\numberwithin{equation}{section}
\newtheorem{thm}{Theorem}[section]
\newtheorem{cor}[thm]{Corollary}
\newtheorem{lem}[thm]{Lemma}
\newtheorem{prop}[thm]{Proposition}
\newtheorem{rem}[thm]{Remark}
\newtheorem{defn}[thm]{Definition}
\newcommand{\cA}{ {\mathcal A} }
\newcommand{\R}{\mathbb{R}}
\newcommand{\Z}{\mathbb{Z}}
\newcommand{\bR}{\mathbb{R}}
\newcommand{\bS}{\mathbb{S}}
\newcommand{\bZ}{\mathbb{Z}}
\begin{document}

\title[Concentration Compactness for Critical Radial Wave Maps]{Concentration Compactness for Critical Radial Wave Maps}

\begin{abstract}
 We consider radially symmetric, energy critical wave maps from $(1+2)$-dimensional Minkowski space into the unit sphere $\mathbb{S}^m$, $m \geq 1$, and prove global regularity and scattering for classical smooth data of finite energy. In addition, we establish a priori bounds on a suitable scattering norm of the radial wave maps and exhibit concentration compactness properties of sequences of radial wave maps with uniformly bounded energies. This extends and complements the beautiful classical work of Christodoulou-Tahvildar-Zadeh~\cite{CTZ1, CTZ2} and Struwe~\cite{Struwe_radial1, Struwe_radial2} as well as of Nahas~\cite{Nahas} on radial wave maps in the case of the unit sphere as the target. The proof is based upon the concentration compactness/rigidity method of Kenig-Merle~\cite{Kenig_Merle1, Kenig_Merle2} and a ``twisted'' Bahouri-G\'erard type profile decomposition~\cite{Bahouri_Gerard}, following the implementation of this strategy by the second author and Schlag~\cite{KS} for energy critical wave maps into the hyperbolic plane as well as by the last two authors~\cite{KL} for the energy critical Maxwell-Klein-Gordon equation.
\end{abstract}

\author[]{Elisabetta Chiodaroli}
\address{B\^atiment des Math\'ematiques \\ EPFL \\ Station 8 \\ 1015 Lausanne \\ Switzerland}
\email{elisabetta.chiodaroli@epfl.ch}

\author[]{Joachim Krieger}
\address{B\^atiment des Math\'ematiques \\ EPFL \\ Station 8 \\ 1015 Lausanne \\ Switzerland}
\email{joachim.krieger@epfl.ch}

\author[]{Jonas L\"uhrmann}
\address{Department of Mathematics \\ Johns Hopkins University \\ 3400 N. Charles Street \\ Baltimore, MD 21218 \\ USA}
\email{luehrmann@math.jhu.edu}

\maketitle

\setcounter{tocdepth}{1}
\tableofcontents

\section{Introduction}

We consider wave maps $\phi \colon \bR^{1+2} \to \bS^m$ from $(1+2)$-dimensional Minkowski space $\bR^{1+2}$ into the $m$-dimensional unit sphere $\bS^m \hookrightarrow \bR^{m+1}$, $m \geq 1$, satisfying the equation 
\begin{equation*} \tag{WM}
 \Box \phi = - \phi \partial_\alpha \phi^\dagger \partial^\alpha \phi
\end{equation*}
for radially symmetric initial data 
\[
 (\phi, \partial_t \phi)|_{t=0} = (\phi_0, \phi_1) \colon \bR^2 \to T\bS^m.
\]

Here we use the extrinsic formulation of the wave maps equation, viewing the $\bR^{m+1}$-valued function $\phi$ as a column vector and denoting its transpose by $\phi^\dagger$. In particular, we note that any initial data $(\phi_0, \phi_1)$ must satisfy the consistency conditions 
\[
 \phi_0^\dagger \phi_0 = 1, \quad \phi_0^\dagger \phi_1 = 0.
\]
Greek indices such as $\alpha$ are implicitly assumed to run from $0$ to $2$ and we use the standard conventions for summing and raising/lowering indices with respect to the metric $\text{diag}(-1, +1, +1)$ on Minkowski space $\bR^{1+2}$. We denote the d'Alembertian by $\Box = -\partial_t^2 + \Delta$ and introduce the shorthand notation $\phi[t] = ( \phi(t), \partial_t \phi(t) )$ for $t \in \bR$. An initial data pair $(\phi_0, \phi_1) \colon \bR^2 \to T\bS^m$ is called classical if it is smooth and constant in the exterior of a compact set.

\medskip 

The wave maps equation (WM) admits a non-negative conserved energy functional 
\[
 E[\phi] := \frac{1}{2} \int_{\bR^2} \sum_{\alpha = 0}^2 | \partial_\alpha \phi |^2 \, dx,
\]
which is invariant under the scaling of the equation 
\[
 \phi(t,x) \mapsto \phi(\lambda t, \lambda x), \quad \lambda > 0.
\]
The Cauchy problem for (WM) on $\bR^{1+2}$ is therefore energy critical. 

\medskip 

The main result of this article asserts that for radially symmetric, classical initial data, the unique, smooth solutions to (WM) exist globally in time and scatter to finite energy free waves. Moreover, we establish a priori bounds on a suitable scattering norm of the solutions and we exhibit concentration compactness properties of sequences of radially symmetric wave maps into the unit sphere with uniformly bounded energies.

\begin{thm} \label{thm:main_theorem}
 There exists a non-decreasing function $K \colon [0,\infty) \to [0, \infty)$ with the following property: Let $(\phi_0, \phi_1)$ be radially symmetric, classical initial data of energy $E$. Then there exists a global, unique, smooth solution $\phi \colon \mathbb{R}^{1+2} \to \mathbb{S}^{m}$ to (WM) with initial data $\phi[0] = (\phi_0, \phi_1)$ satisfying the a priori bound
  \[
   \| \phi \|_{S} \leq K(E),
  \]
  where the $S$ norm is defined in Section~\ref{sec:function_spaces} below. In particular, $\phi$ scatters to finite energy free waves as $t \to \pm \infty$ in the sense that there exist $(f_\pm, g_\pm) \in \dot{H}^1_x \times L^2_x$ such that
  \[
   \lim_{t \to \pm \infty} \, \bigl\| \nabla_{t,x} \phi - \nabla_{t,x} S(t)(f_\pm, g_\pm) \bigr\|_{L^2_x} = 0,
  \]
  where $S(t)(f_\pm, g_\pm) = \cos(t |\nabla|) f_\pm + \frac{\sin(t |\nabla|)}{|\nabla|} g_\pm$ denotes the free wave propagator.
\end{thm}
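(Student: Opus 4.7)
The plan is to follow the Kenig--Merle concentration compactness/rigidity road map, implemented as in the second author's work with Schlag~\cite{KS} for wave maps into the hyperbolic plane and in the last two authors' work~\cite{KL} for Maxwell--Klein--Gordon. First, I would set up a local well-posedness theory in the $S$ norm using null form estimates, together with a small energy global existence and scattering result: there exists $\varepsilon_0>0$ such that any radial classical data with $E[\phi]\leq\varepsilon_0$ produces a global solution with $\|\phi\|_S\lesssim E[\phi]^{1/2}$, and a stability/long-time perturbation lemma quantifies how nearby radial classical data yield nearby $S$-norm solutions. Assuming the theorem fails, I define the critical energy
\[
 E_c:=\sup\bigl\{E\geq 0:\|\phi\|_S\leq K(E[\phi])\ \text{for all radial classical solutions with } E[\phi]\leq E\bigr\},
\]
for some admissible non-decreasing $K$. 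Small energy theory gives $E_c\geq\varepsilon_0>0$, and the failure of the theorem gives $E_c<\infty$; I then select a sequence $\phi_n$ of radial classical solutions with $E[\phi_n]\to E_c$ and $\|\phi_n\|_S\to\infty$.

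The heart of the argument is a ``twisted'' Bahouri--G\'erard profile decomposition~\cite{Bahouri_Gerard} applied to the sequence of initial data $\phi_n[0]$. Because the data must obey the pointwise constraints $\phi_0^\dagger\phi_0=1$, $\phi_0^\dagger\phi_1=0$, one cannot simply superpose linear profiles; following \cite{KS,KL}, I would first perform the linear Bahouri--G\'erard decomposition on an $\bR^{m+1}$-valued perturbation around a suitably chosen base constant map, then correct each profile by a nonlinear projection back onto $\bS^m$ enforcing the tangency condition. In the radial setting the only concentration parameters are time translations and dyadic scales; spatial translations are eliminated by radiality. The profiles are then evolved by the nonlinear flow, asymptotic energy orthogonality is established, and each profile of energy strictly below $E_c$ is globally controlled in $S$ by the inductive hypothesis. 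A nonlinear superposition / long-time perturbation argument in $S$ then forces $\|\phi_n\|_S$ to stay bounded unless a single profile carries all the energy $E_c$.

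Extracting that single profile produces a ``critical element:'' a nontrivial radial wave map $\phi_\star$ of energy $E_c$ whose trajectory $\{\phi_\star[t]:t\in I_{\max}\}$ is precompact in $\dot H^1_x\times L^2_x$ modulo the scaling symmetry $\lambda(t)>0$. The rigidity step rules out such an object. Combining the Morawetz/virial identities of Christodoulou--Tahvildar-Zadeh~\cite{CTZ1,CTZ2} with the equipartition and light cone energy arguments of Struwe~\cite{Struwe_radial1,Struwe_radial2}, I would show that any such compact-modulo-scaling trajectory must be either self-similar (ruled out by CTZ-type monotonicity inside the backward light cone, which forces the self-similar profile to be trivial) or else a time-independent finite energy radial harmonic map $\bR^2\to\bS^m$. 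The latter is impossible for nontrivial finite energy in the radial setting, yielding the contradiction $E_c=\infty$ and thus the claimed a priori bound $\|\phi\|_S\leq K(E)$. Scattering to free waves then follows from the finiteness of $\|\phi\|_S$ via the standard Duhamel argument used to close the local theory.

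The main obstacle I expect is Step~2, the twisted profile decomposition: one must simultaneously (i) define the nonlinear projection onto $\bS^m$ in a way compatible with the dyadic scales of the linear decomposition, (ii) prove that the projected profiles remain asymptotically energy-orthogonal and that their nonlinear evolutions still obey the required orthogonality and decoupling, and (iii) close a nonlinear superposition / perturbation estimate in the $S$ norm, where the null structure of the wave map nonlinearity interacts delicately with the profile scales. A secondary delicate point is excluding the self-similar critical element in the rigidity step, which requires a careful use of interior energy monotonicity inside the backward light cone and an analysis of the trace of $\phi_\star$ on the cone boundary, along the lines of \cite{CTZ1, CTZ2, Struwe_radial1, Struwe_radial2} but adapted to the concentration compactness framework.
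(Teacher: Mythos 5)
Your road map coincides with the paper's: a Kenig--Merle induction on energy, a Bahouri--G\'erard type decomposition of an essentially singular sequence, extraction of a minimal element whose trajectory is precompact modulo scaling, and a rigidity step resting on momentum vanishing (automatic from radiality), the non-existence of non-trivial finite energy radial harmonic maps into $\bS^m$, and the exclusion of self-similar blowup in self-similar variables. The rigidity portion of your sketch is essentially what the paper carries out.

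The genuine gap is in the profile decomposition --- precisely the step you flag as the main obstacle, but for which the mechanism you describe would fail. You propose to run the \emph{linear} Bahouri--G\'erard decomposition (free-wave profiles) on an ambient $\bR^{m+1}$-valued perturbation of a constant map, project back to $\bS^m$, and close a nonlinear superposition/long-time perturbation argument in $S$. This cannot work here for two reasons. First, the equation for a frequency block $\phi_k$ contains the non-perturbative term $2\phi_{\leq k-10}\partial_\alpha\phi_{\leq k-10}^\dagger\partial^\alpha\phi_k$, which is not estimable in any $S$-type framework; the perturbative theory only closes for the renormalized variables $U_{\leq k-10}\overline{\phi}_k$ (Tao's microlocal gauge in the Sterbenz--Tataru form), so the stability argument you invoke must be run at the level of the gauged variables and then transferred back. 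Second, and more fundamentally, frequency diagonalization partially fails: the trilinear estimate of Proposition~\ref{prop:trilinear_estimate} gains exponentially only in $k_1-\min\{k_2,k_3\}$, not in the largest frequency separation, so the low-frequency part of the solution exerts an $O(1)$ influence on the high-frequency evolution for large times. Hence the concentration profiles cannot be free waves: they must be extracted relative to the covariant ``magnetic'' operator $\Box - 2U^{(u^n)}_{<\mu_n}\cA_{\alpha,low}(u^n)\bigl(U^{(u^n)}_{<\mu_n}\bigr)^\dagger\partial^\alpha$ built from the already-controlled low-frequency evolution $u^n$, and the whole construction has to be organized as a finite induction on frequency (evolve the lowest non-atomic part, then add the atoms one at a time). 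Two further ingredients your sketch needs but does not supply: frequency truncation followed by normal projection to $\bS^m$ preserves frequency localization only up to exponential tails and only under a Besov smallness condition around the cutoff, which forces a careful selection of the truncation frequencies; and the minimal element is merely of energy class while the local theory is subcritical, so one must first construct a notion of energy class radial wave map by regularization and finite speed of propagation. Without the renormalization and the twisted (covariant) profile extraction, the decoupling and perturbation estimates in item (iii) of your list of obstacles cannot be closed.
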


We emphasize that global regularity and scattering for energy critical radial wave maps into arbitrary compact target manifolds has already been established in by now classical works of Christodoulou-Tahvildar-Zadeh~\cite{CTZ1, CTZ2} and Struwe~\cite{Struwe_radial1, Struwe_radial2} as well as of Nahas~\cite{Nahas}. We shall next give a brief overview of the history of the wave maps problem and motivate why we are revisiting the beautiful classical results on radial wave maps. Then we conclude this introduction with an overview of the proof of Theorem~\ref{thm:main_theorem}.

\subsection{History and motivation}

We note that the wave maps problem has been the subject of a fascinating and vast body of literature over the past decades that we cannot adequately review here in its entirety. Our primary focus shall be on energy critical wave maps from $(1+2)$-dimensional Minkowski space.

The study of energy critical radial wave maps was begun in the seminal work of Christodoulou-Tahvildar-Zadeh~\cite{CTZ1} where global regularity is proven for arbitrary compact target manifolds for radially symmetric initial data with sufficiently small energy. This small energy global regularity result is then strengthened to a large energy global regularity result by excluding concentration of energy provided the target manifold, unlike the sphere, satisfies a suitable convexity condition. In a subsequent paper~\cite{CTZ2}, Christodoulou-Tahvildar-Zadeh also obtain pointwise scattering bounds on the solutions under the same convexity condition on the target manifold. Struwe~\cite{Struwe_radial1, Struwe_radial2} then established large energy global regularity for radially symmetric wave maps into arbitrary compact target manifolds by combining the results from~\cite{CTZ1} with a careful blowup analysis from~\cite{Struwe}. More recently, Nahas~\cite{Nahas} also proved scattering for radially symmetric wave maps into arbitrary compact target manifolds.

For wave maps without any symmetry assumptions the sub-critical local well-posedness theory was developed by Klainerman-Machedon~\cite{KlMa1, KlMa2, KlMa3} and Klainerman-Selberg~\cite{Klainerman_Selberg1, Klainerman_Selberg2}, making crucial use of the null structure in the quadratic derivative nonlinearity of the wave maps equation.

A very important step in extending the small energy global regularity result of Christodoulou-Tahvildar-Zadeh~\cite{CTZ1} to the non-radial case was achieved by Tataru~\cite{Tataru1}, establishing global regularity for energy critical wave maps into arbitrary compact target manifolds for initial data which is small in the scale-invariant homogeneous Besov space $\dot{B}_{2,1}^1(\bR^2) \times \dot{B}_{2,1}^0(\bR^2)$. This work introduced an important functional framework for the study of the energy critical wave maps equation, in particular the delicate null frame spaces. The final breakthrough to prove small energy global regularity for energy critical wave maps into the unit sphere $\bS^m$, $m \geq 1$, was achieved by Tao~\cite{Tao2} through the key realization that certain non-perturbative terms in the wave maps nonlinearity can be cast into a better form by exploiting the gauge invariance of the wave maps problem. Small energy global regularity was then extended to other target manifolds by the second author~\cite{Krieger} for the hyperbolic plane $\mathbb{H}^2$ and by Tataru~\cite{Tataru2} for arbitrary target manifolds that can be isometrically embedded into Euclidean space. The key role that the gauge structure plays in the study of the wave maps equation at the critical regularity to renormalize the equation into a better form was also further clarified in the works of Klainerman-Rodnianski~\cite{Klainerman_Rodnianski}, Shatah-Struwe~\cite{Shatah_Struwe}, Nahmod-Stefanov-Uhlenbeck~\cite{NSU} and the second author~\cite{Krieger_3d} on global regularity for wave maps from higher-dimensional Minkowski space for small critical Sobolev data. 

For large energies, depending on the geometry of the target manifold, blowup can occur for energy critical wave maps. Indeed, the blowup analysis of Struwe~\cite{Struwe} for energy critical equivariant wave maps showed that singularity formation must be tied to the existence of non-trivial finite energy harmonic maps from $\bR^2$ into the target manifold. Later, the second author joint with Schlag and Tataru~\cite{KST}, Rapha\"el-Rodnianski~\cite{Raphael_Rodnianski} and Rodnianski-Sterbenz~\cite{Rodnianski_Sterbenz} constructed examples of equivariant wave maps into $\bS^2$ that blow up in finite time via the concentration of a non-trivial harmonic map.

These developments culminated in the threshold conjecture that for energy critical wave maps global regularity is expected to hold for initial data with energy less than the energy of any non-trivial harmonic map into the target manifold. This conjecture was established independently around the same time by the second author and Schlag~\cite{KS} for the hyperbolic plane $\mathbb{H}^2$ as the target, by Tao~\cite{Tao_large_WM} for all hyperbolic spaces $\mathbb{H}^d$, $d \geq 1$, and by Sterbenz-Tataru~\cite{Sterbenz_Tataru1, Sterbenz_Tataru2} for any target manifold that can be isometrically embedded into Euclidean space. 

\medskip 

Our motivation for this article is essentially twofold. On the one hand we analyze to what extent the complicated function spaces introduced in the seminal works of Tataru~\cite{Tataru1} and Tao~\cite{Tao2} can be replaced by a simpler functional framework in the radial context to settle the perturbative theory for the wave maps equation (WM) similarly to the strategy in Tao~\cite{Tao2}. Here our goal was to avoid Fourier localization as much as possible and apart from a basic spatial frequency localization, our framework in particular avoids modulation localizations as in \cite{Tataru1} and \cite{Tao2}. More specifically, our space $S$ to control the regularity of radial wave maps is built from dyadic subspaces $S_k$ in the sense that
\[
 \| \phi \|_{S}^2 := \sum_{k \in \bZ} \| \phi_k \|_{S_k}^2.
\]
The dyadic subspace $S_k$ is defined entirely in physical space and essentially has three parts of the following schematic form
\[
 \| \phi \|_{S_k} = \| \phi_k \|_{Str} + \sup_{\ell \in \bZ} \| \chi_{\{ r \sim 2^{\ell} \}} r^{-\frac{1}{2}} \nabla_{t,x} \phi_k \|_{L^2_t L^2_x} + \sum_\pm \| (\partial_t \pm \partial_r) \phi_k \|_{Z_k^\pm},
\]
see Definition~\ref{def:S_norm} for the precise definition. The first part of the $S_k$ norm consists of Strichartz-type norms where we crucially exploit the larger range of admissible Strichartz pairs in the radial context, see for instance Sterbenz~\cite{Sterbenz} and Fang-Wang~\cite{Fang_Wang}. The second part is a local energy decay norm which quite naturally replaces the important $\dot{X}^{1, \frac{1}{2}, \infty}$-type space from the non-radial context. Finally, the third part of the $S_k$ norm is formed by certain atomic spaces $Z_k^{\pm}$. These basically correspond to an ``incoming-outgoing'' decomposition of free radial waves and involve $L^2_{t \pm r} L^\infty_{t \mp r}$-type norms which naturally replace the more complicated null frame spaces~\cite{Tataru1} from the non-radial context.

\medskip

On the other hand, in this work we go beyond the results of Christodoulou-Tahvildar-Zadeh~\cite{CTZ1, CTZ2}, Struwe~\cite{Struwe_radial1, Struwe_radial2} and Nahas~\cite{Nahas} in the case of radially symmetric wave maps into the unit sphere $\bS^m$, $m \geq 1$, as the target manifold and prove, in addition to global regularity and scattering, a priori bounds on the scattering norm $S$ of the solutions to (WM) and we exhibit concentration compactness properties of sequences of radially symmetric wave maps into the unit sphere with uniformly bounded finite energies. To this end we use a version of the concentration compactness/rigidity method of Kenig-Merle~\cite{Kenig_Merle1, Kenig_Merle2} and a modified Bahouri-G\'erard type profile decomposition~\cite{Bahouri_Gerard}, following the implementation of this strategy by the second author and Schlag~\cite{KS} for (non-radial) energy critical wave maps into the hyperbolic plane $\mathbb{H}^2$ and the implementation by the last two authors~\cite{KL} for the related energy critical Maxwell-Klein-Gordon equation. Executing the concentration compactness/rigidity strategy in the context of energy critical wave maps is compounded by the presence of non-perturbative terms in the wave maps nonlinearity, which have to be dealt with via renormalization, and by certain strong low-high interactions in the wave maps nonlinearity. These difficulties will be explained in more detail in the next subsection. However, at this point we emphasize a key difference between our work for the unit sphere $\bS^m \hookrightarrow \bR^{m+1}$, $m \geq 1$, as the target and the work of the second author and Schlag~\cite{KS} for the $\mathbb{H}^2$ target regarding the renormalization procedure. The fact that the gauge group for the target $\mathbb{H}^2$ is abelian was exploited heavily in~\cite{KS} in order to implement a global-in-frequency gauge change by passing to the Coulomb gauge in the intrinsic setting. The construction of the Coulomb gauge is elementary and explicit in the abelian case, but it becomes problematic for large energies in the non-abelian case. Since in our work the gauge group is no longer abelian for the targets $\bS^m$ with $m \geq 3$, we instead employ a version of the gauge construction of Sterbenz-Tataru~\cite{Sterbenz_Tataru1} in the extrinsic setting which deals separately with each frequency level. The latter is a refinement and further development of the microlocal gauge introduced by Tao~\cite{Tao1, Tao2} and is better suited for large energies.

\subsection{Overview of the proof of Theorem~\ref{thm:main_theorem}}

Here we provide an outline of the main ideas of the proof of Theorem~\ref{thm:main_theorem}. Our goal is to show that there exists a non-decreasing function $K \colon [0,\infty) \to [0, \infty)$ with the following property: Let $(\phi_0, \phi_1) \colon \bR^2 \to T\bS^m$ be radially symmetric, classical initial data of energy $E$. Then there exists a global, unique, smooth wave map $\phi \colon \bR^{1+2} \to \bS^{m}$ with initial data $\phi[0] = (\phi_0, \phi_1)$ satisfying the a priori bound 
\[
 \| \phi \|_{S} \leq K(E).
\]
Once we have established this a priori bound, the scattering assertion of Theorem~\ref{thm:main_theorem} is an immediate consequence.

\medskip

Following the general philosophy of the concentration compactness/rigidity scheme, we argue by contradiction and assume that Theorem~\ref{thm:main_theorem} fails. Then the existence of the function $K(\cdot)$ yielding the a priori bounds must fail at some finite energy level. Correspondingly, the following set of energies must be non-empty
\[
 {\mathcal E} := \biggl\{ E \, : \, \sup_{\{ \phi \, : \, E[\phi] \leq E \}} \|\phi\|_{S[I]} = +\infty \biggr\},
\]
where the supremum is taken over all radially symmetric wave maps $\phi \colon I \times \bR^2 \to \bS^m$ with classical radially symmetric initial data of energy $E[\phi] \leq E$ and defined on some time interval $I$. We shall prove a small energy global regularity result together with a priori bounds on the scattering norm $S$ for radial solutions to (WM) in Theorem~\ref{thm:small_energy_global}. For this reason the infimum of the set ${\mathcal E}$ has to be strictly positive
\[
 E_{crit} := \inf {\mathcal E} > 0.
\]
Thus, we may pick a sequence of radial smooth wave maps $\phi^n \colon I^n \times \bR^2 \to \bS^{m}$, $n \geq 1$, with maximal intervals of existence $I^n$ such that
\[
 \lim_{n\to\infty} E[\phi^n] = E_{crit}, \quad \lim_{n\to\infty} \| \phi^n \|_{S[I^n]} = + \infty.
\]
In the following we call such a sequence of wave maps essentially singular. Our goal is now to rule out the existence of such an essentially singular sequence of wave maps, hence proving Theorem~\ref{thm:main_theorem}. This will be achieved in the following two main steps.
\begin{itemize}
 \item Extracting an energy class, radially symmetric, minimal blowup solution $\phi^\infty$ to (WM) of energy $E_{crit}$ from the essentially singular sequence $\{ \phi^n \}_{n \geq 1}$. As a key tool we use a ``twisted'' profile decomposition that takes into account the strong low-high interactions in the wave maps nonlinearity. We develop this modified Bahouri-G\'erard type nonlinear profile decomposition for our setting by following the procedure introduced by the second author and Schlag~\cite{KS} for energy critical wave maps into $\mathbb{H}^2$. The minimal blowup solution can be thought of as a ``minimal counterexample'' to Theorem~\ref{thm:main_theorem} and its orbit must therefore possess a strong compactness property modulo the symmetries of the equation. We note that the heart of this paper resides in this extraction procedure which is carried out in Section~\ref{sec:Concentration_Compactness_Step}. All sections leading up to it lay the groundwork.
 \item Ruling out the minimal blowup solution $\phi^\infty$ via a version of the Kenig-Merle rigidity argument~\cite{Kenig_Merle1} as in~\cite{KS}. Here we utilize the strong compactness property of $\phi^\infty$ against general properties of radial wave maps into the unit sphere. In particular, we invoke the non-existence of non-trivial, finite energy, radial harmonic maps into the unit sphere. This step is accomplished in Section~\ref{sec:rigidity_argument}.
\end{itemize}

\medskip 

The severe difficulties with the extraction of a minimal blowup solution to (WM) can be highlighted by comparing with the situation for the energy critical, defocusing nonlinear wave equation $\Box u = u^5$ on $\bR^{1+3}$. In this context Bahouri-G\'erard~\cite{Bahouri_Gerard} introduced a highly influential nonlinear profile decomposition. It basically asserts that a sequence of solutions to the quintic nonlinear wave equation with uniformly bounded energies can be decomposed into a sum of nonlinear solutions, which are referred to as the nonlinear profiles and which are rescaled and translated in space-time according to the non-compact symmetries of the equation, and an error term, which can be made small in a suitable norm. The ability to extract a minimal blowup solution ultimately relies on the asymptotic decoupling of different nonlinear profiles. In the quintic nonlinearity, the interactions of two different nonlinear profiles with essential frequency supports at divergent scales are asymptotically negligible. This reduces to consider diagonal frequency interactions. But then two different nonlinear profiles living at the same frequency scale must concentrate in divergent regions of space-time so that their interactions in the quintic nonlinearity again vanish asymptotically. 

\medskip 

In contrast, for energy critical wave maps frequency diagonalization appears to partially fail at the critical regularity due to strong low-high interactions in the wave maps nonlinearity. In order to gain a better understanding of these difficulties in our context of radial wave maps into the unit sphere, we now take a closer look at the perturbative theory for the frequency localized wave maps equation (WM). 

\medskip 

\noindent {\it Renormalization and perturbative theory.} More precisely, we study the evolution of a single dyadic frequency block $\phi_k$, say $k = 0$, satisfying the equation
\[
 \Box \phi_0 = - P_0 \bigl( \phi \partial_\alpha \phi^\dagger \partial^\alpha \phi \bigr).
\]
Upon decomposing each input of the nonlinearity into its Littlewood-Paley pieces, the nonlinear wave equation for $\phi_0$ assumes the following form
\begin{align*}
 \Box \phi_0 &= - 2 \phi_{\leq -10} \partial_\alpha \phi_{\leq -10}^\dagger \partial^\alpha \phi_0 \\
 &\quad - \sum_{k_2 > -10} \sum_{k_3 = k_2 + O(1)} P_0 \bigl( \phi_{\leq 20} \partial_\alpha \phi_{k_2}^\dagger \partial^\alpha \phi_{k_3} \bigr) \\
 &\quad - 2 \sum_{k_2 \leq -10} \phi_{\leq k_2-10} \bigl( P_0 \bigl( \partial_\alpha \phi_{k_2}^\dagger \partial^\alpha \phi_{-5 < \cdot < 5} \bigr) - \partial_\alpha \phi_{k_2}^\dagger \partial^\alpha \phi_0 \bigr) \\
 &\quad + \text{error},
\end{align*}
where we are only listing those interaction terms that are more difficult to deal with and the other relatively insignificant interaction terms are just denoted by $\text{error}$. The difficulties with the high-high interactions of the second term and the third term on the right-hand side are a more peculiar feature of our framework. They are non-perturbative in the sense that they cannot be treated with the multilinear estimates within our functional framework due to its relevatively elementary but appealing construction - for instance, we cannot gain in the high-high interactions in our null form estimates. But we note that these two interaction terms could be easily handled with the full power of the more sophisticated functional framework from Tao~\cite{Tao2}. Fortunately, this feature of our setting can be dealt with quite efficiently by passing to the ``nonlinearly modified variable''
\[
 \overline{\phi}_0 := \phi_0 + \frac{1}{2} \sum_{k_2 > -10} \sum_{k_3 = k_2 + O(1)} P_0 \bigl( \phi_{\leq 20} \phi_{k_2}^\dagger \phi_{k_3} \bigr) + \sum_{k_2 \leq -10} \phi_{\leq k_2 - 10} \bigl( P_0 \bigl( \phi_{k_2}^\dagger \phi_{-5 < \cdot < 5} \bigr) - \phi_{k_2}^\dagger \phi_0 \bigr),
\]
which has the effect of either distributing derivatives to different inputs with a better frequency balance or of turning a trilinear interaction term into an easier quintilinear one upon reinserting the wave maps equation. The wave equation for the new variable $\overline{\phi}_0$ then takes on the form
\begin{align*}
 \Box \overline{\phi}_0 = - 2 \phi_{\leq -10} \partial_\alpha \phi_{\leq -10}^\dagger \partial^\alpha \phi_0 + \text{error}
\end{align*}
and we are left to treat the more severe low-low-high interaction term on the right-hand side. Following Tao~\cite{Tao1, Tao2} we exploit the geometry of the wave maps problem and use the orthogonality relation $\phi^\dagger \partial^\alpha \phi = 0$ which remains approximately preserved upon frequency localization. Then we arrive at a better equation for $\phi_0$, namely
\begin{equation} \label{equ:intro_nlw_for_phi_bar}
 \Box \overline{\phi}_0 = - 2 \bigl( \phi_{\leq -10} \partial_\alpha \phi_{\leq -10}^\dagger - \partial_\alpha \phi_{\leq -10} \phi_{\leq -10}^\dagger \bigr) \partial^\alpha \phi_0 + \text{error},
\end{equation}
where the matrix $\bigl( \phi_{\leq -10} \partial_\alpha \phi_{\leq -10}^\dagger - \partial_\alpha \phi_{\leq -10} \phi_{\leq -10}^\dagger \bigr)$ is anti-symmetric. Now we have the following trilinear estimate from Proposition~\ref{prop:trilinear_estimate} at our disposal 
\begin{equation} \label{equ:intro_trilinear_estimate}
 \bigl\| \phi_{k_1} \partial_\alpha \phi_{k_2}^\dagger \partial^\alpha \phi_0 \bigr\|_{L^1_t L^2_x} \lesssim 2^{- \delta (k_1 - k_2)} \| \phi_{k_1} \|_{S_{k_1}} \| \phi_{k_2} \|_{S_{k_2}} \| \phi_0 \|_{S_0}, \quad k_2 + O(1) \leq k_1 \leq 0,
\end{equation}
for some absolute constant $\delta > 0$. The key exponential gain in this estimate allows us to handle those parts of the low-low-high interaction term where a derivative falls on the lowest frequency. At the same time we make the fundamental observation that we do not gain exponentially in the largest frequency difference in this estimate, resulting in the strong low-high interactions alluded to before. We note that the direct analogue of this estimate for the non-radial setting is due to Tao~\cite{Tao2} and is much more difficult to achieve than within our functional framework for the radial case. In view of \eqref{equ:intro_trilinear_estimate}, we split the interaction term on the right-hand side of \eqref{equ:intro_nlw_for_phi_bar} into two parts
\begin{equation} \label{equ:intro_nlw_for_phi_bar_two_parts}
 \Box \overline{\phi}_0 = 2 A_{\alpha; \leq -10} \partial^\alpha \phi_0 - 2 \sum_{k_2 \leq -10} \sum_{k_2 - 10 < k_1 \leq -10} \bigl( \phi_{k_1} \partial_\alpha \phi_{k_2}^\dagger - \partial_\alpha \phi_{k_2} \phi_{k_1}^\dagger \bigr) \partial^\alpha \phi_0 + \text{error},
\end{equation}
where we are introducing the connection form 
\begin{equation*}
 A_{\alpha; \leq -10} = - \sum_{k_2 \leq -10} \bigl( \phi_{\leq k_2 - 10} \partial_\alpha \phi_{k_2}^\dagger - \partial_\alpha \phi_{k_2} \phi_{\leq k_2 - 10}^\dagger \bigr).
\end{equation*}
While in the second interaction term on the right-hand side of \eqref{equ:intro_nlw_for_phi_bar_two_parts} a derivative falls on the lowest frequency and this part can therefore be handled with the trilinear estimate, this is not the case for the first interaction term. Thus, the latter term is non-perturbative and has to be renormalized into a better form. Following the method first introduced by Tao~\cite{Tao1, Tao2}, we define suitable $SO(m+1)$-valued gauge transformations $U_{\leq -10}$ and pass to the new variable $U_{\leq -10} \overline{\phi}_0$ satisfying the nonlinear wave equation 
\begin{equation} \label{equ:intro_nlw_for_gauged_phi_bar}
 \begin{aligned}
  \Box \bigl( U_{\leq -10} \overline{\phi}_0 \bigr) &= U_{\leq -10} \bigl( \Box \overline{\phi}_0 - 2 A_{\alpha; \leq -10} \partial^\alpha \phi_0 \bigr) \\
  &\quad + 2 \bigl(\partial_\alpha U_{\leq -10} + U_{\leq -10} A_{\alpha; \leq -10} \bigr) \partial^\alpha \phi_0 \\
  &\quad + 2 \partial_\alpha U_{\leq -10} \partial^\alpha ( \overline{\phi}_0 - \phi_0 ) \\
  &\quad + \bigl( \Box U_{\leq -10} \bigr) \overline{\phi}_0,
 \end{aligned}
\end{equation}
where the key difficulty now resides in the second interaction term on the right-hand side. More precisely, we implement a version of the gauge construction due to Sterbenz-Tataru~\cite{Sterbenz_Tataru1}, which is better suited for large energies, and define the gauge transformations $U_{\leq h}$ for $h \in \bR$ as solutions to the ODE 
\[
 \frac{d}{d h} U_{\leq h} = U_{\leq h} B_h, \quad \lim_{h \to -\infty} U_{\leq h} = Id,
\]
where the anti-symmetric matrix $B_h$ is of the schematic form
\[
 B_h = \phi_{\leq h-10} \phi_h^\dagger - \phi_h \phi_{\leq h-10}^\dagger.
\]
In particular, the anti-symmetry of $B_h$ ensures that the gauge transformations $U_{\leq h}$ as solutions to the above ODE are exactly orthogonal. This choice of gauge transformation effectively transfers a derivative to the lowest frequency in the key difficult term $2 \bigl(\partial_\alpha U_{\leq -10} + U_{\leq -10} A_{\alpha; \leq -10} \bigr) \partial^\alpha \phi_0$ so that it can basically be handled with the trilinear estimate~\eqref{equ:intro_trilinear_estimate}. Finally, we will have to transfer the bounds on the variable $U_{\leq -10} \overline{\phi}_0$ back to the frequency localized wave map~$\phi_0$. In particular, we note that we have to introduce certain frequency and spatial truncations in the definitions of the gauge transformations and the new variable $\overline{\phi}_0$ in order to have additional sources of smallness at our disposal in the large energy setting. These modifications will be explained as we go in the later sections. This essentially settles the perturbative theory for (WM). 

\medskip 

We now describe the extraction procedure of the minimal blowup solution $\phi^\infty$ to (WM) which necessitates the development of a ``twisted'' Bahouri-G\'erard type profile decomposition to take into account the effect of the strong low-high interactions described above. This undertaking is additionally compounded by the fact that we have to work at the level of the gauged variables $U_{\leq -10} \overline{\phi}_0$ because only these satisfy a nonlinear wave equation with good perturbative properties. As in~\cite{KS} we use a finite induction on frequency procedure to carefully disentangle the low-high frequency interactions.

\medskip 

\noindent {\it Decomposition into frequency atoms and evolving the lowest frequency non-atomic part.} The first step consists in decomposing the essentially singular sequence of data $\{ \phi^n[0] \}_{n \geq 1}$ into frequency atoms using the M\'etivier-Schochet procedure~\cite{Metivier-Schochet} as in Bahouri-G\'erard~\cite{Bahouri_Gerard}. Roughly speaking, the basic idea then goes as follows. Ultimately, we would like to conclude that upon passing to a subsequence, if necessary, the essentially singular sequence of data $\{ \phi^n[0] \}_{n \geq 1}$ consists of exactly one frequency atom wich in turn consists of exactly one concentration profile (to be defined precisely in Subsection~\ref{subsec:adding_in_first_atom}) of asymptotic energy $E_{crit}$. In this scenario, the sequence $\{ \phi^n[0] \}_{n \geq 1}$ has sufficient compactness properties that allow us to pass to a certain limit whose wave maps evolution will be the desired minimal blowup solution to (WM). In order to rule out all other possible scenarios, we seek to prove uniform in~$n$, finite, global $S$ norm bounds on (a subsequence of) the sequence of wave maps evolutions $\{ \phi^n \}_{n \geq 1}$, which would contradict that the sequence is essentially singular. To this end we first achieve control over the wave maps evolutions of certain low frequency truncations of the essentially singular sequence of data $\{ \phi^n[0] \}_{n \geq 1}$. Using a finite inductive procedure over the increasing size of the frequency supports of these low frequency truncations, we then conclude uniform in $n$, finite, global $S$ norm bounds on the actual essentially singular sequence $\{ \phi^n \}_{n \geq 1}$. 

\medskip 

Using a version of the M\'etivier-Schochet procedure~\cite{Metivier-Schochet} we obtain a decomposition into frequency atoms (of a subsequence) of the essentially singular sequence of data
\begin{equation*} 
 \phi^n[0] = \sum_{a=1}^\Lambda \phi^{na}[0] + \phi^{n\Lambda}[0],
\end{equation*}
where the frequency atoms $\phi^{na}[0]$ and the remainder term $\phi^{n \Lambda}[0]$ have disjoint frequency supports. The frequency atoms are sharply localized around frequency scales $(\lambda_n^a)^{-1}$, more precisely they have frequency support on $\bigl\{ |\xi| \in [ (\lambda_n^a)^{-1} R_n^{-1}, (\lambda_n^a)^{-1} R_n ] \bigr\}$ for some sequence $R_n \to \infty$ growing sufficently slowly. The frequency scales diverge from each other as $n \to \infty$ in the sense that
\[
 \lim_{n \to \infty} \, \frac{\lambda_n^a}{\lambda_n^b} + \frac{\lambda_n^b}{\lambda_n^a} = + \infty, \quad a \neq b.
\]
We may assume that the atoms are ordered in terms of the increasing size of their frequency support scales~$(\lambda_n^a)^{-1}$ and we introdue the notation $\mu_n^a = - \log ( \lambda_n^a )$ for the corresponding dyadic frequency support scales. Moreover, the remainder term $\phi^{n \Lambda}[0]$ satisfies the important Besov norm smallness
\[
 \lim_{\Lambda\to\infty} \limsup_{n\to\infty} \| \phi^{n\Lambda}[0] \|_{\dot{B}^1_{2,\infty} \times \dot{B}^0_{2,\infty}} = 0.
\]

\medskip 

As described above, ultimately we would like to conclude that there is exactly one atom in the decomposition~\eqref{equ:frequency_atom_decomposition}, i.e. $\Lambda = 1$, which is of asymptotic energy $E_{crit}$. If this is the case, we proceed directly to the next stage below where we consider the evolution of the first ``large'' frequency atom. Otherwise, we now start a finite inductive procedure to conclude that the sequence $\{ \phi^n \}_{n \geq 1}$ cannot be essentially singular. To this end we fix an integer $\Lambda_0$ sufficiently large such that upon passing to a subsequence, if necessary,
\[
 \sum_{a > \Lambda_0} \limsup_{n\to\infty} \| \phi^{na}[0] \|_{\dot{H}^1_x \times L^2_x}^2 \leq \varepsilon_0,
\]
where $\varepsilon_0 > 0$ is a sufficently small constant that plays the role of a perturbative threshold in the key bootstrap argument in Proposition~\ref{prop:bootstrap_bounds_to_next_level_in_evolving_lowest_frequency_nonatomic_part}. In particular, $\varepsilon_0$ will be chosen sufficently small depending only on the size of $E_{crit}$ and it will be chosen to be less than the small energy global regularity threshold established in Theorem~\ref{thm:small_energy_global}. Then we observe that due to the sharp frequency localizations of the atoms $\phi^{na}[0]$, $1 \leq a \leq \Lambda_0$, the remainder term $\phi^{n \Lambda_0}[0]$ gets split into $\Lambda_0 + 1$ ``frequency shells''
\begin{equation*} 
 \phi^{n \Lambda_0}[0] = \phi^{n \Lambda_0^{(0)}}[0] + \phi^{n \Lambda_0^{(1)}}[0] + \ldots + \phi^{n \Lambda_0^{(\Lambda_0)}}[0],
\end{equation*}
where $\phi^{n \Lambda_0^{(0)}}[0]$ shall denote the lowest frequency component.

\medskip

Our first step now consists in showing that the lowest frequency ``non-atomic'' component $\phi^{n \Lambda_0^{(0)}}[0]$ can be globally evolved and satisfies finite $S$ norm bounds just in terms of $E_{crit}$ uniformly for all sufficiently large $n$. Since the component $\phi^{n \Lambda_0^{(0)}}[0]$ may still have large energy, in order to be able to infer these $S$ norm bounds, we approximate $\phi^{n \Lambda_0^{(0)}}[0]$ by a finite number of delicately chosen low frequency truncations $P_{\leq b_L} \phi^{n \Lambda_0^{(0)}}[0]$.
However, up to this point we have totally ignored that the frequency truncations $P_{\leq b_L} \phi^{n \Lambda_0^{(0)}}[0] = P_{\leq b_L} \phi^n[0]$ are not ``geometric'' in the sense that they are not actual maps $\bR^2 \to T\bS^m$ into the unit sphere and therefore do not constitute suitable initial data for the wave maps equation (WM). To overcome this issue we just project the frequency truncations back to the sphere, using the normal projection operator, and denote the resulting initial data by $\Pi_{\leq b_L} \phi^n[0]$. This operation is well-defined and the frequency localization properties are approximately preserved up to exponential tails if around the frequency cut-offs a certain Besov norm smallness condition is satisfied, see Proposition~\ref{prop:data_approximate_preservation_of_frequency_localization}. This, in particular, forces us to carry out a further refined frequency atom decomposition of the lowest frequency non-atomic part $\phi^{n \Lambda_0^{(0)}}[0]$ to carefully pick these low frequency cut-offs $b_L$. Using a finite induction procedure we then obtain uniform in $n$, a priori bounds on the $S$ norms of the global evolutions of the ``geometric'' lowest frequency non-atomic parts $\Pi_{\leq \mu_n^1 - \log R_n} \phi^n[0]$ via an iterative bootstrap argument, which is accomplished in Proposition~\ref{prop:evolving_lowest_frequency_nonatomic_part} in Subsection~\ref{subsec:lowest_freq_non_atomic}.

\medskip 

\noindent {\it Selecting concentration profiles and adding the first large frequency atom.} Having established control over the global evolution of the lowest frequency non-atomic part $\Pi_{\leq \mu_n^1 - \log(R_n)} \phi^n[0]$ in the previous step, we now ``add in'' the first large frequency atom $\phi^{n1}[0] = P_{[\mu_n^1 - \log(R_n), \mu_n^1 + \log(R_n)]} \phi^n [0]$ in the sense that we now attempt to globally evolve the geometric initial data
\[
 \Pi_{\leq \mu_n^1 + \log(R_n)} \phi^n[0].
\]
In this paragraph we shall slightly abuse notation and write for simplicity
\[
 \phi^n[0] \equiv \Pi_{\leq \mu_n^1 + \log(R_n)} \phi^n[0],
\]
denoting the evolution of this data by $\phi^n$. By rescaling we may assume that $\mu_n^1 \equiv 0$ and we shall later denote by $\mu_n = \mu_n^1 - \log(R_n)$ the frequency cut-off delimiting the essential frequency supports of the lowest frequency non-atomic part and of the first large frequency atom. Moreover, we use the notation
\[
 \phi^n[0] \equiv \Pi_{\leq \mu_n^1 - \log(R_n)} \phi^n[0] + \bigl( \Pi_{\leq \mu_n^1 + \log(R_n)} \phi^n - \Pi_{\leq \mu_n^1 - \log(R_n)} \phi^n \bigr)[0] \equiv u^n[0] + \epsilon^n[0],
\]
where $u^n$ denotes the global evolution of the lowest frequency non-atomic part established in the previous stage. Since only the gauged variables satisfy a nonlinear wave equation with good perturbative properties, we now enact a Bahouri-G\'erard type profile decomposition at the level of the gauged variables $U_{<k}^{(\phi^n)} \overline{\phi^n_k}$, which then has to be translated back to the level of the coordinate functions $\phi^n$. Here, $U^{(\phi^n)}$ denote the gauge transformations defined by the evolutions $\phi^n$. In order to take into account the strong coupling between the very low frequencies coming from $u^n$ and the high frequencies, we have to extract the concentration profiles for $\epsilon^n$ with respect to a suitably modified linear wave operator to match the proper asymptotic evolution of the variables $U_{<k}^{(\phi^n)} \overline{\phi_k^n}$ as $t \to \pm \infty$. In view of \eqref{equ:intro_nlw_for_phi_bar_two_parts} and \eqref{equ:intro_nlw_for_gauged_phi_bar} we use the following very natural linear magnetic-type wave equation to select our concentration profiles 
\begin{equation} \label{equ:intro_magnetic_linear_wave}
 \boxed{ \Box \tilde{\epsilon} = 2 U_{<\mu_n}^{(u^{n})} \cA_{\alpha, low}(u^{n}) \bigl( U_{<\mu_n}^{(u^{n})} \bigr)^\dagger \partial^{\alpha} \tilde{\epsilon} }
\end{equation}
where 
\[
 \cA_{\alpha, low}(u^n) = - \sum_{k_2 < \mu_n} \sum_{k_2 - 10 < k_1 < \mu_n} ( u^n_{k_1} ) (\partial_\alpha u_{k_2}^n )^\dagger - (\partial_\alpha u^n_{k_2}) (u_{k_1}^n)^\dagger + \bigl( U_{< \mu_n}^{(u^n)} \bigr)^\dagger \partial_\alpha U_{< \mu_n}^{(u^n)} + A_{\alpha; < \mu_n}^{(u^n)}.
\]
The gauge transformations $U_{<\mu_n}^{(u^n)}$ and the anti-symmetric matrix $\cA_{\alpha, low}(u^{n})$ are defined purely in terms of the low frequency wave maps $u^n$ over which we already have global control at this stage. Moreover, the anti-symmetry of $\cA_{\alpha, low}(u^n)$ and the much lower essential frequency support of $u^n$ are key for proving asymptotic energy conservation for the flow associated with~\eqref{equ:intro_magnetic_linear_wave}, see Lemma~\ref{lem:soloftherightoneasymptotics}. We note that the linear magnetic-type wave equation~\eqref{equ:intro_magnetic_linear_wave} is the direct analogue of the linear magnetic wave equation in \cite[Definition 9.18]{KS} for the extraction of the concentration profiles in the context of energy critical wave maps into the hyperbolic plane $\mathbb{H}^2$.

A pleasant feature of the linear magnetic-type wave equation \eqref{equ:intro_magnetic_linear_wave} is that it is in fact independent of the frequency level $k \in \bZ$ one works at. Rougly speaking, to obtain the profile decomposition at the level of $\phi^n$ one then evolves the data $\tilde{\epsilon}_k[0] := U_{<k}^{(\phi^n)} \epsilon^n_k[0]$ with respect to the flow of~\eqref{equ:intro_magnetic_linear_wave} for each frequency $k \in \bZ$, selecting appropriate concentration times and profiles, and passes back to the level of $\phi^n$ by multiplying with $\bigl( U_{<k}^{(\phi^n)} \bigr)^\dagger$. Summing over all frequencies $k \in \bZ$ then roughly furnishes the desired profiles, see equation~\eqref{eq:profiles} and the remarks following it for the precise definitions. Provided that all concentration profiles have energy strictly less than $E_{crit}$ we may then carefully construct the global evolution of the data $\Pi_{\leq \mu_n^1 + \log(R_n)} \phi^n[0]$ and obtain uniform in $n$, a priori bounds on the $S$ norms, where a lot of work is required to accomplish that the profile decomposition ansatz is ``sufficiently geometric''. This whole step is carried out in Theorem~\ref{thm:profiledecomp} in Subsection~\ref{subsec:adding_in_first_atom}.

\medskip 

\noindent {\it Conclusion of the induction on frequency process.} We now continue this induction on frequency process and by proceeding as in Subsection~\ref{subsec:lowest_freq_non_atomic} obtain that the data $\Pi_{\leq \mu_n^2 - \log(R_n)} \phi^n[0]$ can be globally evolved with uniform $S$ norm bounds. Then we ``add in'' the second frequency atom $P_{[\mu_n^2 - \log(R_n), \mu_n^2 + \log(R_n)]} \phi^n[0]$ in the sense that by proceeding analogously to Subsection~\ref{subsec:adding_in_first_atom} we may establish the global evolution of the corresponding geometric data $\Pi_{\leq \mu_n^2 + \log(R_n)} \phi^n[0]$ with uniform $S$ norm bounds under the assumption that the associated profiles all have energy strictly less than $E_{crit}$.

All in all, we may continue this procedure $\Lambda_0$ many times and establish the global evolution with uniform in $n$, a priori $S$ norm bounds of (a subsequence of) the essentially singular sequence of data $\{ \phi^n[0] \}_{n \geq 1}$, which would however be a contradiction, unless (a subsequence of) the sequence $\{ \phi^n[0] \}_{n \geq 1}$ is composed of exactly one frequency atom that consists of exactly one profile of asymptotic energy $E_{crit}$. Thus, we must be in the latter scenario and (a subsequence of) $\{ \phi^n[0] \}_{n \geq 1}$ has sufficent compactness properties that allow us to pass to a certain limit whose wave maps evolution will be the desired minimal blowup solution to (WM) as detailed in Subsection~\ref{subsec:conclusion}. Here we note that the minimal blowup solution will merely have energy class regularity but that a strong local well-posedness theory for (WM) is only available at sub-critical regularities. For this reason we actually have to introduce a concept of energy class radial wave maps, which we achieve in Subsection~\ref{subsec:energy_class_evolution} by regularization and reduction to the small energy case via finite speed of propagation, analogously to the procedures in \cite{KS} and \cite{KL}.

The outcome of this last stage is that there exists a non-trivial, energy class, radially symmetric, minimal blowup solution $\phi^\infty \colon I \times \bR^2 \to \bS^{m}$ to (WM) of energy $E_{crit}$ and with maximal interval of existence~$I$. Moreover, there exists a continuous function $\lambda \colon I \to (0, \infty)$ so that the family of functions
\begin{equation*}
 \Bigl\{ \bigl( \phi^\infty(t, \lambda(t)^{-1} \cdot ), \lambda(t)^{-1} \partial_t \phi^\infty(t, \lambda(t)^{-1} \cdot \bigr) \colon t \in I \Bigr\}
\end{equation*}
is pre-compact in $\dot{H}^1_x \times L^2_x$.

\medskip 

\noindent {\it Rigidity argument.} Finally, we have to rule out the existence of such a minimal blowup solution $\phi^\infty$ to (WM). To this end we closely mimic the Kenig-Merle rigidity argument~\cite{Kenig_Merle1} as implemented in~\cite{KS}. In particular, we invoke the non-existence of non-trivial, finite energy, radially symmetric harmonic maps into the unit sphere. This finishes the outline of the main ideas entering the proof of Theorem~\ref{thm:main_theorem}.

\medskip 

We expect that the restriction to the unit sphere $\bS^m$, $m \geq 1$, as the target manifold in our work is not necessary and that our method extends to arbitrary compact target manifolds since a similar gauge construction as for the unit sphere works upon establishing additional estimates on the second fundamental form as in Tataru~\cite{Tataru2} and Sterbenz-Tataru~\cite{Sterbenz_Tataru1}.

Moreover, we point out that neither the gauge construction nor the selection process for the concentration profiles in our work crucially hinge on the radial symmetry assumption. We therefore suspect that the method of this article combined with the sophisticated functional framework from Tao~\cite{Tao2} ought to allow for a similar result in the non-radial case upon restricting to energy levels below the energy of any non-trivial harmonic map from $\bR^2$ to $\bS^m$.

\medskip 

\noindent {\it Organization of the paper.} In Section~\ref{sec:preliminaries} we introduce some notation and several basic definitions. In Section~\ref{sec:function_spaces} we present our precise functional framework. In Section~\ref{sec:multilinear_estimates} we collect the most important multilinear estimates to handle the wave maps nonlinearity in the radial case. In Section~\ref{sec:decomposition_nonlinearity} we carefully analyze the structure of the frequency localized wave maps nonlinearity and introduce the renormalization procedure to deal with the non-perturbative terms. Moreover, we establish a small energy global regularity result for~(WM). In Section~\ref{sec:breakdown_criterion} we show that the $S$ norm provides sufficient control on radial wave maps with classical initial data to infer long time existence and scattering. In Section~\ref{sec:Concentration_Compactness_Step} we begin with the actual proof of Theorem~\ref{thm:main_theorem} and accomplish the most difficult step of extracting a minimal blowup solution with the strong compactness property. In Section~\ref{sec:rigidity_argument} we rule out the existence of the minimal blowup solution and thus finish the proof of Theorem~\ref{thm:main_theorem}.

\section{Preliminaries} \label{sec:preliminaries}

\noindent {\it Notation and conventions.} We write $A \lesssim B$ to denote $A \leq C B$ for some absolute constant $C > 0$ that may depend on fixed parameters and we shall use the notation $A \ll B$ to indicate that the implicit constant in the estimate is small. Moreover, we borrow from Tao~\cite{Tao2} a convenient notation to describe multilinear expressions of product type. For scalar functions $\phi^{(1)}(t,x), \ldots, \phi^{(n)}(t,x)$ we denote by $L( \phi^{(1)}, \ldots, \phi^{(n)} )(t,x)$ any multilinear expression of the form
\begin{align*}
 L(\phi^{(1)}, \ldots, \phi^{(n)})(t,x) := \int K(y_1, \ldots, y_n) \phi^{(1)}(t, x-y_1) \cdots \phi^{(n)}(t, x-y_n) \, dy_1 \ldots dy_n,
\end{align*}
where the kernel $K$ is a measure with bounded mass. We extend this notation to the case when $\phi^{(1)}, \ldots, \phi^{(n)}$ take values as $(m+1)$-dimensional vectors or as $(m+1)\times(m+1)$ matrices.

\medskip 

\noindent {\it Littlewood-Paley projections.} We denote by $\varphi$ a non-negative smooth cut-off function satisfying $\varphi(y) = 1$ for $y \leq 1$ and $\varphi(y) = 0$ for $y > 2$. Then we set $\varphi_0(y) = \varphi(y) - \varphi(2 y)$ and $\varphi_k(y) = \varphi_0(2^{-k} y)$ for $k \in \bZ$. We define the dyadic Littlewood-Paley projection operators $P_k$ for $k \in \Z$ by
\begin{equation*}
 \widehat{P_k f}(\xi) = \varphi_k( |\xi| ) \hat{f}(\xi).
\end{equation*}
We often write $f_k = P_k f$. Occasionally, we also need to use continuous Littlewood-Paley projections $P_h$ for $h \in \bR$. We recall the following Leibniz rule for the Littlewood-Paley projections $P_k$, see \cite[Lemma 2]{Tao2}.
\begin{lem}
 It holds that
 \begin{equation} \label{equ:leibniz_rule}
  P_k (f g) = f P_k g + L ( \nabla_x f, 2^{-k} g ).
 \end{equation}
\end{lem}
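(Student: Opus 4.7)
The plan is to exploit the convolution-kernel structure of $P_k$ together with a first-order Taylor expansion of $f$. First I would write $P_k$ as convolution with the Schwartz kernel $K_k(z) = 2^{2k} K_0(2^k z)$, where $\widehat{K_0}(\xi) = \varphi_0(|\xi|)$. Since $\varphi_0(0) = \varphi(0) - \varphi(0) = 0$, this kernel has zero mean, so $f(x) P_k g(x) = \int K_k(x-y)\, f(x)\, g(y)\, dy$ and hence
\[
P_k(fg)(x) - f(x)\, P_k g(x) = \int K_k(x-y)\,\bigl(f(y) - f(x)\bigr)\, g(y)\, dy.
\]

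Next I would expand $f(y) - f(x) = -(x-y)\cdot \int_0^1 \nabla f\bigl(x+s(y-x)\bigr)\, ds$ via the fundamental theorem of calculus. The key algebraic observation is that $K_k(z)\, z = 2^{-k}\, \widetilde K_k(z)$ with $\widetilde K_k(z) := 2^{2k}\, \widetilde K_0(2^k z)$ and $\widetilde K_0(z) := z\, K_0(z)$, which is again Schwartz. Substituting this into the previous identity and performing the change of variables $u = x-y$ yields
\[
P_k(fg)(x) - f(x)\, P_k g(x) = -2^{-k} \int_0^1 \! \int \widetilde K_k(u) \cdot (\nabla f)(x - s u)\, g(x-u)\, du\, ds.
\]

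Finally, I would recast this as a multilinear $L$-expression in the sense of the paper by taking the kernel
\[
K(y_1, y_2) := -\int_0^1 \widetilde K_k(y_2)\, \delta(y_1 - s\, y_2)\, ds,
\]
so that the right-hand side equals $L(\nabla_x f,\, 2^{-k} g)(x)$. The only thing left to check is that $K$ is a measure of uniformly bounded total mass; this reduces to the $k$-independent estimate $\|\widetilde K_k\|_{L^1} = \|\widetilde K_0\|_{L^1} < \infty$, which follows from $\widetilde K_0$ being Schwartz (a consequence of $\varphi_0$ being smooth and compactly supported away from the origin). The main minor obstacle I anticipate is this bookkeeping of scales — confirming that the single factor $2^{-k}$ extracted from $K_k(z)\, z$ matches the $2^{-k}$ placed on the second input of $L$, while the remaining rescaling of $\widetilde K_0$ is harmlessly absorbed into the bounded-mass kernel.
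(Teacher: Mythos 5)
Your proof is correct and is precisely the standard argument behind the cited result (Tao's Lemma 2): write the commutator $P_k(fg)-fP_kg$ via the convolution kernel, apply the fundamental theorem of calculus to $f(y)-f(x)$, and absorb the factor $zK_k(z)=2^{-k}\widetilde K_k(z)$ into a bounded-mass kernel; the paper itself gives no proof and simply cites this reference. The only cosmetic remark is that your appeal to the zero mean of $K_k$ is unnecessary — pulling the constant $f(x)$ inside the convolution integral requires nothing of the kernel.
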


\medskip 

\noindent {\it Spatial cut-offs.} We will also make use of cut-off functions for the radial variable $r \equiv |x|$. For $\ell \in \bZ$ we denote by $\chi_{\{ r \sim 2^\ell \}}$ a smooth non-negative bump function supported in $\{ r \sim 2^{\ell} \}$ such that we have a smooth finite partition of unity 
\[
 \sum_{\ell \in \bZ} \chi_{\{ r \sim 2^{\ell} \}}(r) = 1 \quad \text{for } r > 0.
\]
Moreover, we denote by $\chi_{\{ r \leq 2^{\ell} \}}$ a smooth cut-off function to $\{ r \lesssim 2^{\ell} \}$. Analogously, we define the cut-off functions $\chi_{\{ r > 2^{\ell}\}}$ and $\chi_{\{ 2^{\ell_1} \leq r \leq 2^{\ell_2}\}}$.

\medskip 

\noindent {\it Frequency envelopes.} We shall use the tool of frequency envelopes from \cite{Tao1} to track the frequency distribution of certain norms. A sequence $\{ c_k \}_{k\in\bZ} \in \ell^2(\bZ)$ of positive real numbers is a frequency envelope if
\[
 2^{-\sigma |k-k'|} c_{k'} \lesssim c_k \lesssim 2^{+ \sigma |k-k'|} c_{k'}
\]
for all $k, k' \in \bZ$, where $\sigma > 0$ is a small absolute constant. We say that an initial data set $\phi[0]$ lies underneath the envelope $\{ c_k \}_{k\in\bZ}$ if 
\[
 \| P_k \phi[0] \|_{\dot{H}^1_x \times L^2_x} \leq c_k
\]
for all $k \in \bZ$. Given an initial data set $\phi[0]$ we may obtain such an envelope by defining
\[
 c_k := \sum_{\ell \in \bZ} 2^{-\sigma |k-\ell|} \| P_{\ell} \phi[0] \|_{\dot{H}^1_x \times L^2_x}.
\]

\section{Function spaces} \label{sec:function_spaces}

In this section we introduce the functional framework used for the proof of Theorem~\ref{thm:main_theorem}. The space $S$ containing the radial wave map $\phi$ is built from dyadic subspaces $S_k$ in the sense that
\[
 \| \phi \|_S^2 := \sum_{k \in \bZ} \| \phi_k \|_{S_k}^2,
\]
where the space $S_k$ is defined as follows.
\begin{defn} \label{def:S_norm}
 Let $k \in \bZ$ and let $\phi$ be a radially symmetric function on $\bR^{1+2}$ with Fourier support in $\{ |\xi| \sim 2^k \}$. Then we define 
 \begin{equation*}
  \begin{aligned}
   \| \phi \|_{S_k} &:= \|\nabla_{t,x} \phi\|_{L^\infty_t L^2_x} + \sup_{ 2 \leq q,p \leq \infty, \frac{1}{q} + \frac{1}{p} < \frac{1}{2} } 2^{(\frac{1}{q} + \frac{2}{p} - 1) k} \| \nabla_{t,x} \phi \|_{L^q_t L^p_x} + \sup_{0 < \lambda < 1} 2^{(\frac{1}{2} - \lambda) k} \| \chi_{\{ r > 2^{-k} \}} r^{-\lambda} \phi \|_{L^2_t L^\infty_x} \\
   &\qquad + \sup_{\ell \in \bZ} \| \chi_{\{ r \sim 2^\ell \}} r^{-\frac{1}{2}} \nabla_{t,x} \phi \|_{L^2_t L^2_x} + \| (\partial_t + \partial_r) \phi \|_{Z^+_k} + \| (\partial_t - \partial_r) \phi \|_{Z^{-}_k},
  \end{aligned}
 \end{equation*}
 where $Z^{\pm}_k$ are atomic spaces defined below in Definition~\ref{def:Z_spaces}.
\end{defn}

We note that the space $S_k$ scales like free waves with $\dot{H}^1_x \times L^2_x$ initial data. The restrictions of the spaces $S$ and $S_k$ to a time interval $I$ are denoted by $S[I]$, respectively $S_k[I]$, with the induced norms. Next we provide the definition of the atomic spaces $Z_k^{\pm}$.

\begin{defn} \label{def:Z_spaces}
 Let $k \in \bZ$ and let $\psi$ be a radially symmetric function on $\bR^{1+2}$ with Fourier support in $\{ |\xi| \sim 2^k \}$. We introduce the auxiliary norm 
 \begin{equation*}
  \| \psi \|_{Y_k} := \sup_{0 < \lambda < 1} 2^{-(\frac{1}{2} + \lambda) k} \| \chi_{\{ r > 2^{-k}\}} r^{-\lambda} \psi \|_{L^2_t L^\infty_x} + \sup_{\ell \in \bZ} \| \chi_{\{ r \sim 2^\ell\}} r^{-\frac{1}{2}} \psi \|_{L^2_t L^2_x}. 
 \end{equation*}
 Then we define 
 \begin{align*}
   \| \psi \|_{Z_k^+} &:= \inf_{ \psi \, = \, \psi^{(+)} + \psi^{(-)} } \biggl\{ \Bigl( \sup_{0 < \lambda \leq \frac{1}{2}} 2^{-(\frac{1}{2} - \lambda)k} \| r^\lambda \psi^{(+)} \|_{L^2_{t+r} L^\infty_{t-r}} + \| \psi^{(+)} \|_{Y_k} \Bigr) \\
   &\qquad \qquad \qquad \quad + \Bigl( \sup_{ 2 \leq q,p \leq \infty, \frac{1}{q} + \frac{1}{p} < \frac{1}{2} } \sup_{ 0 < \lambda < 1} 2^{(\frac{1}{q} + \frac{2}{p} - 1)k} 2^{\lambda k} \sum_{\ell \in \bZ} \| \chi_{\{ r \sim 2^\ell \}} r^{+\lambda} \psi^{(-)} \|_{L^q_t L^p_x} + \| \psi^{(-)} \|_{Y_k} \Bigr) \biggr\}
   \intertext{and} 
   \| \psi \|_{Z_k^-} &:= \inf_{ \psi \, = \, \psi^{(+)} + \psi^{(-)} } \biggl\{ \Bigl( \sup_{ 2 \leq q,p \leq \infty, \frac{1}{q} + \frac{1}{p} < \frac{1}{2} } \sup_{ 0 < \lambda < 1} 2^{(\frac{1}{q} + \frac{2}{p} - 1)k} 2^{\lambda k} \sum_{\ell \in \bZ} \| \chi_{\{ r \sim 2^\ell \}} r^\lambda \psi^{(+)} \|_{L^q_t L^p_x} + \| \psi^{(+)} \|_{Y_k} \Bigr) \\
   &\qquad \qquad \qquad \quad + \Bigl( \sup_{0 < \lambda \leq \frac{1}{2}} 2^{-(\frac{1}{2} - \lambda)k} \| r^{+\lambda} \psi^{(-)} \|_{L^2_{t-r} L^\infty_{t+r}} + \| \psi^{(-)} \|_{Y_k} \Bigr) \biggr\}.
 \end{align*}
\end{defn}

We will place the nonlinearities in the simple $L^1_t L^2_x$ space. The remainder of this section is devoted to the proof of the following key energy estimate connecting the $S_k$ space and the $L^1_t L^2_x$ space.

\begin{lem}[Energy estimate] \label{lem:energy_estimate}
 Let $k \in \bZ$ and let $I$ be any time interval containing $0$. For any radially symmetric function $\phi$ on $I \times \bR^2$ with Fourier support in $\{ |\xi| \sim 2^k \}$, we have 
 \begin{equation} \label{equ:energy_estimate}
  \| \phi_k \|_{S_k[I]} \lesssim \| \nabla_{t,x} \phi_k(0) \|_{L^2_x} + \| \Box \phi_k \|_{L^1_t L^2_x[I]}.
 \end{equation}
\end{lem}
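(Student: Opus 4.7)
\smallskip

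\noindent\textbf{Proof plan.} The plan is to reduce to the homogeneous case via Duhamel and then bound each constituent of $\|\cdot\|_{S_k}$ for a free radial wave at frequency $2^k$. By Duhamel's formula,
\[
\phi_k(t) = \cos(t|\nabla|)\phi_k(0) + \tfrac{\sin(t|\nabla|)}{|\nabla|}\partial_t\phi_k(0) + \int_0^t \tfrac{\sin((t-s)|\nabla|)}{|\nabla|}\bigl(\Box \phi_k\bigr)(s)\,ds.
\]
Every component of $S_k$ is of the form $L^q_t(X_x)$ or $L^q_{t\pm r}(X_{t\mp r})$ with $q>1$, while the forcing is placed in $L^1_t L^2_x$; hence by Minkowski's inequality combined with the Christ--Kiselev lemma, matters reduce to proving the \emph{free-wave} bound $\|\psi\|_{S_k}\lesssim\|\nabla_{t,x}\psi(0)\|_{L^2_x}$ whenever $\Box\psi=0$ and $\widehat{\psi}$ is supported in $\{|\xi|\sim 2^k\}$. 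By scaling we may assume $k=0$.

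For the first two groups of norms in $\|\cdot\|_{S_k}$ I would argue as follows. The energy norm $\|\nabla_{t,x}\psi\|_{L^\infty_t L^2_x}$ is the usual conservation law for free waves. The Strichartz pairs appearing in $\|\cdot\|_{S_k}$ and the weighted $L^2_t L^\infty_x$ bound $\sup_{0<\lambda<1}\|\chi_{\{r>1\}}r^{-\lambda}\psi\|_{L^2_t L^\infty_x}$ lie strictly inside the enlarged radial Strichartz range established by Sterbenz and by Fang--Wang for $(1+2)$-dimensional waves, so these follow directly after a standard Bernstein step to realize the weighted $L^\infty_x$ norm. The local energy decay piece $\sup_{\ell\in\bZ}\|\chi_{\{r\sim 2^\ell\}}r^{-1/2}\nabla_{t,x}\psi\|_{L^2_t L^2_x}$ is the classical KSS/Morawetz estimate for the free wave equation on $\bR^{1+2}$, valid uniformly for all free waves (without needing frequency localization) via a standard multiplier argument with weight $\sim \log r$ or the $(\partial_r + \tfrac{1}{2r})$ conjugation.

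The main work sits in the $Z_k^\pm$ bounds on $(\partial_t\pm\partial_r)\psi$. Here I would exploit the incoming/outgoing structure of 2D radial free waves. Splitting $\psi=\psi_++\psi_-$ into its positive and negative time-frequency halves (which in the radial case, by the asymptotics of Hankel transforms / stationary phase in polar Fourier coordinates, correspond to incoming and outgoing traveling-wave profiles), one has an expansion of the schematic form
\[
\psi_\pm(t,r)\;\sim\; r^{-1/2}\,F_\pm(t\mp r) \;+\; \text{lower-order tail},
\]
where the leading traveling profile carries $L^2$ in its natural null coordinate and $L^\infty$ in the transverse null coordinate, and where the tail terms decay by an extra factor $r^{-1}$ (hence are effectively controlled by Strichartz-type norms with an $r^{+\lambda}$ weight). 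Applying $(\partial_t+\partial_r)$ annihilates the outgoing leading term, leaving an incoming contribution together with an $O(r^{-3/2})$ remainder; this furnishes the atomic decomposition $(\partial_t+\partial_r)\psi=\psi^{(+)}+\psi^{(-)}$ demanded in Definition~\ref{def:Z_spaces}, with $\psi^{(+)}$ (the incoming piece) giving an $L^2_{t+r}L^\infty_{t-r}$ traveling-wave bound and $\psi^{(-)}$ (the outgoing remainder) bounded via the weighted radial Strichartz norms on $r^{+\lambda}\psi^{(-)}$; the $Y_k$ companion norms are controlled by the local energy decay bound of the previous step. The case of $Z_k^-$ is symmetric.

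The principal obstacle is precisely this $Z_k^\pm$ step: one must simultaneously extract the leading null traveling-wave asymptotic with sharp $L^2 L^\infty$ null-frame control \emph{and} verify that the sub-leading remainder is absorbed by radial Strichartz with the prescribed $r^{+\lambda}$ spatial weight summed over dyadic annuli, all uniformly in the frequency localization and at the correct scaling. Quantifying the asymptotic expansion carefully (so that no logarithmic losses appear in passing from the integral representation of the solution to the atomic splitting) is the delicate point; everything else in the estimate is a consequence of either standard energy/Strichartz bounds or of the KSS local energy decay identity.
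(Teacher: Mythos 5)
Your proposal follows essentially the same route as the paper: Duhamel reduction to the free radial wave (where $L^1_tL^2_x$ forcing makes Minkowski alone sufficient), the enlarged radial Strichartz range of Sterbenz/Fang--Wang for the Lebesgue components, local energy decay for the $r^{-1/2}$ piece, and for $Z_k^\pm$ the incoming/outgoing asymptotics of the radial propagator, where $(\partial_t+\partial_r)$ cancels the leading outgoing $r^{-1/2}$ profile and the $O(r^{-3/2})$ remainder is absorbed into the weighted Strichartz component --- exactly the paper's decomposition via the asymptotics $iJ_0(y)+J_0'(y)=y^{-1/2}e^{iy}\beta_+(y)+y^{-3/2}e^{-iy}\beta_-(y)$. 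The one imprecision worth noting is that $(q,p)=(2,\infty)$ sits exactly on the boundary $\frac{1}{q}+\frac{1}{p}=\frac{1}{2}$ of the radial range in $2$D, so the weighted $L^2_tL^\infty_x$ component is not ``strictly inside'' it; the paper instead proves the genuinely weighted estimates $2^{(\frac12+\frac2p-\lambda)k}\|r^{-\lambda}\phi\|_{L^2_tL^p_x}$ for $\frac1p<\lambda<\frac2p$ from scratch via the Bessel/Fourier-series argument and only then applies Bernstein on dyadic annuli, which is the quantitative input your sketch takes for granted.
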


The proof of Lemma~\ref{lem:energy_estimate} is an immediate consequence of the next lemmas. Here we first note that in the radial context a significantly larger range of admissible Strichartz norms is at our disposal, see Sterbenz~\cite{Sterbenz} and Fang-Wang~\cite{Fang_Wang}.
\begin{lem}[Strichartz estimates]
 Let $2 \leq q, p \leq \infty$ with $(q,p) \neq (\infty, \infty)$ satisfy $\frac{1}{q} + \frac{1}{p} < \frac{1}{2}$. Let $k \in \bZ$ and let $I$ be any time interval containing $0$. For any radially symmetric function $\phi$ on $I \times \bR^2$ with Fourier support in $|\xi| \sim 2^k$, it holds that 
 \begin{equation*}
  2^{(\frac{1}{q} + \frac{2}{p} - 1) k} \| \nabla_{t,x} \phi_k \|_{L^q_t L^p_x[I]} \lesssim \| \nabla_{t,x} \phi(0) \|_{L^2_x} + \| \Box \phi \|_{L^1_t L^2_x[I]}.
 \end{equation*}
\end{lem}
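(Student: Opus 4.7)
The plan is to split $\phi$ via Duhamel into homogeneous and inhomogeneous components, apply the radial wave Strichartz estimates of Sterbenz~\cite{Sterbenz} and Fang-Wang~\cite{Fang_Wang} to the homogeneous part, and then reduce the inhomogeneous piece to the homogeneous one by Minkowski's inequality.

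First I would write $\phi = \phi_h + \phi_i$ on $I$, where $\phi_h$ is the free wave with Cauchy data $\phi_h[0] = \phi[0]$ and
\[
 \phi_i(t) := - \int_0^t \tfrac{\sin((t-s)|\nabla|)}{|\nabla|} \Box \phi(s) \, ds.
\]
The half-wave propagators $\cos(t |\nabla|)$ and $\tfrac{\sin(t |\nabla|)}{|\nabla|}$ preserve both radial symmetry and Fourier support in $\{|\xi|\sim 2^k\}$, so $\phi_h$ and $\phi_i$ inherit these properties from $\phi$. In particular, the homogeneous estimates can be applied at frequency $2^k$ to each piece separately.

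For $\phi_h$, I would invoke the radial wave Strichartz estimates in~\cite{Sterbenz, Fang_Wang}, which extend the classical Keel--Tao admissible range in $2{+}1$ dimensions to all $(q,p)$ with $2 \leq q, p \leq \infty$, $(q,p) \neq (\infty,\infty)$ and $\tfrac{1}{q} + \tfrac{1}{p} < \tfrac{1}{2}$. Applied to frequency-localized radial data, they yield
\[
 \|\nabla_{t,x} \phi_h\|_{L^q_t L^p_x(\bR\times\bR^2)} \lesssim 2^{(1 - \frac{1}{q} - \frac{2}{p}) k} \|\nabla_{t,x} \phi(0)\|_{L^2_x},
\]
where the frequency factor encodes the Bernstein/Sobolev cost between the wave-scaling pair $\tfrac{1}{q} + \tfrac{2}{p} = 1$ and the off-scaling pair $(q,p)$. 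Restricting to $I \subset \bR$ only tightens this bound.

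For $\phi_i$, Minkowski's inequality in the $s$-integral gives
\[
 \|\nabla_{t,x} \phi_i\|_{L^q_t L^p_x(I \times \bR^2)} \leq \int_I \Bigl\| \nabla_{t,x} \tfrac{\sin((t-s)|\nabla|)}{|\nabla|} \Box \phi(s) \Bigr\|_{L^q_t L^p_x(\bR \times \bR^2)} \, ds.
\]
For each fixed $s \in I$, the integrand is $\nabla_{t,x}$ of a free wave with Cauchy data $(0, \Box \phi(s))$ at time $s$, still frequency-localized at $2^k$; by time-translation invariance and the homogeneous bound from the previous step, this is controlled by $2^{(1 - \frac{1}{q} - \frac{2}{p}) k} \|\Box \phi(s)\|_{L^2_x}$. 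Integrating in $s$ produces $\|\nabla_{t,x}\phi_i\|_{L^q_t L^p_x(I\times\bR^2)} \lesssim 2^{(1-\frac{1}{q}-\frac{2}{p})k} \|\Box\phi\|_{L^1_t L^2_x(I)}$. Summing the two contributions and multiplying through by $2^{(\frac{1}{q} + \frac{2}{p} - 1)k}$ yields the claim. The one genuine obstacle is the extended-range homogeneous radial Strichartz bound, which is precisely the substantive content of~\cite{Sterbenz, Fang_Wang}; the Duhamel split and Minkowski step are routine once the frequency localization is tracked.
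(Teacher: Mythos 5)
Your proof is correct and follows essentially the route the paper intends: the paper states this lemma without proof, simply citing the homogeneous radial Strichartz estimates of Sterbenz and Fang--Wang, and your Duhamel decomposition plus Minkowski reduction of the inhomogeneous term to the homogeneous bound is the standard (and correct) way to fill in the remaining routine step. The frequency factor $2^{(1-\frac{1}{q}-\frac{2}{p})k}$ is indeed just the scaling-dictated $\dot H^s$ cost with $s = 1 - \frac{1}{q} - \frac{2}{p}$ evaluated on frequency-localized data, and the half-wave propagators do preserve radiality and the frequency support, so the argument closes.
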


Next we prove Strichartz estimates involving a radial weight. 

\begin{lem}[Weighted Strichartz estimates] \label{lem:weighted_strichartz_estimate}
 Let $2 \leq p < \infty$ and $\frac{1}{p} < \lambda < \frac{2}{p}$. Let $k \in \bZ$ and let $I$ be any time interval containing $0$. For any radially symmetric function $\phi$ on $I \times \bR^2$ with Fourier support in $\{ |\xi| \sim 2^k \}$, we have 
 \begin{equation*}
  2^{(\frac{1}{2} + \frac{2}{p} - \lambda) k} \| r^{-\lambda} \phi \|_{L^2_t L^p_x[I]} \lesssim \| \nabla_{t,x} \phi(0) \|_{L^2_x} + \| \Box \phi \|_{L^1_t L^2_x[I]}.
 \end{equation*}
\end{lem}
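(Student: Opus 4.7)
\emph{Scaling reduction.} Under $\phi(t,x) \mapsto \psi(t,x) := \phi(2^{-k}t, 2^{-k}x)$, the frequency support passes from $\{|\xi|\sim 2^k\}$ to $\{|\xi|\sim 1\}$, and a direct change of variables shows that the two sides of the desired inequality rescale by exactly the same power of $2^k$. Hence it is enough to establish the case $k=0$, namely
\[
 \|r^{-\lambda}\psi\|_{L^2_t L^p_x[I]} \lesssim \|\nabla_{t,x}\psi(0)\|_{L^2_x} + \|\Box\psi\|_{L^1_t L^2_x[I]}
\]
for radial $\psi$ with Fourier support in $\{|\xi|\sim 1\}$.

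\emph{Reduction to the homogeneous case.} Write $\psi = \psi^{\mathrm{hom}} + \psi^{\mathrm{inh}}$ via Duhamel, with $\psi^{\mathrm{inh}}(t) = \int_0^t \frac{\sin((t-s)|\nabla|)}{|\nabla|}F(s)\,ds$ and $F := \Box\psi$ still frequency-localized at scale $1$. Minkowski's integral inequality (taking the $L^p_x$ then the $L^2_t$ norm inside the $s$-integral) gives
\[
 \|r^{-\lambda}\psi^{\mathrm{inh}}\|_{L^2_t L^p_x} \lesssim \int \Bigl\| r^{-\lambda}\, \tfrac{\sin((\cdot-s)|\nabla|)}{|\nabla|} F(s) \Bigr\|_{L^2_t L^p_x}\,ds.
\]
Each integrand is the unrestricted free evolution of the frequency-$1$ datum $(0, F(s))$, so the homogeneous weighted estimate controls it by $\|F(s)\|_{L^2_x}$, and integrating in $s$ delivers the contribution $\|F\|_{L^1_t L^2_x}$. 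Thus it suffices to prove the homogeneous bound
\[
 \|r^{-\lambda}\psi\|_{L^2_t L^p_x} \lesssim \|(\psi_0,\psi_1)\|_{\dot H^1_x \times L^2_x}
\]
for radial, frequency-$1$ free waves $\psi$.

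\emph{Proof of the homogeneous weighted estimate.} I split space into the regions $\{r\lesssim 1\}$ and $\{r\gtrsim 1\}$. On the inner region, local integrability of $r^{-\lambda p}$ against $r\,dr$ requires $\lambda < 2/p$, and H\"older's inequality reduces the bound to an unweighted radial $L^2_t L^q_x$ Strichartz estimate with some $q > p$, which falls in the extended admissibility range for radial functions established by Sterbenz~\cite{Sterbenz} and Fang-Wang~\cite{Fang_Wang}. On the outer region, the radial Sobolev improvement $r^{1/2}|f(r)|\lesssim \|f\|_{L^2}$ for radial $f$ frequency-localized at scale~$1$ yields the pointwise decay $|\psi(t,r)|\lesssim r^{-1/2}\|(\psi_0,\psi_1)\|_{\dot H^1 \times L^2}$ for $r \gtrsim 1$; the factor $r^{-\lambda - 1/2}$ is $L^p(r\,dr)$-integrable on $\{r>1\}$ exactly when $\lambda > 1/p$, and the $L^2_t$ integrability is supplied by a radial Morawetz-type endpoint estimate in the spirit of Sterbenz.

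\emph{Main obstacle.} The critical point is to obtain the open range $1/p < \lambda < 2/p$ as a single family. Both endpoints are borderline failures --- spatial non-integrability at the origin at $\lambda = 2/p$, and inadequate decay at infinity at $\lambda = 1/p$ --- which is exactly what the strict inequalities in the hypothesis encode. The estimate on the open interval is then recovered either by real interpolation between two nearby admissible exponents or, equivalently, by optimizing the inner/outer annulus split above.
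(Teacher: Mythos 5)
Your scaling reduction, the Duhamel/Minkowski reduction to the homogeneous case, and the inner-region argument ($r\lesssim 1$, H\"older plus the extended radial Strichartz range, using $\lambda<\tfrac{2}{p}$) all match the paper. The gap is in the outer region $r\gg 1$. The combination you propose --- the uniform-in-time Strauss decay $|\psi(t,r)|\lesssim r^{-1/2}E^{1/2}$ together with an unspecified ``Morawetz-type'' input for the $t$-integration --- does not produce an $L^2_t L^p_x$ bound. The pointwise bound is an $L^\infty_t$ statement with a constant independent of $t$, so it cannot be square-integrated over an unbounded time interval, and a local energy decay estimate controls $\chi_{\{r\sim 2^\ell\}} r^{-1/2}\nabla_{t,x}\psi$ in $L^2_tL^2_x$, not $r^{-\lambda}\psi$ in $L^2_tL^p_x$; interpolating toward $L^p_x$ would require a weighted $L^2_tL^\infty_x$ endpoint, which in the paper is \emph{derived from} the present lemma, so that route is circular. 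Moreover your numerology is off: $r^{-\lambda-1/2}\in L^p(r\,dr,\{r>1\})$ holds iff $\lambda>\tfrac{2}{p}-\tfrac12$, which coincides with $\tfrac{1}{p}$ only at $p=2$; so the condition $\lambda>\tfrac1p$ is not explained by spatial integrability of the decayed profile.

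The mechanism the paper actually uses is concentration of the radial frequency-$1$ free wave near the light cone. Writing the propagator via the Bessel function $J_0$ and using the asymptotics $J_0(y)=y^{-1/2}e^{+iy}\beta_+(y)+y^{-1/2}e^{-iy}\beta_-(y)$, one expands $\hat f_0(\rho)\rho^{1/2}$ in a Fourier series on its compact support and integrates by parts to show each piece satisfies $|\psi_n^{\pm}(t,r)|\lesssim (1+|t\pm r+\tfrac n4|)^{-M}$. At fixed $t$ the $L^p_x$ mass therefore sits on an $O(1)$-neighborhood of $r\approx|t+\tfrac n4|$, where the weight $r^{-1/2-\lambda}$ combined with the measure $r\,dr$ yields an $L^p_x$ norm decaying like $(1+|t+\tfrac n4|)^{-(\frac12+\lambda)+\frac1p}$; this is square-integrable in $t$ precisely when $\lambda>\tfrac1p$, and $\ell^2$-summation over $n$ recovers $\|f_0\|_{L^2_x}$. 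That light-cone localization and the resulting \emph{time} decay of the weighted spatial norm is the missing idea in your outer-region argument; without it (or some equivalent), the claimed conclusion does not follow from the ingredients you list.
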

\begin{proof}
 We adapt the proof of Strichartz estimates under the spherical symmetry assumption by Sterbenz~\cite{Sterbenz} to incorporate the radial weight $r^{-\lambda}$. In view of Duhamel's formula we may assume without loss of generality that $\phi$ is of the form $\phi = e^{it|\nabla|} f_k$ for a radially symmetric function $f_k$ with Fourier support on $|\xi| \sim 2^k$. Moreover, by scaling invariance it suffices to consider the case $k=0$.

 First, we recall that for radially symmetric initial data, the free wave propagator in two space dimensions takes on the specific form
 \[
  (e^{it|\nabla|} f_0)(x) = \int_{\bR^2} e^{2 \pi i (t |\xi| + x \cdot \xi)} \hat{f}_0(\xi) \, d\xi = 2 \pi \int_0^\infty e^{2 \pi i t \rho} J_0(2 \pi r \rho) \varphi_0(\rho) \hat{f}_0(\rho) \rho \, d\rho,
 \]
 where $\varphi_0(\rho)$ is a smooth bump function supported on $\{ \rho \sim 1 \}$ and where
 \[
  J_0(y) = \frac{1}{2\pi} \int_0^{2 \pi} e^{i y \sin(\theta)} \, d\theta, \quad y \in \bR,
 \]
 is the Bessel function of order $0$. Moreover, we recall the following standard asymptotics for the Bessel function of order $0$, see e.g. \cite{Watson},
 \begin{equation} \label{equ:bessel_asymptotics}
  J_0(y) = y^{-\frac{1}{2}} e^{+iy} \beta_{+}(y) + y^{-\frac{1}{2}} e^{-iy} \beta_{-}(y), \quad y \geq 1,
 \end{equation}
 where the functions $\beta_{\pm}$ satisfy the symbol-type bounds 
 \begin{equation} \label{equ:bessel_symbol_bounds}
  | \beta_{\pm}^{(n)}(y) | \lesssim_n y^{-n}, \quad n \geq 0, \, y \geq 1.
 \end{equation}

 We now distinguish the two regimes $r \lesssim 1$ and $r \gg 1$. In the former case, we just use H\"older's inequality, the assumption $\lambda < \frac{2}{p}$ and a standard $TT^\ast$ estimate to obtain that
 \begin{align*}
  \bigl\| \chi_{\{ r \lesssim 1\}} r^{-\lambda} e^{it|\nabla|} f_0 \bigr\|_{L^2_t L^p_x} &\lesssim \bigl\| \chi_{\{ r \lesssim 1\}} r^{-\lambda} \bigr\|_{L^p_x} \bigl\| \chi_{\{ r \lesssim 1\}} \int_0^\infty e^{2 \pi i t \rho} J_0(2 \pi r \rho) \varphi_0(\rho) \hat{f}_0(\rho) \rho \, d\rho \Bigr\|_{L^2_t} \\
  &\lesssim \bigl\| \hat{f}_0(\rho) \rho \bigr\|_{L^2_\rho} \\
  &\lesssim \| f_0 \|_{L^2_x}.
 \end{align*}
 In the latter case $r \gg 1$, the asymptotics~\eqref{equ:bessel_asymptotics} yield that
 \[
  \chi_{\{ r \gg 1\}} r^{-\lambda} (e^{it|\nabla|} f_0)(r) \sim \sum_{\pm} \chi_{\{ r \gg 1\}} r^{-\frac{1}{2} - \lambda} \int_0^\infty e^{2\pi i(t \pm r) \rho} \beta_{\pm}(2 \pi r \rho) \varphi_0(\rho) \hat{f}_0(\rho) \rho^{\frac{1}{2}} \, d\rho.
 \]
 Noting that the function $\rho \mapsto \hat{f}_0(\rho) \rho^{\frac{1}{2}}$ is compactly supported in the interval $(0, 4)$, we may consider its Fourier series 
 \[
  \hat{f}_0(\rho) \rho^{\frac{1}{2}} = \sum_{n \in \bZ} c_n e^{i \frac{\pi}{2} n \rho}, \quad \rho \in (0,4),
 \]
 whose Fourier coefficients satisfy
 \[
  \sum_{n \in \bZ} |c_n|^2 \sim \bigl\| \hat{f}_0(\rho) \rho^{\frac{1}{2}} \bigr\|^2_{L^2_\rho} \sim \| f_0 \|^2_{L^2_x}.
 \]
 We can therefore write 
 \[
  \chi_{\{ r \gg 1\}} r^{-\lambda} (e^{it|\nabla|} f_0)(r) \sim \sum_\pm r^{-\frac{1}{2} - \lambda} \sum_{n \in \bZ} c_n \psi^{\pm}_n(t,r) 
 \]
 with 
 \[
  \psi_n^\pm(t,r) = \chi_{\{ r \gg 1\}} \int_0^\infty e^{2 \pi i (t \pm r + \frac{n}{4}) \rho} \beta_{\pm}(2\pi r \rho) \varphi_0(\rho) \, d\rho.
 \]
 From the symbol-type bounds \eqref{equ:bessel_symbol_bounds} we obtain by repeated integration by parts that 
 \[
  |\psi_n(t,r)| \lesssim_M \Bigl( 1 + \bigl| t \pm r + \frac{n}{4} \bigr| \Bigr)^{-M}.
 \]
 Thus, choosing $M \gg 1$ sufficiently large, we infer from an application of H\"older's inequality (in $n \in \bZ$) and the embedding $\ell^2 \hookrightarrow \ell^p$ for $p \geq 2$ that 
 \begin{align*}
  \bigl\| \chi_{\{ r \gg 1\}} r^{-\lambda} e^{it|\nabla|} f_0 \bigr\|_{L^p_x} &\lesssim \sum_\pm \, \biggl( \int_1^\infty \Bigl( \sum_{n\in\bZ} |c_n| |\psi_n^\pm(t,r)| \Bigr)^p r^{1 - p(\frac{1}{2}+\lambda)} \, dr \biggr)^{\frac{1}{p}} \\
  &\lesssim \sum_\pm \, \biggl( \int_1^\infty \Bigl( \sum_{n\in\bZ} \frac{|c_n|}{(1 + |t \pm r + \frac{n}{4}|)^M} \Bigr)^p r^{1 - p(\frac{1}{2}+\lambda)} \, dr \biggr)^{\frac{1}{p}} \\
  &\lesssim \biggl( \int_1^\infty \Bigl( \sum_{n\in\bZ} \frac{|c_n|^p}{(1 + | |t+\frac{n}{4}| - r|)^{(M-2)p}} \Bigr) r^{1 - p(\frac{1}{2}+\lambda)} \, dr \biggr)^{\frac{1}{p}} \\
  &\lesssim \biggl( \sum_{n \in \bZ} \frac{|c_n|^p}{ (1+|t+\frac{n}{4}|)^{p(\frac{1}{2} + \lambda) - 1} } \biggr)^{\frac{1}{p}} \\
  &\lesssim \biggl( \sum_{n \in \bZ} \frac{|c_n|^2}{ (1+|t+\frac{n}{4}|)^{2(\frac{1}{2} + \lambda - \frac{1}{p})} } \biggr)^{\frac{1}{2}}.
 \end{align*}
 Since by assumption $\lambda > \frac{1}{p}$, we obtain the desired estimate
 \[
  \bigl\| \chi_{\{ r \gg 1\}} r^{-\lambda} e^{it|\nabla|} f_0 \bigr\|_{L^2_t L^p_x} \lesssim \Bigl( \sum_{n \in \bZ} |c_n|^2 \Bigr)^{\frac{1}{2}} \sim \|f_0\|_{L^2_x}.
 \]
\end{proof}

Now we are in the position to deduce a weighted $L^2_t L^\infty_x$ Strichartz estimate.

\begin{lem}[Weighted endpoint Strichartz estimate]
 Let $0 < \lambda < 1$. Let $k \in \bZ$ and let $I$ be any time interval containing $0$. For any radially symmetric function $\phi$ on $I \times \bR^2$ with Fourier support in $\{ |\xi| \sim 2^k \}$, we have 
 \begin{equation*}
  2^{(\frac{1}{2} - \lambda) k} \| \chi_{\{ r \geq 2^{-k}\}} r^{-\lambda} \phi \|_{L^2_t L^\infty_x[I]} \lesssim \| \nabla_{t,x} \phi(0) \|_{L^2_x} + \| \Box \phi \|_{L^1_t L^2_x[I]}.
 \end{equation*}
\end{lem}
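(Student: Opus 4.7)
The plan is to extend the strategy of the proof of Lemma~\ref{lem:weighted_strichartz_estimate}, upgrading the conclusion from a weighted $L^2_t L^p_x$-estimate at finite $p$ to the endpoint $L^2_t L^\infty_x$ via a one-dimensional Sobolev embedding in the radial variable. By Duhamel's formula we may assume $\phi = e^{it|\nabla|} f_0$ is a free wave, and by scaling we may take $k = 0$, so that $\hat{f}_0$ is supported in $\{\rho \sim 1\}$. Using the Bessel representation of radial waves in $\bR^2$ together with the asymptotics~\eqref{equ:bessel_asymptotics}, which are valid on $\{r \geq 1\}$ since $r\rho \gtrsim 1$ there, we obtain exactly as in the previous proof the decomposition
\[
 \chi_{\{r \geq 1\}} r^{-\lambda} \phi(t,r) \sim \sum_{\pm} r^{-\lambda - \frac{1}{2}} F_{\pm}(t,r), \quad F_{\pm}(t, r) := \int_0^\infty e^{2\pi i (t \pm r)\rho} \beta_{\pm}(2\pi r \rho) \varphi_0(\rho) \hat{f}_0(\rho) \rho^{\frac{1}{2}} \, d\rho.
\]

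The key observation is that, for each fixed $r \geq 1$, Plancherel in the variable $t \pm r$ together with the uniform symbol bound $|\beta_{\pm}(2\pi r \rho)| \lesssim 1$ from~\eqref{equ:bessel_symbol_bounds} yields
\[
 \|F_{\pm}(\cdot, r)\|_{L^2_t} \lesssim \|f_0\|_{L^2_x}, \qquad \|\partial_r F_{\pm}(\cdot, r)\|_{L^2_t} \lesssim \|f_0\|_{L^2_x},
\]
uniformly in $r \geq 1$; for $\partial_r F_{\pm}$ one uses that differentiating the integrand in $r$ either brings down a factor $O(\rho) = O(1)$ from the phase or a factor $O(r^{-1}) \leq 1$ from $\beta_{\pm}(2\pi r \rho)$, again by~\eqref{equ:bessel_symbol_bounds}. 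Setting $g(t, r) := r^{-\lambda - \frac{1}{2}} F_{\pm}(t, r)$, which vanishes as $r \to \infty$ for each fixed $t$ (by Riemann--Lebesgue and the decay of the prefactor), the elementary one-dimensional Sobolev inequality $\sup_{r \geq 1} |g(r)|^2 \leq 2 \|g\|_{L^2([1,\infty); dr)} \|\partial_r g\|_{L^2([1,\infty); dr)}$ combined with Cauchy--Schwarz in $t$ gives
\[
 \Bigl\|\sup_{r \geq 1} |g(\cdot, r)|\Bigr\|_{L^2_t}^2 \lesssim \| g \|_{L^2_t L^2(dr; r \geq 1)} \, \| \partial_r g \|_{L^2_t L^2(dr; r \geq 1)},
\]
and Fubini together with the integrability of the weights $r^{-2\lambda - 1}$ and $r^{-2\lambda - 3}$ over $\{r \geq 1\}$ (valid since $\lambda > 0$) then closes the estimate in view of the Bernstein-type equivalence $\|f_0\|_{L^2_x} \sim \|\nabla_{t,x}\phi(0)\|_{L^2_x}$ at frequency $\sim 1$. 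The main delicacy of the argument lies precisely in this interplay of Plancherel in time with the radial Sobolev embedding: the two decouple cleanly only because the Bessel asymptotics~\eqref{equ:bessel_asymptotics}--\eqref{equ:bessel_symbol_bounds} simultaneously provide a uniform-in-$r$ $L^2_t$-bound for $F_{\pm}$ and a symbol-type $r$-dependence that remains harmless under differentiation on $\{r \geq 1\}$.
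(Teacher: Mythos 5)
Your proof is correct, but it takes a genuinely different route from the paper's. The paper deduces the endpoint estimate from the already-established weighted $L^2_t L^p_x$ Strichartz estimates of Lemma~\ref{lem:weighted_strichartz_estimate}: it decomposes $\{r \geq 2^{-k}\}$ into dyadic annuli, uses that $P_k$ essentially preserves spatial localization at scales $\geq 2^{-k}$, applies Bernstein's inequality to pass from $L^\infty_x$ to $L^p_x$ (splitting the weight as $r^{-\lambda} \lesssim 2^{-\frac{\lambda}{2}\ell} r^{-\frac{\lambda}{2}}$), invokes the finite-$p$ lemma with exponent $\tfrac{\lambda}{2}$, and sums the resulting geometric series in $\ell \geq -k$. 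You instead bypass the intermediate lemma entirely and argue directly on the oscillatory-integral representation: Plancherel in $t$ at each fixed $r$ (valid uniformly in $r$ because $\beta_\pm(2\pi r\rho)$ and its $r$-derivative are uniformly bounded symbols on $\{r\rho \gtrsim 1\}$), followed by the one-dimensional Agmon inequality $\sup_r |g|^2 \lesssim \|g\|_{L^2_r}\|\partial_r g\|_{L^2_r}$ and Cauchy--Schwarz in $t$, with the weight $r^{-\lambda-\frac12}$ supplying the integrability over $\{r \geq 1\}$. Your argument is self-contained, avoids Bernstein and the dyadic summation, and in fact only uses $\lambda > 0$, so it yields the estimate for all $\lambda > 0$ rather than just $0 < \lambda < 1$; the paper's route buys reusability of the finite-$p$ lemma and avoids having to track $r$-derivatives of the Bessel amplitudes, but is otherwise no shorter. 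All the steps you sketch (uniform $L^2_t$ bounds for $F_\pm$ and $\partial_r F_\pm$ via Plancherel, the vanishing of $g$ at $r = \infty$, the Fubini/Cauchy--Schwarz bookkeeping) check out.
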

\begin{proof}
 We begin by writing 
 \[
  \| \chi_{\{ r \geq 2^{-k}\}} r^{-\lambda} \phi \|_{L^2_t L^\infty_x} \lesssim \sum_{\ell \geq -k} \| \chi_{\{ 2^\ell \leq r \leq 2^{\ell+1}\}} r^{-\lambda} P_k \phi \|_{L^2_t L^\infty_x}.
 \]
 Since the Littlewood-Paley projection operator $P_k$ lives at spatial scale $2^{-k}$, for $\ell \geq -k$ it approximately preserves the spatial localization enforced by the cutoff $\chi_{\{ 2^{\ell} \leq r \leq 2^{\ell+1}\}}$  up to exponentially decaying tails that can be dealt with easily. We may therefore replace the right-hand side of the previous line by
 \[
  \sum_{\ell \geq -k} \| \chi_{\{ 2^\ell \leq r \leq 2^{\ell+1}\}} r^{-\lambda} \tilde{P}_k \bigl( \chi_{\{ 2^{\ell-2} \leq r \leq 2^{\ell+3}\}} P_k \phi \bigr) \|_{L^2_t L^\infty_x},
 \]
 where $\tilde{P}_k$ is a fattened Littlewood-Paley projection operator such that $\tilde{P}_k P_k \phi = P_k \phi$. Applying Bernstein's inequality for some $2 \leq p < \infty$ with $\frac{1}{p} < \frac{\lambda}{2} < \frac{2}{p}$, we arrive at the bound
 \[
  \sum_{\ell \geq -k} 2^{-\lambda \ell} 2^{\frac{2}{p} k} \| \chi_{\{ 2^{\ell-2} \leq r \leq 2^{\ell+3}\}} P_k \phi \|_{L^2_t L^p_x} \lesssim \sum_{\ell \geq -k} 2^{-\frac{\lambda}{2} \ell} 2^{\frac{2}{p} k} \| r^{-\frac{\lambda}{2}} P_k \phi \|_{L^2_t L^p_x}.
 \]
 Next we invoke the weighted Strichartz estimates from Lemma~\ref{lem:weighted_strichartz_estimate} to obtain the bound 
 \[
  \sum_{\ell \geq -k} 2^{-\frac{\lambda}{2} \ell} 2^{\frac{2}{p} k} 2^{-(\frac{1}{2} + \frac{2}{p} - \frac{\lambda}{2})k} \bigl( \|\nabla_{t,x} \phi(0)\|_{L^2_x} + \|\Box \phi\|_{L^1_t L^2_x} \bigr) \lesssim 2^{-(\frac{1}{2} - \lambda) k} \bigl( \|\nabla_{t,x} \phi(0)\|_{L^2_x} + \|\Box \phi\|_{L^1_t L^2_x} \bigr),
 \]
 which finishes the proof.
\end{proof}

Moreover, we have the following local energy decay estimate.

\begin{lem}[Local energy decay estimate]
 Let $k \in \bZ$ and let $I$ be any time interval containing $0$. For any radially symmetric function $\phi$ on $I \times \bR^2$ with Fourier support in $\{ |\xi| \sim 2^k \}$, it holds that 
 \begin{equation*}
  \sup_{\ell \in \bZ} \| \chi_{\{ r \sim 2^\ell\}} r^{-\frac{1}{2}} \nabla_{t,x} \phi \|_{L^2_t L^2_x[I]} \lesssim \|\nabla_{t,x} \phi(0)\|_{L^2_x} + \| \Box \phi \|_{L^1_t L^2_x[I]}.
 \end{equation*}
\end{lem}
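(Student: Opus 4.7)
The plan is to adapt the Hankel-Fourier approach from the proof of Lemma~\ref{lem:weighted_strichartz_estimate}. By scaling invariance of the stated estimate I first reduce to $k = 0$, and by Duhamel's principle together with Minkowski's inequality in time it suffices to treat free waves $\phi = \cos(t|\nabla|) f + \sin(t|\nabla|) |\nabla|^{-1} g$ with $f, g$ radial, Fourier-supported in $\{|\xi| \sim 1\}$, and $\|f\|_{\dot{H}^1_x} + \|g\|_{L^2_x} \sim \|\nabla_{t,x}\phi(0)\|_{L^2_x}$.

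The two-dimensional radial Fourier inversion formula yields
\[
 \nabla_{t,x}\phi(t,r) = \sum_{\pm} \int_0^\infty e^{\pm 2\pi i t \rho}\, H_\pm(2\pi r\rho)\, \rho^2 \varphi_0(\rho) F_\pm(\rho) \, d\rho,
\]
where each $H_\pm$ is either $J_0$ or $J_1$ (depending on whether one differentiates in $t$ or in $r$) and $F_\pm$ is a linear combination of $\hat f_0$ and $\rho^{-1}\hat g_0$ satisfying $\int_0^\infty |F_\pm(\rho)|^2 \rho^4 \varphi_0(\rho)^2 \, d\rho \lesssim \|\nabla_{t,x}\phi(0)\|_{L^2_x}^2$. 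Applying Plancherel in $t$ at each fixed $r$, multiplying by $r^{-1}$, integrating over the annulus $\{r \sim 2^\ell\}$ against the polar measure $2\pi r \, dr$, and swapping integrals, I reduce matters to the quantity
\[
 I_\ell(\rho) := \int_{\{r \sim 2^\ell\}} |H_\pm(2\pi r \rho)|^2 \, dr,
\]
and it suffices to show $I_\ell(\rho) \lesssim 1$ uniformly in $\ell \in \bZ$ for $\rho$ in the support of $\varphi_0$.

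For this bound I combine the boundedness of $J_0, J_1$ near the origin with the Bessel asymptotics $|J_\nu(y)| \lesssim y^{-1/2}$ for $y \geq 1$ from~\eqref{equ:bessel_asymptotics} to obtain $|H_\pm(y)|^2 \lesssim (1+y)^{-1}$. For $\rho \sim 1$ this gives $I_\ell(\rho) \lesssim \int_{\{r \sim 2^\ell\}} (1+r)^{-1} \, dr \lesssim \min(2^\ell, 1) \lesssim 1$, as required. Inserting the bound back then yields
\[
 \sup_{\ell \in \bZ} \|\chi_{\{r \sim 2^\ell\}} r^{-1/2}\nabla_{t,x}\phi\|_{L^2_t L^2_x}^2 \lesssim \sum_{\pm} \int_0^\infty |F_\pm(\rho)|^2 \rho^4 \varphi_0(\rho)^2 \, d\rho \lesssim \|\nabla_{t,x}\phi(0)\|_{L^2_x}^2,
\]
and Duhamel recovers the inhomogeneous version with $\|\Box\phi\|_{L^1_t L^2_x}$ on the right-hand side.

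The main obstacle is securing the uniformity in $\ell$, and this is precisely where the Bessel decay is indispensable: for $\ell \gg 0$ the growth of the annulus $\{r \sim 2^\ell\}$ is exactly balanced by the $(r\rho)^{-1}$ decay of $|J_\nu(2\pi r\rho)|^2$ coming from~\eqref{equ:bessel_asymptotics}, while for $\ell \ll 0$ the smallness of the annulus itself provides control without requiring any decay from the Bessel function. No endpoint Strichartz bound enters the argument, which is consistent with the scale-invariant character of the estimate.
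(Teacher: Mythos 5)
Your proof is correct, and it reaches the estimate by a cleaner route than the paper's. Both arguments reduce to free waves via Duhamel, rescale to $k=0$, and ultimately exploit the same cancellation: the $y^{-1/2}$ decay of the Bessel functions from~\eqref{equ:bessel_asymptotics} (which, as the paper notes, also holds for $J_0'=-J_1$) squares to $r^{-1}$ and exactly balances the measure of the annulus $\{r\sim 2^\ell\}$, with the small annuli $\ell\ll 0$ handled by their size alone. Where you differ is in how the time integration is dispatched. The paper expands $\hat{f}_0(\rho)\rho^{3/2}$ in a Fourier series, represents the solution as a superposition of wave packets $\psi_n^{\pm}$ concentrated near $t\pm r=-n/4$ with rapidly decaying kernels, and integrates in $t$ first; you instead apply Plancherel in $t$ at fixed $r$ (legitimate since the $\pm$ pieces occupy disjoint frequency half-lines) and reduce everything to the uniform bound $\int_{\{r\sim 2^\ell\}}|J_\nu(2\pi r\rho)|^2\,dr\lesssim 1$ for $\rho\sim 1$. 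Your Plancherel route is the standard local-smoothing argument and is shorter here; the paper's Fourier-series machinery is presumably reused from Lemma~\ref{lem:weighted_strichartz_estimate}, where the target space is $L^2_tL^p_x$ with $p\neq 2$ and Plancherel in time alone would not suffice. One minor point worth making explicit in a final write-up: the restriction to the time interval $I$ is harmless for the homogeneous part since $L^2_t(I)\hookrightarrow L^2_t(\bR)$, and the Minkowski/Duhamel step requires the homogeneous bound uniformly in the initial time, which your scale- and translation-invariant argument provides.
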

\begin{proof}
 We proceed similarly to the proof of Lemma~\ref{lem:weighted_strichartz_estimate}. In view of Duhamel's formula, we may again assume that $\phi$ is of the form $\phi = e^{it|\nabla|} f_k$ for a radially symmetric function $f_k$ with Fourier support on $|\xi| \sim 2^k$. Moreover, by scaling invariance it suffices to consider the case $k=0$. 

 We first prove the local energy decay estimate for the time derivative $\partial_t (e^{it|\nabla|} f_0)$. Due to the radial symmetry assumption, we have the specific representation formula
 \[
  \partial_t (e^{it|\nabla|} f_0)(r) = 4 \pi^2 i \int_0^\infty e^{2 \pi i t \rho} J_0(2 \pi r \rho) \varphi_0(\rho) \hat{f}_0(\rho) \rho^2 \, d\rho.
 \]
 We distinguish the regimes $r \lesssim 1$ and $r \gg 1$. Here we only treat the more difficult case $r \gg 1$. By the asymptotics~\eqref{equ:bessel_asymptotics} we have that 
 \[
  \chi_{\{r \gg 1\}} r^{-\frac{1}{2}} \partial_t (e^{it|\nabla|} f_0)(r) \sim \sum_\pm \chi_{\{r \gg 1\}} r^{-1} \int_0^\infty e^{2 \pi i (t\pm r) \rho} \beta_\pm(2\pi r \rho) \varphi_0(\rho) \hat{f}_0(\rho) \rho^{\frac{3}{2}} \, d\rho.
 \]
 The function $\rho \mapsto \hat{f}_0(\rho) \rho^{\frac{3}{2}}$ has compact support in the interval $(0,4)$ and can therefore be developed into a Fourier series 
 \[
  \hat{f}_0(\rho) \rho^{\frac{3}{2}} = \sum_{n \in \bZ} c_n e^{i \frac{\pi}{2} n \rho}, \quad \rho \in (0,4),
 \]
 where the Fourier coefficients satisfy
 \[
  \sum_{n \in \bZ} |c_n|^2 \sim \bigl\| \hat{f}_0(\rho) \rho^{\frac{3}{2}} \bigr\|_{L^2_\rho}^2 \lesssim \|f_0\|_{L^2_x}^2.
 \]
 Now we follow closely the arguments in the proof of Lemma~\ref{lem:weighted_strichartz_estimate} to find that 
 \begin{align*}
  \sup_{\ell \geq 0} \, \bigl\| \chi_{\{ r \sim 2^\ell\}} r^{-\frac{1}{2}} \partial_t (e^{it|\nabla|} f_0)(r) \bigr\|_{L^2_t L^2_x} \lesssim \sup_{\ell \geq 0} \, \biggl( \sum_\pm \int_{\bR} \int_{\{ r \sim 2^\ell \}} \sum_{n \in \bZ} \frac{|c_n|^2}{(1 + |t \pm r + \frac{n}{4}|)^2} r^{-1} \, dr \, dt \biggr)^{\frac{1}{2}}.
 \end{align*}
 Changing the order of integration and computing the time integral first, we see that the right-hand side obeys the desired bound
 \begin{align*}
  \sup_{\ell \geq 0} \, \biggl( \int_{\{ r \sim 2^\ell \}} \sum_{n \in \bZ} |c_n|^2 r^{-1} \, dr \biggr)^{\frac{1}{2}} \lesssim \biggl( \sum_{n \in \bZ} |c_n|^2 \biggr)^{\frac{1}{2}} \lesssim \|f_0\|_{L^2_x}.
 \end{align*}
 
 The proof of the local energy decay estimate for the spatial derivatives of $e^{it|\nabla|} f_0$ proceeds analogously by noting that the first derivative $J_0'$ of the Bessel function $J_0$ of order $0$ satisfies the same asymptotics~\eqref{equ:bessel_asymptotics}~as~$J_0$.
\end{proof}

Finally, we turn to the atomic $Z_k^{\pm}$ spaces.

\begin{lem}[The $Z^{\pm}_k$ spaces] \label{lem:Z_space}
 Let $k \in \bZ$ and let $I$ be any time interval containing $0$. For any radially symmetric function $\phi$ on $I \times \bR^2$ with Fourier support in $\{ |\xi| \sim 2^k \}$, we have that 
 \begin{align*}
  \| (\partial_t + \partial_r) \phi_k \|_{Z_k^+[I]} &\lesssim \| \nabla_{t,x} \phi_k(0) \|_{L^2_x} + \| \Box \phi_k \|_{L^1_t L^2_x[I]}
 \intertext{and} 
  \| (\partial_t - \partial_r) \phi_k \|_{Z_k^-[I]} &\lesssim \| \nabla_{t,x} \phi_k(0) \|_{L^2_x} + \| \Box \phi_k \|_{L^1_t L^2_x[I]}.
 \end{align*}
\end{lem}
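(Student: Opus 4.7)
The plan mirrors the Bessel-function analysis employed in the proofs of the weighted Strichartz and local energy decay estimates, now combined with a careful outgoing/incoming decomposition dictated by the asymptotic expansion of $J_0$. By Duhamel's formula it suffices to prove the bound for free solutions $\phi=e^{\pm it|\nabla|}f$, and by scaling invariance I may reduce to $k=0$. For the bound on $\|(\partial_t+\partial_r)\phi_0\|_{Z_0^+}$ I will construct the decomposition $(\partial_t+\partial_r)\phi_0=\psi^{(+)}+\psi^{(-)}$ by exploiting the expansion
\[
J_0(2\pi r\rho)=(2\pi r\rho)^{-1/2}\bigl(e^{+2\pi ir\rho}\beta_+(2\pi r\rho)+e^{-2\pi ir\rho}\beta_-(2\pi r\rho)\bigr)
\]
valid for $r\gg 1$, which splits $\chi_{\{r\gg 1\}}\phi_0$ into an incoming piece $\phi^{in}$ with phase $e^{2\pi i(t+r)\rho}$ and an outgoing piece $\phi^{out}$ with phase $e^{2\pi i(t-r)\rho}$. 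The crucial algebraic identity is that $(\partial_t+\partial_r)$ annihilates the outgoing phase exactly, while acting on the incoming phase produces a clean factor of $4\pi i\rho$.

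I then take $\psi^{(+)}$ to be precisely the principal term of $(\partial_t+\partial_r)\phi^{in}$ in which both derivatives hit the phase, namely
\[
\psi^{(+)}(t,r):=\chi_{\{r\gg 1\}}r^{-1/2}\int_0^\infty 4\pi i\rho\,e^{2\pi i(t+r)\rho}\beta_+(2\pi r\rho)\varphi_0(\rho)\hat f_0(\rho)\rho^{1/2}\,d\rho,
\]
and place all remaining pieces into $\psi^{(-)}$. To estimate $\|r^{\lambda}\psi^{(+)}\|_{L^2_{t+r}L^\infty_{t-r}}$ for $0<\lambda\le\tfrac{1}{2}$, I follow the Fourier-series device from the proof of Lemma~\ref{lem:weighted_strichartz_estimate}: expanding $\hat f_0(\rho)\rho^{3/2}=\sum_{n\in\bZ}c_n e^{i\pi n\rho/2}$ on $(0,4)$ with $\sum|c_n|^2\lesssim\|f_0\|_{L^2_x}^2$ and integrating by parts in $\rho$ via the symbol bounds \eqref{equ:bessel_symbol_bounds}, one obtains the representation
\[
\psi^{(+)}(t,r)\sim r^{-1/2}\sum_{n\in\bZ}c_n\Psi_n(t,r),\qquad |\Psi_n(t,r)|\lesssim_M\bigl(1+|t+r+n/4|\bigr)^{-M}.
\]
Since $r^{\lambda-1/2}$ is bounded on $r\gtrsim 1$ for $\lambda\le\tfrac{1}{2}$, and since the pointwise bound on $\Psi_n$ depends only on $t+r$, the $L^\infty_{t-r}$ supremum is essentially free, and Minkowski's inequality combined with the rapid decay of $\Psi_n$ delivers the desired $L^2_{t+r}$ bound by $\|f_0\|_{L^2_x}$.

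The remainder $\psi^{(-)}$ consists of three kinds of contributions: the inner region $r\lesssim 1$ portion (treated by H\"older and $TT^*$ as in Lemma~\ref{lem:weighted_strichartz_estimate}), the pieces where $(\partial_t+\partial_r)$ falls on $r^{-1/2}$ or on $\beta_\pm(2\pi r\rho)$ (both picking up an extra factor of $r^{-1}$), and the entire $(\partial_t+\partial_r)\phi^{out}$ term which by the annihilation identity gains the same $r^{-1}$ decay. In each case the weight $r^{\lambda}$ combines with the $r^{-1}$ gain to produce a factor $r^{\lambda-1}=r^{-(1-\lambda)}$ that lies in the range of admissible weights in Lemma~\ref{lem:weighted_strichartz_estimate}, yielding the Strichartz component of the $\psi^{(-)}$ norm. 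The $Y_k$-components of both $\psi^{(\pm)}$ follow immediately from the previously established weighted endpoint Strichartz and local energy decay estimates applied directly to $(\partial_t\pm\partial_r)\phi_0$, which has the same $L^2$-regularity as $\nabla_{t,x}\phi_0$. The bound for $(\partial_t-\partial_r)\phi_k$ in $Z_k^-$ is obtained by the completely symmetric argument, swapping the roles of incoming and outgoing phases. The main technical obstacle is verifying that the Fourier-series decomposition interacts cleanly with the anisotropic $L^2_{t+r}L^\infty_{t-r}$ norm, the key point being that the Schwartz-type decay of $\Psi_n$ in $t+r$ together with the uniform-in-$r$ boundedness of $\beta_\pm(2\pi r\rho)$ makes the supremum in $t-r$ essentially trivial, which is precisely where the constraint $\lambda\le\tfrac{1}{2}$ enters to prevent the weight $r^{\lambda}$ from spoiling the pointwise $r^{-1/2}$-gain from the Bessel asymptotics.
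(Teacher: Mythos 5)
Your argument follows the paper's proof in all essentials: the reduction to free waves and $k=0$, and the splitting via the Bessel asymptotics into an incoming piece with $r^{-1/2}$ decay (placed in the $L^2_{t+r}L^\infty_{t-r}$ component; your Fourier-series/integration-by-parts device is a legitimate, in fact more explicit, substitute for the $TT^\ast$ bound the paper merely asserts) and an outgoing piece on which $(\partial_t+\partial_r)$ gains a full factor of $r^{-1}$ (placed in the weighted Strichartz component) — the paper records this gain directly as the stationary-phase asymptotics $iJ_0(y)+J_0'(y)=y^{-1/2}e^{iy}\beta_+(y)+y^{-3/2}e^{-iy}\beta_-(y)$, which is exactly the computation you carry out term by term. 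The one imprecision is your appeal to Lemma~\ref{lem:weighted_strichartz_estimate} for the Strichartz part of the $\psi^{(-)}$ estimate: that lemma covers only $q=2$ with weight exponents in $(\tfrac{1}{p},\tfrac{2}{p})$, whereas the $(-)$ component of $Z_0^+$ requires all admissible pairs $\tfrac{1}{q}+\tfrac{1}{p}<\tfrac12$ and all $0<\lambda<1$; what is actually needed (and what the paper invokes) is that after summing $\sum_{\ell\gtrsim 0}2^{(\lambda-1)\ell}<\infty$ over dyadic shells, the remaining $r^{-1/2}$-normalized oscillatory integral satisfies the full range of radial Strichartz estimates by the same proof as the unweighted Strichartz lemma (together with H\"older for $(q,p)=(\infty,\infty)$), not by Lemma~\ref{lem:weighted_strichartz_estimate}.
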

\begin{proof}
 We only provide the proof of the estimate for the $Z_k^+$ space since the case of the $Z_k^-$ space can be dealt with analogously. Moreover, in view of Duhamel's formula we may assume that $\phi_k$ is of the form $e^{it |\nabla|} f_k$ for a radially symmetric function $f_k$ with Fourier support on $\{ |\xi| \sim 2^k \}$. By scaling invariance it suffices to consider the case $k=0$. 
 
 Then we observe that 
 \begin{align*}
  (\partial_t + \partial_r) (e^{i t |\nabla|} f_0)(r) = 4 \pi^2 \int_0^\infty e^{2 \pi i t \rho} \bigl( i J_0(2 \pi r \rho) + J_0'(2 \pi r \rho) \bigr) \varphi_0(\rho) \hat{f_0}(\rho) \rho^2 \, d\rho.
 \end{align*}
 By stationary phase it is easy to see that we have the asymptotics
 \begin{equation} \label{equ:atomic_asymptotics}
  i J_0(y) + J_0'(y) = \frac{i}{2 \pi} \int_0^{2 \pi} e^{i y \sin(\theta)} ( 1 + \sin(\theta) ) \, d\theta = y^{-\frac{1}{2}} e^{+iy} \beta_+(y) + y^{-\frac{3}{2}} e^{-iy} \beta_-(y), \quad y \geq 1,
 \end{equation}
 where the functions $\beta_{\pm}$ satisfy the symbol-type bounds 
 \begin{equation*} 
  | \beta_{\pm}^{(n)}(y) | \lesssim_n y^{-n}, \quad n \geq 0, \, y \geq 1.
 \end{equation*}
 We now distinguish the regimes $r \lesssim 1$ and $r \gg 1$. Here we only turn to the more delicate latter case $r \gg 1$. By the asymptotics~\eqref{equ:atomic_asymptotics} we see that $\chi_{\{ r \gg 1 \}} (\partial_t + \partial_r) (e^{i t |\nabla|} f_0)$ decomposes into two components
 \begin{equation} \label{equ:decomposition_for_Z_space}
  \begin{aligned}
   \chi_{\{ r \gg 1\}} (\partial_t + \partial_r) (e^{i t |\nabla|} f_0)(r) &\sim \chi_{\{ r \gg 1\}} r^{-\frac{1}{2}} \int_0^\infty e^{2 \pi i (t+r) \rho} \beta_+(2 \pi r \rho) \varphi_0(\rho) \hat{f_0}(\rho) \rho^{\frac{3}{2}} \, d\rho \\
   &\quad + \chi_{\{ r \gg 1\}} r^{-\frac{3}{2}} \int_0^\infty e^{2 \pi i (t-r) \rho} \beta_{-}(2 \pi r \rho) \varphi_0(\rho) \hat{f_0}(\rho) \rho^{\frac{1}{2}} \, d\rho.
  \end{aligned}
 \end{equation}
 It is easy to see that both components on the right-hand side have finite $Y_0$ norm. Moreover, the first component on the right-hand side of~\eqref{equ:decomposition_for_Z_space} can be placed into the weighted $L^2_{t+r} L^\infty_{t-r}$ component of the $Z_0^+$ space. More precisely, for any $0 < \lambda \leq \frac{1}{2}$ we obtain by a standard $T T^\ast$ argument that 
 \begin{align*}
  &\biggl\| r^{+\lambda} \chi_{\{ r \gg 1\}} r^{-\frac{1}{2}} \int_0^\infty e^{2 \pi i (t+r) \rho} \beta_+(2 \pi r \rho) \varphi_0(\rho) \hat{f_0}(\rho) \rho^{\frac{3}{2}} \, d\rho \biggr\|_{L^2_{t+r} L^\infty_{t-r}} \\
  &\lesssim \biggl\| \int_0^\infty e^{2 \pi i (t+r) \rho} \chi_{\{ r \gg 1\}} \beta_+(2 \pi r \rho) \varphi_0(\rho) \hat{f_0}(\rho) \rho^{\frac{3}{2}} \, d\rho \biggr\|_{L^2_{t+r} L^\infty_{t-r}} \\
  &\lesssim \bigl\| \hat{f_0}(\rho) \rho^{\frac{3}{2}} \bigr\|_{L^2_\rho} \\
  &\lesssim \| f_0 \|_{L^2_x}.
 \end{align*}
 On the other hand, the second component on the right-hand side of~\eqref{equ:decomposition_for_Z_space} satisfies for any exponent pair $(q, p)$ with $2 \leq q, p \leq \infty$ and $\frac{1}{q} + \frac{1}{p} < \frac{1}{2}$ and for any $0 < \lambda < 1$ that
 \begin{align*}
  &\sum_{\ell \in \bZ} \, \biggl\| \, \chi_{\{ r \sim 2^{\ell} \}} r^{+\lambda} \chi_{\{ r \gg 1\}} r^{-\frac{3}{2}} \int_0^\infty e^{2 \pi i (t-r) \rho} \beta_{-}(2 \pi r \rho) \varphi_0(\rho) \hat{f_0}(\rho) \rho^{\frac{1}{2}} \, d\rho \biggr\|_{L^q_t L^p_x} \\
  &\lesssim \sum_{\ell \gg 1} 2^{(\lambda - 1) \ell} \, \biggl\| r^{-\frac{1}{2}} \int_0^\infty e^{2 \pi i (t-r) \rho} \chi_{\{ r \gg 1\}} \beta_{-}(2 \pi r \rho) \varphi_0(\rho) \hat{f_0}(\rho) \rho^{\frac{1}{2}} \, d\rho \biggr\|_{L^q_t L^p_x} \\
  &\lesssim \biggl\| r^{-\frac{1}{2}} \int_0^\infty e^{2 \pi i (t-r) \rho} \chi_{\{ r \gg 1\}} \beta_{-}(2 \pi r \rho) \varphi_0(\rho) \hat{f_0}(\rho) \rho^{\frac{1}{2}} \, d\rho \biggr\|_{L^q_t L^p_x} \\
  &\lesssim \| f_0 \|_{L^2_x},
 \end{align*}
 where in the case of $(q, p) = (\infty, \infty)$ the last estimate just follows from H\"older's inequality while for all other admissible exponent pairs $(q,p)$ the last estimate follows from the proof of Strichartz estimates in the radial case as in Sterbenz~\cite{Sterbenz}. Putting things together, we conclude that
 \[
  \| (\partial_t + \partial_r)  (e^{i t |\nabla|} f_0) \|_{Z_0^+} \lesssim \| f_0 \|_{L^2_x},
 \]
 which finishes the proof of Lemma~\ref{lem:Z_space}.
\end{proof}

\section{Multilinear estimates} \label{sec:multilinear_estimates}

Here we collect several important multilinear estimates that will be of crucial use to estimate the wave maps nonlinearity. We begin with the following null form estimate.

\begin{prop}[Null form estimate] \label{prop:null_form_estimate}
For $\frac{3}{2} \leq p \leq \infty$ it holds that
\begin{equation} \label{equ:null_form_estimate}
 \bigl\| \partial_\alpha \phi_{k_1}^{(1)} \, \partial^\alpha \phi_{k_2}^{(2)} \bigr\|_{L^p_t L^p_x} \lesssim 2^{(1-\frac{3}{2p}) k_1} 2^{(1-\frac{3}{2p}) k_2} \| \phi_{k_1}^{(1)} \|_{S_{k_1}} \| \phi_{k_2}^{(2)} \|_{S_{k_2}}.
\end{equation}
\end{prop}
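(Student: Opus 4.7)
My plan is to use the radial null form identity. Since $\nabla u \cdot \nabla v = \partial_r u \, \partial_r v$ for radial $u$, $v$, a direct computation yields
\[
\partial_\alpha u \, \partial^\alpha v = -\tfrac{1}{2}\bigl( L_+ u \cdot L_- v + L_- u \cdot L_+ v \bigr), \quad L_\pm := \partial_t \pm \partial_r.
\]
By symmetry it suffices to bound $\bigl\| L_+ \phi^{(1)}_{k_1} \cdot L_- \phi^{(2)}_{k_2} \bigr\|_{L^p_t L^p_x}$, and by the definition of the $S_{k_i}$ norms we have $\|L_+\phi^{(1)}_{k_1}\|_{Z^+_{k_1}} \leq \|\phi^{(1)}_{k_1}\|_{S_{k_1}}$ and $\|L_-\phi^{(2)}_{k_2}\|_{Z^-_{k_2}} \leq \|\phi^{(2)}_{k_2}\|_{S_{k_2}}$.

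\medskip

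Next I unpack the atomic decompositions from Definition~\ref{def:Z_spaces}, writing $L_+\phi^{(1)}_{k_1} = \psi_1^{(+)} + \psi_1^{(-)}$ and $L_-\phi^{(2)}_{k_2} = \psi_2^{(+)} + \psi_2^{(-)}$, where $\psi_1^{(+)}$ is the outgoing null-frame atom (controlled in weighted $L^2_{t+r} L^\infty_{t-r}$), $\psi_2^{(-)}$ is the incoming null-frame atom (weighted $L^2_{t-r} L^\infty_{t+r}$), and $\psi_1^{(-)}$, $\psi_2^{(+)}$ are the auxiliary Strichartz/$Y_k$ atoms. Expanding the product yields four cross terms, of which $\psi_1^{(+)} \psi_2^{(-)}$ is the genuinely null--null piece that truly requires the null structure. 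The three remaining terms each contain at least one Strichartz atom; I would handle these by H\"older's inequality, exploiting the enlarged radial Strichartz range $\frac{1}{q} + \frac{1}{p} < \frac{1}{2}$ together with the dyadic annular summation $\sum_\ell \| \chi_{\{r \sim 2^\ell\}} \cdot \|_{L^q_t L^p_x}$ already built into the $Z^\pm_k$ definition to close the sum over annuli.

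\medskip

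For the null--null term I pass to the null coordinates $u = t+r$, $v = t-r$, noting that $r = (u-v)/2$ and $dt\,dr = \tfrac{1}{2}du\,dv$; the polar measure $r\,dr\,dt$ thus becomes a constant times $(u-v)\,du\,dv$ on the wedge $\{u \geq v\}$. Distributing the weight symmetrically as $r^{1/(2p)} \cdot r^{1/(2p)}$ between the two factors, the case $p = 2$ reduces via Fubini and Cauchy--Schwarz to
\[
\bigl\| r^{1/4} \psi_1^{(+)} \cdot r^{1/4} \psi_2^{(-)} \bigr\|_{L^2_{u,v}} \leq \bigl\| r^{1/4} \psi_1^{(+)} \bigr\|_{L^2_u L^\infty_v} \bigl\| r^{1/4} \psi_2^{(-)} \bigr\|_{L^2_v L^\infty_u},
\]
and invoking the $Z^\pm_{k_i}$ definition with $\lambda = 1/4$ produces exactly $2^{k_1/4} 2^{k_2/4}$, matching the claimed exponent at $p = 2$. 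The endpoint $p = \infty$ is immediate from Bernstein's inequality $\|\partial \phi_k\|_{L^\infty_{t,x}} \lesssim 2^k \|\phi_k\|_{S_k}$, and intermediate $p \in (2, \infty)$ follow by bilinear interpolation between these two endpoints.

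\medskip

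The main obstacle is the lower endpoint $p = 3/2$, where the claimed bound carries no $2^k$ gain: no diagonal radial Strichartz pair is admissible (the critical pair $(3,3)$ lies on the boundary $\frac{1}{q} + \frac{1}{p} = \frac{1}{2}$), and the null-coordinate Cauchy--Schwarz above does not close because it would require atom control in $L^{3/2}_u L^\infty_v$, which is not part of the $Z^\pm_k$ definition. To treat this endpoint I would combine the full strength of the $Y_{k_i}$ component of the atomic norm with the null-coordinate representation: the local energy decay piece $\sup_\ell 2^{-\ell/2}\|\chi_{\{r \sim 2^\ell\}}\psi\|_{L^2_t L^2_x}$ on one factor pairs via H\"older with the radial $L^6_t L^6_x$ Strichartz bound on the other, and the weighted $L^2_t L^\infty_x$ components of $Y_{k_i}$ together with the outgoing/incoming $L^2_{t \pm r} L^\infty_{t \mp r}$ information produce just enough $\ell$-dependent decay to close the annular summation. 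Bilinear interpolation between $p = 3/2$ and $p = 2$ then delivers the remaining range $p \in (3/2, 2)$.
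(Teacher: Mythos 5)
Your overall architecture is the same as the paper's: the radial identity $\partial_\alpha u\,\partial^\alpha v=-\tfrac12(L_+u\,L_-v+L_-u\,L_+v)$, the atomic $(\pm)$ decomposition from the $Z_k^\pm$ spaces, H\"older against the $Y_k$/weighted-Strichartz components for the three cross terms containing a Strichartz-type atom, null coordinates for the genuinely null $\psi_1^{(+)}\psi_2^{(-)}$ interaction, and interpolation with the trivial $L^\infty_{t,x}$ bound. Your $p=2$ computation for the null--null piece is correct (the weight bookkeeping with $r\,dr\,dt\sim(u-v)\,du\,dv$ and $\lambda=\tfrac14$ checks out), and your treatment of the non-null cross terms matches the paper's pairing of $\sup_\ell\|\chi_{\{r\sim 2^\ell\}}r^{-1/2}\cdot\|_{L^2_tL^2_x}$ against $\sum_\ell\|\chi_{\{r\sim 2^\ell\}}r^{+1/2}\cdot\|_{L^6_tL^6_x}$.

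The genuine gap is the $p=\tfrac32$ endpoint for the null--null term, which you correctly flag as the crux but do not actually resolve. Your proposed mechanism --- ``$\ell$-dependent decay from the $Y_k$ components to close the annular summation'' --- is not the right one: at $\lambda=\tfrac12$ the $Z_k^\pm$ norms carry no exponential gain, and no amount of dyadic summation in $\ell$ substitutes for the missing integrability in the null directions. The paper's actual argument is a one-dimensional Hardy--Littlewood--Sobolev step: writing
\begin{equation*}
 \bigl\|L_+\phi_{k_1}^{(1,+)}L_-\phi_{k_2}^{(2,-)}\bigr\|_{L^{3/2}_{t,x}}^{3/2}
 \lesssim \iint |u-v|^{-\frac12}\,\Bigl(\bigl\|r^{\frac12}L_+\phi_{k_1}^{(1,+)}\bigr\|_{L^\infty_v}^{\frac32}\Bigr)(u)\,
 \Bigl(\bigl\|r^{\frac12}L_-\phi_{k_2}^{(2,-)}\bigr\|_{L^\infty_u}^{\frac32}\Bigr)(v)\,du\,dv,
\end{equation*}
where the kernel $|u-v|^{-1/2}$ comes from the leftover $r^{-1/2}$ in the polar measure after distributing $r^{1/2}$ to each factor, and then applying H\"older in $u$ followed by HLS with exponents $\tfrac43\to 4$ to land exactly on $\|r^{1/2}\psi^{(+)}\|_{L^2_{t+r}L^\infty_{t-r}}^{3/2}$ and $\|r^{1/2}\psi^{(-)}\|_{L^2_{t-r}L^\infty_{t+r}}^{3/2}$. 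Without this (or an equivalent) step your interpolation scheme has no lower endpoint, so the range $\tfrac32\le p<2$ is not established; note also that the paper only needs to interpolate between $p=\tfrac32$ and $p=\infty$, so once the HLS endpoint is in place your separate $p=2$ argument becomes redundant.
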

\begin{proof}
The assertion follows by interpolation between the simple $L^\infty_t L^\infty_x$ estimate
\[
 \bigl\| \partial_\alpha \phi_{k_1}^{(1)} \, \partial^\alpha \phi_{k_2}^{(2)} \bigr\|_{L^\infty_t L^\infty_x} \lesssim 2^{k_1} \bigl\| \nabla_{t,x} \phi_{k_1} \bigr\|_{L^\infty_t L^2_x} 2^{k_2} \bigl\| \nabla_{t,x} \phi_{k_2} \bigr\|_{L^\infty_t L^2_x} \lesssim 2^{k_1} 2^{k_2} \|\phi_{k_1}^{(1)}\|_{S_{k_1}} \|\phi_{k_2}^{(2)}\|_{S_{k_2}}
\]
and the following $L^{\frac{3}{2}}_t L^{\frac{3}{2}}_x$ estimate
\begin{equation} \label{equ:null_form_estimate_proof_L32}
 \bigl\| \partial_\alpha \phi_{k_1}^{(1)} \, \partial^\alpha \phi_{k_2}^{(2)} \bigr\|_{L^{\frac{3}{2}}_t L^{\frac{3}{2}}_x} \lesssim  \|\phi_{k_1}^{(1)}\|_{S_{k_1}} \|\phi_{k_2}^{(2)}\|_{S_{k_2}},
\end{equation}
which we now prove. Since in the radially symmetric setting it holds that
\[
 -2 \partial_\alpha \phi \partial^\alpha \psi = (\partial_t + \partial_r) \phi (\partial_t - \partial_r) \psi + (\partial_t - \partial_r) \phi (\partial_t + \partial_r) \psi,
\]
by symmetry it suffices to consider bounding the expression $(\partial_t + \partial_r) \phi_{k_1}^{(1)} (\partial_t - \partial_r) \phi_{k_2}^{(2)}$ in $L^{\frac{3}{2}}_t L^{\frac{3}{2}}_x$. To this end we decompose the two inputs into $(+)$ and $(-)$ components, i.e.
\[
 (\partial_t + \partial_r) \phi_{k_1}^{(1)} = (\partial_t + \partial_r) \phi_{k_1}^{(1,+)} + (\partial_t + \partial_r) \phi_{k_1}^{(1,-)}
\]
and
\[
 (\partial_t - \partial_r) \phi_{k_2}^{(2)} = (\partial_t - \partial_r) \phi_{k_2}^{(2,+)} + (\partial_t - \partial_r) \phi_{k_2}^{(2,-)}.
\]
Then we consider all possible interactions. In the case of $(+)/(+)$ interactions we estimate 
\begin{align*}
 &\bigl\| (\partial_t + \partial_r) \phi_{k_1}^{(1,+)} (\partial_t - \partial_r) \phi_{k_2}^{(2,+)} \bigr\|_{L^{\frac{3}{2}}_t L^{\frac{3}{2}}_x} \\ 
 &\lesssim \sup_{\ell \in \bZ} \, \bigl\| \chi_{\{ r \sim 2^\ell\}} r^{-\frac{1}{2}} (\partial_t + \partial_r) \phi_{k_1}^{(1,+)} \bigr\|_{L^2_t L^2_x} \sum_{\ell \in \bZ} \, \bigl\| \chi_{\{ r \sim 2^\ell\}} r^{+\frac{1}{2}} (\partial_t - \partial_r) \phi_{k_2}^{(2, +)} \bigr\|_{L^6_t L^6_x} \\
 &\lesssim \| \phi_{k_1}^{(1)} \|_{S_{k_1}} \| \phi_{k_2}^{(2)} \|_{S_{k_2}}
\end{align*}
and the case of $(-)/(+)$ as well as of $(-)/(-)$ interactions can be bounded analogously. We are therefore left with the treatment of the delicate $(+)/(-)$ interactions. By spherical symmetry, we may write
\begin{align*}
 &\bigl\| (\partial_t + \partial_r) \phi_{k_1}^{(1,+)} (\partial_t - \partial_r) \phi_{k_2}^{(2,-)} \bigr\|_{L^{\frac{3}{2}}_t L^{\frac{3}{2}}_x}^{\frac{3}{2}} \lesssim \iint r^{-\frac{1}{2}} \bigl| r^{\frac{1}{2}} (\partial_t + \partial_r) \phi_{k_1}^{(1,+)} \bigr|^{\frac{3}{2}} \bigl| r^{\frac{1}{2}} (\partial_t - \partial_r) \phi_{k_2}^{(2,-)} \bigr|^{\frac{3}{2}} \, dr \, dt.
\end{align*}
Changing variables from $(t,r)$ to $(t+r, t-r)$ and noting that $r = \frac{1}{2}( (t+r) - (t-r) )$, we can estimate the previous line by
\[
 \iint |(t+r)-(t-r)|^{-\frac{1}{2}} \, \Bigl( \bigl\| r^{\frac{1}{2}} (\partial_t + \partial_r) \phi_{k_1}^{(1,+)} \bigr\|_{L^\infty_{t-r}}^{\frac{3}{2}} \Bigr)(t+r) \, \Bigl( \bigl\| r^{\frac{1}{2}} (\partial_t - \partial_r) \phi_{k_2}^{(2,-)} \bigr\|_{L^\infty_{t+r}}^{\frac{3}{2}}\Bigr)(t-r) \, d(t+r) \, d(t-r).
\]
Next we apply H\"older's inequality followed by the Hardy-Littlewood-Sobolev inequality and obtain the desired bound
\begin{align*}
 &\Bigl\| \bigl\| r^{\frac{1}{2}} (\partial_t + \partial_r) \phi_{k_1}^{(1,+)} \bigr\|_{L^\infty_{t-r}}^{\frac{3}{2}} \Bigr\|_{L^{\frac{4}{3}}_{t+r}} \, \biggl\| \int |(t+r)-(t-r)|^{-\frac{1}{2}} \, \Bigl( \bigl\| r^{\frac{1}{2}} (\partial_t - \partial_r) \phi_{k_2}^{(2,-)} \bigr\|_{L^\infty_{t+r}}^{\frac{3}{2}}\Bigr)(t-r) \, d(t-r) \biggr\|_{L^4_{t+r}} \\
 &\lesssim \bigl\| r^{\frac{1}{2}} (\partial_t + \partial_r) \phi_{k_1}^{(1,+)} \bigr\|_{L^2_{t+r} L^\infty_{t-r}}^{\frac{3}{2}} \, \Bigl\| \bigl\| r^{\frac{1}{2}} (\partial_t - \partial_r) \phi_{k_2}^{(2,-)} \bigr\|_{L^\infty_{t+r}}^{\frac{3}{2}} \Bigr\|_{L^{\frac{4}{3}}_{t-r}} \\
 &\lesssim \bigl\| r^{\frac{1}{2}} (\partial_t + \partial_r) \phi_{k_1}^{(1,+)} \bigr\|_{L^2_{t+r} L^\infty_{t-r}}^{\frac{3}{2}} \, \bigl\| r^{\frac{1}{2}} (\partial_t - \partial_r) \phi_{k_2}^{(2,-)} \bigr\|_{L^2_{t-r} L^\infty_{t+r}}^{\frac{3}{2}} \\
 &\lesssim \| \phi_{k_1}^{(1)} \|_{S_{k_1}}^{\frac{3}{2}} \| \phi_{k_2}^{(2)} \|_{S_{k_2}}^{\frac{3}{2}}.
\end{align*}
This finishes the proof of the $L^{\frac{3}{2}}_t L^{\frac{3}{2}}_x$ estimate \eqref{equ:null_form_estimate_proof_L32} and thus concludes the proof of Proposition~\ref{prop:null_form_estimate}.
\end{proof}

We emphasize that the preceding proposition allows to estimate the null form $\partial_\alpha \phi \partial^\alpha \psi$ ``below'' $L^2_t L^2_x$, in fact in $L^{\frac{3}{2}}_t L^{\frac{3}{2}}_x$. This feature will be crucial in the concentration compactness step in Section~\ref{sec:Concentration_Compactness_Step} to gain smallness for certain multilinear expressions, which is explained in detail in Subsection~\ref{subsubsec:gain_smallness_null_form_below_L2}. It was first pointed out by Klainerman-Machedon~\cite{KlMa1} that the null form $\partial_\alpha \phi \partial^\alpha \psi$ can be estimated ``below'' $L^2_t L^2_x$ provided both factors are free radial waves. The corresponding improvements without the radiality assumption are due to Bourgain~\cite{Bourgain}, Wolff~\cite{Wolff} and Tao~\cite{Tao_bilinear_restriction}. Next we establish a weighted null form estimate.

\begin{prop}[Weighted null form estimate] \label{prop:weighted_null_form_estimate}
For $0 < \lambda \leq \frac{1}{2}$ it holds that 
\begin{equation} \label{equ:weighted_null_form_estimate}
 \bigl\| r^{+ \lambda} \partial_\alpha \phi_{k_1}^{(1)} \partial^\alpha \phi_{k_2}^{(2)} \bigr\|_{L^2_t L^2_x} \lesssim 2^{(\frac{1}{2} - \lambda) \min \{ k_1, k_2 \} } \| \phi_{k_1}^{(1)} \|_{S_{k_1}} \| \phi_{k_2}^{(2)} \|_{S_{k_2}}.
\end{equation}
\end{prop}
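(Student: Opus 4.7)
The plan is to mirror the strategy used in the proof of Proposition~\ref{prop:null_form_estimate}, adapted to carry the spatial weight $r^{+\lambda}$ and to land in $L^2_tL^2_x$ instead of $L^{3/2}_tL^{3/2}_x$. By the symmetry of the expression under swapping $k_1 \leftrightarrow k_2$, I assume without loss of generality $k_1 \le k_2$, so $\min(k_1, k_2) = k_1$ and the target gain is $2^{(1/2-\lambda)k_1}$. Using the radial null form identity
\[
-2\,\partial_\alpha\phi^{(1)}\partial^\alpha\phi^{(2)} = (\partial_t+\partial_r)\phi^{(1)}(\partial_t-\partial_r)\phi^{(2)} + (\partial_t-\partial_r)\phi^{(1)}(\partial_t+\partial_r)\phi^{(2)},
\]
together with symmetry between its two summands, it suffices to bound $\bigl\|r^\lambda(\partial_t+\partial_r)\phi^{(1)}_{k_1}\,(\partial_t-\partial_r)\phi^{(2)}_{k_2}\bigr\|_{L^2_tL^2_x}$. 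Appealing to the atomic decompositions defining $Z_{k_1}^+$ and $Z_{k_2}^-$, I write $(\partial_t+\partial_r)\phi^{(1)}_{k_1} = \psi^{(1,+)}+\psi^{(1,-)}$ and $(\partial_t-\partial_r)\phi^{(2)}_{k_2} = \psi^{(2,+)}+\psi^{(2,-)}$ and expand into four cross-terms.

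The crux is the diagonal interaction $\psi^{(1,+)}\psi^{(2,-)}$, where both factors carry the sharp $L^2_{t\pm r}L^\infty_{t\mp r}$ atomic bound on complementary null directions. I pass to null coordinates $u = t+r$, $v = t-r$, with $r = (u-v)/2 \ge 0$ and $dr\,dt = \tfrac{1}{2}\,du\,dv$, to rewrite
\[
\bigl\|r^\lambda\psi^{(1,+)}\psi^{(2,-)}\bigr\|_{L^2_tL^2_x}^2 \sim \iint_{u>v}(u-v)^{2\lambda+1}\bigl|\psi^{(1,+)}\bigr|^2\bigl|\psi^{(2,-)}\bigr|^2\,du\,dv.
\]
Then I split $r^{2\lambda+1} = r^{2\lambda_1}\,r^{2\lambda_2}$ with $\lambda_1 = \lambda$ and $\lambda_2 = \tfrac{1}{2}$; both exponents lie in $(0,\tfrac{1}{2}]$ exactly thanks to the hypothesis $\lambda \le \tfrac{1}{2}$. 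Since $\psi^{(1,+)}$ is controlled in $L^\infty_v$ pointwise in $u$ while $\psi^{(2,-)}$ is controlled in $L^\infty_u$ pointwise in $v$ (the two suprema being taken in \emph{different} variables), Fubini yields
\[
\iint\bigl|r^\lambda\psi^{(1,+)}\bigr|^2\bigl|r^{1/2}\psi^{(2,-)}\bigr|^2\,du\,dv \le \bigl\|r^\lambda\psi^{(1,+)}\bigr\|_{L^2_uL^\infty_v}^2\bigl\|r^{1/2}\psi^{(2,-)}\bigr\|_{L^2_vL^\infty_u}^2.
\]
Invoking the $Z_{k_1}^+$ norm with parameter $\lambda$ and the $Z_{k_2}^-$ norm at the boundary value $\tfrac{1}{2}$ bounds these respectively by $2^{(1/2-\lambda)k_1}\|\phi^{(1)}\|_{S_{k_1}}$ and $\|\phi^{(2)}\|_{S_{k_2}}$, producing exactly the claimed factor $2^{(1/2-\lambda)k_1}$.

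For the three remaining cross-terms $\psi^{(1,+)}\psi^{(2,+)}$, $\psi^{(1,-)}\psi^{(2,+)}$ and $\psi^{(1,-)}\psi^{(2,-)}$, at least one of the two factors is controlled only via the Strichartz and $Y_{k_i}$ components of its $Z_{k_i}^\pm$ norm. Here I would decompose radially into dyadic shells $r \sim 2^\ell$ and, within each shell, use H\"older to place the $k_1$-frequency factor in $L^\infty_tL^\infty_x$ or a weighted $L^2_tL^\infty_x$ (via Bernstein for $\ell \lesssim -k_1$ and the weighted endpoint Strichartz/$Y_{k_1}$ bounds contained in $S_{k_1}$ for $\ell \gtrsim -k_1$) against the $k_2$-frequency factor in $L^\infty_tL^2_x$ or in the local energy decay component $\|\chi_{\{r\sim 2^\ell\}}r^{-1/2}\psi^{(2,\cdot)}\|_{L^2_tL^2_x}$ of $Y_{k_2}$. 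Distributing the weight $r^\lambda$ across the two factors together with the constraint $\lambda \le \tfrac{1}{2}$ makes the dyadic sum in $\ell$ geometrically convergent, while the $2^{k_1}$ from Bernstein on the low-frequency factor supplies the required $2^{(1/2-\lambda)k_1}$ scaling. The main obstacle is the diagonal case above: the null-coordinate change is precisely what enables the two sharp atomic bounds to be used simultaneously with their suprema aligned in transverse directions, and the exponent constraint $\lambda_1 + \lambda_2 = \lambda + \tfrac{1}{2}$ with $\lambda_1, \lambda_2 \in (0, \tfrac{1}{2}]$ is what fixes the threshold $\lambda \le \tfrac{1}{2}$ appearing in the hypothesis.
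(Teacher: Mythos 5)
Your proposal is correct and follows essentially the same route as the paper's proof: the same reduction to $(\partial_t+\partial_r)\phi^{(1)}_{k_1}(\partial_t-\partial_r)\phi^{(2)}_{k_2}$, the same $(\pm)$ atomic decomposition, and the same treatment of the key $(+)/(-)$ interaction by splitting the weight as $r^{\lambda}\cdot r^{1/2}$ and pairing the transverse $L^2_{t+r}L^\infty_{t-r}$ and $L^2_{t-r}L^\infty_{t+r}$ components, with the remaining cross-terms handled via the Strichartz, weighted endpoint and local energy decay pieces of the $S_k$ and $Y_k$ norms. The paper additionally peels off the region $r\leq 2^{-k_1}$ at the outset (since the weighted endpoint Strichartz and $Y_k$ components carry the cutoff $\chi_{\{r>2^{-k}\}}$), which your remark about using Bernstein for $\ell\lesssim -k_1$ covers in substance.
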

\begin{proof}
 Without loss of generality we may assume that $k_1 \leq k_2$. Then we can dispense with the case when the radial variable is restricted to the range $r \leq 2^{-k_1}$ because here we can just bound by
 \begin{align*}
  \bigl\| \chi_{\{ r \leq 2^{-k_1} \}} r^{+\lambda} \partial_\alpha \phi_{k_1}^{(1)} \partial^\alpha \phi_{k_2}^{(2)} \bigr\|_{L^2_t L^2_x} &\lesssim \sum_{\ell \leq -k_1} \bigl\| \chi_{\{ r \sim 2^\ell \}} r^{\frac{1}{2} + \lambda} \partial_\alpha \phi_{k_1}^{(1)} \bigr\|_{L^\infty_t L^\infty_x} \bigl\| \chi_{\{ r \sim 2^\ell \}} r^{-\frac{1}{2}} \partial^\alpha \phi_{k_2}^{(2)} \bigr\|_{L^2_t L^2_x} \\
  &\lesssim \sum_{\ell \leq -k_1} 2^{(\frac{1}{2} + \lambda) \ell} 2^{k_1} \| \nabla_{t,x} \phi_{k_1}^{(1)} \|_{L^\infty_t L^2_x} \sup_{\ell \in \bZ} \, \bigl\| \chi_{\{ r \sim 2^\ell \}} r^{-\frac{1}{2}} \nabla_{t,x} \phi_{k_2}^{(2)} \bigr\|_{L^2_t L^2_x} \\
  &\lesssim 2^{(\frac{1}{2} - \lambda) k_1} \| \phi_{k_1}^{(1)} \|_{S_{k_1}} \| \phi_{k_2}^{(2)} \|_{S_{k_2}}.
 \end{align*}
 Moreover, since in the radially symmetric setting the null form $\partial_\alpha \phi_{k_1}^{(1)} \partial^\alpha \phi_{k_2}^{(2)}$ can be written as a linear combination of terms $(\partial_t \pm \partial_r) \phi_{k_1}^{(1)} (\partial_t \mp \partial_r) \phi_{k_2}^{(2)}$, we are left to estimate
 \[
  \bigl\| \chi_{\{ r > 2^{-k_1}\}} r^{+\lambda} (\partial_t + \partial_r) \phi_{k_1}^{(1)} (\partial_t - \partial_r) \phi_{k_2}^{(2)} \bigr\|_{L^2_t L^2_x}.
 \]
 To this end we again decompose the two inputs into $(+)$ and $(-)$ components
 \[
  (\partial_t \pm \partial_r) \phi_{k_j}^{(j)} = (\partial_t \pm \partial_r) \phi_{k_j}^{(j,+)} + (\partial_t \pm \partial_r) \phi_{k_j}^{(j,-)}, \quad j = 1,2
 \]
 and then estimate all possible interactions. In the case of $(+)/(-)$ interactions, we have that
 \begin{align*}
  \bigl\| \chi_{\{ r > 2^{-k_1}\}} r^{+\lambda} (\partial_t + \partial_r) \phi_{k_1}^{(1,+)} (\partial_t - \partial_r) \phi_{k_2}^{(2,-)} \bigr\|_{L^2_t L^2_x} &\lesssim \bigl\| r^{+\lambda} (\partial_t + \partial_r) \phi_{k_1}^{(1,+)} \bigr\|_{L^2_{t+r} L^\infty_{t-r}} \bigl\| r^{+\frac{1}{2}} (\partial_t - \partial_r) \phi_{k_2}^{(2,-)} \bigr\|_{L^2_{t-r} L^\infty_{t+r}} \\
  &\lesssim 2^{(\frac{1}{2}-\lambda) k_1} \| \phi_{k_1}^{(1)} \|_{S_{k_1}} \| \phi_{k_2}^{(2)} \|_{S_{k_2}}.
 \end{align*}
 Then we bound the $(+)/(+)$ interactions by
 \begin{align*}
  &\bigl\| \chi_{\{ r > 2^{-k_1}\}} r^{+\lambda} (\partial_t + \partial_r) \phi_{k_1}^{(1,+)} (\partial_t - \partial_r) \phi_{k_2}^{(2,-)} \bigr\|_{L^2_t L^2_x} \\
  &\lesssim \bigl\| \chi_{\{ r > 2^{-k_1} \}} r^{-\lambda} (\partial_t + \partial_r) \phi_{k_1}^{(+)} \bigr\|_{L^2_t L^\infty_x} \sum_{\ell \in \bZ} \, \bigl\| \chi_{\{ r \sim 2^\ell\}} r^{+ 2 \lambda} (\partial_t - \partial_r) \phi_{k_2}^{(+)} \bigr\|_{L^\infty_t L^2_x} \\
  &\lesssim 2^{(\frac{1}{2} + \lambda) k_1} \|\phi_{k_1}^{(1)} \|_{S_{k_1}} 2^{-2\lambda k_2} \|\phi_{k_2}^{(2)}\|_{S_{k_2}} \\
  &\lesssim 2^{(\frac{1}{2} - \lambda) k_1} \|\phi_{k_1}^{(1)} \|_{S_{k_1}} \|\phi_{k_2}^{(2)}\|_{S_{k_2}}.
 \end{align*}
 In the case of $(-)/(-)$ interactions we estimate as follows
 \begin{align*}
  &\bigl\| \chi_{\{ r > 2^{-k_1}\}} r^{+\lambda} (\partial_t + \partial_r) \phi_{k_1}^{(1,-)} (\partial_t - \partial_r) \phi_{k_2}^{(2,-)} \bigr\|_{L^2_t L^2_x} \\
  &\lesssim \sum_{\ell \in \bZ} \, \bigl\| \chi_{\{ r \sim 2^\ell \}} r^{\frac{1}{2} + \lambda} (\partial_t + \partial_r) \phi_{k_1}^{(1,-)} \bigr\|_{L^\infty_t L^\infty_x} \sup_{\ell \in \bZ} \, \bigl\| \chi_{\{ r \sim 2^\ell \}} r^{-\frac{1}{2}} (\partial_t - \partial_r) \phi_{k_2}^{(2,+)} \bigr\|_{L^2_t L^2_x} \\
  &\lesssim 2^{(\frac{1}{2}-\lambda) k_1} \|\phi_{k_1}^{(1)}\|_{S_{k_1}} \|\phi_{k_2}^{(2)}\|_{S_{k_2}}
 \end{align*}
 and the remaining case of $(-)/(+)$ interactions can be treated in exactly the same manner.
\end{proof}

We conclude with the following delicate trilinear estimate.

\begin{prop}[Trilinear estimate] \label{prop:trilinear_estimate}
 We have that
 \begin{equation} \label{equ:trilinear_estimate}
  \bigl\| L \bigl( \phi_{k_1}^{(1)}, \partial_\alpha \phi_{k_2}^{(2)}, \partial^\alpha \phi_{k_3}^{(3)} \bigr) \bigr\|_{L^1_t L^2_x} \lesssim 2^{- \frac{1}{4} ( k_1 - \min \{ k_2, k_3 \} )} \|\phi_{k_1}^{(1)}\|_{S_{k_1}} \|\phi_{k_2}^{(2)}\|_{S_{k_2}} \|\phi_{k_3}^{(3)}\|_{S_{k_3}}
 \end{equation}
 for $k_1 \geq \min \{k_2, k_3\} + O(1)$.
\end{prop}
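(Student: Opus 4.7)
The plan is to use a matched-weights Hölder decomposition: pair the weighted endpoint Strichartz from the $S_{k_1}$ norm on $\phi^{(1)}_{k_1}$ with the weighted null-form estimate of Proposition~\ref{prop:weighted_null_form_estimate} applied to the pair $\partial_\alpha \phi^{(2)}_{k_2} \partial^\alpha \phi^{(3)}_{k_3}$. Without loss of generality assume $k_2 \leq k_3$ (by the symmetry $\partial_\alpha \phi^{(2)} \partial^\alpha \phi^{(3)} = \partial_\alpha \phi^{(3)} \partial^\alpha \phi^{(2)}$), so that $\min\{k_2,k_3\} = k_2$ and the required gain is $2^{-(k_1-k_2)/4}$. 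The exponent $\lambda = 1/4$ in the matched weights $r^{\pm \lambda}$ is chosen precisely so that Proposition~\ref{prop:weighted_null_form_estimate} contributes $2^{+k_2/4}$ and the weighted endpoint Strichartz contributes $2^{-k_1/4}$, multiplying to the desired $2^{-(k_1-k_2)/4}$.

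Decompose $\phi^{(1)}_{k_1} = \chi_{\{r > 2^{-k_1}\}}\phi^{(1)}_{k_1} + \chi_{\{r < 2^{-k_1}\}}\phi^{(1)}_{k_1}$. For the outer piece I would apply H\"older in space and time:
\[
\|\chi_{\{r > 2^{-k_1}\}} \phi^{(1)}_{k_1} \partial_\alpha \phi^{(2)}_{k_2} \partial^\alpha \phi^{(3)}_{k_3}\|_{L^1_t L^2_x} \leq \|\chi_{\{r > 2^{-k_1}\}} r^{-1/4} \phi^{(1)}_{k_1}\|_{L^2_t L^\infty_x} \, \|r^{+1/4} \partial_\alpha \phi^{(2)}_{k_2} \partial^\alpha \phi^{(3)}_{k_3}\|_{L^2_t L^2_x}.
\]
The first factor is bounded by $2^{-k_1/4} \|\phi^{(1)}\|_{S_{k_1}}$ by Definition~\ref{def:S_norm} (the weighted endpoint Strichartz component of $S_{k_1}$ with $\lambda = 1/4$), and the second factor is bounded by $2^{k_2/4} \|\phi^{(2)}\|_{S_{k_2}} \|\phi^{(3)}\|_{S_{k_3}}$ by Proposition~\ref{prop:weighted_null_form_estimate} at $\lambda = 1/4$. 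Multiplying yields exactly the desired bound for the outer region.

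The main obstacle is the inner region, where $r^{-1/4}$ fails to be in $L^\infty_x$ near the origin and the naive $L^2_t L^\infty_x$ bound breaks down. My plan here is to first use Bernstein together with the frequency localization of $\phi^{(1)}_{k_1}$ to bound $\|\phi^{(1)}_{k_1}\|_{L^\infty_t L^\infty_x} \lesssim \|\phi^{(1)}\|_{S_{k_1}}$, reducing to the problem of controlling $\|\chi_{\{r < 2^{-k_1}\}} \partial_\alpha \phi^{(2)}_{k_2} \partial^\alpha \phi^{(3)}_{k_3}\|_{L^1_t L^2_x}$ with a gain $2^{-(k_1-k_2)/4}$. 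One then applies Hölder with a finite exponent, for instance
\[
\|\chi_{\{r<2^{-k_1}\}} F\|_{L^1_t L^2_x} \leq \|\chi_{\{r < 2^{-k_1}\}} r^{-1/4}\|_{L^\infty_t L^4_x} \, \|r^{+1/4} F\|_{L^1_t L^4_x},
\]
where the spatial $L^4_x$ norm of $\chi_{\{r < 2^{-k_1}\}} r^{-1/4}$ is explicitly $\sim 2^{-k_1/4}$ by direct integration, while the weighted $L^1_t L^4_x$ bound on the null form $F$ is obtained by interpolating Proposition~\ref{prop:weighted_null_form_estimate} with an unweighted $L^{3/2}_t L^{3/2}_x$ bound from Proposition~\ref{prop:null_form_estimate}, using that the product has Fourier support at $|\xi| \lesssim 2^{k_3}$ (Bernstein applied to the product) and exploiting the smallness $r^{+1/4} \leq 2^{-k_1/4}$ on the inner ball to avoid a loss in the highest frequency $k_3$.

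Alternatively, for the inner region one can proceed via a characteristic $(\pm)$-decomposition of $\partial_\alpha \phi^{(2)}_{k_2}$ and $\partial^\alpha \phi^{(3)}_{k_3}$ into $(\partial_t \pm \partial_r)$ components, placing the $(+)(-)$ and $(-)(+)$ mixed pairings into $L^2_{t+r} L^\infty_{t-r}$ and $L^2_{t-r} L^\infty_{t+r}$ norms from the atomic $Z_{k_j}^\pm$ spaces of Definition~\ref{def:Z_spaces}, in the same spirit as the proof of Proposition~\ref{prop:null_form_estimate}. This second route treats the inner region in a way that is structurally parallel to the treatment of the outer region and avoids any interpolation argument. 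Combining the outer and inner contributions yields the claimed trilinear estimate \eqref{equ:trilinear_estimate}.
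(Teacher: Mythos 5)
Your treatment of the outer region $\{r>2^{-k_1}\}$ is correct and is in fact a streamlined version of the paper's argument: the paper splits instead at $r=2^{-k_2}$ (with $k_2=\min\{k_2,k_3\}$) and, on the outer region, redoes the $(\pm)$ case analysis by hand, whereas you obtain the same bound in one line by pairing the weighted endpoint Strichartz component of $S_{k_1}$ at $\lambda=\tfrac14$ with Proposition~\ref{prop:weighted_null_form_estimate} at $\lambda=\tfrac14$; since the latter already encodes that case analysis and holds without any restriction on $r$, this is legitimate and cleaner.

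The genuine gap is in your primary route for the inner region. Having placed $\phi^{(1)}_{k_1}$ in $L^\infty_t L^\infty_x$, your H\"older step
\[
\bigl\|\chi_{\{r<2^{-k_1}\}}F\bigr\|_{L^1_tL^2_x}\leq \bigl\|\chi_{\{r<2^{-k_1}\}}r^{-1/4}\bigr\|_{L^\infty_tL^4_x}\,\bigl\|r^{1/4}F\bigr\|_{L^1_tL^4_x}
\]
forces the null form into an $L^1$-in-time norm. But the only available bounds on $\partial_\alpha\phi^{(2)}_{k_2}\partial^\alpha\phi^{(3)}_{k_3}$ are the weighted $L^2_tL^2_x$ estimate of Proposition~\ref{prop:weighted_null_form_estimate} and the $L^p_{t,x}$ estimates of Proposition~\ref{prop:null_form_estimate} with $p\geq\tfrac32$; interpolation between $L^2_tL^2_x$ and $L^{3/2}_tL^{3/2}_x$ only produces $L^q_tL^q_x$ with $q\in[\tfrac32,2]$, Bernstein improves the spatial exponent only, and one cannot H\"older down to $L^1_t$ on an unbounded time interval. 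So $\|r^{1/4}F\|_{L^1_tL^4_x}$ is not reachable from the stated ingredients, independently of the $2^{k_3/2}$ loss that Bernstein on the product would also incur. The fix is to not freeze $\phi^{(1)}_{k_1}$ in $L^\infty_t$: the paper puts $\phi^{(1)}_{k_1}$ into $L^4_tL^\infty_x$ (which already yields the gain $2^{-k_1/4}$ by the Strichartz component of $S_{k_1}$), puts $\chi_{\{r\sim2^\ell\}}r^{1/2}\nabla_{t,x}\phi^{(2)}_{k_2}$ into $L^4_tL^\infty_x$ summed over the dyadic shells $\ell$ in the inner region (yielding $2^{k_2/4}$), and puts $\phi^{(3)}_{k_3}$ into the local energy decay norm $\sup_\ell\|\chi_{\{r\sim2^\ell\}}r^{-1/2}\nabla_{t,x}\phi^{(3)}_{k_3}\|_{L^2_tL^2_x}$, so that the time exponents $\tfrac14+\tfrac14+\tfrac12$ add up to $1$; no null structure is needed in the inner region at all. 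Your alternative $(\pm)$-decomposition route for the inner region is plausible in spirit but is not carried out, and as stated it inherits the same problem of extracting the factor $2^{-k_1/4}$ while reaching $L^1_t$ with $\phi^{(1)}_{k_1}$ held only in $L^\infty_{t,x}$.
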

\begin{proof}
 We may assume without loss of generality that $k_2 \leq k_3$. Moreover, we may restrict the radial variable to the range $\{ r > 2^{-k_2} \}$, because otherwise we can just easily estimate 
 \begin{align*}
  &\bigl\| \chi_{\{ r \leq 2^{-k_2}\}} L \bigl( \phi_{k_1}^{(1)}, \partial_\alpha \phi_{k_2}^{(2)}, \partial^\alpha \phi_{k_3}^{(3)} \bigr) \bigr\|_{L^1_t L^2_x} \\ 
  &\lesssim \| \phi_{k_1}^{(1)} \|_{L^4_t L^\infty_x} \sum_{\ell \leq -k_2} \| \chi_{\{ r \sim 2^\ell \}} r^{+\frac{1}{2}} \nabla_{t,x} \phi_{k_2}^{(2)} \|_{L^4_t L^\infty_x} \sup_{\ell \in \bZ} \| \chi_{\{r \sim 2^{\ell}\}} r^{-\frac{1}{2}} \nabla_{t,x} \phi_{k_3}^{(3)} \|_{L^2_t L^2_x} \\
  &\lesssim 2^{-\frac{1}{4} (k_1 - k_2)} \|\phi_{k_1}^{(1)}\|_{S_{k_1}} \|\phi_{k_2}^{(2)}\|_{S_{k_2}} \|\phi_{k_3}^{(3)}\|_{S_{k_3}}.
 \end{align*}
 In the radially symmetric setting we have the identity
 \[
  -2 \partial_\alpha \phi \partial^\alpha \psi = (\partial_t + \partial_r) \phi (\partial_t - \partial_r) \psi + (\partial_t - \partial_r) \phi (\partial_t + \partial_r) \psi.
 \]
 By symmetry it therefore suffices to now bound the expression
 \[
  \bigl\| \chi_{\{r > 2^{-k_2}\}} L \bigl( \phi_{k_1}^{(1)}, (\partial_t + \partial_r) \phi_{k_2}^{(2)}, (\partial_t - \partial_r) \phi_{k_3}^{(3)} \bigr) \bigr\|_{L^1_t L^2_x}. 
 \]
 To this end we decompose $(\partial_t + \partial_r) \phi_{k_2}^{(2)}$ and $(\partial_t - \partial_r) \phi_{k_3}^{(3)}$ into $(+)$ and $(-)$ components 
 \[
  (\partial_t \pm \partial_t) \phi_{k_j}^{(j)} = (\partial_t \pm \partial_r) \phi_{k_j}^{(j,+)} + (\partial_t \pm \partial_r) \phi_{k_j}^{(j,-)} \quad \text{for } j = 2, 3
 \]
 and distinguish all possible interaction scenarios. In the case of $(+)/(-)$ interactions, we estimate
 \begin{align*}
  &\bigl\| \chi_{\{r > 2^{-k_2}\}} L \bigl( \phi_{k_1}^{(1)}, (\partial_t + \partial_r) \phi_{k_2}^{(2, +)}, (\partial_t - \partial_r) \phi_{k_3}^{(3, -)} \bigr) \bigr\|_{L^1_t L^2_x} \\
  &\lesssim \bigl\| \chi_{\{ r > 2^{-k_1}\}} r^{-\frac{1}{4}} \phi_{k_1}^{(1)} \bigr\|_{L^2_t L^\infty_x} \bigl\| r^{+\frac{1}{4}} (\partial_t + \partial_r) \phi_{k_2}^{(2,+)} \bigr\|_{L^2_{t+r} L^\infty_{t-r}} \bigl\| r^{+\frac{1}{2}} (\partial_t - \partial_r) \phi_{k_3}^{(3,-)} \bigr\|_{L^2_{t-r} L^\infty_{t+r}} \\
  &\lesssim 2^{- \frac{1}{4} (k_1 - k_2)} \|\phi_{k_1}^{(1)}\|_{S_{k_1}} \|\phi_{k_2}^{(2)}\|_{S_{k_2}} \|\phi_{k_3}^{(3)}\|_{S_{k_3}},
 \end{align*}
 where we used that $k_1 \geq k_2 + O(1)$ by assumption. Then we bound the $(-)/(+)$ interactions by
 \begin{align*}
  &\bigl\| \chi_{\{r > 2^{-k_2}\}} L \bigl( \phi_{k_1}^{(1)}, (\partial_t + \partial_r) \phi_{k_2}^{(2, -)}, (\partial_t - \partial_r) \phi_{k_3}^{(3, +)} \bigr) \bigr\|_{L^1_t L^2_x} \\
  &\lesssim \bigl\| \chi_{\{ r > 2^{-k_1}\}} r^{-\frac{1}{4}} \phi_{k_1}^{(1)} \bigr\|_{L^2_t L^\infty_x} \sum_{\ell \in \bZ} \, \bigl\| \chi_{\{ r \sim 2^\ell \}} r^{+\frac{3}{4}} (\partial_t + \partial_r) \phi_{k_2}^{(2,-)} \bigr\|_{L^\infty_t L^\infty_x} \sup_{\ell \in \bZ} \, \bigl\| \chi_{\{r \sim 2^\ell\}} r^{-\frac{1}{2}} \nabla_{t,x} \phi_{k_3}^{(3,+)} \bigr\|_{L^2_t L^2_x} \\
  &\lesssim 2^{-\frac{1}{4} (k_1 - k_2)} \|\phi_{k_1}^{(1)}\|_{S_{k_1}} \|\phi_{k_2}^{(2)}\|_{S_{k_2}} \|\phi_{k_3}^{(3)}\|_{S_{k_3}}
 \end{align*}
 and the case of $(-)/(-)$ interactions can be treated in exactly the same manner. Finally, we have to consider the case of $(+)/(+)$ interactions
 \begin{align*}
  &\bigl\| \chi_{\{r > 2^{-k_2}\}} L \bigl( \phi_{k_1}^{(1)}, (\partial_t + \partial_r) \phi_{k_2}^{(2, +)}, (\partial_t - \partial_r) \phi_{k_3}^{(3, +)} \bigr) \bigr\|_{L^1_t L^2_x} \\
  &\lesssim \bigl\| \chi_{\{ r > 2^{-k_1} \}} r^{-\frac{1}{4}} \phi_{k_1}^{(1)} \bigr\|_{L^2_t L^\infty_x} \bigl\| \chi_{\{ r > 2^{-k_2}\}} r^{-\frac{1}{4}} (\partial_t + \partial_r) \phi_{k_2}^{(2,+)} \bigr\|_{L^2_t L^\infty_x} \sum_{\ell \in \bZ} \, \bigl\| \chi_{\{ r \sim 2^\ell \}} r^{+\frac{1}{2}} (\partial_t - \partial_r) \phi_{k_3}^{(3,+)} \bigr\|_{L^\infty_t L^2_x} \\
  &\lesssim 2^{-\frac{1}{4}(k_1 - k_2)} 2^{-\frac{1}{2}(k_3-k_2)} \|\phi_{k_1}^{(1)}\|_{S_{k_1}} \|\phi_{k_2}^{(2)}\|_{S_{k_2}} \|\phi_{k_3}^{(3)}\|_{S_{k_3}},
 \end{align*}
 which finishes the proof.
\end{proof}

\section{Decomposition of the nonlinearity and renormalization} \label{sec:decomposition_nonlinearity}

In this section we study the structure of the nonlinearity in the frequency localized wave maps equation
\begin{equation}
 \Box \phi_k = - P_k \bigl( \phi \partial_\alpha \phi^\dagger \partial^\alpha \phi \bigr), \quad k \in \bZ.
\end{equation}
Our main tools to estimate this wave maps nonlinearity are the null form estimate~\eqref{equ:null_form_estimate} and the trilinear estimate~\eqref{equ:trilinear_estimate} from the previous section. However, these will turn out to be far from sufficient, even for small energies. Accordingly, using Littlewood-Paley theory we will ``peel off'' the ``good'' parts from the nonlinearity to isolate its non-perturbative parts. Then we will introduce the renormalization procedure to deal with the latter parts of the nonlinearity. For the sake of readability we decided to only introduce a simplified version of our renormalization procedure in this section. This version suffices for small energies, but in order to handle large energies the construction of our renormalization procedure will, roughly speaking, involve further frequency and spatial truncations. We will explain the necessary modifications as we go in the later sections of this paper. 

\medskip

Our starting point is the following decomposition of the wave maps nonlinearity at fixed frequency $2^k$,
\begin{align}
 &P_k \bigl( \phi \partial_\alpha \phi^\dagger \partial^\alpha \phi \bigr) \nonumber \\
 &= 2 \sum_{k_2 \leq k-10} \phi_{\leq k_2-10} \partial_\alpha \phi_{k_2}^\dagger \partial^\alpha \phi_k \label{equ:WM_nonlinearity_bad0} \\
 &\quad + \sum_{k_2 > k-10} \sum_{k_3 = k_2+O(1)} P_k \bigl( \phi_{\leq k_2 + 20} \partial_\alpha \phi_{k_2}^\dagger \partial^\alpha \phi_{k_3} \bigr) \label{equ:WM_nonlinearity_bad1} \\
 &\quad + 2 \sum_{k_2 \leq k-10} \phi_{\leq k_2-10} \bigl( P_k \bigl( \partial_\alpha \phi_{k_2}^\dagger \partial^\alpha \phi_{k-5 < \cdot < k+5} \bigr) - \partial_\alpha \phi_{k_2}^\dagger \partial^\alpha \phi_k \bigr). \label{equ:WM_nonlinearity_bad2} \\ 
 &\quad + \sum_{k_2 \leq k-10} \sum_{k_3 = k + O(1)} P_k L \bigl( \phi_{k_2 - 10 < \cdot \leq k-10}, \partial_\alpha \phi_{k_2}, \partial^\alpha \phi_{k_3} \bigr) \label{equ:WM_nonlinearity_derivative_on_lowest_freq1}  \\
 &\quad + \sum_{k_2 > k-10} \sum_{k_1 = k_2 + O(1)} \sum_{k_3 \leq k_2 + O(1)} P_k L \bigl( \phi_{k_1}, \partial_\alpha \phi_{k_2}, \partial^\alpha \phi_{k_3} \bigr) \label{equ:WM_nonlinearity_derivative_on_lowest_freq2} \\
 &\quad + \sum_{k_2 \leq k-10} \sum_{k_3 = k + O(1)} 2^{-k} P_k L \bigl( \nabla_x \phi_{\leq k_2 - 10}, \partial_\alpha \phi_{k_2}, \partial^\alpha \phi_{k_3} \bigr). \label{equ:WM_nonlinearity_derivative_transferred_to_lowest_freq} 
\end{align}
In order to arrive at this decomposition, we begin with
\[
 P_k \bigl( \phi \partial_\alpha \phi^\dagger \partial^\alpha \phi \bigr) = P_k \bigl( \phi \partial_\alpha \phi^\dagger_{\leq k-10} \partial^\alpha \phi_{\leq k-10} \bigr) + 2 P_k \bigl( \phi \partial_\alpha \phi_{\leq k-10}^\dagger \partial^\alpha \phi_{> k-10} \bigr) + P_k \bigl( \phi \partial_\alpha \phi^\dagger_{>k-10} \partial^\alpha \phi_{>k-10} \bigr)
\]
and then further decompose into
\begin{align}
 &P_k \bigl( \phi \partial_\alpha \phi^\dagger \partial^\alpha \phi \bigr) \nonumber \\
 &= P_k \bigl( \phi \partial_\alpha \phi^\dagger_{\leq k-10} \partial^\alpha \phi_{\leq k-10} \bigr) \label{equ:WM_nonlinearity_precise_decomp_term1} \\
 &\quad + 2 \sum_{k_1 > k-10} P_k \bigl( \phi_{k_1} \partial_\alpha \phi_{\leq k-10}^\dagger \partial^\alpha \phi_{k_1 + O(1)} \bigr) \label{equ:WM_nonlinearity_precise_decomp_term2} \\
 &\quad + 2 \sum_{k_2 \leq k-10} P_k \bigl( \phi_{k_2 -10 < \cdot \leq k-10} \partial_\alpha \phi_{k_2}^\dagger \partial^\alpha \phi_{k-5 < \cdot < k+5} \bigr) \label{equ:WM_nonlinearity_precise_decomp_term3} \\
 &\quad + 2 \sum_{k_2 \leq k-10} P_k \bigl( \phi_{\leq k_2-10} \partial_\alpha \phi_{k_2}^\dagger \partial^\alpha \phi_{k-5 < \cdot < k+5} \bigr) \label{equ:WM_nonlinearity_precise_decomp_term4} \\
 &\quad + \sum_{k_2 > k-10} P_k \bigl( \phi_{k_2-5 < \cdot < k_2+5} \partial_\alpha \phi_{k_2}^\dagger \partial^\alpha \phi_{\leq k_2 -10} \bigr) \label{equ:WM_nonlinearity_precise_decomp_term5} \\
 &\quad + \sum_{k_2 > k-10} P_k \bigl( \phi_{\leq k_2 + 20} \partial_\alpha \phi_{k_2}^\dagger \partial^\alpha \phi_{k_2-10 < \cdot \leq k_2+10} \bigr) \label{equ:WM_nonlinearity_precise_decomp_term6} \\
 &\quad + \sum_{k_2 > k-10} \sum_{k_3 > k_2 + 10} P_k \bigl( \phi_{k_3 - 5 < \cdot < k_3 + 5} \partial_\alpha \phi_{k_2}^\dagger \partial^\alpha \phi_{k_3} \bigr). \label{equ:WM_nonlinearity_precise_decomp_term7}
\end{align}
The first term~\eqref{equ:WM_nonlinearity_precise_decomp_term1} can be estimated in the same manner as the term~\eqref{equ:WM_nonlinearity_derivative_on_lowest_freq2} and is therefore not further included in the decomposition \eqref{equ:WM_nonlinearity_bad0}--\eqref{equ:WM_nonlinearity_derivative_transferred_to_lowest_freq}. The second term~\eqref{equ:WM_nonlinearity_precise_decomp_term2} is of type~\eqref{equ:WM_nonlinearity_derivative_on_lowest_freq2}, while the third term~\eqref{equ:WM_nonlinearity_precise_decomp_term3} is of type~\eqref{equ:WM_nonlinearity_derivative_on_lowest_freq1}. Using the Leibniz rule~\eqref{equ:leibniz_rule} for the projection $P_k$, we may write the fourth term~\eqref{equ:WM_nonlinearity_precise_decomp_term4} as
\begin{equation} \label{equ:WM_nonlinearity_precise_decomp_term3_expanded}
 \begin{aligned}
  & 2 \sum_{k_2 \leq k-10} P_k \bigl( \phi_{k_2 -10 < \cdot \leq k-10} \partial_\alpha \phi_{k_2}^\dagger \partial^\alpha \phi_{k-5 < \cdot < k+5} \bigr) \\
  &\quad = 2 \sum_{k_2 \leq k-10} \phi_{\leq k_2 -10} \partial_\alpha \phi_{k_2}^\dagger \partial^\alpha \phi_k \\
  &\quad \quad + 2 \sum_{k_2 \leq k-10} 2^{-k} P_k L \bigl( \nabla_x \phi_{\leq k_2 -10}, \partial_\alpha \phi_{k_2}, \partial^\alpha \phi_{k-5 < \cdot < k+5} \bigr) \\
  &\quad \quad + 2 \sum_{k_2 \leq k-10} \phi_{\leq k_2 -10} \bigl( P_k (\partial_\alpha \phi_{k_2}^\dagger \partial^\alpha \phi_{k-5 < \cdot < k+5} ) - \partial_\alpha \phi_{k_2}^\dagger \partial^\alpha \phi_k \bigr).
 \end{aligned}
\end{equation}
Then the first term on the right-hand side of~\eqref{equ:WM_nonlinearity_precise_decomp_term3_expanded} coincides exactly with the term~\eqref{equ:WM_nonlinearity_bad0}, the second term on the right-hand side is of type~\eqref{equ:WM_nonlinearity_derivative_transferred_to_lowest_freq} and the third term on the right-hand side coincides with the term~\eqref{equ:WM_nonlinearity_bad2}. Finally, the terms \eqref{equ:WM_nonlinearity_precise_decomp_term5} and \eqref{equ:WM_nonlinearity_precise_decomp_term7} are both of the type~\eqref{equ:WM_nonlinearity_derivative_on_lowest_freq2}, while the high-high interactions term~\eqref{equ:WM_nonlinearity_precise_decomp_term6} coincides with the term~\eqref{equ:WM_nonlinearity_bad1}.

\medskip

Let us now return to the decomposition \eqref{equ:WM_nonlinearity_bad0}--\eqref{equ:WM_nonlinearity_derivative_transferred_to_lowest_freq} of the wave maps nonlinearity at fixed frequency. In order to estimate the term~\eqref{equ:WM_nonlinearity_derivative_on_lowest_freq1} we will use the trilinear estimate~\eqref{equ:trilinear_estimate}, while we will bound the terms \eqref{equ:WM_nonlinearity_derivative_on_lowest_freq2} and \eqref{equ:WM_nonlinearity_derivative_transferred_to_lowest_freq} using a combination of the null form estimate~\eqref{equ:null_form_estimate} and Strichartz estimates. In contrast, the first three terms \eqref{equ:WM_nonlinearity_bad0} -- \eqref{equ:WM_nonlinearity_bad2} are not amenable to good bounds using the null form estimate and the trilinear estimate. Correspondingly, we have to introduce a renormalization procedure to deal with these non-perturbative terms. For the term~\eqref{equ:WM_nonlinearity_bad0} we will follow Tao's idea \cite{Tao1, Tao2} to apply a suitable gauge transformation that casts this part of the nonlinearity into a better form, while for the other two terms \eqref{equ:WM_nonlinearity_bad1}--\eqref{equ:WM_nonlinearity_bad2} we will pass to a ``nonlinearly modified'' version of $\phi_k$ that satisfies a better equation. 

We begin with the latter part of our renormalization procedure. Using the identity
\begin{equation} \label{equ:reinsert_wm_equation_identity}
 \begin{aligned}
  \Box \bigl( \phi^{(1)} \phi^{(2) \dagger} \phi^{(3)} \bigr) &= 2 \phi^{(1)} \partial_\alpha \phi^{(2) \dagger} \partial^\alpha \phi^{(3)} + 2 \partial_\alpha \phi^{(1)} \partial^\alpha \phi^{(2) \dagger} \phi^{(3)} + 2 \partial_\alpha \phi^{(1)} \phi^{(2) \dagger} \partial^\alpha \phi^{(3)} \\
  &\quad \quad + \Box \phi^{(1)} \phi^{(2) \dagger} \phi^{(3)} + \phi^{(1)} \Box \phi^{(2) \dagger} \phi^{(3)} + \phi^{(1)} \phi^{(2) \dagger} \Box \phi^{(3)},
 \end{aligned}
\end{equation}
we may rewrite the high-high interaction term~\eqref{equ:WM_nonlinearity_bad1} into the schematic form 
\begin{align}
 \sum_{k_2 > k-10} \sum_{k_3 = k_2+O(1)} P_k \bigl( \phi_{\leq k_2 + 20} \partial_\alpha \phi_{k_2}^\dagger \partial^\alpha \phi_{k_3} \bigr) &= \Box \biggl( \frac{1}{2} \sum_{k_2 > k-10} \sum_{k_3 = k_2+O(1)} P_k \bigl( \phi_{\leq k_2 + 20} \phi_{k_2}^\dagger \phi_{k_3} \bigr) \biggr) \nonumber \\
 &\quad + \sum_{k_2 > k-10} \sum_{k_3 = k_2+O(1)} P_k L \bigl( \partial_\alpha \phi_{\leq k_2+20}, \partial^\alpha \phi_{k_2}, \phi_{k_3} \bigr) \label{equ:high_high_reinserted1} \\
 &\quad + \sum_{k_2 > k-10} \sum_{k_3 = k_2+O(1)} P_k L \bigl( \Box \phi_{\leq k_2+20}, \phi_{k_2}, \phi_{k_3} \bigr) \label{equ:high_high_reinserted_quintilinear1} \\
 &\quad + \sum_{k_2 > k-10} \sum_{k_3 = k_2+O(1)} P_k L \bigl( \phi_{\leq k_2+20}, \Box \phi_{k_2}, \phi_{k_3} \bigr). \label{equ:high_high_reinserted_quintilinear2}
\end{align}
Here we note that all terms apart from the first one on the right-hand side of the previous equation now have a better structure in that a derivative falls on a lowest frequency or upon reinserting the wave maps equation these terms become quintilinear and thus easier to estimate. In particular, we note that the term~\eqref{equ:high_high_reinserted1} is of the same form as the term~\eqref{equ:WM_nonlinearity_derivative_on_lowest_freq2}.

Similarly, using the Leibniz rule~\eqref{equ:leibniz_rule} for the projection $P_k$ and the identity~\eqref{equ:reinsert_wm_equation_identity}, the other difficult term~\eqref{equ:WM_nonlinearity_bad2} can be rewritten into the schematic form
\begin{align}
 2 \sum_{k_2 \leq k-10} \phi_{\leq k_2-10} \bigl( P_k \bigl( \partial_\alpha \phi_{k_2}^\dagger \partial^\alpha \phi_{k-5 < \cdot < k+5} \bigr) - \partial_\alpha \phi_{k_2}^\dagger \partial^\alpha \phi_k \bigr) &= \Box \biggl( \sum_{k_2 \leq k-10} \phi_{\leq k_2 - 10} \bigl( P_k (\phi_{k_2}^\dagger \phi_{k-5 < \cdot < k+5} ) - \phi_{k_2}^\dagger \phi_k \bigr) \biggr) \nonumber \\
 &\quad + \sum_{k_2 \leq k-10} 2^{-k} L \bigl( \partial_\alpha \phi_{\leq k_2-10}, \nabla_x \partial^\alpha \phi_{k_2}, \phi_{k-5 < \cdot < k+5} \bigr) \label{equ:derivative_transferred_type2_reinserted1} \\
 &\quad + \sum_{k_2 \leq k-10} 2^{-k} L \bigl( \partial_\alpha \phi_{\leq k_2-10}, \nabla_x \phi_{k_2}, \partial^\alpha \phi_{k-5 < \cdot < k+5} \bigr) \label{equ:derivative_transferred_type2_reinserted2} \\
 &\quad + \sum_{k_2 \leq k-10} 2^{-k} L \bigl( \Box \phi_{\leq k_2-10}, \nabla_x \phi_{k_2}, \phi_{k-5 < \cdot < k+5} \bigr) \label{equ:derivative_transferred_type2_quintilinear1} \\
 &\quad + \sum_{k_2 \leq k-10} 2^{-k} L \bigl( \phi_{\leq k_2-10}, \nabla_x \Box \phi_{k_2}, \phi_{k-5 < \cdot < k+5} \bigr) \label{equ:derivative_transferred_type2_quintilinear2} \\
 &\quad + \sum_{k_2 \leq k-10} 2^{-k} L \bigl( \phi_{\leq k_2-10}, \nabla_x \phi_{k_2}, \Box \phi_{k-5 < \cdot < k+5} \bigr), \label{equ:derivative_transferred_type2_quintilinear3}
\end{align}
where again all terms apart from the first one on the right-hand side are now amenable to better estimates either because a derivative now falls on the lowest frequency or because upon reinserting the wave maps equation the term is now quintilinear. We observe that the terms \eqref{equ:derivative_transferred_type2_reinserted1} and \eqref{equ:derivative_transferred_type2_reinserted2} are essentially of the same form as the term~\eqref{equ:WM_nonlinearity_derivative_transferred_to_lowest_freq} and can be dealt with in the same manner. Hence, introducing the following ``nonlinearly modified'' version of~$\phi_k$,
\begin{equation} \label{equ:definition_phi_bar}
 \bar{\phi}_k := \phi_k + \frac{1}{2} \sum_{k_2 > k-10} \sum_{k_3 = k_2 + O(1)} P_k \bigl( \phi_{\leq k_2+20} \phi_{k_2}^\dagger \phi_{k_3} \bigr) + \sum_{k_2 \leq k-10} \phi_{\leq k_2 - 10} \bigl( P_k (\phi_{k_2}^\dagger \phi_{k-5 < \cdot < k+5} ) - \phi_{k_2}^\dagger \phi_k \bigr),
\end{equation}
our previous considerations imply that $\bar{\phi}_k$ satisfies a wave equation of the following schematic form
\begin{equation} \label{equ:phi_bar_nlw}
 \begin{aligned}
  \Box \bar{\phi}_k &= - 2 \sum_{k_2 \leq k-10} \phi_{\leq k_2-10} \partial_\alpha \phi_{k_2}^\dagger \partial^\alpha \phi_k \\
  &\quad + \sum_{k_2 \leq k-10} \sum_{k_3 = k + O(1)} P_k L \bigl( \phi_{k_2 - 10 < \cdot \leq k-10}, \partial_\alpha \phi_{k_2}, \partial^\alpha \phi_{k_3} \bigr) \\
  &\quad + \sum_{k_2 > k-10} \sum_{k_1 = k_2 + O(1)} \sum_{k_3 \leq k_2 + O(1)} P_k L \bigl( \phi_{k_1}, \partial_\alpha \phi_{k_2}, \partial^\alpha \phi_{k_3} \bigr) \\
  &\quad + \sum_{k_2 \leq k-10} \sum_{k_3 = k + O(1)} 2^{-k} P_k L \bigl( \nabla_x \phi_{\leq k_2 - 10}, \partial_\alpha \phi_{k_2}, \partial^\alpha \phi_{k_3} \bigr) \\
  &\quad + Q_k(\phi),
 \end{aligned}
\end{equation}
where $Q_k(\phi)$ comprises all the quintilinear terms \eqref{equ:high_high_reinserted_quintilinear1}--\eqref{equ:high_high_reinserted_quintilinear2} and \eqref{equ:derivative_transferred_type2_quintilinear1}--\eqref{equ:derivative_transferred_type2_quintilinear3}
\begin{align*}
 Q_k(\phi) &= \sum_{k_2 > k-10} \sum_{k_3 = k_2+O(1)} P_k L \bigl( P_{\leq k_2+20} ( \phi \partial_\alpha \phi^\dagger \partial^\alpha \phi ), \phi_{k_2}, \phi_{k_3} \bigr) \\
 &\quad + \sum_{k_2 > k-10} \sum_{k_3 = k_2+O(1)} P_k L \bigl( \phi_{\leq k_2+20}, P_{k_2} ( \phi \partial_\alpha \phi^\dagger \partial^\alpha \phi ), \phi_{k_3} \bigr) \\
 &\quad + \sum_{k_2 \leq k-10} 2^{-k} L \bigl( P_{\leq k_2-10}(\phi \partial_\alpha \phi^\dagger \partial^\alpha \phi), \nabla_x \phi_{k_2}, \phi_{k-5 < \cdot < k+5} \bigr) \\
 &\quad + \sum_{k_2 \leq k-10} 2^{-k} L \bigl( \phi_{\leq k_2-10}, \nabla_x P_{k_2} ( \phi \partial_\alpha \phi^\dagger \partial^\alpha \phi ), \phi_{k-5 < \cdot < k+5} \bigr) \\
 &\quad + \sum_{k_2 \leq k-10} 2^{-k} L \bigl( \phi_{\leq k_2-10}, \nabla_x \phi_{k_2}, P_{k-5 < \cdot < k+5} ( \phi \partial_\alpha \phi^\dagger \partial^\alpha \phi ) \bigr).
\end{align*}
All the quintilinear terms in $Q_k(\phi)$ can be easily estimated using just combinations of the null form estimate~\eqref{equ:null_form_estimate} and Strichartz estimates as well as the fact that $\|\phi\|_{L^\infty_t L^\infty_x} \lesssim 1$. Thus, we can overcome the difficulties with the terms \eqref{equ:WM_nonlinearity_bad1}--\eqref{equ:WM_nonlinearity_bad2} at the expense of passing from $\phi_k$ to the new variable $\bar{\phi}_k$. Note that $\bar{\phi}_k$ is also localized to frequency $2^k$. In the following we will sometimes use the notation $\bar{\phi} = \sum_{k\in\bZ} \bar{\phi}_k$. 

\medskip

Now we still have to deal with the more severe term~\eqref{equ:WM_nonlinearity_bad0} which comes up as the first term on the right-hand side of the equation~\eqref{equ:phi_bar_nlw} for $\bar{\phi}_k$. We follow Tao's idea~\cite{Tao1, Tao2} and apply a renormalization $U_{\leq k-10} \bar{\phi}_k$ so that $\Box \bigl( U_{\leq k-10} \bar{\phi}_k \bigr)$ takes on a much better form. More specifically, we implement a version of the gauge construction introduced by Sterbenz-Tataru~\cite{Sterbenz_Tataru1, Sterbenz_Tataru2}. To this end we pick a smooth cut-off function $\eta \in C^\infty(\bR)$ such that $\eta(y) = 0$ for $y \leq -\frac{2}{3}$ and $\eta(y) = 1$ for $y \geq -\frac{1}{3}$. Then we define for $h \in \bR$ the matrix
\begin{equation} \label{equ:matrix_gauge_transform}
 B_h := \frac{d}{dh} \sum_{\ell \in \bZ} \eta(h - \ell) (\phi_{\leq \ell - 10} \phi_{\ell}^\dagger - \phi_{\ell} \phi_{\leq \ell -10}^\dagger) = \sum_{\ell \in \bZ} \eta'(h - \ell) (\phi_{\leq \ell - 10} \phi_{\ell}^\dagger - \phi_{\ell} \phi_{\leq \ell -10}^\dagger).
\end{equation}
Now we define the gauge transformations $U_{\leq h}$ for $h \in \bR$ via the ODE
\begin{equation} \label{equ:ode_gauge_transform}
 \frac{d}{dh} U_{\leq h} = U_{\leq h} B_h, \quad \lim_{h \to -\infty} U_{\leq h} = Id.
\end{equation}
By the anti-symmetry of $B_h$, it follows that the gauge transformations $U_{\leq h}$ as solutions to the ODE~\eqref{equ:ode_gauge_transform} satisfy $U_{\leq h}^\dagger U_{\leq h} = Id$ and are thus exactly orthogonal. However, they do not have a sharp frequency localization. Fortunately, we will see that the renormalized quantity $U_{\leq k-10} \bar{\phi}_k$ is still approximately localized to frequency $2^k$ up to exponentially decaying tails.

\medskip

Next, we compute the equation that the renormalization $U_{\leq k-10} \bar{\phi}_k$ satisfies. Upon defining for $k \in \bZ$
\begin{equation}
 A_{\alpha; k} = \partial_\alpha \phi_k \phi_{\leq k - 10}^\dagger - \phi_{\leq k - 10} \partial_\alpha \phi_k^\dagger
\end{equation}
and
\begin{equation} \label{equ:definition_connection_form_A}
 A_{\alpha; \leq k} = \sum_{\ell \leq k} A_{\alpha; \ell},
\end{equation}
we find that 
\begin{equation} \label{equ:nlw_for_renormalized_phi_bar} 
 \begin{aligned}
  \Box \bigl( U_{\leq k-10} \bar{\phi}_k \bigr) &= U_{\leq k-10} \bigl( \Box \bar{\phi}_k - 2 A_{\alpha; \leq k-10} \partial^\alpha \phi_k \bigr) \\
  &\quad + 2 \bigl( \partial_\alpha U_{\leq k-10} + U_{\leq k-10} A_{\alpha; \leq k-10} \bigr) \partial^\alpha \phi_k \\
  &\quad + 2 \partial_\alpha U_{\leq k-10} \partial^\alpha (\bar{\phi}_k - \phi_k) \\
  &\quad + \bigl( \Box U_{\leq k-10} \bigr) \bar{\phi}_k.
 \end{aligned}
\end{equation}
In the following proposition we carefully analyze the structure of each term in the nonlinearity of the wave equation~\eqref{equ:nlw_for_renormalized_phi_bar} for $U_{\leq k-10} \bar{\phi}_k$.
\begin{prop} \label{prop:nlw_for_renormalized_phi_bar_schematic_identities}
 It holds that
 \begin{equation} \label{equ:nlw_for_renormalized_phi_bar_term1}
  \begin{aligned}
   \Box \bar{\phi}_k - 2 A_{\alpha; \leq k-10} \partial^\alpha \phi_k &= \sum_{k_2 \leq k-10} \sum_{k_3 = k + O(1)} P_k L \bigl( \phi_{k_2 - 10 < \cdot \leq k-10}, \partial_\alpha \phi_{k_2}, \partial^\alpha \phi_{k_3} \bigr) \\
   &\quad + \sum_{k_2 > k-10} \sum_{k_1 = k_2 + O(1)} \sum_{k_3 \leq k_2 + O(1)} P_k L \bigl( \phi_{k_1}, \partial_\alpha \phi_{k_2}, \partial^\alpha \phi_{k_3} \bigr) \\
   &\quad + \sum_{k_2 \leq k-10} \sum_{k_3 = k + O(1)} 2^{-k} P_k L \bigl( \nabla_x \phi_{\leq k_2 - 10}, \partial_\alpha \phi_{k_2}, \partial^\alpha \phi_{k_3} \bigr) \\
   &\quad + Q_k(\phi).
  \end{aligned}
 \end{equation}
 Moreover, we have the following schematic identities
 \begin{align}
  \bigl( \partial_\alpha U_{\leq k-10} + U_{\leq k-10} A_{\alpha; \leq k-10} \bigr) \partial^\alpha \phi_k &= \sum_{k_2 \leq k-10} U_{\leq k_2-1} L \bigl( \phi_{k_2-10 \leq \cdot \leq k-10}, \partial_\alpha \phi_{k_2}, \partial^\alpha \phi_k \bigr), \label{equ:schematic_form_nlw_for_renormalized_phi_bar_term2} \\
  \partial_\alpha U_{\leq k-10} \partial^\alpha (\bar{\phi}_k - \phi_k) &= \sum_{k_1 \leq k-10} U_{\leq k_1 - 1} L \bigl( \partial_\alpha L ( \phi_{\leq k_1-10}, \phi_{k_1} ), \partial^\alpha (\bar{\phi}_k - \phi_k) \bigr), \label{equ:schematic_form_nlw_for_renormalized_phi_bar_term3} \\
  \bigl( \Box U_{\leq k-10} \bigr) \bar{\phi}_k &= \sum_{k_1 \leq k-10} U_{\leq k_1 - 1} L \bigl( \Box L ( \phi_{\leq k_1-10}, \phi_{k_1} ), \bar{\phi}_k \bigr) \label{equ:schematic_form_nlw_for_renormalized_phi_bar_term4} \\
   &\quad + \sum_{k_1 \leq k_2 \leq k-10} U_{\leq k_1-1} L \bigl( \partial_\alpha L ( \phi_{\leq k_1-10}, \phi_{k_1} ), \partial^\alpha L ( \phi_{\leq k_2-10}, \phi_{k_2} ), \bar{\phi}_k \bigr). \nonumber
 \end{align}
\end{prop}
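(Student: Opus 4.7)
The plan is to verify the four schematic identities successively, each by a direct algebraic manipulation of the defining relations. The key algebraic inputs are: the explicit formula \eqref{equ:phi_bar_nlw} for $\Box \bar{\phi}_k$; the definition \eqref{equ:definition_connection_form_A} of $A_{\alpha;\leq k-10}$; the ODE \eqref{equ:ode_gauge_transform} governing $U_{\leq h}$; and the orthogonality relation $\phi^\dagger \partial^\alpha \phi = 0$, which holds pointwise since $\phi$ is valued in $\bS^m$.

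I first treat \eqref{equ:nlw_for_renormalized_phi_bar_term1}. Starting from \eqref{equ:phi_bar_nlw}, the only term that does not already appear on the right-hand side of \eqref{equ:nlw_for_renormalized_phi_bar_term1} is the low-high interaction $-2\sum_{k_2 \leq k-10} \phi_{\leq k_2-10} \partial_\alpha \phi_{k_2}^\dagger \partial^\alpha \phi_k$. Adding and subtracting the matrix-transpose partner, I write
\begin{align*}
 -2\phi_{\leq k_2-10} \partial_\alpha \phi_{k_2}^\dagger \partial^\alpha \phi_k
 &= 2\bigl( \partial_\alpha \phi_{k_2} \phi_{\leq k_2-10}^\dagger - \phi_{\leq k_2-10} \partial_\alpha \phi_{k_2}^\dagger \bigr) \partial^\alpha \phi_k - 2 \partial_\alpha \phi_{k_2} \phi_{\leq k_2-10}^\dagger \partial^\alpha \phi_k \\
 &= 2 A_{\alpha;k_2} \partial^\alpha \phi_k - 2 \partial_\alpha \phi_{k_2} \phi_{\leq k_2-10}^\dagger \partial^\alpha \phi_k,
\end{align*}
so that summation over $k_2 \leq k-10$ produces the desired term $2 A_{\alpha;\leq k-10} \partial^\alpha \phi_k$ together with a residual $-2 \sum_{k_2 \leq k-10} \partial_\alpha \phi_{k_2} \phi_{\leq k_2-10}^\dagger \partial^\alpha \phi_k$. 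I absorb this residual by splitting $\phi_{\leq k_2-10}^\dagger \partial^\alpha \phi_k = \phi^\dagger \partial^\alpha \phi_k - \phi_{> k_2-10}^\dagger \partial^\alpha \phi_k$. After further Littlewood-Paley decomposition, the $\phi_{> k_2-10}^\dagger \partial^\alpha \phi_k$ contribution yields trilinear terms with the derivative on the lowest-frequency input $\phi_{k_2}$, matching the forms \eqref{equ:WM_nonlinearity_derivative_on_lowest_freq1} and \eqref{equ:WM_nonlinearity_derivative_on_lowest_freq2}. The $\phi^\dagger \partial^\alpha \phi_k$ contribution invokes the orthogonality relation: $0 = P_k( \phi^\dagger \partial^\alpha \phi ) = \phi^\dagger \partial^\alpha \phi_k + [P_k, \phi^\dagger] \partial^\alpha \phi$, so that the Leibniz rule \eqref{equ:leibniz_rule} gives $\phi^\dagger \partial^\alpha \phi_k = L( \nabla_x \phi, 2^{-k} \partial^\alpha \phi )$. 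Multiplying by $\partial_\alpha \phi_{k_2}$, this last piece either fits the $2^{-k}$-prefactor form \eqref{equ:WM_nonlinearity_derivative_transferred_to_lowest_freq} or, after reinserting (WM) for one of the $\phi$ factors, contributes a quintilinear term absorbed into $Q_k(\phi)$.

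For the three identities involving $U_{\leq h}$, I derive a formula for $\partial_\alpha U_{\leq h}$ by differentiating the ODE \eqref{equ:ode_gauge_transform}. Setting $Y := U_{\leq h}^\dagger \partial_\alpha U_{\leq h}$ and exploiting $U_{\leq h} U_{\leq h}^\dagger = Id$, a direct computation shows $\frac{d}{dh} Y = [Y, B_h] + \partial_\alpha B_h$ with $Y(-\infty) = 0$, giving $Y(k-10) = \int_{-\infty}^{k-10} \partial_\alpha B_{h'} \, dh' + (\text{commutator corrections})$. Decomposing
\[
 \partial_\alpha B_h = \sum_\ell \eta'(h-\ell) \bigl( C_{\alpha;\ell} - A_{\alpha;\ell} \bigr), \qquad C_{\alpha;\ell} := \partial_\alpha \phi_{\leq \ell-10} \phi_\ell^\dagger - \phi_\ell \partial_\alpha \phi_{\leq \ell-10}^\dagger,
\]
and using $\int_{-\infty}^{k-10} \eta'(h'-\ell) \, dh' = \eta(k-10-\ell)$, which equals $1$ for $\ell \leq k - 10$, the $-A_{\alpha;\ell}$-part integrates (to leading order) to $-A_{\alpha;\leq k-10}$. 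Rewriting $\partial_\alpha U_{\leq k-10} + U_{\leq k-10} A_{\alpha;\leq k-10} = U_{\leq k-10} (Y + A_{\alpha;\leq k-10})$, this $A$-contribution cancels. The surviving $C_{\alpha;\ell}$-part, upon dyadic decomposition of $\partial_\alpha \phi_{\leq \ell-10} = \sum_{k_2 \leq \ell-10} \partial_\alpha \phi_{k_2}$, gives precisely the schematic form \eqref{equ:schematic_form_nlw_for_renormalized_phi_bar_term2}; the commutator corrections and the transition factor $U_{\leq k-10} U_{\leq k_2-1}^\dagger$ are absorbed into the schematic prefactor $U_{\leq k_2-1}$. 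Identity \eqref{equ:schematic_form_nlw_for_renormalized_phi_bar_term3} is then immediate from the same formula for $\partial_\alpha U_{\leq k-10}$, noting that each $B_{h'}$ is itself a product $L(\phi_{\leq h'-10}, \phi_{h'})$. For \eqref{equ:schematic_form_nlw_for_renormalized_phi_bar_term4}, applying $\Box$ to the Dyson series expansion of $U_{\leq k-10}$ produces two types of terms: one in which $\Box$ hits a single factor $B_{h_1}$, yielding the first sum in \eqref{equ:schematic_form_nlw_for_renormalized_phi_bar_term4}; and one in which $\partial_\alpha$ hits distinct factors $B_{h_1}, B_{h_2}$ (with $h_1 \leq h_2$), producing the second sum.

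The main obstacle is identity \eqref{equ:nlw_for_renormalized_phi_bar_term1}, specifically showing that the residual left by the anti-symmetrization truly decomposes into the three ``good'' trilinear forms plus $Q_k(\phi)$. This is the one step that must leverage the orthogonality relation $\phi^\dagger \partial^\alpha \phi = 0$ via a commutator identity for $P_k$, a tool unavailable in the gauge-theoretic identities \eqref{equ:schematic_form_nlw_for_renormalized_phi_bar_term2}--\eqref{equ:schematic_form_nlw_for_renormalized_phi_bar_term4} (where the cancellations are essentially tautological consequences of the construction of $B_h$). A secondary subtlety in the latter three identities is that $U_{\leq h}$ is only approximately frequency-localized below $2^h$, so the prefactor $U_{\leq k_2-1}$ in the schematic $L$-forms must be read as an orthogonal matrix whose essential frequency support lies below $2^{k_2}$ up to exponentially decaying tails, an approximation whose cost will have to be controlled carefully in the subsequent multilinear estimates.
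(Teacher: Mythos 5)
Your proposal is correct and follows essentially the same route as the paper: identity \eqref{equ:nlw_for_renormalized_phi_bar_term1} is reduced, exactly as in the paper, to the residual $\sum_{k_2 \leq k-10}\partial_\alpha \phi_{k_2}\phi_{\leq k_2-10}^\dagger \partial^\alpha \phi_k$, which is then handled via the geometric identity $\phi^\dagger \partial^\alpha \phi = 0$ together with the Leibniz commutator for $P_k$ (Tao's argument), and the gauge identities rest on the same key cancellation $\partial_\alpha \int_{j-1}^{j} B_h \, dh = -A_{\alpha;j} + (\partial_\alpha \phi_{\leq j-10}\phi_j^\dagger - \phi_j \partial_\alpha \phi_{\leq j-10}^\dagger)$. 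Your ODE for $Y = U_{\leq h}^\dagger \partial_\alpha U_{\leq h}$ with variation of constants is just a repackaging of the paper's telescoping product and iterated integral (Dyson-type) expansions, with your ``commutator corrections'' playing the role of the paper's terms involving $U_{\leq j-1}^\dagger U_{\leq h} - Id$.
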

\begin{proof}
 We begin with the proof of the first identity~\eqref{equ:nlw_for_renormalized_phi_bar_term1}. In view of \eqref{equ:phi_bar_nlw} we only have to understand the structure of the term 
 \[
  \sum_{k_2 \leq k-10} \partial_\alpha \phi_{k_2} \phi_{\leq k_2-10}^\dagger \partial^\alpha \phi_k.
 \]
 But here we can argue as in Tao~\cite[p. 461]{Tao2} and use the geometric identity $\phi^\dagger \partial^\alpha \phi = 0$ to show that this term is in fact a sum of terms of the schematic forms \eqref{equ:WM_nonlinearity_derivative_on_lowest_freq1}--\eqref{equ:WM_nonlinearity_derivative_transferred_to_lowest_freq}. This proves~\eqref{equ:nlw_for_renormalized_phi_bar_term1}.
 
 \medskip
 
 Next we turn to the proof of the schematic identity~\eqref{equ:schematic_form_nlw_for_renormalized_phi_bar_term2}. Using the exact orthogonality of the gauge transformations $U_{\leq h}$, by telescoping we may write $U_{\leq k-10}$ as
 \[
  U_{\leq k-10} = U_{\leq k - M} \prod_{j=k-M+1}^{k-10} U_{\leq j-1}^\dagger U_{\leq j} 
 \]
 for some $M \gg 1$. Applying a derivative $\partial_\alpha$ and letting $M \to \infty$, we find that
 \[
  \partial_\alpha U_{\leq k-10} = \sum_{j \leq k-10} U_{\leq j-1} \partial_\alpha \bigl( U_{\leq j-1}^\dagger U_{\leq j} \bigr) U_{\leq j}^\dagger U_{\leq k-10}.
 \]
 From the defining ODE \eqref{equ:ode_gauge_transform} for the gauge transformations we have that 
 \begin{equation} \label{equ:integral_expansion1}
  U_{\leq j-1}^\dagger U_{\leq j} = \int_{j-1}^j U_{\leq j-1}^\dagger U_{\leq h} B_h \, dh + Id
 \end{equation}
 and hence 
 \[
  \partial_\alpha \bigl( U_{\leq j-1}^\dagger U_{\leq j} \bigr) = \partial_\alpha \int_{j-1}^j B_h \, dh + \partial_\alpha \int_{j-1}^j \bigl( U_{\leq j-1}^\dagger U_{\leq j} - Id \bigr) B_h \, dh.
 \]
 Thus, we may write 
 \begin{align*}
  \partial_\alpha U_{\leq k-10} + U_{\leq k-10} A_{\alpha; \leq k-10} &= \sum_{j \leq k-10} U_{\leq j-1} \biggl( \partial_\alpha \int_{j-1}^j B_h \, dh \biggr) U_{\leq j}^\dagger U_{\leq k-10} \\
  &\quad + \sum_{j \leq k-10} U_{\leq j-1} A_{\alpha; j} \\
  &\quad + \sum_{j \leq k-10} U_{\leq j-1} \bigl( U_{\leq j-1}^\dagger U_{\leq k-10} - Id \bigr) A_{\alpha; j} \\
  &\quad + \sum_{j \leq k-10} U_{\leq j-1} \biggl( \partial_\alpha \int_{j-1}^j \bigl( U_{\leq j-1}^\dagger U_{\leq h} - Id \bigr) B_h \, dh \biggr) U_{\leq j}^\dagger U_{\leq k-10}.  
 \end{align*}
 Now by the definition of $B_h$~\eqref{equ:matrix_gauge_transform} we have that 
 \[
  \partial_\alpha \int_{j-1}^j B_h \, dh = \partial_\alpha (\phi_{\leq j-10} \phi_j^\dagger - \phi_j \phi_{\leq j-10}^\dagger ) = - A_{\alpha; j} + (\partial_\alpha \phi_{\leq j-10} \phi_j^\dagger - \phi_j \partial_\alpha \phi_{\leq j-10}^\dagger).
 \]
 Combining the two previous identities we find that
 \begin{equation} \label{equ:nlw_for_renormalized_phi_bar_term2_expanded}
  \begin{aligned}
   \partial_\alpha U_{\leq k-10} + U_{\leq k-10} A_{\alpha; \leq k-10} &= \sum_{j \leq k-10} U_{\leq j-1} A_{\alpha; j} \bigl( - U_{\leq j}^\dagger U_{\leq k-10} + Id \bigr) \\
   &\quad + \sum_{j \leq k-10} U_{\leq j-1} \bigl( \partial_\alpha \phi_{\leq j-10} \phi_j^\dagger - \phi_j \partial_\alpha \phi_{\leq j-10}^\dagger \bigr) U_{\leq j}^\dagger U_{\leq k-10} \\
   &\quad + \sum_{j \leq k-10} U_{\leq j-1} \bigl( U_{\leq j-1}^\dagger U_{\leq k-10} - Id \bigr) A_{\alpha; j} \\
   &\quad + \sum_{j \leq k-10} U_{\leq j-1} \biggl( \partial_\alpha \int_{j-1}^j \bigl( U_{\leq j-1}^\dagger U_{\leq h} - Id \bigr) B_h \, dh \biggr) U_{\leq j}^\dagger U_{\leq k-10}.  
  \end{aligned}
 \end{equation}
 In view of the identity
 \[
  U_{\leq k_1}^\dagger U_{\leq k_2} - Id = \int_{k_1}^{k_2} U_{\leq k_1}^\dagger U_{\leq h} B_h \, dh \quad \text{for } k_1 < k_2,
 \]
 the exact orthogonality of the gauge transformations $U_{\leq h}$, the definition of $B_h$~\eqref{equ:matrix_gauge_transform} and the fact that $\|\phi\|_{L^\infty_t L^\infty_x} \lesssim 1$, it is now apparent that when it comes to estimates, the first three terms on the right-hand side of~\eqref{equ:nlw_for_renormalized_phi_bar_term2_expanded} lead to terms of the schematic form~\eqref{equ:schematic_form_nlw_for_renormalized_phi_bar_term2}. To see this also for the last term on the right-hand side of~\eqref{equ:nlw_for_renormalized_phi_bar_term2_expanded}, we note that by iteratively inserting the identity~\eqref{equ:integral_expansion1}, we obtain that
 \begin{align*}
  \partial_\alpha \int_{j-1}^j \bigl( U_{\leq j-1}^\dagger U_{\leq h} - Id \bigr) B_h \, dh &= \int_{j-1}^j \int_{j-1}^{h_1} U_{\leq j-1}^\dagger U_{\leq h_2} B_{h_2} \partial_\alpha B_{h_1} \, dh_2 \, dh_1 \\
  &\quad + \sum_{n=1}^\infty \int_{j-1}^j \int_{j-1}^{h_1} \cdots \int_{j-1}^{h_n} U_{\leq j-1}^\dagger U_{\leq h_{n+1}} \partial_\alpha B_{h_{n+1}} B_{h_n} \cdots B_{h_1} \, dh_{n+1} \cdots dh_1.
 \end{align*}
 Here the series may seem to diverge at first sight, but the integration over simplices yields the necessary compensating $\frac{1}{n!}$ decay. Thus, also the last term on the right-hand side of~\eqref{equ:nlw_for_renormalized_phi_bar_term2_expanded} is of the schematic form~\eqref{equ:schematic_form_nlw_for_renormalized_phi_bar_term2}, which finishes the proof of~\eqref{equ:schematic_form_nlw_for_renormalized_phi_bar_term2}.
 
 \medskip
 
 Similarly, we can prove~\eqref{equ:schematic_form_nlw_for_renormalized_phi_bar_term3} using the identities
 \[
  \partial_\alpha U_{\leq k-10} = \sum_{j \leq k-10} U_{\leq j-1} \partial_\alpha \bigl( U_{\leq j-1}^\dagger U_{\leq j} \bigr) U_{\leq j}^\dagger U_{\leq k-10}
 \]
 and 
 \begin{align*}
  \partial_\alpha \bigl( U_{\leq j-1}^\dagger U_{\leq j} \bigr) &= \int_{j-1}^j U_{\leq j-1}^\dagger U_{\leq h_1} \partial_\alpha B_{h_1} \, dh_1 \\
  &\quad + \sum_{n=1}^\infty \int_{j-1}^j \int_{j-1}^{h_1} \cdots \int_{j-1}^{h_n} U_{\leq j-1}^\dagger U_{\leq h_{n+1}} \partial_\alpha B_{h_{n+1}} B_{h_n} \cdots B_{h_1} \, dh_{n+1} \cdots dh_1.
 \end{align*}
 
 \medskip
 
 Finally, we turn to the proof of~\eqref{equ:schematic_form_nlw_for_renormalized_phi_bar_term4}. From the telescoping identity 
 \[
  U_{\leq k-10} = U_{\leq k-M} \prod_{j = k-M+1}^{k-10} U_{\leq j-1}^\dagger U_{\leq j}
 \]
 for $M \gg 1$, we obtain that
 \begin{equation} \label{equ:box_U_expansion}
  \begin{aligned}
   \Box U_{\leq k-10} &= \sum_{j \leq k-10} U_{\leq j-1} \Box \bigl( U_{\leq j-1}^\dagger U_{\leq j} \bigr) U_{\leq j}^\dagger U_{\leq k-10} \\
   &\quad + 2 \sum_{j_1 < j_2 \leq k-10} U_{\leq j_1 -1} \partial_\alpha \bigl( U_{\leq j_1-1}^\dagger U_{\leq j_1} \bigr) \cdots \partial^\alpha \bigl( U_{\leq j_2-1}^\dagger U_{\leq j_2} \bigr) U_{\leq j_2}^\dagger U_{\leq k-10}.
  \end{aligned}
 \end{equation}
 Then by iteratively inserting the identities
 \begin{equation*}
  \Box \bigl( U_{\leq j-1}^\dagger U_{\leq j} \bigr) = \int_{j-1}^j \Box \bigl( U_{\leq j-1}^\dagger U_{\leq h} \bigr) B_h \, dh + 2 \int_{j-1}^j \partial_\alpha \bigl( U_{\leq j-1}^\dagger U_{\leq h} \bigr) \partial^\alpha B_h \, dh + \int_{j-1}^j \bigl( U_{\leq j-1}^\dagger U_{\leq h} \bigr) \Box B_h \, dh
 \end{equation*}
 and
 \begin{equation*}
  \partial_\alpha \bigl( U_{\leq j-1}^\dagger U_{\leq j} \bigr) = \int_{j-1}^j \partial_\alpha \bigl( U_{\leq j-1}^\dagger U_{\leq h} \bigr) B_h \, dh + \int_{j-1}^j \bigl( U_{\leq j-1}^\dagger U_{\leq h} \bigr) \partial_\alpha B_h \, dh
 \end{equation*}
 into \eqref{equ:box_U_expansion}, we infer by similar arguments as before that $\bigl( \Box U_{\leq k-10} \bigr) \bar{\phi}_k$ is indeed of the schematic form~\eqref{equ:schematic_form_nlw_for_renormalized_phi_bar_term4}.
\end{proof}

Next, we prove several useful bounds on $\bar{\phi}_k$ and on the gauge transformations that will be needed in the sequel.
\begin{lem} \label{lem:bounds_on_phi_bar}
 Let $\phi \colon I \times \bR^2 \to \bS^{m}$ be a radial wave map of energy $E$. Then we have for all $k \in \bZ$ that
 \begin{equation} \label{equ:L_infty_bounds_phi_bar}
  \| \bar{\phi}_k \|_{L^\infty_t L^\infty_x} \lesssim 1 + E 
 \end{equation}
 and it holds that
 \begin{equation} \label{equ:kinetic_energy_bound_phi_bar}
  \sum_{k\in\bZ} \| \nabla_{t,x} \bar{\phi}_k \|_{L^2_x}^2 \lesssim E + E^3.
 \end{equation}
\end{lem}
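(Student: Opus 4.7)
The plan is to decompose $\bar{\phi}_k = \phi_k + T_k^{HH} + T_k^{LH}$ according to~\eqref{equ:definition_phi_bar}, where $T_k^{HH}$ is the high--high correction (the sum over $k_2 > k-10$) and $T_k^{LH}$ is the low--high commutator correction, and to estimate each piece by Littlewood--Paley / Bernstein methods against the frequency envelope $c_k := \|\nabla_{t,x} \phi_k\|_{L^\infty_t L^2_x}$, which satisfies $\sum_{k \in \bZ} c_k^2 \lesssim E$. The key basic facts are that $|\phi| \equiv 1$ together with the $L^1$-boundedness of the Littlewood--Paley kernels implies $\|\phi_{\leq \ell}\|_{L^\infty_{t,x}} \lesssim 1$ for every $\ell$, while Bernstein gives $\|\phi_k\|_{L^2_x} \lesssim 2^{-k} c_k$ and $\|\phi_k\|_{L^\infty_x} \lesssim c_k$.

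For the $L^\infty$ estimate~\eqref{equ:L_infty_bounds_phi_bar} the $\phi_k$ contribution is already $\lesssim 1$. For $T_k^{HH}$ I apply the two-dimensional Bernstein bound $\|P_k g\|_{L^\infty_x} \lesssim 2^k \|g\|_{L^2_x}$ to the triple product and H\"older-split as $L^\infty \cdot L^\infty \cdot L^2$, placing $\phi_{\leq k_2+20}$ in $L^\infty$ via the sphere bound, $\phi_{k_2}$ in $L^\infty$ via Bernstein, and $\phi_{k_3}$ in $L^2$; this yields a pointwise bound $\lesssim 2^{k-k_2} c_{k_2}^2$ whose sum over $k_2 > k - 10$ is controlled by $\lesssim E$. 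For $T_k^{LH}$ the Leibniz rule~\eqref{equ:leibniz_rule} first recasts the commutator as $L(\nabla_x \phi_{k_2}^\dagger,\, 2^{-k} \phi_{k-5 < \cdot < k+5})$, so $T_k^{LH}$ becomes a trilinear expression carrying a $2^{-k}$ weight and a derivative on $\phi_{k_2}$. Using $\|\nabla \phi_{k_2}\|_{L^\infty} \lesssim 2^{k_2} c_{k_2}$ (Bernstein), $\|\phi_{k-5<\cdot<k+5}\|_{L^\infty} \lesssim c_k$, and Cauchy--Schwarz in $k_2$ yields the bound $\lesssim E$ as needed.

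For the kinetic energy bound~\eqref{equ:kinetic_energy_bound_phi_bar} the $\phi_k$ contribution is exactly $\sum_k c_k^2 \lesssim E$. For $T_k^{HH}$ I commute $\nabla_{t,x}$ past $P_k$ and expand the derivative by Leibniz among the three factors. Whenever $\nabla_x$ is the outer derivative, or $\partial_t$ lands on one of the two high-frequency factors, the decisive tool is the Bernstein inequality $\|P_k g\|_{L^2_x} \lesssim 2^k \|g\|_{L^1_x}$ (valid in $\bR^2$), combined with the bilinear $L^1$-orthogonality $\|\phi_{k_2}^\dagger \phi_{k_3}\|_{L^1_x} \leq \|\phi_{k_2}\|_{L^2_x} \|\phi_{k_3}\|_{L^2_x} \lesssim 2^{-2k_2} c_{k_2} c_{k_3}$; this yields a pointwise bound $\lesssim 2^{k-k_2} c_{k_2}^2$, and a Young-type convolution argument in $k$ then gives $\sum_k \|\ldots\|^2_{L^2_x} \lesssim E^2$. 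The genuinely delicate case is when $\partial_t$ falls on the low-frequency factor $\phi_{\leq k_2+20}$, since $\partial_t \phi$ is not a priori in $L^\infty$. Here I additionally use that $\|\partial_t \phi_{\leq k_2+20}\|_{L^\infty_x} \lesssim 2^{k_2} E^{1/2}$, obtained from dyadic Bernstein together with Cauchy--Schwarz against the envelope; the resulting pointwise bound $\lesssim 2^{k-k_2} E^{1/2} c_{k_2}^2$ square-sums in $k$ to $\lesssim E^3$. The term $T_k^{LH}$ is handled analogously: after Leibniz expansion of $\nabla_{t,x}$, each trilinear piece carries either the $2^{-k}$ prefactor or an additional derivative on a factor whose frequency is much smaller than $2^k$, so the off-diagonal decay $2^{k_2 - k}$ needed for square-summation in $k$ is automatic.

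The principal technical difficulty is the high--high contribution to the kinetic energy bound: a naive $L^\infty \cdot L^\infty \cdot L^2$ H\"older split only produces a pointwise estimate of order $c_{k_2}^2$ that is uniform in $k$ and therefore not square-summable. The summability gain $2^{k-k_2}$ comes from recognizing that a sharp low-frequency projection $P_k$ applied to a product of two frequencies $\sim 2^{k_2} \gg 2^k$ enjoys an orthogonality advantage captured precisely by the $\|P_k\|_{L^1_x \to L^2_x} \lesssim 2^k$ Bernstein inequality together with the $L^2 \cdot L^2 \to L^1$ product bound on the two high-frequency factors. Once this observation is in place, the remainder of the argument consists of routine Cauchy--Schwarz and convolution-type summations against the frequency envelope.
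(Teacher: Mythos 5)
Your proposal is correct and follows essentially the same route as the paper's proof: Bernstein's inequality applied to the output projection $P_k$, H\"older splittings of the trilinear terms with the sphere bound $\|\phi_{\leq \ell}\|_{L^\infty}\lesssim 1$, the Leibniz recasting of the commutator term, and Cauchy--Schwarz/convolution summation against the energy. The paper only writes out the $L^\infty$ estimate and declares the kinetic energy bound ``analogous''; your more detailed treatment of that case (in particular identifying the term with $\partial_t$ on the low-frequency factor as the source of the $E^3$ contribution) is consistent with what the analogous estimates yield.
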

\begin{proof}
 We start with the proof of~\eqref{equ:L_infty_bounds_phi_bar}. Since $\phi$ maps into the sphere and the Littlewood-Paley projections $P_k$ are disposable, we clearly have $\|\phi_k\|_{L^\infty_t L^\infty_x} \lesssim 1$. Using Bernstein's inequality, we easily bound the second component in the definition~\eqref{equ:definition_phi_bar} of $\bar{\phi}_k$ by
 \begin{align*}
  \biggl\| \sum_{k_2 > k-10} \sum_{k_3 = k_2 + O(1)} P_k \bigl( \phi_{\leq k_2+20} \phi_{k_2}^\dagger \phi_{k_3} \bigr) \biggr\|_{L^\infty_x} &\lesssim \sum_{k_2 > k-10} \sum_{k_3=k_2 + O(1)} 2^{2k} \|\phi_{k_2}\|_{L^2_x} \|\phi_{k_3}\|_{L^2_x} \\
  &\lesssim \sum_{k_2 > k-10} \|\nabla_x \phi_{k_2}\|_{L^2_x}^2 \\
  &\lesssim E,
 \end{align*}
 while for the third component in the definition~\eqref{equ:definition_phi_bar} we additionally use the Leibniz rule~\eqref{equ:leibniz_rule} for $P_k$ to find that
 \begin{align*}
  \biggl\| \sum_{k_2 \leq k-10} \phi_{\leq k_2-10} \bigl( P_k ( \phi_{k_2}^\dagger \phi_{k-5 < \cdot < k+5} ) - \phi_{k_2}^\dagger \phi_k \bigr) \biggr\|_{L^\infty_x} &= \biggl\| \sum_{k_2 \leq k-10} 2^{-k} L\bigl( \phi_{\leq k_2-10}, \nabla_x \phi_{k_2}, \phi_{k+O(1)} \bigr) \biggr\|_{L^\infty_x} \\
  &\lesssim \sum_{k_2 \leq k-10} 2^{-k + k_2} \|\nabla_x \phi_{k_2}\|_{L^2_x} \| \nabla_x \phi_k \|_{L^2_x} \\
  &\lesssim E.
 \end{align*}
 The proof of~\eqref{equ:kinetic_energy_bound_phi_bar} follows by analogous estimates.
\end{proof}

\begin{lem} \label{lem:gauge_transform_bounds}
 Let $\phi \colon I \times \bR^2 \to \bS^{m}$ be a radial wave map of energy $E$ and let $U_{\leq h}$ for $h \in \bR$ be the associated gauge transformations as defined in~\eqref{equ:ode_gauge_transform}. For $2 < p \leq \infty$ it holds that 
 \begin{equation} \label{equ:L_p_bound_nabla_U}
  \bigl\| \nabla_{t,x} U_{\leq k} \bigr\|_{L^\infty_t L^p_x} \lesssim 2^{(1-\frac{2}{p})k} (E^{\frac{1}{2}} + E).
 \end{equation}
 Moreover, we have for $2 \leq p \leq \infty$ that
 \begin{equation} \label{equ:L_p_bound_nabla2_U}
  \bigl\| \nabla_x \nabla_{t,x} U_{\leq k} \bigr\|_{L^\infty_t L^p_x} \lesssim 2^{(2-\frac{2}{p})k} (E^{\frac{1}{2}} + E).
 \end{equation}
\end{lem}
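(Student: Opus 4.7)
The plan is to reduce both bounds to direct Bernstein-plus-energy estimates on the matrix $B_h$ by exploiting the telescoping identity already derived in the proof of Proposition~\ref{prop:nlw_for_renormalized_phi_bar_schematic_identities}. Since $U_{\leq h}$ is exactly orthogonal, so that $\|U_{\leq h}\|_{L^\infty_t L^\infty_x} \lesssim 1$, we write
$$ \partial_\alpha U_{\leq k} = \sum_{j \leq k} U_{\leq j-1} \, \partial_\alpha \bigl( U_{\leq j-1}^\dagger U_{\leq j} \bigr) \, U_{\leq j}^\dagger U_{\leq k}, $$
so that $L^p \times L^\infty \times L^\infty$ Hölder yields
$$ \| \partial_\alpha U_{\leq k} \|_{L^p_x} \lesssim \sum_{j \leq k} \bigl\| \partial_\alpha \bigl( U_{\leq j-1}^\dagger U_{\leq j} \bigr) \bigr\|_{L^p_x}. $$
Using the integral representation \eqref{equ:integral_expansion1} and iterating, the dominant contribution for $h \in [j-1,j]$ is $\int_{j-1}^j \partial_\alpha B_h \, dh$, with all higher-order tails handled by the same series expansion and $1/n!$ gain from integration over simplices used in the proof of Proposition~\ref{prop:nlw_for_renormalized_phi_bar_schematic_identities}.

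The key auxiliary step is therefore to estimate $\partial_\alpha B_h$ in $L^p_x$. By \eqref{equ:matrix_gauge_transform} and the support of $\eta'$, for $h \in [j-1,j]$ only indices $\ell = j + O(1)$ contribute, and schematically
$$ \partial_\alpha B_h \sim \partial_\alpha \phi_{\leq j-10} \cdot \phi_j^\dagger \,\pm\, \phi_{\leq j-10} \cdot \partial_\alpha \phi_j^\dagger \,\pm\, \text{symmetric}. $$
Since $\phi$ maps into $\bS^m$ we have $\|\phi_{\leq j-10}\|_{L^\infty_x} \lesssim 1$, and Bernstein together with the energy bound $\|\nabla_{t,x} P_\ell \phi\|_{L^2_x} \lesssim E^{1/2}$ yields
$$ \|\partial_\alpha \phi_{\leq j-10}\|_{L^\infty_x} \lesssim 2^j E^{\frac{1}{2}}, \quad \|\phi_j\|_{L^p_x} \lesssim 2^{-\frac{2j}{p}} E^{\frac{1}{2}}, \quad \|\partial_\alpha \phi_j\|_{L^p_x} \lesssim 2^{(1-\frac{2}{p}) j} E^{\frac{1}{2}}. $$
Combining these gives $\|\partial_\alpha B_h\|_{L^p_x} \lesssim 2^{(1-\frac{2}{p}) j} (E^{\frac{1}{2}} + E)$. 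Summing the geometric series over $j \leq k$, which converges precisely because $p > 2$ so that $1 - 2/p > 0$, yields~\eqref{equ:L_p_bound_nabla_U}.

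For the second bound \eqref{equ:L_p_bound_nabla2_U}, we apply an extra $\nabla_x$ to the telescoping expansion. By Leibniz, either the derivative falls on one of the outer $U$-factors, in which case we bound it using the estimate just proved together with $\|U\|_{L^\infty_x} \lesssim 1$ and $\|\partial_\alpha(U_{\leq j-1}^\dagger U_{\leq j})\|_{L^\infty_x} \lesssim 2^j (E^{\frac{1}{2}} + E)$, or it falls on the middle factor $\partial_\alpha(U_{\leq j-1}^\dagger U_{\leq j})$, which reduces to estimating $\nabla_x \partial_\alpha B_h$. Expanding the latter by Leibniz produces terms of schematic types $\phi_{\leq j-10} \cdot \nabla_x \partial_\alpha \phi_j$, $\nabla_x \phi_{\leq j-10} \cdot \partial_\alpha \phi_j$, and $\nabla_x \partial_\alpha \phi_{\leq j-10} \cdot \phi_j$, each of which is estimated as before and yields an $L^p_x$ bound of order $2^{(2-\frac{2}{p}) j} (E^{\frac{1}{2}} + E)$ for $h \in [j-1,j]$. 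Summing over $j \leq k$ now converges for all $p \geq 2$ since $2 - 2/p \geq 1 > 0$, producing \eqref{equ:L_p_bound_nabla2_U}.

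The main obstacle is the bookkeeping of the higher-order tails from iterating \eqref{equ:integral_expansion1}, but as in the proof of Proposition~\ref{prop:nlw_for_renormalized_phi_bar_schematic_identities} these tails are absolutely summable thanks to the $1/n!$ decay coming from the simplex integrations, so they contribute at most terms of the same order as the leading ones already estimated.
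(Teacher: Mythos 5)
Your argument is essentially the paper's own proof: the same telescoping identity for $\nabla_{t,x}U_{\leq k}$, exact orthogonality of the gauge transformations to discard the outer $U$-factors in $L^\infty$, Bernstein plus the energy bound applied to the schematic bilinear expression $L(\phi_{\leq j-10},\phi_j)$ (your $B_h$), and summation of the resulting geometric series, which converges precisely because $1-\tfrac{2}{p}>0$ for $p>2$ in the first bound and $2-\tfrac{2}{p}>0$ for $p\geq 2$ in the second. The only cosmetic difference is that you track the higher-order simplex tails explicitly, which the paper subsumes into the schematic identity; this is correct and no further comment is needed.
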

\begin{proof}
 By telescoping as in the proof of Proposition~\ref{prop:nlw_for_renormalized_phi_bar_schematic_identities}, we obtain the schematic identity 
 \[
  \nabla_{t,x} U_{\leq k} = \sum_{k_2 \leq k} U_{\leq k_2-1} \nabla_{t,x} L ( \phi_{\leq k_2-10}, \phi_{k_2} ) U_{\leq k_2}^\dagger U_{\leq k}.
 \]
 Hence we find for $2 < p \leq \infty$ that
 \begin{align*}
  \bigl\| \nabla_{t,x} U_{\leq k} \bigr\|_{L^p_x} &\lesssim \sum_{k_2 \leq k} \bigl\| L ( \nabla_{t,x} \phi_{\leq k_2-10}, \phi_{k_2} ) \bigr\|_{L^p_x} + \sum_{k_2 \leq k} \bigl\| L ( \phi_{\leq k_2-10}, \nabla_{t,x} \phi_{k_2} ) \bigr\|_{L^p_x} \\
  &\lesssim \sum_{k_2 \leq k} \sum_{k_1 \leq k_2 - 10} \| \nabla_{t,x} \phi_{k_1} \|_{L^\infty_x} \|\phi_{k_2}\|_{L^p_x} + \sum_{k_2 \leq k} \| \nabla_{t,x} \phi_{k_2} \|_{L^p_x} \\
  &\lesssim \sum_{k_2 \leq k} \sum_{k_1 \leq k_2 - 10} 2^{k_1} 2^{-\frac{2}{p} k_2} \|\nabla_{t,x} \phi_{k_1}\|_{L^2_x} \|\nabla_x \phi_{k_2}\|_{L^2_x}  + \sum_{k_2 \leq k} 2^{(1 - \frac{2}{p}) k_2} \|\nabla_{t,x} \phi_{k_2}\|_{L^2_x} \\
  &\lesssim 2^{(1-\frac{2}{p})k} (E + E^{\frac{1}{2}}),
 \end{align*}
 which yields \eqref{equ:L_p_bound_nabla_U}. The proof of the bound~\eqref{equ:L_p_bound_nabla2_U} proceeds similarly.
\end{proof}

We conclude this section by establishing a small energy global regularity result for the wave maps equation (WM) for radially symmetric, classical initial data. The proof is a fairly immediate consequence of the multilinear estimates from the previous section and of the careful decomposition of the wave maps nonlinearity as well as of the introduction of the renormalization procedure in this section.
\begin{thm} \label{thm:small_energy_global}
 There exists an absolute constant $\varepsilon > 0$ such that for any radially symmetric, classical initial data $(\phi_0, \phi_1) \colon \bR^2 \to T \bS^m$ with energy $E[\phi] < \varepsilon$, there exists a unique, radially symmetric, classical global solution $\phi \colon \bR^{1+2} \to \bS^m$ to (WM) with initial data $\phi[0] = (\phi_0, \phi_1)$ satisfying 
 \begin{equation} \label{equ:small_energy_S_norm_bound}
  \| \phi \|_{S[\bR]} \lesssim E[\phi]^{\frac{1}{2}}.
 \end{equation}
\end{thm}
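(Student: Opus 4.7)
The plan is a standard bootstrap argument on a frequency envelope, with the renormalization of Section~\ref{sec:decomposition_nonlinearity} providing the key perturbative estimates. Given classical radial initial data of energy $E < \varepsilon$, subcritical local well-posedness furnishes a smooth solution $\phi$ on some maximal interval $I \ni 0$. Let $\{c_k\}_{k \in \bZ}$ be a frequency envelope with $\|P_k \phi[0]\|_{\dot{H}^1_x \times L^2_x} \leq c_k$ and $\sum_k c_k^2 \lesssim E$, as in Section~\ref{sec:preliminaries}. The target estimate is the a priori bound
\[
 \|\phi_k\|_{S_k[I']} \leq C_0 \, c_k \quad \text{for all } k \in \bZ,
\]
on every compact $I' \subset I$, with an absolute constant $C_0$. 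Square-summing in $k$ yields \eqref{equ:small_energy_S_norm_bound}, while a continuity argument (using that $I' \mapsto \|\phi_k\|_{S_k[I']}$ is continuous for classical data together with the breakdown criterion of Section~\ref{sec:breakdown_criterion}) forces $I = \bR$ and propagates classical regularity.

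To run the bootstrap, assume $\|\phi_k\|_{S_k[I']} \leq 2 C_0 c_k$ for all $k \in \bZ$ on some subinterval $I' \subset I$, and seek to improve this to $\|\phi_k\|_{S_k[I']} \leq C_0 c_k$. The natural object to estimate is the renormalized variable $U_{\leq k-10} \bar{\phi}_k$, which satisfies the wave equation~\eqref{equ:nlw_for_renormalized_phi_bar}. Apply the energy estimate~\eqref{equ:energy_estimate} at approximate frequency $2^k$ (the frequency localization of $U_{\leq k-10}\bar{\phi}_k$ is preserved up to exponentially decaying tails by Lemma~\ref{lem:gauge_transform_bounds}) and estimate each of the four contributions on the right-hand side of \eqref{equ:nlw_for_renormalized_phi_bar} in $L^1_t L^2_x$. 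The quasi-trilinear expressions in \eqref{equ:nlw_for_renormalized_phi_bar_term1} are bounded by the trilinear estimate~\eqref{equ:trilinear_estimate} for the low-high-high pieces in which the derivative lands on the lowest frequency, and by the null form estimate~\eqref{equ:null_form_estimate} combined with Strichartz bounds for the balanced high-high pieces. The quintilinear remainder $Q_k(\phi)$ is dominated by one application of H\"older together with $\|\phi\|_{L^\infty_t L^\infty_x} \leq 1$ and a null form/Strichartz bound on the remaining three factors. Finally, the gauge-corrected interaction \eqref{equ:schematic_form_nlw_for_renormalized_phi_bar_term2} and the commutator terms \eqref{equ:schematic_form_nlw_for_renormalized_phi_bar_term3}--\eqref{equ:schematic_form_nlw_for_renormalized_phi_bar_term4} carry a derivative on the lowest frequency by construction, and are bounded using the $L^p$ estimates of Lemma~\ref{lem:gauge_transform_bounds} on $\nabla_{t,x} U_{\leq k}$ and $\nabla_x \nabla_{t,x} U_{\leq k}$ together with \eqref{equ:trilinear_estimate} and Strichartz inequalities. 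The dyadic sums over the various frequencies close thanks to the exponential off-diagonal gains in \eqref{equ:trilinear_estimate}, \eqref{equ:weighted_null_form_estimate} and \eqref{equ:null_form_estimate} and the almost-orthogonality property $\sum_j 2^{-\sigma|k-j|} c_j \lesssim c_k$ of the envelope.

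Every bound so produced is at least cubic in $\phi$, so it takes the form $\|U_{\leq k-10} \bar{\phi}_k\|_{S_k[I']} \leq C_1 c_k + C_1 (C_0 E^{1/2})^2 c_k$ for an absolute constant $C_1$ independent of $C_0$. To revert to $\phi_k$, use that $U_{\leq k-10}$ is exactly orthogonal (so $\phi_k = U_{\leq k-10}^\dagger (U_{\leq k-10}\bar{\phi}_k) + (\phi_k - \bar{\phi}_k)$ up to frequency tails), and that by Lemma~\ref{lem:bounds_on_phi_bar} and its $S_k$-level variant the quadratic perturbation $\bar{\phi}_k - \phi_k$ obeys $\|\bar{\phi}_k - \phi_k\|_{S_k[I']} \lesssim (C_0 E^{1/2}) c_k$ via straightforward null form and Strichartz estimates. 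Combining these and choosing $C_0 = 2 C_1$ together with $\varepsilon$ so small that $C_1 C_0 \varepsilon^{1/2} < \tfrac{1}{4}$ improves the bootstrap to $\|\phi_k\|_{S_k[I']} \leq C_0 c_k$.

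The main obstacle I anticipate is the self-consistency and bookkeeping surrounding the gauge transformations: the $U_{\leq h}$ are built from $\phi$ itself, so every bound on them (Lemma~\ref{lem:gauge_transform_bounds}, Proposition~\ref{prop:nlw_for_renormalized_phi_bar_schematic_identities}) must be deployed under the bootstrap hypothesis, and one must verify that $U_{\leq k-10}\bar{\phi}_k$ is essentially frequency-localized at $2^k$ so the energy estimate applies with a compatible projection. The invertibility of the map $\phi_k \mapsto U_{\leq k-10}\bar{\phi}_k$ at the $S_k$ level is also somewhat delicate because $\bar{\phi}$ is a nonlinear modification of $\phi$, but in the small-energy regime this is a simple contraction. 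Once these technical points are handled, the actual multilinear work is a routine invocation of the estimates from Section~\ref{sec:multilinear_estimates}.
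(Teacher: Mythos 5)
Your proposal is correct and follows essentially the same route as the paper: a frequency-envelope bootstrap in which one passes to the renormalized variable $U_{\leq k-10}\bar{\phi}_k$, applies the energy estimate~\eqref{equ:energy_estimate}, bounds the nonlinearity of~\eqref{equ:nlw_for_renormalized_phi_bar} term by term with the trilinear, null form and Strichartz estimates, controls the exponential frequency tails of the gauged variable via Lemma~\ref{lem:gauge_transform_bounds} and Bernstein, and then reverts to $\phi_k$ using the exact orthogonality of $U_{\leq k-10}$ and the quadratic smallness of $\bar{\phi}_k - \phi_k$. The paper organizes the tail bookkeeping as a weighted sum $\sum_{k_0} 2^{\alpha|k_0|}\|P_{k_0}(\cdot)\|_{S_{k_0}}$ over output frequencies and defers the reversion step to the proof of Proposition~\ref{prop:partial_fungibility}, but these are presentational rather than substantive differences.
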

\begin{proof}
 It is a standard argument to show that in order to conclude global regularity and the bound~\eqref{equ:small_energy_S_norm_bound}, it suffices to prove frequency envelope bounds for the local-in-time evolution of a classical radial wave map with initial data $(\phi_0, \phi_1)$, see~\cite{Tao1, Tao2}. Thus, let $\{ c_k \}_{k\in\bZ}$ be a frequency envelope covering the initial data $(\phi_0, \phi_1)$ and denote by $\phi$ the classical wave maps evolution with initial data $\phi[0] = (\phi_0, \phi_1)$ on a time interval $I = [-T, T]$ for some $0 < T < \infty$. We shall prove frequency envelope bounds for the evolution by a bootstrap argument. Making the bootstrap assumption 
 \[
  \| P_k \phi \|_{S_k[I]} \leq C c_k, \quad k \in \bZ,
 \]
 for some sufficently large, absolute constant $C > 0$, we shall now show that this implies the improved bound 
 \[
  \| P_k \phi \|_{S_k[I]} \leq \frac{C}{2} c_k, \quad k \in \bZ.
 \]
 By scaling invariance, it suffices to prove this bound for $k = 0$. To this end we first deduce a frequency envelope bound for the renormalized quantity $U_{\leq -10} \overline{\phi}_0$. This step is slightly compounded by the fact that the variable $U_{\leq -10}  \overline{\phi}_0$ is only approximately localized to frequency $\sim 0$ up to exponentially decaying tails. Correspondingly, we show that for any $0 \leq \alpha < 1$, it holds that
 \begin{equation} \label{equ:small_energy_global_est1}
  \sum_{k_0 \in \bZ} 2^{\alpha |k_0|} \bigl\| P_{k_0} \big( U_{\leq -10} \overline{\phi}_0 \big) \bigr\|_{S_{k_0}[I]} \ll C c_0.
 \end{equation}
 From this bound on $U_{\leq -10} \overline{\phi}_0$ we can then pass back to $\phi_0$ and recover the improved bound 
 \[
  \| P_0 \phi \|_{S_0[I]} \leq \frac{C}{2} c_0,
 \]
 as explained in detail in the proof of Proposition~\ref{prop:partial_fungibility} in the next section. In order to prove the bound~\eqref{equ:small_energy_global_est1} we show that for any $0 \leq \alpha < 1$, 
 \begin{equation} \label{equ:small_energy_global_est2}
  \sum_{k_0 \in \bZ} 2^{\alpha |k_0|} \bigl\| P_{k_0} \nabla_{t,x} \bigl( U_{\leq -10} \overline{\phi}_0 \bigr) \bigr\|_{L^2_x} \ll C c_0
 \end{equation}
 and that 
 \begin{equation} \label{equ:small_energy_global_est3}
  \sum_{k_0 \in \bZ} 2^{\alpha |k_0|} \bigl\| P_{k_0} \Box \bigl( U_{\leq -10} \overline{\phi}_0 \bigr) \bigr\|_{L^1_t L^2_x[I]} \lesssim \varepsilon C c_0.
 \end{equation}
 For sufficently small $\varepsilon > 0$ the energy estimate~\eqref{equ:energy_estimate} then yields the desired bound~\eqref{equ:small_energy_global_est1}. We note that the exponential factors $2^{\alpha |k_0|}$ in \eqref{equ:small_energy_global_est2} and \eqref{equ:small_energy_global_est3} can be controlled by playing out Bernstein's inequality and the bounds on the gauge transformations from Lemma~\ref{lem:gauge_transform_bounds}, see the proof of Proposition~\ref{prop:partial_fungibility} in the next section for such an argument. Then the bound~\eqref{equ:small_energy_global_est2} is straightforward to derive from the definition of $\overline{\phi}_0$ using the properties of frequency envelopes. Finally, in order to deduce the bound~\eqref{equ:small_energy_global_est3}, we recall that Proposition~\ref{prop:nlw_for_renormalized_phi_bar_schematic_identities} carefully lists each schematic term arising in the nonlinearity $\Box \bigl( U_{\leq -10} \overline{\phi}_0 \bigr)$. Using the properties of frequency envelopes we then prove~\eqref{equ:small_energy_global_est3} separately for each schematic term. Specifically, we may bound the first term on the right-hand side of~\eqref{equ:nlw_for_renormalized_phi_bar_term1} as well as the term~\eqref{equ:schematic_form_nlw_for_renormalized_phi_bar_term2} using the trilinear estimate~\eqref{equ:trilinear_estimate}, while all other terms on the right-hand side of~\eqref{equ:nlw_for_renormalized_phi_bar_term1} as well as the terms \eqref{equ:schematic_form_nlw_for_renormalized_phi_bar_term3}--\eqref{equ:schematic_form_nlw_for_renormalized_phi_bar_term4} can be estimated by a combination of the null form estimate~\eqref{equ:null_form_estimate} and Strichartz estimates.
\end{proof}

\section{Breakdown criterion} \label{sec:breakdown_criterion}

Here we show that the $S$ norm introduced in Definition~\ref{def:S_norm} provides sufficient control on a radially symmetric, classical solution to the wave maps equation (WM) in order to infer long time existence and scattering. The main result is summarized in the following proposition.

\begin{prop} \label{prop:breakdown_criterion}
 Let $I$ be the maximal time interval of existence of a radially symmetric, classical wave map $\phi \colon I \times \bR^2 \to \bS^m$. If $\| \phi \|_{S[I]} < \infty$, then we must have $I = \bR$. Moreover, $\phi$ then scatters to finite energy free waves as $t \to \pm \infty$ in the sense that there exist $(f_\pm, g_\pm) \in \dot{H}^1_x \times L^2_x$ such that
 \[
  \lim_{t \to \pm \infty} \, \bigl\| \nabla_{t,x} \phi - \nabla_{t,x} S(t)(f_\pm, g_\pm) \bigr\|_{L^2_x} = 0,
 \]
 where $S(t)(f_\pm, g_\pm) = \cos(t |\nabla|) f_\pm + \frac{\sin(t |\nabla|)}{|\nabla|} g_\pm$ denotes the free wave propagator.
\end{prop}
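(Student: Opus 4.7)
The plan is to derive both global existence and scattering from a single frequency envelope bootstrap. The reduction I would make is this: given the qualitative assumption $\|\phi\|_{S[I]} < \infty$, I first prove the quantitative persistence of regularity statement that for every frequency envelope $\{c_k\}_{k\in\bZ}$ covering $\phi[t_0]$ at some $t_0 \in I$, one has
\[
 \|P_k \phi\|_{S_k[I]} \leq C\bigl(\|\phi\|_{S[I]}, E[\phi]\bigr) \, c_k, \quad k \in \bZ.
\]
Once this is available, summing against the $H^N$-type envelopes that one gets from classical data yields $H^N$ control of $\phi$ uniformly in time on $I$, which together with the local well-posedness at sub-critical regularity lets one extend the classical solution past any finite endpoint of $I$, contradicting maximality unless $I = \bR$.

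To prove the envelope bound I would rerun the bootstrap argument from the proof of Theorem~\ref{thm:small_energy_global} on a partition $I = I_1 \cup \cdots \cup I_N$ of $I$ into finitely many subintervals, the number $N$ depending only on $\|\phi\|_{S[I]}$ and a small absolute threshold $\varepsilon_\ast > 0$, chosen so that $\|\phi\|_{S[I_j]} \leq \varepsilon_\ast$ on each $I_j$. On each $I_j$ one works with the renormalized variable $U_{\leq k-10}\bar\phi_k$ from Section~\ref{sec:decomposition_nonlinearity}, whose equation was schematically written out in Proposition~\ref{prop:nlw_for_renormalized_phi_bar_schematic_identities}. Each schematic term in that equation is at least trilinear in $\phi$, so applying the null form estimate \eqref{equ:null_form_estimate}, the trilinear estimate \eqref{equ:trilinear_estimate}, and Strichartz bounds produces two factors of $\|\phi\|_{S[I_j]} \leq \varepsilon_\ast$ (which close the bootstrap) together with a single frequency envelope factor and a harmless power of the energy $E[\phi]$ (which is absorbed in $C(\cdot)$). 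One has to undo the gauge and the nonlinear substitution to pass from envelope bounds on $U_{\leq k-10}\bar\phi_k$ back to envelope bounds on $\phi_k$, but this is exactly the transference step already used in the proof of Theorem~\ref{thm:small_energy_global}, controlled by \eqref{equ:L_p_bound_nabla_U}--\eqref{equ:L_p_bound_nabla2_U} and \eqref{equ:L_infty_bounds_phi_bar}--\eqref{equ:kinetic_energy_bound_phi_bar}. Concatenating over the $N$ pieces produces the desired global envelope bound on $I$.

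For scattering, the same nonlinear decomposition now implies, on $I = \bR$, that
\[
 \sum_{k\in\bZ} \bigl\| \Box \bigl( U_{\leq k-10}\bar\phi_k \bigr) \bigr\|_{L^1_t L^2_x(\bR\times\bR^2)}^2 < \infty,
\]
so by the standard Duhamel argument $\nabla_{t,x} (U_{\leq k-10}\bar\phi_k)$ has a strong $L^2_x$ limit at $t\to\pm\infty$ summable in $k$, i.e. the gauged variable scatters. To transfer the scattering back to $\phi$ itself I would first show that $\bar\phi_k - \phi_k$ goes to zero in $\dot H^1_x$ as $t\to\pm\infty$: by \eqref{equ:definition_phi_bar} this difference is $L(\phi,\phi,\phi)$-type and a variant of the null-form and Strichartz arguments gives the decay. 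Then one has to show that $U_{\leq k-10}(t)$ has strong operator limits on $L^2_x$ as $t\to\pm\infty$ and that $\partial_t U_{\leq k-10}(t)\bar\phi_k(t) \to 0$ in $L^2_x$; both reduce via \eqref{equ:ode_gauge_transform} and Lemma~\ref{lem:gauge_transform_bounds} to the integrability in time of $\nabla_{t,x}\phi$ against $L^\infty_x$-type radial Strichartz norms controlled by $\|\phi\|_S$. Combining these three facts produces $(f_\pm, g_\pm)\in\dot H^1_x \times L^2_x$ with the advertised limit for $\phi$.

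The main obstacle, and what I would spend the most care on, is the last transfer step: the gauge transformations $U_{\leq k-10}$ are orthogonal but have no sharp frequency localization, and they depend on $t$ through the full wave map $\phi(t)$ which itself has no pointwise-in-time decay. Verifying that $U_{\leq k-10}(t)$ and $\partial_t U_{\leq k-10}(t)\bar\phi_k(t)$ behave well enough in the limit $t\to\pm\infty$, while still only using norms that are bounded by $\|\phi\|_{S[\bR]}$ and $E[\phi]$, is the delicate point; everything else is a rerun of the small-energy machinery already set up in Sections~\ref{sec:function_spaces}--\ref{sec:decomposition_nonlinearity}.
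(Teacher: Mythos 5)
Your overall architecture matches the paper's: frequency envelope bounds on $I$ imply persistence of a subcritical norm, hence extension past any finite endpoint; and scattering is read off from the Duhamel formula for the gauged variable $U_{\leq k-10}\bar\phi_k$ and then transferred back to $\phi_k$. However, there is a genuine gap in how you propose to prove the envelope bounds. You partition $I$ into finitely many intervals $I_j$ ``chosen so that $\|\phi\|_{S[I_j]} \leq \varepsilon_\ast$'' and then close the bootstrap by extracting two small factors of $\|\phi\|_{S[I_j]}$ from each trilinear term. This partition does not exist: the $S_k$ norm contains the component $\|\nabla_{t,x}\phi_k\|_{L^\infty_t L^2_x}$, so $\|\phi\|_{S[I_j]}^2 \gtrsim \sum_k \|\nabla_{t,x}\phi_k\|_{L^\infty_t L^2_x[I_j]}^2 \sim E[\phi]$ on \emph{every} subinterval, no matter how short. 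The $S$ norm is not divisible in the strong sense you assume. This is precisely why the paper only establishes the \emph{weak} divisibility statement $\|\phi\|_{S[I_j]} \lesssim C(E)$ (Proposition~\ref{prop:partial_fungibility}) and then has to manufacture smallness by other means: further frequency and spatial truncations in the definitions of $\bar\phi_k$ and of the gauge matrices $B_h$ (the parameter $m$ chosen large depending on $\|\phi\|_{S[I]}$), together with divisibility of specific genuinely divisible quantities such as $\|P_0(\partial_\alpha\phi\,\partial^\alpha\phi)\|_{L^{3/2}_{t,x}}$, weighted $L^2_tL^2_x$ null-form norms, and local energy decay norms. Without these modifications your bootstrap constant cannot be beaten, since the ``small'' factors you rely on are in fact of size $E^{1/2}$.

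A secondary, smaller issue concerns the transfer of scattering from $U_{\leq k-10}\bar\phi_k$ back to $\phi_k$. You propose to show that $U_{\leq k-10}(t)$ has limits by arguing that $\partial_t U_{\leq k-10}$ is time-integrable against $L^\infty_x$-type Strichartz norms; but products of two Strichartz factors only land in $L^{1+\epsilon}_t$ on bounded intervals, which does not give convergence as $t\to\pm\infty$. The paper's route is different and cleaner: from the Duhamel representation one first deduces $\|\bar\phi_k(t)\|_{L^\infty_x}\to 0$ (dispersive decay of the frequency-localized free wave plus the $L^\infty_x$-smallness of the $o(1)$ error), hence $\|\phi_k(t)\|_{L^\infty_x}\to 0$ after making $\bar\phi_k-\phi_k$ small via the modified definition, and only then concludes $\|U_{\leq k-10}(t)-Id\|_{L^\infty_x}\to 0$, since $B_h$ is built from products containing a factor $\phi_\ell$. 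You should adopt this order of deductions rather than attempting a direct time-integrability argument for $\partial_t U_{\leq k-10}$.
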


We will give the proof of Proposition~\ref{prop:breakdown_criterion} at the end of this section. The key ingredient for the proof is the next proposition which provides frequency envelope bounds for a radially symmetric, classical wave map $\phi \colon I \times \bR^2 \to \bS^m$ with finite $S$ norm. 

\begin{prop} \label{prop:finite_S_gives_freq_envelope_bounds}
 Let $\phi \colon I \times \bR^2 \to \bS^m$ be a radially symmetric, classical wave map defined on a time interval~$I$ containing time $t=0$. Let $\{ c_k \}_{k \in \bZ}$ be a frequency envelope covering the initial data $\phi[0]$. If $\| \phi \|_{S[I]} < \infty$, then there exists $C \equiv C \bigl( \|\phi\|_{S[I]} \bigr) > 0$ such that for all $k \in \bZ$,
 \begin{equation*}
  \| P_k \phi \|_{S_k[I]} \leq C c_k.
 \end{equation*}
\end{prop}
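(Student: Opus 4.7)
The plan is to run a divisibility argument that reduces the proposition to the small energy situation treated in Theorem~\ref{thm:small_energy_global}. Since $\|\phi\|_{S[I]} < \infty$, given any small threshold $\eta > 0$ (to be chosen depending only on absolute constants and on the ambient energy $E[\phi]$) one may partition the interval $I$ into $N \equiv N(\|\phi\|_{S[I]}, \eta)$ consecutive subintervals $I = I_1 \cup \cdots \cup I_N$ with $\|\phi\|_{S[I_j]} \leq \eta$ on each. On each $I_j$ the aim is to propagate a frequency envelope bound: assuming $\|P_k \phi[t_{j-1}]\|_{\dot{H}^1_x \times L^2_x} \leq c_k^{(j-1)}$ for a fixed envelope $\{c_k^{(j-1)}\}$ at the left endpoint of $I_j$, one obtains the same type of bound $\|P_k \phi\|_{S_k[I_j]} \leq C_0 c_k^{(j-1)}$ with $C_0$ an absolute constant. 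Iterating $N$ times (and replacing the envelope by a slightly fattened one at each step to absorb exponential tails) yields a final bound $\|P_k \phi\|_{S_k[I]} \leq C_0^N c_k$, and $C \equiv C_0^N$ depends only on $\|\phi\|_{S[I]}$.

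The per-interval bootstrap mirrors the proof of Theorem~\ref{thm:small_energy_global} essentially verbatim. One makes the bootstrap assumption $\|P_k \phi\|_{S_k[I_j]} \leq 2 C_0 c_k^{(j-1)}$, passes to the renormalized variable $U_{\leq k-10} \overline{\phi}_k$, and applies the energy estimate \eqref{equ:energy_estimate} on $I_j$:
\[
 \sum_{k_0 \in \bZ} 2^{\alpha |k_0 - k|} \bigl\| P_{k_0}( U_{\leq k-10} \overline{\phi}_k ) \bigr\|_{S_{k_0}[I_j]} \lesssim \sum_{k_0 \in \bZ} 2^{\alpha |k_0 - k|} \Bigl( \bigl\| P_{k_0} \nabla_{t,x}( U_{\leq k-10} \overline{\phi}_k )(t_{j-1}) \bigr\|_{L^2_x} + \bigl\| P_{k_0} \Box( U_{\leq k-10} \overline{\phi}_k ) \bigr\|_{L^1_t L^2_x[I_j]} \Bigr),
\]
for some fixed $0 < \alpha < 1$; the weighted sum accounts for the fact that $U_{\leq k-10} \overline{\phi}_k$ is only approximately localized at frequency $2^k$. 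The data term is controlled by $c_k^{(j-1)}$ (up to standard use of Lemma~\ref{lem:gauge_transform_bounds} and \eqref{equ:kinetic_energy_bound_phi_bar}), while every schematic piece of the nonlinearity produced by Proposition~\ref{prop:nlw_for_renormalized_phi_bar_schematic_identities} is estimated through the null form estimate~\eqref{equ:null_form_estimate}, the weighted null form estimate~\eqref{equ:weighted_null_form_estimate} and the trilinear estimate~\eqref{equ:trilinear_estimate}, together with the frequency envelope property of $\{c_k^{(j-1)}\}$. Crucially, each of these multilinear bounds contains at least one factor of the form $\|\phi\|_{S[I_j]} \leq \eta$, so the resulting estimate is of the form $\lesssim \eta \cdot 2 C_0 c_k^{(j-1)}$. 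Choosing $\eta$ sufficiently small then improves the bootstrap constant from $2 C_0$ back to $C_0$. Finally, the bound on $U_{\leq k-10} \overline{\phi}_k$ is transferred back to $\phi_k$ itself by inverting the gauge transformation and the nonlinear modification, which is a quintilinear perturbation and therefore transparent in view of \eqref{equ:L_infty_bounds_phi_bar}--\eqref{equ:kinetic_energy_bound_phi_bar} and \eqref{equ:L_p_bound_nabla_U}--\eqref{equ:L_p_bound_nabla2_U}, at the cost of passing to a slightly fattened envelope, which is harmless.

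To start the induction one takes $c_k^{(0)} = c_k$, which covers $\phi[0]$ by hypothesis. At time $t_j$ the envelope $\{c_k^{(j)}\}$ is chosen to be a mild enlargement of $\{C_0 c_k^{(j-1)}\}$ (say by convolving with a discrete exponential weight with parameter $\sigma/2$) so as to remain a genuine frequency envelope and still dominate the pointwise-in-$k$ bound produced on $I_j$. After $N$ steps one obtains the claimed envelope bound on all of $I$ with constant $C \equiv C(N) = C(\|\phi\|_{S[I]})$.

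The main obstacle is that at the energy levels considered here the gauge construction~\eqref{equ:matrix_gauge_transform}--\eqref{equ:ode_gauge_transform} and the nonlinear modification~\eqref{equ:definition_phi_bar} do not provide sufficient smallness on their own: the simplified renormalization as stated only closes in the small energy regime. To remedy this one must invoke the refined version of the renormalization with additional frequency and spatial truncations to be developed later in the paper, whose precise purpose is to furnish the auxiliary smallness parameters needed when $E[\phi]$ is large; once these truncations are in place, the smallness of $\|\phi\|_{S[I_j]}$ on each short subinterval supplies the remaining gain and the argument closes as outlined above.
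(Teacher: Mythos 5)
Your overall architecture (divisibility of the $S$ norm, a per-interval bootstrap via the renormalized variable $U_{\leq k-10}\overline{\phi}_k$, and iteration of envelopes across the subintervals) matches the paper's proof, which indeed combines Proposition~\ref{prop:partial_fungibility} with an iterative bootstrap on further subdivided intervals $I_{ji}$. However, there is a genuine gap at the very first step: you claim that for any $\eta>0$ one can partition $I$ into finitely many subintervals with $\|\phi\|_{S[I_j]}\leq\eta$. This is false. The $S_k$ norm contains the non-divisible component $\|\nabla_{t,x}\phi_k\|_{L^\infty_t L^2_x}$, and by energy conservation $\sum_k\|\nabla_{t,x}\phi_k\|^2_{L^\infty_t L^2_x[I_j]}\gtrsim E[\phi]$ on \emph{every} subinterval, so $\|\phi\|_{S[I_j]}\gtrsim E[\phi]^{1/2}$ no matter how finely you subdivide. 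This is precisely why Proposition~\ref{prop:partial_fungibility} is only a \emph{weak} divisibility statement, asserting $\|\phi\|_{S[I_j]}\lesssim C(E)$ rather than $\leq\eta$. Consequently your bootstrap does not close as written: the claim that ``each multilinear bound contains at least one factor $\|\phi\|_{S[I_j]}\leq\eta$'' cannot supply smallness, because that factor is bounded below by a fixed multiple of $E^{1/2}$, which need not be small.

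The correct mechanism, which your sketch only gestures at in the final paragraph, is different in substance: the smallness must come (i) from divisibility of \emph{specific divisible sub-norms} appearing in the nonlinearity estimates — the local energy decay norm, the weighted null form $L^2_{t,x}$ quantities of Proposition~\ref{prop:weighted_null_form_estimate}, the $L^{3/2}_{t,x}$ null form norm, and Strichartz norms with finite time exponent — each of which genuinely becomes small on suitably chosen subintervals even though the full $S$ norm does not; and (ii) from the modified definitions of $\overline{\phi}_k$ (\eqref{equ:modified_definition_phi_bar_A}--\eqref{equ:modified_definition_phi_bar_D}) and of the gauge transformations (\eqref{equ:modified_gauge_transform_definition}), whose frequency and spatial truncation parameter $m$ is chosen large depending on $\|\phi\|_{S[I]}$ so as to produce explicit gains $2^{-\delta m}$ that compensate the large factors of $\|\phi\|_{S[I_j]}$. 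Your closing remark that ``once these truncations are in place, the smallness of $\|\phi\|_{S[I_j]}$ supplies the remaining gain'' still leans on the false divisibility claim; you need to replace that crutch entirely by the divisibility of the individual divisible quantities inside the trilinear and null form estimates, as is done in the proofs of Propositions~\ref{prop:partial_fungibility} and~\ref{prop:finite_S_gives_freq_envelope_bounds}.
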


The proof of Proposition~\ref{prop:finite_S_gives_freq_envelope_bounds} relies on the following weak divisibility property of the $S$ norm.

\begin{prop} \label{prop:partial_fungibility}
 Let $\phi \colon I \times \bR^2 \to \bS^m$ be a radially symmetric, classical wave map of energy $E$ defined on a time interval $I$ and satisfying $\|\phi\|_{S[I]} \leq K$. Then there exists a partition $I = \cup_{j=1}^N I_j$ into $N \equiv N(K, E)$ consecutive time intervals $I_j$ such that
 \begin{equation} \label{equ:partial_fungibility}
  \|\phi\|_{S[I_j]} \lesssim C(E)\quad \text{for } j = 1, \ldots, N,
 \end{equation}
 where $C(E)$ is an absolute constant that just depends on the size of the energy $E$.
\end{prop}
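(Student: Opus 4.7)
My plan is to combine the energy estimate from Lemma~\ref{lem:energy_estimate}, applied on each subinterval $I_j$ at its left endpoint $t_j$, with a stopping-time argument on the ``divisible'' components of the $S$-norm. At $t_j$ the initial-data term $\|\nabla_{t,x}\phi(t_j)\|_{L^2_x}$ is bounded by $\sqrt{2E}$ uniformly in $j$ by energy conservation, while the nonlinearity $\|P_k(\phi \partial_\alpha \phi^\dagger \partial^\alpha \phi)\|_{L^1_t L^2_x(I_j)}$ is controlled via the decomposition and renormalization of Section~\ref{sec:decomposition_nonlinearity} together with the multilinear estimates of Propositions~\ref{prop:null_form_estimate} and~\ref{prop:trilinear_estimate}. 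Each multilinear term appearing in that bound carries, after extracting the purely energetic factor, at least one factor measured in a \emph{divisible} $S_k$-component, i.e.\ a Strichartz norm $L^q_t L^p_x$ with $q<\infty$, the weighted Strichartz $L^2_t L^\infty_x$, the local energy decay $L^2_t L^2_x$, or an $L^2_{t \pm r} L^\infty_{t \mp r}$ constituent of $Z_k^{\pm}$. These divisible components can be made uniformly small on short enough subintervals.

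\textbf{Stopping-time partition.} For each divisible $L^q_t$-type component, Minkowski's inequality (valid for $q \geq 2$) yields
\[
 \Bigl\| \Bigl( \sum_{k \in \bZ} |\,\textrm{component}_k(t)\,|^2 \Bigr)^{1/2} \Bigr\|_{L^q_t(I)} \leq \Bigl( \sum_{k \in \bZ} \|\textrm{component}_k\|_{L^q_t(I)}^2 \Bigr)^{1/2} \lesssim K,
\]
so the scalar functional $\Phi(J) := \sum \int_J ( \sum_k |\textrm{component}_k(t)|^2 )^{q/2} \, dt$, summed over all divisible components, is additive across consecutive subintervals and satisfies $\Phi(I) \lesssim P(K)$ for some fixed polynomial $P$. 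Fix $\delta = \delta(E) > 0$ small, set $t_0 := \inf I$, and iteratively let $t_{j+1}$ be the smallest $t > t_j$ such that $\Phi([t_j, t]) = \delta$ (or $t_{j+1} := \sup I$ if no such $t$ exists). Additivity of $\Phi$ bounds the number of intervals by $N \lesssim \Phi(I)/\delta =: N(K,E)$.

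\textbf{Closing the bound on each $I_j$.} Monotonicity of each $S_k$-norm under restriction of the time interval — straightforward for the $L^q_t$-type components, and ensured for the atomic $Z_k^{\pm}$ pieces by fixing once and for all a near-optimal decomposition $\psi = \psi^{(+)} + \psi^{(-)}$ on $I$ and noting that the $L^2_{t \pm r} L^\infty_{t \mp r}$ norms can only decrease under restriction to the time slab $\{t \in I_j\}$ — gives the preliminary bound $\|\phi\|_{S[I_j]} \leq \|\phi\|_{S[I]} \leq K$. Applying Lemma~\ref{lem:energy_estimate} at $t_j$ and summing in $k \in \bZ$ yields
\[
 \|\phi\|_{S[I_j]}^2 \lesssim E + \sum_{k \in \bZ} \bigl\| P_k \bigl( \phi \partial_\alpha \phi^\dagger \partial^\alpha \phi \bigr) \bigr\|_{L^1_t L^2_x(I_j)}^2,
\]
where the first term is $\lesssim E$ by energy conservation and the $L^2$-boundedness of Littlewood--Paley projections. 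The second term, after decomposition and renormalization as in Section~\ref{sec:decomposition_nonlinearity} and application of the multilinear estimates of Section~\ref{sec:multilinear_estimates}, admits a schematic bound $C_1(E) \, \eta(\delta) \, \|\phi\|_{S[I_j]}^{\alpha}$ for some $\alpha \geq 1$ and some modulus $\eta(\cdot)$ with $\eta(\delta) \to 0$ as $\delta \to 0$. Choosing $\delta = \delta(E)$ sufficiently small makes $C_1(E) \eta(\delta)$ arbitrarily small in a manner depending only on $E$, whereupon a standard continuity-in-interval bootstrap delivers the desired $\|\phi\|_{S[I_j]} \lesssim C(E)$.

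\textbf{Main obstacle.} The principal difficulty is reconciling the $\ell^2$-in-frequency structure of the $S$-norm with the $L^q$-in-time divisibility of its components: a naive pointwise-in-$k$ stopping-time argument would produce infinitely many subintervals. The remedy is to combine all dyadic frequencies into a single additive scalar functional via Minkowski's inequality, enabling a genuine stopping-time argument with $N$ depending only on $K$ and $E$. A secondary technical point concerns the atomic $Z_k^{\pm}$ pieces, whose $L^2_{t \pm r} L^\infty_{t \mp r}$ constituents are most naturally additive in null, not standard time, coordinates; this is handled by the monotonicity observation above, since restricting to $\{t \in I_j\}$ corresponds to a diagonal strip in $(t+r, t-r)$-coordinates over which the $L^\infty$ in the transverse null variable is only evaluated on a subset and can thus only decrease.
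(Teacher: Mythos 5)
There is a genuine gap, and it sits exactly where the real work of this proposition lies. Your scheme applies the energy estimate to $\phi_k$ itself and then asserts that $\sum_k \| P_k ( \phi \partial_\alpha \phi^\dagger \partial^\alpha \phi ) \|_{L^1_t L^2_x(I_j)}^2$ can be made small "after decomposition and renormalization" together with divisibility. But renormalization is not an estimate on that nonlinearity; it is a change of unknown. The low-low-high term \eqref{equ:WM_nonlinearity_bad0}, i.e.\ $2\sum_{k_2 \leq k-10} \phi_{\leq k_2-10} \partial_\alpha \phi_{k_2}^\dagger \partial^\alpha \phi_k$, in which the derivative $\partial^\alpha$ falls on the highest-frequency input, is \emph{non-perturbative}: it admits no $L^1_t L^2_x$ bound in terms of $S$ norms within this framework (the trilinear estimate \eqref{equ:trilinear_estimate} requires a derivative on a low frequency), so no divisible factor can be extracted from it and no choice of $\delta$ makes it small on short intervals. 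The same applies to \eqref{equ:WM_nonlinearity_bad1} and \eqref{equ:WM_nonlinearity_bad2}. The energy estimate must therefore be applied to the gauged, nonlinearly modified variable $U_{\leq k-10}\bar{\phi}_k$, whose source terms (Proposition~\ref{prop:nlw_for_renormalized_phi_bar_schematic_identities}) genuinely are divisible; and since $U_{\leq k-10}\bar{\phi}_k$ is only approximately frequency localized, one must in fact control weighted sums $\sum_{k_0} 2^{\alpha|k-k_0|}\|P_{k_0}(\cdot)\|$ over all output frequencies, which your display does not account for.

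The second, and decisive, omission is the transfer back from $U_{\leq k-10}\bar{\phi}_k$ to $\phi_k$. Writing $\bar{\phi}_k = U_{\leq k-10}^\dagger \, (U_{\leq k-10}\bar{\phi}_k)$ and expanding in a Littlewood--Paley trichotomy, the high-high terms in the $Z_k^{\pm}$ components unavoidably produce factors of $\|\phi\|_{S[I_j]}$ — that is, of $K$ — rather than of $C(E)$, so a naive bootstrap closes only with a constant depending on $K$, defeating the whole point of the proposition. The paper's remedy is to modify $\bar{\phi}_k$ and the gauge transformations by frequency and spatial truncations at scale $m = m(K)$ (see \eqref{equ:modified_definition_phi_bar_A}--\eqref{equ:modified_definition_phi_bar_D} and \eqref{equ:modified_gauge_transform_definition}), peeling off additional "good" divisible terms so that the remaining high-high interactions carry a smallness factor $2^{-\delta m}$ compensating $\|\phi\|_{S[I]}$; one must also separately prove $\|\bar{\phi}-\phi\|_{S[I]} \lesssim C(E)$. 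Your stopping-time construction via an additive scalar functional and your monotonicity remark for the $Z_k^{\pm}$ atoms are fine and consistent with the paper's divisibility arguments, but without the renormalized equation as the object of the energy estimate and without the $m$-dependent modification handling the gauge-transfer losses, the argument does not yield a bound depending only on $E$.
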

\begin{proof}
 The proof proceeds in two steps. First we show that 
 \begin{equation} \label{equ:partial_fungibility_difference}
  \| \bar{\phi} - \phi \|_{S[I]} \lesssim C(E).
 \end{equation}
 Then we prove that the interval $I$ can be partitioned into $N(K,E)$ consecutive subintervals $I_j$ such that
 \begin{equation} \label{equ:partial_fungibility_phi_bar}
  \| \bar{\phi} \|_{S[I_j]} \lesssim C(E) \quad \text{for } j = 1, \ldots, N.
 \end{equation}
 The assertion \eqref{equ:partial_fungibility} then is an immediate consequence of the two previous bounds. Their proof, however, requires the introduction of certain frequency and spatial cutoffs in the definitions of the quantity $\bar{\phi}$ and of the gauge transformations $U_{\leq h}$ which we will explain as we go. We will be able to prove \eqref{equ:partial_fungibility_difference} directly, gaining smallness by choosing the frequency and spatial cutoff parameter sufficently large, while we will establish \eqref{equ:partial_fungibility_phi_bar} using a suitable divisibility argument.
 
 \medskip
 
 We begin with the proof of the bound \eqref{equ:partial_fungibility_difference}. To this end we recall that our motivation for the introduction of the quantity $\bar{\phi} = \sum_{k\in\bZ} \bar{\phi}_k$ was in the first place that the high-high interaction term~\eqref{equ:WM_nonlinearity_bad1}
 \begin{equation*}
  \sum_{k_2 > k-10} \sum_{k_3 = k_2 + O(1)} P_k \bigl( \phi_{\leq k_2 + 20} \partial_\alpha \phi_{k_2}^\dagger \partial^\alpha \phi_{k_3} \bigr)
 \end{equation*}
 as well as the term~\eqref{equ:WM_nonlinearity_bad2}
 \begin{equation*}
  \sum_{k_2 \leq k-10} \phi_{\leq k_2-10} \bigl( P_k ( \partial_\alpha \phi_{k_2}^\dagger \partial^\alpha \phi_{k-5 < \cdot < k+5} ) - \partial_\alpha \phi_{k_2}^\dagger \partial^\alpha \phi_k \bigr)
 \end{equation*}
 in the wave map nonlinearity localized to frequency $\sim 2^k$ turn out to be non-perturbative due to our choice to estimate the nonlinearity solely in $L^1_t L^2_x$ and to build the $S$ norm out of relatively elementary function spaces. However, we can in fact ``peel off'' some more good parts from these terms. More precisely, the following frequency and spatially truncated versions of \eqref{equ:WM_nonlinearity_bad1} 
 \begin{equation} \label{equ:WM_nonlinearity_bad1_peeled_off}
  \sum_{k-10 < k_2 \leq k+m} \sum_{k_3=k_2+O(1)} P_k \bigl( \chi_{\{ r \leq 2^{-k+m}\}} \phi_{\leq k_2+20} \partial_\alpha \phi_{k_2}^\dagger \partial^\alpha \phi_{k_3} \bigr)
 \end{equation}
 and of \eqref{equ:WM_nonlinearity_bad2} 
 \begin{equation} \label{equ:WM_nonlinearity_bad2_peeled_off}
  \quad 2 \sum_{k-m < k_2 \leq k-10} P_{k-10 < \cdot < k+10} \Bigl( \chi_{\{ r \leq 2^{-k+m} \}} \phi_{\leq k_2+20} \bigl( P_k ( \partial_\alpha \phi_{k_2}^\dagger \partial^\alpha \phi_{k-5 < \cdot < k+5} ) - \partial_\alpha \phi_{k_2}^\dagger \partial^\alpha \phi_k \bigr) \Bigr)
 \end{equation}
 for some large parameter $m \gg 1$, can still be suitably estimated, as we will show in detail in the second part of the proof of this proposition. Correspondingly, within this proof we may also work with the following modified definition 
 \begin{align}
  \bar{\phi}_k - \phi_k &:= \frac{1}{2} \sum_{k-10 < k_2 \leq k+m} \sum_{k_3 = k_2 + O(1)} P_k \bigl( \chi_{\{ r > 2^{-k+m} \}} \phi_{\leq k_2+20} \phi_{k_2}^\dagger \phi_{k_3} \bigr) \label{equ:modified_definition_phi_bar_A} \\
  &\quad + \frac{1}{2} \sum_{k_2 > k + m} \sum_{k_3 = k_2+O(1)} P_k \bigl( \phi_{\leq k_2+20} \phi_{k_2}^\dagger \phi_{k_3} \bigr) \label{equ:modified_definition_phi_bar_B} \\
  &\quad + \sum_{k-m < k_2 \leq k-10} P_{k-10 < \cdot < k+10} \Bigl( \chi_{\{ r > 2^{-k+m} \}} \phi_{\leq k_2+20} \bigl( P_k ( \phi_{k_2}^\dagger \phi_{k-5 < \cdot < k+5} ) - \phi_{k_2}^\dagger \phi_k \bigr) \Bigr) \label{equ:modified_definition_phi_bar_C} \\
  &\quad + \sum_{k_2 \leq k-m} P_{k-10 < \cdot < k+10} \Bigl( \phi_{\leq k_2+20} \bigl( P_k ( \phi_{k_2}^\dagger \phi_{k-5 < \cdot < k+5} ) - \phi_{k_2}^\dagger \phi_k \bigr) \Bigr). \label{equ:modified_definition_phi_bar_D}
 \end{align}
 Note that additional error terms will arise here in the equation for $\Box \bar{\phi}_k$ when a derivative hits the spatial cutoffs $\chi_{\{ r > 2^{-k+m}\}}$. However, these extra terms can be dealt with easily. We now prove that
 \begin{equation} \label{equ:partial_fungibility_difference_decay_in_m}
  \| \bar{\phi} - \phi \|_{S[I]} \lesssim 2^{-\alpha m} \| \phi \|_{S[I]}^\beta
 \end{equation}
 for suitable $\alpha, \beta > 0$, which immediately implies the desired bound~\eqref{equ:partial_fungibility_difference} upon choosing $m \gg 1$ sufficiently large depending on the size of $\|\phi\|_{S[I]}$. We start off with the term~\eqref{equ:modified_definition_phi_bar_A} in the expression for $\bar{\phi}_k - \phi_k$ and seek to show by direct estimation that
 \begin{equation} \label{equ:partial_fungibility_modified_definition_phi_bar_A}
  \biggl( \sum_{k\in\bZ} \, \Bigl\| \sum_{k-10 < k_2 \leq k+m} \sum_{k_3 = k_2 + O(1)} P_k \bigl( \chi_{\{ r > 2^{-k+m} \}} \phi_{\leq k_2+20} \phi_{k_2}^\dagger \phi_{k_3} \bigr) \Bigr\|_{S_k[I]}^2 \biggr)^{\frac{1}{2}} \lesssim 2^{-\alpha m} \| \phi \|_{S[I]}^\beta.
 \end{equation}
 To this end we prove \eqref{equ:partial_fungibility_modified_definition_phi_bar_A} separately for each part of the $S_k$ norm. We begin with the kinetic energy component. Suppose $\nabla_{t,x}$ hits $\phi_{k_2}^\dagger$. Using Strauss' improved Sobolev embedding in the radial case
 \begin{equation} \label{equ:strauss_sobolev_embedding}
  \bigl\| r^{+\frac{1}{2}} \phi_k \bigr\|_{L^\infty_x} \lesssim 2^{-\frac{1}{2} k} \| \phi_k \|_{\dot{H}^1_x},
 \end{equation}
 we may estimate 
 \begin{align*}
  &\biggl( \sum_{k\in\bZ} \, \Bigl\| \sum_{k-10 < k_2 \leq k+m} \sum_{k_3 = k_2 + O(1)} P_k \bigl( \chi_{\{ r > 2^{-k+m} \}} \phi_{\leq k_2+20} \nabla_{t,x} \phi_{k_2}^\dagger \phi_{k_3} \bigr) \Bigr\|_{L^\infty_t L^2_x[I]}^2 \biggr)^{\frac{1}{2}} \\
  &\lesssim \biggl( \sum_{k\in\bZ} \, \Bigl( \sum_{k-10 < k_2 \leq k+m} \sum_{k_3 = k_2 + O(1)} \bigl\| \nabla_{t,x} \phi_{k_2} \bigr\|_{L^\infty_t L^2_x[I]} \bigl\| \chi_{\{ r > 2^{-k+m}\}} \phi_{k_3} \bigr\|_{L^\infty_t L^\infty_x[I]} \Bigr)^2 \biggr)^{\frac{1}{2}} \\
  &\lesssim \biggl( \sum_{k\in\bZ} \, \Bigl( \sum_{k-10 < k_2 \leq k+m} \sum_{k_3 = k_2 + O(1)} \| \phi_{k_2} \|_{S_{k_2}[I]} 2^{-\frac{1}{2} m} 2^{-\frac{1}{2} (k_3 - k)} \| \phi_{k_3} \|_{S_{k_3}[I]} \Bigr)^2 \biggr)^{\frac{1}{2}} \\
  &\lesssim 2^{-\frac{1}{2} m} \| \phi \|_{S[I]}^2.
 \end{align*}
 Analogously, we can deal with the expressions arising when $\nabla_{t,x}$ hits $\phi_{\leq k_2+20}$ or the cutoff $\chi_{\{ r > 2^{-k+m} \}}$, which yields \eqref{equ:partial_fungibility_modified_definition_phi_bar_A} for the kinetic energy component of the $S_k$ norm. In a very similar fashion we may also prove \eqref{equ:partial_fungibility_modified_definition_phi_bar_A} for the Strichartz, weighted endpoint Strichartz and local energy decay components of the $S_k$ norm. Hence we are left to prove \eqref{equ:partial_fungibility_modified_definition_phi_bar_A} for the more delicate atomic space parts $Z_k^{\pm}$. It suffices to consider the $Z_k^{+}$ component. Assume that $(\partial_t + \partial_r)$ hits the input $\phi_{k_2}^\dagger$. Then we decompose $(\partial_t + \partial_r) \phi_{k_2}$ into its $(+)$ and $(-)$ components
 \[
  (\partial_t + \partial_r) \phi_{k_2} = (\partial_t + \partial_r) \phi_{k_2}^{(+)} + (\partial_t + \partial_r) \phi_{k_2}^{(-)}.
 \]
 We now show that in case of the $(\partial_t + \partial_r) \phi_{k_2}^{(+)}$ component, we can place the whole expression
 \begin{equation*}
  \sum_{k-10 < k_2 \leq k+m} \sum_{k_3 = k_2 +O(1)} P_k \bigl( \chi_{\{ r > 2^{-k+m} \}} \phi_{\leq k_2+20} (\partial_t + \partial_r) \phi_{k_2}^{(+) \dagger} \phi_{k_3} \bigr)
 \end{equation*}
 into the $(+)$ component of the $Z_k^{+}$ norm with the desired bound. Indeed, we have for the $L^2_{t+r} L^\infty_{t-r}$ part that
 \begin{align*}
  &\Biggl( \sum_{k\in\bZ} \, \biggl( \sup_{0 < \lambda \leq \frac{1}{2}} 2^{-(\frac{1}{2}-\lambda) k} \Bigl\| r^{+\lambda} \sum_{k-10 < k_2 \leq k+m} \sum_{k_3 = k_2 + O(1)} P_k \bigl( \chi_{\{ r > 2^{-k+m}\}} \phi_{\leq k_2+20} (\partial_t + \partial_r) \phi_{k_2}^{(+) \dagger} \phi_{k_3} \bigr) \Bigr\|_{L^2_{t+r} L^\infty_{t-r}[I]} \biggr)^2 \Biggr)^{\frac{1}{2}} \\
  &\lesssim \Biggl( \sum_{k\in\bZ} \, \biggl( \sup_{0 < \lambda \leq \frac{1}{2}} \sum_{k-10 < k_2 \leq k+m} \sum_{k_3 = k_2 + O(1)} 2^{-(\frac{1}{2} - \lambda) k} \bigl\| r^{+\lambda} (\partial_t + \partial_r) \phi_{k_2}^{(+)} \bigr\|_{L^2_{t+r} L^\infty_{t-r}} \bigl\| \chi_{\{ r > 2^{-k+m} \}} \phi_{k_3} \bigr\|_{L^\infty_t L^\infty_x} \biggr)^2 \Biggr)^{\frac{1}{2}} \\
  &\lesssim \Biggl( \sum_{k\in\bZ} \, \biggl( \sup_{0 < \lambda \leq \frac{1}{2}} \sum_{k-10 < k_2 \leq k+m} \sum_{k_3 = k_2 + O(1)} 2^{-\frac{1}{2} m} 2^{-\lambda (k_2-k)} \| \phi_{k_2} \|_{S_{k_2}} \| \phi_{k_3} \|_{S_{k_3}} \biggr)^2 \Biggr)^{\frac{1}{2}} \\
  &\lesssim 2^{-\frac{1}{2} m} m^{\frac{1}{2}} \|\phi\|_{S[I]}^2,
 \end{align*}
 where we again used Strauss' improved Sobolev embedding \eqref{equ:strauss_sobolev_embedding}. Similarly, we may bound the $Y_k$ norm. 
 
 Instead, when we face the $(\partial_t + \partial_r) \phi_{k_2}^{(-)}$ component, we can suitably place the whole resulting expression into the $(-)$ component of the $Z_k^+$ norm. More precisely, for the weighted $L^q_t L^p_x$ norm we find that 
 \begin{align*}
  &2^{(\frac{1}{q}+\frac{2}{p}-1) k} 2^{\lambda k} \sum_{\ell\in\bZ} \, \Bigl\| \chi_{\{ r \sim 2^\ell\}} r^{+\lambda} \sum_{k-10 < k_2 \leq k+m} \sum_{k_3 = k_2+O(1)} P_k \bigl( \chi_{\{ r > 2^{-k+m}\}} \phi_{\leq k_2+20} (\partial_t + \partial_r) \phi_{k_2}^{(-) \dagger} \phi_{k_3} \bigr) \Bigr\|_{L^q_t L^p_x} \\
  &\lesssim \sum_{k-10 < k_2 \leq k+m} \sum_{k_3 = k_2+O(1)} 2^{(\frac{1}{q}+\frac{2}{p}-1) k} 2^{\lambda k} \sum_{\ell\in\bZ} \, \bigl\| \chi_{\{ r \sim 2^\ell\}} r^{+\lambda} (\partial_t + \partial_r) \phi_{k_2}^{(-)} \bigr\|_{L^q_t L^p_x} \bigl\| \chi_{\{ r > 2^{-k+m}\}} \phi_{k_3} \bigr\|_{L^\infty_t L^\infty_x} \\
  &\lesssim 2^{-\frac{1}{2} m} \sum_{k-10 < k_2 \leq k+m} 2^{(\frac{1}{q} + \frac{2}{p} - \frac{1}{2} + \lambda)(k-k_2)} \|\phi_{k_2}\|_{S_{k_2}}^2
 \end{align*}
 and in view of the admissible ranges for $(q,p)$ and $\lambda$, upon square-summing over $k\in\bZ$, we may bound the whole expression by $2^{-\delta m} m^{\frac{1}{2}} \|\phi\|_{S[I]}^2$ for some small $\delta > 0$, which is of the desired form. We proceed analogously with the $Y_k$ norm. When $(\partial_t + \partial_r)$ hits the other inputs, the $Z_k^+$ norm can be bounded similarly. This finishes the proof of~\eqref{equ:partial_fungibility_modified_definition_phi_bar_A} and hence the proof of~\eqref{equ:partial_fungibility_difference_decay_in_m} for the term \eqref{equ:modified_definition_phi_bar_A}.
 
 In order to show \eqref{equ:partial_fungibility_difference_decay_in_m} for the term \eqref{equ:modified_definition_phi_bar_B} we can proceed analogously only that we have to introduce an additional splitting
 \begin{align*}
  \sum_{k_2 > k+m} \sum_{k_3 = k_2 + O(1)} P_k \bigl( \phi_{\leq k_2+20} \phi_{k_2}^\dagger \phi_{k_3} \bigr) &= \sum_{k_2 > k+m} \sum_{k_3 = k_2 + O(1)} P_k \bigl( \chi_{\{ r \leq 2^{-k}\}} \phi_{\leq k_2+20} \phi_{k_2}^\dagger \phi_{k_3} \bigr) \\
  &\quad + \sum_{k_2 > k+m} \sum_{k_3 = k_2 + O(1)} P_k \bigl( \chi_{\{ r > 2^{-k} \}} \phi_{\leq k_2+20} \phi_{k_2}^\dagger \phi_{k_3} \bigr).
 \end{align*}
 Suppose here that the input $\phi_{k_2}^\dagger$ is hit by a derivative, then we achieve an exponential gain in $-k_3$ and thus smallness, either by using Bernstein to place $\phi_{k_3}$ into $L^\infty_t L^2_x$ or by placing $\phi_{k_3}$ into $L^\infty_t L^\infty_x$ and using Strauss' improved Sobolev embedding~\eqref{equ:strauss_sobolev_embedding}.
 
 Finally, the proof of \eqref{equ:partial_fungibility_difference_decay_in_m} for the terms \eqref{equ:modified_definition_phi_bar_C} and \eqref{equ:modified_definition_phi_bar_D} works similarly to the above estimates and is left to the reader. This finishes the first part of the proof of Proposition~\ref{prop:partial_fungibility}, namely establishing the bound~\eqref{equ:partial_fungibility_difference}. 

 \medskip
 
 We now turn to the proof of the estimate~\eqref{equ:partial_fungibility_phi_bar}. This step is slightly compounded by the fact that the renormalized variable $U_{\leq k-10} \bar{\phi}_k$ is only approximately frequency localized to frequency $2^k$ up to exponentially decaying tails. We will first show via a divisibility argument that we can partition the time interval $I = \cup_j I_j$ into $N(K, E)$ consecutive intervals $I_j = [t_j, t_{j+1}]$ satisfying  
 \begin{equation} \label{equ:divisibility_bound_refined}
  \biggl( \sum_{k\in\bZ} \, \Bigl( \sum_{k_0 \in \bZ} 2^{\alpha |k-k_0|} \bigl\| P_{k_0} \bigl( U_{\leq k-10} \bar{\phi}_k \bigr) \bigr\|_{S_{k_0}[I_j]} \Bigr)^2 \biggr)^{\frac{1}{2}} \lesssim C(E)
 \end{equation}
 for any $0 \leq \alpha < 1$. Then we will infer the desired bound 
 \begin{equation*}
  \biggl( \sum_{k\in\bZ} \| \bar{\phi}_k \|_{S_k[I_j]}^2 \biggr)^{\frac{1}{2}} \lesssim C(E)
 \end{equation*}
 by decomposing the product $\bar{\phi}_k = U_{\leq k-10}^\dagger \bigl( U_{\leq k-10} \bar{\phi}_k \bigr)$ into a Littlewood-Paley trichotomy and invoking the bounds~\eqref{equ:divisibility_bound_refined} as well as by introducing a modified version of the gauge transformations~$U_{\leq h}$. 
 
 We begin with the proof of~\eqref{equ:divisibility_bound_refined}. Using the energy estimate~\eqref{equ:energy_estimate} we find that
 \begin{equation} \label{equ:divisibility_energy_estimate_applied}
  \begin{aligned}
   \biggl( \sum_{k\in\bZ} \, \Bigl( \sum_{k_0 \in \bZ} 2^{\alpha |k-k_0|} \bigl\| P_{k_0} \bigl( U_{\leq k-10} \bar{\phi}_k \bigr) \bigr\|_{S_{k_0}[I_j]} \Bigr)^2 \biggr)^{\frac{1}{2}} &\lesssim \biggl( \sum_{k\in\bZ} \, \Bigl( \sum_{k_0 \in \bZ} 2^{\alpha |k-k_0|} \bigl\| \nabla_{t,x} P_{k_0} \bigl( U_{\leq k-10} \bar{\phi}_k \bigr)(t_j) \bigr\|_{L^2_x} \Bigr)^2 \biggr)^{\frac{1}{2}} \\
   &\quad + \biggl( \sum_{k\in\bZ} \, \Bigl( \sum_{k_0 \in \bZ} 2^{\alpha |k-k_0|} \bigl\| P_{k_0} \Box \bigl( U_{\leq k-10} \bar{\phi}_k \bigr) \bigr\|_{L^1_t L^2_x[I_j]} \Bigr)^2 \biggr)^{\frac{1}{2}}.
  \end{aligned}
 \end{equation}
 For the first term on the right-hand side of~\eqref{equ:divisibility_energy_estimate_applied}, we now show in detail that it is bounded by $C(E)$ in the case when the derivative $\nabla_{t,x}$ falls on $\bar{\phi}_k$. The other case when it hits $U_{\leq k-10}$ can be treated similarly. Using Bernstein's inequality and the sharp localization of $\bar{\phi}_k$ to frequency $2^k$, we obtain that
 \begin{equation} \label{equ:divisibility_kinetic_energy_bound}
  \begin{aligned}
   \sum_{k\in\bZ} \Bigl( \sum_{k_0 \in \bZ} 2^{\alpha |k-k_0|} \bigl\| P_{k_0} \bigl( U_{\leq k-10} \nabla_{t,x} \bar{\phi}_k \bigr\|_{L^2_x} \Bigr)^2 &\lesssim \sum_{k \in \bZ} \Bigl( \sum_{k_0 \leq k-10} 2^{\alpha(k-k_0)} \bigl\| P_{k_0} \bigl( (P_{k+O(1)} U_{\leq k-10}) \nabla_{t,x} \bar{\phi}_k \bigr\|_{L^2_x} \Bigr)^2 \\
   &\quad + \sum_{k\in\bZ} \, \bigl\| P_{k+O(1)} \bigl( U_{\leq k-10} \nabla_{t,x} \bar{\phi}_k \bigr) \bigr\|_{L^2_x}^2 \\
   &\quad + \sum_{k\in\bZ} \Bigl( \sum_{k_0 > k+10} 2^{\alpha(k_0-k)} \bigl\| P_{k_0} \bigl( (P_{k_0+O(1)} U_{\leq k-10}) \nabla_{t,x} \bar{\phi}_k \bigr) \bigr\|_{L^2_x} \Bigr)^2 \\
   &\lesssim \sum_{k\in\bZ} \Bigl( \sum_{k_0 \leq k-10} 2^{\alpha (k-k_0)} 2^{k_0-2k} \bigl\| \nabla_x^2 U_{\leq k-10} \bigr\|_{L^2_x} \|\nabla_{t,x} \bar{\phi}_k\|_{L^2_x} \Bigr)^2 \\
   &\quad + \sum_{k\in\bZ} \| \nabla_{t,x} \bar{\phi}_k \|_{L^2_x}^2 \\
   &\quad + \sum_{k\in\bZ} \Bigl( \sum_{k_0 > k+10} 2^{\alpha (k_0-k)} 2^{k-2k_0} \bigl\| \nabla_x^2 U_{\leq k-10} \bigr\|_{L^2_x} \|\nabla_{t,x} \bar{\phi}_k \|_{L^2_x} \Bigr)^2.
  \end{aligned}
 \end{equation}
 Then the bounds on the gauge transformation from Lemma~\ref{lem:gauge_transform_bounds}, the kinetic energy bounds on $\bar{\phi}_k$ from Lemma~\ref{lem:bounds_on_phi_bar} and the fact that $\alpha < 1$ yield that the right-hand side of the previous line is bounded by
 \begin{align*}
  C(E) \, \Bigl( \sum_{k_0 \leq k-10} 2^{(\alpha -1)(k-k_0)} + \sum_{k_0 > k+10} 2^{(\alpha -2) (k_0-k)} \Bigr) \sum_{k\in\bZ} \|\nabla_{t,x} \bar{\phi}_k\|_{L^2_x}^2 \lesssim C(E).
 \end{align*}
 
 Next we prove by divisibility that on suitable time intervals $I_j$ the second term on the right-hand side of~\eqref{equ:divisibility_energy_estimate_applied}
 \begin{equation} \label{equ:divisibility_L1L2_with_tails}
  \biggl( \sum_{k\in\bZ} \, \Bigl( \sum_{k_0 \in \bZ} 2^{\alpha |k-k_0|} \bigl\| P_{k_0} \Box \bigl( U_{\leq k-10} \bar{\phi}_k \bigr) \bigr\|_{L^1_t L^2_x[I_j]} \Bigr)^2 \biggr)^{\frac{1}{2}}
 \end{equation}
 is also bounded by $C(E)$. Here we first recall from~\eqref{equ:nlw_for_renormalized_phi_bar} that the wave equation for the renormalized variable $U_{\leq k-10} \bar{\phi}_k$ is given by 
 \begin{equation} \label{equ:nlw_for_renormalized_phi_bar_in_fungibility_proof}
  \begin{aligned}
   \Box \bigl( U_{\leq k-10} \bar{\phi}_k \bigr) &= U_{\leq k-10} \bigl( \Box \bar{\phi}_k - 2 A_{\alpha; \leq k-10} \partial^\alpha \phi_k \bigr) \\
   &\quad + 2 \bigl( \partial_\alpha U_{\leq k-10} + U_{\leq k-10} A_{\alpha; \leq k-10} \bigr) \partial^\alpha \phi_k \\
   &\quad + 2 \partial_\alpha U_{\leq k-10} \partial^\alpha (\bar{\phi}_k - \phi_k) \\
   &\quad + \bigl( \Box U_{\leq k-10} \bigr) \bar{\phi}_k
  \end{aligned}
 \end{equation} 
 and that in Proposition~\ref{prop:nlw_for_renormalized_phi_bar_schematic_identities} we had carefully uncovered the schematic form of each term in the nonlinearity on the right-hand side of~\eqref{equ:nlw_for_renormalized_phi_bar_in_fungibility_proof}. In view of these identities we observe that we can treat the exponential tails in~\eqref{equ:divisibility_L1L2_with_tails} by playing out Bernstein's inequality and the bounds on the gauge transformations from Lemma~\ref{lem:gauge_transform_bounds}, analogously to how we proceeded in the estimate~\eqref{equ:divisibility_kinetic_energy_bound}. It therefore suffices in the following to prove by divisibility that we can partition the time interval $I = \cup_j I_j$ into $N(K,E)$ consecutive intervals $I_j$ satisfying
 \begin{equation}
  \sum_{k\in\bZ} \, \bigl\| \Box \bigl( U_{\leq k-10} \bar{\phi}_k \bigr) \bigr\|_{L^1_t L^2_x[I_j]}^2 \lesssim C(E).
 \end{equation}
 We now prove this bound seperately for each type of term appearing on the right-hand side of the wave equation~\eqref{equ:nlw_for_renormalized_phi_bar_in_fungibility_proof} for the renormalized variable $U_{\leq k-10} \bar{\phi}_k$. We start with the term $(\Box \bar{\phi}_k - 2 A_{\alpha; \leq k-10} \partial^\alpha \phi_k \bigr)$ and recall its basic decomposition~\eqref{equ:nlw_for_renormalized_phi_bar_term1}. First though, we have to deal with the two extra terms \eqref{equ:WM_nonlinearity_bad1_peeled_off} and \eqref{equ:WM_nonlinearity_bad2_peeled_off} that appear due to our modified definition of $\bar{\phi}$ in this proof. For the term~\eqref{equ:WM_nonlinearity_bad1_peeled_off} we use Bernstein's inequality to bound
 \begin{align*}
  &\sum_{k\in\bZ} \, \biggl\| \sum_{k-10 < k_2 \leq k+m} \sum_{k_3=k_2+O(1)} P_k \bigl( \chi_{\{ r \leq 2^{-k+m}\}} \phi_{\leq k_2+20} \partial_\alpha \phi_{k_2}^\dagger \partial^\alpha \phi_{k_3} \bigr) \biggr\|_{L^1_t L^2_x}^2 \\
  &\lesssim \sum_{k\in\bZ} \, \biggl( \sum_{k-10 < k_2 \leq k+m} \sum_{\ell \leq -k+m} 2^{k+\ell} \bigl\| \chi_{\{ r \sim 2^\ell \}} r^{-\frac{1}{2}} \nabla_{t,x} \phi_{k_2} \bigr\|_{L^2_t L^2_x}^2 \biggr)^2 \\
  &\lesssim \|\phi\|_S^2 \biggl( \sum_{k\in\bZ} \sum_{k-10 < k_2 \leq k+m} \sum_{\ell \leq -k+m} 2^{k+\ell} \bigl\| \chi_{\{ r \sim 2^\ell \}} r^{-\frac{1}{2}} \nabla_{t,x} \phi_{k_2} \bigr\|_{L^2_t L^2_x}^2 \biggr). 
 \end{align*}
 Then we observe that the last factor in the previous line has the divisibility property and satisfies
 \begin{align*}
  \sum_{k\in\bZ} \sum_{k-10 < k_2 \leq k+m} \sum_{\ell \leq -k+m} 2^{k+\ell} \bigl\| \chi_{\{ r \sim 2^\ell \}} r^{-\frac{1}{2}} \nabla_{t,x} \phi_{k_2} \bigr\|_{L^2_t L^2_x}^2 \lesssim C(m) \sum_{k\in\bZ} \|\phi_k\|_{S_k}^2 \lesssim C(m) \|\phi\|_S^2.
 \end{align*}
 It can thus be made sufficiently small when restricted to suitable time intervals noting that at this point the size of the parameter $m \gg 1$ has already been fixed. Similarly, we can gain smallness for the other additional term~\eqref{equ:WM_nonlinearity_bad2_peeled_off} on suitable time intervals. We continue with the first term on the right-hand side of~\eqref{equ:nlw_for_renormalized_phi_bar_term1}. Here we have to distinguish two cases 
 \begin{align*}
  &\sum_{k\in\bZ} \, \biggl\| \sum_{k_2 \leq k-10} \sum_{k_3 = k + O(1)} P_k L \bigl( \phi_{k_2 - 10 < \cdot \leq k-10}, \partial_\alpha \phi_{k_2}, \partial^\alpha \phi_{k_3} \bigr) \biggr\|_{L^1_t L^2_x}^2 \\ &\lesssim \sum_{k\in\bZ} \, \biggl\| \sum_{k_2 \leq k-10} \sum_{k_3 = k + O(1)} P_k \Bigl( \chi_{\{ r \leq 2^{-k_2} \}} L \bigl( \phi_{k_2 - 10 < \cdot \leq k-10}, \partial_\alpha \phi_{k_2}, \partial^\alpha \phi_{k_3} \bigr) \Bigr) \biggr\|_{L^1_t L^2_x}^2 \\
  &\quad + \sum_{k\in\bZ} \, \biggl\| \sum_{k_2 \leq k-10} \sum_{k_3 = k + O(1)} P_k \Bigl( \chi_{\{ r > 2^{-k_2} \}} L \bigl( \phi_{k_2 - 10 < \cdot \leq k-10}, \partial_\alpha \phi_{k_2}, \partial^\alpha \phi_{k_3} \bigr) \Bigr) \biggr\|_{L^1_t L^2_x}^2.
 \end{align*}
 In the first case we bound
 \begin{align*}
  &\sum_{k\in\bZ} \, \biggl\| \sum_{k_2 \leq k-10} \sum_{k_3 = k + O(1)} P_k \Bigl( \chi_{\{ r \leq 2^{-k_2} \}} L \bigl( \phi_{k_2 - 10 < \cdot \leq k-10}, \partial_\alpha \phi_{k_2}, \partial^\alpha \phi_{k_3} \bigr) \Bigr) \biggr\|_{L^1_t L^2_x}^2 \\
  &\lesssim \sum_{k\in\bZ} \, \biggl\| \sum_{k_2 \leq k-10} \sum_{k_1 \geq k_2-10} \sum_{\ell \leq -k_2} \chi_{\{ r \sim 2^\ell\}} r^{+\frac{1}{2}} |\phi_{k_1}| |\nabla_{t,x} \phi_{k_2}| \biggr\|_{L^2_t L^\infty_x}^2 \sup_{\ell \in \bZ} \, \bigl\| \chi_{\{r \sim 2^{\ell}\}} r^{-\frac{1}{2}} \nabla_{t,x} \phi_k \bigr\|_{L^2_t L^2_x}^2 \\
  &\lesssim \biggl\| \sum_{k_2 \in \bZ} \sum_{k_1 \geq k_2-10} \sum_{\ell \leq -k_2} \chi_{\{ r \sim 2^\ell\}} r^{+\frac{1}{2}} |\phi_{k_1}| |\nabla_{t,x} \phi_{k_2}| \biggr\|_{L^2_t L^\infty_x}^2 \|\phi\|_{S}^2 
 \end{align*}
 and note that the first factor on the right-hand side of the previous line has the divisibility property 
 \begin{align*}
  \biggl\| \sum_{k_2 \in \bZ} \sum_{k_1 \geq k_2-10} \sum_{\ell \leq -k_2} \chi_{\{ r \sim 2^\ell\}} r^{+\frac{1}{2}} |\phi_{k_1}| |\nabla_{t,x} \phi_{k_2}| \biggr\|_{L^2_t L^\infty_x} \lesssim \sum_{k_2 \in \bZ} \sum_{k_1 \geq k_2-10} 2^{-\frac{1}{2} k_2} \|\phi_{k_1}\|_{L^4_t L^\infty_x} \| \nabla_{t,x} \phi_{k_2} \|_{L^4_t L^\infty_x}  \lesssim \|\phi\|_S^2
 \end{align*}
 and thus yields smallness on suitable time intervals. In the second case we use the trilinear estimate~\eqref{equ:trilinear_estimate} and its proof to bound 
 \begin{align*}
  &\sum_{k\in\bZ} \, \biggl\| \sum_{k_2 \leq k-10} \sum_{k_3 = k + O(1)} P_k \Bigl( \chi_{\{ r > 2^{-k_2} \}} L \bigl( \phi_{k_2 - 10 < \cdot \leq k-10}, \partial_\alpha \phi_{k_2}, \partial^\alpha \phi_{k_3} \bigr) \Bigr) \biggr\|_{L^1_t L^2_x}^2 \\
  &\lesssim \sum_{k\in\bZ} \, \biggl( \sum_{k_2 \leq k-10} \sum_{k_1 \geq k_2 - 10} 2^{-\frac{1}{4}(k_1-k_2)} 2^{+\frac{1}{4} k_1} \bigl\| \chi_{\{ r > 2^{-k_1} \}} r^{-\frac{1}{4}}  \phi_{k_1} \bigr\|_{L^2_t L^\infty_x} \|\phi_{k_2}\|_{S_{k_2}} \|\phi_k\|_{S_k} \biggr)^2 \\
  &\lesssim \biggl( \sum_{k_1 \in \bZ} 2^{\frac{1}{2} k_1} \bigl\| \chi_{\{ r > 2^{-k_1} \}} r^{-\frac{1}{4}}  \phi_{k_1} \bigr\|_{L^2_t L^\infty_x}^2 \biggr) \, \|\phi\|_S^4
 \end{align*}
 and then the first factor on the right-hand side has the divisibility property to gain the desired smallness. Next we turn to the second term on the right-hand side of~\eqref{equ:nlw_for_renormalized_phi_bar_term1} and seek to achieve smallness for the expression
 \[
  \sum_{k\in\bZ} \, \biggl\| \sum_{k_2 > k-10} \sum_{k_1 = k_2 + O(1)} \sum_{k_3 \leq k_2 + O(1)} P_k L \bigl( \phi_{k_1}, \partial_\alpha \phi_{k_2}, \partial^\alpha \phi_{k_3} \bigr) \biggr\|_{L^1_t L^2_x}^2.
 \]
 To this end we note that by Bernstein's inequality and the null form estimate~\eqref{equ:null_form_estimate} it holds that
 \begin{align*}
  &\biggl\| \sum_{k_2 > k-10} \sum_{k_1 = k_2 + O(1)} \sum_{k_3 \leq k_2 + O(1)} P_k L \bigl( \phi_{k_1}, \partial_\alpha \phi_{k_2}, \partial^\alpha \phi_{k_3} \bigr) \biggr\|_{L^1_t L^2_x}^2 \\
  &\lesssim \sum_{k_2 > k-10} \sum_{k_1 = k_2 + O(1)} \sum_{k_3 \leq k_2 + O(1)} 2^{\frac{1}{5} k} \bigl\| \phi_{k_1} \bigr\|_{L^{\frac{5}{2}}_t L^\infty_x} \bigl\| \partial_\alpha \phi_{k_2} \partial^\alpha \phi_{k_3} \bigr\|_{L^{\frac{5}{3}}_t L^{\frac{5}{3}}_x} \\
  &\lesssim \sum_{k_2 > k-10} \sum_{k_3 \leq k_2 + O(1)} 2^{-\frac{1}{5}(k_2-k)} 2^{-\frac{1}{10}(k_2-k_3)} \|\phi_{k_2}\|_{S_{k_2}}^2 \|\phi_{k_3}\|_{S_{k_3}}.
 \end{align*}
 Hence, we can exploit the exponential gains in the frequency differences to achieve smallness when the inputs have disparate frequency supports and reduce to the case where all inputs are at frequency $2^{k+O(M)}$ where $M \gg 1$ is chosen sufficiently large depending on $\|\phi\|_S$. But then we obtain smallness by divisibility as in the previous step. The third term on the right-hand side of~\eqref{equ:nlw_for_renormalized_phi_bar_term1} can be treated in exactly the same manner and all quintilinear terms in the fourth term $Q_k(\phi)$ on the right-hand side of~\eqref{equ:nlw_for_renormalized_phi_bar_term1} are easier to control using just Strichartz estimates and the null form estimate~\eqref{equ:null_form_estimate}. We leave the details to the reader.
 
 We are left to consider all remaining terms on the right-hand side of the wave equation~\eqref{equ:nlw_for_renormalized_phi_bar_in_fungibility_proof} for the renormalized variable $U_{\leq k-10} \bar{\phi}_k$. But in view of the schematic identities \eqref{equ:schematic_form_nlw_for_renormalized_phi_bar_term2}--\eqref{equ:schematic_form_nlw_for_renormalized_phi_bar_term4} from Proposition~\ref{prop:nlw_for_renormalized_phi_bar_schematic_identities}, these are either of the same type as the first term on the right-hand side of~\eqref{equ:nlw_for_renormalized_phi_bar_term1}, which we have already dealt with above, or they are quintilinear and therefore easier to control.

 Thus, we can partition the time interval $I = \cup_j I_j$ into $N(K, E)$ consecutive intervals $I_j$ satisfying
 \begin{equation} \label{equ:divisibility_bound_gauged_phi_bar}
  \biggl( \sum_{k\in\bZ} \, \Bigl( \sum_{k_0 \in \bZ} 2^{\alpha |k-k_0|} \bigl\| P_{k_0} \bigl( U_{\leq k-10} \bar{\phi}_k \bigr) \bigr\|_{S_{k_0}[I_j]} \Bigr)^2 \biggr)^{\frac{1}{2}} \lesssim C(E)
 \end{equation}
 for any $0 \leq \alpha < 1$. It now remains to transfer these bounds to $\bar{\phi}$, i.e. to show that on each such interval $I_j$ we have that
 \begin{equation*}
  \biggl( \sum_{k \in \bZ} \| \bar{\phi}_k \|_{S_k[I_j]}^2 \biggr)^{\frac{1}{2}} \lesssim C(E).
 \end{equation*}
 To this end we use the exact orthogonality of the gauge transformations to write
 \[
  \bar{\phi}_k = U_{\leq k-10}^\dagger U_{\leq k-10} \bar{\phi}_k
 \]
 and then decompose into a Littlewood-Paley trichotomy in view of the localization of $\bar{\phi}_k$ to frequency $2^k$,
 \begin{equation} \label{equ:phi_bar_trichotomy}
  \begin{aligned}
   \bar{\phi}_k &= P_{\leq k-C} \bigl( U_{\leq k-10}^\dagger \bigr) \, P_{k+O(1)} \bigl( U_{\leq k-10} \bar{\phi}_k \bigr) \\
   &\quad + P_{k+O(1)} \bigl( U_{\leq k-10}^\dagger \bigr) \, P_{\leq k-C} \bigl( U_{\leq k-10} \bar{\phi}_k \bigr) \\
   &\quad + \sum_{k_1 > k + O(1)} \sum_{k_2 = k_1 + O(1)} P_k \bigl( P_{k_1} \bigl( U_{\leq k-10}^\dagger \bigr) \, P_{k_2} \bigl( U_{\leq k-10}^\dagger \bar{\phi}_k \bigr) \bigr) \\
   &\equiv (LH)_k + (HL)_k + (HH)_k.
  \end{aligned}
 \end{equation}
 We now estimate \eqref{equ:phi_bar_trichotomy} separately for each part of the $S_k$ norm. Here we observe that for those parts of the $S_k$ norm that do not involve a derivative the desired bound follows immediately from~\eqref{equ:divisibility_bound_gauged_phi_bar} and the boundedness of $P_{k_1} U_{\leq k_2}^\dagger$ for arbitrary $k_1, k_2 \in \bZ$ thanks to the exact orthogonality of $U_{\leq k_2}^\dagger$. Similarly, for all other parts of the $S_k$ norm when the derivative falls on the $U_{\leq k-10} \bar{\phi}_k$ term, the desired bound follows immediately.
 
 Let us therefore begin with the treatment of the Strichartz component of the $S_k$ norm when the derivative $\nabla_{t,x}$ falls onto the term $U_{\leq k-10}^\dagger$. Noting that the gauge transformation bounds from Lemma~\ref{lem:gauge_transform_bounds} also hold for the transpose $U_{\leq k-10}^\dagger$, we find for the $(LH)_k$ part that
 \begin{align*}
  &2^{(\frac{1}{q} + \frac{2}{p} -1) k} \sum_{k_1 \leq k-C} \bigl\| P_{k_1} \nabla_{t,x} U_{\leq k-10}^\dagger \bigr\|_{L^\infty_t L^\infty_x[I_j]} \bigl\| P_{k+O(1)} \bigl( U_{\leq k-10} \bar{\phi}_k \bigr) \bigr\|_{L^q_t L^p_x[I_j]} \\
  &\lesssim \sum_{k_1 \leq k+O(1)} 2^{-k} 2^{\frac{1}{2} k_1} \bigl\| P_{k_1} \nabla_{t,x} U_{\leq k-10}^\dagger \bigr\|_{L^\infty_t L^4_x[I_j]} \bigl\| P_k \bigl( U_{\leq k-10} \bar{\phi}_k \bigr) \bigr\|_{S_k[I_j]} \\
  &\lesssim \sum_{k_1 \leq k+O(1)} 2^{-k} 2^{\frac{1}{2} k_1} 2^{\frac{1}{2} k} C(E) \, \bigl\| P_k \bigl( U_{\leq k-10} \bar{\phi}_k \bigr) \bigr\|_{S_k[I_j]} \\
  &\lesssim C(E) \, \bigl\| P_k \bigl( U_{\leq k-10} \bar{\phi}_k \bigr) \bigr\|_{S_k[I_j]}.
 \end{align*}
 Square-summing over $k\in\bZ$ and invoking the previously established bound~\eqref{equ:divisibility_bound_gauged_phi_bar}, we obtain the desired estimate. We proceed analogously with the Strichartz component for the $(HL)_k$ and $(HH)_k$ parts. The weighted $L^2_t L^\infty_x$ component and the local energy decay component of the $S_k$ norm can also be treated similarly so that we now turn to the more delicate $Z_k^+$ norm, the $Z_k^-$ norm being handled in exactly the same manner. We consider in detail the high-high part $(HH)_k$ for the $Z_k^+$ norm, the $(LH)_k$ and $(HL)_k$ parts being similar.
 
 Here it turns out that we cannot avoid to pick up factors of $\| \phi \|_{S[I_j]}$ which would destroy our final goal to obtain a bound just in terms of the energy $E$ on the interval $I_j$. For this reason we have to slightly modify the definition of the gauge transformations to introduce another source of smallness to compensate factors of $\|\phi\|_{S[I_j]}$. At this point we recall that the non-perturbative term~\eqref{equ:WM_nonlinearity_bad0},
 \begin{equation} \label{equ:breakdown_criterion_phi_bar_WM_nonlinearity_bad0}
  2 \sum_{k_2 \leq k-10} \phi_{\leq k_2-10} \partial_\alpha \phi_{k_2}^\dagger \partial^\alpha \phi_k,
 \end{equation}
 in the decomposition \eqref{equ:WM_nonlinearity_bad0}--\eqref{equ:WM_nonlinearity_derivative_transferred_to_lowest_freq} of the wave maps nonlinearity at frequency $2^k$ necessitated the introduction of the gauge transformations $U_{\leq h}$. However, letting $m \gg 1$ be a sufficiently large integer depending only on the size of the $S[I]$ norm of $\phi$, we can in fact still ``peel off'' the good term
 \begin{equation*}
  2 \sum_{k-m < k_2 \leq k-10} \chi_{\{ r \leq 2^{-k+m} \}} \phi_{\leq k_2-10} \partial_\alpha \phi_{k_2}^\dagger \partial^\alpha \phi_k
 \end{equation*}
 from \eqref{equ:breakdown_criterion_phi_bar_WM_nonlinearity_bad0}. The latter term can be easily seen to have the divisibility property and can therefore be dealt with accordingly in the previous parts of this proof of Proposition~\ref{prop:partial_fungibility}. Hence, we only have to use a gauge transformation that suitably renormalizes the expression 
 \begin{equation*}
  2 \sum_{k_2 \leq k-m} \phi_{\leq k_2-10} \partial_\alpha \phi_{k_2}^\dagger \partial^\alpha \phi_k + 2 \sum_{k-m < k_2 \leq k-10} \chi_{\{ r > 2^{-k+m} \}} \phi_{\leq k_2-10} \partial_\alpha \phi_{k_2}^\dagger \partial^\alpha \phi_k.
 \end{equation*}
 Correspondingly, we modify the definition of the matrix $B_h$ in the construction of the gauge transformations $U_{\leq h}$ in~\eqref{equ:matrix_gauge_transform} to
 \begin{equation} \label{equ:modified_gauge_transform_definition}
  \begin{aligned}
   B_h &:= \frac{d}{dh} \sum_{\ell \in \bZ} \eta(h+10-m-\ell) (\phi_{\leq \ell-10} \phi_{\ell}^\dagger - \phi_{\ell} \phi_{\leq \ell-10}^\dagger) \\
   &\quad \quad + \frac{d}{dh} \sum_{\ell \in \bZ} \bigl( \eta(h-\ell) - \eta(h+10-m-\ell) \bigr) \chi_{\{ r > 2^{-h-10+m} \}} (\phi_{\leq \ell-10} \phi_{\ell}^\dagger - \phi_{\ell} \phi_{\leq \ell-10}^\dagger)  .
  \end{aligned}
 \end{equation}
 Then we may conclude as in the proof of Proposition~\ref{prop:nlw_for_renormalized_phi_bar_schematic_identities} that $(\partial_t + \partial_r) U_{\leq k-10}^\dagger$ is of the schematic form
 \begin{align*}
  (\partial_t + \partial_r) U_{\leq k-10}^\dagger &= \sum_{k_3 \leq k-m} U_{\leq k_3-1} (\partial_t + \partial_r) L(\phi_{\leq k_3-10}, \phi_{k_3}) \\
  &\quad + \sum_{k-m < k_3 \leq k-10} U_{\leq k_3-1} (\partial_t + \partial_r) \, \chi_{\{ r > 2^{-k_3+m}\}} L(\phi_{\leq k_3-10}, \phi_{k_3}).
 \end{align*}
 We now estimate in detail the $Z_k^+$ norm of the $(HH)_k$ part of~\eqref{equ:phi_bar_trichotomy} when $(\partial_t + \partial_r)$ falls onto $\phi_{k_3}$, the other case being similar. Thus, we end up having to estimate the $Z_k^+$ norm of the following schematic expression
 \begin{equation} \label{equ:fungibility_Z_k_norm_schematic} 
  \begin{aligned}
   &\sum_{k_1 > k+O(1)} \sum_{k_2 = k_1+O(1)} \sum_{k_3 \leq k-m} P_k \Bigl( P_{k_1} \Bigl( \sum_{k_3 \leq k-m} U_{\leq k_3-1} (\partial_t + \partial_r) \phi_{k_3} \Bigr) P_{k_2} \bigl( U_{\leq k-10} 
   \bar{\phi}_k \bigr) \Bigr) \\
   &\quad + \sum_{k_1 > k+O(1)} \sum_{k_2 = k_1+O(1)} \sum_{k-m < k_3 \leq k-10} P_k \Bigl( P_{k_1} \Bigl( U_{\leq k_3-1} \chi_{\{ r > 2^{-k+m} \}} (\partial_t + \partial_r) \phi_{k_3} \Bigr) P_{k_2} \bigl( U_{\leq k-10} \bar{\phi}_k \bigr) \Bigr)
  \end{aligned}
 \end{equation}
 and we begin with the first summand in ~\eqref{equ:fungibility_Z_k_norm_schematic}. We distinguish the cases $r \leq 2^{-k}$ and $r > 2^{-k}$. When $r \leq 2^{-k}$ we can place the whole output into the $(-)$ component of the $Z_k^+$ norm using just Bernstein and Strichartz estimates. Indeed, for the weighted $L^q_t L^p_x$ norm of the $(-)$ component we have 
 \begin{align*}
  &2^{(\frac{1}{q} + \frac{2}{p}-1) k} 2^{\lambda k} \sum_{\ell \leq -k} \sum_{k_1 > k+O(1)} \sum_{k_2 = k_1 + O(1)} \sum_{k_3 \leq k-m} \biggl\| \chi_{\{ r \sim 2^\ell\}} r^{\lambda} P_k \Bigl( P_{k_1} \Bigl( \sum_{k_3 \leq k-m} U_{\leq k_3-1} (\partial_t + \partial_r) \phi_{k_3} \Bigr) P_{k_2} \bigl( U_{\leq k-10} 
   \bar{\phi}_k \bigr) \Bigr) \biggr\|_{L^q_t L^p_x[I_j]} \\
  &\lesssim 2^{(\frac{1}{q} + \frac{2}{p}-1) k} \sum_{k_2 > k+O(1)} \sum_{k_3 \leq k-m} \bigl\| (\partial_t + \partial_r) \phi_{k_3} \bigr\|_{L^\infty_t L^p_x[I_j]}  \bigl\| P_{k_2} \bigl( U_{\leq k-10} \bar{\phi}_k \bigr) \bigr\|_{L^q_t L^\infty_x[I_j]} \\
  &\lesssim \biggl( \sum_{k_3 \leq k-m} 2^{(1-\frac{2}{p}) (k_3-k)} \|\phi_{k_3}\|_{S_{k_3}[I_j]} \biggr) \sum_{k_2 > k+O(1)} 2^{\frac{1}{q}(k-k_2)} \bigl\| P_{k_2} \bigl( U_{\leq k-10} \bar{\phi}_k \bigr) \bigr\|_{S_{k_2}[I_j]} \\
  &\lesssim 2^{-(1-\frac{2}{p}) m} \|\phi\|_{S[I]} \sum_{k_2 > k+O(1)} 2^{\frac{1}{q}(k-k_2)} \bigl\| P_{k_2} \bigl( U_{\leq k-10} \bar{\phi}_k \bigr) \bigr\|_{S_{k_2}[I_j]}.
 \end{align*}
 Choosing $m \gg 1$ sufficiently large depending on the size of the $S[I]$ norm of $\phi$ to compensate the factor of $\|\phi\|_{S[I]}$ on the right-hand side of the last line, we may then easily square-sum over $k\in\bZ$ and invoke the estimate~\eqref{equ:divisibility_bound_gauged_phi_bar} to obtain a final bound just in terms of the energy $E$. Similarly we bound the $Y_k$ norm of the $(-)$ component. 
 
 Let us now consider the case when $r > 2^{-k}$. Here we distinguish whether $(\partial_t + \partial_r) \phi_{k_3}$ is of $(+)$ type or of $(-)$ type. If it is of $(+)$ type, we place the whole expression into the $(+)$ component of the $Z_k^+$ norm. To this end we consider the $L^2_{t+r} L^\infty_{t-r}$ part of the $(+)$ component. For $0 < \lambda < \frac{1}{2}$ we have
 \begin{align*}
  &2^{-(\frac{1}{2}-\lambda)k} \biggl\| r^\lambda \chi_{\{ r > 2^{-k} \}} \sum_{k_1 > k+O(1)} \sum_{k_2 = k_1+O(1)} \sum_{k_3 \leq k-m} P_k \Bigl( P_{k_1} \Bigl( \sum_{k_3 \leq k-m} U_{\leq k_3-1} (\partial_t + \partial_r) \phi_{k_3}^{(+)} \Bigr) P_{k_2} \bigl( U_{\leq k-10} \bar{\phi}_k \bigr) \Bigr) \biggr\|_{L^2_{t+r} L^\infty_{t-r}[I_j]} \\
  &\lesssim 2^{-(\frac{1}{2}-\lambda)k} \sum_{k_2 > k+O(1)} \sum_{k_3 \leq k-m} \bigl\| r^\lambda (\partial_t + \partial_r) \phi_{k_3}^{(+)} \bigr\|_{L^2_{t+r} L^\infty_{t-r}[I_j]} \bigl\| P_{k_2} \bigl( U_{\leq k-10} \bar{\phi}_k \bigr) \bigr\|_{L^\infty_t L^\infty_x} \\
  &\lesssim \biggl( \sum_{k_3 \leq k-m} 2^{-(\frac{1}{2}-\lambda)(k-k_3)} \|\phi_{k_3}\|_{S_{k_3}[I]} \biggr) \sum_{k_2 \in \bZ} \bigl\| P_{k_2} \bigl( U_{\leq k-10} \bar{\phi}_k \bigr) \bigr\|_{S_{k_2}[I_j]} \\
  &\lesssim 2^{-(\frac{1}{2}-\lambda) m} \|\phi\|_{S[I]} \sum_{k_2 \in \bZ} \bigl\| P_{k_2} \bigl( U_{\leq k-10} \bar{\phi}_k \bigr) \bigr\|_{S_{k_2}[I_j]}
 \end{align*}
 and upon choosing $m \gg 1$ sufficiently large, we may easily square-sum in $k\in\bZ$ and obtain a final bound just in terms of the energy $E$ thanks to~\eqref{equ:divisibility_bound_gauged_phi_bar}. For $\lambda = \frac{1}{2}$ we unfortunately do not straightaway have summability in $k_3 \leq k-m$. In this case we split the weight $r^{\frac{1}{2}} = r^{\frac{1}{2}-\delta} r^{\delta}$ for some small $\delta > 0$ and absorb $r^\delta$ into the high-frequency factor $P_{k_2} \bigl( U_{\leq k-10} \bar{\phi}_k \bigr)$ via Strauss' improved Sobolev embedding in the radial case 
 \[
  \bigl\| \chi_{\{ r > 2^{-k}\}} r^\delta P_{k_2} \bigl( U_{\leq k-10} \bar{\phi}_k \bigr) \bigr\|_{L^\infty_t L^\infty_x[I_j]} \lesssim 2^{(\frac{1}{2}-\delta) k} 2^{-\frac{1}{2} k_2} \bigl\| P_{k_2} \bigl( U_{\leq k-10} \bar{\phi}_k \bigr\|_{L^\infty_t L^2_x[I_j]}.
 \]
 Then we can again sum over $k_3 \leq k-m$ and gain a smallness factor $2^{-\delta m}$. The $Y_k$ norm of the $(-)$ component of the $Z_k^+$ norm is easier to bound. It therefore remains to consider the case when $(\partial_t + \partial_r) \phi_{k_3}$ is of $(-)$ type. Here we try to place the whole expression into the $(-)$ component of the $Z_k^+$ norm. We start with the weighted $L^q_t L^p_x$ part
 \begin{align*}
  2^{(\frac{1}{q} + \frac{2}{p} - 1)k} 2^{\lambda k} \sum_{\ell > - k} \biggl\| \chi_{\{r \sim 2^\ell\}} r^\lambda \sum_{k_1 > k+O(1)} \sum_{k_2 = k+O(1)} \sum_{k_3 \leq k-m} P_k \Bigl( P_{k_1} \Bigl( \sum_{k_3 \leq k-m} U_{\leq k_3-1} (\partial_t + \partial_r) \phi_{k_3}^{(-)} \Bigr) P_{k_2} \bigl( U_{\leq k-10} \bar{\phi}_k \bigr) \Bigr) \biggr\|_{L^q_t L^p_x[I_j]},
 \end{align*}
 where we aim to estimate $(\partial_t + \partial_r) \phi_{k_3}^{(-)}$ in $L^q_t L^\infty_x$, while placing the high-frequency factor $P_{k_2} \bigl( U_{\leq k-10} \bar{\phi}_k \bigr)$ into $L^\infty_t L^p_x$. In order to ensure summability over the low frequencies $k_3 \leq k-m$, we exploit that we can absorb a weight $r^{\frac{1}{2} - \frac{1}{p}}$ into $P_{k_2} \bigl( U_{\leq k-10} \bar{\phi}_k \bigr)$ via the estimate 
 \[
  \bigl\| r^{\frac{1}{2}-\frac{1}{p}} P_{k_2} f \bigr\|_{L^\infty_t L^p_x} \lesssim 2^{-(\frac{1}{2} + \frac{1}{p}) k_2} \|\nabla_x P_{k_2} f\|_{L^2_x},
 \]
 which follows from interpolating Strauss' improved Sobolev embedding with the trivial energy estimate. This yields the following bound on the weighted $L^q_t L^p_x$ part of the $(-)$ component of the $Z_k^+$ norm
 \begin{align*}
  &\sum_{k_3 \leq k-10} 2^{(\frac{1}{q}-1+\lambda) k} 2^{(1-\lambda+\frac{1}{2}-\frac{1}{p}-\frac{1}{q}) k_3} \| \phi_{k_3} \|_{S_{k_3}[I_j]} \sum_{k_2 > k + O(1)} 2^{\frac{2}{p}k} 2^{-(\frac{1}{2} + \frac{1}{p}) k_2} \bigl\| P_{k_2} \bigl( U_{\leq k-10} \bar{\phi}_k \bigr) \bigr\|_{S_{k_2}[I_j]} \\
  &\lesssim 2^{-(1-\lambda+\frac{1}{2}-\frac{1}{p}-\frac{1}{q}) m} \sum_{k_2 > k + O(1)} 2^{-(\frac{1}{2} + \frac{1}{p}) (k_2-k)} \bigl\| P_{k_2} \bigl( U_{\leq k-10} \bar{\phi}_k \bigr) \bigr\|_{S_{k_2}[I_j]},
 \end{align*}
 which yields a smallness gain $2^{-\delta m}$ on account of the fact that $0 < \lambda < 1$ and $\frac{1}{q} + \frac{1}{p} < \frac{1}{2}$. Then we can easily square-sum this bound over $k \in \bZ$ and obtain a final estimate just in terms of $E$ by invoking~\eqref{equ:divisibility_bound_gauged_phi_bar}. The $Y_k$ norm of the $(-)$ component of the $Z_k^+$ norm can again be treated easily. 
 
 Finally, we have to estimate the $Z_k^+$ norm of the second summand in~\eqref{equ:fungibility_Z_k_norm_schematic}. Here we proceed similarly to the above treatment of the first summand only that we gain smallness using Strauss' improved Sobolev embedding and the restriction $r > 2^{-k+m}$. This finishes the proof of Proposition~\ref{prop:partial_fungibility}.
\end{proof}

Next, we give the proof of Proposition~\ref{prop:finite_S_gives_freq_envelope_bounds} using the weak divisibility of the $S$ norm.
\begin{proof}[Proof of Proposition~\ref{prop:finite_S_gives_freq_envelope_bounds}]
 We may assume that the interval $I$ is of the form $I = [0,T]$ for some $0 < T \leq \infty$. Then we first use Proposition~\ref{prop:partial_fungibility} to partition $I = \cup_{j=1}^N I_j$ into $N \equiv N(\|\phi\|_{S[I]}, E)$ consecutive intervals $I_j = [t_{j-1}, t_j]$ with $t_0 = 0$ and $t_N = T$ such that $\| \phi \|_{S[I_j]} \leq C(E)$ for $j = 1, \ldots, N$, where $C(E) > 0$ is a constant that depends only on the energy $E$ of the wave map $\phi$. For each interval $I_j = [t_{j-1}, t_j]$ we introduce a frequency envelope
 \[
  c_k^{(j)} := \sum_{\ell \in \bZ} 2^{-\sigma |k-\ell|} \big\| P_\ell \phi [t_{j-1}] \big\|_{\dot{H}^1_x \times L^2_x}, \quad k \in \bZ,
 \]
 for some small constant $\sigma > 0$, and show via an iterative bootstrap argument that
 \[
  \| P_k \phi \|_{S_k[I_j]} \lesssim c_k^{(j)},
 \]
 which then implies the assertion of the proposition. To this end we further partition each interval $I_j = \cup_i I_{ji}$ into finitely many consecutive intervals $I_{ji}$ which have suitable divisibility properties. On every interval $I_{ji}$ we now run a bootstrap argument. Starting with $I_{j1}$ we make the bootstrap assumption
 \[
  \| P_k \phi \|_{S_k[I_{j1}]} \leq C c_k^{(j)}
 \]
 for some sufficently large, absolute constant $C>0$ and now show that this implies the improved bound 
 \[
  \| P_k \phi \|_{S_k[I_{j1}]} \leq \frac{C}{2} c_k^{(j)}.
 \]
 Then we continue analogously on all remaining intervals $I_{ji}$. Here we again have to use the modified definition~\eqref{equ:modified_definition_phi_bar_A} of $\overline{\phi}_k$ and the modified definition~\eqref{equ:modified_gauge_transform_definition} of the gauge transformations $U_{\leq h}$ to have an additional soure of smallness at our disposal as in the preceding proof of Proposition~\ref{prop:partial_fungibility}. Then it is easy to show that for any $0 \leq \alpha < 1$,
 \[
  \sum_{k_0 \in \bZ} 2^{\alpha |k-k_0|} \bigl\| P_{k_0} \nabla_{t,x} \bigl( U_{\leq k-10} \overline{\phi}_k(t_j) \bigr) \bigr\|_{L^2_x} \ll C c_k^{(j)}.
 \]
 Moreover, picking the intervals $I_{ji}$ suitably, we may infer by divisibility arguments that for any $0 \leq \alpha < 1$,
 \[
  \sum_{k_0 \in \bZ} 2^{\alpha |k-k_0|} \bigl\| P_{k_0} \Box \big( U_{\leq k-10} \overline{\phi}_k \big) \bigr\|_{L^1_t L^2_x[I_{j1}]} \ll C c_k^{(j)}
 \]
 and then the energy estimate~\eqref{equ:energy_estimate} implies that 
 \[
  \sum_{k_0 \in \bZ} 2^{\alpha |k-k_0|} \bigl\| P_{k_0} \bigl( U_{\leq k-10} \overline{\phi}_k \bigr) \bigr\|_{S_{k_0}[I_{j1}]} \ll C c_k^{(j)}.
 \]
 Finally, we argue similarly as in the preceding proof of Proposition~\ref{prop:partial_fungibility} to pass from the last bound back to~$\phi_k$ and recover the improved bound
 \[
  \| P_k \phi \|_{S_k[I_{j1}]} \leq \frac{C}{2} c_{k}^{(j)}.
 \]
 This finishes the proof of Proposition~\ref{prop:finite_S_gives_freq_envelope_bounds}.
\end{proof}

We are now in a position to provide the proof of the main result of this section.

\begin{proof}[Proof of Proposition~\ref{prop:breakdown_criterion}]
 Suppose that $I \neq \bR$. Since we have $\| \phi \|_{S[I]} < \infty$ by assumption, Proposition~\ref{prop:finite_S_gives_freq_envelope_bounds} yields frequency envelope bounds for the evolution of $\phi$ on its maximal time interval of existence $I$,
 \[
  \| P_k \phi \|_{S_k[I]} \leq C c_k,
 \]
 where $\{ c_k \}_{k \in \bZ}$ is a frequency envelope covering the initial data $\phi[0]$. Then it is a standard argument to infer that a subcritical norm of $\phi[t]$ must stay finite on $I$, i.e. 
 \[
  \sup_{t \in I} \| \phi[t] \|_{H^s_x \times H^{s-1}_x} < \infty
 \]
 for some $s > 1$. But then the local well-posedness theory~\cite{KlMa1, KlMa2, KlMa3, Klainerman_Selberg1, Klainerman_Selberg2} implies that the evolution of $\phi$ extends smoothly beyond the time interval $I$, contradicting the maximality of $I$. Thus, we must have $I = \bR$ and it remains to prove the scattering assertion. To this end we first note that in the gauged wave equation
 \[
  \Box \big( U_{\leq k-10} \overline{\phi}_k \big) \equiv F_k, \quad k \in \bZ,
 \]
 we have that $\| F_k \|_{L^1_t L^2_x[\bR]} \lesssim c_k$. Now for $k \ll -1$ or $k \gg 1$ we already know from the a priori bounds that $\| \nabla_{t,x} \phi_k \|_{L^\infty_t L^2_x} \lesssim c_k$ has very small norm, so it suffices to consider $k = O(1)$. Picking a sufficently large time $T > 0$ such that 
 \[
  \| F_k \|_{L^1_t L^2_x[ [T, \infty) )} \ll 1,
 \]
 we then obtain that for $k = O(1)$,
 \[
  U_{\leq k-10} \overline{\phi}_k(t) = S(t-T)\big( U_{\leq k-10} \overline{\phi}_k[T] \big) + o_{L^\infty_t \dot{H}^1_x([T,\infty))}(1) \quad \text{for } \, t > T.
 \]
 In order to infer scattering, it then suffices to show that 
 \[
  \lim_{t\to\infty} \, \bigl\| \nabla_{t,x} \big( U_{\leq k-10} \overline{\phi}_k(t) - \phi_k(t) \big) \bigr\|_{L^2_x} = 0,
 \]
 for which in turn it suffices to prove that $\lim_{t\to\infty} \big\| U_{\leq k-10}(t) - Id \big\|_{L^\infty_x} = 0$ as well as $\lim_{t\to\infty} \| \phi_k(t) \|_{L^\infty_x} = 0$. On the one hand, it is easy to see that 
 \[
  \lim_{t\to\infty} \, \big\| S(t-T)\big( U_{\leq k-10} \overline{\phi}_k[T] \big) \big\|_{L^\infty_x} = 0,
 \]
 and the localization to frequency $k = O(1)$ also implies that the error $o_{L^\infty_t \dot{H}^1_x([T,\infty))}(1)$ goes to zero in $L^\infty_x$. This then implies that $\overline{\phi}_k(t)$ converges to zero in $L^\infty_x$. To pass from here to $\phi_k$ again requires modifying the definition of $\overline{\phi}_k$ as in the previous proof of Proposition~\ref{prop:partial_fungibility} so that the difference $\overline{\phi}_k - \phi_k$ becomes arbitrarily small. Having uniform smallness for $\phi_k$ in $L^\infty_x$, we then obtain that
 \[
  \lim_{t\to\infty} \big\| U_{\leq k-10}(t) - Id \big\|_{L^\infty_x} = 0,
 \]
 which implies 
 \[
  \overline{\phi}_k(t) = S(t-T)\big( U_{\leq k-10} \overline{\phi}_k[T] \big) + o_{L^\infty_t \dot{H}^1_x[[T,\infty))}(1) \quad \text{for } \, t > T.
 \]
 Then the modified definition of $\bar{\phi}_k$ yields that 
 \[
  \phi_k(t) = S(t-T)\big( U_{\leq k-10} \overline{\phi}_k[T] \big) + o_{L^\infty_t \dot{H}^1_x[[T,\infty))}(1) \quad \text{for } \, t > T,
 \]
 which gives scattering.
\end{proof}

\section{Concentration compactness step} \label{sec:Concentration_Compactness_Step}

In this section we begin with the actual proof of Theorem~\ref{thm:main_theorem}. We recall that our goal is to show that there exists a non-decreasing function $K \colon [0,\infty) \to [0, \infty)$ with the following property: Let $(\phi_0, \phi_1) \colon \bR^2 \to T\bS^m$ be radially symmetric, classical initial data of energy $E$. Then there exists a unique, classical, global wave map $\phi \colon \bR^{1+2} \to \bS^{m}$ with initial data $\phi[0] = (\phi_0, \phi_1)$ satisfying the a priori bound 
\[
 \| \phi \|_{S} \leq K(E).
\]
Once we have established this a priori bound, the scattering assertion of Theorem~\ref{thm:main_theorem} is an immediate consequence of Proposition~\ref{prop:breakdown_criterion}.

\medskip

We argue by contradiction and assume that Theorem~\ref{thm:main_theorem} fails. Then the existence of the function $K(\cdot)$ yielding the a priori bounds must fail at some finite energy level. Correspondingly, the following set of energies must be non-empty
\[
 {\mathcal E} := \biggl\{ E \, : \, \sup_{\{ \phi \, : \, E[\phi] \leq E \}} \|\phi\|_{S[I]} = +\infty \biggr\},
\]
where the supremum is taken over all radial, classical wave maps $\phi \colon I \times \bR^2 \to \bS^m$ defined on some time interval $I$ and with energy $E[\phi] \leq E$. By the small energy global regularity result from Theorem~\ref{thm:small_energy_global}, the infimum of the set ${\mathcal E}$ has to be strictly positive
\[
 E_{crit} := \inf {\mathcal E} > 0.
\]
Thus, we may pick a sequence of radially symmetric, classical wave maps $\phi^n \colon I^n \times \bR^2 \to \bS^{m}$, $n \geq 1$, with maximal intervals of existence $I^n$ such that
\[
 \lim_{n\to\infty} E[\phi^n] = E_{crit}, \quad \lim_{n\to\infty} \| \phi^n \|_{S[I^n]} = + \infty.
\]
In the following we call such a sequence of wave maps \emph{essentially singular}. Moreover, we denote the associated essentially singular sequence of initial data $\{ \phi^n[0] \}_{n \geq 1}$ by $\{ (\phi_0^n, \phi_1^n) \}_{n \geq 1}$. Here, the subscripts in $(\phi_0^n, \phi_1^n)$ should not to be confused with frequency localizations, but this will always be clear from the context.

\medskip 

Our goal is now to rule out the existence of such an essentially singular sequence of wave maps $\{ \phi^n \}_{n \geq 1}$, hence proving Theorem~\ref{thm:main_theorem}. To this end we follow the general philosophy of the concentration compactness/rigidity method introduced by Kenig-Merle~\cite{Kenig_Merle1, Kenig_Merle2}, but more precisely we shall follow the implementation of this strategy for energy critical wave maps into the hyperbolic plane as in~\cite{KS} as well as for the energy critical Maxwell-Klein-Gordon equation as in~\cite{KL}. In this section we carry out a ``twisted'' Bahouri-G\'erard type profile decomposition that takes into account the strong low-high interactions in the wave maps nonlinearity. It enables us to extract from the essentially singular sequence a non-trivial minimal blowup solution to (WM) whose orbit satisfies a strong compactness property. Then we exclude the existence of such a minimal blowup solution in the rigidity argument of the next, and final, section of this paper.

\medskip 

The first step consists in decomposing the essentially singular sequence of data $\{ \phi^n[0] \}_{n \geq 1}$ into frequency atoms using the M\'etivier-Schochet procedure~\cite{Metivier-Schochet} as in Bahouri-G\'erard~\cite{Bahouri_Gerard}. Roughly speaking, the basic idea then goes as follows. Ultimately, we would like to conclude that upon passing to a subsequence, if necessary, the essentially singular sequence of data $\{ \phi^n[0] \}_{n \geq 1}$ consists of exactly one frequency atom wich in turn consists of exactly one concentration profile (to be defined precisely in Subsection~\ref{subsec:adding_in_first_atom}) of asymptotic energy $E_{crit}$. In this scenario, the sequence $\{ \phi^n[0] \}_{n \geq 1}$ has sufficient compactness properties that allow us to pass to a certain limit whose wave maps evolution will be the desired minimal blowup solution to (WM) as detailed in Subsection~\ref{subsec:conclusion}. In order to rule out all other possible scenarios, we seek to prove uniform in~$n$, finite, global $S$ norm bounds on the sequence of wave maps evolutions $\{ \phi^n \}_{n \geq 1}$, which would contradict that the sequence is essentially singular. To this end we first achieve control over the wave maps evolutions of certain low frequency truncations of the essentially singular sequence of data $\{ \phi^n[0] \}_{n \geq 1}$. Using a finite inductive procedure over the increasing size of the frequency supports of these low frequency truncations, we then conclude uniform in $n$, finite, global $S$ norm bounds on the actual essentially singular sequence $\{ \phi^n \}_{n \geq 1}$. This inductive procedure over the increasing size of the frequency supports also enables us to disentangle the strong low-high frequency interactions in the wave maps nonlinearity.

\subsection{Decomposition into frequency atoms}

We now turn to the details of the decomposition of the essentially singular sequence of data $\{ \phi^n[0] \}_{n \geq 1}$ into frequency atoms. Here we follow relatively closely Section~9.1 and Section~9.2 in~\cite{KS} as well as Section~7.2 in~\cite{KL}, which in turn partially mimic Section~III.1 in Bahouri-G\'erard~\cite{Bahouri_Gerard}. First, we need to introduce some terminology from~\cite{Bahouri_Gerard}.

\medskip 

We call a sequence of positive numbers $\{ \lambda_n \}_{n \geq 1}$ a {\it scale}. Two scales $\{ \lambda_n^a \}_{n \geq 1}$ and $\{ \lambda_n^b \}_{n \geq 1}$ are {\it orthogonal} if
\[
 \lim_{n\to\infty} \, \frac{\lambda_n^a}{\lambda_n^b} + \frac{\lambda_n^b}{\lambda_n^a} = +\infty.
\]
Let $\{ (f^n, g^n) \}_{n \geq 1}$ be a bounded sequence of functions in $\dot{H}^1_x(\bR^2) \times L^2_x(\bR^2)$ and let $\{ \lambda_n \}_{n \geq 1}$ be a scale. Then the sequence $\{ (f^n, g^n) \}_{n \geq 1}$ is called {\it $\lambda_n$-oscillatory} if 
\[
 \lim_{R \to \infty} \limsup_{n \to \infty} \, \bigg( \int_{ \{ \lambda_n |\xi| \leq \frac{1}{R} \} } | \widehat{\nabla_x f^n}(\xi) |^2 + | \widehat{g^n}(\xi) |^2 \, d\xi + \int_{ \{ \lambda^n |\xi| \geq R \} } | \widehat{\nabla_x f^n}(\xi)|^2 + |\widehat{g^n}(\xi)|^2 \, d\xi \bigg) = 0
\]
and we say that the sequence $\{ (f^n, g^n) \}_{n \geq 1}$ is {\it $\lambda_n$-singular} if for all $0 < a < b$,
\[
 \lim_{n\to\infty} \int_{ \{ a \leq \lambda_n |\xi| \leq b \} } | \widehat{ \nabla_x f^n}(\xi) |^2 + |\widehat{g^n}(\xi)|^2 \, d\xi = 0.
\]

\medskip 

Using the M\'etivier-Schochet procedure~\cite{Metivier-Schochet} as implemented in Bahouri-G\'erard~\cite{Bahouri_Gerard}, we start off with a decomposition
\[
 \phi^n[0] = \sum_{a=1}^\Lambda \tilde{\phi}^{na}[0] + \tilde{\phi}^{n\Lambda}[0]
\]
of the essentially singular sequence of data $\{ \phi^n[0] \}_{n \geq 1}$ into $\lambda_n^a$-oscillatory \emph{frequency atoms} $\tilde{\phi}^{na}[0]$, $1 \leq a \leq \Lambda$, for pairwise orthogonal frequency scales $\{ \lambda_n^a\}_{n \geq 1}$ and into a remainder term $\tilde{\phi}^{n \Lambda}[0]$, which is $\lambda_n^a$-singular for $1 \leq a \leq \Lambda$ and has the smallness property 
\[
 \lim_{\Lambda\to\infty} \limsup_{n\to\infty} \| \tilde{\phi}^{n\Lambda}[0] \|_{\dot{B}^1_{2,\infty} \times \dot{B}^0_{2,\infty}} = 0.
\]

Next we achieve a sharp frequency localization of the atoms by picking a sequence $R_n \to \infty$ growing sufficently slowly and by then setting
\[
 \phi^{na}[0] := P_{[\mu_n^a - \log R_n, \mu_n^a + \log R_n]} \phi^n [0] \quad \text{for } a = 1, \ldots, \Lambda
\]
and 
\[
 \phi^{n \Lambda}[0] := P_{\cap_{a=1}^\Lambda [\mu_n^a - \log R_n, \mu_n^a + \log R_n]^c} \phi^n [0],
\]
where we use the notation $\mu_n^a = - \log ( \lambda_n^a )$. Then we have the new frequency atom decomposition
\begin{equation} \label{equ:frequency_atom_decomposition}
 \phi^n[0] = \sum_{a=1}^\Lambda \phi^{na}[0] + \phi^{n\Lambda}[0]
\end{equation}
with the same properties as above, but now with the additional sharp frequency localization property of the atoms. Moreover, we may assume that the atoms are ordered in terms of the increasing size of their frequency support scales $(\lambda_n^a)^{-1}$. 

\medskip 

As described above, ultimately we would like to conclude that there is exactly one atom in the decomposition~\eqref{equ:frequency_atom_decomposition}, i.e. $\Lambda = 1$, which is of asymptotic energy $E_{crit}$. If this is the case, we proceed directly as in Subsection~\ref{subsec:adding_in_first_atom}. Otherwise, we now conclude via a finite inductive procedure that the sequence $\{ \phi^n \}_{n \geq 1}$ cannot be essentially singular. To this end we fix an integer $\Lambda_0$ sufficiently large such that upon passing to a subsequence, if necessary,
\[
 \sum_{a > \Lambda_0} \limsup_{n\to\infty} \| \phi^{na}[0] \|_{\dot{H}^1_x \times L^2_x}^2 \leq \varepsilon_0,
\]
where $\varepsilon_0 > 0$ is a sufficently small constant that plays the role of a perturbative threshold in the key bootstrap argument in Proposition~\ref{prop:bootstrap_bounds_to_next_level_in_evolving_lowest_frequency_nonatomic_part} in Subsection~\ref{subsec:lowest_freq_non_atomic}. In particular, $\varepsilon_0$ will be chosen sufficently small depending only on the size of $E_{crit}$ and it will be chosen to be less than the small energy global regularity threshold from Theorem~\ref{thm:small_energy_global}. Then we observe that due to the sharp frequency localizations of the atoms $\phi^{na}[0]$, $1 \leq a \leq \Lambda_0$, the remainder term $\phi^{n \Lambda_0}[0]$ gets split into $\Lambda_0 + 1$ ``frequency shells''
\begin{equation} \label{equ:decomposition_remainder_term_into_frequency_shells}
 \phi^{n \Lambda_0}[0] = \phi^{n \Lambda_0^{(0)}}[0] + \phi^{n \Lambda_0^{(1)}}[0] + \ldots + \phi^{n \Lambda_0^{(\Lambda_0)}}[0],
\end{equation}
where $\phi^{n \Lambda_0^{(0)}}[0]$ shall denote the lowest frequency component.

\medskip

Our first step now consists in showing that the lowest frequency ``non-atomic'' component $\phi^{n \Lambda_0^{(0)}}[0]$ can be globally evolved and satisfies finite $S$ norm bounds just in terms of $E_{crit}$ uniformly for all sufficiently large $n$. Since the component $\phi^{n \Lambda_0^{(0)}}[0]$ may still have large energy, in order to be able to infer these $S$ norm bounds by bootstrap, we split $\phi^{n \Lambda_0^{(0)}}[0]$ into finitely many pieces
\[
 \phi^{n \Lambda_0^{(0)}}[0] = \sum_{\ell = 1}^{C_1} P_{J_\ell} \phi^{n \Lambda_0^{(0)}}[0]
\]
by means of frequency localization to consecutive frequency intervals $J_\ell = [a_\ell, b_\ell]$ such that
\[
 (-\infty, \mu_n^1 - \log R_n ] = \cup_{\ell=1}^{C_1} J_\ell 
\]
and such that for $\ell = 1, \ldots, C_1$,
\[
 \| P_{J_\ell} \phi^{n \Lambda_0^{(0)}}[0] \|^2_{\dot{H}^1_x \times L^2_x} \lesssim \varepsilon_0.
\]
Here we recall that $(-\infty, \mu_n^1 - \log R_n]$ is the (dyadic) frequency support of $\phi^{n \Lambda_0^{(0)}}[0]$. The number $C_1$ of such frequency intervals is of the order $O( \frac{E_{crit}}{\varepsilon_0} )$ since $\| \phi^{n \Lambda_0^{(0)}}[0] \|_{\dot{H}^1_x \times L^2_x}^2 \lesssim E_{crit}$.

We now intend to inductively show that if
\[
 \sum_{\ell \leq L} P_{J_\ell} \phi^{n \Lambda_0^{(0)}}[0] = P_{\leq b_{L}} \phi^{n \Lambda_0^{(0)}}[0]
\]
can be globally evolved with uniform finite $S$ norm bounds in terms of $E_{crit}$ for some $L \geq 1$ (for all sufficently large~$n$), then we may also conclude this for 
\[
 \sum_{\ell \leq L+1} P_{J_\ell} \phi^{n \Lambda_0^{(0)}}[0] = P_{\leq b_{L+1}} \phi^{n \Lambda_0^{(0)}}[0],
\]
and the induction start for the first piece $P_{J_1} \phi^{n \Lambda_0^{(0)}}[0]$ is provided by the small energy global regularity result from Theorem~\ref{thm:small_energy_global}.

\medskip 

However, up to this point we have totally ignored that the frequency truncations $P_{\leq b_L} \phi^{n \Lambda_0^{(0)}}[0] \equiv P_{\leq b_L} \phi^n[0]$ are not ``geometric'' in the sense that they are not actual maps $\bR^2 \to T\bS^m$ into the unit sphere and therefore do not constitute suitable initial data for the wave maps equation (WM). To overcome this issue we just project the frequency truncations back to the sphere, using the normal projection operator. As we shall see next, this operation is well-defined and the frequency localization properties are approximately preserved up to exponential tails if around the frequency cut-offs a certain Besov norm smallness condition is satisfied.

\medskip 

We begin with a general lemma that shows that the frequency truncations $P_{\leq \alpha_n} \phi_0^n$ of maps $\phi_0^n \colon \bR^2 \to \bS^m$, $n \geq 1$, remain close to the sphere if around the frequency cut-offs $\{ \alpha_n \}_{n \geq 1}$ a certain Besov norm smallness holds.

\begin{lem} \label{lem:frequency_truncation_stays_close_to_sphere}
 Let $\{ \alpha_n \}_{n\geq1}$ be a sequence of real numbers. Given $\delta_1 > 0$ one can choose $\delta_0 \ll \delta_1$ sufficiently small (depending on $E_{crit}$) so that if we have 
 \begin{equation} \label{equ:geometric_truncation_besov_smallness}
  \limsup_{n\to\infty} \, \bigl\| P_{[\alpha_n, \alpha_n + M]} \phi_0^n \bigr\|_{\dot{B}^1_{2,\infty}} \leq \delta_0
 \end{equation}
 for $M \sim \log ( (1+E_{crit}) \delta_1^{-1} )$, then it holds that 
 \[
  \limsup_{n\to\infty} \, \bigl\| 1 - (P_{\leq \alpha_n} \phi_0^n)^\dagger (P_{\leq \alpha_n} \phi_0^n) \bigr\|_{L^\infty_x} \lesssim \delta_1.
 \]
\end{lem}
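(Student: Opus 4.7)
My plan is to use the pointwise constraint $(\phi_0^n)^\dagger \phi_0^n \equiv 1$ together with the fact that $P_{\leq \alpha_n}$ preserves constants ($P_{\leq \alpha_n}[1] = 1$) to rewrite
\[
1 - (P_{\leq \alpha_n}\phi_0^n)^\dagger(P_{\leq \alpha_n}\phi_0^n) = P_{\leq \alpha_n}|\phi_0^n|^2 - |P_{\leq \alpha_n}\phi_0^n|^2.
\]
Setting $\phi := P_{\leq \alpha_n}\phi_0^n$ and $\psi := P_{>\alpha_n}\phi_0^n$, expanding $|\phi_0^n|^2 = |\phi|^2 + 2\phi^\dagger \psi + |\psi|^2$, and using $P_{\leq \alpha_n}|\phi|^2 - |\phi|^2 = -P_{>\alpha_n}|\phi|^2$, the right-hand side reduces to the three terms
\[
(\mathrm{I}) := -P_{>\alpha_n}|\phi|^2, \qquad (\mathrm{II}) := 2 P_{\leq \alpha_n}[\phi^\dagger \psi], \qquad (\mathrm{III}) := P_{\leq \alpha_n}|\psi|^2.
\]
Each will be estimated in $L^\infty_x$ using only the Besov hypothesis \eqref{equ:geometric_truncation_besov_smallness}, the pointwise bound $\|\phi_0^n\|_{L^\infty_x} = 1$, and the energy control $\sum_k \|P_k\phi_0^n\|_{\dot H^1_x}^2 \lesssim E_{crit}$.

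For $(\mathrm{I})$, the Fourier support of $|\phi|^2$ lies in $\{|\xi| \lesssim 2^{\alpha_n}\}$, so $P_{>\alpha_n}|\phi|^2$ collapses to $O(1)$ dyadic blocks at frequency $\sim 2^{\alpha_n}$. A Littlewood--Paley trichotomy on each such block forces one factor to be $P_{\sim\alpha_n}\phi_0^n$, whose $L^\infty_x$ norm is $\lesssim \delta_0$ by Bernstein and the Besov hypothesis; pairing with $\|\phi\|_{L^\infty_x} \lesssim 1$ gives $\|(\mathrm{I})\|_{L^\infty_x} \lesssim \delta_0$. An analogous frequency-support argument for $(\mathrm{II})$ shows that only the $P_{[\alpha_n, \alpha_n + O(1)]}\phi_0^n$ portion of $\psi$ interacts with $\phi$ to produce output frequencies $\leq 2^{\alpha_n}$, again yielding $\|(\mathrm{II})\|_{L^\infty_x} \lesssim \delta_0$.

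The main obstacle is $(\mathrm{III})$, since $\psi$ carries frequencies arbitrarily far above $\alpha_n$ where only the energy bound is available. I would split $\psi = \psi_{\mathrm{med}} + \psi_{\mathrm{high}}$ with $\psi_{\mathrm{med}} := P_{(\alpha_n, \alpha_n + M]}\phi_0^n$ in the Besov-small range and $\psi_{\mathrm{high}} := P_{>\alpha_n + M}\phi_0^n$. The medium-medium and cross contributions to $P_{\leq \alpha_n}|\psi|^2$ reduce, via matched-frequency balance, to finitely many dyadic pieces at least one of whose factors is Besov-small, and they are controlled by $M\delta_0^2 + \delta_0 E_{crit}^{1/2}$. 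The decisive high-high piece uses the sharp output localization at scale $2^{\alpha_n}$ to trade $L^\infty_x$ for $L^1_x$ via Bernstein, followed by Plancherel and the energy bound:
\[
\bigl\| P_{\leq \alpha_n}|\psi_{\mathrm{high}}|^2 \bigr\|_{L^\infty_x} \lesssim 2^{2\alpha_n}\|\psi_{\mathrm{high}}\|_{L^2_x}^2 = 2^{2\alpha_n} \sum_{k > \alpha_n + M} 2^{-2k}\|P_k\phi_0^n\|_{\dot H^1_x}^2 \lesssim 2^{-2M} E_{crit}.
\]

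Choosing $M \sim \log((1+E_{crit})\delta_1^{-1})$ makes the high-high contribution $\lesssim \delta_1$, and then choosing $\delta_0$ small enough depending on $M$ (hence on $\delta_1$ and $E_{crit}$) absorbs the remaining terms, yielding the claim. The heart of the argument is the Bernstein step for $\psi_{\mathrm{high}}$: the energy alone gives no smallness of $\|P_k\phi_0^n\|_{L^\infty_x}$ at frequencies $k \gg \alpha_n$, so a termwise estimate on the tail is hopeless; instead one exploits the sharp output localization to convert the Plancherel-integrable $L^2$ tail of $\phi_0^n$ above frequency $2^{\alpha_n + M}$ into the smallness factor $2^{-2M}$, which beats $E_{crit}$ precisely when $M$ is chosen logarithmically large as in the hypothesis.
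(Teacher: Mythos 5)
Your decomposition into $(\mathrm{II})=2P_{\leq\alpha_n}(\phi^\dagger\psi)$ and $(\mathrm{III})=P_{\leq\alpha_n}|\psi|^2$ (with $\phi=P_{\leq\alpha_n}\phi_0^n$, $\psi=P_{>\alpha_n}\phi_0^n$), and their estimation --- Bernstein plus the Besov hypothesis for the cross term and the range $[\alpha_n,\alpha_n+M]$, then Bernstein/Plancherel/energy giving $2^{-2M}E_{crit}$ for the tail above $\alpha_n+M$ --- is exactly the paper's argument, including the logarithmic choice of $M$ and the choice of $\delta_0$ small relative to $M$.

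The gap is in your term $(\mathrm{I})=-P_{>\alpha_n}|P_{\leq\alpha_n}\phi_0^n|^2$, which appears only because you apply the sharp projection $P_{\leq\alpha_n}$ to the constraint. The trichotomy for $P_{>\alpha_n}\bigl(P_{k_1}\phi_0^{n\dagger}\,P_{k_2}\phi_0^n\bigr)$ with $k_1,k_2\leq\alpha_n$ forces $\max(k_1,k_2)\geq\alpha_n-O(1)$, so the distinguished factor lies in the blocks $P_{[\alpha_n-O(1),\,\alpha_n]}\phi_0^n$. But the hypothesis \eqref{equ:geometric_truncation_besov_smallness} controls only the blocks with $k\in[\alpha_n,\alpha_n+M]$; the blocks $P_{\alpha_n-1}\phi_0^n,\,P_{\alpha_n-2}\phi_0^n$ are governed solely by the energy bound, so Bernstein yields $\|P_{[\alpha_n-O(1),\,\alpha_n-1]}\phi_0^n\|_{L^\infty_x}\lesssim E_{crit}^{1/2}$ rather than $\lesssim\delta_0$, and your claimed bound $\|(\mathrm{I})\|_{L^\infty_x}\lesssim\delta_0$ does not follow. (For a general finite-energy $f$ satisfying the Besov hypothesis the quantity $\|P_{>\alpha_n}|P_{\leq\alpha_n}f|^2\|_{L^\infty_x}$ can indeed be of size $E_{crit}$: two wave packets at opposite frequencies of magnitude just below $2^{\alpha_n}$ produce output above $2^{\alpha_n}$.) The term is still controllable for the sphere-valued $\phi_0^n$, but only by rerouting through the constraint: substituting $|P_{\leq\alpha_n}\phi_0^n|^2=1-2\phi^\dagger\psi-|\psi|^2$ turns $(\mathrm{I})$ into $2P_{>\alpha_n}(\phi^\dagger\psi)+P_{>\alpha_n}|\psi|^2$, to which your estimates for $(\mathrm{II})$ and $(\mathrm{III})$ apply verbatim, since they only use that the output is localized at frequencies $\lesssim 2^{\alpha_n}$. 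The paper sidesteps the issue entirely: since $1-|P_{\leq\alpha_n}\phi_0^n|^2$ has Fourier support in $\{|\xi|\lesssim 2^{\alpha_n}\}$, one applies the fattened projection $P_{\leq\alpha_n+10}$ to the identity $1-|P_{\leq\alpha_n}\phi_0^n|^2=2\phi^\dagger\psi+|\psi|^2$, which acts as the identity on the left-hand side, so your term $(\mathrm{I})$ never arises.
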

\begin{proof}
 By definition we have 
 \[
  ( P_{\leq \alpha_n} \phi_0^n + P_{> \alpha_n} \phi_0^n )^\dagger ( P_{\leq \alpha_n} \phi_0^n + P_{> \alpha_n} \phi_0^n ) = 1
 \]
 and therefore
 \[
  1 - (P_{\leq \alpha_n} \phi_0^n)^\dagger (P_{\leq \alpha_n} \phi_0^n) = 2 P_{\leq \alpha_n + 10} \bigl( (P_{\leq \alpha_n} \phi_0^n)^\dagger (P_{> \alpha_n} \phi_0^n) \bigr) + P_{\leq \alpha_n + 10} \bigl( (P_{>\alpha_n} \phi_0^n)^\dagger (P_{>\alpha_n} \phi_0^n) \bigr).
 \]
 Then we use Bernstein's inequality and \eqref{equ:geometric_truncation_besov_smallness} to bound  
 \begin{align*}
  \limsup_{n\to\infty} \, \bigl\| P_{\leq \alpha_n + 10} \bigl( (P_{\leq \alpha_n} \phi_0^n)^\dagger (P_{> \alpha_n} \phi_0^n) \bigr) \bigr\|_{L^\infty_x} &\lesssim \limsup_{n\to\infty} \| P_{\leq \alpha_n} \phi_0^n \|_{L^\infty_x} \sum_{k=\alpha_n}^{\alpha_n+15} 2^k \| P_k \phi_0^n \|_{L^2_x} \\
  &\lesssim \limsup_{n\to\infty} \sum_{k=\alpha_n}^{\alpha_n+15} \| \nabla_x P_k \phi_0^n \|_{L^2_x} \\
  &\lesssim \delta_0 \lesssim \delta_1.
 \end{align*}
 Similarly, we obtain
 \begin{align*}
  \limsup_{n\to\infty} \, \bigl\| P_{\leq \alpha_n + 10} \bigl( (P_{>\alpha_n} \phi_0^n)^\dagger (P_{>\alpha_n} \phi_0^n) \bigr) \bigr\|_{L^\infty_x} &\lesssim \limsup_{n\to\infty} \sum_{k \geq \alpha_n} 2^{2 \alpha_n} \| P_k \phi_0^n \|_{L^2_x}^2 \\
  &\lesssim \limsup_{n\to\infty} \sum_{k=\alpha_n}^{\alpha_n+M} 2^{2 \alpha_n - 2 k} \|\nabla_x P_k \phi_0^n\|_{L^2_x}^2 + \sum_{k > \alpha_n+M} 2^{-2M} \|\nabla_x P_k \phi_0^n\|_{L^2_x}^2 \\
  &\lesssim M \delta_0 + 2^{-2M} E_{crit} \\
  &\lesssim \delta_1,
 \end{align*}
 where in the last step we use that $M \sim \log( (1+E_{crit}) \delta_1^{-1} )$ and choose $\delta_0 \lesssim \delta_1 \log^{-1} ((1+E_{crit}) \delta_1^{-1})$.
\end{proof}

For suitably chosen frequency cut-offs $\alpha_n$, the frequency truncations $P_{\leq \alpha_n} \phi_0^n$ therefore stay close to the sphere. In order to recover exact maps into the sphere, we then simply project the frequency truncations $P_{\leq \alpha_n} \phi_0^n$ back to the sphere, using the normal projection operator $\Pi$. Hence, we set 
\begin{equation} \label{equ:project_to_sphere}
 \Pi_{\leq \alpha_n} \phi_0^n := \Pi \bigl( P_{\leq \alpha_n} \phi_0^n \bigr) := \frac{P_{\leq \alpha_n} \phi_0^n}{|P_{\leq \alpha_n} \phi_0^n|}.
\end{equation}
For the associated frequency truncations $P_{\leq \alpha_n} \phi_1^n$ of the time derivatives $\phi_1^n$ in our essentially singular sequence of data $\phi^n[0] = (\phi_0^n, \phi_1^n)$, we just use linear orthogonal projection onto the respective fiber of the tangent space and map $P_{\leq \alpha_n} \phi_1^n$ to
\begin{equation} \label{equ:project_to_sphere_for_time_derivative}
 \Pi_{\leq \alpha_n} \phi_1^n := P_{\leq \alpha_n} \phi_1^n - \langle \Pi_{\leq \alpha_n} \phi_0^n, P_{\leq \alpha_n} \phi_1^n \rangle \Pi_{\leq \alpha_n} \phi_0^n \in T_{\Pi_{\leq \alpha_n} \phi_0^n} \bS^{m}.
\end{equation}
Thus, after frequency truncation of the data $\phi^n[0] = (\phi_0^n, \phi_1^n) \colon \bR^2 \to T \bS^{m}$ to dyadic frequencies less than $\alpha_n$, we may recover the data pair
\[
 \Pi_{\leq \alpha_n} \phi^n[0] := (\Pi_{\leq \alpha_n} \phi_0^n, \Pi_{\leq \alpha_n} \phi_1^n) \colon \bR^2 \to T \bS^{m}.
\]
In the next proposition we prove that this operation approximately preserves the frequency localization if the data satisfy a Besov norm smallness condition around the frequency cut-offs.
\begin{prop} \label{prop:data_approximate_preservation_of_frequency_localization}
 Let $\{ \alpha_n \}_{n \geq 1}$ be a sequence of real numbers. Given $\delta_1 > 0$ one can choose $\delta_0 \ll \delta_1$ sufficently small depending on the size of $E_{crit}$ so that if we have 
 \begin{equation} \label{equ:small_besov_error_around_frequency_cutoff}
  \limsup_{n\to\infty} \, \bigl\| P_{[\alpha_n-M, \alpha_n+M]} \phi^n[0] \bigr\|_{\dot{B}^1_{2,\infty} \times \dot{B}^0_{2,\infty}} \leq \delta_0
 \end{equation}
 for $M \sim \log ( (1+E_{crit}) \delta_1^{-1} )$, then for all sufficently large $n$ it holds that 
 \begin{equation} \label{equ:data_approximate_preservation_of_frequency_localization}
  \bigl\| P_k \bigl( P_{\leq \alpha_n} \phi^n[0] - \Pi_{\leq \alpha_n} \phi^n[0] \bigr) \bigr\|_{\dot{H}^1_x \times L^2_x} \lesssim \delta_1 2^{-|k-\alpha_n|} \quad \text{for } k \in \bZ.
 \end{equation}
\end{prop}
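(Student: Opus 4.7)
The plan is to handle the position and momentum components of $\phi^n[0]$ separately, both via Littlewood-Paley analysis that exploits the Besov smallness~\eqref{equ:small_besov_error_around_frequency_cutoff} at the cutoff together with the pointwise closeness to the sphere from Lemma~\ref{lem:frequency_truncation_stays_close_to_sphere}. Setting $u := P_{\leq\alpha_n}\phi_0^n$ and $v := P_{>\alpha_n}\phi_0^n$, I would start from the algebraic identity
\[
 P_{\leq\alpha_n}\phi_0^n - \Pi_{\leq\alpha_n}\phi_0^n \;=\; -\frac{u}{|u|(|u|+1)}\bigl(|u|^2 - 1\bigr) \;=\; \frac{u}{|u|(|u|+1)}\bigl(2\langle u,v\rangle + |v|^2\bigr),
\]
where the second equality uses $|\phi_0^n|^2 = 1$. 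Since Lemma~\ref{lem:frequency_truncation_stays_close_to_sphere} provides $\|\,1-|u|^2\,\|_{L^\infty_x} \lesssim \delta_1 \ll 1$, the prefactor $G(u) := 1/(|u|(|u|+1))$ is a smooth bounded function of $u$ which I would expand as a geometrically convergent series in $(|u|^2 - 1)$ with $O(1)$ coefficients, reducing the problem to multilinear expressions in $u$ and $v$ of arbitrary but summably controlled degree.

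For each such multilinear expression I would split the $u$-factors and $v$-factors around the cutoff as $u = u_{\mathrm{far}} + u_{\mathrm{near}}$ and $v = v_{\mathrm{near}} + v_{\mathrm{far}}$, where $u_{\mathrm{near}} := P_{(\alpha_n-M,\alpha_n]}\phi_0^n$ and $v_{\mathrm{near}} := P_{(\alpha_n,\alpha_n+M]}\phi_0^n$. Hypothesis~\eqref{equ:small_besov_error_around_frequency_cutoff} gives $\|u_{\mathrm{near}}\|_{\dot{B}^1_{2,\infty}}, \|v_{\mathrm{near}}\|_{\dot{B}^1_{2,\infty}} \lesssim \delta_0$, while $u_{\mathrm{far}}$ and $v_{\mathrm{far}}$ are separated by a frequency gap of at least $M \sim \log((1+E_{crit})\delta_1^{-1})$. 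Applying the Leibniz rule~\eqref{equ:leibniz_rule} and Bernstein's inequality to each output block $P_k(\cdot)$ and invoking the Littlewood-Paley trichotomy, I would distinguish two regimes: when $k$ lies at distance $\gg 1$ from $\alpha_n$, the required exponential decay $2^{-|k-\alpha_n|}$ comes from the frequency mismatch between the output and inputs (since any contribution must involve at least one factor of $v_{\mathrm{far}}$ or $u_{\mathrm{far}}$ whose frequency support is separated from $\alpha_n$ by at least $M$); when $k$ lies near $\alpha_n$, at least one near-cutoff piece must appear, directly yielding a factor of $\delta_0$. Absorbing the $M$-dependent logarithmic losses into the final $\delta_1$ by choosing $\delta_0$ sufficiently small depending on $E_{crit}$ and $\delta_1$ then delivers the bound on the position component.

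For the momentum component I would exploit the tangency relation $\phi_0^{n\dagger}\phi_1^n = 0$, which upon Littlewood-Paley expansion of the product yields an identity of the form
\[
 \bigl\langle u,\, P_{\leq\alpha_n}\phi_1^n \bigr\rangle \;=\; -\bigl\langle u, P_{>\alpha_n}\phi_1^n\bigr\rangle - \bigl\langle v, P_{\leq\alpha_n}\phi_1^n\bigr\rangle - \bigl\langle v, P_{>\alpha_n}\phi_1^n\bigr\rangle + R_n,
\]
where $R_n$ is a commutator-type remainder with frequency support essentially concentrated near $\alpha_n$. Every term on the right features one factor straddling the cutoff, so the same trichotomy-plus-Besov-smallness argument supplies the envelope bound for $\langle u, P_{\leq\alpha_n}\phi_1^n\rangle$. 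Replacing $\Pi_{\leq\alpha_n}\phi_0^n$ by $u$ in the defining formula for $\Pi_{\leq\alpha_n}\phi_1^n$ using the already-established position estimate (with lower-order error) then produces the required envelope bound for the momentum component.

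The main obstacle is the bookkeeping required to maintain the \emph{pointwise} exponential decay $2^{-|k-\alpha_n|}$ in $k$ rather than a merely $\ell^2$-summable envelope, while simultaneously keeping the Taylor series of $G(u)$ summable. One must verify, in every Littlewood-Paley scenario produced by the trichotomy and every Taylor term, that the algebraic $\delta_0$-smallness of the near-cutoff pieces and the geometric gains from the frequency separation can be distributed so that the final per-$k$ bound has the sharp form claimed in~\eqref{equ:data_approximate_preservation_of_frequency_localization}, and one must calibrate $M$ and $\delta_0$ so as to balance the logarithmic losses from converting $\dot{H}^1$ to $L^\infty$ via Bernstein against the exponential gains from the frequency gap.
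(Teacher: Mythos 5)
Your route is genuinely different from the paper's. The paper never writes the normal projection as an algebraic function of $P_{\leq \alpha_n}\phi_0^n$; instead it uses the continuous Littlewood--Paley representation $\phi_0^n - \Pi(P_{\leq\alpha_n}\phi_0^n)=\int_{\alpha_n}^{\infty}(P_h\phi_0^n)\,\Pi'(P_{\leq h}\phi_0^n)\,dh$ for the low output frequencies and $\nabla_x P_k\Pi(P_{\leq\alpha_n}\phi_0^n)=\nabla_x P_k\int_{-\infty}^{\alpha_n}(P_h\phi_0^n)\Pi'(P_{\leq h}\phi_0^n)\,dh$ for the high ones, extracting the tail $2^{-(k-\alpha_n)}$ by writing $\|\nabla_x P_k F\|_{L^2_x}\lesssim 2^{-k}\|\nabla_x^2 P_kF\|_{L^2_x}$ and paying at most $2^{h}\leq 2^{\alpha_n}$ per extra derivative landing on the inputs. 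Your treatment of the regime $k\leq\alpha_n$ (high-high interactions near or above the cutoff, Besov smallness in the window $[\alpha_n-M,\alpha_n+M]$, crude energy bounds plus the gain $2^{-(\alpha_n-k)}\leq 2^{-M}\sim\delta_1/(1+E_{crit})$ outside it) is sound, and the momentum component reduces to the position component in essentially the same way as in the paper.

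The gap is in the regime $k>\alpha_n+O(1)$, and in particular $k>\alpha_n+M$, where $P_kP_{\leq\alpha_n}\phi_0^n=0$ and \eqref{equ:data_approximate_preservation_of_frequency_localization} is purely a statement about the high-frequency tail of $\Pi_{\leq\alpha_n}\phi_0^n$. Once you substitute $|u|^2-1=-(2\langle u,v\rangle+|v|^2)$ and estimate the resulting multilinear expressions in $u$ and $v$ term by term, the contribution of the factor $P_{k+O(1)}v=P_{k+O(1)}\phi_0^n$ to $P_k\bigl(u\,G(u)\,\langle u,v\rangle\bigr)$ has $\dot{H}^1_x$-size comparable to $\|\nabla_x P_{k+O(1)}\phi_0^n\|_{L^2_x}$, which for $k>\alpha_n+M$ is controlled only by $E_{crit}^{1/2}$: there is no ``frequency mismatch'' to exploit because the input and output frequencies agree, and no Besov smallness because $k$ lies outside the window. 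The bound $\delta_1 2^{-(k-\alpha_n)}$ is simply false for these individual terms; it holds for their sum only because of the cancellation $2\langle u,v\rangle+|v|^2=1-|u|^2$, i.e.\ because the quantity under $P_k$ is a function of the truncated map $u$ alone. Concretely, the $j$-th term $u(|u|^2-1)^j$ of your expansion has Fourier support in $\{|\xi|\lesssim (2j+1)2^{\alpha_n}\}$, so only terms with $j\gtrsim 2^{k-\alpha_n}$ contribute to $P_k$, and those carry $\||u|^2-1\|_{L^\infty_x}^{j}\lesssim\delta_1^{j}$, which yields (super-)exponential decay. You must either keep $1-|u|^2$ intact (never splitting it into $u$- and $v$-parts) whenever the output frequency exceeds $\alpha_n$ and run this counting argument, or switch to the paper's $2^{-k}\nabla_x^2$ device; as written, the high-frequency case does not close. (A minor point: the first equality in your identity has the wrong sign, since $u-u/|u|=+\frac{u(|u|^2-1)}{|u|(|u|+1)}$; this does not affect the estimates.)
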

\begin{proof}
 We begin with the proof of \eqref{equ:data_approximate_preservation_of_frequency_localization} for the component $\Pi_{\leq \alpha_n} \phi_0^n$ for the low frequencies $k \leq \alpha_n$. Using that $\phi_0^n = \Pi(\phi_0^n)$, we may write
 \[
  \phi_0^n - \Pi ( P_{\leq \alpha_n} \phi_0^n ) = \int_{\alpha_n}^\infty \frac{d}{dh} \bigl( \Pi (P_{\leq h} \phi_0^n) \bigr) \, dh = \int_{\alpha_n}^\infty (P_h \phi_0^n) \Pi'( P_{\leq h} \phi_0^n ) \, dh.
 \]
 Here a somewhat delicate point arises, namely that the expression $ \Pi (P_{\leq h} \phi_0^n)$ is not necessarily defined for all $h\geq \alpha_n$. In order to deal with this, we can extend the operator $\Pi$ smoothly beyond a tubular neighborhood of $\bS^m$, but no longer necessarily taking values in $\bS^m$, but merely in $\R^{m+1}$. 
 Thus, we obtain from Bernstein's inequality that
 \begin{align*}
  &\bigl\| P_k \nabla_x \bigl( P_{\leq \alpha_n} \phi_0^n - \Pi_{\leq \alpha_n} \phi_0^n \bigr) \bigr\|_{L^2_x} \\
  &\lesssim \Bigl\| P_k \int_{\alpha_n}^\infty (\nabla_x P_h \phi_0^n) \Pi'(P_{\leq h} \phi_0^n) \, dh \Bigr\|_{L^2_x} + \Bigl\| P_k \int_{\alpha_n}^\infty (P_h \phi_0^n) (\nabla_x P_{\leq h} \phi_0^n) \Pi''(P_{\leq h} \phi_0^n) \, dh \Bigr\|_{L^2_x} \\
  &\lesssim 2^k \int_{\alpha_n}^\infty \| \nabla_x P_h \phi_0^n \|_{L^2_x} 2^{-h} \| \nabla_x P_{>h-10} \Pi'( P_{\leq h} \phi_0^n ) \|_{L^2_x} \, dh \\
  &\quad + 2^k \int_{\alpha_n}^\infty 2^{-h} \| \nabla_x P_h \phi_0^n \|_{L^2_x} \|\nabla_x P_{\leq h} \phi_0^n\|_{L^2_x} \| \Pi''(P_{\leq h} \phi_0^n) \|_{L^\infty_x} \, dh \\
  &\lesssim 2^k \int_{\alpha_n}^\infty 2^{-h} \| \nabla_x P_h \phi_0^n \|_{L^2_x} \| \nabla_x P_{\leq h} \phi_0^n \|_{L^2_x} \| \Pi''(P_{\leq h} \phi_0^n) \|_{L^\infty_x} \, dh.
 \end{align*}
 Using \eqref{equ:small_besov_error_around_frequency_cutoff}, we conclude that for all sufficently large $n$, the previous line is bounded by
 \begin{align*}
  &2^k E_{crit}^{\frac{1}{2}}  \int_{\alpha_n}^{\alpha_n + M} 2^{-h} \| \nabla_x P_h \phi_0^n \|_{L^2_x} \, dh + 2^k E_{crit}^{\frac{1}{2}} \int_{\alpha_n + M}^\infty 2^{-h} \|\nabla_x P_h \phi_0^n \|_{L^2_x} \, dh \\
  &\lesssim 2^{k-\alpha_n} E_{crit}^{\frac{1}{2}} M \delta_0 + 2^{k-\alpha_n} E_{crit} 2^{-M}.
 \end{align*}
 For the high frequencies $k > \alpha_n$, we use that 
 \[
  \nabla_x P_k  \Pi (P_{\leq \alpha_n} \phi_0^n) = \nabla_x P_k \int_{-\infty}^{\alpha_n} (P_h \phi_0^n) \Pi'(P_{\leq h} \phi_0^n) \, dh.
 \]
 Then we have 
 \begin{equation} \label{equ:data_approximate_preservation_of_frequency_localization_high_frequencies}
  \begin{aligned}
  \bigl\| \nabla_x P_k \Pi \bigl( P_{\leq \alpha_n} \phi_0^n \bigr) \bigr\|_{L^2_x} &\lesssim 2^{-k} \bigl\| \nabla_x^2 P_k \Pi ( P_{\leq \alpha_n} \phi_0^n ) \bigr\|_{L^2_x} \\
  &\lesssim 2^{-k} \Bigl\| P_k \int_{-\infty}^{\alpha_n} (\nabla_x^2 P_h \phi_0^n) \Pi'( P_{\leq h} \phi_0^n ) \, dh \Bigr\|_{L^2_x} \\
  &\quad + 2^{-k} \Bigl\| P_k \int_{-\infty}^{\alpha_n} (\nabla_x P_h \phi_0^n) (\nabla_x P_{\leq h} \phi_0^n) \Pi''(P_{\leq h} \phi_0^n) \, dh \Bigr\|_{L^2_x}.
  \end{aligned}
 \end{equation}
 We bound the first term on the right-hand side of \eqref{equ:data_approximate_preservation_of_frequency_localization_high_frequencies} by
 \begin{align*}
  2^{-k} \int_{-\infty}^{\alpha_n} 2^h \| \nabla_x P_h \phi_0^n \|_{L^2_x} \, dh &\lesssim 2^{-k} \int_{\alpha_n - M}^{\alpha_n} 2^h \|\nabla_x P_h \phi_0^n \|_{L^2_x} \, dh + 2^{-k} \int_{-\infty}^{\alpha_n-M} 2^h \|\nabla_x P_h \phi_0^n\|_{L^2_x} \, dh \\
  &\lesssim 2^{-(k-\alpha_n)} ( M \delta_0 + E_{crit}^{\frac{1}{2}} 2^{-M} ),
 \end{align*}
 while the second term on the right-hand side of \eqref{equ:data_approximate_preservation_of_frequency_localization_high_frequencies} can be estimated by
 \begin{align*}
  2^{-k} \int_{-\infty}^{\alpha_n} \| \nabla_x P_h \phi_0^n \|_{L^2_x} \| \nabla_x P_{\leq h} \phi_0^n \|_{L^\infty_x} \, dh &\lesssim 2^{-k} \int_{\alpha_n - M}^{\alpha_n} \|\nabla_x P_h \phi_0^n \|_{L^2_x} 2^h \|\nabla_x P_{\leq h} \phi_0^n\|_{L^2_x} \, dh \\
  &\quad + 2^{-k} \int_{-\infty}^{\alpha_n - M} \|\nabla_x P_h \phi_0^n \|_{L^2_x} 2^h \|\nabla_x P_{\leq h} \phi_0^n\|_{L^2_x} \, dh \\
  &\lesssim 2^{-(k-\alpha_n)} (1+E_{crit}) (M \delta_0 + 2^{-M}).
 \end{align*}
 Thus, we have inferred that 
 \[
  \bigl\| \nabla_x P_k \bigl( P_{\leq \alpha_n} \phi_0^n - \Pi_{\leq \alpha_n} \phi_0^n \bigr) \bigr\|_{L^2_x} \lesssim (1+E_{crit}) (M \delta_0 + 2^{-M}) 2^{-|k-\alpha_n|}.
 \]
 Since $M \sim \log( (1+E_{crit}) \delta_1^{-1})$, we obtain \eqref{equ:data_approximate_preservation_of_frequency_localization} for the component $\Pi_{\leq \alpha_n} \phi_0^n$ upon choosing $\delta_0 \ll \delta_1$ sufficently small depending on the size of $E_{crit}$.
 
 Finally, we turn to the proof of \eqref{equ:data_approximate_preservation_of_frequency_localization} for the time derivative component $\Pi_{\leq \alpha_n} \phi_1^n$. For the low frequencies $k \leq \alpha_n$ we exploit the fact that $(\phi_0^n)^\dagger \phi_1^n = 0$ to write 
 \begin{equation} \label{equ:data_approximate_preservation_of_frequency_localization_time_derivative_low_frequencies}
  \begin{aligned}
   P_{\leq \alpha_n} \phi_1^n - \Pi_{\leq \alpha_n} \phi_1^n &= - \langle \Pi_{\leq \alpha_n} \phi_0^n, P_{\leq \alpha_n} \phi_1^n \rangle \Pi_{\leq \alpha_n} \phi_0^n \\
   &= \int_{\alpha_n}^\infty \frac{d}{dh} \bigl( \langle \Pi_{\leq h} \phi_0^n, P_{\leq h} \phi_1^n \rangle \Pi_{\leq h} \phi_0^n \bigr) \, dh \\
   &= \int_{\alpha_n}^\infty \langle \Pi_{\leq h} \phi_0^n, P_{\leq h} \phi_1^n \rangle (P_h \phi_0^n) \Pi'(P_{\leq h} \phi_0^n) \, dh \\
   &\quad + \int_{\alpha_n}^\infty \langle (P_h \phi_0^n) \Pi'(P_{\leq h} \phi_0^n), P_{\leq h} \phi_1^n \rangle \Pi(P_{\leq h} \phi_0^n) \, dh \\
   &\quad + \int_{\alpha_n}^\infty \langle \Pi_{\leq h} \phi_0^n, P_h \phi_1^n \rangle \Pi_{\leq h} \phi_0^n \, dh.
  \end{aligned}
 \end{equation}
 Then we may bound the first term on the right-hand side of \eqref{equ:data_approximate_preservation_of_frequency_localization_time_derivative_low_frequencies} by
 \begin{align*}
  &\Bigl\| P_k \int_{\alpha_n}^\infty \langle \Pi_{\leq h} \phi_0^n, P_{\leq h} \phi_1^n \rangle (P_h \phi_0^n) \Pi'(P_{\leq h} \phi_0^n) \, dh \Bigr\|_{L^2_x} \\
  &\quad \lesssim 2^k \int_{\alpha_n}^\infty \| \Pi_{\leq h} \phi_0^n \|_{L^\infty_x} \| P_{\leq h} \phi_1^n \|_{L^2_x} \| P_h \phi_0^n \|_{L^2_x} \| \Pi'(P_{\leq h} \phi_0^n) \|_{L^\infty_x} \, dh \\
  &\quad \lesssim 2^k E_{crit}^{\frac{1}{2}} \int_{\alpha_n}^\infty 2^{-h} \|\nabla_x P_h \phi_0^n\|_{L^2_x} \, dh \\
  &\quad \lesssim 2^{k-\alpha_n} (1+E_{crit}) (M \delta_0 + 2^{-M}),
 \end{align*}
 which is of the desired form, while the other two terms on the right-hand side of \eqref{equ:data_approximate_preservation_of_frequency_localization_time_derivative_low_frequencies} can be dealt with similarly. Proving \eqref{equ:data_approximate_preservation_of_frequency_localization} for $\Pi_{\leq \alpha_n} \phi_1^n$ for the high frequencies $k > \alpha_n$ is a variant of the previous estimates.
\end{proof}

\begin{rem}
 We note that Proposition~\ref{prop:data_approximate_preservation_of_frequency_localization} and its proof are reminiscent of Proposition~11.1 in Sterbenz-Tataru~\cite{Sterbenz_Tataru1} where it is shown that for initial data sets with small ``energy dispersion'' frequency truncation followed by normal projection approximately preserves the frequency localization properties up to exponentially decaying tails.
\end{rem}

Thus, for the ensuing induction on frequency process, we shall use $\Pi_{\leq b_L} \phi^{n \Lambda_0^{(0)}}[0]$ as the actual data for the wave maps evolution. But in order for this data $\Pi_{\leq b_L} \phi^{n \Lambda_0^{(0)}}[0]$ to approximately have the same frequency localization properties up to exponentially decaying tails as the frequency truncations $P_{\leq b_L} \phi^{n \Lambda_0^{(0)}}[0]$, Proposition~\ref{prop:data_approximate_preservation_of_frequency_localization} requires the Besov norm smallness condition~\eqref{equ:small_besov_error_around_frequency_cutoff}. This, in particular, forces us to implement the following delicate selection procedure for the endpoints of the intervals $J_\ell = [a_\ell, b_\ell]$.

\medskip 

We first use the M\'etivier-Schochet procedure~\cite{Metivier-Schochet} to carry out a further refined decomposition of the lowest frequency ``non-atomic'' component $\phi^{n \Lambda_0^{(0)}}[0]$ into finitely many ``smaller'' frequency atoms. More precisely, given any $\delta_0 > 0$ we decompose into
\[
 \phi^{n \Lambda_0^{(0)}}[0] = \sum_{j = 1}^{J(\Lambda_0^{(0)})} \phi^{n a_j^{(0)}}[0] + \phi^{n J(\Lambda_0^{(0)})}[0]
\]
such that the ``small'' atoms $\phi^{n a_j^{(0)}}[0]$ are frequency localized to
\[
 |\xi| \in [ ( \lambda_n^{a_j^{(0)}})^{-1} (R_n^{(0)})^{-1}, (\lambda_n^{a_j^{(0)}})^{-1} R_n^{(0)} ]
\]
for a sequence $R_n^{(0)} \to \infty$ growing sufficently slowly as $n \to \infty$ and such that
\begin{equation} \label{equ:besov_error_non_atomic_component}
 \limsup_{n\to\infty} \| \phi^{n J(\Lambda_0^{(0)})}[0] \|_{\dot{B}^1_{2,\infty} \times \dot{B}^0_{2,\infty}} \leq \delta_0.
\end{equation}
By the orthogonality of the frequency scales, we may assume that for sufficently large $n$ the frequency intervals
\[
 \bigl[  (\lambda_n^{a_j^{(0)}})^{-1} (R_n^{(0)})^{-1}, (\lambda_n^{a_j^{(0)}})^{-1} R_n^{(0)} \bigr]
\]
are disjoint for different $a_j^{(0)}$.

Then we are in the position to describe how the frequency intervals $J_\ell$ can be chosen inductively. Assume that the intervals
\[
 J_1 = [a_1, b_1], \ldots, J_{\ell-1} = [a_{\ell-1}, b_{\ell-1}]
\]
have already been specified. Then we pick the maximal interval $[a_\ell, \tilde{b}_\ell]$ with the property that $a_\ell = b_{\ell-1}$ and such that
\[
 \bigl\| P_{[a_\ell, \tilde{b}_\ell]} \phi^{n \Lambda_0^{(0)}}[0] \bigr\|_{\dot{H}^1_x \times L^2_x}^2 = \varepsilon_0.
\]
If it happens that 
\[
 \tilde{b}_\ell \in [-\log(\lambda_n^{a_j^{(0)}}) - 2 \log(R_n^{(0)}), -\log(\lambda_n^{a_j^{(0)}}) + 2 \log(R_n^{(0)})],
\]
then we shift the interval endpoint upwards and take 
\[
 b_\ell = - \log(\lambda_n^{a_j^{(0)}}) + 2 \log(R_n^{(0)}),
\]
otherwise we set $b_\ell = \tilde{b}_\ell$.

The point of this construction is that for $\ell = 1, \ldots, C_1$, the frequency intervals $J_\ell = [a_\ell, b_\ell]$ now satisfy 
\[
 \bigl\| P_{J_\ell} \phi^{n \Lambda_0^{(0)}}[0] \bigr\|_{\dot{H}^1_x \times L^2_x}^2 \lesssim \varepsilon_0
\]
and have the desired \emph{key property} that for any fixed $M > 0$, it holds that 
\begin{equation} \label{equ:besov_smallness_edge_points}
 \limsup_{n\to\infty} \, \bigl\| P_{[b_\ell - M, b_\ell + M]} \phi^{n \Lambda_0^{(0)}}[0] \bigr\|_{\dot{B}^1_{2,\infty} \times \dot{B}^0_{2,\infty}} \leq \delta_0
\end{equation}
for all sufficently large $n$.

\subsection{Interlude: A concept of energy class radial wave maps} \label{subsec:energy_class_evolution}

At the end of the entire concentration compactness step in this Section~\ref{sec:Concentration_Compactness_Step} we want to extract a minimal blowup solution to the wave maps equation (WM) that is merely of energy class. However, since the local well-posedness theory \cite{KlMa1, KlMa2, KlMa3, Klainerman_Selberg1, Klainerman_Selberg2} only pertains to data of regularity $H^{1+}_x(\R^2) \times H^{0+}_x(\R^2)$, we first of all have to introduce a notion of the wave maps evolution of radially symmetric energy class data. We shall achieve this analogously to the procedures in \cite{KS} and \cite{KL} by regularization and reduction to the small energy case via finite speed of propagation. We begin with the following ``high-frequency perturbation'' lemma.
\begin{lem} \label{lem:localperturbative} 
Let $\phi: [0,T] \times \R^2 \rightarrow \mathbb{S}^m$ be a wave map which is supported in the frequency interval $(-\infty, K]$ for some $K \in \R$ up to an exponentially decaying tail, more precisely such that 
\[
 \| \phi_k \|_{S_k([0,T])} \leq \delta_1 2^{-\sigma (k-K)}, \quad k \geq K,
\]
for some positive constants $0 < \delta_1 \ll 1$ and $\sigma > 0$. Suppose that
\[
 \| \phi \|_{S([0,T])} \leq C_1
\]
for some $C_1>0$. Then there exists $\delta \equiv \delta(C_1, \sigma)$ such that if $\epsilon[0] \in H^{1+}_x \times H^{0+}_x$ is a data pair such that $(\phi+\epsilon)[0]$ constitutes admissible initial data mapping into $T \mathbb{S}^m$ and such that 
\[
 \big\| \epsilon[0] \big\|_{\dot{H}^1_x \times L^2_x} \leq \delta \quad \text{ with } \delta_1 \ll \delta,
\]
and moreover if $\epsilon[0]$ is supported up to exponentially decaying tails at frequencies $[K, \infty)$ in the sense that 
\[
 \big\| P_k \epsilon[0] \big\|_{\dot{H}^1_x \times L^2_x} \leq \delta 2^{-\sigma(K-k)}, \quad k<K, 
\]
then the data $(\phi+\epsilon)[0]$ can be evolved on $[0,T] \times \R^2$ as a wave map $(\phi+\epsilon)(t, x)$ of class $H^{1+}_x \times H^{0+}_x$ and we have that
\[
 \| \epsilon \|_{S([0,T])} \leq C_2(C_1,\sigma) \, \delta.
\]
\end{lem}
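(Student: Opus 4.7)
The plan is to extend $\phi+\epsilon$ to all of $[0,T]$ by a bootstrap continuity argument in a frequency-envelope-controlled $S$-norm. The subcritical local well-posedness theory of Klainerman--Machedon and Klainerman--Selberg gives a unique $H^{1+}_x \times H^{0+}_x$ evolution of $(\phi+\epsilon)[0]$ on some maximal interval $[0,T^\ast) \subseteq [0,T]$, with $T^\ast$ governed by a subcritical breakdown criterion. Letting $\tilde c_k$ be a frequency envelope covering $\epsilon[0]$, which by hypothesis satisfies $\tilde c_k \lesssim \delta\, 2^{-\sigma (K-k)_+}$ and $\sum_k \tilde c_k^2 \lesssim \delta^2$, I would bootstrap the hypothesis
\[
 \|P_k \epsilon\|_{S_k([0,T'])} \leq C_2\, \tilde c_k, \qquad k \in \bZ,
\]
on subintervals $[0,T'] \subset [0,T^\ast)$, with a large constant $C_2 = C_2(C_1, \sigma)$ to be determined, and show that this may be improved to $\tfrac{1}{2} C_2 \tilde c_k$, from which the breakdown criterion forces $T^\ast = T$.

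To improve the bootstrap I would derive the difference wave equation by subtracting the instances of (WM) for $\phi$ and $\phi+\epsilon$. Schematically,
\begin{align*}
 \Box \epsilon &= -\phi\,\partial_\alpha \phi^\dagger \partial^\alpha \epsilon - \phi\,\partial_\alpha \epsilon^\dagger \partial^\alpha \phi - \epsilon\,\partial_\alpha \phi^\dagger \partial^\alpha \phi \\
 &\quad + L(\phi,\epsilon,\partial\epsilon,\partial\epsilon) + L(\epsilon,\epsilon,\partial\phi,\partial\phi) + L(\epsilon^2,\partial\epsilon,\partial\epsilon).
\end{align*}
The second line is at least quadratic in $\epsilon$ and is dispatched perturbatively: the trilinear estimate \eqref{equ:trilinear_estimate}, the null form estimate \eqref{equ:null_form_estimate}, the bootstrap hypothesis and $\|\phi\|_{S} \leq C_1$ produce $L^1_t L^2_x$ bounds scaling as $C_1^j \delta^{3-j}\,\tilde c_k$ for $j = 0,1,2$, each of which gains a full power of $\delta$. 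The first line, however, is linear in $\epsilon$ with $\phi$ as background and exhibits precisely the low-low-high obstructions analyzed in Section~\ref{sec:decomposition_nonlinearity}; it must be renormalized. I would apply the nonlinear modification $\bar{\,\cdot\,}$ and the gauge transformation $U^{(\phi+\epsilon)}_{\leq k-10}$ to the actual wave map $\phi+\epsilon$ and, separately, to $\phi$, and subtract. Since $\phi$ itself satisfies the renormalized identities of Proposition~\ref{prop:nlw_for_renormalized_phi_bar_schematic_identities}, the resulting equation for the gauged difference reduces to schematic terms each containing at least one explicit factor of $\epsilon$, plus differences $U^{(\phi+\epsilon)} - U^{(\phi)}$ which, by the defining ODE~\eqref{equ:ode_gauge_transform} and the telescoping used in Proposition~\ref{prop:nlw_for_renormalized_phi_bar_schematic_identities}, admit a convergent series expansion whose coefficients are linear in $\epsilon$. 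Every term on the right-hand side is then amenable to the multilinear machinery of Section~\ref{sec:multilinear_estimates}.

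The bootstrap is then closed frequency-by-frequency via the energy estimate of Lemma~\ref{lem:energy_estimate}, exploiting the frequency separation to sum efficiently over the frequency envelope: the exponential decay $\|\phi_k\|_{S_k} \leq \delta_1 2^{-\sigma(k-K)}$ for $k \geq K$ and $\tilde c_k \leq \delta 2^{-\sigma(K-k)}$ for $k < K$ guarantees that in every trilinear interaction contributing to $\Box P_k \epsilon$, at least one factor lies in its exponentially small tail regime, producing an off-diagonal gain $2^{-\sigma' |k_\ast - K|}$ that absorbs the summation losses from the low-high and low-low-high couplings. Combining the $\delta$-gains from the perturbative terms with the $\delta_1$-gains from the exponential tail of $\phi$ above $K$, one improves the bootstrap by a factor $\delta + \delta_1 \ll 1$, which closes the argument for $C_2$ chosen large depending only on $(C_1, \sigma)$. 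The main obstacle I anticipate is the careful transfer of bounds from the renormalized gauged variable back to $\epsilon_k$ itself, since $U^{(\phi+\epsilon)}$ has large derivatives even though it is orthogonal; I would handle this exactly as in the proof of Proposition~\ref{prop:partial_fungibility}, using the exact orthogonality of the gauge together with the Littlewood--Paley trichotomy~\eqref{equ:phi_bar_trichotomy} and the gauge transformation bounds of Lemma~\ref{lem:gauge_transform_bounds}.
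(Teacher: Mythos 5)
Your overall architecture is the same as the paper's: subcritical local well-posedness for the data $(\phi+\epsilon)[0]$, a bootstrap on the frequency envelope $c_k+d_k$ (bulk of $\epsilon$ at $k\geq K$, exponential tail $\delta 2^{-\sigma(K-k)}$ below), the renormalized difference equation obtained by gauging $\phi+\epsilon$ and $\phi$ separately and subtracting, the series expansion of $U^{(\phi+\epsilon)}-U^{(\phi)}$ from the defining ODE, and the transfer back from the gauged variable via exact orthogonality and the Littlewood--Paley trichotomy. The paper indeed proves this lemma by declaring it ``a simplified version'' of the bootstrap in Proposition~\ref{prop:bootstrap_bounds_to_next_level_in_evolving_lowest_frequency_nonatomic_part}.

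However, there is a genuine gap in your mechanism for \emph{closing} the bootstrap. You claim that every interaction gains either a factor $\delta$ (terms at least quadratic in $\epsilon$) or a factor $\delta_1$ (at least one factor of $\phi$ in its tail above $K$), because ``in every trilinear interaction at least one factor lies in its exponentially small tail regime.'' This is false for the terms that are \emph{linear} in $\epsilon$ with $\epsilon$ at its bulk frequencies $k\geq K$ and all $\phi$-factors at their bulk frequencies $<K$: for instance, after renormalization, the residual low-low-high term $\sum_{k_2<k_1<K}\bigl(\phi_{k_1}\partial_\alpha\phi_{k_2}^\dagger-\partial_\alpha\phi_{k_2}\phi_{k_1}^\dagger\bigr)\partial^\alpha\epsilon_k$ with $k\geq K$, or $P_k\bigl(\epsilon_{k_1}P_{k_2}(\partial_\alpha\phi^\dagger\partial^\alpha\phi)\bigr)$ with all frequencies in bulk. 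The trilinear and null form estimates bound these only by $C(C_1)\,C_2\,\tilde c_k$ with no small prefactor, so the bootstrap constant cannot be improved on all of $[0,T]$ in one stroke — the dichotomy ``$\delta+\delta_1\ll 1$'' simply does not cover them. The paper's cure is \emph{divisibility}: one partitions $[0,T]$ into $N(C_1,\sigma)$ subintervals on which the divisible space-time norms of $\phi$ (local energy decay, the $L^{3/2}_{t,x}$ norm of the null form, weighted Strichartz norms) are small, runs the bootstrap interval by interval, and propagates the data bound across interval endpoints. Relatedly, you omit the preliminary step (the analogue of Lemma~\ref{lem:bootstrap_non_atomic_kinetic}) in which the kinetic energy of $\epsilon_k$ for $k<K$ is first recovered by an energy-identity argument rather than by $L^1_tL^2_x$ estimates; this improved low-frequency bound is what makes the estimate of $(U^{(\phi+\epsilon)}-U^{(\phi)})\bar\phi_k$ and the final multilinear step close when the output frequency lies below $K$. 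Without the interval subdivision and this two-tier structure, the argument as written does not yield the improved constant $\tfrac{1}{2}C_2\tilde c_k$.
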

\begin{proof} 
From the basic local existence theory~\cite{KlMa1, KlMa2, KlMa3, Klainerman_Selberg1, Klainerman_Selberg2} we know that the data $(\phi+\epsilon)[0]$ can be evolved locally in time as a wave map of class $H^{1+}_x \times H^{0+}_x$. It remains to show that this evolution extends all the way up to time $t = T$. This we do by a bootstrap argument which is a simplified version of the one that will be used for controlling the lowest frequency ``non-atomic'' component in Subsection~\ref{subsec:lowest_freq_non_atomic}. In fact, we make the bootstrap assumption 
\[
 \|\epsilon_k\|_{S_k([0,T])} \leq C (c_k + d_k)
\]
for some sufficiently large $C > 0$, where $\{ c_k \}_{k\in\Z}$ is such that $c_k = 0$ for $k<K$ and $\sum_{k\in\Z} c_k^2 \leq \delta^2$, while $\{ d_k \}_{k\in\bZ}$ is such that $d_k = \delta 2^{-\sigma |k-K|}$ for $k < K$ and $d_k = 0$ for $k \geq K$. Then we argue as in the bootstrap argument in the proof of Proposition~\ref{prop:bootstrap_bounds_to_next_level_in_evolving_lowest_frequency_nonatomic_part} in Subsection~\ref{subsec:lowest_freq_non_atomic}.
\end{proof}

Now assume that we are given a radially symmetric data pair $\phi[0] \colon \R^{2} \rightarrow T\mathbb{S}^m$ of energy class $\phi[0] \in (\dot{H}^1_x \cap L^\infty_x) \times L^2_x$. Then we can pass to the frequency truncated data $\Pi_{<K} \phi[0]$ for any $K \in \R$. As each of these data sets are of class $H^{1+}_x \times H^{0+}_x$ we have a canonical evolution for them on some local time intervals, which may however a priori depend on $K$. But then we observe  
\begin{lem} \label{lem:datalocalization} 
Let $\phi[0] \colon \R^{2} \rightarrow T\mathbb{S}^m$ be radially symmetric data of energy class $\phi[0] \in (\dot{H}^1_x \cap L^\infty_x) \times L^2_x$. Then there exists a time $T_*>0$, $T_* \equiv T_*(\phi[0])$, and some sufficiently large $K_0 \in \R$ such that for any $K \geq K_0$, the wave maps evolution $\phi^{(K)}(t, x)$ of the data $\Pi_{<K} \phi[0]$ exists on $[0,T_*]\times \R^2$, and moreover, we have a uniform bound 
\[
 \big\|\phi^{(K)}\big\|_{S([0, T_*])} \leq C\big( \phi[0] \big), \quad K \geq K_0. 
\]
\end{lem}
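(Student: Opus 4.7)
The plan is to reduce to the small energy setting of Theorem~\ref{thm:small_energy_global} by a spatial localization argument based on finite speed of propagation for (WM), in close analogy with the constructions in~\cite{KS} and~\cite{KL}.

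First, since $|\nabla_{t,x}\phi[0]|^2 \in L^1(\bR^2)$ with finite total mass, I would use absolute continuity of the Lebesgue integral together with the decay of energy at spatial infinity to choose a finite partition of $\bR^2$ by a central disk and concentric annuli $A_0, A_1, \ldots, A_N$ (exploiting radial symmetry) such that
\[
 \int_{A_j} \bigl( |\nabla_x \phi_0|^2 + |\phi_1|^2 \bigr) \, dx < \varepsilon_0, \quad j = 0, \ldots, N,
\]
where $\varepsilon_0$ is fixed much smaller than the absolute small energy threshold $\varepsilon$ from Theorem~\ref{thm:small_energy_global}. Since $\Pi_{<K}\phi[0] \to \phi[0]$ in $\dot{H}^1_x \times L^2_x$ as $K \to \infty$, by picking $K_0$ large enough the same estimate persists for $\Pi_{<K}\phi[0]$ (with $\varepsilon_0$ replaced by $2\varepsilon_0$, say) uniformly in $K \geq K_0$.

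Next, for each index $j$ I would construct a globally defined initial data set $\phi^{(K,j)}[0] \colon \bR^2 \to T\bS^m$ that coincides with $\Pi_{<K}\phi[0]$ on a slight shrinking of $A_j$, equals a fixed constant map into $\bS^m$ outside a slight thickening of $A_j$, and has total energy $\lesssim \varepsilon_0$. This is achieved by multiplying $\Pi_{<K}\phi[0]$ by a smooth spatial cutoff, smoothly gluing to the constant map in the ambient Euclidean space $\bR^{m+1}$, and then applying the normal projection back to $\bS^m$. This last step is well-defined provided $\Pi_{<K}\phi[0]$ stays uniformly close to the sphere, which upon enlarging $K_0$ is ensured by Lemma~\ref{lem:frequency_truncation_stays_close_to_sphere}. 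Applying Theorem~\ref{thm:small_energy_global} to each $\phi^{(K,j)}[0]$ then produces a global, classical wave maps evolution $\phi^{(K,j)}$ with $\|\phi^{(K,j)}\|_{S(\bR)} \lesssim \varepsilon_0^{1/2}$. Setting $T_* > 0$ to be a small fraction of the minimal distance between the shrunk and the thickened versions of the $A_j$, finite speed of propagation then yields that the classical wave maps evolution $\phi^{(K)}$ of $\Pi_{<K}\phi[0]$ exists on $[0, T_*] \times \bR^2$ and agrees with $\phi^{(K,j)}$ in the backward dependency region of the shrunk $A_j$, and these regions collectively cover $[0, T_*] \times \bR^2$. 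Summing the $S$ norm bounds over $j = 0, \ldots, N$ then delivers the claimed uniform estimate $\|\phi^{(K)}\|_{S([0,T_*])} \leq C(\phi[0])$.

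The main technical obstacle lies in the fact that the $S$ norm is not strictly local in physical space, since its dyadic $S_k$ components carry frequency content that does not respect spatial cutoffs. To sum the local bounds of the $\phi^{(K,j)}$ into a uniform global $S$ norm bound on $[0,T_*] \times \bR^2$, I would combine finite speed of propagation with the rapid spatial decay of each frequency-localized $\phi^{(K,j)}_k$ away from the thickened $A_j$, so that cone-localized estimates can be added. A secondary point is ensuring that the localization-then-projection procedure genuinely yields data with energy uniformly small in $K \geq K_0$; this is precisely the role played by Lemma~\ref{lem:frequency_truncation_stays_close_to_sphere} together with a careful matching of cutoff scales to the frequency cap $K$.
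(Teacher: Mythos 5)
Your overall strategy coincides with the paper's: decompose $\bR^2$ into finitely many annuli of energy $<\varepsilon_0$, manufacture from $\Pi_{<K}\phi[0]$ globally defined small-energy data agreeing with it on each annulus, evolve these globally via Theorem~\ref{thm:small_energy_global}, and patch via finite speed of propagation on a short time interval $[0,T_*]$. However, there is a genuine gap at the one point where the paper's proof does real work: the gluing to a constant. You glue $\Pi_{<K}\phi_0$ to ``a fixed constant map into $\bS^m$'' across a transition annulus and then project back to the sphere, but unless that constant is chosen to be (approximately) the value of $\phi_0$ on the transition region, two things fail. First, the convex combination $\chi\,\Pi_{<K}\phi_0 + (1-\chi)\,p$ need not lie in a tubular neighborhood of $\bS^m$ (think of $p$ antipodal to $\phi_0$ there), so the normal projection is not defined. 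Second, and independently of the projection, the energy of the glued datum picks up the term $|\nabla\chi|^2\,|\Pi_{<K}\phi_0 - p|^2$ integrated over the transition annulus; since $|\nabla\chi|\sim R_j^{-1}$ and the annulus has area $\sim R_j^2$, this contribution is $O\bigl(\sup|\Pi_{<K}\phi_0-p|^2\bigr)$, which is \emph{not} small for a generic constant $p$, so the localized data need not enter the small-energy regime at all. The paper resolves exactly this by taking $p=\phi(x_0)$ for some $x_0$ in the annulus and proving, using radial symmetry and annuli of bounded aspect ratio $R_j\leq 2R_{j-1}$, the oscillation bound
\[
 \bigl|\phi(x)-\phi(y)\bigr| \lesssim \Bigl|\log\Bigl(\tfrac{r}{\tilde r}\Bigr)\Bigr|^{\frac12}\,\bigl\|\nabla_x\phi\bigr\|_{L^2_x(C_{\frac12 R_{j-1},2R_j})} \ll \eta,
\]
which makes both issues disappear simultaneously. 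Your appeal to Lemma~\ref{lem:frequency_truncation_stays_close_to_sphere} does not substitute for this: that lemma concerns the distance of a frequency truncation of a sphere-valued map to the sphere under a Besov smallness hypothesis, whereas here $\Pi_{<K}\phi[0]$ is already exactly sphere-valued by construction, and the object whose distance to $\bS^m$ must be controlled is the cutoff-glued combination, not the truncation itself. You should also record the tangency correction for the time component, $\partial_t\phi^{(j)} := \chi\,\partial_t\phi - \chi\,\bigl((\partial_t\phi)^\dagger\phi^{(j)}\bigr)\phi^{(j)}$, so that the localized data pair is admissible.
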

\begin{proof} 
The claim will follow from the small energy global regularity result from Theorem~\ref{thm:small_energy_global} via Huygen's principle and a simple partition of unity argument to patch together the global-in-space solution from spatially localized ones. To this end we define the annuli
\[
 C_{R_1, R_2} := \{ x \in \R^2 \colon R_1 \leq |x| \leq R_2 \}, \quad 0 \leq R_1 < R_2 \leq +\infty
\]
and denote by $\eta > 0$ the cutoff for the small energy global regularity theory. Given radially symmetric data $\phi[0] \colon \R^2 \rightarrow T\mathbb{S}^m$, there exists a covering of $\R^2$ by finitely many annuli $C_{R_{j-1}, R_j}$, $j = 1,2, \ldots, J,$ with $R_0 = 0, R_J = +\infty$, $R_{j} \leq 2 R_{j-1}$ for $j \in \{2, \ldots, J-1 \}$, and such that $\phi[0] \big|_{C_{R_{j-1}, R_j}}$ coincides with the restriction to $C_{R_{j-1}, R_j}$ of some radially symmetric data $\phi^{(j)}[0] \colon \R^2 \rightarrow T\mathbb{S}^m$ of energy less than $\eta$. To see this, pick finitely many such annuli $C_{R_{j-1}, R_j}$ with the property that 
\[
 \big\| \phi[0] \big|_{C_{ \frac{1}{2} R_{j-1}, 2R_j}} \big\|_{\dot{H}^1_x \times L^2_x} \ll \eta. 
\]
Then observe that for any $x, y \in C_{\frac{1}{2} R_{j-1}, 2 R_j}$, $j \in \{2, \ldots, J-1\}$, with $r = |x|$ and $\tilde{r} = |y|$, we have that
\[
 \big| \phi(x) - \phi(y) \big| = \bigg| \int_{\tilde{r}}^r \nabla_r \phi(r) \, dr \bigg| \lesssim \Big| \log \Big( \frac{r}{\tilde{r}} \Big) \Big|^{\frac{1}{2}} \big\|\nabla_x\phi\big\|_{C_{\frac{1}{2} R_{j-1}, 2R_j}} \ll \eta.
\]
Then picking $x_0 \in C_{R_{j-1}, R_j}$ arbitrarily and introducing 
\[
 \phi^{(j)} := \Pi \Big( \chi_{C_{\frac{1}{2} R_{j-1}, 2R_j}} \phi + \big( 1- \chi_{C_{\frac{1}{2} R_{j-1}, 2R_j}} \big) \phi(x_0) \Big), \quad j \in \{2, \ldots, J-1\}, 
\]
where $\bigl\{ \chi_{C_{\frac{1}{2} R_{j-1}, 2R_j}} \bigr\}_{j = 1, \ldots, J}$ is a smooth partition of unity of $\R^2$ with $\chi_{C_{\frac{1}{2} R_{j-1}, 2R_j}}\big|_{C_{R_{j-1}, R_j}} \equiv 1$, we obtain 
\[
 \big\| \nabla_x \phi^{(j)} \big\|_{L_x^2(\R^2)} \ll \eta
\]
as well as 
\[
 \phi^{(j)}\big|_{C_{R_{j-1}, R_j}} \equiv \phi.
\]
Further, we set 
\[
 \partial_t \phi^{(j)} := \chi_{C_{\frac{1}{2} R_{j-1}, 2 R_j}} \partial_t \phi - \chi_{C_{\frac{1}{2} R_{j-1}, 2 R_j}} \bigl( (\partial_t \phi)^\dagger \phi^{(j)} \bigr) \, \phi^{(j)}.
\]
Then we have $\partial_t\phi^{(j)}\big|_{C_{R_{j-1}, R_j}} = \partial_t\phi$ and it is easily seen that 
\[
 \big\| \partial_t \phi^{(j)} \big\|_{L_x^2(\R^2)} \ll \eta. 
\]
Importantly, the constructed data pair $\phi^{(j)}[0] = (\phi^{(j)}, \partial_t \phi^{(j)}) \colon \R^2 \to T\mathbb{S}^m$ is again radially symmetric. It is also straightforward to modify this construction on $C_{\frac{1}{2} R_{j-1}, 2R_j}$ for $j=1$ and for $j = J$. Replacing $\phi[0]$ by $\phi^{(K)}[0] = \Pi_{<K}\phi[0]$ results in the data $\phi^{(j, K)}[0]$, which satisfy the same bounds (with a slightly different implied constant) for large enough $K$. It then follows from the small energy global regularity theory that we can evolve these data $\phi^{(j,K)}[0]$ to global wave maps and a simple application of Huygen's principle implies that the evolutions $\phi^{(K)}$ exist on a joint time interval $[0,T_*]$ for some $0 < T_* \ll 1$ for all large enough $K \geq K_0$. Moreover, we may infer uniform bounds 
\[
 \big\|\phi^{(K)}\big\|_{S([0,T_*])} \leq C_*(\phi[0]), \quad K \geq K_0. 
\]
\end{proof}
Combining the two preceding lemmas, we can now deduce the desired concept of energy class evolution. Given radially symmetric energy class data $\phi[0] \colon \R^{2} \rightarrow T\mathbb{S}^m$ with $\phi[0] \in (\dot{H}^1_x \cap L^\infty_x) \times L^2_x$, we see that by Lemma~\ref{lem:datalocalization} there exists $T_*(\phi[0])>0$ such that for all sufficiently large $K \geq K_0$ the wave maps evolutions $\phi^{(K)}(t, x)$ exist on a joint time interval $[0, T_*]$ and moreover, using Lemma~\ref{lem:localperturbative}, we see that the sequence $\{\phi^{(K)}\}_{K \geq K_0}$ converges in the sense of $\| \cdot \|_{S([0, T_*])}$. It also follows that the limit is canonical, i.e. it does not depend on the precise choice of regularization. Correspondingly, we introduce
\begin{defn} \label{defn:energy_class_evolution}
 Let $\phi[0] \colon \R^{2} \rightarrow T\mathbb{S}^m$ be a radially symmetric energy class data pair $\phi[0] \in (\dot{H}^1_x \cap L^\infty_x) \times L^2_x$ and let $\{ \phi^{(K)}[0] \}_{K}$ be a sequence of frequency truncated data $\phi^{(K)}[0] = \Pi_{<K} \phi[0]$ such that $\phi^{(K)}[0] \to \phi[0]$ as $K \to \infty$ in the sense of $\dot{H}^1_x \times L^2_x$. We denote by $\phi^{(K)}$ the smooth local wave maps evolutions of the data $\phi^{(K)}[0]$ and define $I = (-T_0, T_1) = \cup \tilde{I}$ to be the union of all open time intervals $\tilde{I} \ni 0$ such that
 \[
  \sup_{J \subset \tilde{I}, J \text{ closed}} \liminf_{K\to\infty} \big\| \phi^{(K)} \big\|_{S[J]} < \infty.
 \]
 Then we define the wave maps evolution of $\phi[0]$ on $I \times \R^2$ to be 
 \[
  \phi[t] := \lim_{K\to\infty} \phi^{(K)}[t], \quad t \in I,
 \]
 where the limit is taken in the energy topology. We refer to $I$ as the maximal lifespan of $\phi$. For any closed time interval $J \subset I$, we set 
 \[
  \| \phi \|_{S[J]} := \lim_{K\to\infty} \big\| \phi^{(K)} \big\|_{S[J]}.
 \]
\end{defn}

Our above considerations also imply the following characterization of the maximal lifespan $I$ of an energy class wave maps evolution.
\begin{lem}
 Let $\phi$, $\phi^{(K)}$ and $I$ be as in the preceding Definition~\ref{defn:energy_class_evolution}. Suppose that $I \neq \R$. Then it must hold that
 \[
  \sup_{J \subset I, J \text{ closed}} \liminf_{K\to\infty} \big\| \phi^{(K)} \big\|_{S[J]} = \infty.
 \]
\end{lem}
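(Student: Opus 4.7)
I argue by contradiction: assume there exists $M<\infty$ with $\liminf_{K\to\infty}\|\phi^{(K)}\|_{S[J]}\leq M$ for every closed $J\subset I$, while still $I=(-T_0,T_1)\neq\R$. By time reversal symmetry one may assume $T_1<\infty$. Exhausting $I$ by $J_n:=[-T_0+\tfrac{1}{n},T_1-\tfrac{1}{n}]$, a diagonal extraction produces a subsequence $\{\phi^{(K_j)}\}$ such that $\|\phi^{(K_j)}\|_{S[J_n]}\leq M+1$ whenever $j\geq n$. Because $\phi^{(K_j)}[0]=\Pi_{<K_j}\phi[0]$ converges to $\phi[0]$ in $\dot{H}^1_x\times L^2_x$, one can select a single frequency envelope $\{c_k\}_{k\in\bZ}\in\ell^2$ which simultaneously covers $\phi^{(K_j)}[0]$ for every sufficiently large $j$. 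Proposition~\ref{prop:finite_S_gives_freq_envelope_bounds} then yields
\[
\|P_k\phi^{(K_j)}\|_{S_k[J_n]}\leq C(M+1)\,c_k,\qquad k\in\bZ,
\]
with a constant independent of $n$ and $j$; square-summing in $k$ gives $\|\phi^{(K_j)}\|_{S[J_n]}\leq C(M+1)\|\{c_k\}\|_{\ell^2}$ uniformly.

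Passing to the limit $j\to\infty$, the envelope bounds transfer to $\phi$ itself: $\|P_k\phi\|_{S_k[J_n]}\leq Cc_k$ uniformly in $n$. From the $L^\infty_t L^2_x$ component of each $S_k$ norm, combined with dominated convergence over $\sum_k c_k^2$, it follows that $t\mapsto\phi[t]$ is Cauchy in $\dot{H}^1_x\times L^2_x$ as $t\to T_1^-$; Bernstein's inequality summed over the envelope additionally produces uniform $L^\infty_x$ control. Hence $\phi[T_1]\in(\dot H^1_x\cap L^\infty_x)\times L^2_x$ exists as a bona fide energy class initial datum taking values in $T\bS^m$. Applying Lemma~\ref{lem:datalocalization} at time $T_1$ then furnishes some $T_*>0$ and an energy class wave map $\tilde\phi$ extending $\phi$ to $[0,T_1+T_*]$, with $\|\tilde\phi\|_{S[[0,T_1+T_*]]}\leq C_1<\infty$ and still obeying the frequency envelope $\{c_k\}$.

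It remains to upgrade this extension to uniform $S$-bounds for the approximants $\phi^{(K_j)}$ past the would-be terminal time $T_1$. Fix $K_0$ so large that $\sum_{k\geq K_0}c_k^2$ falls below the smallness threshold $\delta(C_1,\sigma)$ from Lemma~\ref{lem:localperturbative}, and let $\tilde\phi^{(K_0)}$ be the classical wave map on $[0,T_1+T_*]$ obtained from the low-frequency truncation $\Pi_{<K_0}\tilde\phi[0]$ as in Definition~\ref{defn:energy_class_evolution}. By Proposition~\ref{prop:data_approximate_preservation_of_frequency_localization} applied to $\tilde\phi$ at $t=0$, this reference solution is frequency supported in $(-\infty,K_0]$ up to exponentially decaying tails controlled by the envelope. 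For any $K_j\geq K_0$, the difference $\phi^{(K_j)}[0]-\tilde\phi^{(K_0)}[0]=\Pi_{<K_j}\phi[0]-\Pi_{<K_0}\phi[0]$ is, again by Proposition~\ref{prop:data_approximate_preservation_of_frequency_localization}, essentially supported at frequencies $[K_0,\infty)$ with energy controlled by $\sum_{k\geq K_0}c_k^2$, while $\phi^{(K_j)}[0]=\tilde\phi^{(K_0)}[0]+(\phi^{(K_j)}[0]-\tilde\phi^{(K_0)}[0])$ takes values in $T\bS^m$. Lemma~\ref{lem:localperturbative} then produces $\phi^{(K_j)}$ on $[0,T_1+T_*/2]$ with $\|\phi^{(K_j)}\|_{S[[0,T_1+T_*/2]]}\lesssim C_1+1$ uniformly in $j$ large. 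This yields $\liminf_{K\to\infty}\|\phi^{(K)}\|_{S[[0,T_1+T_*/2]]}<\infty$, contradicting the maximality of $T_1$ in the definition of~$I$.

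The principal obstacle is the last step, in which Lemma~\ref{lem:localperturbative} must be applied globally on $[0,T_1+T_*/2]$. This requires carefully matching the ``high-frequency perturbation'' hypotheses of that lemma: both the classical reference solution $\tilde\phi^{(K_0)}$ and the perturbation $\phi^{(K_j)}-\tilde\phi^{(K_0)}$ must exhibit the correct exponential frequency decay on opposite sides of $K_0$, which is delivered only because the common envelope $\{c_k\}\in\ell^2$ applies to both and because the geometric normal-projection truncation of Proposition~\ref{prop:data_approximate_preservation_of_frequency_localization} preserves sharp frequency localization up to the permitted exponentially decaying tails. Choosing $K_0$ depending on the extension constant $C_1$ (and hence on $\phi[T_1]$) ensures simultaneously the frequency localization and the smallness required to close the argument.
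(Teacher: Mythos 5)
Your overall architecture --- argue by contradiction, extract a diagonal subsequence with uniform $S$-bounds on an exhaustion of $I$, upgrade to frequency envelope bounds via Proposition~\ref{prop:finite_S_gives_freq_envelope_bounds}, deduce existence of the endpoint limit $\phi[T_1]$ in energy, extend locally past $T_1$ with Lemma~\ref{lem:datalocalization}, and then propagate bounds to the approximants with Lemma~\ref{lem:localperturbative} --- is exactly what the paper intends (it offers no explicit proof, only the remark that the preceding considerations imply the statement). However, the final matching step as written is circular. You take as reference solution $\tilde\phi^{(K_0)}$, ``the classical wave map on $[0,T_1+T_*]$ obtained from $\Pi_{<K_0}\tilde\phi[0]$''. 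Nothing established so far guarantees that this object exists, let alone with finite $S$-norm, on $[0,T_1+T_*]$ --- or even on all of $[0,T_1)$. The contradiction hypothesis controls only a $\liminf$, and your diagonal extraction gives $\|\phi^{(K_j)}\|_{S[J_n]}\leq M+1$ only for $j\geq n$; for a \emph{fixed} $K_0=K_{j_0}$ you therefore control $\phi^{(K_0)}$ only on $[-T_0+1/j_0,\,T_1-1/j_0]$. Moreover Lemma~\ref{lem:localperturbative} propagates bounds upward in the truncation parameter (the perturbation must live at frequencies \emph{above} the reference's essential support), so you cannot transfer the bounds from the large $K_j$ down to $K_0$: a fixed low-frequency truncation based at time $0$ could in principle break down before $T_1$. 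Asserting that $\tilde\phi^{(K_0)}$ reaches $T_1+T_*$ with bound $C_1$ is essentially the conclusion you are trying to prove for $K=K_0$.

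The repair is to anchor the perturbative step at a time $t_1$ close to $T_1$ rather than at $t=0$: take as reference $\chi^{(K')}$, the evolution of $\Pi_{<K'}\phi[t_1]$, which Lemma~\ref{lem:datalocalization} (applied to data converging to $\phi[T_1]$, so that the local time of existence does not degenerate) controls on $[t_1,T_1+T_*/2]$ uniformly in large $K'$, and write $\phi^{(K)}[t_1]=\chi^{(K')}[t_1]+e$ with $e=\bigl(\phi^{(K)}[t_1]-\phi[t_1]\bigr)+\bigl(\phi[t_1]-\Pi_{<K'}\phi[t_1]\bigr)$. The second piece is small and supported at frequencies $\geq K'$ up to tails by Proposition~\ref{prop:data_approximate_preservation_of_frequency_localization}; for the first piece you must additionally observe that the bootstrap in the proof of Lemma~\ref{lem:localperturbative} propagates the envelope $c_k+d_k$, so that $\phi[t_1]-\phi^{(K)}[t_1]$ remains essentially supported at frequencies $\geq K$ with energy $\lesssim\sum_{k\geq K}c_k^2$ --- a fact your write-up never invokes but which is indispensable for Lemma~\ref{lem:localperturbative} to be applicable with data posed at time $t_1$. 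With that, $\phi^{(K)}$ extends to $[t_1,T_1+T_*/2]$ with uniform bounds for all large $K$, and combining with the uniform bounds on $[-T_0+1/n,\,t_1]$ exhibits a strictly larger admissible open interval, contradicting the maximality of $I$.
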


\subsection{Evolving the lowest frequency non-atomic part} \label{subsec:lowest_freq_non_atomic}

In this subsection we prove that the lowest frequency non-atomic part $\phi^{n \Lambda_0^{(0)}}[0]$, more precisely the associated wave map data $\Pi_{\leq \mu_n^1 - \log(R_n)} \phi^n[0]$, can be globally evolved for all sufficiently large $n$ with uniform in $n$ bounds on the $S$ norms of the global evolutions just in terms of $E_{crit}$. The end result is summarized in the following proposition.

\begin{prop} \label{prop:evolving_lowest_frequency_nonatomic_part}
 Let $\phi^{n \Lambda_0^{(0)}}[0]$ be defined as in \eqref{equ:decomposition_remainder_term_into_frequency_shells}. Then provided $\delta_0 \ll \delta_1 \ll \varepsilon_0$ are chosen sufficently small depending on the size of $E_{crit}$, the associated wave map initial data $\Pi_{\leq \mu_n^1 - \log(R_n)} \phi^n[0]$ can be evolved globally in time for all sufficently large $n$ and their evolutions denoted by $\Pi_{\leq \mu_n^1 - \log(R_n)} \phi^n$ obey 
 \begin{equation*}
  \bigl\| \Pi_{\leq \mu_n^1 - \log(R_n)} \phi^n \bigr\|_{S[\bR]} \leq C(E_{crit})
 \end{equation*}
 uniformly for all sufficently large $n$ for some constant $C(E_{crit}) > 0$ that depends only on $E_{crit}$. 
\end{prop}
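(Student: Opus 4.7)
\medskip

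\noindent \emph{Proof proposal.} The plan is to argue by a finite induction on the index $L \in \{1, \ldots, C_1\}$, showing that for each $L$ the (geometric) data $\Pi_{\leq b_L} \phi^n[0]$ admits a global wave maps evolution satisfying a uniform $S$ norm bound $\|\Pi_{\leq b_L} \phi^n\|_{S[\bR]} \leq C_L(E_{crit})$ for all $n$ sufficiently large. The induction base $L=1$ is supplied directly by the small energy global regularity theorem~\ref{thm:small_energy_global}, since $\|P_{J_1} \phi^{n\Lambda_0^{(0)}}[0]\|_{\dot{H}^1_x \times L^2_x}^2 \lesssim \varepsilon_0$ and $\varepsilon_0$ has been selected well below the small energy threshold; here we also need to invoke Proposition~\ref{prop:data_approximate_preservation_of_frequency_localization} in combination with the Besov smallness~\eqref{equ:besov_smallness_edge_points} at $b_1$ to see that the geometric truncation $\Pi_{\leq b_1} \phi^n[0]$ differs from $P_{\leq b_1} \phi^n[0]$ only by a tail whose frequency envelope decays exponentially away from $b_1$ with size $\lesssim \delta_1 \ll \varepsilon_0$. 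Since only finitely many steps $L = 1, \ldots, C_1 = O(E_{crit}/\varepsilon_0)$ are needed, the final constant $C_{C_1}(E_{crit})$ depends solely on $E_{crit}$.

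For the induction step from $L$ to $L+1$, denote by $u^{n,L}$ the already constructed global evolution of $\Pi_{\leq b_L} \phi^n[0]$ and consider the difference
\[
 \epsilon^{n,L}[0] := \Pi_{\leq b_{L+1}} \phi^n[0] - \Pi_{\leq b_L} \phi^n[0].
\]
Two applications of Proposition~\ref{prop:data_approximate_preservation_of_frequency_localization} combined with~\eqref{equ:besov_smallness_edge_points} at the endpoints $b_L$ and $b_{L+1}$ show that $\epsilon^{n,L}[0]$ is essentially frequency-localized to $J_{L+1}$ up to exponentially decaying tails, and that
\[
 \limsup_{n \to \infty} \|\epsilon^{n,L}[0]\|_{\dot{H}^1_x \times L^2_x}^2 \lesssim \varepsilon_0 + O(\delta_1).
\]
I then solve for $\Pi_{\leq b_{L+1}} \phi^n = u^{n,L} + \epsilon^{n,L}$ by propagating $\epsilon^{n,L}$ via its governing equation, working frequency by frequency at the level of the renormalized variables $U_{<k}^{(u^{n,L})} \overline{\epsilon^{n,L}_k}$ with gauge transformations defined purely in terms of the already controlled low-frequency wave map $u^{n,L}$. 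In this formulation, the linear part of the equation for $\epsilon^{n,L}$ is precisely the magnetic-type wave equation associated to $u^{n,L}$ described in the introduction, so that the difficult non-perturbative low-low-high interactions have been absorbed into the gauge, and what remains in the nonlinearity are trilinear terms with a derivative on a low frequency, high-high interaction terms (handled by passing to $\overline{\epsilon^{n,L}_k}$), and quintilinear error terms.

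The inductive step is then closed by a bootstrap argument of the type alluded to in Proposition~\ref{prop:bootstrap_bounds_to_next_level_in_evolving_lowest_frequency_nonatomic_part}: assuming
\[
 \sum_{k_0 \in \bZ} 2^{\alpha|k-k_0|} \bigl\| P_{k_0} \bigl( U_{<k}^{(u^{n,L})} \overline{\epsilon^{n,L}_k} \bigr) \bigr\|_{S_{k_0}[I]} \leq K(c_k + d_k)
\]
for some large constant $K$, a frequency envelope $\{c_k\}$ with $\sum c_k^2 \lesssim \varepsilon_0$ concentrated on $J_{L+1}$, and exponentially decaying tails $\{d_k\}$ of total size $\lesssim \delta_1$, the energy estimate~\eqref{equ:energy_estimate} together with the multilinear bounds of Section~\ref{sec:multilinear_estimates} (most importantly the trilinear estimate~\eqref{equ:trilinear_estimate} and the null form estimate~\eqref{equ:null_form_estimate}) applied to each schematic piece identified in Proposition~\ref{prop:nlw_for_renormalized_phi_bar_schematic_identities} improves the constant back to $K/2$. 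The nonlinear interactions that are purely internal to $\epsilon^{n,L}$ carry a prefactor $O(\varepsilon_0)$ and therefore can be absorbed by choosing $\varepsilon_0$ small enough depending on $E_{crit}$. The mixed interactions, where a low-frequency input is supplied by $u^{n,L}$ and is thus bounded only by $C_L(E_{crit})$, are handled by exploiting the gain $2^{-\delta(k_1 - k_2)}$ in the trilinear estimate whenever the derivative can be placed on the low frequency; the remaining piece, where the derivative sits on the high frequency, is exactly what the gauge $U_{<k}^{(u^{n,L})}$ has been designed to renormalize.

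The main obstacle in executing this plan is precisely this last low-high coupling: the trilinear estimate affords no exponential gain in the largest frequency separation, so naive iteration would produce cumulative factors of $C_L(E_{crit})$ that blow up along the induction. Overcoming this requires, on the one hand, that the gauge transformation $U_{<k}^{(u^{n,L})}$ genuinely absorb the resonant $A_\alpha \partial^\alpha \epsilon$ term, leaving only terms where a derivative falls on the low frequency, and on the other hand, that the perturbation threshold $\varepsilon_0$ be chosen depending only on $E_{crit}$ (not on $C_L$), which forces us to run the bootstrap with a smallness constant tied to $\varepsilon_0$ alone. The spatial and frequency truncations that appear in the modified definitions~\eqref{equ:modified_definition_phi_bar_A}--\eqref{equ:modified_definition_phi_bar_D} and~\eqref{equ:modified_gauge_transform_definition} will also be needed to manufacture additional smallness in the atomic $Z_k^\pm$ components of the $S_k$ norm, just as in the proof of Proposition~\ref{prop:partial_fungibility}.
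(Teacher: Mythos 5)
Your overall architecture matches the paper's: a finite induction over $L = 1, \ldots, C_1$, with the base case from Theorem~\ref{thm:small_energy_global}, the geometric truncations controlled via Proposition~\ref{prop:data_approximate_preservation_of_frequency_localization} and the Besov smallness~\eqref{equ:besov_smallness_edge_points}, and the step closed by a frequency-envelope bootstrap using the energy estimate, the trilinear estimate and the null form estimate. However, there are two genuine gaps in how you propose to execute the induction step.

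First, you propose to renormalize $\epsilon^{n,L}$ with gauge transformations $U^{(u^{n,L})}_{<k}$ built \emph{only} from the already-controlled low-frequency evolution. This does not remove all the non-perturbative terms. The difference equation for $\epsilon = \epsilon^{n,L}$ contains, among others, the contribution $\sum_{k_2 \leq k-10} \epsilon_{\leq k_2 - 10}\, \partial_\alpha \epsilon_{k_2}^\dagger\, \partial^\alpha \epsilon_k$ with $k_2$ ranging over $J_{L+1}$, whose ``connection part'' $A^{(\epsilon)}_{\alpha;\leq k-10}\partial^\alpha \epsilon_k$ is exactly of the type~\eqref{equ:WM_nonlinearity_bad0}: the trilinear estimate~\eqref{equ:trilinear_estimate} gives no gain when the undifferentiated factor sits at the lowest frequency, and the smallness of $E[\epsilon] \lesssim \varepsilon_0$ does not help, since this term cannot be placed in $L^1_t L^2_x$ at all within the functional framework (this is why even the small energy Theorem~\ref{thm:small_energy_global} requires the gauge). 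The paper therefore works with the full renormalized difference $\Box\bigl(U^{(\phi+\epsilon)}_{\leq -10}\overline{(\phi+\epsilon)}_0\bigr) - \Box\bigl(U^{(\phi)}_{\leq -10}\overline{\phi}_0\bigr)$ and must carefully expand $U^{(\phi+\epsilon)} - U^{(\phi)}$ as an iterated integral in the differences $B_h^{(\phi+\epsilon)} - B_h^{(\phi)}$; controlling that expansion in turn requires a \emph{preliminary} improved kinetic energy bound on $P_k\epsilon$ for the low frequencies $k < b_L$ (Lemma~\ref{lem:bootstrap_non_atomic_kinetic}), proved by a direct energy-identity argument using the geometric relation $(\phi+\epsilon)^\dagger\partial_t(\phi+\epsilon)=0$. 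Neither the full gauge nor this preliminary low-frequency energy bound appears in your outline.

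Second, your bootstrap is run on the whole real line, but the inductive hypothesis only gives $\|u^{n,L}\|_{S[\bR]} \leq C_L$ with $C_L$ not small, so one must first invoke Proposition~\ref{prop:partial_fungibility} to partition $\bR$ into $N \equiv N(C_L, E_{crit})$ intervals $I_j$ on which $\|u^{n,L}\|_{S[I_j]} \lesssim C(E_{crit})$, and gain the remaining smallness on each $I_j$ by divisibility of specific norms. Iterating the bootstrap across the $I_j$ then faces the danger that the energy of $\epsilon$ grows from one interval to the next and exits the perturbative regime before reaching $I_N$. The paper rules this out by an approximate energy conservation statement for $\epsilon$: the energy transfer between $\epsilon$ and $u^{n,L}$ per interval is $O_{E_{crit}}(\delta_2)$, controlled ultimately by the underlying Besov error $\delta_0$, which must be chosen small depending on $N(C_L, E_{crit})$ \emph{in advance}. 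Your proposal fixes $\delta_0 \ll \delta_1 \ll \varepsilon_0$ but gives no mechanism preventing the cumulative energy drift of $\epsilon$ over the time-interval iteration; without it the induction step does not close.
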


We shall prove Proposition~\ref{prop:evolving_lowest_frequency_nonatomic_part} via a finite inductive procedure over the increasing size of the frequencies, more precisely we shall inductively conclude that for $L = 1, \ldots, C_1$, the data $\Pi_{\leq b_L} \phi^n[0]$ can be globally evolved for all sufficently large $n$ with uniform in $n$ bounds on the $S$ norms of the evolutions. The induction start is given by the small energy global regularity result from Theorem~\ref{thm:small_energy_global} and the induction step is provided by the following key proposition of this subsection.

\begin{prop} \label{prop:bootstrap_bounds_to_next_level_in_evolving_lowest_frequency_nonatomic_part}
 Assume that for some $1 \leq L < C_1$, the data $\Pi_{\leq b_L} \phi^{n \Lambda_0^{(0)}}[0]$ can be evolved globally in time for all large $n$ and that their evolutions denoted by $\Pi_{\leq b_L} \phi^{n \Lambda_0^{(0)}}$ satisfy 
 \[
  \bigl\| \Pi_{\leq b_L} \phi^{n \Lambda_0^{(0)}} \bigr\|_{S[\bR]} \leq C_2.
 \]
 Then provided $\delta_0 \ll \delta_1$ and $\delta_1 \equiv \delta_1(C_2, E_{crit})$ are chosen sufficiently small, there exists $C_3 \equiv C_3(C_2) > 0$ such that for all sufficently large $n$, $\Pi_{\leq b_{L+1}} \phi^{n \Lambda_0^{(0)}}[0]$ can be globally evolved and their evolutions denoted by $\Pi_{\leq b_{L+1}} \phi^{n \Lambda_0^{(0)}}$ satisfy
 \[
  \bigl\| \Pi_{\leq b_{L+1}} \phi^{n \Lambda_0^{(0)}} \bigr\|_{S[\bR]} \leq C_3.
 \]
\end{prop}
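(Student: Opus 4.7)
The plan is to realize the data $\Pi_{\leq b_{L+1}}\phi^{n\Lambda_0^{(0)}}[0]$ as a perturbation $u^n[0]+\epsilon^n[0]$ of the data $u^n[0] := \Pi_{\leq b_L}\phi^{n\Lambda_0^{(0)}}[0]$ furnished by the induction hypothesis, and to globally evolve the perturbation by a bootstrap at the gauged level. More precisely, Proposition~\ref{prop:data_approximate_preservation_of_frequency_localization} applied to the two consecutive frequency cutoffs $b_L$ and $b_{L+1}$ -- whose Besov-norm smallness~\eqref{equ:besov_smallness_edge_points} is precisely what the choice of the intervals $J_\ell$ was designed to produce -- ensures that $\epsilon^n[0]$ is sharply localized to the dyadic frequency shell $[a_{L+1},b_{L+1}]$ up to exponentially decaying $\delta_1$-small tails, with overall kinetic energy $\lesssim \varepsilon_0$ inherited from $\|P_{J_{L+1}}\phi^{n\Lambda_0^{(0)}}[0]\|_{\dot H^1_x\times L^2_x}^2\lesssim \varepsilon_0$. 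One then introduces two frequency envelopes $\{c_k\}$ and $\{d_k\}$ covering the shell and the tails respectively, where $\|c\|_{\ell^2}^2\lesssim \varepsilon_0$ and $d_k=\delta_1 2^{-\sigma|k-b_{L+1}|}$, and runs a bootstrap argument of the form $\|P_k\epsilon^n\|_{S_k[I]}\leq C(c_k+d_k)$ with the goal of improving to $(C/2)(c_k+d_k)$.

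To implement the bootstrap one cannot work directly with the equation for $\epsilon^n$, because the nonlinearity of (WM) for the full evolution $\phi^n=u^n+\epsilon^n$ contains the low-high interaction $\phi^n_{\leq k-10}\partial_\alpha (\phi^n_{\leq k-10})^\dagger\partial^\alpha\phi^n_k$ of~\eqref{equ:WM_nonlinearity_bad0} with $u^n$ in the low-frequency slots at full $S$-norm $C_2$, which is not perturbative within our functional framework. Instead, one works with the renormalized variable $U^{(\phi^n)}_{\leq k-10}\overline{\phi^n_k}$ defined via the nonlinearly modified $\overline{\phi^n_k}$ from~\eqref{equ:definition_phi_bar} and the gauge transformations $U^{(\phi^n)}_{\leq h}$ from~\eqref{equ:ode_gauge_transform}, constructed out of the full $\phi^n$; both of these have to be built using the spatially and frequency truncated modifications \eqref{equ:modified_definition_phi_bar_A}--\eqref{equ:modified_definition_phi_bar_D} and \eqref{equ:modified_gauge_transform_definition} so that we retain an additional source of smallness, parametrized by a large integer $m=m(C_2)\gg 1$. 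The wave equation for this gauged variable is the analogue of~\eqref{equ:nlw_for_renormalized_phi_bar_in_fungibility_proof} and its right-hand side is controlled termwise by the schematic identities of Proposition~\ref{prop:nlw_for_renormalized_phi_bar_schematic_identities}.

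The key technical step is then to close the bootstrap via the energy estimate~\eqref{equ:energy_estimate} on a partition of $\R$ supplied by the weak divisibility Proposition~\ref{prop:partial_fungibility}: $\R=\cup_{j=1}^N I_j$ with $N\equiv N(C_2,E_{crit})$ and $\|u^n\|_{S[I_j]}\lesssim C(E_{crit})$, further subdivided so that the bilinear and trilinear divisibility expressions arising in the proof of Proposition~\ref{prop:partial_fungibility} have arbitrarily small norm on each $I_j$. On each subinterval one decomposes every factor in $\Box(U^{(\phi^n)}_{\leq k-10}\overline{\phi^n_k})$ according to $\phi^n=u^n+\epsilon^n$ and distinguishes three types of resulting contributions: (i) \emph{pure $u^n$} terms, which are absorbed into the $S$-norm bound for $u^n$ itself coming from the induction hypothesis and divisibility; (ii) \emph{mixed} terms with at least one factor of $\epsilon^n$ and all other factors involving $u^n$, for which the non-perturbative low-high structure is dealt with by the gauge cancellation (so that a derivative effectively falls on the lowest frequency) and the remaining trilinear expressions are estimated by~\eqref{equ:trilinear_estimate}, whose exponential gain $2^{-\delta(k_1-k_2)}$ is essential to sum over the separation between the shell of $\epsilon^n$ and the frequencies of $u^n$, or by the null-form bound~\eqref{equ:null_form_estimate}/weighted bound~\eqref{equ:weighted_null_form_estimate}; (iii) terms \emph{quadratic or higher} in $\epsilon^n$, which gain a smallness factor $\varepsilon_0^{1/2}$ from the bootstrap assumption. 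Having obtained the improved bound for the gauged variable (with the usual exponential-tail factor $2^{\alpha|k-k_0|}$ absorbed as in~\eqref{equ:divisibility_kinetic_energy_bound}), one transfers it back to $\overline{\phi^n_k}$ and then to $\phi^n_k$ via the Littlewood-Paley trichotomy~\eqref{equ:phi_bar_trichotomy} and the modified definition of $\overline{\phi^n_k}$, exactly as in the concluding part of the proof of Proposition~\ref{prop:partial_fungibility}. Iterating the improved estimate across the finitely many subintervals $I_j$ yields the global $S$-norm bound with a constant $C_3\equiv C_3(C_2)$.

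The principal difficulty is therefore the treatment of the mixed low-high contributions of type (ii): the frequency separation between $u^n$ at low frequencies $k'\leq b_L$ and $\epsilon^n$ in the shell $[a_{L+1},b_{L+1}]$ is only guaranteed to be on the order of $b_{L+1}-a_{L+1}$, which is not necessarily large, so the exponential gain in~\eqref{equ:trilinear_estimate} must be coupled with the $\varepsilon_0$-smallness of $\epsilon^n$ and with divisibility of $u^n$ to beat the large constant $C_2$. This is the reason for the implicit dependence $\delta_1\equiv\delta_1(C_2,E_{crit})$ and for the requirement $\delta_0\ll\delta_1$, chosen in concrete terms so that $\delta_1\log((1+E_{crit})\delta_1^{-1})$ is dominated by a small power of $\varepsilon_0$ times an inverse power of $C_2$ matching the multilinear estimates, which ensures that the extra contributions generated by the exponentially decaying tails at the cutoff $b_{L+1}$ remain perturbative throughout the bootstrap.
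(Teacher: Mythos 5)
Your overall architecture matches the paper's: write the new data as $u^n[0]+\epsilon^n[0]$ with $u^n$ the inductively controlled evolution, set up a frequency-envelope bootstrap $\|P_k\epsilon\|_{S_k}\leq C(c_k+d_k)$ on the divisibility intervals supplied by Proposition~\ref{prop:partial_fungibility}, and close it at the level of the gauged, renormalized difference equation using the trilinear estimate~\eqref{equ:trilinear_estimate} and the null form bounds (the paper inserts a third parameter $\delta_2$ with $\delta_0\ll\delta_1\ll\delta_2\ll\varepsilon_0$ for the tails, but that is cosmetic). There is, however, a genuine gap in how you propose to close the bootstrap at frequencies $k$ at or below $b_L$. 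To pass from the gauged quantity back to $\epsilon_k$ you must estimate the term $\bigl(U^{(\phi+\epsilon)}_{\leq k-10}-U^{(\phi)}_{\leq k-10}\bigr)\overline{\phi}_k$ in the $S_k$ norm (the paper's step (b), estimate \eqref{equ:bootstrap_non_atomic_difference_Us_S_norm}). When $k$ lies inside the essential frequency support of $\phi$, the factor $\overline{\phi}_k$ has $S_k$ norm of size $O(1)$ and this is a product estimate with no time integration, so neither divisibility nor the exponential gain in \eqref{equ:trilinear_estimate} produces smallness; the gauge difference is controlled by $L^\infty$-type norms of the low-frequency components $\epsilon_{k_1}$, $k_1<b_L$, and feeding in the raw bootstrap hypothesis returns exactly $C\,d_k$ rather than $\tfrac{C}{2}\,d_k$, so the constant never improves and the bootstrap does not close. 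The paper resolves this by first proving, via a direct energy-identity argument (pairing the difference equation against $\partial_t\epsilon_k$, exploiting the geometric identity $\phi^\dagger\partial_t\phi=0$ and the ability to estimate the null form below $L^2_{t,x}$), the improved kinetic-energy bound \eqref{equ:bootstrap_non_atomic_kinetic} of Lemma~\ref{lem:bootstrap_non_atomic_kinetic} for $k<b_L$ with a constant $D_0\ll C$, and only then running the steps (a)--(e) that lead from the $L^1_tL^2_x$ bound on the boxed difference to the improved $S$ bound for $\epsilon_k$. Your proposal contains no substitute for this preliminary step.

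A second, smaller omission: when iterating over the $N(C_2,E_{crit})$ intervals $I_j$ you need the energy of $\epsilon$ to remain below the perturbative threshold $\sim\varepsilon_0$ on every later interval, otherwise the contributions you classify as ``quadratic or higher in $\epsilon$'' cease to be small. Exact conservation holds only for $\phi+\epsilon$ and for $\phi$ separately, so one must bound the cross term $\int_{\R^2}\partial_\alpha\epsilon^\dagger\,\partial_\alpha\phi\,dx$; the paper does this using the frequency-profile preservation just established by the bootstrap together with the exponential decay of $\phi$ above $b_L$, showing the energy transfer per interval is $O_{E_{crit}}(\delta_2)$, and then uses the a priori bound on $N$ to choose $\delta_0$ small enough. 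You should supply this approximate energy conservation argument before asserting that the iteration across the $I_j$ closes.
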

\begin{proof} 
To simplify the notation in the following proof we shall write
\[ 
 \phi \equiv \Pi_{\leq b_L} \phi^{n \Lambda_0^{(0)}}, \quad \epsilon = \Pi_{\leq b_{L+1}} \phi^{n \Lambda_0^{(0)}} - \Pi_{\leq b_L} \phi^{n \Lambda_0^{(0)}},
\]
keeping in mind that $\phi$ and $\epsilon$ depend on $n$. Since by assumption $\phi$ exists globally in time with finite $S$ norm, we have frequency envelope bounds for $\phi$ thanks to Proposition~\ref{prop:finite_S_gives_freq_envelope_bounds}. In particular, due to the Besov smallness~\eqref{equ:besov_smallness_edge_points} around the frequency interval endpoint $b_L$, this implies that for all sufficently large~$n$,
\begin{equation} \label{equ:bootstrap_non_atomic_exponential_decay_phi}
 \| P_k \phi \|_{S_k[\bR]} \lesssim \delta_1 2^{-\sigma (k-b_L)} \text{ for } k > b_L.
\end{equation}

While $\phi$ exists globally in time, $\epsilon$ only exists locally in time and for now, any statement we make about $\epsilon$ is meant locally in time on some interval $I_0$ around $t=0$. In order to prove global existence and finite $S$ norm bounds for $\epsilon$, we use Proposition~\ref{prop:partial_fungibility} to partition the time axis $\bR = \cup_{j=1}^N I_j$ into $N \equiv N(C_2, E_{crit})$ many time intervals $I_j$ with the property that
\[
 \|\phi\|_{S[I_j]} \lesssim C(E_{crit}) \quad \text{for } j = 1, \ldots, N.
\]
We tacitly assume that these intervals are intersected with $I_0$ and that the interval $I_1$ contains time $t=0$. Our strategy is to iteratively prove $S$ norm bounds for $\epsilon$ by bootstrap on each interval $I_j$. Here we encounter the danger that the energy of $\epsilon$ could keep growing as we move to later time intervals and could thereby leave the perturbative regime before we would have concluded $S$ norm bounds on $\epsilon$ on all time intervals $I_1, \ldots, I_N$. However, we will see that the energy transfer between $\epsilon$ and $\phi$ is controlled by the underlying Besov error~$\delta_0$, which therefore implies approximate energy conservation for $\epsilon$. We now fix the interval $I_1$ with the understanding that all the arguments in this step can be carried out for the later time intervals $I_2, \ldots, I_N$. 

For the ensuing bootstrap argument we introduce another smallness parameter $\delta_2 > 0$ satisfying 
\[
 0 < \delta_0 \ll \delta_1 \ll \delta_2 \ll \varepsilon_0 \ll 1.
\]
Here we recall that $\delta_0$ is the underlying Besov error~\eqref{equ:besov_error_non_atomic_component} of the non-atomic component in the frequency atom decomposition and that $\delta_1$ controls the approximate frequency localization~\eqref{equ:data_approximate_preservation_of_frequency_localization} of the normal projection operator. Then we make the following bootstrap assumptions.

\medskip

\noindent {\bf Bootstrap assumptions:} Suppose that
\begin{equation} \label{equ:bootstrap_assumption_non_atomic}
 \| P_k \epsilon \|_{S_k[I_1]} \leq C ( c_k + d_k ) \text{ for } k \in \bZ,
\end{equation}
where $C \equiv C(E_{crit}) \gg 1$ is a sufficently large constant and where $\{ c_k \}_{k \in \bZ}$ satisfies
\begin{equation} \label{equ:bootstrap_non_atomic_envelope_bound1}
 c_k = 0 \text{ for } k < b_L, \quad c_k = 0 \text{ for } k > b_{L+1}, \quad \sum_{k\in\bZ} c_k^2 \lesssim \varepsilon_0
\end{equation}
as well as 
\begin{equation} \label{equ:equ:bootstrap_non_atomic_envelope_bound2}
 \sum_{b_L \leq k \leq b_L + M} c_k^2 \leq \delta_2^2 \text{ for some } M \gg 1 \text{ with } 2^{-M} \leq \delta_2^{3}.
\end{equation}
Moreover, $\{ d_k \}_{k\in\bZ}$ satisfies
\begin{equation} \label{equ:bootstrap_non_atomic_envelope_bound3}
 d_k = \delta_2 2^{-\sigma (b_L - k)} \text{ for } k < b_L, \quad d_k = 0 \text{ for } b_L \leq k \leq b_{L+1}, \quad d_k = \delta_2 2^{-\sigma (k-b_{L+1})} \text{ for } k > b_{L+1}.
\end{equation}

The idea here is that $\{ d_k \}_{k\in\bZ}$ incorporates the frequency leakage to $\epsilon$ coming from $\phi$ in the difference equation for $\epsilon$ as well as the approximate frequency localization of the data $\epsilon[0]$ up to exponential tails coming from application of the normal projection operator $\Pi$. Upon writing
\begin{equation}
 \epsilon[0] = P_{[b_L, b_{L+1}]} \phi^{n \Lambda_0^{(0)}}[0] + \bigl( \Pi_{\leq b_{L+1}} \phi^{n \Lambda_0^{(0)}} - P_{\leq b_{L+1}} \phi^{n \Lambda_0^{(0)}} \bigr)[0] - \bigl( \Pi_{\leq b_L} \phi^{n \Lambda_0^{(0)}} - P_{\leq b_L} \phi^{n \Lambda_0^{(0)}} \bigr)[0],
\end{equation}
it becomes clear that by the choice of the frequency intervals $[b_L, b_{L+1}]$, by the Besov smallness~\eqref{equ:besov_smallness_edge_points} around the endpoints of these intervals and by the approximate preservation of the frequency localization by the normal projection operator $\Pi$ as established in Proposition~\ref{prop:data_approximate_preservation_of_frequency_localization}, we have for all sufficiently large $n$ that
\begin{equation*}
 \bigl\| P_k \epsilon [0] \bigr\|_{\dot{H}^1_x \times L^2_x} \lesssim c_k + d_k.
\end{equation*}

In a first step we use a direct energy conservation argument to recover the envelope for the low frequencies $k < b_L$ with respect to the kinetic energy norm. Then we will crucially use this bound and the ``renormalized difference equation'' for $\epsilon$ to recover the full bound.
\begin{lem} \label{lem:bootstrap_non_atomic_kinetic}
 If $\epsilon$ satisfies the assumption~\eqref{equ:bootstrap_assumption_non_atomic}, then for some $1 \ll D_0 \ll C$ the following improved bound for the kinetic energy component for the low frequencies $k < b_L$ holds
 \begin{equation} \label{equ:bootstrap_non_atomic_kinetic}
  \| \nabla_{t,x} P_k \epsilon \|_{L^\infty_t L^2_x[I_1]} \leq D_0 ( c_k + d_k ).
 \end{equation}
\end{lem}
\begin{proof}
 We may assume that $k=0$, so in particular we have $b_L > 0$ in the following. Then we observe that $\epsilon_0$ satisfies the wave equation
 \[
  \Box \epsilon_0 = - P_0 \bigl( (\phi + \epsilon) \partial_\alpha(\phi + \epsilon)^\dagger \partial^\alpha (\phi+\epsilon) \bigr) + P_0 \bigl( \phi \partial_\alpha \phi^\dagger \partial^\alpha \phi \bigr) \equiv P_0 X(\phi, \epsilon).
 \]
 Passing to the corresponding energy identity, we see that we need to show 
 \[
  \biggl| \int_{I_1} \int_{\bR^2} P_0 X(\phi, \epsilon)^\dagger \partial_t \epsilon_0 \, dx \, dt \biggr| \ll C^2 \delta_2^2 2^{-2 \sigma b_L}.
 \]
 We now prove this bound separately for each term in
 \begin{equation} \label{equ:bootstrap_non_atomic_kinetic_terms}
  \begin{aligned}
   -X(\phi, \epsilon) &= \epsilon \partial_\alpha \phi^\dagger \partial^\alpha \phi + 2 \phi \partial_\alpha \epsilon^\dagger \partial^\alpha \phi + \epsilon \partial_\alpha \epsilon^\dagger \partial^\alpha \epsilon + 2 \epsilon \partial_\alpha \epsilon^\dagger \partial^\alpha \phi + \phi \partial_\alpha \epsilon^\dagger \partial^\alpha \epsilon.
  \end{aligned}
 \end{equation}

 \noindent {\it (i) Contribution of $P_0 \bigl( \epsilon \partial_\alpha \phi^\dagger \partial^\alpha \phi \bigr)$.} We split into
 \begin{equation} \label{equ:bootstrap_non_atomic_kinetic_epp}
  P_0 \bigl( \epsilon \partial_\alpha \phi^\dagger \partial^\alpha \phi \bigr) = P_0 \bigl( P_{> b_L} \epsilon \partial_\alpha \phi^\dagger \partial^\alpha \phi \bigr) + P_0 \bigl( P_{[-10, b_L]} \epsilon \partial_\alpha \phi^\dagger \partial^\alpha \phi \bigr) + P_0 \bigl( P_{\leq -10} \epsilon \partial_\alpha \phi^\dagger \partial^\alpha \phi \bigr).
 \end{equation}
 Then we have for the first term
 \begin{align*}
  \biggl| \int_{I_1} \int_{\bR^2} P_0 \bigl( P_{> b_L} \epsilon \partial_\alpha \phi^\dagger \partial^\alpha \phi \bigr)^\dagger \partial_t \epsilon_0 \, dx \, dt \biggr| \lesssim \sum_{k_1 > b_L} \sum_{k_2 = k_1+O(1)} \biggl| \int_{I_1} \int_{\bR^2} P_0 \bigl( P_{k_1} \epsilon P_{k_2} \bigl( \partial_\alpha \phi^\dagger \partial^\alpha \phi \bigr) \bigr)^\dagger \partial_t \epsilon_0 \, dx \, dt \biggr|.
 \end{align*}
 In case of high-high interactions, we place $P_{k_2}\bigl( \partial_\alpha \phi^\dagger \partial^\alpha \phi \bigr)$ into $L^{\frac{3}{2}}_t L^{\frac{3}{2}}_x$, while in case of low-high and high-low interactions, we put $P_{k_2} \bigl( \partial_\alpha \phi^\dagger \partial^\alpha \phi \bigr)$ into $L^{\frac{5}{3}}_t L^{\frac{5}{3}}_x$. Thus, in the former case we obtain from the null form estimate~\eqref{equ:null_form_estimate}, the bootstrap assumptions~\eqref{equ:bootstrap_assumption_non_atomic} and the exponential decay~\eqref{equ:bootstrap_non_atomic_exponential_decay_phi} of the $S_k$ norms of $\phi$ for $k > b_L$ the bound
 \begin{align*}
  &\lesssim \sum_{k_1 > b_L} \sum_{k_3 > k_1} \bigl\| P_{k_1} \epsilon \bigr\|_{L^3_t L^\infty_x[I_1]} \bigl\| P_{k_3} \phi \bigr\|_{S_{k_3}[I_1]}^2 \bigl\| \partial_t \epsilon_0 \bigr\|_{L^\infty_t L^3_x[I_1]} \\
  &\lesssim \sum_{k_1 > b_L} \sum_{k_3 > k_1} 2^{-\frac{1}{3} k_1} \varepsilon_0 \delta_1^2 2^{-2\sigma(k_3-b_L)} C \delta_2 2^{-\sigma b_L} \\
  &\ll C^2 \delta_2^2 2^{- 2 \sigma b_L}
 \end{align*}
 for $\delta_1 > 0$ sufficiently small. In the latter case, we similarly infer the estimate
 \begin{align*}
  &\lesssim \sum_{k_1 > b_L} \sum_{k_3 \leq k_1 + O(1)} \bigl\| P_{k_1} \epsilon \bigr\|_{L^{\frac{5}{2}}_t L^\infty_x[I_1]} 2^{\frac{1}{10} k_1} 2^{\frac{1}{10} k_3} \bigl\| P_{k_1} \phi \bigr\|_{S_{k_1}[I_1]} \bigl\| P_{k_3} \phi \bigr\|_{S_{k_3}[I_1]} \bigl\| \partial_t \epsilon_0 \bigr\|_{L^\infty_t L^{\frac{5}{2}}_x[I_1]} \\
  &\lesssim \sum_{k_1 > b_L} \sum_{k_3 \leq k_1 + O(1)} 2^{-\frac{1}{5} k_1} \varepsilon_0 2^{\frac{1}{10} k_1} 2^{\frac{1}{10} k_3} \delta_1 2^{-\sigma (k_1 - b_L)} \| P_{k_3} \phi \|_{S_{k_3}[I_1]} C \delta_2 2^{-\sigma b_L} \\
  &\lesssim \sum_{k_1 > b_L} 2^{-\frac{1}{5} k_1} \varepsilon_0 \delta_1 2^{-\sigma (k_1 - b_L)} C(E_{crit}) C \delta_2 2^{-\sigma b_L} \\
  &\ll C^2 \delta_2^2 2^{-2 \sigma b_L}. 
 \end{align*}
 Next, for the contribution of the second term in~\eqref{equ:bootstrap_non_atomic_kinetic_epp}
 \[
  \biggl| \int_{I_1} \int_{\bR^2} P_0 \bigl( P_{[-10,b_L]} \epsilon \partial_\alpha \phi^\dagger \partial^\alpha \phi \bigr)^\dagger \partial_t \epsilon_0 \, dx \, dt \biggr|,
 \]
 we argue similarly and use that
 \[
  \bigl\| P_{k_1} \epsilon \bigr\|_{L^q_t L^\infty_x} \lesssim C \delta_2 2^{-\frac{1}{q} k_1 - \sigma (b_L - k_1)} \text{ for } k_1 < b_L
 \]
 and a divisibility argument (from $\partial_\alpha \phi^\dagger \partial^\alpha \phi$) to force smallness. Finally, in order to treat the third term in~\eqref{equ:bootstrap_non_atomic_kinetic_epp}, for each frequency $k \leq -10$ we consider the expression
 \[
  \biggl| \int_{I_1} \int_{\bR^2} P_0 \bigl( P_k \epsilon P_0 ( \partial_\alpha \phi^\dagger \partial^\alpha \phi ) \bigr)^\dagger \partial_t \epsilon_0 \, dx \, dt \biggr|
 \]
 and now further distinguish for the radial variable the cases $r \leq 1$ and $r > 1$. 
 
 When $r \leq 1$ in case of low-high and high-low interactions in $P_0 \bigl( \partial_\alpha \phi^\dagger \partial^\alpha \phi \bigr)$ we easily estimate
 \begin{align*}
  &\biggl| \int_{I_1} \int_{\bR^2} \chi_{\{ r \leq 1\}} P_0 \bigl( P_k \epsilon P_0 ( \partial_\alpha \phi^\dagger \partial^\alpha \phi ) \bigr)^\dagger \partial_t \epsilon_0 \, dx \, dt \biggr| \\
  &\lesssim \sum_{\ell < 0} \, \bigl\| \chi_{\{ r \sim 2^\ell \}} r^{+\frac{1}{2}} \bigr\|_{L^6_x} \| P_k \epsilon \|_{L^\infty_t L^\infty_x[I_1]} \bigl\| P_0 \bigl( \partial_\alpha \phi \partial^\alpha \phi \bigr) \bigr\|_{L^2_t L^3_x[I_1]} \sup_{\ell \in \bZ} \, \bigl\| \chi_{\{ r \sim 2^\ell \}} r^{-\frac{1}{2}} \partial_t \epsilon_0 \bigr\|_{L^2_t L^2_x[I_1]} \\
  &\lesssim \| P_k \epsilon \|_{S_k[I_1]} \bigl\| P_0 \bigl( \partial_\alpha \phi \partial^\alpha \phi \bigr) \bigr\|_{L^2_t L^3_x[I_1]} \| \epsilon_0 \|_{S_0[I_1]} \\
  &\lesssim 2^{\sigma k} C^2 \delta_2^2 2^{-2 \sigma b_L} \bigl\| P_0 \bigl( \partial_\alpha \phi \partial^\alpha \phi \bigr) \bigr\|_{L^2_t L^3_x[I_1]}.
 \end{align*}
 Then one can sum over $k \leq -10$ and smallness is obtained from divisibility of the $\bigl\| P_0 \bigl( \partial_\alpha \phi \partial^\alpha \phi \bigr) \bigr\|_{L^2_t L^3_x[I_1]}$ norm. In case of high-high interactions in $P_0 \bigl( \partial_\alpha \phi^\dagger \partial^\alpha \phi \bigr)$, we use H\"older's inequality to place $P_k \epsilon$ into $L^\infty_t L^\infty_x[I_1]$, the null form $P_0 \bigl( \partial_\alpha \phi^\dagger \partial^\alpha \phi \bigr)$ into $L^{\frac{3}{2}}_t L^{\frac{3}{2}}_x[I_1]$ and $\partial_t \epsilon_0$ into $L^3_t L^\infty_x[I_1]$, gaining smallness from divisibility of the $L^{\frac{3}{2}}_t L^{\frac{3}{2}}_x[I_1]$ norm in this case. 
 
 We can therefore now turn to the more subtle case when $r > 1$. We first treat the high-high interactions in $P_0 \bigl( \partial_\alpha \phi^\dagger \partial^\alpha \phi \bigr)$. Here we use Strauss' improved Sobolev embedding to estimate 
 \begin{align*}
  &\biggl| \int_{I_1} \int_{\bR^2} \chi_{\{ r > 1\}} P_0 \bigl( P_k \epsilon \sum_{k_1 > 0} P_0 ( \partial_\alpha \phi_{k_1}^\dagger \partial^\alpha \phi_{k_1+O(1)} ) \bigr)^\dagger \partial_t \epsilon_0 \, dx \, dt \biggr| \\ 
  &\lesssim \sum_{\ell > 0} \, \bigl\| \chi_{\{ r \sim 2^\ell\}} P_k \epsilon \bigr\|_{L^\infty_t L^\infty_x} \Bigl\| \sum_{k_1 > 0} r^{+\frac{1}{2}} ( \partial_\alpha \phi_{k_1}^\dagger \partial^\alpha \phi_{k_1+O(1)} ) \Bigr\|_{L^2_t L^2_x[I_1]} \sup_{\ell \in \bZ} \, \bigl\| \chi_{\{ r \sim 2^\ell \}} r^{-\frac{1}{2}} \partial_t \epsilon_0 \bigr\|_{L^2_t L^2_x[I_1]} \\
  &\lesssim \sum_{\ell > 0} \min \{ 2^{-\frac{1}{2}(\ell+k)}, 1 \} \|P_k \epsilon \|_{S_k[I_1]} \| \epsilon_0 \|_{S_0[I_1]} \Bigl\| \sum_{k_1 > 0} r^{+\frac{1}{2}} ( \partial_\alpha \phi_{k_1}^\dagger \partial^\alpha \phi_{k_1+O(1)} ) \Bigr\|_{L^2_t L^2_x[I_1]} \\
  &\lesssim (1 + |k|) 2^{\sigma k} C^2 \delta_2^2 2^{-2 \sigma b_L} \Bigl\| \sum_{k_1 > 0} r^{+\frac{1}{2}} ( \partial_\alpha \phi_{k_1}^\dagger \partial^\alpha \phi_{k_1+O(1)} ) \Bigr\|_{L^2_t L^2_x[I_1]}.
 \end{align*}
 This bound can be summed over $k \leq -10$ and we obtain smallness since the $L^2_t L^2_x[I_1]$ norm here has the divisibility property thanks to the weighted null form estimate~\eqref{equ:weighted_null_form_estimate}. In case of low-high interactions in $P_0 \bigl( \partial_\alpha \phi^\dagger \partial^\alpha \phi \bigr)$ (and then analogously for high-low interactions), we write 
 \begin{align*}
  &\int_{I_1} \int_{\bR^2} \chi_{\{ r > 1\}} P_0 \bigl( P_k \epsilon \sum_{k_1 < O(1)} P_0 ( \partial_\alpha \phi_{k_1}^\dagger \partial^\alpha \phi_{0} ) \bigr)^\dagger \partial_t \epsilon_0 \, dx \, dt \\
  &= \int_{I_1} \int_{\bR^2} \chi_{\{ r > 1\}} P_0 \bigl( P_k \epsilon \sum_{k <k_1 < O(1)} P_0 ( \partial_\alpha \phi_{k_1}^\dagger \partial^\alpha \phi_{0} ) \bigr)^\dagger \partial_t \epsilon_0 \, dx \, dt \\
  &\quad + \sum_{k_1 \leq k} \int_{I_1} \int_{\bR^2} \chi_{\{ r > 1 \}} P_0 \bigl( P_k \epsilon P_0 ( \partial_\alpha \phi_{k_1}^\dagger \partial^\alpha \phi_{0} ) \bigr)^\dagger \partial_t \epsilon_0 \, dx \, dt.
 \end{align*}
 Then we can bound the first term on the right-hand side exactly as in the high-high case only that we pay a price of $|k|^2$, which can still be absorbed by the gain $2^{\sigma k}$ and then summed over $k \leq -10$. For the second term on the right-hand side we distribute the weight differently, more precisely for some $0 < \lambda < \frac{1}{2}$ we again use Strauss' improved Sobolev embedding and estimate by
 \begin{align*}
  &\sum_{\ell > 0} \sum_{k_1 \leq k} \bigl\| \chi_{\{ r \sim 2^\ell \}} r^{\frac{1}{2} - \lambda} P_k \epsilon \bigr\|_{L^\infty_t L^\infty_x[I_1]} \bigl\| r^{+\lambda} \partial_\alpha \phi_{k_1}^\dagger \partial^\alpha \phi_0 \bigr\|_{L^2_t L^2_x[I_1]} \sup_{\ell \in \bZ} \, \bigl\| \chi_{\{ r \sim 2^\ell \}} r^{-\frac{1}{2}} \partial_t \epsilon_0 \bigr\|_{L^2_t L^2_x} \\
  &\lesssim \sum_{k_1 \leq k} \sum_{\ell > 0} \min \{ 2^{-\lambda (\ell + k)}, 1 \} 2^{-(\frac{1}{2}-\lambda)k} \| P_k \epsilon \|_{S_k[I_1]} \bigl\| r^{+\lambda} \partial_\alpha \phi_{k_1}^\dagger \partial^\alpha \phi_0 \bigr\|_{L^2_t L^2_x[I_1]} \| \epsilon_0 \|_{S_0[I_1]} \\
  &\lesssim \sum_{k_1 \leq k} (1+|k|) 2^{-(\frac{1}{2}-\lambda) (k-k_1)} \|P_k \epsilon\|_{S_k[I_1]} 2^{-(\frac{1}{2}-\lambda) k_1} \bigl\| r^{+\lambda} \partial_\alpha \phi_{k_1}^\dagger \partial^\alpha \phi_0 \bigr\|_{L^2_t L^2_x[I_1]} \| \epsilon_0 \|_{S_0[I_1]} \\
  &\lesssim (1+|k|) 2^{\sigma k} C^2 \delta_2^2 2^{-2\sigma b_L} \biggl( \sum_{k_1 \leq 0} 2^{-2(\frac{1}{2}-\lambda)k_1} \bigl\| r^{+\lambda} \partial_\alpha \phi_{k_1}^\dagger \partial^\alpha \phi_0 \bigr\|_{L^2_t L^2_x[I_1]}^2 \biggr)^{\frac{1}{2}}.
 \end{align*}
 The last line can then be summed over $k \leq -10$ and by the weighted null form estimate~\eqref{equ:weighted_null_form_estimate} the last factor here has the divisibility property yielding smallness.
 
 \medskip
 
 \noindent {\it (ii) Contribution of $P_0 \bigl( \phi \partial_\alpha \epsilon^\dagger \partial^\alpha \phi \bigr)$.} Here we split into
 \begin{align*}
  \int_{I_1} \int_{\bR^2} P_0 \bigl( \phi \partial_\alpha \epsilon^\dagger \partial^\alpha \phi \bigr)^\dagger \partial_t \epsilon_0 \, dx \,dt &= \int_{I_1} \int_{\bR^2} P_0 \bigl( \phi_{\leq -10} \partial_\alpha \epsilon^\dagger \partial^\alpha \phi \bigr)^\dagger \partial_t \epsilon_0 \, dx \,dt \\
  &\quad + \int_{I_1} \int_{\bR^2} P_0 \bigl( \phi_{> -10} \partial_\alpha \epsilon^\dagger \partial^\alpha \phi \bigr)^\dagger \partial_t \epsilon_0 \, dx \,dt.
 \end{align*}
 Since the second term is truly quadrilinear and thus easier to treat, we only turn to the details of the estimate of the first term, which we may schematically write as
 \begin{equation} \label{equ:bootstrap_non_atomic_kinetic_pep_main}
  \int_{I_1} \int_{\bR^2} P_0 \bigl( \partial_\alpha \epsilon^\dagger \partial^\alpha \phi \bigr) \phi_{\leq -10}^\dagger \partial_t \epsilon_0 \, dx \, dt.
 \end{equation}
 Since $\phi + \epsilon$ and $\phi$ are wave maps into the sphere, we have the following geometric identity at our disposal
 \[
  0 = (\phi + \epsilon)^\dagger \partial_t (\phi + \epsilon) = \phi^\dagger \partial_t \epsilon + \epsilon^\dagger \partial_t \phi + \epsilon^\dagger \partial_t \epsilon.
 \]
 Upon applying the projection $P_0$ and decomposing $P_0 ( \phi^\dagger \partial_t \epsilon )$ into a Littlewood-Paley trichotomy, we find that we can schematically rewrite the term~\eqref{equ:bootstrap_non_atomic_kinetic_pep_main} as 
 \begin{align*}
  \int_{I_1} \int_{\bR^2} P_0 \bigl( \partial_\alpha \epsilon^\dagger \partial^\alpha \phi \bigr) \Bigl(- \phi_0^\dagger \partial_t \epsilon_{\leq -10} - \sum_{k_1 > 0} P_0 \bigl( \phi_{k_1}^\dagger \partial_t \epsilon_{k_1} \bigr) - P_0 ( \epsilon^\dagger \partial_t \phi ) - P_0 ( \epsilon^\dagger \partial_t \epsilon ) \Bigr) \, dx \, dt.
 \end{align*}
 This leads to four contributions, which we now estimate separately.
 
 \noindent {\it (ii.1)} We begin with the term
 \[
  \int_{I_1} \int_{\bR^2} P_0 \bigl( \partial_\alpha \epsilon^\dagger \partial^\alpha \phi \bigr) \phi_0^\dagger \partial_t \epsilon_{\leq -10} \, dx \, dt.
 \]
 In case of high-low or low-high interactions in $P_0 \bigl( \partial_\alpha \epsilon^\dagger \partial^\alpha \phi \bigr)$, we use the null form estimate~\eqref{equ:null_form_estimate} to place $P_0 \bigl( \partial_\alpha \epsilon^\dagger \partial^\alpha \phi \bigr)$ into $L^{\frac{5}{3}}_t L^{\frac{5}{3}}_x$ and put $\phi_0$ as well as $\partial_t \epsilon_{\leq -10}$ into $L^5_t L^5_x$, gaining smallness from divisibility of $\| \phi_0 \|_{L^5_t L^5_x}$. In case of high-high interactions in $P_0 \bigl( \partial_\alpha \epsilon^\dagger \partial^\alpha \phi \bigr)$ we integrate by parts to move a derivative $\partial_\alpha$ away from $\epsilon$, which leads to the terms
 \[
  \int_{I_1} \int_{\bR^2} P_0 \bigl( \epsilon^\dagger \Box \phi \bigr) \phi_0^\dagger \partial_t \epsilon_{\leq -10} \, dx \, dt + \int_{I_1} \int_{\bR^2} P_0 \bigl( \epsilon^\dagger \partial^\alpha \phi \bigr) \partial_\alpha \phi_0^\dagger \partial_t \epsilon_{\leq -10} \, dx \, dt + \int_{I_1} \int_{\bR^2} P_0 \bigl( \epsilon^\dagger \partial^\alpha \phi \bigr) \phi_0^\dagger \partial_\alpha \partial_t \epsilon_{\leq -10} \, dx \, dt,
 \]
 where in the first term we have to reinsert the wave maps equation for $\phi$. Then one can essentially argue as in {\it (i)} to bound these terms.
 
 \noindent {\it (ii.2)} Next, we consider the term 
 \[
  \int_{I_1} \int_{\bR^2} P_0 \bigl( \partial_\alpha \epsilon^\dagger \partial^\alpha \phi \bigr) \sum_{k_1 > 0} P_0 \bigl( \phi_{k_1}^\dagger \partial_t \epsilon_{k_1} \bigr) \, dx \, dt.
 \]
 In case of high-low interactions in $P_0 \bigl( \partial_\alpha \epsilon^\dagger \partial^\alpha \phi \bigr)$ (and then analogously for low-high interactions), we split into 
 \begin{equation*}
  \sum_{0 < k_1 \leq b_L} \int_{I_1} \int_{\bR^2} P_0 \bigl( \partial_\alpha \epsilon_0^\dagger \partial^\alpha \phi_{\leq -10} \bigr) P_0 \bigl( \phi_{k_1}^\dagger \partial_t \epsilon_{k_1} \bigr) \, dx \, dt + \sum_{k_1 > b_L} \int_{I_1} \int_{\bR^2} P_0 \bigl( \partial_\alpha \epsilon_0^\dagger \partial^\alpha \phi_{\leq -10} \bigr) P_0 \bigl( \phi_{k_1}^\dagger \partial_t \epsilon_{k_1} \bigr) \, dx \, dt.
 \end{equation*}
 Then it is easy to see that in the first summand we can restrict to $r > 1$, which allows us to estimate via the weighted null form estimate~\eqref{equ:weighted_null_form_estimate} for some $0 < \lambda < \frac{1}{2}$,
 \begin{align*}
  &\biggl| \sum_{0 < k_1 \leq b_L} \int_{I_1} \int_{\bR^2} P_0 \bigl( \partial_\alpha \epsilon_0^\dagger \partial^\alpha \phi_{\leq -10} \bigr) \chi_{\{ r > 1\}} P_0 \bigl( \phi_{k_1}^\dagger \partial_t \epsilon_{k_1} \bigr) \, dx \, dt \biggr| \\
  &\lesssim \bigl\| r^{+\lambda} P_0 \bigl( \partial_\alpha \epsilon_0^\dagger \partial^\alpha \phi_{\leq -10} \bigr) \bigr\|_{L^2_t L^2_x[I_1]} \sum_{0 \leq k_1 < b_L} \bigl\| \chi_{\{ r > 2^{-k_1}\}} r^{-\lambda} \phi_{k_1} \bigr\|_{L^2_t L^\infty_x[I_1]} \| \partial_t \epsilon_{k_1} \|_{L^\infty_t L^2_x[I_1]} \\
  &\lesssim \| \epsilon_0 \|_{S_0[I_1]} \| \phi \|_{S[I_1]} \biggl( \sum_{0 < k_1 \leq b_L} 2^{2(\frac{1}{2}-\lambda) k_1} \bigl\| \chi_{\{ r > 2^{-k_1}\}} r^{-\lambda} \phi_{k_1} \bigr\|_{L^2_t L^\infty_x[I_1]}^2 \biggr)^{\frac{1}{2}} \biggl( \sum_{0 < k_1 \leq b_L} 2^{-2 (\frac{1}{2} - \lambda) k_1} C^2 \delta_2^2 2^{-2\sigma (b_L -k_1)} \biggr)^{\frac{1}{2}} \\
  &\lesssim C^2 \delta_2^2 2^{-2\sigma b_L} \biggl( \sum_{0 < k_1 \leq b_L} 2^{2(\frac{1}{2}-\lambda) k_1} \bigl\| \chi_{\{ r > 2^{-k_1}\}} r^{-\lambda} \phi_{k_1} \bigr\|_{L^2_t L^\infty_x[I_1]}^2 \biggr)^{\frac{1}{2}}
 \end{align*}
 and here the last factor has the divisibility property yielding smallness. For the second summand we use Bernstein and the null form estimate~\eqref{equ:null_form_estimate} to bound by
 \begin{align*}
  &\biggl| \sum_{k_1 > b_L} \int_{I_1} \int_{\bR^2} P_0 \bigl( \partial_\alpha \epsilon_0^\dagger \partial^\alpha \phi_{\leq -10} \bigr) P_0 \bigl( \phi_{k_1}^\dagger \partial_t \epsilon_{k_1} \bigr) \, dx \, dt \biggr| \\
  &\lesssim \bigl\| \partial_\alpha \epsilon_0^\dagger \partial^\alpha \phi_{\leq -10} \bigr\|_{L^{\frac{5}{3}}_t L^{\frac{5}{3}}_x[I_1]} \sum_{k_1 > b_L} \| \phi_{k_1} \|_{L^{\frac{5}{2}}_t L^\infty_x[I_1]} \| \partial_t \epsilon_{k_1} \|_{L^\infty_t L^2_x[I_1]} \\
  &\lesssim \| \epsilon_0 \|_{S_0[I_1]} \|\phi\|_{S[I_1]} \sum_{k_1 > b_L} 2^{-\frac{5}{2} k_1} \delta_1 2^{-\sigma (k_1-b_L)} \|\epsilon_{k_1}\|_{S_{k_1}[I_1]} \\
  &\lesssim C \delta_2 2^{-\sigma b_L} C(E_{crit}) \delta_1 2^{-\frac{5}{2} b_L} \varepsilon_0^{\frac{1}{2}} \\
  &\ll C^2 \delta_2^2 2^{-2\sigma b_L}.
 \end{align*}
 In case of high-high interactions in $P_0 \bigl( \partial_\alpha \epsilon^\dagger \partial^\alpha \phi \bigr)$ we have to integrate by parts to move a derivative $\partial_\alpha$ away from $\epsilon$  and then one can argue essentially as in $(i)$.

 \noindent {\it (ii.3)} Here we further decompose the term $P_0 \bigl( \epsilon^\dagger \partial_t \phi \bigr)$ into a Littlewood-Paley trichotomy so that we schematically have 
 \begin{equation} \label{equ:bootstrap_non_atomic_kinetic_pep_c_splitting}
  \begin{aligned}
    \int_{I_1} \int_{\bR^2} P_0 \bigl( \partial_\alpha \epsilon^\dagger \partial^\alpha \phi \bigr) P_0 \bigl( \epsilon^\dagger \partial_t \phi \bigr) \, dx \, dt &= \int_{I_1} \int_{\bR^2} P_0 \bigl( \partial_\alpha \epsilon^\dagger \partial^\alpha \phi \bigr) P_0 \bigl( \epsilon_{\leq -10}^\dagger \partial_t \phi_0 \bigr) \, dx \, dt \\
    &\quad + \int_{I_1} \int_{\bR^2} P_0 \bigl( \partial_\alpha \epsilon^\dagger \partial^\alpha \phi \bigr) P_0 \bigl( \epsilon_0^\dagger \partial_t \phi_{\leq -10} \bigr) \, dx \, dt \\
    &\quad + \sum_{k_1 > 0} \int_{I_1} \int_{\bR^2} P_0 \bigl( \partial_\alpha \epsilon^\dagger \partial^\alpha \phi \bigr) P_0 \bigl( \epsilon_{k_1}^\dagger \partial_t \phi_{k_1} \bigr) \, dx \, dt.
  \end{aligned}
 \end{equation}
 
 For the first term on the right-hand side of~\eqref{equ:bootstrap_non_atomic_kinetic_pep_c_splitting} for high-low interactions in $P_0 \bigl( \partial_\alpha \epsilon^\dagger \partial^\alpha \phi \bigr)$ (and similarly for low-high interactions) we can proceed as we did for the third term in~\eqref{equ:bootstrap_non_atomic_kinetic_epp}, where now $\partial_\alpha \epsilon^\dagger \partial^\alpha \phi$ replaces $\partial_\alpha \phi^\dagger \partial^\alpha \phi$. Correspondingly, the divisibility part to achieve smallness has to be modified a bit, and in fact has to come from the last factor $\partial_t \phi_0$ which we place in the local energy decay space. Specifically, we arrive at the expression
 \[
  \biggl( \sum_{\ell \in \bZ} (1 + \ell^2)^{-1} \bigl\| \chi_{\{ r \sim 2^\ell \}} r^{-\frac{1}{2}} \partial_t \phi_0 \bigr\|_{L^2_t L^2_x[I_1]}^2 \biggr)^{\frac{1}{2}},
 \]
 which has the divisibility property here. For high-high interactions in $P_0 \bigl( \partial_\alpha \epsilon^\dagger \partial^\alpha \phi \bigr)$ one again has to integrate by parts to move a derivative $\partial_\alpha$ away from $\epsilon$.
 
 The second term on the right-hand side of~\eqref{equ:bootstrap_non_atomic_kinetic_pep_c_splitting} can be treated easily using the null form estimate~\eqref{equ:null_form_estimate} and Strichartz estimates in case of high-low and low-high interactions in $P_0 \bigl( \partial_\alpha \epsilon^\dagger \partial^\alpha \phi \bigr)$, while for high-high interactions one has to integrate by parts again.
 
 Finally, the third term on the right-hand side of~\eqref{equ:bootstrap_non_atomic_kinetic_pep_c_splitting} can be dealt with analogously to {\it (ii.2)}.
 
 \noindent {\it (ii.4)} In this case we can essentially proceed as in {\it (ii.3)} only that it is easier to achieve smallness thanks to the extra factor $\epsilon$.
 
 \medskip
 
 \noindent {\it (iii) Contribution of $P_0 \bigl( \epsilon \partial_\alpha \epsilon^\dagger \partial^\alpha \epsilon \bigr)$.} As usual we split into 
 \begin{align*}
  \int_{I_1} \int_{\bR^2} P_0 \bigl( \epsilon \partial_\alpha \epsilon^\dagger \partial^\alpha \epsilon \bigr)^\dagger \partial_t \epsilon_0 \, dx \,dt &= \int_{I_1} \int_{\bR^2} P_0 \bigl( \epsilon _{\leq -10} \partial_\alpha \epsilon^\dagger \partial^\alpha \epsilon \bigr)^\dagger \partial_t \epsilon_0 \, dx \,dt \\
  &\quad + \sum_{k_1 > -10} \sum_{k_2 = k_1 + O(1)} \int_{I_1} \int_{\bR^2} P_0 \bigl( P_{k_1} \epsilon  P_{k_2} ( \partial_\alpha \epsilon^\dagger \partial^\alpha \epsilon ) \bigr)^\dagger \partial_t \epsilon_0 \, dx \,dt.
 \end{align*}
 For the first term on the right-hand side we may proceed as at the end of {\it (i)} to infer the desired bound. For the second term we distinguish high-high interactions in $P_{k_2} ( \partial_\alpha \epsilon^\dagger \partial^\alpha \epsilon )$ from high-low and low-high interactions. In the former case we place $P_{k_2} (\partial_\alpha \epsilon^\dagger \partial^\alpha \epsilon)$ into $L^{\frac{3}{2}}_t L^{\frac{3}{2}}_x$ and use the null form estimate~\eqref{equ:null_form_estimate} to infer
 \begin{align*}
  &\biggl| \sum_{k_1 > -10} \sum_{k_2 = k_1 + O(1)} \int_{I_1} \int_{\bR^2} P_0 \bigl( P_{k_1} \epsilon  P_{k_2} ( \partial_\alpha \epsilon^\dagger \partial^\alpha \epsilon ) \bigr)^\dagger \partial_t \epsilon_0 \, dx \,dt \biggr| \\
  &\lesssim \sum_{k_3 > -10} \| P_{k_3} \epsilon \|_{S_{k_1}[I_1]}^2 \sum_{k_1 > -10} \| P_{k_1} \epsilon \|_{L^3_t L^\infty_x[I_1]} \| \partial_t \epsilon_0 \|_{L^\infty_t L^3_x[I_1]} \\
  &\lesssim \varepsilon_0 \sum_{k_1 > -10} 2^{-\frac{1}{3} k_1} \| P_{k_1} \epsilon \|_{S_{k_1}[I_1]} \| \partial_t \epsilon_0 \|_{S_0[I_1]}. 
 \end{align*}
 By further distinguishing the frequency regimes and exploiting the smallness and decay properties \eqref{equ:bootstrap_non_atomic_envelope_bound1}--\eqref{equ:bootstrap_non_atomic_envelope_bound3} of $\{ c_k \}_{k\in\bZ}$ and $\{ d_k \}_{k\in\bZ}$, we find that
 \begin{align*}
  &\sum_{k_1 > -10} 2^{-\frac{1}{3} k_1} \| P_{k_1} \epsilon \|_{S_{k_1}[I_1]} \| \partial_t \epsilon_0 \|_{S_0[I_1]} \\
  &\lesssim \sum_{-10 < k_1 \leq b_L} 2^{-(\frac{1}{3} - \sigma) k_1} C^2 \delta_2^2 2^{-2\sigma b_L} + \sum_{b_L < k_1 \leq b_L + M} 2^{-\frac{1}{3} k_1} c_{k_1} C \delta_2 2^{-\sigma b_L} + \sum_{k_1 > b_L + M} 2^{-\frac{1}{3} k_1} c_{k_1} C \delta_2 2^{-\sigma b_L} \\
  &\lesssim C^2 \delta_2^2 2^{-2 \sigma b_L} + \biggl( \sum_{b_L < k_1 \leq b_L+M} c_{k_1}^2 \biggr)^{\frac{1}{2}} \delta_2 2^{-(\sigma + \frac{1}{3}) b_L} + 2^{-\frac{1}{3} M} \varepsilon_0 C \delta_2 2^{-(\sigma + \frac{1}{3}) b_L} \\
  &\lesssim C^2 \delta_2^2 2^{-2 \sigma b_L}. 
 \end{align*}
 In total, for sufficiently small $\varepsilon_0 > 0$ we obtain the desired bound
 \[
  \biggl| \sum_{k_1 > -10} \sum_{k_2 = k_1 + O(1)} \int_{I_1} \int_{\bR^2} P_0 \bigl( P_{k_1} \epsilon  P_{k_2} ( \partial_\alpha \epsilon^\dagger \partial^\alpha \epsilon ) \bigr)^\dagger \partial_t \epsilon_0 \, dx \,dt \biggr| \lesssim \varepsilon_0 C^2 \delta_2^2 2^{-2 \sigma b_L} \ll C^2 \delta_2^2 2^{-2 \sigma b_L}.
 \]
 In the case of high-low or low-high interactions, we place $P_{k_2} ( \partial_\alpha \epsilon^\dagger \partial^\alpha \epsilon )$ into $L^{\frac{3}{2}+}_t L^{\frac{3}{2}+}_x$ and then proceed similarly as above.
 
 \medskip
 
 \noindent {\it (iv) Contributions of $P_0 \bigl( \epsilon \partial_\alpha \epsilon^\dagger \partial^\alpha \phi \bigr)$ and $P_0 \bigl( \phi \partial_\alpha \epsilon^\dagger \partial^\alpha \epsilon \bigr)$}: These terms do not offer anything new and can be treated similarly as above, which finishes the proof of Lemma~\ref{lem:bootstrap_non_atomic_kinetic}.
\end{proof}

Next, we recover the full envelope bound. We may assume that $k=0$ and now want to infer the improved bound $\| \epsilon_0 \|_{S_0[I_1]} \leq \frac{C}{2} (c_0 + d_0)$. To this end we will use the ``renormalized difference equation'' for $\epsilon_0$ given by
\begin{equation*}
 \Box \bigl( U_{\leq -10}^{(\phi + \epsilon)} \bar{(\phi + \epsilon)}_0 \bigr) - \Box \bigl( U_{\leq -10}^{(\phi)} \bar{\phi}_0 \bigr) \equiv F_0^{(\phi+\epsilon)} - F_0^{(\epsilon)},
\end{equation*}
where $U_{\leq h}^{(\phi + \epsilon)}$ and $U_{\leq h}^{(\phi)}$ denote the gauge transformations defined by $\phi + \epsilon$, respectively $\phi$. We will proceed in several steps which we briefly sketch before we turn to the details.
\begin{itemize}
 \item[(a)] First we show that for some $D_1 \gg 1$ with $1 \ll D_0 \ll D_1 \ll C$, we have for any $0 \leq \alpha < 1$ that
  \begin{equation} \label{equ:bootstrap_non_atomic_L1L2}
  \sum_{k_0 \in \bZ} 2^{\alpha |k_0|} \, \bigl\| P_{k_0} \Box \bigl( U_{\leq -10}^{(\phi+\epsilon)} \bar{(\phi + \epsilon)}_0 - U_{\leq -10}^{(\phi)} \bar{\phi}_0 \bigr) \bigr\|_{L^1_t L^2_x[I_1]} \leq D_1 (c_0 + d_0).
 \end{equation}
 Note that here it is again necessary to control the above weighted sum over all frequency outputs since the frequency localizations of $\bar{(\phi + \epsilon)}_0$ and $\bar{\phi}_0$ to frequency $\sim 1$ are only approximately preserved by the gauge transformations up to exponentially decaying tails. By the energy estimate~\eqref{equ:energy_estimate} we can then conclude that
 \begin{equation} \label{equ:bootstrap_non_atomic_gauged_difference_S_norm}
  \sum_{k_0 \in \bZ} 2^{\alpha |k_0|} \, \bigl\| P_{k_0} \bigl( U_{\leq -10}^{(\phi + \epsilon)} \bar{(\phi+\epsilon)}_0 - U_{\leq -10}^{(\phi)} \bar{\phi}_0 \bigr) \bigr\|_{S_{k_0}[I_1]} \lesssim D_1 ( c_0 + d_0 ).
 \end{equation}
 \item[(b)] Next we show that we have
 \begin{equation} \label{equ:bootstrap_non_atomic_difference_Us_S_norm}
  \sum_{k_0 \in \bZ} 2^{\alpha |k_0|} \, \bigl\| P_{k_0} \bigl( U_{\leq -10}^{(\phi+\epsilon)} - U_{\leq -10}^{(\phi)} \bigr) \bar{\phi}_0 \bigr\|_{S_{k_0}[I_1]} \lesssim D_1 ( c_0 + d_0 ).
 \end{equation}
 This step crucially relies on the improved bounds on the kinetic energy of $\epsilon_k$ for the low frequencies $k < b_L$, which we have already established in Lemma~\ref{lem:bootstrap_non_atomic_kinetic}.
  \item[(c)] Combining \eqref{equ:bootstrap_non_atomic_gauged_difference_S_norm} and \eqref{equ:bootstrap_non_atomic_difference_Us_S_norm} we immediately obtain
  \begin{equation} \label{equ:bootstrap_non_atomic_difference_phi_S_norm}
   \sum_{k_0 \in \bZ} 2^{\alpha |k_0|} \, \bigl\| P_{k_0} U_{\leq -10}^{(\phi+\epsilon)} \bigl( \bar{(\phi + \epsilon)}_0 - \bar{\phi}_0 \bigr) \bigr\|_{S_{k_0}[I_1]} \lesssim D_1 ( c_0 + d_0 ).
  \end{equation}
  \item[(d)] Then we write
  \[
   \bar{(\phi + \epsilon)}_0 - \bar{\phi}_0 = \bigl( U_{\leq -10}^{(\phi+\epsilon)} \bigr)^\dagger U_{\leq -10}^{(\phi+\epsilon)} \bigl( \bar{(\phi + \epsilon)}_0 - \bar{\phi}_0 \bigr) 
  \]
  and decompose into a Littlewood-Paley trichotomy. Using the bound~\eqref{equ:bootstrap_non_atomic_difference_phi_S_norm} we may then proceed as in the proof of Proposition~\ref{prop:partial_fungibility} to infer that for some $D_2 \gg 1$ with $1 \ll D_0 \ll D_1 \ll D_2~\ll~C$,
  \begin{equation} \label{equ:bootstrap_non_atomic_difference_phi_bar_S_norm}
   \bigl\| \bar{(\phi + \epsilon)}_0 - \bar{\phi}_0 \bigr\|_{S_0[I_1]} \leq D_2 ( c_0 + d_0 ).
  \end{equation}
  Here we do not have to work with the modified version of the gauge transformations as introduced in Proposition~\ref{prop:partial_fungibility} because we already have that $\| \phi \|_{S[I_1]} \leq C(E_{crit})$. We will correspondingly not provide further details on this step.
  \item[(e)] Finally, we invoke the bound \eqref{equ:bootstrap_non_atomic_difference_phi_bar_S_norm} and exploit the inherently multilinear structure of 
  \[
   \bar{(\phi + \epsilon)}_0 - \bar{\phi}_0 - \epsilon_0
  \]
  to conclude the desired improved bound
  \[
   \| \epsilon_0 \|_{S_0[I_1]} \leq \frac{C}{2} ( c_0 + d_0 ).
  \]
\end{itemize}
We now provide the details of the key steps (a), (b) and (e) in separate lemmas, starting with (a).

\begin{lem}
 If $\epsilon$ satisfies the bootstrap assumption~\eqref{equ:bootstrap_assumption_non_atomic} for some $C \equiv C(E_{crit}) \gg 1$ sufficiently large, then we have for some $1 \ll D_1 \ll C$ that for all $0 \leq \alpha < 1$,
 \begin{equation} \label{equ:bootstrap_non_atomic_L1L2_lemma}
  \sum_{k_0 \in \bZ} 2^{\alpha |k_0|} \, \bigl\| P_{k_0} \Box \bigl( U_{\leq -10}^{(\phi+\epsilon)} \bar{(\phi + \epsilon)}_0 - U_{\leq -10}^{(\phi)} \bar{\phi}_0 \bigr) \bigr\|_{L^1_t L^2_x[I_1]} \leq D_1 (c_0 + d_0).
 \end{equation}
\end{lem}
\begin{proof}
 We begin by observing that it suffices to prove 
 \begin{equation} \label{equ:bootstrap_non_atomic_L1L2_freq0}
  \bigl\| \Box \bigl( U_{\leq -10}^{(\phi+\epsilon)} \bar{(\phi + \epsilon)}_0 - U_{\leq -10}^{(\phi)} \bar{\phi}_0 \bigr) \bigr\|_{L^1_t L^2_x[I_1]} \leq D_1 (c_0 + d_0).
 \end{equation}
 Then the bound on the weighted sum \eqref{equ:bootstrap_non_atomic_L1L2_lemma} over all output frequencies can be established as in the proof of Proposition~\ref{prop:partial_fungibility}. 
 
 For each characteristic component of the renormalized wave maps nonlinearity as summarized in Proposition~\ref{prop:nlw_for_renormalized_phi_bar_schematic_identities}, we now have to establish the estimate~\eqref{equ:bootstrap_non_atomic_L1L2_freq0} for the difference of the corresponding expression for $\phi + \epsilon$ and $\phi$. Here we show this in detail for the first term on the right-hand side of~\eqref{equ:nlw_for_renormalized_phi_bar_term1}, namely when a derivative falls on the lowest frequency; for all other terms one has to proceed similarly. We are therefore now facing a difference term of the schematic form
 \begin{equation*}
  U_{\leq -10}^{(\phi+\epsilon)} \sum_{k_2 \leq -10} (\phi + \epsilon)_{k_2-10 < \cdot \leq -10} \partial_\alpha (\phi + \epsilon)_{k_2}^\dagger \partial^\alpha (\phi+\epsilon)_0 - U_{\leq -10}^{(\phi)} \sum_{k_2 \leq -10} \phi_{k_2-10 < \cdot \leq -10} \partial_\alpha \phi_{k_2}^\dagger \partial^\alpha \phi_0.
 \end{equation*}
 This leads to a number of expressions which we are now estimating separately.
 
 \medskip 
 
 \noindent {\it (i) Contribution of $\bigl( U_{\leq -10}^{(\phi + \epsilon)} - U_{\leq -10}^{(\phi)} \bigr) \sum_{k_2 \leq -10} \phi_{k_2-10 < \cdot \leq -10} \partial_\alpha \phi_{k_2}^\dagger \partial^\alpha \phi_0$.} We distinguish further between the relation of the frequency scale $0$ to $b_L$, which is the upper endpoint of the ``essential'' frequency support of~$\phi$.
 
 \medskip 
 
 \noindent {\it (i.1) $0 \geq b_L$.} This case is straightforward on account of the smallness $\|P_k \phi\|_{S_k} \lesssim \delta_1 2^{-\sigma (k-b_L)}$ for $k > b_L$. We simply use the exact orthogonality of the gauge transformations and the trilinear estimate~\eqref{equ:trilinear_estimate} to bound
 \begin{align*}
  \Bigl\| \bigl( U_{\leq -10}^{(\phi + \epsilon)} - U_{\leq -10}^{(\phi)} \bigr) \sum_{k_2 \leq -10} \phi_{k_2-10 < \cdot \leq -10} \partial_\alpha \phi_{k_2}^\dagger \partial^\alpha \phi_0 \Bigr\|_{L^1_t L^2_x[I_1]} &\lesssim \sum_{k_2 \leq -10} \| \phi_{k_2} \|_{S_{k_2}[I_1]}^2 \|\phi_0\|_{S_0[I_1]} \\
  &\lesssim C(E_{crit}) \delta_1 2^{+\sigma b_L} \\
  &\ll \delta_2 2^{+\sigma b_L}.
 \end{align*}
 
 \medskip 
 
 \noindent {\it (i.2) $0 < b_L$.} Here we have to argue more carefully, since we now have to gain smallness from the difference of the gauge transformations $U_{\leq -10}^{(\phi+\epsilon)} - U_{\leq -10}^{(\phi)}$. By the defining ODE \eqref{equ:ode_gauge_transform} for the gauge transformations we have that 
 \[
  U_{\leq -10}^{(\phi+\epsilon)} - U_{\leq -10}^{(\phi)} = \int_{-\infty}^{-10} U_{\leq h}^{(\phi+\epsilon)} \bigl( B_h^{(\phi+\epsilon)} - B_h^{(\phi)} \bigr) \, dh + \int_{-\infty}^{-10} \bigl( U_{\leq h}^{(\phi+\epsilon)} - U_{\leq h}^{(\phi)} \bigr) B_h^{(\phi)} \, dh.
 \]
 By further expanding the second term on the right-hand side, we obtain an expansion containing only terms involving differences $B_h^{(\phi+\epsilon)} - B_h^{(\phi)}$, namely
 \begin{equation} \label{equ:bootstrap_non_atomic_difference_gauge_transformations}
  U_{\leq -10}^{(\phi+\epsilon)} - U_{\leq -10}^{(\phi)} = \sum_{n=1}^\infty \int_{-\infty}^{-10} \int_{-\infty}^{h_1} \cdots \int_{-\infty}^{h_{n-1}} U_{\leq h_n}^{(\phi+\epsilon)} \bigl( B_{h_n}^{(\phi+\epsilon)} - B_{h_n}^{(\phi)} \bigr) B_{h_{n-1}}^{(\phi)} \cdots B_{h_1}^{(\phi)} \, dh_n \ldots dh_1.
 \end{equation}
 Since the repeated expansion leads to difference terms of lower and lower frequencies, this infinite sum is convergent due to the assumed exponential decay of $d_k$ for $k < b_L$. In view of the definition~\eqref{equ:matrix_gauge_transform} of $B_h^{(\phi+\epsilon)}$ and $B_h^{(\phi)}$, we may therefore replace \eqref{equ:bootstrap_non_atomic_difference_gauge_transformations} by the schematic expression 
 \[
  \sum_{k_1 \leq -10}  L(\phi_{\leq k_1-10}, \epsilon_{k_1}) + L(\epsilon_{\leq k_1-10}, \phi_{k_1}) + L(\epsilon_{\leq k_1-10}, \epsilon_{k_1}).
 \]
 Noting that by a simple divisibility argument we may assume that
 \[
  \Bigl\| \sum_{k_2 \leq -10} \phi_{k_2-10 < \cdot \leq -10} \partial_\alpha \phi_{k_2}^\dagger \partial^\alpha \phi_0 \Bigr\|_{L^1_t L^2_x[I_1]} \ll 1,
 \]
 we find that
 \begin{align*}
  &\Bigl\| \bigl( U_{\leq -10}^{(\phi + \epsilon)} - U_{\leq -10}^{(\phi)} \bigr) \sum_{k_2 \leq -10} \phi_{k_2-10 < \cdot \leq -10} \partial_\alpha \phi_{k_2}^\dagger \partial^\alpha \phi_0 \Bigr\|_{L^1_t L^2_x[I_1]} \\
  &\lesssim \Bigl\| \Bigl( \sum_{k_1 \leq -10}  L(\phi_{\leq k_1-10}, \epsilon_{k_1}) + L(\epsilon_{\leq k_1-10}, \phi_{k_1}) + L(\epsilon_{\leq k_1-10}, \epsilon_{k_1}) \Bigr) \sum_{k_2 \leq -10} \phi_{k_2-10 < \cdot \leq -10} \partial_\alpha \phi_{k_2}^\dagger \partial^\alpha \phi_0 \Bigr\|_{L^1_t L^2_x[I_1]} \\
  &\lesssim \sum_{k_1 \leq -10} \| \epsilon_{k_1} \|_{L^\infty_t L^\infty_x[I_1]} \, \Bigl\| \sum_{k_2 \leq -10} \phi_{k_2-10 < \cdot \leq -10} \partial_\alpha \phi_{k_2}^\dagger \partial^\alpha \phi_0 \Bigr\|_{L^1_t L^2_x[I_1]} \\
  &\lesssim \sum_{k_1 \leq -10} \delta_2 2^{-\sigma (b_L - k_1)} \, \Bigl\| \sum_{k_2 \leq -10} \phi_{k_2-10 < \cdot \leq -10} \partial_\alpha \phi_{k_2}^\dagger \partial^\alpha \phi_0 \Bigr\|_{L^1_t L^2_x[I_1]} \\
  &\ll \delta_2 2^{-\sigma b_L}.
 \end{align*}

 \medskip
 
 \noindent {\it (ii) Contribution of $U_{\leq -10}^{(\phi+\epsilon)} \sum_{k_2 \leq -10} \epsilon_{k_2-10 < \cdot \leq -10} \partial_\alpha \phi_{k_2}^\dagger \partial^\alpha \phi_0$.} Again we further distinguish between the relation of the frequency scale $0$ and $b_L$.
 
 \medskip 
 
 \noindent {\it (ii.1) $0 \geq b_L$.} Here we immediately obtain from the smallness $\|\phi_k\|_{S_k} \lesssim \delta_1 2^{-\sigma (k-b_L)}$ for $k > b_L$ and the trilinear estimate~\eqref{equ:trilinear_estimate} the desired bound 
 \begin{align*}
  \Bigl\| U_{\leq -10}^{(\phi+\epsilon)} \sum_{k_2 \leq -10} \epsilon_{k_2-10 < \cdot \leq -10} \partial_\alpha \phi_{k_2}^\dagger \partial^\alpha \phi_0 \Bigr\|_{L^1_t L^2_x[I_1]} &\lesssim \Bigl( \sum_{k_1} \|\epsilon_{k_1}\|_{S_{k_1}[I_1]}^2 \Bigr)^{\frac{1}{2}} \Bigl( \sum_{k_2} \|\phi_{k_2}\|_{S_{k_2}[I_1]}^2 \Bigr)^{\frac{1}{2}} \|\phi_0\|_{S_0[I_1]} \\
  &\lesssim \varepsilon_0^{\frac{1}{2}} C(E_{crit}) \delta_1 2^{+\sigma b_L} \\
  &\ll \delta_2 2^{+\sigma b_L}.
 \end{align*}
 
 \medskip 
 
 \noindent {\it (ii.2) $0 < b_L$.} By the trilinear estimate~\eqref{equ:trilinear_estimate} here we have 
 \begin{align*}
  \Bigl\| U_{\leq -10}^{(\phi+\epsilon)} \sum_{k_2 \leq -10} \epsilon_{k_2-10 < \cdot \leq -10} \partial_\alpha \phi_{k_2}^\dagger \partial^\alpha \phi_0 \Bigr\|_{L^1_t L^2_x[I_1]} &\lesssim \sum_{k_2 \leq -10} \sum_{k_2-10 < k_1 \leq -10} 2^{-\frac{1}{4} (k_1-k_2)} \| \epsilon_{k_1} \|_{S_{k_1}[I_1]} \|\phi_{k_2}\|_{S_{k_2}[I_1]} \|\phi_0\|_{S_0[I_1]} \\
  &\lesssim \delta_2 2^{-\sigma b_L} C(E_{crit})
 \end{align*}
 and then one forces smallness via a divisibility argument. 
 
 \medskip
 
 \noindent {\it (iii) Contribution of $U_{\leq -10}^{(\phi + \epsilon)} \sum_{k_2 \leq -10} \epsilon_{k_2-10 < \cdot \leq -10} \partial_\alpha \epsilon_{k_2}^\dagger \partial^\alpha \epsilon_0$.} In this case the smallness comes from the bound $\sum_{k \in \bZ} c_k^2 \lesssim \varepsilon_0$. Indeed, by the trilinear estimate we obtain 
 \begin{align*}
  \Bigl\| U_{\leq -10}^{(\phi + \epsilon)} \sum_{k_2 \leq -10} \epsilon_{k_2-10 < \cdot \leq -10} \partial_\alpha \epsilon_{k_2}^\dagger \partial^\alpha \epsilon_0 \Bigr\|_{L^1_t L^2_x[I_1]} &\lesssim \Bigl( \sum_{k_2 \leq -10} \| \epsilon_{k_2} \|_{S_{k_2}[I_1]}^2 \Bigr) \| \epsilon_0 \|_{S_0[I_1]} \\
  &\lesssim \varepsilon_0 C (c_0 + d_0) \\
  &\leq D_1 (c_0 + d_0).
 \end{align*}

 \medskip
 
 \noindent {\it (iv) All other contributions can be estimated similarly to the above cases.}
\end{proof}

We proceed with step (b).

\begin{lem}
 If $\epsilon$ satisfies the bootstrap assumption~\eqref{equ:bootstrap_assumption_non_atomic} for some $C \equiv C(E_{crit}) \gg 1$ sufficently large, then we have for some $1 \ll D_0 \ll D_1 \ll C$ that for all $0 \leq \alpha < 1$ 
 \begin{equation} \label{equ:bootstrap_non_atomic_step_b}
  \sum_{k_0 \in \bZ} 2^{\alpha |k_0|} \bigl\| P_{k_0} \bigl( \bigl( U_{\leq -10}^{(\phi+\epsilon)} - U_{\leq -10}^{(\phi)} \bigr) \bar{\phi}_0 \bigr) \bigr\|_{S_{k_0}[I_1]} \lesssim D_1 (c_0 + d_0).
 \end{equation}
\end{lem}
\begin{proof}
 We have to prove the bound \eqref{equ:bootstrap_non_atomic_step_b} separately for each component of our $S$ norm. To this end we distinguish again between the relationship of the frequency scale $0$ to $b_L$. In case that $b_L < 0$, we exploit the smallness $\| \phi_k \|_{S_k} \lesssim \delta_1 2^{-\sigma (k-b_L)}$ for $k > b_L$. Instead when $b_L \geq 0$, we have to invoke the already improved estimate~\eqref{equ:bootstrap_non_atomic_kinetic} on the kinetic energy of $\epsilon_k$ for the low frequencies $k < b_L$. Moreover, we achieve control over the weighted sum over all frequencies $k_0 \in \bZ$ in \eqref{equ:bootstrap_non_atomic_step_b} by essentially playing out the frequencies using Bernstein's inequality.
\end{proof}

Finally, we turn to step (e). Having established that $\bigl\| \bar{(\phi + \epsilon)}_0 - \bar{\phi}_0 \bigr\|_{S_0[I_1]} \leq D_2 ( c_0 + d_0 )$ for some $D_2 \ll C$, we may now quickly infer the desired improved bound 
\[
 \| \epsilon_0 \|_{S_0[I_1]} \leq \frac{C}{2} ( c_0 + d_0 )
\]
and thus close our bootstrap argument by exploiting the inherently multilinear structure of $\bar{(\phi + \epsilon)}_0 - \bar{\phi}_0 - \epsilon_0$. Indeed, recalling the definition~\eqref{equ:definition_phi_bar} of $\bar{\phi}$, we see that each multilinear expression in $\bar{(\phi + \epsilon)}_0 - \bar{\phi}_0 - \epsilon_0$ contains at least one factor of $\epsilon$. Then we again distinguish the relationship between the frequency scale $0$ and $b_L$ and either exploit the smallness $\|\phi_k\|_{S_k} \lesssim \delta_1 2^{-\sigma (k-b_L)}$ for $k \geq b_L$ or the already improved bound on the kinetic energy of $\epsilon_k$ for $k < b_L$, which completes the final step (e).  

\medskip 

At this point we have thus achieved that for $C \equiv C(E_{crit}) \gg 1$ sufficently large, we may infer by bootstrap on the time interval $I_1$ the improved bound
\begin{equation} \label{equ:non_atomic_bootstrap_result}
 \|P_k \epsilon\|_{S_k[I_1]} \leq \frac{C}{2} (c_k + d_k) \quad \text{for } k \in \bZ.
\end{equation}
We would now like to repeat the above bootstrap argument on the remaining time intervals $I_2, \ldots, I_N$ in order to conclude that $\epsilon$ exists globally in time and satisfies uniform in $n$ global-in-time $S$ norm bounds. As alluded to before, here we encounter the danger that the energy of $\epsilon$ could keep growing as we move to later time intervals. However, by \eqref{equ:non_atomic_bootstrap_result} the frequency profile of $\epsilon$ is essentially preserved along the evolution on the time interval $I_1$. For this reason there cannot actually be much energy transfer between $\epsilon$ and $\phi$, since $\phi$ is exponentially decaying for frequencies $k > b_L$. The next lemma indeed shows that the energy of $\epsilon$ is approximately preserved along the evolution on the interval $I_1$. More precisely, the energy transfer is controlled by the smallness parameter $\delta_2$, which can be chosen arbitrarily small.

\begin{lem}
 Assuming the bounds \eqref{equ:non_atomic_bootstrap_result} on the evolution of $\epsilon$ on $I_1$, it holds that
 \begin{equation}
  \| \nabla_{t,x} \epsilon(t) \|_{L^2_x}^2 \leq \| \nabla_{t,x} \epsilon(0) \|_{L^2_x}^2 + C(E_{crit}) \, \delta_2 \quad \text{for } t \in I_1.
 \end{equation}
\end{lem}
\begin{proof}
By energy conservation for the wave maps evolutions $\phi + \epsilon$ and $\phi$ on $I_1$, we have that 
\[
 \sum_{\alpha = 0}^2 \| \partial_\alpha (\phi + \epsilon)(t) \|_{L^2_x}^2 = \text{const.} \quad \text{for } t \in I_1
\]
and 
\[
 \sum_{\alpha = 0}^2 \| \partial_\alpha \phi(t) \|_{L^2_x}^2 = \text{const.} \quad \text{for } t \in I_1.
\]
Moreover, since it holds that
\[
 \sum_{\alpha=0}^2 \| \partial_\alpha \epsilon(t) \|_{L^2_x}^2 = \sum_{\alpha = 0}^2 \| \partial_\alpha (\phi + \epsilon)(t) \|_{L^2_x}^2 - \sum_{\alpha = 0}^2 \| \partial_\alpha \phi(t) \|_{L^2_x}^2 - 2 \sum_{\alpha = 0}^2 \int_{\bR^2} ( \partial_\alpha \epsilon^\dagger \partial_\alpha \phi )(t,x) \, dx,
\]
it suffices to estimate on the time interval $I_1$,
\[
 \sum_{\alpha = 0}^2 \int_{\bR^2} \partial_\alpha \epsilon^\dagger \partial_\alpha \phi \, dx = \sum_{k\in\bZ} \sum_{\alpha = 0}^2 \int_{\bR^2} P_k \partial_\alpha \epsilon^\dagger \, \tilde{P}_k \partial_\alpha \phi \, dx,
\]
where $\tilde{P}_k$ is a slightly fattened Littlewood-Paley projection such that $\tilde{P}_k P_k = P_k$. Using the bounds~\eqref{equ:non_atomic_bootstrap_result} on $\epsilon$ as well as the exponential decay~\eqref{equ:bootstrap_non_atomic_exponential_decay_phi} of $\phi$ for frequencies $k \geq b_L$, we therefore obtain on $I_1$ that
\begin{align*}
 \bigg| \sum_{\alpha = 0}^2 \int_{\bR^2} \partial_\alpha \epsilon^\dagger \partial_\alpha \phi \, dx \bigg| &\lesssim \sum_{k\in\bZ} \| P_k \nabla_{t,x} \epsilon \|_{L^\infty_t L^2_x[I_1]} \| \tilde{P}_k \nabla_{t,x} \phi \|_{L^\infty_t L^2_x[I_1]} \\
 &\lesssim \sum_{k \in (-\infty, b_L) \cap \bZ} \delta_2 2^{-\sigma (b_L - k)} \| \tilde{P}_k \nabla_{t,x} \phi\|_{L^\infty_t L^2_x[I_1]} + \sum_{k \in [b_L, +\infty) \cap \bZ} (c_k + d_k) \delta_1 2^{-\sigma(k-b_L)} \\
 &\lesssim_{E_{crit}} \delta_2 + \delta_1 \\
 &\lesssim_{E_{crit}} \delta_2.
\end{align*}
\end{proof}

Since the number of time intervals $N \equiv N(C_2, E_{crit})$ is controlled by the size of $C_2$ and $E_{crit}$, we can a priori ensure to choose the underlying Besov error $\delta_0$ so small that the energy of $\epsilon$ never leaves the perturbative regime, i.e. it stays less than, say, $2 \varepsilon_0$. Hence, we can carry out the above bootstrap argument to infer $S$ norm bounds on $\epsilon$ on all remaining time intervals $I_2, \ldots, I_N$. This finishes the proof of Proposition~\ref{prop:bootstrap_bounds_to_next_level_in_evolving_lowest_frequency_nonatomic_part}.
\end{proof}

\subsection{Selecting concentration profiles and adding the first large frequency atom} \label{subsec:adding_in_first_atom}

\subsubsection{Setting up a profile decomposition}

Having established control over the global evolution of the lowest frequency non-atomic part $\Pi_{\leq \mu_n^1 - \log(R_n)} \phi^n[0]$ in the previous subsection, we now ``add in'' the first large frequency atom $\phi^{n1}[0] = P_{[\mu_n^1 - \log(R_n), \mu_n^1 + \log(R_n)]} \phi^n [0]$ in the sense that we now attempt to evolve the data
\[
 \Pi_{\leq \mu_n^1 + \log(R_n)} \phi^n[0].
\]
In this subsection we shall slightly abuse notation and write for simplicity
\[
 \phi^n[0] \equiv \Pi_{\leq \mu_n^1 + \log(R_n)} \phi^n[0].
\]
Moreover, we denote by 
\[
 u^n[0] \equiv \Pi_{\leq \mu_n^1 - \log(R_n)} \phi^n[0]
\]
the data of the lowest frequency non-atomic part and by
\[
  \mu_n \equiv \mu_n^1 - \log(R_n)
\]
the upper frequency cut-off delimiting the lowest frequency non-atomic part. Finally, we shall write 
\[
 \epsilon^n[0] \equiv \Pi_{\leq \mu_n^1 + \log(R_n)} \phi^n[0] - \Pi_{\leq \mu_n^1 - \log(R_n)} \phi^n[0].
\]
By rescaling we may assume that $\mu_n^1 \equiv 0$.

\medskip 

In fact, we shall first attempt to evolve a sequence of slightly modified data 
\[
 \phi^{n,<K_*}[0] \equiv \Pi_{<K_*} \phi^n[0],
\]
which have better frequency localization properties, and thence infer bounds on the evolutions of the original data $\Pi_{\leq \mu_n^1 + \log(R_n)} \phi^n[0]$. Correspondingly, we also introduce the notation
\[
 \epsilon^{n, <K_*}[0] \equiv \phi^{n, < K_*}[0] - u^n[0].
\]
In order to obtain these bounds, we shall have to take advantage of a suitable profile decomposition, which however is quite delicate to achieve on account of the poor perturbative properties of the wave maps equation expressed in terms of the coordinate functions. 

\medskip 

The very first step to pick the concentration profiles is to correctly identify their asymptotic behavior. To guess the correct ansatz, we shall throughout assume that the evolutions $\phi^n$ of the data $\phi^n[0]$ exist on some large time interval $I$. In particular, this means that the gauge transformations $U_{<k}^{(\phi^n)}$ associated with the evolutions $\phi^n$ are a priori well-defined. At this point it is important to observe that in the trilinear estimate~\eqref{equ:trilinear_estimate} we do not gain exponentially in the difference of the largest to the smallest frequency present. For this reason the low frequencies will exert a non-negligible influence on the high frequencies for large times, which we have to take into account by the correct choice of the concentration profiles. At frequency $k = 0$ we arrive at the equation 
\begin{align*}
 \Box \bigl( U_{<0}^{(\phi^n)} \overline{\phi^n_0} \bigr) = \Box \big( U_{<0}^{(\phi^n)} \overline{(u^{n} + \epsilon^{n})}_0 \big) =: 2 U_{<0}^{(\phi^n)} \cA_{\alpha, low}(u^{n}) \partial^{\alpha} \epsilon_0^{n} + U_{<0}^{(\phi^n)} F_0(u^{n}, \epsilon^{n}),
\end{align*}
where $\cA_{\alpha, low}(u^n)$ incorporates all (perturbative) low frequency terms stemming purely from $u^{n}$ for which there are no exponential frequency gains in the corresponding interactions in the wave maps nonlinearity. In view of the structure of the wave maps nonlinearity as detailed in Proposition~\ref{prop:nlw_for_renormalized_phi_bar_schematic_identities}, we arrive at the expression
\[
 \cA_{\alpha, low}(u^n) = - \sum_{k_2 < \mu_n} \sum_{k_2 - 10 < k_1 < \mu_n} ( u^n_{k_1} ) (\partial_\alpha u_{k_2}^n )^\dagger - (\partial_\alpha u^n_{k_2}) (u_{k_1}^n)^\dagger + \bigl( U_{< \mu_n}^{(u^n)} \bigr)^\dagger \partial_\alpha U_{< \mu_n}^{(u^n)} + A_{\alpha; < \mu_n}^{(u^n)},
\]
where $U^{(u^n)}_{< \mu_n}$ is the gauge transformation defined by $u^n$ and $A_{\alpha; < \mu_n}^{(u^n)}$ is the connection form \eqref{equ:definition_connection_form_A} defined by~$u^n$. Then we may write
\[
 U_{<0}^{(\phi^n)} \cA_{\alpha, low}(u^{n}) \partial^{\alpha} \epsilon_0^{n}  = U_{<0}^{(\phi^n)} \cA_{\alpha, low}(u^{n}) (U_{<0}^{(\phi^n)})^\dagger \partial^{\alpha} (U_{<0}^{(\phi^n)} \epsilon_0^{n}) + error,
\]
where we expect $error$ to be a better term with a derivative on a low frequency term. Furthermore, we expect the high-frequency contribution to $U_{<0}^{(\phi^n)}$ coming from $\epsilon^{n}$ to be negligible, and so we replace the first term on the right by $U_{<0}^{(u^{n})} \cA_{\alpha, low}(u^{n}) (U_{<0}^{(u^{n})})^\dagger \partial^{\alpha} (U_{<0}^{(\phi^n)} \epsilon_0^{n})$. In fact, due to the rapid decay of $u^{n}$ at large frequencies $k > \mu_n$, where we recall that $\mu_n$ denotes the frequency cutoff delimiting the frequency atom $\epsilon^{n}$ from below, we shall replace this by the even more natural $U_{<\mu_n}^{(u^{n})} \cA_{\alpha, low}(u^{n}) (U_{<\mu_n}^{(u^{n})})^\dagger \partial^{\alpha}(U_{<0}^{(\phi^n)}\epsilon_0^{n})$. Importantly, note that this is now no longer dependent on the choice of a specific frequency $k = 0$. Moreover, we observe that to formulate the right equation we have to pass to the variable $\tilde{\epsilon}^{n}_0: = U_{<0}^{(\phi^n)} \epsilon_0^{n}$, and more generally $\tilde{\epsilon}^{n}_k: = U_{<k}^{(\phi^n)} \epsilon_k^{n}$.

\medskip 

Then we shall use the following very natural equation to select our concentration profiles (which in effect is the same for all frequencies and so we shall not indicate a localization here)
\begin{equation} \label{eq:therightone}
 \boxed{ \Box \tilde{\epsilon} = 2 U_{<\mu_n}^{(u^{n})} \cA_{\alpha, low}(u^{n}) \bigl( U_{<\mu_n}^{(u^{n})} \bigr)^\dagger \partial^{\alpha} \tilde{\epsilon}. }
\end{equation}
Observe that the anti-symmetric matrix $U_{<\mu_n}^{(u^{n})} \cA_{\alpha, low}(u^{n}) \bigl( U_{<\mu_n}^{(u^{n})} \bigr)^\dagger$ is defined purely in terms of the low frequency constituent~$u^n$, over which we already have control. 

\medskip 

After these heuristics, we now proceed with the actual selection of the concentration profiles. Here we shall proceed naturally in two steps, first picking the profile at time $t = 0$ for which of course the flow associated with \eqref{eq:therightone} is irrelevant, and then picking the temporally unbounded profiles, using the terminology of \cite{KL} and \cite{KS}. A technical difficulty here consists in identifying a profile which actually maps into the target sphere $\bS^m$. Also, since functions in $\dot{H}^1_x(\R^2)$ are not even distributions, we start by carefully implementing a frequency cutoff away from $-\infty$, and more precisely to the frequency interval $[-K, \infty)$ for some $K \gg 1$. The number $K$ will later on play the role of a parameter which we need to fine tune. Furthermore, we also need to restrict the frequencies from above, below some threshold $K_*$. This we do by passing to the truncated data $\phi^{n,<K_*}[0]$. Once we obtain bounds for the $S$ norms of the evolution that are uniform in $K_*$, we shall be able to invoke a simpler perturbative argument just as in the control of the lowest frequency non-atomic part to infer the desired bound. 

The following lemma provides a first version of a decomposition into a bounded and an unbounded profile.
\begin{lem} \label{lem:prelim profile} 
Writing $\phi^{n,<K_*} = u^{n} + \epsilon^{n,<K_*}$ and passing to a suitable subsequence with respect to $n$, there exists $p_*\in \bS^m$ and for each $K\geq K_0$ and $n$ sufficiently large, a decomposition (all at time $t = 0$)
\begin{align*}
 \epsilon^{n,<K_*} + p_* &= \epsilon_{bounded}^{Kn,<K_*} + \epsilon_{unbounded}^{Kn,<K_*} + \eta^{Kn,<K_*}, \\
 \partial_t \epsilon^{n,<K_*} &= \gamma_{bounded}^{Kn,<K_*} +  \gamma_{unbounded}^{Kn,<K_*} + \zeta^{Kn,<K_*},
\end{align*}
where we have 
\begin{align*}
 \limsup_{K \rightarrow\infty} \, \limsup_{n\rightarrow\infty} \, \Bigl( \big\|\nabla_x\epsilon_{bounded}^{Kn,<K_*}\big\|_{L_x^2}^2 + \big\|\gamma_{bounded}^{Kn,<K_*}\big\|_{L_x^2}^2 \Bigr) &\leq \limsup_{n\rightarrow\infty}\big\|\nabla_{t,x}\epsilon^{n,<K_*}\big\|_{L_x^2}, \\
 \lim_{K\rightarrow\infty} \limsup_{n\rightarrow\infty} \big\|\nabla_x\eta^{Kn,<K_*}\big\|_{L_x^2} &= 0, \\
 \lim_{K\rightarrow\infty} \limsup_{n\rightarrow\infty} \big\|\zeta^{Kn,<K_*}\big\|_{L_x^2} &= 0,
\end{align*}
as well as the asymptotic orthogonality relations 
\begin{align*}
 \lim_{n\rightarrow\infty} \int_{\R^2} \bigl( \nabla_x\epsilon_{bounded}^{Kn,<K_*} \bigr)^\dagger \cdot \nabla_x\epsilon_{unbounded}^{Kn,<K_*}(x)\,dx &= 0, \\
 \lim_{n\rightarrow\infty} \int_{\R^2} \bigl( \gamma_{bounded}^{Kn,<K_*} \bigr)^\dagger \cdot \gamma_{unbounded}^{Kn,<K_*}(x)\,dx &= 0.
\end{align*}
The components $\epsilon_{bounded}^{Kn,<K_*}$ map into $\bS^m$ and the data pair $(\epsilon_{bounded}^{Kn,<K_*}, \gamma_{bounded}^{Kn,<K_*})$ is admissible in the sense that 
\[
 \bigl( \epsilon_{bounded}^{Kn,<K_*} \bigr)^\dagger \cdot  \gamma_{bounded}^{Kn,<K_*} = 0
\]
poinwise. Moreover, the limits $\lim_{K \rightarrow \infty}\nabla_x\epsilon^{Kn,<K_*}_{bounded} =: \nabla_x\epsilon_{bounded}^{<K_*}$ and $\lim_{K\rightarrow\infty}\gamma^{Kn,<K_*}_{bounded} =: \gamma_{bounded}^{<K_*}$ exist in $L^2_x$ independently of $n$ with 
\[
\epsilon_{bounded}^{<K_*} \in L^\infty_x \cap \dot{H}^1_x, \quad \gamma_{bounded}^{<K_*} \in L^2_x.
\]
We also have the compatibility relation $ \bigl( \epsilon_{bounded}^{<K_*} \bigr)^\dagger \cdot \gamma_{bounded}^{<K_*} = 0$ pointwise. Furthermore, it holds that $\epsilon_{unbounded}^{Kn,<K_*} \to 0$ in $L^\infty_{loc}$ as $n\rightarrow\infty$. Finally, we have that $u^{n} - p_* \rightarrow 0$ in $L^\infty_{loc}$ as $n \rightarrow \infty$ and for any $R_0>0$, we have 
\begin{align*}
 \lim_{K\rightarrow\infty}\limsup_{n\rightarrow\infty}\big\|\chi_{B_{R_0}}\eta^{Kn,<K_*}\big\|_{L_x^\infty\cap \dot{H}^1_x} &= 0, \\
 \lim_{K\rightarrow\infty}\limsup_{n\rightarrow\infty}\big\|\chi_{B_{R_0}}\zeta^{Kn,<K_*}\big\|_{L_x^2} &= 0.
\end{align*}
\end{lem}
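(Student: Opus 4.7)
The plan is to construct the decomposition in three stages: first extract $p_*$, then build the bounded profile via spatial truncation and normal projection, and finally define the unbounded profile and error terms as residuals.

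\textbf{Stage 1 (Extracting $p_*$).} The key observation is that $u^{n}$ has frequency support essentially concentrated on $\{|\xi| \lesssim 2^{\mu_n}\}$ up to exponentially decaying tails by Proposition~\ref{prop:data_approximate_preservation_of_frequency_localization}, while $\mu_n \to -\infty$. Bernstein's inequality then yields $\|\nabla_x u^{n}\|_{L^\infty_x} \lesssim 2^{\mu_n}\|\nabla_x u^{n}\|_{L^2_x} \to 0$, so the sequence $\{u^{n}\}$ is uniformly equicontinuous. Since $u^{n}$ is $\bS^m$-valued (and in particular uniformly bounded), Arzel\`a--Ascoli combined with extracting a subsequence gives $u^{n} \to p_*$ in $L^\infty_{loc}(\bR^2)$ for some $p_* \in \bS^m$.

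\textbf{Stage 2 (Bounded profile via cutoff and projection).} Choose a spatial scale $R_K \nearrow \infty$ slowly enough that $\|u^{n} - p_*\|_{L^\infty(B_{R_K})} \to 0$ along the subsequence, together with a smooth cutoff $\chi_{R_K}$ with $\chi_{R_K}\equiv 1$ on $B_{R_K/2}$ and $\mathrm{supp}(\chi_{R_K})\subset B_{R_K}$. Since $\phi^{n,<K_*} = u^{n} + \epsilon^{n,<K_*}$ is $\bS^m$-valued and $u^{n} - p_* \to 0$ uniformly on $B_{R_K}$, the vector $\chi_{R_K}(\epsilon^{n,<K_*} + p_*) + (1-\chi_{R_K})\, p_*$ lies in a shrinking tubular neighborhood of $\bS^m$ for $n$ large; we may therefore apply the normal projection $\Pi$ and set
\[
 \epsilon^{Kn,<K_*}_{bounded} := \Pi\bigl(\chi_{R_K}(P_{\geq -K}\epsilon^{n,<K_*} + p_*) + (1-\chi_{R_K})\, p_*\bigr),
\]
which is $\bS^m$-valued and equals $p_*$ outside $B_{R_K}$, so that $\epsilon^{Kn,<K_*}_{bounded} - p_*$ is a compactly supported $\dot{H}^1_x\cap L^\infty_x$ function. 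To produce a tangent vector at this new position, define
\[
 \gamma^{Kn,<K_*}_{bounded} := \chi_{R_K}\partial_t \epsilon^{n,<K_*} - \bigl\langle \chi_{R_K}\partial_t \epsilon^{n,<K_*}, \epsilon^{Kn,<K_*}_{bounded}\bigr\rangle\, \epsilon^{Kn,<K_*}_{bounded},
\]
which automatically satisfies the compatibility relation $(\epsilon^{Kn,<K_*}_{bounded})^\dagger \gamma^{Kn,<K_*}_{bounded} = 0$. The frequency cutoff at $-K$ inside the definition of $\epsilon^{Kn,<K_*}_{bounded}$ is what makes the argument of $\Pi$ an actual $L^\infty$ function rather than merely a tempered distribution.

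\textbf{Stage 3 (Unbounded profile and verification).} Define the residuals
\[
 \epsilon^{Kn,<K_*}_{unbounded} := (\epsilon^{n,<K_*} + p_*) - \epsilon^{Kn,<K_*}_{bounded} - \eta^{Kn,<K_*}, \quad \gamma^{Kn,<K_*}_{unbounded} := \partial_t\epsilon^{n,<K_*} - \gamma^{Kn,<K_*}_{bounded} - \zeta^{Kn,<K_*},
\]
where $\eta^{Kn,<K_*}$ absorbs (i) the discarded low-frequency piece $P_{<-K}\epsilon^{n,<K_*}$, whose $\dot{H}^1_x$ norm tends to $0$ as $K\to\infty$ uniformly in $n$ because $\epsilon^{n,<K_*}$ has dominant frequency support above $\mu_n$ together with the Besov smallness \eqref{equ:besov_smallness_edge_points} around the cut-offs, and (ii) the error coming from the nonlinear map $\Pi$ applied to our argument, which vanishes on $B_{R_K/2}$ by a variant of Lemma~\ref{lem:frequency_truncation_stays_close_to_sphere}. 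The term $\zeta^{Kn,<K_*}$ is defined analogously for the time-derivative component. The energy and Besov bounds, the asymptotic orthogonality between bounded and unbounded parts, and the local vanishing of $\epsilon^{Kn,<K_*}_{unbounded}$ all follow from these definitions: the bounded piece is supported in $B_{R_K}$ while the unbounded piece is, by subtraction, essentially supported outside $B_{R_K/2}$, so any cross-inner-product reduces to an integral over the thin annulus $B_{R_K}\setminus B_{R_K/2}$ that is controlled via Hardy's inequality.

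\textbf{Main obstacle.} The delicate point is producing \emph{$n$-independent} limits $\lim_{K\to\infty}\epsilon^{Kn,<K_*}_{bounded} = \epsilon^{<K_*}_{bounded}$ and $\lim_{K\to\infty}\gamma^{Kn,<K_*}_{bounded} = \gamma^{<K_*}_{bounded}$ in $L^2_x$. This requires a diagonal extraction in $(n,K)$ together with a careful choice of $R_K$: it must grow with $K$ fast enough to capture essentially all the concentrated energy, yet slowly enough that $\|u^{n}-p_*\|_{L^\infty(B_{R_K})}$, which controls the distance of our argument from $\bS^m$ and hence the size of the projection error, remains small along the subsequence. That such a scale exists is where the uniform frequency support upper bound $K_*$ plays its role, since it prevents high-frequency oscillations from obstructing the locally uniform compactness that drives the whole argument.
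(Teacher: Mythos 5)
Your Stage 1 matches the paper ($u^{n}\to p_*$ in $L^\infty_{loc}$ follows from the evacuation of $u^{n}$ to very low frequencies), and the idea of applying the normal projection $\Pi$ to a spatially truncated quantity is also present in the paper's construction. But there is a genuine gap in Stage 2: you never extract a weak limit, and that is the essential content of the lemma. In the paper, $\epsilon^{K,<K_*}_{bounded}$ is defined as the \emph{weak limit} in $H^1_x$ of $P_{[-K,\infty)}\epsilon^{n,<K_*}$ along a subsequence in $n$, and the unbounded component is the difference $P_{[-K,\infty)}\epsilon^{n,<K_*}-\epsilon^{K,<K_*}_{bounded}$, which by construction converges weakly to zero (hence, by the essentially compact frequency localization, also in $L^\infty_{loc}$). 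This is what makes the asymptotic orthogonality relations and the $n$-independence of $\lim_{K\to\infty}\nabla_x\epsilon^{Kn,<K_*}_{bounded}$ automatic, and it is what guarantees that the profiles later extracted from $\epsilon_{unbounded}$ via Proposition~\ref{prop:linprofiles} are all temporally unbounded. Your $\epsilon^{Kn,<K_*}_{bounded}=\Pi\bigl(\chi_{R_K}(P_{\geq-K}\epsilon^{n,<K_*}+p_*)+(1-\chi_{R_K})p_*\bigr)$ instead contains the full (truncated) sequence element, so it absorbs the portion of $\epsilon^{n,<K_*}[0]$ belonging to the temporally unbounded concentration profiles. Consequently: (i) $\nabla_x\epsilon^{Kn,<K_*}_{bounded}$ does not converge strongly in $L^2_x$ as $n\to\infty$ — precisely the energy of the unbounded profiles obstructs this — so the claimed $n$-independent limits $\epsilon^{<K_*}_{bounded}$, $\gamma^{<K_*}_{bounded}$ do not exist with your definitions; (ii) your residual $\epsilon^{Kn,<K_*}_{unbounded}$ is a spatially exterior object rather than a weakly vanishing one, so $\epsilon^{Kn,<K_*}_{unbounded}\to0$ in $L^\infty_{loc}$ fails on compact sets larger than $B_{R_K}$; and (iii) the orthogonality relations cannot be obtained from your ``thin annulus plus Hardy'' argument, because in your splitting the bounded and unbounded parts are the same function cut at a fixed radius, not asymptotically decoupled objects. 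The diagonal extraction you flag as the ``main obstacle'' cannot repair this: no choice of $R_K$ turns a spatial truncation into a concentration-compactness decomposition.

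A secondary issue concerns what goes inside $\Pi$. The paper keeps the low-frequency tail $P_{(-\infty,-K]}\epsilon^{n,<K_*}$ in the argument of $\Pi$ on the ball (replacing it by a suitable constant $p_1$ outside), because it is the sum $\epsilon^{K,<K_*}_{bounded}+P_{(-\infty,-K]}\epsilon^{n,<K_*}+p_*$ — not $P_{\geq-K}\epsilon^{n,<K_*}+p_*$ alone — that is guaranteed to be close to $\bS^m$: one uses that $\phi^{n,<K_*}=u^{n}+\epsilon^{n,<K_*}$ maps into $\bS^m$, that $u^{n}\approx p_*$ locally, and that the high-frequency remainder vanishes locally uniformly. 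Discarding $P_{<-K}\epsilon^{n,<K_*}$ before projecting, as you do, requires its $L^\infty$ smallness on $B_{R_K}$; but this quantity is only small in $\dot{H}^1_x$ for $K$ large, which in two dimensions does not control its $L^\infty$ size (it may be close to a nonzero constant), so your tubular-neighborhood claim is not justified as stated.
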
 
\begin{proof} 
The extra parameter $K$ here plays the role of an additional frequency cutoff. Write 
\[
 \epsilon^{n,<K_*} = P_{[-K, \infty)} \epsilon^{n,<K_*} + P_{(-\infty, -K)} \epsilon^{n,<K_*}. 
\]
The sequence $\bigl\{ P_{[-K, \infty)} \epsilon^{n} \bigr\}_n$ being bounded in $H^1_x(\R^2)$, passing to a subsequence we may pick a weak limit $\epsilon^{K,<K_*}_{bounded}$, which is either zero or non-zero. Then upon passing to the subsequence,
\[
 P_{[-K, \infty)} \epsilon^{n,<K_*} - \epsilon_{bounded}^{K,<K_*}
\]
converges weakly toward zero as $n \to \infty$, and in particular, it converges pointwise toward zero (due to the essentially sharp frequency localization). By letting $K$ run through all positive integers and implementing a Cantor diagonal argument to successive subsequences, we can arrange that $P_{[-K,\infty)} \epsilon^{\tilde{K},<K_*}_{bounded} = \epsilon^{K,<K_*}_{bounded}$ for $\tilde{K}>K$, and thence that $\nabla_x \epsilon^{K,<K_*}_{bounded}$ converges in $L^2_x$ as $K \rightarrow \infty$ to, say, $\nabla_x \epsilon^{<K_*}_{bounded}$ with $\epsilon^{<K_*}_{bounded} \in \dot{H}^1_x \cap L^\infty_x$. Similarly, we may assume that $\lim_{K_*\rightarrow\infty}\nabla_x\epsilon^{<K_*}_{bounded}$ exists in $L^2_x$. Now write
\[
 \phi^{n,<K_*} = u^{n} + \epsilon^{K, <K_*}_{bounded} + P_{(-\infty, -K]} \epsilon^{n,<K_*} + \bigl( P_{[-K, \infty)} \epsilon^{n,<K_*} - \epsilon_{bounded}^{K,<K_*} \bigr).
\]
Localizing to a large ball $B_R$ around the origin, notice that (with the error vanishing as $n\rightarrow\infty$)
\begin{align*}
 u^{n} \big|_{B_R} &= p_n + o_{L^\infty_x}(1), \\ 
 \chi_{B_R} \bigl( P_{[-K, \infty)} \epsilon^{n,<K_*} - \epsilon_{bounded}^{K,<K_*} \bigr) &= o_{L^\infty_x}(1). 
\end{align*}
It follows that $\big( \epsilon^{K,<K_*}_{bounded} + P_{(-\infty, -K]} \epsilon^{n,<K_*} \big) \big|_{B_R} + p_n$ is within a $o_{L^\infty_x}(1)$ neighborhood of $\bS^m$. Passing to a subsequence, we may suppose $p_n \rightarrow p_*$ for some $p_* \in \bS^m$, whence $\big( \epsilon^{K,<K_*}_{bounded} + P_{(-\infty, -K]} \epsilon^{n,<K_*} \big) \big|_{B_R} + p_*$ is arbitrarily close to $\bS^m$ provided $n$ is sufficiently large.
Further, observe that 
\[
 \big\| \epsilon^{K,<K_*}_{bounded} \big\|_{L^\infty_x( \{ R \sim R_* \})} \lesssim (2^K R_*)^{-1} + \big\| \nabla \epsilon^{K,<K_*}_{bounded} \big\|_{L^2_x(\{R\sim R_*\})}.
\]
Now for a suitable quantity $\delta(K) \rightarrow 0$ as $K \rightarrow \infty$, put $R_* \geq 2^{-K} \delta(K)^{-\frac12}$. By convergence of $\nabla\epsilon^{K,<K_*}_{bounded}$ in $L^2_x$ as $K\rightarrow\infty$, we get from this 
\[
 \big\| \epsilon^{K,<K_*}_{bounded} \big\|_{L^\infty_x(\{ R>2^{-K}\delta(K)^{-\frac12} \})} \rightarrow 0
\]
as $K\rightarrow\infty$. Also, putting 
\[
 \delta(K,K_*) := \limsup_{n\rightarrow\infty}\big\|\nabla P_{(-\infty, -K]} \epsilon^{n,<K_*}\big\|_{L_x^2},
\]
then indeed we have $\lim_{K\rightarrow\infty}\delta(K,K_*) = 0$, and passing to a subsequence as $K\rightarrow\infty$ we can find $p_1 \in \R^{m+1}$ such that 
\[
 P_{(-\infty, -K]} \epsilon^{n,<K_*} \big|_{B_{R_{*K}} \backslash B_{\frac12 R_{*K}}} = p_1 + o_{L^\infty_x}(1)
\]
as $n, K\rightarrow\infty$, where $R_{*K} := 2^{-K} \delta(K,K_*)^{-\frac12}$, since the variation of the function $P_{(-\infty, -K]} \epsilon^{n,<K_*}$ on $B_{R_{*K}} \backslash B_{\frac12 R_{*K}}$ is $\lesssim \delta(K,K_*)^{\frac12}$ as $n$ is very large. 

It follows that choosing $\chi_{B_{R_{*K}}}$ as before, the expression
\[
\chi_{B_{R_{*K}}} \epsilon^{K,<K_*}_{bounded} + \chi_{B_{R_{*K}}} P_{(-\infty, -K]} \epsilon^{n,<K_*} + (1-\chi_{B_{R_{*K}}}) \cdot p_1 + p_*
\]
converges toward $\bS^m$ as $n, K \rightarrow \infty$. Letting $\Pi$ be the normal projection onto $\bS^m$ and labelling 
\begin{align*}
 \tilde{\epsilon}^{Kn,<K_*}_{bounded} &:= \Pi \big( \chi_{B_{R_{*K}}} \epsilon^{K,<K_*}_{bounded} + \chi_{B_{R_{*K}}} P_{(-\infty, -K]} \epsilon^{n,<K_*} + (1-\chi_{B_{R_{*K}}}) \cdot p_1 + p_* \big), \\
 \epsilon^{Kn,<K_*}_{unbounded} &:= P_{[-K, \infty)}\epsilon^{n,<K_*} - \epsilon^{K,<K_*}_{bounded}, \\
 \eta^{nK,<K_*} &:= (1-\Pi) \big(\chi_{B_{R_{*K}}} \epsilon^{K,<K_*}_{bounded} + \chi_{B_{R_{*K}}} P_{(-\infty, -K]} \epsilon^{n,<K_*} + (1-\chi_{B_{R_{*K}}}) \cdot p_1 + p_*\big) \\
 &\quad \quad + (1- \chi_{B_{R_{*K}}}) (P_{(-\infty, -K]} \epsilon^{n1,<K_*} - p_1) + (1-\chi_{B_{R_{*K}}}) \epsilon^{K,<K_*}_{bounded},
\end{align*}
we infer a representation 
\[
 \epsilon^{n, <K_*} + p_* = \tilde{\epsilon}_{bounded}^{Kn,<K_*} + \epsilon_{unbounded}^{Kn,<K_*} + \eta^{Kn,<K_*}
\]
with all the properties of the lemma (but with $\tilde{\epsilon}_{bounded}^{Kn,<K_*}$ in place of $\epsilon_{bounded}^{Kn,<K_*}$). We carefully observe that 
\[
 (1- \chi_{B_{R_{*K}}}) \bigl( P_{(-\infty, -K]} \epsilon^{n,<K_*} - p_1)  + (1-\chi_{B_{R_{*K}}} \bigr) \epsilon^{K, <K_*}_{bounded} 
\]
is supported outside of $B_{R_0}$ for $K$ large enough, and that 
\[
 (1-\Pi) \bigl( \chi_{B_{R_{*K}}} \epsilon^{K,<K_*}_{bounded} + \chi_{B_{R_{*K}}} P_{(-\infty, -K]} \epsilon^{n,<K_*} + (1-\chi_{B_{R_{*K}}}) \cdot p_1 + p_* \bigr)
\]
converges toward zero in the $L^\infty_x$-norm as $n, K\rightarrow\infty$. It remains to show smallness of the term 
\[
 (1-\Pi) \big(\chi_{B_{R_{*K}}} \epsilon^{K,<K*}_{bounded} + \chi_{B_{R_{*K}}} P_{(-\infty, -K]} \epsilon^{n,<K_*} + (1-\chi_{B_{R_{*K}}}) \cdot p_1 + p_* \big).
\]
with respect to $\dot{H}^1_x$. This is clear by an argument as above, provided we include a cutoff $\chi_{\frac12 B_{R_{*K}}^c}$ in front. Thus, consider now the term 
\begin{align*}
 &\chi_{\frac12 B_{R_{*K}}}(1-\Pi)\big(\chi_{B_{R_{*K}}}\epsilon^{K,<K_*}_{bounded} + \chi_{B_{R_{*K}}} P_{(-\infty, -K]} \epsilon^{n,<K_*} + (1-\chi_{B_{R_{*K}}}) \cdot p_1 + p_* \big) \\
 &= \chi_{\frac12 B_{R_{*K}}}(1-\Pi)\big(\epsilon^{K,<K_*}_{bounded} + P_{(-\infty, -K]} \epsilon^{n,<K_*} + p_* \big) \\
 &= \chi_{\frac12 B_{R_{*K}}}(1-\Pi)\big(\phi^{n,<K_*} - (u^{n} - p_*) - ( P_{[-K,\infty)}\epsilon^{n1,<K_*} - \epsilon^{K,<K_*}_{bounded} ) \big).
\end{align*}
Extending $(1-\Pi)$ smoothly to all of $\R^{m+1}$ and using the same notation for the global operator, we get on account of 
\[
 (1-\Pi) \big( \phi^{n,<K_*} \big) = 0
\]
the following
\begin{align*}
&\Big\| \nabla_{t,x} \chi_{\frac12 B_{R_{*K}}} (1-\Pi) \big(\phi^{n,<K_*} - (u^{n} -p_*) - \big( P_{[-K,\infty)} \epsilon^{n,<K_*} - \epsilon^{K,<K_*}_{bounded} \big) \big) \Big\|_{L_x^2} \\
& = \Big\| \nabla_{t,x} \int_0^1 \chi_{\frac12 B_{R_{*K}}} \partial_s (1-\Pi) \big(\phi^{n,<K_*} - s ( u^{n} - p_* ) - s \big( P_{[-K,\infty)} \epsilon^{n,<K_*} - \epsilon^{K,<K_*}_{bounded} \big) \big) \, ds \Big\|_{L_x^2} \\
&\lesssim \big\| \nabla_{t,x} \big( \chi_{\frac12 B_{R_{*K}}} (u^n - p_*) \big) \big\|_{L_x^2} + \big\| \nabla_{t,x} \big( \chi_{\frac12 B_{R_{*K}}} \big( P_{[-K,\infty)} \epsilon^{n,<K_*} - \epsilon^{K,<K_*}_{bounded} ) \big) \big\|_{L_x^2} \\
&\quad + \big\| \big( \chi_{\frac12 B_{R_{*K}}}(u^{n} - p_*) \big) \big\|_{L_x^\infty} + \big\| \big( \chi_{\frac12 B_{R_{*K}}} ( P_{[-K,\infty)} \epsilon^{n,<K_*} - \epsilon^{K,<K_*}_{bounded} ) \big) \big\|_{L_x^\infty}.
\end{align*}
All terms at the end are easily seen to converge to $0$ as $n\rightarrow\infty$. To see this for the second term, use that 
\begin{align*}
 \big\| \nabla_x(\chi_{\frac12 B_{R_{*K}}}) (P_{[-K,\infty)}\epsilon^{n,<K_*} - \epsilon^{K,<K_*}_{bounded}) \big) \big\|_{L_x^2} \rightarrow 0 
\end{align*}
as $n\rightarrow\infty$ since $P_{[-K,\infty)} \epsilon^{n,<K_*} - \epsilon^{K,<K_*}_{bounded} \rightharpoonup 0$ weakly as $n\rightarrow\infty$. Also we have 
\begin{align*}
 \big\| \chi_{\frac12 B_{R_{*K}}} \nabla_x ( P_{[-K,\infty)}\epsilon^{n,<K_*} - \epsilon^{K,<K_*}_{bounded}) \big) \big\|_{L_x^2} &\leq \big\| \chi_{\frac12 B_{R_{*K}}} \nabla_x( P_{[-K,K]}\epsilon^{n,<K_*} - P_{(-\infty, K]} \epsilon^{K,<K_*}_{bounded} \big) \big\|_{L_x^2} \\ 
 &\quad + \big\|\chi_{\frac12 B_{R_{*K}}} \nabla_x \big( P_{(K, \infty)} \epsilon^{n,<K_*} - P_{(K, \infty)} \epsilon^{K,<K_*}_{bounded} \big) \big\|_{L_x^2}.
\end{align*}
Then the first term on the right-hand side converges to $0$ as $n\rightarrow\infty$ and the second converges to $0$ as $K\rightarrow\infty$ uniformly in $n$. For the term
\[
 \big\| \nabla_{t,x} \big(\chi_{\frac12 B_{R_{*K}}} (u^{n} - p_*) \big) \big\|_{L_x^2},  
\]
use the fact that $\lim_{n\rightarrow\infty} \chi_{\frac12 B_{R_{*K}}}(u^{n} - p_*) = 0$ as well as the evacuation to extremely low frequencies for $u^{n}$ as $n\rightarrow\infty$. We also observe here that since $\nabla_x \tilde{\epsilon}^{Kn,<K_*}_{bounded} \rightarrow \nabla_x \epsilon^{<K_*}_{bounded}$ for any large enough $n$ and $K\rightarrow\infty$, we may indeed pick $\epsilon^{<K_*}_{bounded}$ to map into $\bS^m$. 

\medskip 

As for decomposing the time derivative, let $\gamma^{K,<K_*}_{bounded}$ be a weak limit of $P_{[-K,\infty)} \partial_t \epsilon^{n,<K_*}$ as $n \to \infty$, and then let 
\begin{align*}
 \gamma^{Kn,<K_*}_{unbounded} &:= P_{[-K,\infty)} \partial_t \epsilon^{n,<K_*} - \gamma^{K,<K_*}_{bounded}, \\
 \zeta^{Kn,<K_*} &:= P_{(-\infty, -K]} \partial_t \epsilon^{n,<K_*}.
\end{align*}
We have 
\begin{align*}
0 = \bigl( u^{n} - p_* + \epsilon^{Kn,<K_*}_{bounded} +  \epsilon^{Kn,<K_*}_{unbounded} +  \eta^{Kn,<K_*} \bigr)^\dagger \cdot \bigl( \partial_t u^{n} + \gamma^{K,<K_*}_{bounded} + \gamma^{Kn,<K_*}_{unbounded} + \zeta^{Kn,<K_*} \bigr).
\end{align*}
But then on any bounded set $D$ we have in the pointwise sense that
\[
 \lim_{n\rightarrow\infty} \bigl( u^{n} - p_* +  \epsilon^{Kn,<K_*}_{unbounded} \bigr)^\dagger \cdot \bigl( \partial_t u^{n} + \gamma^{K,<K_*}_{bounded} + \gamma^{Kn,<K_*}_{unbounded} + \zeta^{Kn,<K_*} \bigr) = 0
\]
and similarly (on a bounded set $D$) we have the pointwise limit
\[
 \lim_{K\rightarrow\infty} \, \bigl( \eta^{Kn,<K_*} \bigr)^\dagger \cdot \bigl( \partial_t u^{n} + \gamma^{K,<K_*}_{bounded} + \gamma^{Kn,<K_*}_{unbounded} + \zeta^{Kn,<K_*} \bigr)  = 0.
\]
Also, again exploiting the frequency localization, we get 
\[
 \lim_{n\rightarrow\infty} \, \bigl( \epsilon^{Kn,<K_*}_{bounded} \bigr)^\dagger \bigl( \gamma^{Kn,<K_*}_{unbounded} + \zeta^{Kn,<K_*} \bigr) = 0.
\]
Finally, we conclude that if we put 
\[
 \tilde{\gamma}^{Kn,<K_*}_{bounded} := \gamma^{K,<K_*}_{bounded} - \epsilon^{Kn,<K_*}_{bounded} \, \bigl( \epsilon^{Kn,<K_*}_{bounded} \bigr)^\dagger \cdot  \gamma^{K,<K_*}_{bounded} 
\]
and then write 
\begin{align*}
 \partial_t \epsilon^{n,<K_*} = \tilde{\gamma}^{Kn,<K_*}_{bounded} + \gamma^{Kn,<K_*}_{unbounded}  + \tilde{\zeta}^{Kn,<K_*}
\end{align*}
with 
\[
 \tilde{\zeta}^{Kn,<K_*} = \zeta^{Kn,<K_*} + \epsilon^{Kn,<K_*}_{bounded} \, \bigl( \epsilon^{Kn,<K_*}_{bounded} \bigr)^\dagger \cdot \gamma^{K,<K_*}_{bounded}.
\]
Then we have found the desired representation with $\tilde{\gamma}^{Kn,<K_*}_{bounded}$ in place of $\gamma^{Kn,<K_*}_{bounded}$ and $\tilde{\zeta}^{Kn,<K_*}$ in place of $\zeta^{Kn,<K_*}$. 
\end{proof}

Note from the preceding proof that setting now 
\[
 \epsilon^{Kn,<K_*}_{bounded} = \Pi \Bigl( \chi_{B_{R_{*K}}}\epsilon^{K,<K_*}_{bounded} + \chi_{B_{R_{*K}}}P_{(-\infty, -K]} \epsilon^{n,<K_*} + (1-\chi_{B_{R_{*K}}})\cdot p_1 + p_* \Bigr),
\]
the implied frequency localization from above (up to exponential tails) allows us to conclude the more precise local convergence statement that on any bounded set $D$ we have 
\[
 \lim_{K\rightarrow\infty}\big\|\epsilon^{Kn,<K_*}_{bounded} - \epsilon^{<K_*}_{bounded}\big\|_{L^\infty_x \cap \dot{H}^1_x(D)} = 0
\]
uniformly for all sufficiently large $n$. Similarly, upon passing to a subsequence if necessary, we have that for any such $D$ 
\[
 \lim_{K\rightarrow\infty}\big\|\gamma^{Kn,<K_*}_{bounded} - \gamma^{<K_*}_{bounded}\big\|_{L^2_x(D)} = 0
\]
for a suitable limit function $\gamma^{<K_*}_{bounded}\in L^2_x$. 

\medskip 

In the preceding we have always kept the {\it{upper frequency bound}} $K_*$ fixed. However, we may now increase $K_*$ and only consider subsequences of the sequence of $n$ considered previously. Doing this we may achieve that actually 
\[
 \lim_{K_*\rightarrow\infty} \big( \nabla_x\epsilon^{<K_*}_{bounded}, \gamma^{<K_*}_{bounded} \big)
\]
exists in $L^2_x$. In what follows we shall use the notation $\epsilon^{<K_*}_{bounded}[0] := \big( \epsilon^{<K_*}_{bounded}, \gamma^{<K_*}_{bounded}\big)$.

\medskip 

It now remains to reveal the fine structure of the data pair $\big(\epsilon^{Kn,<K_*}_{unbounded}, \gamma^{Kn,<K_*}_{unbounded}\big)$, i.e. to resolve it into profiles. To this end we shall use the key equation \eqref{eq:therightone}. As we have to work with the gauged variables for each frequency block, we use for each dyadic frequency $k \in \bZ$ the data pair 
\begin{equation} \label{eq:tildeepsilonKn<K}
\tilde{\epsilon}^{Kn,<K_*}_{unbounded,k}[0]: = \Bigl( U_{<k}^{(\phi^{n,<K_*})} \epsilon^{Kn,<K_*}_{unbounded,k}, U_{<k}^{(\phi^{n,<K_*})}\gamma^{Kn,<K_*}_{unbounded} + \partial_t(U_{<k}^{(\phi^{n,<K_*})})\epsilon^{Kn,<K_*}_{unbounded,k} \Bigr).
\end{equation}
To state the decomposition into concentration profiles for the linear evolution, we first need a precise description of solutions to \eqref{eq:therightone}. 
\begin{lem} \label{lem:soloftherightoneasymptotics}
Given radial $\tilde{\epsilon}[0] \in \dot{H}^1_x \times L^2_x(\R^2; \R^{m+1})$, there exists a unique solution $\tilde{\epsilon}^{(n)}(t, \cdot) \in C^0(\R; \dot{H}^1_x \times L^2_x(\R^2; \R^{m+1}))$ to \eqref{eq:therightone} satisfying uniform bounds 
\[
 \|\tilde{\epsilon}^{(n)} \|_{S[\bR]} \leq C\bigl( \|u^{n}\|_{S[\bR]} \bigr) \big\| \tilde{\epsilon}[0] \big\|_{\dot{H}^1_x \times L^2_x(\R^2; \R^{m+1})}.
\]
The solution $\tilde{\epsilon}^{(n)}(t, \cdot)$ vanishes asymptotically in the sense that given $\gamma>0$, there exists a decomposition 
\[
 \tilde{\epsilon}^{(n)} = \tilde{\epsilon}^{(n)}_1 + \tilde{\epsilon}^{(n)}_2
\]
and a time $t_0(\gamma, \|u^{n}\|_{S}, \tilde{\epsilon}[0]) > 0$ such that for all $n$,
\[
\big\|\tilde{\epsilon}^{(n)}_1\big\|_{S[\bR]} < \gamma, \quad \big\|\tilde{\epsilon}^{(n)}_2\big\|_{L^\infty_t L^\infty_x[ (-\infty,-t_0] \cup [t_0, \infty) ]} < \gamma.
\]
Finally, we have asymptotic energy conservation in the sense that 
\[
 \lim_{n\rightarrow\infty} \sup_{t\in \R} \, \Big| \big\|\nabla_{t,x}\tilde{\epsilon}^{(n)}(t, \cdot)\big\|_{L_x^2} - \big\|\tilde{\epsilon}[0]\big\|_{\dot{H}^1_x \times L^2_x} \Big| = 0.
\]
\end{lem}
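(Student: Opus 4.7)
The plan is to prove the three assertions of the lemma in succession, treating \eqref{eq:therightone} as a linear system with a ``magnetic-type'' potential built from the globally controlled low-frequency wave map $u^{n}$. The key structural input is that the matrix $U_{<\mu_n}^{(u^{n})} \cA_{\alpha, low}(u^{n}) ( U_{<\mu_n}^{(u^{n})} )^\dagger$ is anti-symmetric (inherited from the anti-symmetry of $\cA_{\alpha,low}(u^n)$ and the orthogonality of $U_{<\mu_n}^{(u^n)}$) and has essential frequency support in $\{|\xi| \lesssim 2^{\mu_n}\}$, with $\mu_n \to -\infty$ after the rescaling $\mu_n^1 \equiv 0$.

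For existence, uniqueness and the global $S$-norm bound I would mimic the bootstrap strategy of Proposition~\ref{prop:evolving_lowest_frequency_nonatomic_part}. Specifically, use Proposition~\ref{prop:partial_fungibility} to partition $\bR = \cup_{j=1}^{N} I_j$ into $N \equiv N(\|u^n\|_{S},E_{crit})$ consecutive intervals on which $\|u^n\|_{S[I_j]} \le C(E_{crit})$. On each $I_j$ run a fixed point at the level of the frequency-localized gauged variables $U_{<k}^{(u^n)} \tilde{\epsilon}_k$: these satisfy a wave equation in which the ``dangerous'' part of the magnetic coupling has been cancelled by the gauge choice (exactly as in Section~\ref{sec:decomposition_nonlinearity}), so the remaining inhomogeneity can be estimated by the trilinear estimate \eqref{equ:trilinear_estimate}, the null form estimate \eqref{equ:null_form_estimate} and Strichartz estimates, together with the energy estimate \eqref{equ:energy_estimate}. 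Uniqueness is immediate from linearity and the $L^2$ energy identity below.

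For asymptotic energy conservation I would integrate the energy identity
\[
 \frac{d}{dt} E(\tilde{\epsilon}^{(n)})(t) = -2 \int_{\bR^2} (\partial_t \tilde{\epsilon}^{(n)})^\dagger \, \bigl( U_{<\mu_n}^{(u^{n})} \cA_{\alpha,low}(u^{n}) ( U_{<\mu_n}^{(u^{n})} )^\dagger \bigr) \, \partial^\alpha \tilde{\epsilon}^{(n)} \, dx.
\]
The $\alpha=0$ contribution vanishes pointwise by anti-symmetry, leaving only the spatial terms. To show that these integrate to $o(1)$ as $n \to \infty$ uniformly in $t$, first approximate $\tilde{\epsilon}[0]$ by data with frequency support in a fixed window $[-K,K]$ up to an $\dot H^1_x \times L^2_x$ error that is arbitrarily small (step (i) then bounds the associated evolution in $S$). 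For the truncated piece, the coefficient has frequency support $\lesssim 2^{\mu_n}$ while the evolution $\tilde{\epsilon}^{(n)}$ retains essential frequency support $\gtrsim 2^{-K}$ (propagated through the evolution by a frequency-envelope bootstrap, paralleling Lemma~\ref{lem:localperturbative}), so the spatial interaction term is low-high and Proposition~\ref{prop:trilinear_estimate} together with an integration by parts produces a factor $2^{\mu_n + K}$ after summation, giving $o(1)$ as $n \to \infty$.

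For the asymptotic vanishing in $L^\infty_x$, split $\tilde{\epsilon}[0] = \tilde{\epsilon}[0]_{\rm small} + \tilde{\epsilon}[0]_{\rm reg}$ in $\dot H^1_x \times L^2_x$, where the small piece has norm $\le \gamma/C$ (so its $(n)$-evolution $\tilde{\epsilon}^{(n)}_1$ has $\|\tilde{\epsilon}^{(n)}_1\|_{S} < \gamma$ by step (i)), and $\tilde{\epsilon}[0]_{\rm reg}$ is smooth, spatially compactly supported, and frequency-localized to $[-K,K]$. Write the $(n)$-evolution $\tilde{\epsilon}^{(n)}_2$ of the regular piece as a free radial wave plus a Duhamel correction driven by $2 U_{<\mu_n}^{(u^{n})} \cA_{\alpha,low}(u^{n}) ( U_{<\mu_n}^{(u^{n})} )^\dagger \partial^\alpha \tilde{\epsilon}^{(n)}_2$; by the same low-high frequency separation, this forcing is small in $L^1_t L^2_x$, so the correction is small in $S$ and therefore in $L^\infty_t L^\infty_x$. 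The free radial wave with smooth, spatially localized, frequency-bounded data disperses in $L^\infty_x$ at rate $|t|^{-1/2}$ outside any compact time interval, which yields the required smallness $<\gamma$ for $|t| \ge t_0$ with $t_0 = t_0(\gamma, \tilde{\epsilon}[0])$.

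The main obstacle is the asymptotic energy conservation, since the perturbation is not a priori small in any norm that trivially controls the energy-flux integral; the gain must be extracted from the frequency separation between the low-frequency connection form (at scale $2^{\mu_n}$ with $\mu_n \to -\infty$) and the essentially fixed-scale evolution $\tilde{\epsilon}^{(n)}$. Making this rigorous requires both propagating the frequency localization of $\tilde{\epsilon}^{(n)}$ through the flow (up to the exponentially decaying tails produced by the gauge transformations $U_{<k}^{(u^n)}$) and a careful integration-by-parts manipulation of the anti-symmetric quadratic form, exploiting the precise structure of $\cA_{\alpha,low}(u^n)$ recorded above \eqref{eq:therightone}.
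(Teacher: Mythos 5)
Your treatment of existence, the uniform $S$-norm bound (via divisibility as in Proposition~\ref{prop:partial_fungibility}) and the asymptotic $L^\infty$ decay (small-energy piece plus a dispersing regular piece) is in the same spirit as the paper, which disposes of these points briefly. The substantive part of the paper's proof is the asymptotic energy conservation, and there your argument has a genuine gap.

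After the $\alpha=0$ term drops out by anti-symmetry, you are left with the flux term $\int_{\R}\int_{\R^2}(\partial_t\tilde{\epsilon}^{(n)})^\dagger\,\cA_j\,\partial_j\tilde{\epsilon}^{(n)}\,dx\,dt$, and you propose to make this $o(1)$ using low--high frequency separation, Proposition~\ref{prop:trilinear_estimate} and an integration by parts. This does not work. The trilinear estimate applies only to the null contraction $\partial_\alpha\phi_{k_2}\partial^\alpha\phi_{k_3}$; the flux term contracts $\partial_t$ on one copy of $\tilde{\epsilon}^{(n)}$ with $\partial_j$ on the other (spatial indices only, no compensating time--time piece), so it carries no null structure, and integrating by parts in $x_j$ only produces the \emph{spatial} divergence $\partial_j\cA_j$, not the space-time divergence $\partial^\alpha\cA_\alpha$ which is what has null structure. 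Frequency separation alone gives no smallness here: schematically $\cA_j$ is of size $O(2^{\mu_n})$ in $L^\infty_{t,x}$ but does not decay in time, so the time integral of $\|\cA_j\|_{L^\infty_x}\|\nabla_{t,x}\tilde{\epsilon}^{(n)}\|_{L^2_x}^2$ diverges; this first-order magnetic interaction is exactly the ``strong low-high interaction'' that is non-perturbative throughout the paper. The actual cancellation comes from the anti-symmetry of $\cA_\alpha$ used structurally: for an outgoing radial wave $\partial_r\tilde{\epsilon}\approx-\partial_t\tilde{\epsilon}$, so the integrand is of the form $v^\dagger\cA v$ to leading order and vanishes. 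The paper implements this by replacing the plain energy with the covariant energy
\[
 E_k^{(n)}(t)=\frac12\int_{\R^2}\Bigl(\bigl|\partial_t\tilde{\epsilon}_k^{(n)}-\cA_{0,<k}\tilde{\epsilon}_k^{(n)}\bigr|^2+\sum_{j=1,2}\bigl|\partial_j\tilde{\epsilon}_k^{(n)}-\cA_{j,<k}\tilde{\epsilon}_k^{(n)}\bigr|^2\Bigr)\,dx,
\]
whose time derivative, after repeated use of anti-symmetry and integration by parts, cancels the dangerous term $2\cA_\alpha\partial^\alpha\tilde{\epsilon}_k^{(n)}$ exactly against the equation and leaves only terms involving $\partial^\alpha\cA_{\alpha,<k}$, the curvature $\partial_t\cA_{j,<k}-\partial_j\cA_{0,<k}$, and quadratic expressions in $\cA_{<k}$ --- all of which do carry null structure in $u^n$ (or extra smallness) and are shown to vanish as $n\to\infty$; since $\cA_{\alpha,<k}\tilde{\epsilon}_k^{(n)}$ is small in $L^2_x$ by the frequency separation, $E_k^{(n)}$ agrees asymptotically with the plain energy. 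Without this (or an equivalent) device, your computation cannot be closed.
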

\begin{proof}
The a priori bounds follow from the divisibility argument used in the proof of Proposition~\ref{prop:partial_fungibility}. Moreover, we can infer that if $\{c_k\}_{k\in \bZ}$ is a sufficiently flat frequency envelope covering the data $\tilde{\epsilon}[0]$, then we obtain 
\[
 \| \tilde{\epsilon}^{(n)}_k \|_{S_k[\bR]} \leq D(\| u^{n} \|_{S[\bR]}) c_k. 
\]
The asymptotic decay follows as in the proof of Proposition~9.20 in~\cite{KS}. Correspondingly, we only turn to the details of the proof of the asymptotic energy conservation. Our reasoning will be closely related to but in certain aspects a much simplified version of the proof of Proposition~9.14 in~\cite{KS}. Thus, let $P_k$ be slightly modified Littlewood-Paley projections with the property that the corresponding cutoffs $\chi_k(\xi)$ on the frequency side satisfy for all $\xi \neq 0$ that
\[
 \sum_{k \in \bZ} \chi_k(\xi)^2 = 1.
\]
In particular, we then have (setting $\tilde{\epsilon}_k = P_k\tilde{\epsilon}$)
\[
\sum_{k \in \bZ} \big\| \nabla_{t,x} \tilde{\epsilon}_k(0, \cdot) \big\|_{L_x^2}^2 = \big\| \nabla_{t,x} \tilde{\epsilon}(0, \cdot) \big\|_{L_x^2}^2. 
\]
In particular, it suffices to prove asymptotic energy conservation for the frequency localized functions $\tilde{\epsilon}_k$. To simplify the notation within this proof, we shall write 
\[
 \cA_{\alpha} \equiv U_{<\mu_n}^{(u^{n})} \cA_{\alpha, low}(u^{n}) \bigl( U_{<\mu_n}^{(u^{n})} \bigr)^\dagger.
\]
Then on localizing \eqref{eq:therightone} to dyadic frequency $k \in \bZ$, we obtain 
\[
\Box \tilde{\epsilon}_k^{(n)} = 2 \cA_{\alpha, <k} \partial^{\alpha} \tilde{\epsilon}_k^{(n)} + F_k^{(n)}, 
\]
where we have $\lim_{n\rightarrow\infty}\sum_k \big\| F_k^{(n)} \big\|_{L_t^1 L_x^2}^2 = 0$. We shall now consider the quantity 
\[
E_k^{(n)}(t): = \frac12 \int_{\R^2} \Big( \big| \partial_t \tilde{\epsilon}_k^{(n)} - \cA_{0,<k} \tilde{\epsilon}_k^{(n)} \big|^2 + \sum_{j = 1,2} \big| \partial_j \tilde{\epsilon}_k^{(n)} - \cA_{j,<k} \tilde{\epsilon}_k^{(n)} \big|^2 \Big) \, dx
\]
and show that uniformly for all $t \in \bR$,
\begin{equation} \label{eq:asymptoticenergyrelation}
 \lim_{n\rightarrow\infty} E_k^{(n)}(t) = \big\| \nabla_{t,x} \tilde{\epsilon}_k(0, \cdot) \big\|_{L_x^2}^2.
\end{equation}
Then on account of the fact that uniformly for all $t \in \bR$,
\[
 \lim_{n\rightarrow\infty} \sum_{k\in\bZ} \big\| \cA_{\alpha,<k} \tilde{\epsilon}_k^{(n)} \big\|_{L_x^2}^2 = 0,
\]
the final conclusion of the lemma follows. In order to show \eqref{eq:asymptoticenergyrelation}, we differentiate $E_k^{(n)}(t)$ and find
\begin{align*}
(E_k^{(n)})'(t) &= \int_{\R^2} \big( \partial_{tt} \tilde{\epsilon}_k^{(n)} - \cA_{0,<k} \partial_t \tilde{\epsilon}_k^{(n)} - \partial_t \cA_{0,<k} \tilde{\epsilon}_k^{(n)} \big)^\dagger \cdot \big( \partial_t \tilde{\epsilon}_k^{(n)} - \cA_{0,<k} \tilde{\epsilon}_k^{(n)} \big) \, dx \\
&\quad + \sum_{j=1,2} \int_{\R^2} \big( \partial_{tj} \tilde{\epsilon}_k^{(n)} - \cA_{j,<k} \partial_t \tilde{\epsilon}_k^{(n)} - \partial_t \cA_{j,<k} \tilde{\epsilon}_k^{(n)} \big)^\dagger \cdot \big( \partial_j \tilde{\epsilon}_k^{(n)} - \cA_{j,<k}\tilde{\epsilon}_k^{(n)} \big) \, dx \\
&\equiv I + II.
\end{align*}
To simplify things below, we first observe the following schematic vanishing relations
\begin{align*}
\lim_{n\rightarrow\infty} \int_{\R^{1+2}} \partial^{\alpha} \cA_{\alpha, <k} \nabla_{t,x} \tilde{\epsilon}_k^{(n)} \tilde{\epsilon}_k^{(n)} \, dx \, dt &= 0, \\
\lim_{n\rightarrow\infty} \int_{\R^{1+2}} (\partial_t \cA_{j, <k} - \partial_j \cA_{0, <k}) \nabla_{t,x} \tilde{\epsilon}_k^{(n)} \tilde{\epsilon}_k^{(n)} \, dx \, dt &= 0, \\
\lim_{n\rightarrow\infty} \int_{\R^{1+2}} (\cA_{<k})^2 \nabla_{t,x} \tilde{\epsilon}_k^{(n)} \tilde{\epsilon}_k^{(n)} \, dx \, dt &= 0, \\
\lim_{n\rightarrow\infty} \int_{\R^{1+2}} (\nabla_{t,x} \cA_{<k}) \cA_{<k} (\tilde{\epsilon}_k^{(n)})^2 \, dx \, dt &= 0.
\end{align*}
Here one may replace $\R^{1+2}$ by $I \times \R^2$ for any time interval $I$, the vanishing relations being uniform in $I$. To see the first of these relations, we write schematically
\[
\partial^{\alpha} \cA_{\alpha, <k} = P_{<k} \Big( U_{<\mu_n}^{(u^{n})} \partial_{\alpha} u^{n} \partial^{\alpha} u^{n} \big(U_{<\mu_n}^{(u^{n})} \big)^\dagger \Big) + \text{cubic terms}, 
\]
where the cubic terms arise upon differentiating $U_{<\mu_n}^{(u^{n})}$ or re-expanding $\Box u^{n}$ using the wave maps equation. Then it is straightforward to place the cubic terms into $L_t^1 L_x^M$ using the Strichartz type norms in our $S$~space in conjunction with Bernstein's inequality. In fact, from the definition of $u^{n}$, the cubic terms live at frequency~$<~\mu_n$ up to asymptotically vanishing terms with respect to $L_t^1 L_x^M$. Then one closes by observing that 
\begin{align*}
 \big\| P_l \bigl( \nabla_{t,x} \tilde{\epsilon}_k^{(n)} \tilde{\epsilon}_k^{(n)} \bigr) \big\|_{L_t^\infty L_x^{1+}} &\lesssim 2^{-\sigma|l-k|} 2^{-(1-)k} \big\| \tilde{\epsilon}_k^{(n)} \big\|_{S_k}^2 \lesssim 2^{-\sigma|l-k|} 2^{-(1-)k} c_k,
\end{align*}
where $\{ c_k \}_{k\in\bZ}$ is a sufficiently flat frequency envelope covering the data $\tilde{\epsilon}[0]$. On the other hand, owing to the null structure, the principal term 
\[
 P_{<k} \Big( U_{<\mu_n}^{(u^{n})} \partial_{\alpha} u^{n} \partial^{\alpha}u^{n} \bigl( U_{<\mu_n}^{(u^{n})} \bigr)^\dagger \Big) 
\]
can be placed into $L_{t,x}^{\frac32+}$, again essentially reduced to frequencies $<\mu_n$, while one uses 
\[
\big\| P_l \bigl( \nabla_{t,x} \tilde{\epsilon}_k^{(n)} \tilde{\epsilon}_k^{(n)} \bigr) \big\|_{L_{t,x}^{3-}} \lesssim 2^{-\sigma|l-k|} 2^{-(0+)k} c_k. 
\]
The second null form 
\[
\int_{\R^2}(\partial_t \cA_{j, <k} - \partial_j \cA_{0, <k}) \nabla_{t,x}\tilde{\epsilon}_k^{(n)}\tilde{\epsilon}_k^{(n)}\,dx
\]
is handled similarly, and since we can similarly bound $\big\| ( \cA_{<k} )^2\big\|_{L_{t,x}^{\frac32+}}$ and $\big\| ( \nabla_{t,x} \cA_{<k} ) \cA_{<k} \big\|_{L_{t,x}^{\frac32+}}$, we also get the remaining vanishing relations. 

It now remains to reduce $(E_k^{(n)})'(t)$ to the expressions in the preceding vanishing relations. To this end we first observe that by the anti-symmetry of $\cA_\alpha$ it holds that
\[
 \bigl( \cA_{0, <k} \partial_t \tilde{\epsilon}_k^{(n)} \bigr)^\dagger \cdot \partial_t \tilde{\epsilon}_k^{(n)} = 0,
\]
and thus for the term $I$ we have 
\[
 I = \int_{\R^2} \bigl( \partial_{tt} \tilde{\epsilon}_k^{(n)} - 2 \cA_{0, <k} \partial_t \tilde{\epsilon}_k^{(n)} - \partial_t \cA_{0, <k} \tilde{\epsilon}_k^{(n)} \bigr)^\dagger \cdot \bigl( \partial_t \tilde{\epsilon}_k^{(n)} - \cA_{0, <k} \tilde{\epsilon}_k^{(n)} \bigr) \, dx + error,
\]
where $error$ refers to terms satisfying the above vanishing relations. Next, we integrate by parts in term $II$ to find that
\begin{align*}
II &= \sum_{j=1,2} \int_{\R^2} \big( \partial_{tj} \tilde{\epsilon}_k^{(n)} - \cA_{j,<k} \partial_t \tilde{\epsilon}_k^{(n)} - \partial_t \cA_{j,<k} \tilde{\epsilon}_k^{(n)} \big)^\dagger \cdot \big( \partial_j \tilde{\epsilon}_k^{(n)} - \cA_{j,<k} \tilde{\epsilon}_k^{(n)} \big) \,dx \\
&= - \sum_{j=1,2} \int_{\R^2} \bigl( \partial_{t} \tilde{\epsilon}_k^{(n)} \bigr)^\dagger \cdot \big( \partial_{jj} \tilde{\epsilon}_k^{(n)} - \cA_{j,<k} \partial_j \tilde{\epsilon}_k^{(n)} - \partial_j \cA_{j,<k} \tilde{\epsilon}_k^{(n)} \big) \, dx \\
&\quad - \sum_{j=1,2} \int_{\R^2} \big( \cA_{j,<k}\partial_t\tilde{\epsilon}_k^{(n)} + \partial_t \cA_{j,<k} \tilde{\epsilon}_k^{(n)} \big)^\dagger \cdot \big( \partial_j \tilde{\epsilon}_k^{(n)} - \cA_{j,<k} \tilde{\epsilon}_k^{(n)} \big) \, dx.
\end{align*}
Then by the anti-symmetry of $\cA_{\alpha}$ we have the relation $\bigl( \cA_{j,<k} \partial_t \tilde{\epsilon}_k^{(n)} \bigr)^\dagger \cdot \partial_j \tilde{\epsilon}_k^{(n)} = - \bigl( \partial_t \tilde{\epsilon}_k^{(n)} \bigr)^\dagger \cdot \cA_{j,<k} \partial_j \tilde{\epsilon}_k^{(n)}$, and may write the preceding further as 
\begin{align*}
 II &= - \sum_{j=1,2} \int_{\R^2} \bigl( \partial_{t} \tilde{\epsilon}_k^{(n)} \bigr)^\dagger \cdot \bigl( \partial_{jj} \tilde{\epsilon}_k^{(n)} - 2 \cA_{j,<k} \partial_j \tilde{\epsilon}_k^{(n)} - \partial_j \cA_{j,<k} \tilde{\epsilon}_k^{(n)} \big) \, dx \\
 &\quad - \sum_{j=1,2} \int_{\R^2} \bigl( \partial_t \cA_{j,<k} \tilde{\epsilon}_k^{(n)} \bigr)^\dagger \cdot\partial_j\tilde{\epsilon}_k^{(n)} \,dx + error.
\end{align*}
We further modify the first term on the right-hand side above to obtain 
\begin{align*}
II &= - \sum_{j=1,2}\int_{\R^2} \bigl( \partial_{t} \tilde{\epsilon}_k^{(n)} - \cA_{0,<k} \tilde{\epsilon}_k^{(n)} \bigr)^\dagger \cdot \bigl( \partial_{jj} \tilde{\epsilon}_k^{(n)} - 2 \cA_{j,<k} \partial_j \tilde{\epsilon}_k^{(n)} - \partial_j \cA_{j,<k} \tilde{\epsilon}_k^{(n)} \bigr) \, dx \\
  &\quad - \sum_{j=1,2} \int_{\R^2} \bigl( \cA_{0,<k} \tilde{\epsilon}_k^{(n)} \bigr)^\dagger \cdot \partial_{jj} \tilde{\epsilon}_k^{(n)} \, dx - \sum_{j=1,2} \int_{\R^2} \bigl( \partial_t \cA_{j,<k} \tilde{\epsilon}_k^{(n)} \bigr)^\dagger \cdot\partial_j\tilde{\epsilon}_k^{(n)} \,dx + error \\
  & =  - \sum_{j=1,2} \int_{\R^2} \big( \partial_{t} \tilde{\epsilon}_k^{(n)} - \cA_{0,<k} \tilde{\epsilon}_k^{(n)} \big)^\dagger \cdot \big( \partial_{jj} \tilde{\epsilon}_k^{(n)} - 2 \cA_{j,<k} \partial_j \tilde{\epsilon}_k^{(n)} - \partial_j \cA_{j,<k} \tilde{\epsilon}_k^{(n)} \big) \, dx \\
  &\quad + \sum_{j=1,2} \int_{\R^2} \big( \partial_j \cA_{0,<k} \tilde{\epsilon}_k^{(n)} \big)^\dagger \cdot \partial_{j} \tilde{\epsilon}_k^{(n)} \, dx - \sum_{j=1,2} \int_{\R^2} \bigl( \partial_t \cA_{j,<k} \tilde{\epsilon}_k^{(n)} \bigr)^\dagger \cdot\partial_j\tilde{\epsilon}_k^{(n)} \,dx + error,
\end{align*}
where we also used that $\big( \cA_{0,<k} \partial_j \tilde{\epsilon}_k^{(n)} \big)^\dagger \cdot \partial_{j} \tilde{\epsilon}_k^{(n)} = 0$. Putting things together, we find that 
\begin{align*}
(E_k^{(n)})'(t) &= \int_{\R^2} \big( - \Box \tilde{\epsilon}_k^{(n)} + 2 \cA_{\alpha,<k} \partial^{\alpha} \tilde{\epsilon}_k^{(n)} + \partial^\alpha \cA_{\alpha,<k} \tilde{\epsilon}_k^{(n)} \big)^\dagger \cdot \big( \partial_t \tilde{\epsilon}_k^{(n)} - \cA_{0,<k} \tilde{\epsilon}_k^{(n)} \big) \, dx \\
&\quad + \sum_{j=1,2} \int_{\R^2} \big( ( \partial_j \cA_{0,<k} - \partial_t \cA_{j, <k} ) \tilde{\epsilon}_k^{(n)} \big)^\dagger \cdot \partial_{j} \tilde{\epsilon}_k^{(n)} \, dx + error
\end{align*}
and the asymptotic energy conservation follows by using the equation satisfied by $\tilde{\epsilon}_k^{(n)}$ as well as the above vanishing relations.
\end{proof}

We now turn to the process of extracting linear concentration profiles with respect to the wave operator
\[
 \tilde{\Box}_{A^n} := \Box - 2 U_{< \mu_n}^{(u^{n})} \cA_{\alpha, low}(u^{n}) \big( U_{<\mu_n}^{(u^{n})} \big)^\dagger \partial^{\alpha}.
\]
In the following we use notation borrowed from \cite{Bahouri_Gerard}, \cite{KS} and \cite{KL}. We denote by $S_{A^n}$ the wave propagator associated with $\tilde{\Box}_{A^n}$ and shall say that a sequence of data pairs $\tilde{\epsilon}^n[0] \colon \R^2 \rightarrow \R^{m+1} \times \R^{m+1}$ is {\it essentially supported at frequency $\sim 1$}, provided it holds that
\[
 \lim_{R \rightarrow \infty} \limsup_{n\rightarrow\infty} \big\| P_{[-R, R]^c} \tilde{\epsilon}^n[0] \big\|_{\dot{H}^1_x \times L^2_x} = 0. 
\]
Given a sequence of data $\{ \tilde{\epsilon}^n[0] \}_{n \geq 1}$ essentially supported at frequency $\sim 1$ and uniformly bounded in $\dot{H}^1_x(\R^2; \R^{m+1}) \times L^2_x(\R^2; \R^{m+1})$, we introduce the set 
\begin{align*}
 &\mathcal{U}_{A^n}(\tilde{\epsilon}^n[0]) := \Big\{ V \in L_{t,loc}^2 H^1_x(\R^{1+2}; \R^{m+1}) \cap C^1_t L^2_x(\R^{1+2}; \R^{m+1}) \, : \, \exists \{(t_n, x_n)\}_{n\geq 1} \subset \R \times \R^2 \,\text{s.t.} \\ 
 &\hspace{10.5cm} S_{A^n} \big(\tilde{\epsilon}^n[0]\big)(t+t_n, x+x_n) \rightharpoonup V \Big\}.
\end{align*}
Here the weak limit is in the sense of $L^2_{t, loc} H^1_x$ and we observe that each such weak limit $V(t,x)$ solves $\Box V = 0$ in the sense of distributions. We define 
\[
 \eta_{A^n} \big( \tilde{\epsilon}^n[0] \big) := \sup \Big\{ E(V) = \int_{\R^2} \big| \nabla_{t,x} V \big|^2 \, dx \, : \, V \in \mathcal{U}_{A^n}(\tilde{\epsilon}^n[0]) \Big\}.
\]
Then the extraction process of the linear concentration profiles is summarized in the following proposition, which we formulate in the context of general data which have the weak frequency localization properties of~$\epsilon^{n}[0]$.
\begin{prop} \label{prop:linprofiles}
Let $\{ \tilde{\epsilon}^{n}[0] \}_{n \geq 1}$ be a sequence of radially symmetric data pairs $\R^2 \rightarrow \R^{m+1} \times \R^{m+1}$, which are essentially supported at frequency $\sim 1$ and satisfy a uniform energy bound 
\[
 \sup_{n \geq 1} \big\| \tilde{\epsilon}^{n}[0] \big\|_{\dot{H}^1_x \times L^2_x} \lesssim 1.
\]
Upon passing to a suitable subsequence, there exists a sequence of profiles $\tilde{\epsilon}_{b}[0]$, $b \geq 1$, as well as sequences of time shifts $\{t^n_b\}_{n \geq 1}$, $b \geq 1$, satisfying the divergence relation
\[
  \lim_{n\rightarrow\infty}\big|t^n_b - t^n_{b'}\big| = +\infty, \quad b \neq b', 
\]
so that we have for any $B \geq 1$,
\[
 S_{A^n}\big(\tilde{\epsilon}^{n}[0]\big)(t, x) = \sum_{b=1}^B S_{A^n(t+t^n_b, \cdot)}\big(\tilde{\epsilon}_b[0]\big)(t - t^n_b, x) + \tilde{\epsilon}^{n}_{B}(t, x)
\]
and such that 
\[
\lim_{B\rightarrow\infty}\limsup_{n\rightarrow\infty} \eta_{A^n}(\tilde{\epsilon}^{n}_{B}[0]) = 0.
\]
If the data $\tilde{\epsilon}^{Kn,<K_*}[0]$ also depend on two parameters $K$ and $K_*$ such that 
\[
 \lim_{K_{1,2}\rightarrow\infty} \limsup_{K_*\rightarrow\infty} \limsup_{n\rightarrow\infty} \big\|\tilde{\epsilon}^{K_1n,<K_*}[0] - \tilde{\epsilon}^{K_2n,<K_*}[0]\big\|_{\dot{H}^1_x \times L^2_x} = 0
\]
and similarly for $K_*$, then constructing the corresponding profiles $\tilde{\epsilon}_b^{K, <K_*}$ and the remainder term $\tilde{\epsilon}^{Kn,<K_*}_{B}$, we may assume that the limits
\[
\lim_{K\rightarrow\infty}\lim_{K_*\rightarrow\infty}\tilde{\epsilon}_b^{K, <K_*}[0]
\]
exist in $\dot{H}^1_x \times L^2_x$, and also that $\lim_{K\rightarrow\infty} \lim_{K_*\rightarrow\infty} \eta_{A^n}\big(\tilde{\epsilon}^{Kn,<K_*}_{B}[0]\big)$ exists. Finally, we have asymptotic energy conservation 
\[
 \lim_{n\rightarrow\infty}\big\|\nabla_{t,x}\big(S_{A^n(t+t^n_b, \cdot)}(\tilde{\epsilon}_b[0])\big)(0 - t^n_b, \cdot)\big\|_{L_x^2} = \big\|\tilde{\epsilon}_b[0]\big\|_{\dot{H}^1_x \times L^2_x}, \quad b \geq 1,
\]
as well as the asymptotic orthogonality relation 
\[
 \big\| \tilde{\epsilon}^{n}[0] \big\|_{\dot{H}^1_x \times L^2_x}^2 = \sum_{b=1}^B \big\|\tilde{\epsilon}_b[0]\big\|_{\dot{H}^1_x \times L^2_x}^2 + \big\|\tilde{\epsilon}_B^n[0]\big\|_{\dot{H}^1_x\times L^2_x}^2 + o(1) \quad \text{as } n \to \infty.
\]
\end{prop}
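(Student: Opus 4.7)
The plan is to follow the classical Bahouri--G\'erard~\cite{Bahouri_Gerard} iterative extraction scheme, with the free wave operator replaced by the twisted operator $\tilde{\Box}_{A^n}$. Set $\tilde{\epsilon}^n_0[0] := \tilde{\epsilon}^n[0]$. At stage $B$, if $\eta_{A^n}(\tilde{\epsilon}^n_{B-1}[0]) = 0$, we stop; otherwise, by the very definition of $\eta_{A^n}$, I would select, along a subsequence, sequences $\{t^n_B\}_{n\geq 1}$ and $\{x^n_B\}_{n\geq 1}$ so that $S_{A^n}(\tilde{\epsilon}^n_{B-1}[0])(\cdot + t^n_B, \cdot + x^n_B)$ converges weakly in $L^2_{t,loc} H^1_x$ to some $V_B$ with $E(V_B) \geq \tfrac{1}{2}\eta_{A^n}(\tilde{\epsilon}^n_{B-1}[0])$. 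Radiality lets us take $x^n_B \equiv 0$. Because $\cA_{\alpha, low}(u^n)$ is essentially frequency supported below $\mu_n \to -\infty$ and $u^n$ enjoys the global $S$-norm control from Proposition~\ref{prop:evolving_lowest_frequency_nonatomic_part}, the magnetic perturbation in $\tilde{\Box}_{A^n}$ converges weakly to zero after translation, so $V_B$ solves the free wave equation $\Box V_B = 0$. I then define $\tilde{\epsilon}_B[0]$ as the weak limit in $\dot{H}^1_x \times L^2_x$ of the shifted Cauchy data $S_{A^n}(\tilde{\epsilon}^n_{B-1}[0])[t^n_B]$, so that $S_{A^n(\cdot + t^n_B, \cdot)}(\tilde{\epsilon}_B[0])(\cdot - t^n_B, \cdot)$ produces the same weak limit $V_B$ and subtract it off to obtain $\tilde{\epsilon}^n_B[0]$.

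Next I would establish the near-orthogonality identity
\[
 \big\|\tilde{\epsilon}^n_{B-1}[0]\big\|_{\dot{H}^1_x \times L^2_x}^2 = \big\|\tilde{\epsilon}_B[0]\big\|^2_{\dot{H}^1_x \times L^2_x} + \big\|\tilde{\epsilon}^n_B[0]\big\|^2_{\dot{H}^1_x \times L^2_x} + o(1),
\]
by combining asymptotic energy conservation along the twisted flow (Lemma~\ref{lem:soloftherightoneasymptotics}) with the standard Pythagorean relation for weak limits. Iterating yields both the claimed asymptotic orthogonality and the summability $\sum_B \|\tilde{\epsilon}_b[0]\|_{\dot{H}^1_x \times L^2_x}^2 \lesssim 1$. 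The divergence of the time shifts $|t^n_b - t^n_{b'}| \to \infty$ for $b \neq b'$ is enforced by contradiction: should $\{t^n_b - t^n_{b'}\}_n$ stay bounded, one would extract $t^n_b - t^n_{b'} \to \tau$ on a subsequence and translate in time by $\tau$ to lift the $b$-th concentration profile to stage $b'$, contradicting the fact that its energy was already extracted. Summability of the profile energies then forces $\eta_{A^n}(\tilde{\epsilon}^n_B[0]) \to 0$ as $B \to \infty$ along a final diagonal subsequence, since each step extracts at least half of the current value of $\eta_{A^n}$.

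The main obstacle will be the absence of exact translation invariance for $\tilde{\Box}_{A^n}$ due to the time-dependent magnetic background. Concretely, one needs to verify that the weak limit $V_B$ (taken with respect to the family $S_{A^n}$) is genuinely realized, at $t=0$, by an actual twisted-flow profile $S_{A^n(\cdot + t^n_B, \cdot)}(\tilde{\epsilon}_B[0])$. For diverging $|t^n_B| \to \infty$ this is tractable because the asymptotic decay in $L^\infty_t L^\infty_x$ from Lemma~\ref{lem:soloftherightoneasymptotics} forces the shifted twisted flow to behave like a free wave on any compact time window around $t = 0$; for bounded $t^n_B$ it follows from standard stability of the twisted propagator in $\dot{H}^1_x \times L^2_x$. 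Finally, for the parameter-dependent enhancement, a Cantor diagonal argument applied across $K$, $K_*$ and $B$, combined with the assumed uniform Cauchy property of $\{\tilde{\epsilon}^{Kn,<K_*}[0]\}$, transfers the Cauchy property stably to the extracted profiles $\tilde{\epsilon}^{K,<K_*}_b[0]$ and to the remainder quantities $\eta_{A^n}(\tilde{\epsilon}^{Kn,<K_*}_B[0])$, yielding the stated existence of limits in $K$ and $K_*$.
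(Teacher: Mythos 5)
Your proposal is correct and follows essentially the same route as the paper, which simply invokes Lemma~\ref{lem:soloftherightoneasymptotics} and defers to the analogous Bahouri--G\'erard-type extractions in \cite{KS} (Proposition 7.11) and \cite{KL} (Lemma 9.23) — i.e., iterative extraction of weak limits along diverging time shifts for the twisted flow, Pythagorean expansion via asymptotic energy conservation, and a diagonal argument in the auxiliary parameters. Your sketch fills in exactly the steps that citation is standing in for, so no further comment is needed.
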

\begin{proof} 
 Using Lemma~\ref{lem:soloftherightoneasymptotics}, the proof proceeds in direct analogy to the proof of Proposition 7.11 in~\cite{KS} or to the proof of Lemma 9.23 in~\cite{KL}.
\end{proof}

For each dyadic frequency $k \in \bZ$ we now consider the sequence $\bigl\{ \tilde{\epsilon}^{Kn,<K_*}_{unbounded,k}[0] \bigr\}_{n \geq 1}$ defined further above in~\eqref{eq:tildeepsilonKn<K} and extract concentration profiles $\tilde{\epsilon}^{K,<K_*}_{b, k}[0]$, $b \geq 1$. Applying Proposition~\ref{prop:linprofiles} each time and passing to a suitable subsequence, we may assume that we obtain the same time scales $\{ t^n_b \}_{n \geq 1}$. Importantly, by the construction of $\bigl\{ \tilde{\epsilon}^{Kn,<K_*}_{unbounded,k}[0] \bigr\}_{n \geq 1}$ in Lemma~\ref{lem:prelim profile}, the extracted concentration profiles $\tilde{\epsilon}^{K,<K_*}_{b, k}[0]$, $b \geq 1$, must all be temporally unbounded, i.e. we have $\lim_{n\to\infty} |t_b^n| = +\infty$ for every $b \geq 1$. In order to simplify the notation we set
\[
 \tilde{\epsilon}^{Kn,<K_*}_{b,k}(t,\cdot) := S_{A^n(t+t^n_b, \cdot)}\big( \tilde{\epsilon}_{b,k}^{K,<K_*}[0] \big)(t - t^n_b, \cdot).
\]
Moreover, for later reference, we observe that due to our definition \eqref{eq:tildeepsilonKn<K} and simple frequency considerations, we have that $ \tilde{\epsilon}^{Kn,<K_*}_{b,k}$ is essentially supported at frequency $\sim 2^k$ in the sense that 
\[
 \sum_{a\in \Z} 2^{\sigma |a|} \Big( \sum_{k\in \Z} \big\|P_{k+a} \tilde{\epsilon}^{Kn,<K_*}_{b,k} \big\|_{S_{k+a}}^2 \Big)^{\frac12} \lesssim \big\|\tilde{\epsilon}^{K,<K_*}_b[0]\big\|_{\dot{H}^1_x \times L^2_x}
\]
for an absolute constant $\sigma>0$.

\medskip 

Then it is natural to make the following ansatz for each dyadic frequency $k \in \bZ$,
\begin{equation} \label{eq:profilesfreqloc}
 \phi^{n,<K_*}_k = u^{n}_{k}  + \epsilon^{<K_*}_{bounded,k} + \bigl( U_{<k}^{(\phi^{n,<K_*})} \bigr)^\dagger \Big( \sum_{b=1}^B \tilde{\epsilon}^{Kn,<K_*}_{b,k} + \tilde{\epsilon}^{Kn,<K_*}_{B,k} \Big) + \eta^{Kn,<K_*}_{(k)}
\end{equation}
and without the frequency localization 
\begin{equation} \label{eq:profiles}
 \phi^{n,<K_*} = u^{n}  - p_* + \epsilon^{<K_*}_{bounded} + \sum_{k \in \bZ} \bigl( U_{<k}^{(\phi^{n,<K_*})} \bigr)^\dagger \Big( \sum_{b=1}^B \tilde{\epsilon}^{Kn,<K_*}_{b,k} + \tilde{\epsilon}^{Kn,<K_*}_{B,k} \Big) + \eta^{Kn,<K_*}.
\end{equation}
Here, $\epsilon^{<K_*}_{bounded}$ of course refers to the wave maps evolution of the data $\epsilon^{<K_*}_{bounded}[0] = (\epsilon^{<K_*}_{bounded}, \gamma^{<K_*}_{bounded})$ constructed in the preceding Lemma~\ref{lem:prelim profile}.

\medskip

There is a small subtlety in \eqref{eq:profilesfreqloc} on account of the fact that the expression 
\[
 \big( U_{<k}^{(\phi^{n,<K_*})} \big)^\dagger \Big( \sum_{b=1}^B \tilde{\epsilon}^{Kn,<K_*}_{b,k} + \tilde{\epsilon}^{Kn,<K_*}_{B,k} \Big)
\]
is not necessarily localized to frequency $\sim 2^k$ but only up to exponentially decaying tails, and hence the same applies to $ \eta^{Kn,<K_*}_{(k)}$, whence the different notation. Note, however, that by definition we have 
\begin{align*}
 \big( U_{<k}^{(\phi^{n,<K_*})} \big)^\dagger \Big( \sum_{b=1}^B\tilde{\epsilon}^{Kn,<K_*}_{b,k}+ \tilde{\epsilon}^{Kn,<K_*}_{B,k} \Big) + \eta^{Kn,<K_*}_{(k)} &= \tilde{P}_k \bigg( \big( U_{<k}^{(\phi^{n,<K_*})})^\dagger \Big( \sum_{b=1}^B\tilde{\epsilon}^{Kn,<K_*}_{b,k}+ \tilde{\epsilon}^{Kn,<K_*}_{B,k} \Big) + \eta^{Kn,<K_*}_{(k)} \bigg) \\
 & = \tilde{P}_k \bigg( \big( U_{<k}^{(\phi^{n,<K_*})} \big)^\dagger \Big( \sum_{b=1}^B \tilde{\epsilon}^{Kn,<K_*}_{b,k} + \tilde{\epsilon}^{Kn,<K_*}_{B,k} \Big) \bigg) + \tilde{P}_k \eta^{Kn,<K_*}_{(k)}
\end{align*}
for a suitable localizer $\tilde{P}_k$, and this is what we shall substitute in frequency localized terms. Then we can also replace \eqref{eq:profiles} by 
\begin{equation} \label{eq:profiles1}
 \phi^{n,<K_*} = u^{n} - p_* + \epsilon^{<K_*}_{bounded} + \sum_{k\in\bZ} \tilde{P}_k \bigg( \big( U_{<k}^{(\phi^{n,<K_*})} \big)^\dagger \Big( \sum_{b=1}^B \tilde{\epsilon}^{Kn,<K_*}_{b,k} + \tilde{\epsilon}^{Kn,<K_*}_{B,k} \Big) \bigg) + \sum_{k\in\bZ} \tilde{P}_k \eta_{(k)}^{Kn,<K_*}.
\end{equation}
More precisely, this ansatz is natural to make as long as we are in the regime where the $ \epsilon^{<K_*}_{bounded}$ is truly nonlinear, while in its asymptotic regime we would want to replace it by a solution to \eqref{eq:therightone}. Unfortunately, by comparison to \cite{KS} and \cite{KL}, it appears harder here to make a good {\it{global}} ansatz for the solution, as the previous Lemma~\ref{lem:prelim profile} has already indicated how delicate the correct choice of the bounded profile at time $t = 0$ was. In fact, we expect the correct choice of bounded profile at the next concentration time $t = t^n_1$ (assuming, as we may, that $t^n_1 \ll t^n_2 \ll \ldots$) to delicately hinge on $\phi^{n,<K_*}$ at that time. Our way out of this shall be a careful inductive procedure, first controlling the solution (in terms of the $ \eta^{Kn,<K_*}$) on the time slice $[0, t^n_1 -T_{1*}]$ for some sufficiently large but finite $T_{1*}$ (and as usual picking $n$ large enough), and then delicately modifying the ansatz \eqref{eq:profiles1} to track the solution on $[t^n_1 -T_{1*}, t^n_2 -T_{2*}]$ and so on. Here a crucial point shall be that {\it{our choice of $B$ shall be rather simple}}, and in fact only hinge on $\| u^{n} \|_{S[\bR]}$. This should be compared with the cruder arguments in \cite{KS} and \cite{KL}, where the choice of $B$ hinges on fine properties such as $S$ norm bounds of the profiles and their scattering behavior. In our situation, in light of the poor perturbation theory and the fact that we do not even know the later nonlinear profiles, we could not possibly define $B$ in this manner. 

\medskip 

We call the expressions
\[
 \epsilon^{<K_*}_{bounded}[0], \quad \sum_{k \in \bZ} \tilde{P}_k \Big( \big(U_{<k}^{(\phi^{n,<K_*})} \big)^\dagger \tilde{\epsilon}^{Kn,<K_*}_{b,k} \Big)[0], \quad b \geq 1,
\]
the \emph{profiles} (all evaluated at time $t = 0$). We note that the energy of these expressions as $n\rightarrow\infty$ is well-defined. Also, it follows from Lemma~\ref{lem:prelim profile} that we have the asymptotic orthogonality relation 
\begin{equation} \label{eq:asymptoticorthogonalityatcoordinates} 
 \begin{aligned}
  &\big\|\nabla_{t,x}\phi^{n,<K_*}(0, \cdot)\big\|_{L_x^2}^2 \\ 
  &= \big\|\nabla_{t,x} u^{n}(0, \cdot)\big\|_{L_x^2}^2 + \big\|\nabla_{t,x} \epsilon^{<K_*}_{bounded}(0, \cdot)\big\|_{L_x^2}^2 + \sum_{b=1}^B \big\|\nabla_{t,x} \sum_{k\in\bZ} \tilde{P}_k \Big( \big(U_{<k}^{(\phi^{n,<K_*})}\big)^\dagger \tilde{\epsilon}^{Kn,<K_*}_{b,k} \Big)(0, \cdot)\big\|_{L_x^2}^2 \\
  &\quad \quad + \big\|\nabla_{t,x} \sum_{k\in\bZ} \tilde{P}_k \Big( \big( U_{<k}^{(\phi^{n,<K_*})} \big)^\dagger \tilde{\epsilon}^{Kn,<K_*}_{B,k} \Big)(0, \cdot) \big\|_{L_x^2}^2 + o(1),
 \end{aligned} 
\end{equation}
where the error vanishes asymptotically as $K, n\rightarrow\infty$. The following theorem is the key result of this subsection.

\begin{thm} \label{thm:profiledecomp} 
Assume that the profiles all have asymptotically (as $K, K_*\rightarrow\infty$) energy strictly less than $E_{crit}$. In particular, this is the case when there are at least two profiles present for $K, K_*$ large enough. Then the data $\phi^{n,<K_*}[0]$ can be evolved globally in time and the resulting solution $\phi^{n,<K_*}$ satisfies uniformly for large $K_*$ and $n$ the bound 
\begin{equation} \label{equ:uniform_S_bounds_profile_decomp_thm}
 \big\| \phi^{n,<K_*} \big\|_{S} < \infty.
\end{equation}
In fact, given $\delta_2>0$, there exists $B = B(\|u^{n}\|_S, \delta_2)$ such that there are profiles $\tilde{\tilde{\epsilon}}^{n,<K_*}_{bounded}$ and $\tilde{\tilde{\epsilon}}^{Kn,<K_*}_{b}$, $b \geq 1$, satisfying for $K, K_*,n$ sufficiently large
\[
 E[\tilde{\tilde{\epsilon}}^{n,<K_*}_{bounded}] < E[\epsilon^{<K_*}_{bounded}] + \delta_2, \quad E[\tilde{\tilde{\epsilon}}^{Kn,<K_*}_{b}] < E[\tilde{\epsilon}^{Kn,<K_*}_{b}] + \delta_2, \quad b \geq 1,
\]
and also 
\[
 \big\|\tilde{\tilde{\epsilon}}^{n,<K_*}_{bounded}\big\|_{S} < \infty, \quad \big\|\tilde{\tilde{\epsilon}}^{Kn,<K_*}_{b}\big\|_{S}<\infty, \quad b \geq 1,
\]
uniformly for $n, K_*$ large and such that 
\[
 \phi^{n,<K_*} = u^{n} + \tilde{\tilde{\epsilon}}^{n,<K_*}_{bounded} + \sum_{b=1}^B \tilde{\tilde{\epsilon}}^{Kn,<K_*}_{b} + \sum_{k\in\bZ} \big( U_{<k}^{(\phi^{n,<K_*})} \big)^\dagger \tilde{\epsilon}^{Kn,<K_*}_{B,k} + \tilde{\tilde{\eta}}^{Kn,<K_*}
\]
with $\big\|\tilde{\tilde{\eta}}^{Kn,<K_*}\big\|_{S} < \delta_2$. The profiles $\tilde{\tilde{\epsilon}}^{n,<K_*}_{bounded}$ and $\tilde{\tilde{\epsilon}}^{Kn,<K_*}_{b}$ coincide with the nonlinear profile $\epsilon^{<K_*}_{bounded}$, respectively certain nonlinear profiles $\tilde{\tilde{\epsilon}}^{<K_*}_{b}$ near $t = 0$, respectively near $t = t^n_b$, and are of the form 
\[
 \tilde{\tilde{\epsilon}}^{Kn,<K_*}_{b} = \sum_{k \in \Z} \big( U_{<k}^{(\phi^{n,<K_*})} \big)^\dagger \tilde{\epsilon}^{Kn,<K_*}_{b,k},
\]
where the $\tilde{\epsilon}^{Kn,<K_*}_{b,k}$ are solutions to \eqref{eq:therightone} far away from these times. Also, $\tilde{\epsilon}^{Kn,<K_*}_{B,k}$ is a solution to~\eqref{eq:therightone}. 
\end{thm}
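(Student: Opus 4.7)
The plan is to proceed by a finite induction over the concentration times $t_1^n \ll t_2^n \ll \ldots \ll t_B^n$ (reordering the profiles if necessary), producing on each slab a controlled evolution expressed in terms of a ``current'' nonlinear bounded profile together with the remaining linear profiles evolved via \eqref{eq:therightone}. The number $B$ is chosen depending only on $\|u^n\|_S$ and $\delta_2$ large enough so that the sum of the energies of the profiles with index $>B$ falls below the small-energy threshold $\varepsilon_0$ from Theorem~\ref{thm:small_energy_global} (this uses the asymptotic orthogonality \eqref{eq:asymptoticorthogonalityatcoordinates} together with the uniform bound on $\|\phi^{n,<K_*}[0]\|_{\dot H^1_x\times L^2_x}$). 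The induction hypothesis is that the statement of Theorem~\ref{thm:main_theorem} holds for all energies strictly below $E_{crit}$; hence the nonlinear bounded profile $\epsilon^{<K_*}_{bounded}$ (starting from $\epsilon^{<K_*}_{bounded}[0]$ constructed in Lemma~\ref{lem:prelim profile}) and every nonlinear profile attached at the concentration times $t_b^n$ obey global $S$-norm bounds a priori.

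On the first slab $[0, t_1^n - T_{1*}]$ (with $T_{1*}$ large but finite, to be chosen), I would make the ansatz \eqref{eq:profiles1} with the bounded profile evolved nonlinearly, every unbounded profile of index $b\le B$ replaced by $\bigl(U_{<k}^{(\phi^{n,<K_*})}\bigr)^\dagger \tilde\epsilon^{Kn,<K_*}_{b,k}$ (solutions to \eqref{eq:therightone} on this slab, since $t_b^n$ is far away), and the tail term $\tilde\epsilon^{Kn,<K_*}_{B,k}$ also evolved linearly via \eqref{eq:therightone}. The analysis of the resulting error $\tilde{\tilde\eta}^{Kn,<K_*}$ proceeds at the level of the gauged variables, i.e.\ one inserts the ansatz into the renormalized equation \eqref{equ:nlw_for_renormalized_phi_bar} for each dyadic block and bootstraps. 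The key mechanisms making the error small are: (i) the asymptotic decoupling in space-time of distinct profiles coming from Proposition~\ref{prop:linprofiles} together with Lemma~\ref{lem:soloftherightoneasymptotics}, which reduces cross terms to diagonal ones via divisibility of Strichartz, weighted local-energy-decay and $Z_k^\pm$ norms as in the proof of Proposition~\ref{prop:partial_fungibility}; (ii) the smallness of $\eta_{A^n}(\tilde\epsilon^{Kn,<K_*}_{B}[0])$ combined with the small-energy theory Theorem~\ref{thm:small_energy_global} applied to the tail; (iii) the crucial gain below $L^2_t L^2_x$ for radial null forms (Proposition~\ref{prop:null_form_estimate}, \eqref{equ:null_form_estimate_proof_L32}), which turns the interaction of two distinct unbounded profiles with diverging time shifts into an asymptotically negligible contribution — this is exactly the ``gain smallness from null form below $L^2$'' point that will be elaborated in Subsection~\ref{subsubsec:gain_smallness_null_form_below_L2}. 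A bootstrap argument analogous to the proof of Proposition~\ref{prop:bootstrap_bounds_to_next_level_in_evolving_lowest_frequency_nonatomic_part}, but with the low-frequency $u^n$ replaced by $u^n$ plus the already-constructed nonlinear bounded profile, then closes on $[0, t_1^n - T_{1*}]$ uniformly in $n$ and $K, K_*$.

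Next comes the handoff to the slab $[t_1^n - T_{1*}, t_2^n - T_{2*}]$, which I expect to be the main obstacle. The difficulty is that the linear evolution of the $b=1$ profile is not an honest map to $\mathbb{S}^m$ and does not interact perturbatively near its concentration time with the already-evolved bounded profile $\tilde{\tilde\epsilon}^{n,<K_*}_{bounded}$. I would resolve this exactly in the spirit of Lemma~\ref{lem:prelim profile}: at time $t = t_1^n - T_{1*}$ extract from the current state $\phi^{n,<K_*}(t_1^n - T_{1*},\cdot)$ a new bounded component by applying the normal projection $\Pi$ on a suitable ball localized around the concentration point of the first profile, absorbing the non-geometric error into a quantity whose energy is small (controlled by $T_{1*}\to\infty$ using the linear asymptotic vanishing from Lemma~\ref{lem:soloftherightoneasymptotics}). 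The new bounded profile, by the asymptotic orthogonality \eqref{eq:asymptoticorthogonalityatcoordinates}, still has energy strictly below $E_{crit}$ (up to a loss controlled by $\delta_2$), so by the inductive assumption admits a global $S$-bound, and produces the nonlinear profile $\tilde{\tilde\epsilon}^{<K_*}_1$. The previously bounded profile is simultaneously replaced by its linear evolution via \eqref{eq:therightone} on the new slab, where by Lemma~\ref{lem:soloftherightoneasymptotics} it too is essentially linear and decays in $L^\infty$. The bootstrap on the new slab then proceeds exactly as before.

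Iterating this handoff $B$ times up to the slab $[t_B^n + T_{B*}, \infty)$ — where every profile is again in its asymptotic linear regime and the solution can be controlled by the small-energy theory applied to the tail plus the linear evolution of the profiles — one obtains \eqref{equ:uniform_S_bounds_profile_decomp_thm}. Finally, to pass from the truncated data $\phi^{n,<K_*}[0]$ back to $\phi^n[0] = \Pi_{\leq \mu_n^1+\log R_n}\phi^n[0]$, one invokes Lemma~\ref{lem:localperturbative} and Proposition~\ref{prop:data_approximate_preservation_of_frequency_localization}: the uniformity in $K_*$ of the bounds just obtained allows to upgrade $\phi^{n,<K_*}\to \phi^n$ as a high-frequency perturbation, completing the proof of the theorem.
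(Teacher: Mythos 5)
Your overall architecture matches the paper's: a finite induction over time slabs delimited by the concentration times, with the current bounded profile evolved nonlinearly (globally, by the minimality of $E_{crit}$), the remaining profiles and the tail evolved via the magnetic linear flow \eqref{eq:therightone}, a handoff at each $t^n_b - T_{b*}$ in which a new bounded profile is extracted from the actual solution, and a final upgrade from $\phi^{n,<K_*}$ to $\phi^n$ via Lemma~\ref{lem:localperturbative}. However, there are two genuine gaps. First, your treatment of the tail is wrong as stated: you choose $B$ so that ``the sum of the energies of the profiles with index $>B$ falls below $\varepsilon_0$'' and then propose to apply the small-energy theory of Theorem~\ref{thm:small_energy_global} to the tail. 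But the remainder $\tilde{\epsilon}^{Kn,<K_*}_{B}$ is \emph{not} small in energy --- it contains, besides the profiles of index $>B$, the entire non-concentrating part of the sequence, whose energy can be comparable to $E_{crit}$. What Proposition~\ref{prop:linprofiles} gives as $B\to\infty$ is only smallness of the concentration functional $\eta_{A^n}(\tilde{\epsilon}^{Kn,<K_*}_{B}[0])$. The paper converts this into smallness of the source terms $H^{(2)}_{(k)}$ in $\ell^2_k L^1_t L^2_x$ through the interpolation mechanism of Lemma~\ref{lem:bilinearnullformbelowL2use} (null form estimated below $L^2_{t,x}$ interpolated against an $L^\infty_{t,x}$ factor that vanishes with $\eta_{A^n}$); without this step your bootstrap on each slab does not close.

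Second, you do not address the accumulation of errors over the $B$ handoffs. Each passage from one slab to the next replaces the error term by a new one ($\eta \to \tilde\eta \to \tilde{\tilde\eta} \to \ldots$), and a naive iteration could degrade the bound $\|\tilde{\tilde{\eta}}\|_S < \delta_2$ after $B$ steps, since the bootstrap constants on each slab depend on the size of the incoming error. The paper's device is to compare the concatenated errors with a \emph{single} globally defined function $\tilde{\eta}^{Kn,<K_*}_{(1)}$ solving the approximate equation \eqref{eq:tiletaapprox}; that equation is solvable globally by a divisibility argument with a norm controlled by $c(\eta_{A^n}(\tilde{\epsilon}^{Kn,<K_*}_{B}))$, hence small once $B = B(\|u^n\|_S,\delta_2)$ is large, and the discrepancy between the slab-by-slab errors and this global solution vanishes as the parameters $R, T_{b*}, K, n$ are sent to infinity. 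Since there are only $B$ (an a priori bounded number of) adjustments of these parameters, the process does not diverge. You need an analogue of this comparison to justify the uniform bound $\|\tilde{\tilde{\eta}}^{Kn,<K_*}\|_S < \delta_2$ at the end of the iteration. A minor further point: at the handoff the paper obtains the new bounded profile directly as a local strong limit of $\sum_k \tilde P_k\bigl(\bigl(U^{(\phi^{n,<K_*})}_{<k}\bigr)^\dagger \tilde\epsilon^{Kn,<K_*}_{1,k}\bigr) + \tilde\eta^{Kn,<K_*}$ plus the constant $p_{1*}$, which automatically maps into $\bS^m$ because the other constituents vanish in $L^\infty_{loc}$; your proposed normal projection on a ball is not needed and would require re-verifying the frequency localization statements of Proposition~\ref{prop:data_approximate_preservation_of_frequency_localization} in this setting.
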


\begin{rem}
 We emphasize that the bound \eqref{equ:uniform_S_bounds_profile_decomp_thm} on the $S$ norms of the evolutions $\phi^{n, <K_*}$, which holds uniformly for all sufficently large $K_*$ and $n$, implies via the high-frequency perturbation Lemma~\ref{lem:localperturbative} that the data $\Pi_{\leq \mu_n^1 + \log(R_n)} \phi^n[0]$ can be globally evolved with uniform $S$ norm bounds for all sufficently large $n$. This conclusion is key for the next step in our induction on frequency process in the next subsection.
\end{rem}

Before we begin with the proof of Theorem~\ref{thm:profiledecomp} we first address some technical issues.

\subsubsection{Technical remarks about multilinear estimates using the decomposition \eqref{eq:profiles1}}

In the sequel, we shall estimate the terms on the right hand side of the basic gauged wave equation 
\begin{equation} \label{eq:againtheequation}
 \Box \big( U_{<k}^{(\phi^{n,<K_*})} \overline{\phi_{k}^{n,<K_*}} \big) = U_{<k}^{(\phi^{n,<K_*})}F_k(\phi^{n,<K_*}, \nabla_{t,x}\phi^{n,<K_*})
\end{equation}
or minor variations thereof by inserting the decompositions \eqref{eq:profiles1} and exploiting a priori bounds on $u^{n}$, $\epsilon^{<K_*}_{bounded}$, $\tilde{\epsilon}^{Kn,<K_*}_{b,k}$ and $\tilde{\epsilon}^{Kn,<K_*}_{B,k}$, where for the last two expressions we of course need control over all frequencies $k \in \bZ$. 

We recall that the structure of the renormalized nonlinearity $F_k(\cdot, \cdot)$ was carefully analyzed in Proposition~\ref{prop:nlw_for_renormalized_phi_bar_schematic_identities}. For what follows it will be useful to introduce the notion of ``perturbative factors'' and ``non-perturbative factors'' in the multilinear expressions constituting the nonlinearity $F_k(\cdot, \cdot)$. We call an input of a multilinear expression in $F_k(\cdot, \cdot)$ a ``non-perturbative factor'' if it can only be estimated in $L^\infty_t L^\infty_x$ in order to place the whole multilinear expression into $L^1_t L^2_x$. These ``non-perturbative factors'' can only occur in the quintilinear expressions\footnote{An example of a ``non-perturbative factor'' is the input $\phi_{\leq k-10}^{(1)}$ in the following quintilinear expression $P_k \bigg( \phi_{\leq k-10}^{(1)} \sum_{k_1 = k_2 + O(1)} P_{k_1} \Big( \phi_{\leq k_1-10}^{(2)} \sum_{k_3=k_4+O(1)} (\partial_\alpha \phi_{k_3}^{(3)} )^\dagger \partial^\alpha \phi_{k_4}^{(4)} \Big)^\dagger \phi_{k_2}^{(5)} \bigg)$ arising in the term $Q_k(\phi)$ in \eqref{equ:nlw_for_renormalized_phi_bar_term1}.} $Q_k(\phi)$ in \eqref{equ:nlw_for_renormalized_phi_bar_term1} as well as in \eqref{equ:schematic_form_nlw_for_renormalized_phi_bar_term3} and in \eqref{equ:schematic_form_nlw_for_renormalized_phi_bar_term4}. All other inputs of any multilinear term in $F_k(\cdot, \cdot)$ will be referred to as ``perturbative factors''.

Then we will be facing the technical difficulty that the ``gauged terms''
\[
 \big( U_{<k}^{(\phi^{n,<K_*})} \big)^\dagger \tilde{\epsilon}^{Kn,<K_*}_{b,k}, \quad \big( U_{<k}^{(\phi^{n,<K_*})} \big)^\dagger \tilde{\epsilon}^{Kn,<K_*}_{B,k}
\]
again involve the a priori uncontrolled function $\phi^{n,<K_*}$, although we of course have a priori control over $\big\| \big( U_{<k}^{(\phi^{n,<K_*})} \big)^\dagger \big\|_{L_{t,x}^\infty}$. Also, observe that the two preceding expressions actually are not localized sharply to frequency $\sim 2^k$, but only up to exponentially decaying tails. To deal with these issues we formulate
\begin{lem}\label{lem:tediousmultilinear} Let $\{c_k^{(b)}\}_{k\in \Z}$ be a sufficiently flat frequency envelope covering $\sum_{\tilde{k}}2^{\sigma|k - \tilde{k}|}\big\|P_k\big(\tilde{\epsilon}^{Kn,<K_*}_{b,\tilde{k}}\big)\big\|_{S_k}$ for some small $\sigma>0$. Then substituting 
\[
 \sum_{\tilde{k}}P_{\tilde{k}} \big( \big(U_{<\tilde{k}}^{(\phi^{n,<K_*})}\big)^\dagger \tilde{\epsilon}^{Kn,<K_*}_{b,\tilde{k}} \big)
\]
for $\phi^{n,<K_*}$ for the ``perturbative factors'' in the terms $F_k(\phi^{n,<K_*}, \nabla_{t,x}\phi^{n,<K_*})$, and calling the resulting functions $G_k$, we get under the bootstrap assumption $\big\|\sum_k\tilde{P}_k\eta_{(k)}^{Kn,<K_*}\big\|_S\lesssim 1$ the bound
\[
 \big\|G_k\big\|_{L_t^1 L_x^2}\leq  C \Big( \|u^{n}\|_{S}, \big\|\epsilon^{<K_*}_{bounded}\big\|_{S},\sum_k \Big( \sum_{\tilde{b}} c_k^{(\tilde{b})} \Big)^2, \sum_k \Big( \sum_{\tilde{k}}2^{\sigma|k - \tilde{k}|} \big\|P_k\tilde{\epsilon}^{Kn,<K_*}_{B,\tilde{k}}\big\|_{S}\Big)^2 \Big) c_k^{(b)}.
\]
Thus, one can estimate $G_k$ as if the factors were just $P_k \sum_{\tilde{k}}\tilde{\epsilon}^{Kn,<K_*}_{b,\tilde{k}}$. Similar inequalities can be obtained when one or more of the ``perturbative factors'' in $F_k(\cdot, \cdot)$ are occupied by $\sum_{\tilde{k}}P_{\tilde{k}} \big( \big(U_{<\tilde{k}}^{(\phi^{n,<K_*})} \big)^\dagger \tilde{\epsilon}^{Kn,<K_*}_{b,\tilde{k}} \big)$ while other ``perturbative factors'' are occupied by one or more of the remaining terms in \eqref{eq:profiles1}. 
\end{lem}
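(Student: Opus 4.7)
The plan is to examine each multilinear summand in $F_k(\phi^{n,<K_*}, \nabla_{t,x}\phi^{n,<K_*})$ according to the schematic identities of Proposition~\ref{prop:nlw_for_renormalized_phi_bar_schematic_identities}, substitute the decomposition \eqref{eq:profiles1} into every perturbative slot, and then show that inserting the gauged profile $\sum_{\tilde{k}}P_{\tilde{k}}\bigl((U_{<\tilde{k}}^{(\phi^{n,<K_*})})^\dagger\tilde{\epsilon}^{Kn,<K_*}_{b,\tilde{k}}\bigr)$ into a slot produces no more than the substitution of the unadorned $P_k\sum_{\tilde{k}}\tilde{\epsilon}^{Kn,<K_*}_{b,\tilde{k}}$ would. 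The actual bounds are then the null form estimate \eqref{equ:null_form_estimate}, the weighted null form estimate \eqref{equ:weighted_null_form_estimate}, the trilinear estimate \eqref{equ:trilinear_estimate}, together with Strichartz and local energy decay bounds, chained exactly as in the proof of Proposition~\ref{prop:partial_fungibility}.

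First I would extract from the bootstrap assumption on $\sum_{k}\tilde{P}_k\eta_{(k)}^{Kn,<K_*}$ together with \eqref{eq:profiles1} and the asymptotic orthogonality \eqref{eq:asymptoticorthogonalityatcoordinates} a uniform (in large $n$, $K$, $K_*$) bound on $\|\phi^{n,<K_*}\|_S$ in terms of $\|u^{n}\|_S$, $\|\epsilon^{<K_*}_{bounded}\|_S$, $\sum_{k}(\sum_{\tilde{b}}c_k^{(\tilde{b})})^2$ and $\sum_{k}\bigl(\sum_{\tilde{k}}2^{\sigma|k-\tilde{k}|}\|P_k\tilde{\epsilon}^{Kn,<K_*}_{B,\tilde{k}}\|_S\bigr)^2$. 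By Lemma~\ref{lem:gauge_transform_bounds} this implies uniform $L^\infty_t L^p_x$-bounds on $\nabla_{t,x}U_{<k}^{(\phi^{n,<K_*})}$ and on $\nabla_x\nabla_{t,x}U_{<k}^{(\phi^{n,<K_*})}$, while the exact orthogonality gives $\|U_{<k}^{(\phi^{n,<K_*})}\|_{L^\infty_{t,x}}\lesssim 1$.

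Now for a perturbative factor, whether carrying a derivative or not, I would use a Leibniz expansion
\[
 \nabla^\alpha_{t,x} P_{\tilde{k}}\bigl((U_{<\tilde{k}}^{(\phi^{n,<K_*})})^\dagger\tilde{\epsilon}^{Kn,<K_*}_{b,\tilde{k}}\bigr) = P_{\tilde{k}}\bigl((U_{<\tilde{k}}^{(\phi^{n,<K_*})})^\dagger\nabla^\alpha_{t,x}\tilde{\epsilon}^{Kn,<K_*}_{b,\tilde{k}}\bigr) + P_{\tilde{k}}\bigl((\nabla^\alpha_{t,x}U_{<\tilde{k}}^{(\phi^{n,<K_*})})^\dagger\tilde{\epsilon}^{Kn,<K_*}_{b,\tilde{k}}\bigr).
\]
In the first piece, $(U_{<\tilde{k}}^{(\phi^{n,<K_*})})^\dagger$ is placed in $L^\infty_{t,x}$ and the ensuing multilinear estimate reduces verbatim to the one in which $\tilde{\epsilon}^{Kn,<K_*}_{b,\tilde{k}}$ appears alone, producing an output carrying the frequency envelope $c_{\tilde{k}}^{(b)}$ that is $\ell^2$-summable by the flatness of $\{c_k^{(b)}\}$. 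In the second piece the derivative is on the gauge and the factor $\tilde{\epsilon}^{Kn,<K_*}_{b,\tilde{k}}$ itself appears without derivative and essentially at frequency $\sim 2^{\tilde{k}}$; here Lemma~\ref{lem:gauge_transform_bounds} furnishes the necessary Strichartz-type bounds for $\nabla_{t,x}U^{(\phi^{n,<K_*})}_{<\tilde{k}}$ so that the same chain of null form, trilinear, Strichartz and local energy decay estimates absorbs the contribution at the cost of a constant depending only on the quantities listed in the lemma. The fact that $P_{\tilde{k}}\bigl((U_{<\tilde{k}}^{(\phi^{n,<K_*})})^\dagger\tilde{\epsilon}^{Kn,<K_*}_{b,\tilde{k}}\bigr)$ is localized to frequency $\sim 2^{\tilde{k}}$ only up to exponentially decaying tails is then handled by the Littlewood--Paley trichotomy argument \eqref{equ:phi_bar_trichotomy} and the Bernstein bookkeeping used in the second part of the proof of Proposition~\ref{prop:partial_fungibility}, now applied with $\tilde{\epsilon}^{Kn,<K_*}_{b,\tilde{k}}$ playing the role of $\bar\phi_k$.

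The hard part will be purely bookkeeping: several perturbative slots may be simultaneously occupied by gauged profile terms, by the remainder $\sum_{\tilde{k}}P_{\tilde{k}}\bigl((U_{<\tilde{k}}^{(\phi^{n,<K_*})})^\dagger\tilde{\epsilon}^{Kn,<K_*}_{B,\tilde{k}}\bigr)$, by $\epsilon^{<K_*}_{bounded}$ or by $u^{n}$, and one has to verify that precisely one input carries the critical envelope $\{c_k^{(b)}\}$ while every other input is bounded by its natural $S$-norm, and that the $\ell^2$-summability in the output frequency $k$ survives the combined exponential tails produced by the gauge transformations, the envelope flatness of $\{c_k^{(b)}\}$ and the weight $2^{\sigma|k-\tilde{k}|}$ in the summability hypothesis on $\tilde{\epsilon}^{Kn,<K_*}_{B,\tilde{k}}$. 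Once this is arranged for the trilinear interaction terms, the quintilinear terms from $Q_k(\phi)$ and from \eqref{equ:schematic_form_nlw_for_renormalized_phi_bar_term3}--\eqref{equ:schematic_form_nlw_for_renormalized_phi_bar_term4} are easier thanks to the additional algebraic smallness, and the proof of the variant in which several perturbative factors carry different constituents of \eqref{eq:profiles1} reduces to the same arguments by redistributing which factor carries the critical envelope.
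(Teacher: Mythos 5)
Your overall strategy (substitute \eqref{eq:profiles1} into each schematic term of Proposition~\ref{prop:nlw_for_renormalized_phi_bar_schematic_identities}, split the gauged factor by a Leibniz/trichotomy rule, put $\big(U_{<\tilde{k}}^{(\phi^{n,<K_*})}\big)^\dagger$ into $L^\infty_{t,x}$ when it carries no derivative, and treat the case where the derivative hits the gauge separately) is the right one, but the way you close the derivative-on-gauge case has a genuine gap. Lemma~\ref{lem:gauge_transform_bounds} only yields $L^\infty_t L^p_x$ bounds on $\nabla_{t,x}U_{<\tilde{k}}^{(\phi^{n,<K_*})}$ (coming from energy conservation alone); it does \emph{not} give the ``Strichartz-type'' bounds you invoke, i.e.\ bounds with a finite time exponent. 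To place a trilinear term into $L^1_t L^2_x$ you need at least two (in fact, in the delicate high-low configurations, three) factors with finite time integrability, so a factor of the form $P_{\tilde{k}}\big((\nabla_{t,x}U_{<\tilde{k}}^{(\phi^{n,<K_*})})^\dagger\tilde{\epsilon}^{Kn,<K_*}_{b,\tilde{k}}\big)$ controlled only in $L^\infty_t L^p_x$ cannot be absorbed. The paper's proof supplies exactly this missing ingredient: the key estimate is $2^{k/3}\big\|P_k\big((U_{<k}^{(\phi^{n,<K_*})})^\dagger\tilde{\epsilon}^{Kn,<K_*}_{b,k}\big)\big\|_{L^3_tL^\infty_x}\lesssim c_k^{(b)}$, proved via the Littlewood--Paley trichotomy in which the problematic case is the high-low interaction $P_k\big(P_{[k-10,k+10]}(U_{<k}^{(\phi^{n,<K_*})})^\dagger P_{<k-10}\tilde{\epsilon}^{Kn,<K_*}_{b,k}\big)$, where the output frequency is carried by the gauge transformation. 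There one must re-expand $P_{[k-10,k+10]}(U_{<k}^{(\phi^{n,<K_*})})^\dagger$ schematically as $2^{-k}P_k\big((U^\dagger)\sum_{k_1<k}\nabla_x(\phi_{<k_1}\phi_{k_1}^\dagger-\phi_{k_1}\phi_{<k_1}^\dagger)(U^\dagger)\big)$, reinsert \eqref{eq:profiles1} for the differentiated factor, and iterate the trichotomy; the iteration terminates after finitely many steps because each step gains a factor $\|P_{<k-10}\tilde{\epsilon}^{Kn,<K_*}_{b,k}\|_{L^M_tL^\infty_x}\lesssim 2^{-k/M}c_k^{(b)}$. This iterative re-expansion of the gauge via its defining ODE and the profile decomposition is the essential mechanism of the lemma and is absent from your argument.

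A second, related problem is your opening step: you cannot extract a uniform bound on $\|\phi^{n,<K_*}\|_S$ from the bootstrap hypothesis on $\eta$ together with \eqref{eq:profiles1}, because the $S$-norms of the gauged profile terms $\sum_{\tilde{k}}\tilde{P}_{\tilde{k}}\big((U_{<\tilde{k}}^{(\phi^{n,<K_*})})^\dagger\tilde{\epsilon}^{Kn,<K_*}_{b,\tilde{k}}\big)$ themselves involve derivatives of $U^{(\phi^{n,<K_*})}$, hence of the a priori uncontrolled $\phi^{n,<K_*}$; this is circular. The whole design of the lemma (and the reason the constant depends only on the energies/envelopes of the constituents in \eqref{eq:profiles1}, not on $\|\phi^{n,<K_*}\|_S$) is to avoid ever needing such a bound, using only $\|U_{<\tilde{k}}^{(\phi^{n,<K_*})}\|_{L^\infty_{t,x}}\lesssim 1$ from exact orthogonality plus the re-expansion described above.
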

\begin{proof} We illustrate it by considering the specific term 
\[
 F_k(\phi, \nabla_{t,x}\phi) = \sum_{k_1<k}\sum_{k_2<k_1-10}\phi_{k_1} \partial_{\alpha}\phi_{k_2}^{\dagger}\partial^{\alpha}\phi_k.
\]
Thus we have to bound the expression 
\begin{equation} \label{eq:themess1} 
 \sum_{k_1<k}\sum_{k_2<k_1-10}P_{k_1} \big( \big(U_{<k_1}^{(\phi^{n,<K_*})}\big)^\dagger \tilde{\epsilon}^{Kn,<K_*}_{b,k_1} \big) \, \partial_{\alpha} P_{k_2} \big( \big(U_{<k_2}^{(\phi^{n,<K_*})}\big)^\dagger \tilde{\epsilon}^{Kn,<K_*}_{b,k_2} \big)^{\dagger} \, \partial^{\alpha} P_k \big( \big( U_{<k}^{(\phi^{n,<K_*})} \big)^\dagger \tilde{\epsilon}^{Kn,<K_*}_{b,k} \big).
\end{equation}
As usual, to simplify things, we reduce as we may to the case $k=0$. To begin with, we infer the general bound 
\begin{equation} \label{eq:L3Linftybound}
 2^{\frac{k}{3}} \big\| P_k \big( \big( U_{<k}^{(\phi^{n,<K_*})} \big)^\dagger \tilde{\epsilon}^{Kn,<K_*}_{b,k} \big) \big\|_{L_t^3 L_x^\infty} \lesssim c_k^{(b)}, 
\end{equation}
where the implied constant on the right is like the constant $C(\cdot)$ in the statement of the lemma. To see this, write 
\begin{equation} \label{eq:l3linftytrichotomy}
\begin{split}
 P_k \big( \big( U_{<k}^{(\phi^{n,<K_*})} \big)^\dagger \tilde{\epsilon}^{Kn,<K_*}_{b,k} \big) &= P_k \big( P_{<k-10} \big( U_{<k}^{(\phi^{n,<K_*})} \big)^\dagger P_{[k-10,k+10]} \tilde{\epsilon}^{Kn,<K_*}_{b,k} \big) \\
 &\quad + P_k \big( P_{[k-10,k+10]} \big( U_{<k}^{(\phi^{n,<K_*})} \big)^\dagger P_{<k-10} \tilde{\epsilon}^{Kn,<K_*}_{b,k} \big) \\
 &\quad + P_k \big( P_{\geq k-10} \big( U_{<k}^{(\phi^{n,<K_*})} \big)^\dagger P_{\geq k-10} \tilde{\epsilon}^{Kn,<K_*}_{b,k} \big).
\end{split}\end{equation}
The desired bound for the first term on the right is immediate. The bound for the third term on the right is also immediate, using 
\[
 \big\| P_{\geq k-10} \big( U_{<k}^{(\phi^{n,<K_*})} \big)^\dagger \big\|_{L_t^\infty L_x^2} \lesssim 2^{-k} \big\|\nabla_{t,x}\phi^{n,<K_*}\big\|_{L_t^\infty L_x^2}
\]
and Bernstein's inequality
\begin{align*}
\big\| P_k\big(P_{\geq k-10} \big( U_{<k}^{(\phi^{n,<K_*})} \big)^\dagger P_{\geq k-10} \tilde{\epsilon}^{Kn,<K_*}_{b,k} \big) \big\|_{L_t^3 L_x^\infty} &\lesssim \sum_{k_1>k-10} 2^k \big\|P_{k_1} \big( U_{<k}^{(\phi^{n,<K_*})} \big)^\dagger \big\|_{L_t^\infty L_x^2} \big\|P_{k_1}\tilde{\epsilon}^{Kn,<K_*}_{b,k}\big\|_{L_t^3 L_x^\infty} \\
&\lesssim2^k\sum_{k_1>k-10}2^{-\frac43 k_1}2^{-\sigma|k-k_1|}c_k^{(b)}\lesssim 2^{-\frac{k}{3}}c_k^{(b)}.
\end{align*}
For the second term on the right, we expand $P_{[k-10,k+10]} \big( U_{<k}^{(\phi^{n,<K_*})} \big)^\dagger$ schematically into 
\[
 - 2^{-k} P_{k} \big( \big( U_{<k}^{(\phi^{n,<K_*})} \big)^\dagger \sum_{k_1<k} \nabla_x \big( \phi_{<k_1} \phi_{k_1}^{\dagger} - \phi_{k_1} \phi_{<k_1}^{\dagger} \big) \big( U_{<k}^{(\phi^{n,<K_*})} \big)^\dagger \big). 
\]
Then, depending on where the derivative $\nabla_x$ lands, re-expand either $\nabla_x\phi_{<k_1}$ or $\nabla_x\phi_{k_1}$ using \eqref{eq:profiles1}, and repeat application of the above trichotomy. Then using that 
\[
 \big\| P_{<k-10} \tilde{\epsilon}^{Kn,<K_*}_{b,k} \big\|_{L_t^M L_x^\infty} \lesssim 2^{-\frac{k}{M}} c_k^{(b)}
\]
for $M$ large enough, one obtains the desired bound \eqref{eq:L3Linftybound} after finitely many steps, using H\"older's inequality. 

Now back to \eqref{eq:themess1}, consider first the case where all derivatives fall on a $\tilde{\epsilon}$-factor, which is the expression  
\begin{equation} \label{eq:themess2}
 \begin{split}
  &\sum_{k_1<0}\sum_{k_2<k_1-10} P_{k_1} \big( \big(U_{<k_1}^{(\phi^{n,<K_*})}\big)^\dagger \tilde{\epsilon}^{Kn,<K_*}_{b,k_1} \big) \, P_{k_2}\big( \big(U_{<k_2}^{(\phi^{n,<K_*})}\big)^\dagger \partial_{\alpha} \tilde{\epsilon}^{Kn,<K_*}_{b,k_2} \big)^{\dagger} \, P_0\big( \big( U_{<0}^{(\phi^{n,<K_*})} \big)^\dagger \partial^{\alpha}\tilde{\epsilon}^{Kn,<K_*}_{b,0}\big).
 \end{split}
\end{equation}
Then if in each of the expressions $P_{k_i}\big(\ldots\big), i = 1,2$ and $P_0\big(\ldots\big)$, we have a low-high interaction, we can simply bound the factors $\big( U_{<\cdot}^{(\phi^{n,<K_*})} \big)^\dagger$ in $L_{t,x}^\infty$ and the estimate follows. On the other hand, if at least one of the first two of these expressions has a high-low or high-high interaction, we place both factors inside it into $L_t^3 L_x^\infty$, using the observation before, and then also place the remaining low frequency term $P_{k_j}\big(\ldots\big)$ into $L_t^3 L_x^\infty$, while we simply place $P_0\big(\ldots\big)$ into $L_t^\infty L_x^2$. In case of a high-low or high-high interaction inside $P_0\big(\ldots\big)$, we place the lower frequency term into $L_t^3 L_x^\infty$ and the remaining low frequency terms $P_{k_j}\big(\ldots\big)$, $j = 1,2$, into $L_t^3 L_x^\infty$, while the high frequency term inside $P_0\big(\ldots\big)$ gets placed into $L_t^\infty L_x^2$. 

If at least one derivative $\partial_{\alpha}$ falls on a term $(U_{<\cdot}^{(\phi^{n,<K_*})})^\dagger$, say inside $P_{k_2}\big(\ldots\big)$, one can again close by placing three low frequency terms into $L_t^3 L_x^\infty$ as long as there is a low-high or high-high interaction inside $P_{k_2}\big(\ldots\big)$. In case of a high-low interaction, one uses re-expansion of $\partial_{\alpha} \big( U_{<k_2}^{(\phi^{n,<K_*})} \big)^\dagger$, using the equation for $U_{<k_2}^{(\phi^{n,<K_*})}$ and \eqref{eq:profiles1}, similar to the proof of \eqref{eq:L3Linftybound}. 
\end{proof}

\subsubsection{An important bilinear estimate} \label{subsubsec:gain_smallness_null_form_below_L2}

In the proof of Theorem~\ref{thm:profiledecomp}, an important role shall be played by the fact that certain source terms which are multilinear expressions whose factors are all essentially supported at frequency $\sim 1$, but which enjoy smallness of their $L^\infty_x$-norm, have small square-summed (over frequencies) $L_t^1 L_x^2$-norm. This smallness can be gained on account of our delicate bilinear null form estimate~\eqref{equ:null_form_estimate} which allows us to estimate the null form $\partial_\alpha \phi \partial^\alpha \psi$ ``below'' the $L_{t,x}^2$-space, in fact in $L^{\frac{3}{2}}_{t,x}$. This crucial fact was first pointed out by Klainerman-Machedon~\cite{KlMa1} provided both factors are free waves. We note that the corresponding improvements without the radiality assumption are due to Bourgain~\cite{Bourgain}, Wolff~\cite{Wolff} and Tao~\cite{Tao_bilinear_restriction} and played an important role in \cite{KS}. We quickly illustrate how to use this result with the following 
\begin{lem} \label{lem:bilinearnullformbelowL2use} 
Let $\{ \epsilon^{(n)} \}_{n \geq 1}$ be a family of functions essentially localized to frequency $\sim 1$ and with uniform bounds 
\[
 \sup_{n \geq 1} \| \epsilon^{(n)} \|_{S} \lesssim 1
\]
as well as the pointwise decay $\lim_{n\rightarrow\infty}\big\|P_k\nabla_{t,x}\epsilon^{(n)}\big\|_{L_{t,x}^\infty} = 0$ for all $k\in \Z$. Then denoting by $F_k(\epsilon^{(n)}, \nabla_{t,x}\epsilon^{(n)})$ any one of the source terms occurring in our equation 
\[
 \Box \big( U_{<k}^{(\phi)}\bar{\phi}_k \big) = U_{<k}^{(\phi)}F_k(\phi, \nabla_{t,x}\phi), 
\]
we have 
\[
 \lim_{n\to\infty} \sum_{k\in\bZ} \big\|F_k(\epsilon^{(n)}, \nabla_{t,x}\epsilon^{(n)})\big\|_{L_t^1L_x^2}^2 = 0.
\]
\end{lem}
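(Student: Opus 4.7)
The central observation is that, by Proposition~\ref{prop:nlw_for_renormalized_phi_bar_schematic_identities}, every schematic summand in $F_k(\epsilon^{(n)}, \nabla_{t,x}\epsilon^{(n)})$ contains at least one embedded null form $\partial_\alpha \epsilon^{(n)}_{k_1} \partial^\alpha \epsilon^{(n)}_{k_2}$. I will exploit the fact that Proposition~\ref{prop:null_form_estimate} permits pushing such a null form below the $L^2_{t,x}$ threshold: at the endpoint $p=\tfrac32$,
\[
 \bigl\| \partial_\alpha \epsilon^{(n)}_{k_1} \partial^\alpha \epsilon^{(n)}_{k_2} \bigr\|_{L^{3/2}_t L^{3/2}_x} \lesssim \| \epsilon^{(n)}_{k_1} \|_{S_{k_1}} \| \epsilon^{(n)}_{k_2} \|_{S_{k_2}},
\]
while the hypothesis $\|P_k \nabla_{t,x}\epsilon^{(n)}\|_{L^\infty_{t,x}} \to 0$ yields
\[
 \bigl\| \partial_\alpha \epsilon^{(n)}_{k_1} \partial^\alpha \epsilon^{(n)}_{k_2} \bigr\|_{L^\infty_{t,x}} \lesssim \|P_{k_1}\nabla_{t,x}\epsilon^{(n)}\|_{L^\infty_{t,x}} \|P_{k_2}\nabla_{t,x}\epsilon^{(n)}\|_{L^\infty_{t,x}} \xrightarrow[n\to\infty]{} 0
\]
for every fixed $(k_1,k_2)$. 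Log-convexity of $L^p$-norms then gives, for any $p\in(\tfrac32,\infty)$,
\[
 \bigl\| \partial_\alpha \epsilon^{(n)}_{k_1} \partial^\alpha \epsilon^{(n)}_{k_2} \bigr\|_{L^p_t L^p_x} \lesssim o_n(1)^{1-\frac{3}{2p}} \bigl( \| \epsilon^{(n)}_{k_1} \|_{S_{k_1}} \| \epsilon^{(n)}_{k_2} \|_{S_{k_2}} \bigr)^{\frac{3}{2p}}.
\]

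Next, I will fix $p = 2 + \delta$ for a small $\delta > 0$ so that H\"older's inequality becomes applicable in the trilinear setting. For a model term like $L(\epsilon^{(n)}_{k_3}, \partial_\alpha \epsilon^{(n)}_{k_1}, \partial^\alpha \epsilon^{(n)}_{k_2})$ (as encountered in \eqref{equ:nlw_for_renormalized_phi_bar_term1} and \eqref{equ:schematic_form_nlw_for_renormalized_phi_bar_term2}),
\[
 \bigl\| L\bigl( \epsilon^{(n)}_{k_3}, \partial_\alpha \epsilon^{(n)}_{k_1}, \partial^\alpha \epsilon^{(n)}_{k_2} \bigr) \bigr\|_{L^1_t L^2_x} \leq \bigl\| \epsilon^{(n)}_{k_3} \bigr\|_{L^q_t L^r_x} \bigl\| \partial_\alpha \epsilon^{(n)}_{k_1} \partial^\alpha \epsilon^{(n)}_{k_2} \bigr\|_{L^p_t L^p_x},
\]
with $1/q = (1+\delta)/(2+\delta)$ and $1/r = \delta/(2(2+\delta))$, both strictly positive precisely because $p>2$. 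The pair $(q,r)$ lies in the admissible Strichartz range built into the $S_k$-norm, so the first factor is controlled by $\|\epsilon^{(n)}_{k_3}\|_{S_{k_3}}$ (with a harmless Bernstein factor from the essential frequency localization at scale $\sim 1$). Combining this with the gain from the previous paragraph produces, writing $\{c_k\}$ for a flat envelope covering $\|P_k \epsilon^{(n)}\|_{S_k}$ uniformly in $n$, a bound of the schematic form
\[
 \bigl\| L\bigl( \epsilon^{(n)}_{k_3}, \partial_\alpha \epsilon^{(n)}_{k_1}, \partial^\alpha \epsilon^{(n)}_{k_2} \bigr) \bigr\|_{L^1_t L^2_x} \lesssim o_n(1)^{\theta} \, c_{k_1} c_{k_2} c_{k_3}
\]
for some $\theta = \theta(\delta) > 0$. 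The frequency-coupling constraints built into each schematic summand of $F_k$ (for instance $k_3 = k+O(1)$ or $k_1, k_2, k_3 \leq k$) then localize the $\ell^2_k$-summation and yield $\sum_k \|F_k\|_{L^1_t L^2_x}^2 \lesssim o_n(1)^{2\theta} \cdot (\sum_k c_k^2)^{O(1)}$.

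The quintilinear pieces comprising $Q_k(\phi)$ and the connection-form expansions \eqref{equ:schematic_form_nlw_for_renormalized_phi_bar_term3}--\eqref{equ:schematic_form_nlw_for_renormalized_phi_bar_term4} are handled in exactly the same spirit, and are in fact easier: one still identifies at least one embedded null form to which the interpolation gain is applied, while the two extra perturbative factors are absorbed using the global bound $\|\epsilon^{(n)}\|_{L^\infty_{t,x}}\lesssim 1$ (as follows from essential frequency localization at $\sim 1$ together with Bernstein and Lemma~\ref{lem:bounds_on_phi_bar}) and standard Strichartz/local-energy bounds from the $S_k$-norm. Crucially, the undifferentiated factors $\epsilon^{(n)}$ never need to supply the smallness themselves --- the pointwise decay of $\nabla_{t,x}\epsilon^{(n)}$ alone, funneled through the embedded null form, suffices.

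The main technical issue I anticipate is a careful bookkeeping of the square-summation in $k$: the essential frequency localization provides a tail estimate
\[
 \lim_{R\to\infty}\limsup_{n\to\infty}\sum_{|k|>R}\|P_k \epsilon^{(n)}\|_{S_k}^2 = 0
\]
so that for any $\eta > 0$ one first truncates the $k$-sum to $|k|\leq R(\eta)$ using the uniform bounds above, and then invokes the pointwise decay hypothesis on the finite set $\{|k_1|,|k_2|\leq R\}$ to conclude that the contribution of $|k|\leq R$ tends to zero as $n\to\infty$. Combining these two limits in the standard diagonal order $R\to\infty$ after $n\to\infty$ yields the claimed vanishing $\lim_{n\to\infty}\sum_k \|F_k\|_{L^1_t L^2_x}^2 = 0$.
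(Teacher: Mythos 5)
Your overall strategy is the same as the paper's: reduce all frequencies to $O(1)$ using the essential frequency localization, isolate the embedded null form $\partial_\alpha\epsilon^{(n)}_{k_1}\partial^\alpha\epsilon^{(n)}_{k_2}$, interpolate it between the $L^{3/2}_{t,x}$ bound of Proposition~\ref{prop:null_form_estimate} and the vanishing $L^\infty_{t,x}$ bound coming from the pointwise decay hypothesis, and then apply H\"older against the undifferentiated factor. The paper's proof does exactly this for the generic trilinear term.

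However, there is a concrete error in your H\"older bookkeeping. You place the null form in $L^{p}_{t,x}$ with $p=2+\delta>2$, which forces the undifferentiated factor into $L^q_tL^r_x$ with $\tfrac1q=\tfrac{1+\delta}{2+\delta}>\tfrac12$, i.e.\ $q<2$. Contrary to your claim, this pair is \emph{not} in the admissible Strichartz range of Definition~\ref{def:S_norm}, which requires $2\le q,p\le\infty$ and $\tfrac1q+\tfrac1p<\tfrac12$; and the failure is not merely formal, since no global-in-time $L^q_t$ norm with $q<2$ is controlled by the $S$ norm (a frequency-one free wave decays only like $t^{-1/2}$ in $L^\infty_x$, so such norms diverge). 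The fix is to interpolate to the other side of $2$: place the null form in $L^{2-}_{t,x}$ (still strictly between $\tfrac32$ and $\infty$, so you retain a positive power of the vanishing $L^\infty_{t,x}$ factor), put the undifferentiated factor in the admissible space $L^{2+}_tL^\infty_x$, and absorb the resulting spatial deficit of the output (which now lands in $L^1_tL^{2-}_x$ rather than $L^1_tL^2_x$) by Bernstein, using that after the initial reduction the output is frequency-localized at scale $O(1)$. This is precisely the exponent arrangement in the paper's proof. With that correction the rest of your argument, including the treatment of the quintilinear terms and the two-step limit in the $k$-summation, goes through.
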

\begin{proof} First, we may reduce the output frequency $k$ as well as the frequencies of all perturbative factors in $F_k(\cdot, \cdot)$ to size $O(1)$ due to the essential frequency localization of $\epsilon^{(n)}$. Then we consider a generic expression
\[
 \epsilon_{k_1}^{(n)}\partial_{\alpha}\epsilon^{(n)}_{k_2}\partial^{\alpha}\epsilon^{(n)}_{k_3}, \quad k_j = O(1).  
\]
We have 
\begin{align*}
 \big\|\epsilon_{k_1}^{(n)}\partial_{\alpha}\epsilon^{(n)}_{k_2}\partial^{\alpha}\epsilon^{(n)}_{k_3}\big\|_{L_t^1 L_x^2}&\lesssim \big\|\epsilon_{k_1}^{(n)}\big\|_{L_t^{2+}L_x^\infty}\big\|\partial_{\alpha}\epsilon^{(n)}_{k_2}\partial^{\alpha}\epsilon^{(n)}_{k_3}\big\|_{L_{t,x}^{2-}},
\end{align*}
where the implied constant depends on $k_{1,2,3}$ and we used Bernstein's inequality to pass from $L^1_t L^2_x$ to $L_t^1 L_x^{2-}$. Further, by interpolation and the null form estimate~\eqref{equ:null_form_estimate} we have 
\begin{align*}
 \big\|\partial_{\alpha}\epsilon^{(n)}_{k_2}\partial^{\alpha}\epsilon^{(n)}_{k_3}\big\|_{L_{t,x}^{2-}} &\lesssim \big\|\partial_{\alpha}\epsilon^{(n)}_{k_2}\partial^{\alpha}\epsilon^{(n)}_{k_3}\big\|_{L_{t,x}^{\frac32}}^{\frac{3}{4-}} \big\|\partial_{\alpha}\epsilon^{(n)}_{k_2}\partial^{\alpha}\epsilon^{(n)}_{k_3}\big\|_{L_{t,x}^{\infty}}^{\frac{1}{4+}} \lesssim \prod_{j=2,3} \big\|\epsilon^{(n)}_{k_j}\big\|_{S_{k_j}}^{\frac{3}{4-}} \big\|\partial_{\alpha}\epsilon^{(n)}_{k_2}\partial^{\alpha}\epsilon^{(n)}_{k_3}\big\|_{L_{t,x}^{\infty}}^{\frac{1}{4+}}.
\end{align*}
Then the last factor at the end vanishes asymptotically due to our assumption, which yields the claim.
\end{proof}

The preceding lemma, or simple variations thereof, will play an important role in Step~2 of Stage~1 of the proof of Theorem~\ref{thm:profiledecomp} below, and more specifically, in the proof of Lemma~\ref{lem:epsboundedlin} on an ``essentially unbounded'' time interval. Working on a bounded time interval as in Step 1 below, one can use much cruder bounds, as for example in {\it{(ii)}} of Step 1 below.

\subsubsection{The proof of Theorem~\ref{thm:profiledecomp}}

We now use the preceding preparations to provide 

\begin{proof}[Proof of Theorem~\ref{thm:profiledecomp}]
The strategy shall be to understand the bound as well as the profile decomposition on the interval $[0, t^n_1 - T_*]$ and then reveal in a second step how to adjust the profile decomposition to cover also the interval $[t^n_1 - T_*, t^n_2 - T_{1*}]$ for sufficiently large $T_*$ and $T_{1*}$ as well as for sufficiently large parameters $K$ and $n$. This will reveal that ensuring $\eta_{A^n}\big( \epsilon^{Kn,<K_*}_{B}[0] \big) < \delta_*(\|u^{n}\|_{S}, \delta_2)$ is indeed enough to hold the error term $\tilde{\tilde{\eta}}^{Kn,<K_*}<\delta_2$ even as the process gets continued beyond time~$t^n_2$. At every step, the values of $K$ and $n$ may have to be increased, but since there is only an a priori bounded number of steps, this process will not diverge. Finally, by their construction, the profiles $\tilde{\tilde{\epsilon}}^{n,<K_*}_{bounded}$ and $\tilde{\tilde{\epsilon}}^{Kn,<K_*}_{b}$ will satisfy uniform $S$ norm bounds with respect to $n$, $K$ and $K_*$ sufficiently large. We now proceed in two stages:

\medskip 
 
\noindent {\bf{Stage 1}}: {\it{Control of the solution on the first time slice $[0, t^n_1 - T_*]$ for sufficiently large $T_*$.}} In turn, we distinguish between the region where  $\epsilon^{<K_*}_{bounded}$ is nonlinear and the region where it scatters: 

\medskip 

\noindent {\bf{Step 1}}: {\it{Here we control the solution on any sufficiently large but fixed finite time interval starting at $t = 0$:}}

\begin{lem} \label{lem:epsboundednonlin} 
Given any $R>0$ (which we will eventually choose depending on the properties of $\epsilon^{<K_*}_{bounded}$) and arbitrary $\delta_3>0$, there are $K$ and $n$ sufficiently large such that defining $\sum_k\tilde{P}_k\eta^{Kn,<K_*}_{(k)}$ via \eqref{eq:profiles1} for a fixed choice of $B$, we have uniformly for all large $K_*$ that
\begin{align*}
 \Big\|\sum_k\tilde{P}_k\eta^{Kn,<K_*}_{(k)}\Big\|_{S([0,R])} < \delta_3.
\end{align*}
In particular, we can achieve that $\big\|\sum_k\tilde{P}_k\eta^{Kn,<K_*}_{(k)}\big\|_{S([0,R])} \ll \delta_2$, where $\delta_2 > 0$ is the constant prescribed in the statement of Theorem~\ref{thm:profiledecomp}. 
\end{lem}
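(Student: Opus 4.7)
The plan is a bootstrap argument on the maximal subinterval $[0, T^*] \subseteq [0, R]$ on which the classical evolution $\phi^{n,<K_*}$ exists and on which the bootstrap assumption $\|\sum_k \tilde P_k \eta^{Kn,<K_*}_{(k)}\|_{S([0,T^*])} \leq \delta_3$ holds. The key ingredients are the global control of $u^n$ from Proposition~\ref{prop:evolving_lowest_frequency_nonatomic_part}; the global control of $\epsilon^{<K_*}_{bounded}$ coming from the $E_{crit}$-minimality assumption combined with the hypothesis that every profile has energy strictly below $E_{crit}$ (so that its wave maps evolution exists globally in the sense of Definition~\ref{defn:energy_class_evolution} with $\|\epsilon^{<K_*}_{bounded}\|_S\leq C(E_{crit})$); and the fact that on the bounded interval $[0,R]$ all remaining pieces of the ansatz~\eqref{eq:profiles1} are asymptotically negligible as $K,n\to\infty$. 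Consequently, the principal approximation on $[0,R]$ is simply $\phi^{n,<K_*}\approx u^{n}-p_{*}+\epsilon^{<K_{*}}_{bounded}$, and the diverging frequency gap between $u^{n}$ (supported essentially below $\mu_n=-\log R_n\to-\infty$) and $\epsilon^{<K_{*}}_{bounded}$ (supported at frequencies $\gtrsim O(1)$ up to $K_{*}$) renders the cross-interaction source terms small.

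To quantify the negligibility of the other pieces, I would first use Lemma~\ref{lem:soloftherightoneasymptotics} applied to each unbounded profile: since $|t^n_b|\to\infty$, for any $\gamma>0$ we obtain a decomposition $\tilde\epsilon^{Kn,<K_*}_{b,k}=(\tilde\epsilon^{Kn,<K_*}_{b,k})_1+(\tilde\epsilon^{Kn,<K_*}_{b,k})_2$ with $\|(\tilde\epsilon^{Kn,<K_*}_{b,k})_1\|_{S[\bR]}<\gamma$ and $\|(\tilde\epsilon^{Kn,<K_*}_{b,k})_2\|_{L^\infty_tL^\infty_x([0,R])}<\gamma$ for $n$ sufficiently large, where $\gamma$ is chosen depending on $B$, $\delta_3$, and $\|u^n\|_S$. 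Moreover, by choosing $B=B(\|u^n\|_S,\delta_3)$ large in Proposition~\ref{prop:linprofiles}, we can make $\eta_{A^n}(\tilde\epsilon^{Kn,<K_*}_{B}[0])$ and the $\ell^{2}$-sum over $k$ of the $L^{\infty}_{t,x}$-type seminorms of $\tilde\epsilon^{Kn,<K_*}_{B,k}$ arbitrarily small, while retaining uniform $S$ norm bounds via Lemma~\ref{lem:soloftherightoneasymptotics}. Finally, the data $\sum_k \tilde P_k \eta^{Kn,<K_*}_{(k)}[0]$ is $o(1)$ in $\dot H^1_x\times L^2_x$ as $K\to\infty$ by Lemma~\ref{lem:prelim profile} combined with the construction of the linear profiles at $t=0$, uniformly in $K_*$.

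Within the bootstrap, I would plug the ansatz~\eqref{eq:profiles1} into the gauged wave equation~\eqref{eq:againtheequation} for $\phi^{n,<K_*}_k$ and subtract the corresponding equations satisfied individually by the gauged versions of $u^n$ and $\epsilon^{<K_*}_{bounded}$, as well as the linear equations~\eqref{eq:therightone} satisfied by each $\tilde\epsilon^{Kn,<K_*}_{b,k}$ and $\tilde\epsilon^{Kn,<K_*}_{B,k}$. The remaining source terms driving $\sum_k\tilde P_k\eta^{Kn,<K_*}_{(k)}$ fall into three classes: (a) cross-interactions between $u^n$ and $\epsilon^{<K_*}_{bounded}$, which by the trilinear estimate~\eqref{equ:trilinear_estimate} carry an exponential frequency-gap factor $2^{-\delta|\mu_n|}\to 0$; (b) interactions involving at least one of the small pieces $(\tilde\epsilon^{Kn,<K_*}_{b,k})_j$ or the remainder $\tilde\epsilon^{Kn,<K_*}_{B,k}$, which are handled via Lemma~\ref{lem:bilinearnullformbelowL2use}, exploiting that the bilinear null form can be estimated below $L^{2}_{t,x}$ in order to extract an $L^{\infty}_{t,x}$ factor that vanishes as $n\to\infty$ (or is small by the $\eta_{A^n}$-selection of $B$); and (c) self-interactions of $\eta^{Kn,<K_*}_{(k)}$, which are perturbative under the bootstrap since $\delta_3\ll 1$. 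Summing in $k$, applying the energy estimate~\eqref{equ:energy_estimate}, and using Lemma~\ref{lem:tediousmultilinear} to dispose of the gauge factors $(U^{(\phi^{n,<K_*})}_{<k})^{\dagger}$ produces the improved bound with constant $\delta_3/2$, and a standard continuity argument then extends $T^{*}=R$ with the claimed uniformity in $K_{*}$.

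The main obstacle will be the bookkeeping of the gauge transformation $U^{(\phi^{n,<K_*})}_{<k}$, which is defined in terms of the a priori uncontrolled $\phi^{n,<K_*}$ itself and does not split cleanly along the ansatz; this is precisely the point of Lemma~\ref{lem:tediousmultilinear}, which allows the substitution of the ansatz into each perturbative factor of the nonlinearity after iteratively expanding $U^{(\phi^{n,<K_*})}_{<k}$ through its defining ODE~\eqref{equ:ode_gauge_transform} and the schematic identities of Proposition~\ref{prop:nlw_for_renormalized_phi_bar_schematic_identities}. A secondary technical point is that all these expansions must be compatible with the approximate (rather than sharp) frequency localization of the gauged profiles; this is handled by summing the tails against the exponentially small factors $2^{-\sigma|k-\tilde k|}$ that appear throughout, exactly as in the proof of Proposition~\ref{prop:partial_fungibility}.
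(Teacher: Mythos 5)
Your overall architecture (insert the ansatz \eqref{eq:profiles1} into the gauged equation, subtract the bulk equations, classify the interactions, and bootstrap) matches the paper's, and your treatment of the asymptotically vanishing pieces and of the $u^{n}$--$\epsilon^{<K_*}_{bounded}$ cross terms is in the right spirit. But there are two genuine gaps. First, the most delicate source term is the one that is \emph{linear} in $\eta^{Kn,<K_*}$ with $\eta^{Kn,<K_*}$ sitting in a non-perturbative ($L^\infty_{t,x}$-only) slot and all perturbative factors occupied by $\epsilon^{<K_*}_{bounded}$: this contributes $C\bigl(\|\epsilon^{<K_*}_{bounded}\|_S\bigr)\,\|\eta^{Kn,<K_*}\|_S$ (in fact, after the frequency splitting, a factor like $R^{41}(\log R+K_*)^{1/2}$ times $\|\eta\|_S$) to the right-hand side, with a \emph{large} constant. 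A single-shot bootstrap with "standard continuity argument" cannot absorb this; you are not in the small-data regime relative to $\epsilon^{<K_*}_{bounded}$. The paper closes this by (i) splitting space into $B_{R_*}$ and its complement and splitting $\eta$ into frequencies above and below $-10\log R$, using the $L^\infty_{loc}$ smallness of $\eta^{Kn,<K_*}(0,\cdot)$ from Lemma~\ref{lem:prelim profile} together with $|\eta(t)|\leq |\eta(0)|+R\|\partial_t\eta_{<-10\log R}\|_{L^\infty}\lesssim R^{-9}\|\eta\|_S$ for the low frequencies, and (ii) a time-subdivision/divisibility argument: $[0,R]$ is partitioned into $O(R^{100}(\log R+K_*)^{100})$ subintervals on which the divisible norms of $\epsilon^{<K_*}_{bounded}$ are small, and the bound is propagated interval by interval. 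This iteration multiplies the error by $e^{C R^{100}(\log R+K_*)^{100}}$, which is why the initial smallness must be taken of size $\delta_4 e^{-R^{1000}-2^{K_*}}$ (achieved by taking $K$ large depending on $R$ and $K_*$); your $o(1)$ energy smallness of the data, without this quantitative interplay, does not suffice to conclude.

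Second, "subtracting the equation satisfied by the gauged version of $\epsilon^{<K_*}_{bounded}$" is not well-defined as stated: the bulk term in \eqref{eq:profiles1} appears gauged by $U_{<k}^{(\phi^{n,<K_*})}$, while the autonomous equation for $\epsilon^{<K_*}_{bounded}$ is gauged by $U_{<k}^{(\epsilon^{<K_*}_{bounded})}$, and these two gauge transformations differ by a rotation that is \emph{not} small (it sees all the very low frequencies of $u^n$). The paper introduces constant matrices $G_k^{(n)}\in SO(m+1)$ (Observation~1) so that $U_{<k}^{(\phi^{n,<K_*},R)}\overline{\epsilon^{<K_*,R,M}_{bounded,k}} - G_k^{(n)}U_{<k}^{(\epsilon^{<K_*},R)}\overline{\epsilon^{<K_*,R,M}_{bounded,k}}$ is controlled by $o(1)+c_R\|\eta\|_S+\delta_4 e^{-R^{1000}-2^{K_*}}$; without this alignment the cancellation of the self-interactions of $\epsilon^{<K_*}_{bounded}$ fails. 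Lemma~\ref{lem:tediousmultilinear}, to which you defer the gauge bookkeeping, addresses the substitution of the ansatz into the perturbative slots of the nonlinearity, not this alignment of the bulk gauges; relatedly, the paper also needs the $R$- and $M$-truncated modifications $\overline{\phi^{n,<K_*,R,M}_k}$ and $U_{<k}^{(\cdot,R)}$, and a separate low-frequency energy estimate for $\eta^{Kn,<K_*}_k$, $k<\mu_n$ (the analogue of Lemma~\ref{lem:bootstrap_non_atomic_kinetic}), to handle the terms where $\eta$ is non-perturbative and all perturbative factors are $u^n$ — none of which appear in your outline. Finally, note that the lemma is asserted for a \emph{fixed} choice of $B$ (and the paper stresses it holds independently of $B$), so choosing $B$ large should not enter this step.
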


Observe that this holds independently of the $B$ chosen in \eqref{eq:profiles}. The choice of $B$ will be important for the control later on and will be done depending only on the size of $\|u^{n}\|_{S}$ and $\delta_2$. For simplicity of notation, we shall from now on write $\eta^{Kn,<K_*} \equiv \sum_k\tilde{P}_k\eta^{Kn,<K_*}_{(k)}$ and $\eta^{Kn,<K_*}_k \equiv P_k\sum_{\tilde{k}}\tilde{P}_{\tilde{k}}\eta^{Kn,<K_*}_{({\tilde{k}})}$. 

\begin{proof}[Proof of Lemma \ref{lem:epsboundednonlin}]
We recall the frequency localized wave equation for $\phi^{n,<K_*}$, written schematically as
\[
 \Box \Big( U_{<k}^{(\phi^{n,<K_*})} \overline{\phi^{n,<K_*}_k} \Big) = U_{<k}^{(\phi^{n,<K_*})}F_k \big( \phi^{n,<K_*}, \nabla_{t,x}\phi^{n,<K_*} \big).
\]
For technical reasons, specifically the ability to pass from $\overline{\phi^{n,<K_*}_k}$ to $\phi^{n,<K_*}_k$, we shall in fact replace the former quantity by 
\begin{align*}
 \overline{\phi^{n,<K_*,R,M}_k} := \phi^{n,<K_*}_k &+ P_k \Big( \phi^{n,<K_*}_{<k-10} \sum_{k_{1,2}>k+\log R} \big( \phi^{n,<K_*}_{k_1} \big)^\dagger \phi^{n,<K_*}_{k_2} \Big) \\
 &+ P_k \Big(\phi^{n,<K_*}_{<k-10\log R} \sum_{k_{1,2}\in[k+\log M,k+\log R]} \big(\phi^{n,<K_*}_{k_1}\big)^\dagger \phi^{n,<K_*}_{k_2} \Big) \\
 &+ P_k \Big(\phi^{n,<K_*}_{<k-10\log R} \sum_{k<k_{1,2}\leq k+\log M}\chi_{\{r\geq 2^{-k} M\}} \big(\phi^{n,<K_*}_{k_1}\big)^\dagger \phi^{n,<K_*}_{k_2}\Big) \\
 &+ 2^{-k}P_k \Big(\sum_{k_1<k-\log R]} \phi^{n,<K_*}_{<k_1-10} \nabla \big( \phi^{n,<K_*}_{k_1} \big)^\dagger \phi^{n,<K_*}_k \Big) \\
 &+ 2^{-k}P_k \Big(\sum_{k_1\in [k-\log R, k - \log M]} \phi^{n,<K_*}_{<k_1-10\log R}\nabla \big( \phi^{n,<K_*}_{k_1} \big)^\dagger \phi^{n,<K_*}_k \Big) \\
 &+ 2^{-k}P_k \Big(\sum_{k_1\in [k-\log M,k]}\chi_{\{r\geq 2^{-k} M\}} \phi^{n,<K_*}_{<k_1-10\log R} \nabla\big(\phi^{n,<K_*}_{k_1}\big)^\dagger \phi^{n,<K_*}_k\Big).
\end{align*}
Here the parameter $M$ will be chosen large depending on the size of $\big\|\epsilon^{<K_*}_{bounded}\big\|_{S}$ and $\|u^{n}\|_{S}$, while the parameter $R$ shall be chosen large depending on more subtle properties of $\epsilon^{<K_*}_{bounded}$ such as its energy distribution among frequencies. In particular, we may assume $R\gg M$. We also need to modify the matrices $U_{<k}^{(\phi^{n,<K_*})}$ in order to render them ``less dependent'' on $\eta^{Kn,<K_*}$. Specifically, we replace them by $U_{<k}^{(\phi^{n,<K_*},R)}$, which is constructed analogously to $U_{<k}^{(\phi^{n,<K_*})}$ but with the matrix $B_h$ in its construction~\eqref{equ:matrix_gauge_transform} modified to 
\begin{align*}
 B_h^{(R)} &:= \frac{d}{dh} \sum_{\ell \in \Z} \eta(h - 10\log R - \ell) \big( \phi_{\leq \ell - 10}\phi^{\dagger}_\ell - \phi_\ell \phi_{\leq \ell-10}^{\dagger} \big) \\
 &\quad + \frac{d}{dh} \sum_{\ell \in \Z} \chi_{\{ r \geq 2^{-\ell} R^{40} \}} \big( \eta(h - \ell) - \eta(h - 10\log R - \ell \big) \big( \phi_{\leq \ell - 10} \phi^{\dagger}_\ell - \phi_\ell \phi_{\leq \ell-10}^{\dagger} \big).
\end{align*}

This results in the new equation 
\[
 \Box \Big( U_{<k}^{(\phi^{n,<K_*},R)} \overline{\phi^{n,<K_*,R,M}_k} \Big) = U_{<k}^{(\phi^{n,<K_*},R)}F_k^{(R,M)}\big( \phi^{n,<K_*}, \nabla_{t,x}\phi^{n,<K_*} \big).
\]
In order to pass to an equation for $\eta^{Kn,<K_*}$, we need to subtract off the bulk terms. This is straightforward for the low frequency term $u^{n}$ as well as the asymptotic terms 
\[
 \sum_k \tilde{P}_k \bigg( \big( U_{<k}^{(\phi^{n,<K_*})} \big)^\dagger \Big( \sum_{b=1}^B \tilde{\epsilon}^{Kn,<K_*}_{b,k} + \tilde{\epsilon}^{Kn,<K_*}_{B,k} \Big) \bigg),
\]
but more delicate for the bulk term $\epsilon_{bounded}^{<K_*}$. Here we make the following 

\medskip 

\noindent {\bf{Observation 1}}: {\it{For any fixed $k$ and $n$ there exists a (constant) matrix $G_k^{(n)} \in SO(m+1)$ with the property that as $n\rightarrow\infty$
\begin{align*}
 &\sum_{a\in \Z} 2^{\sigma|a|} \bigg( \sum_k \big\| P_{k+a} \big( U_{<k}^{(\phi^{n,<K_*},R)} \overline{\epsilon^{<K_*,R,M}_{bounded,k}} - G^{(n)}_k \cdot U_{<k}^{(\epsilon^{<K_*}_{bounded},R)} \overline{\epsilon^{<K_*,R,M}_{bounded,k}} \big) \big\|_{S_{k+a}([0,R])}^2 \bigg)^{\frac12} \\ 
 &= o(1) + c_R\big\|\eta^{Kn,<K_*}\big\|_{S([0,R]\times \R^2)} + \delta_4 e^{-R^{1000}-2^{K_*}}
\end{align*}
for suitable $\sigma>0$. Also, there is a partition of $[0,R]$ into $O(R^{100}+K_*^{100})$ many time intervals $I_j$ such that we have for all $j$
\begin{align*}
 &\bigg( \sum_k \big\| U_{<k}^{(\phi^{n,<K_*},R)}F_k^{(R,M)}(\epsilon^{<K_*}_{bounded}, \nabla\epsilon^{<K_*}_{bounded}) - G^{(n)}_k \cdot U_{<k}^{(\epsilon^{<K_*}_{bounded},R)}F_k^{(R,M)}(\epsilon^{<K_*}_{bounded}, \nabla\epsilon^{<K_*}_{bounded})\big)\big\|_{L_t^1 L_x^2[I_j]}^2 \bigg)^{\frac12} \\
 & = o(1) + c_R\big\|\eta^{Kn,<K_*}\big\|_{S}+ \delta_4 e^{-R^{1000}-2^{K_*}},
\end{align*}
where the error $o(1)$ is in the sense as $n\rightarrow\infty$. Also, $c_R$ is a coefficient with $\lim_{R\rightarrow\infty}c_R = 0$. In fact, the matrix $G_k^{(n)}$ does not depend on $M$ but it may implicitly depend on $R, K_*$.}}

\medskip 

We observe here that the factors $G^{(n)}_k$ are analogous to the exponential corrections $e^{i\gamma_{\delta_2nab}}$ in Proposition~9.24 in~\cite{KS}. We complement this with the following observation of a similar flavor:

\medskip 

\noindent {\bf{Observation 2}}: {\it{We have the asymptotic relations as $n \rightarrow \infty$ 
\begin{align*}
\sum_{a\in \Z} 2^{\sigma|a|} \bigg( \sum_k \big\| P_{k+a} \big( U_{<k}^{(\phi^{n,<K_*},R)} \overline{u^{n,R,M}_{k}} - U_{<k}^{(u^{n},R)} \overline{u^{n,R,M}_{k}} \big) \big\|_{S}^2 \bigg)^{\frac12} &= o(1), \\
\sum_{a\in \Z} 2^{\sigma|a|} \bigg( \sum_k \big\| P_{k+a} \big( U_{<k}^{(\phi^{n,<K_*},R)} F_k^{(R,M)}(u^{n}, \nabla u^{n}) - U_{<k}^{(u^{n},R)} F_k^{(R,M)}(u^{n}, \nabla u^{n}) \big) \big\|_{L_t^1 L_x^2}^2 \bigg)^{\frac12} &= o(1),
\end{align*}
where $\overline{u^{n,R,M}_{k}}$ is defined in analogy to $\overline{\phi^{n,<K_*,R,M}_k}$.  }}

\medskip 

We leave the proofs of these observations for later. Then the proof of Lemma~\ref{lem:epsboundednonlin} shall follow from the following steps:

\medskip 

\noindent {\bf Step 1 of proof of Lemma~\ref{lem:epsboundednonlin}}: {\it Fix a time slice $[0,R] \times \R^2$. Given $\delta_4>0$  (which we will later choose sufficiently small depending on $\delta_3, \|\epsilon^{<K_*}_{bounded}\|_{S}, \|u^{n}\|_{S}$), then we have for $K$ sufficiently large that
\begin{align*}
 &\limsup_{n\rightarrow\infty} \bigg( \sum_k \Big\| \Box \big( U_{<k}^{(\phi^{n,<K_*},R)} \overline{\phi^{n,<K_*,R,M}_k} - U_{<k}^{(u^{n},R)} \overline{u^{n,R,M}_{k}} - G^{(n)}_k \cdot U_{<k}^{(\epsilon^{<K_*},R)} \overline{\epsilon^{<K_*,R,M}_{bounded,k}} \big) \Big\|_{L_t^1 L_x^2([0,R])}^2 \bigg)^{\frac12} \\ 
 &\leq \delta_4 e^{-R^{1000}-2^{K_*}} C_{11}\big( \big\|\epsilon^{<K_*}_{bounded}\big\|_{S}, \|u^{n}\|_{S} \big) +  R^{41}(\log R+K_*)^{\frac12} \big\|\eta^{Kn,<K_*}\big\|_{S([0,R])} C_{10}\big( \big\|\epsilon^{<K_*}_{bounded}\big\|_{S}, \|u^{n}\|_{S} \big) \\
 &\quad + D\big( \big\|\epsilon^{<K_*}_{bounded}\big\|_{S}, \|u^{n}\|_{S} \big) \big\| \eta^{Kn,<K_*} \big\|_{S([0,R])}^3.
\end{align*}
}

To see this, we start by observing the identity 
\[
 \Box \big( G^{(n)}_k\cdot U_{<k}^{(\epsilon^{<K_*}_{bounded},R)}\overline{\epsilon^{<K_*,R,M}_{bounded,k}} \big) = G^{(n)}_k\cdot U_{<k}^{(\epsilon^{<K_*}_{bounded},R)}F_k^{(R,M)}(\epsilon^{<K_*}_{bounded}, \nabla\epsilon^{<K_*}_{bounded}),
\]
and so 
\begin{align*}
 &\Box \big( U_{<k}^{(\phi^{n,<K_*},R)}\overline{\phi^{n,<K_*,R,M}_k} - U_{<k}^{(u^{n},R)} \overline{u^{n,R,M}_k} - G^{(n)}_k\cdot U_{<k}^{(\epsilon^{<K_*}_{bounded},R)}\overline{\epsilon^{<K_*,R,M}_{bounded,k}}\big) \\
 & = U_{<k}^{(\phi^{n,<K_*},R)}F_k^{(R,M)}(\phi^{n,<K_*},\nabla \phi^{n,<K_*}) - G^{(n)}_k\cdot U_{<k}^{(\epsilon^{<K_*}_{bounded},R)}F_k^{(R,M)}(\epsilon^{<K_*}_{bounded}, \nabla\epsilon^{<K_*}_{bounded}) -U_{<k}^{(u^{n},R)} F_k^{(R,M)}(u^{n}, \nabla u^{n}) \\
 & = U_{<k}^{(\phi^{n,<K_*},R)} \Big( F_k^{(R,M)}(\phi^{n,<K_*},\nabla \phi^{n,<K_*}) - F_k^{(R,M)}(\epsilon^{<K_*}_{bounded}, \nabla \epsilon^{<K_*}_{bounded}) - F_k^{(R,M)}(u^{n}, \nabla u^{n}) \Big) + A_k^{(R,M, n)}, 
\end{align*}
where Observations 1 and 2 imply that after restriction to any one of $O(R^{100} + K_*^{100})$ many time intervals $I_j$ we get  
\[
 \lim_{R, n\rightarrow\infty} \Big( \sum_k \big\| A_k^{(R,M, n)}\big\|_{L_t^1 L_x^2}^2 \Big)^{\frac12} = 0. 
\]
This time restriction step will be carried out explicitly in {\it{Step 5}} below.

\medskip 

Thus, it now remains to bound the term 
\begin{equation} \label{eq:step1mainterm}
 U_{<k}^{(\phi^{n,<K_*},R)} \Big( F_k^{(R,M)}(\phi^{n,<K_*},\nabla \phi^{n,<K_*}) - F_k^{(R,M)}(\epsilon^{<K_*}_{bounded}, \nabla\epsilon^{<K_*}_{bounded}) - F_k^{(R,M)}(u^{n}, \nabla u^{n}) \Big).
\end{equation}
Here we distinguish between different interaction terms, first amongst the bulk terms in the decomposition~\eqref{eq:profiles1}. Before doing so, we quickly comment on the fact that the nonlinearity $F_k^{(R,M)}(\phi^{n,<K_*},\nabla \phi^{n,<K_*})$ now involves some additional terms on account of the fact that we modified $\overline{\phi^{n,<K_*}}$ to $\overline{\phi^{n,<K_*,R,M}}$ as well as $U_{<k}^{(\phi^{n,<K_*})}$ to $U_{<k}^{(\phi^{n,<K_*},R)}$. For the former modification these terms are schematically of the form 
\begin{equation} \label{eq:moretediouserrors}
 \begin{aligned}
  &P_k \big( \phi_{[k-10\log R, k]} \sum_{k_1 = k_2+O(1)\in[k,k+\log R]} \partial_{\alpha}\phi_{k_1}^\dagger \partial^{\alpha}\phi_{k_2} \big), \quad \sum_{k_1\in [k-10\log M, k]}P_k \big( \chi_{\{r\leq 2^{-k} M\}}\phi_{<k_1-10}\nabla\partial_\alpha\phi_{k_1}^\dagger \partial^{\alpha}\phi_k \big) \\
  &P_k \big( \phi_{<k}\sum_{k<k_1 = k_2+O(1)<k+\log M}\chi_{\{ r \leq 2^{-k} M \} } \partial_{\alpha}\phi_{k_1}^\dagger \partial^{\alpha}\phi_{k_2} \big), \quad P_k \big( \phi_{<k}\sum_{k<k_1 = k_2+O(1)<k+\log M}\chi_{\{r\leq 2^{-k} M\}}' \phi_{k_1}^\dagger \nabla_{t,x}\phi_{k_2} \big) \\
  &P_k \big( \sum_{k_1\in [k-10\log M, k]} \chi_{\{r\leq 2^{-k} M\}}' \phi_{<k_1-10} \nabla \phi_{k_1}^\dagger \partial^{\alpha}\phi_k \big)
 \end{aligned}
\end{equation}
as well as additional terms where $\chi'$ is replaced by $\chi''$ at the expense of the operator $\nabla_{t,x}$. For the latter modification we get additional terms of the schematic form 
\begin{align*}
 &\sum_{k_1\in[k-10\log R,k]}\phi_{<k_1 - 10}\chi_{\{ r\leq 2^{-k_1}R^{40} \}} \partial_{\alpha}\phi_{k_1}^{\dagger}\partial^{\alpha}\phi_k, \quad \sum_{k_1\in[k-10\log R,k]}\phi_{<k_1 - 10}\chi_{\{ r \leq 2^{-k_1}R^{40}\}}' \phi_{k_1}^{\dagger} \nabla_{t,x} \phi_k \\
 &\sum_{k_1\in[k-\log R,k]}\phi_{<k_1 - 10}\chi_{\{r\leq 2^{-k_1}R^{40}\}}''\phi_{k_1}^{\dagger}\phi_k.
\end{align*}

We claim that all of these terms admit good bounds in terms of the frequency square-summed $L_t^1 L_x^2$-norm, albeit possibly at the expense of a power of $R$ or $M$, and that they all have the important divisibility property. By this of course we mean that if one of the factors $\phi_{k_j}$ is replaced by $\eta_{k_j}$, then we can divide time into $O(M^K+R^K)$ many intervals for some explicit $K$ such that the corresponding square sum $L_t^1 L_x^2$-norm restricted to such a time slice is bounded by $\ll \|\eta\|_{S}$. The implicit constant in $O(M^K+R^K)$ depends only on~$\|\phi\|_{S}$. We quickly explain this for the first term in \eqref{eq:moretediouserrors}, the others being handled similarly. Thus write 
\begin{equation} \label{equ:illustrate_divisibility}
 \begin{aligned}
  &P_k \big( \phi_{[k-10\log R, k]} \sum_{k_1 = k_2+O(1)\in[k,k+\log R]}\partial_{\alpha}\phi_{k_1}^\dagger \partial^{\alpha}\phi_{k_2} \big) \\
  &= P_k \big( \chi_{\{r\leq R^{40}2^{-k}\}} \phi_{[k-10\log R, k]} \sum_{k_1 = k_2+O(1)\in[k,k+\log R]} \partial_{\alpha} \phi_{k_1}^\dagger \partial^{\alpha} \phi_{k_2} \big) \\
  &\quad + P_k \big( \chi_{\{r>R^{40} 2^{-k}\}} \phi_{[k-10\log R, k]} \sum_{k_1 = k_2+O(1)\in[k,k+\log R]} \partial_{\alpha} \phi_{k_1}^\dagger \partial^{\alpha} \phi_{k_2} \big).
 \end{aligned}
\end{equation}
Then to estimate the second term on the right-hand side we use the null form estimate~\eqref{equ:null_form_estimate} to infer that 
\begin{align*}
&\big\| P_k \big( \chi_{\{ r>R^{40}2^{-k}\}} \phi_{[k-10\log R, k]} \sum_{k_1 = k_2+O(1)\in[k,k+\log R]} \partial_{\alpha} \phi_{k_1}^\dagger \partial^{\alpha} \phi_{k_2} \big) \big\|_{L_t^1 L_x^2} \\ 
&\lesssim 2^{\frac{k}{3}}\big\|\chi_{\{ r>R^{40}2^{-k} \}} \phi_{[k-10\log R, k]} \big\|_{L_t^3 L_x^\infty} \big\|\sum_{k_1 = k_2+O(1)\in[k,k+\log R]}\partial_{\alpha}\phi_{k_1}^\dagger \partial^{\alpha} \phi_{k_2} \big\|_{L_{t,x}^{\frac32}}\\
&\lesssim R^{-1} \Big( \sum_{\ell \in [k-10\log R, k]} \|\phi_{\ell} \|_{S_\ell}^2 \Big)^{\frac12} \sum_{k_1} \|\phi_{k_1}\|_{S_{k_1}}^2.
\end{align*}
Here we have used an interpolate between the improved $L^\infty_x$-bound 
\[
 \big\| \chi_{\{ r\geq 2^{-k}R\}} P_k \phi \big\|_{L_{t,x}^\infty} \lesssim R^{-\frac12} \|P_k \phi \|_{\dot{H}^1_x}
\] 
and the $L_t^{2+} L_x^\infty$ Strichartz estimate to get improved control over $\big\| \chi_{\{ r \geq 2^{-k}R \}} P_k \phi \big\|_{L_t^3 L_x^\infty}$. Finally, square-summing over $k\in \Z$, we get 
\begin{align*}
\Big( \sum_{k \in \Z} \big\| P_k \big( \chi_{\{ r>R^{40}2^{-k}\}} \phi_{[k-10\log R, k]} \sum_{k_1 = k_2+O(1)\in[k,k+\log R]} \partial_{\alpha} \phi_{k_1}^\dagger \partial^{\alpha} \phi_{k_2} \big) \big\|_{L_t^1 L_x^2}^2 \Big)^{\frac12} \lesssim R^{-1} \log(R) \, \|\phi\|_{S}^3,
\end{align*}
which of course gains smallness for $R\gg 1$. To bound the first term on the right-hand side of \eqref{equ:illustrate_divisibility} further above, we use the local energy decay norm
\begin{align*}
&\big\| P_k \big( \chi_{\{ r \leq R^{40}2^{-k}\}} \phi_{[k-10\log R, k]} \sum_{k_1 = k_2+O(1)\in[k,k+\log R]} \partial_{\alpha} \phi_{k_1}^\dagger \partial^{\alpha} \phi_{k_2} \big) \big\|_{L_t^1 L_x^2} \\
&\lesssim R^{40} \big\| \phi_{[k-10\log R, k]}\big\|_{L_{t,x}^\infty} \Big( \sum_{k+\log R >k_1>k} \sum_{\ell < 40\log R - k} \big( 2^{\frac{k_1+\ell}{2}} R^{-20} \big\| \chi_{\{ r \sim 2^\ell\}} r^{-\frac12} \nabla_{t,x} \phi_{k_1} \big\|_{L_{t,x}^2} \big)^2 \Big) \\
&\lesssim  R^{40} \big\| \phi_{[k-10\log R, k]} \big\|_{L_{t,x}^\infty} \Big( \sum_{k+\log R >k_1>k} \sum_{\ell<40 \log R - k_1} \big( 2^{\frac{k_1+\ell}{2}} R^{-20} \big\| \chi_{\{r \sim 2^\ell \}} \nabla_{t,x} \phi_{k_1} \big\|_{L_{t,x}^2} \big)^2 \Big).
\end{align*}
Then square summing over $k\in \Z$ results in the bound 
\begin{align*}
\Big( \sum_{k\in \Z} \big\| P_k \big( \chi_{\{ r \leq R^{40}2^{-k}\}} \phi_{[k-10\log R, k]} \sum_{k_1 = k_2+O(1)>k} \partial_{\alpha} \phi_{k_1} \partial^{\alpha} \phi_{k_2} \big) \big\|_{L_t^1 L_x^2}^2 \Big)^{\frac12} \lesssim R^{40} \log(R) \|\phi\|_{S}^3. 
\end{align*}
Moreover, the quantity 
\begin{align*}
 \sum_{k_1} \sum_{\ell<40\log R - k_1} \Big( 2^{\frac{k_1+\ell}{2}} R^{-20} \big\| \chi_{\{ r \sim 2^\ell \}} r^{-\frac12} \nabla_{t,x} \phi_{k_1} \big\|_{L_{t,x}^2} \Big)^2 \lesssim \sum_{k_1} \sup_{\ell \in \Z} \big\| \chi_{\{ r \sim 2^\ell\}} r^{-\frac12} \nabla_{t,x} \phi_{k_1} \big\|_{L_{t,x}^2}^2 \lesssim \|\phi\|_{S}^2
\end{align*}
is easily seen to have the divisibility property. 

\medskip 

Back to controlling \eqref{eq:step1mainterm}, we now discuss how to bound the various interactions:

\medskip 

\noindent {\it{(i) Interactions between $u^{n}$ and $\epsilon^{<K_*}_{bounded}$.}} These are either terms in which $u^{n}$ appears in one of the lowest frequency ``non-perturbative'' factors such as 
\[
 P_k \bigg( (u^{n}_{<k-10} - p_*) \sum_{k_{1,2}>k+10} P_{k_1}\Big( P_{<k_1-10} \epsilon^{<K_*}_{bounded} \sum_{k_4 = k_5+O(1)} \big( \partial_{\alpha}\epsilon^{<K_*}_{bounded,k_4}\big)^\dagger \partial^{\alpha} \epsilon^{<K_*}_{bounded,k_5} \Big)^\dagger P_{k_2} \epsilon^{<K_*}_{bounded} \bigg),
\]
where one can only place $(u^{n}_{<k-10} - p_*)$ into $L^\infty_{t,x}$ or else they are terms where $u^{n}$ appears in a ``perturbative factor'' such as 
\[
 P_k \bigg( \epsilon^{<K_*}_{bounded, <k-10} \sum_{k_{1,2}>k+10} P_{k_1} \Big( P_{<k_1-10} \epsilon^{<K_*}_{bounded} \sum_{k_4 = k_5+O(1)} \partial_{\alpha}(u^{n}_{k_4})^\dagger \partial^{\alpha} u^{n}_{k_5} \Big)^\dagger P_{k_2} u^{n}_{<k-10} \bigg).
\]
For the former type of term one gets smallness for the square-summed $L_t^1 L_x^2$ norm from the smallness of $u^{n}_{<k-10} - p_*$ on the support of $P_k \epsilon^{<K_*}_{bounded}$ for $k \in \Z$ of moderate size, while for extremely small or large $k$ one exploits the smallness of $P_k\epsilon^{<K_*}_{bounded}$. For the second type of term, one may assume that the frequencies $2^{k_2}, 2^{k_4}, 2^{k_5}$ applied to $u^{n}$ are extremely small, in which case one gains by placing the nonlinearity into $L_t^\infty L_x^2$ instead of $L_t^1 L_x^2$ and making up by multiplying with $R$ (which is held fixed) and then letting $n\rightarrow\infty$. 

\medskip 

\noindent {\it{(ii) Interactions between $u^{n}$ and $\sum_k \tilde{P}_k \Big( \big( U_{<k}^{(\phi^{n,<K_*})} \big)^\dagger \Big( \sum_{b=1}^B \tilde{\epsilon}^{Kn,<K_*}_{b,k} + \tilde{\epsilon}^{Kn,<K_*}_{B,k} \Big) \Big)$.}} Here, if $u^{n}$ appears in a ``non-perturbative factor'' and not in a ``perturbative factor'', then we exploit the pointwise decay of the expression $\sum_k \tilde{P}_k \Big( \big( U_{<k}^{(\phi^{n,<K_*})} \big)^\dagger \Big( \sum_{b=1}^B \tilde{\epsilon}^{Kn,<K_*}_{b,k} + \tilde{\epsilon}^{Kn,<K_*}_{B,k} \Big) \Big)$. In fact, observe that due to the essential frequency localization (up to exponentially decaying tails) we infer 
\[
 \lim_{n\rightarrow\infty} \bigg\| \sum_k \big( U_{<k}^{(\phi^{n,<K_*})} \big)^\dagger \Big( \sum_{b=1}^B \tilde{\epsilon}^{Kn,<K_*}_{b,k} \Big) \bigg\|_{L_{t,x}^\infty([0,R]\times \R^2)} = 0. 
\]
The same holds for the remainder term $\sum_k \big( U_{<k}^{(\phi^{n,<K_*})} \big)^\dagger \tilde{\epsilon}^{Kn,<K_*}_{B,k}$ since $\tilde{\epsilon}^{Kn,<K_*}_{B,k}(0, \cdot)$ converges weakly to zero for all $k \in \Z$. To close the case in which all ``perturbative factors'' are of the form
\[
 \sum_k \tilde{P}_k \bigg( \big( U_{<k}^{(\phi^{n,<K_*})} \big)^\dagger \Big( \sum_{b=1}^B \tilde{\epsilon}^{Kn,<K_*}_{b,k} + \tilde{\epsilon}^{Kn,<K_*}_{B,k} \Big) \bigg),
\]
we achieve $L_t^1 L_x^2$ for the product by placing it into $L_t^\infty L_x^2$ and multiplying with $R$, and the former norm can be bounded by placing all factors into $L_t^\infty L_x^6$ (for cubic terms, and similarly for higher order terms) by interpolating between $L_t^\infty L_x^2$ and $L_{t,x}^\infty$, with the latter norm being asymptotically vanishing (as $n\rightarrow\infty$) due to the preceding remarks. 

If $u^{n}$ appears in at least one ``perturbative factor'' while $\sum_k \tilde{P}_k \Big( \big( U_{<k}^{(\phi^{n,<K_*})} \big)^\dagger \big( \sum_{b=1}^B \tilde{\epsilon}^{Kn,<K_*}_{b,k} + \tilde{\epsilon}^{Kn,<K_*}_{B,k} \big) \Big)$ appears in the ``non-perturbative factor'', we close again by using the asymptotic vanishing of this term in $L_{t,x}^\infty$ if not all ``perturbative factors'' are $u^{n}$. If all ``perturbative factors'' are frequency localized versions of $u^{n}$, we gain smallness on account of the frequency localization of 
\[
 \sum_k \tilde{P}_k \bigg( \big( U_{<k}^{(\phi^{n,<K_*})} \big)^\dagger \Big( \sum_{b=1}^B \tilde{\epsilon}^{Kn,<K_*}_{b,k} + \tilde{\epsilon}^{Kn,<K_*}_{B,k} \Big) \bigg)
\]
to frequencies $\geq \mu_n$ up to exponentially decaying errors. 

\medskip 

\noindent {\it{(iii) Interactions between $\epsilon^{<K_*}_{bounded}$ and $\sum_k \tilde{P}_k \Big( \big( U_{<k}^{(\phi^{n,<K_*})} \big)^\dagger \big( \sum_{b=1}^B \tilde{\epsilon}^{Kn,<K_*}_{b,k} + \tilde{\epsilon}^{Kn,<K_*}_{B,k} \big) \Big)$ as well as pure self-interactions of $\sum_k \tilde{P}_k \Big( \big( U_{<k}^{(\phi^{n,<K_*})} \big)^\dagger \Big( \sum_{b=1}^B \tilde{\epsilon}^{Kn,<K_*}_{b,k} + \tilde{\epsilon}^{Kn,<K_*}_{B,k} \Big) \Big)$}}. These are handled exactly like the preceding case. 

\medskip 

We emphasize that the discussion up to this point applies to all terms occurring inside $F_k^{(R,M)}$, also to the additional terms stemming from modifying $F_k$ to $F_k^{(R,M)}$, in light of the discussion preceding {\it{(i)}}. 

\medskip 

\noindent {\it{(iv) Interactions involving $\eta^{Kn,<K_*}$.}} These are the most delicate ones, but we can again get rid of most terms:

\medskip 

\noindent {\it{(iv.1) Interactions involving $\eta^{Kn,<K_*}$ and at least one factor $u^{n}$.}} Assume that the factor $u^{n}$ is in a non-perturbative position, and all other perturbative factors are occupied by (frequency localizations of) $\eta^{Kn,<K_*}$. Write this term schematically as $(u^{n}_{<k-10} - p_*) F_k^{(R,M)}(\eta^{Kn,<K_*})$. Then we get 
\[
 \bigg( \sum_k \big\| (u^{n}_{<k-10} - p_*) F_k^{(R,M)}(\eta^{Kn,<K_*}) \big\|_{L_t^1 L_x^2([0,R])}^2 \bigg)^{\frac12} \lesssim \big\| \eta^{Kn,<K_*} \big\|_{S}^3.
\]
On the other hand, if at least one of the other perturbative factors is occupied by a $\epsilon^{<K_*}_{bounded}$ or an asymptotically vanishing term $\sum_k \tilde{P}_k \Big( \big( U_{<k}^{(\phi^{n,<K_*})} \big)^\dagger \big( \sum_{b=1}^B \tilde{\epsilon}^{Kn,<K_*}_{b,k} + \tilde{\epsilon}^{Kn,<K_*}_{B,k} \big) \Big)$, we estimate it by 
\begin{align*}
&\bigg( \sum_k \Big\| u^{n}_{<k-10} F_k^{(R,M)}(\eta^{Kn,<K_*}, \epsilon^{<K_*}_{bounded}, \textstyle{\sum_k} \big( U_{<k}^{(\phi^{n,<K_*})} \big)^\dagger \Big( \sum_{b=1}^B \tilde{\epsilon}^{Kn,<K_*}_{b,k} + \tilde{\epsilon}^{Kn,<K_*}_{B,k}\big) \Big) \Big\|_{L_t^1 L_x^2([0,R])}^2 \bigg)^{\frac12} \\
&\quad \leq C_1 \big\|\eta^{Kn,<K_*}\big\|_{S}^2 + C_2 \big\|\eta^{Kn,<K_*}\big\|_{S}, 
\end{align*}
where $C_{1,2}$ depend only on $\big\|\epsilon^{<K_*}_{bounded}\big\|_{S}$ provided $n$ is sufficiently large to kill the effect of the asymptotically vanishing terms. 

Next, consider the terms where at least one perturbative factor is occupied by $u^{n}$ and where there is at least one factor $\eta^{Kn,<K_*}$. If this latter factor is in a perturbative position, then we bound the term by 
\[
 \lesssim C_3\Big( \|u^{n} \|_{S}, \big\| \epsilon^{<K_*}_{bounded} \big\|_{S} \Big) \big\| \eta^{Kn,<K_*} \big\|_{S}^2 + C_4 \Big( \|u^{n}\|_{S}, \big\| \epsilon^{<K_*}_{bounded} \big\|_{S} \Big) \big\|\eta^{Kn,<K_*} \big\|_{S}.
\]
On the other hand, if the factor $\eta^{Kn,<K_*}$ is in a non perturbative position and all perturbative factors are occupied by $u^{n}$, then we have to complement this argument with a pure energy type bound just as in the proof of the bound for the lowest frequency non-atomic part in Proposition~\ref{prop:bootstrap_bounds_to_next_level_in_evolving_lowest_frequency_nonatomic_part}. We provide this in the next item:

\medskip

\noindent {\it{(iv.2) Choosing $K$ and $n$ large enough, we have for all $k < \mu_n$ that
\begin{align*}
 &\big\| P_k \eta^{Kn,<K_*} \big\|_{L_t^\infty L_x^2([0,R])} \\
 &\ll \delta_4 e^{-R^{1000} - 2^{K_*}} 2^{-\sigma|k-b_L|} + R \, 2^{k} \bigg( C_5\Big( \|u^{n}\|_{S}, \big\|\epsilon^{<K_*}_{bounded}\big\|_{S} \Big) \big\|\eta^{nK}\big\|_S + C_6 \Big( \|u^{n}\|_{S}, \big\|\epsilon^{<K_*}_{bounded}\big\|_{S} \Big) \big\|\eta^{nK}\big\|^3_S \bigg) \\
 &\ll \delta_4 e^{-R^{1000} - 2^{K_*}} 2^{-\sigma|k-b_L|}
\end{align*}
for a suitable absolute constant $\sigma>0$. Here $\delta_4$ will be determined in terms of $\delta_3$ as well as $\|u^{n}\|_{S}$.}} In fact, this is much simpler than the proof of the corresponding bound for the lowest frequency non-atomic part in Lemma~\ref{lem:bootstrap_non_atomic_kinetic}. Replacing $\epsilon$ by $\eta^{Kn, <K_*}$ there and $\phi$ by 
\[
 u = u^{n}  - p_* +  \epsilon^{<K_*}_{bounded} + \sum_k \big( U_{<k}^{(\phi^{n,<K_*})} \big)^\dagger \Big( \sum_{b=1}^B \tilde{\epsilon}^{Kn,<K_*}_{b,k} + \tilde{\epsilon}^{Kn,<K_*}_{B,k} \Big),
\]
then fixing a very low frequency $k < \mu_n$ we have to bound integrals of the form  
\[
 \int_0^R \int_{\R^2} P_k \big( \eta^{Kn, <K_*} \partial_{\alpha}u^\dagger \partial^{\alpha} u \big)^\dagger \partial_t P_k \eta^{Kn, <K_*} \, dx \, dt
\]
as well as other, similar ones. But the preceding expression is bounded by 
\begin{align*}
&\bigg| \int_0^R \int_{\R^2} P_k \big( \eta^{Kn, <K_*} \partial_{\alpha}u^\dagger \partial^{\alpha} u \big)^\dagger \partial_t P_k \eta^{Kn, <K_*} \, dx \, dt \bigg| \\
&\leq R \, \big\|\eta^{Kn, <K_*}\big\|_{L_{t,x}^\infty([0,R])} \big\|\partial_{\alpha} u \partial^{\alpha} u \big\|_{L_t^\infty L_x^{1+}([0,R])} \big\| \partial_t P_k \eta^{Kn, <K_*} \big\|_{L_{t}^\infty L_x^{\infty-}([0,R])}
\end{align*}
and this in turn is bounded by the expression further above. Indeed, observe that the terms 
\[
 \sum_k \big( U_{<k}^{(\phi^{n,<K_*})} \big)^\dagger \Big( \sum_{b=1}^B \tilde{\epsilon}^{Kn,<K_*}_{b,k} + \tilde{\epsilon}^{Kn,<K_*}_{B,k} \Big)
\]
asymptotically do not contribute, on account of their asymptotic vanishing property. The term $\delta_4e^{-R^{1000}-2^{K_*}} 2^{-\sigma|k-b_L|}$ comes from the initial data $\eta^{Kn,<K_*}[0]$ at time $t = 0$, which can be made small by picking $K$ large enough on account of Lemma~\ref{lem:prelim profile}. The other integrals in the energy identity are handled similarly. 

\medskip 

We can now close case {\it{(iv.1)}}:

\medskip 

\noindent {\it{(iv.3) Conclusion of case (iv.1)}}. Consider now a term of the schematic form $\eta^{Kn,<K_*}_{<k-10} F_k^{(R,M)}(u^{n}, \nabla u^{n})$. We bound this by 
\begin{align*}
 \bigg( \sum_k \big\|\eta^{Kn,<K_*}_{<k-10}F_k^{(R,M)}(u^{n}, \nabla u^{n})\big\|_{L_t^1 L_x^2}^2 \bigg)^{\frac12} &\lesssim \bigg( \sum_{k\geq \mu_n} \big\| \eta^{Kn,<K_*}_{<k-10} F_k^{(R,M)}(u^{n}, \nabla u^{n}) \big\|_{L_t^1 L_x^2}^2 \bigg)^{\frac12} \\
 &\quad + \bigg( \sum_{k<\mu_n} \big\| \eta^{Kn,<K_*}_{<k-10} F_k^{(R,M)}(u^{n}, \nabla u^{n}) \big\|_{L_t^1 L_x^2}^2 \bigg)^{\frac12}.
\end{align*}
Then for the first term on the right we have 
\[
 \lim_{n\rightarrow\infty} \bigg( \sum_{k\geq \mu_n} \big\| \eta^{Kn,<K_*}_{<k-10} F_k^{(R,M)}(u^{n}, \nabla u^{n}) \big\|_{L_t^1 L_x^2}^2 \bigg)^{\frac12} = 0,
\]
while for the second term on the right, we use {\it{(iv.2)}} to bound it by 
\[
 \limsup_{n\rightarrow\infty} \bigg( \sum_{k<\mu_n} \big\| \eta^{Kn,<K_*}_{<k-10} F_k^{(R,M)}(u^{n}, \nabla u^{n}) \big\|_{L_t^1 L_x^2}^2 \bigg)^{\frac12} \lesssim \delta_4 e^{-R^{1000}-2^{K_*}} \|u^{n}\|_{S}^3. 
\]

\medskip 

\noindent {\it{(v) Interactions between $\eta^{Kn,<K_*}$ and $\epsilon^{<K_*}_{bounded}$.}} This is the most delicate case on account of the situations where $\eta^{Kn,<K_*}$ is in a non-perturbative position while the perturbative factors are all $\epsilon^{<K_*}_{bounded}$. In fact, it is in this situation where the very precise information about $\eta^{Kn,<K_*}$ at $t = 0$ from Lemma~\ref{lem:prelim profile} becomes really important. The basic idea is to partition the time slice $[0,R]\times \R^2$ into a cylinder $[0,R]\times B_{R_*}$ for some very large $R_*$ chosen such that $\epsilon^{<K_*}_{bounded}$ is small in the complement of $B_{R_*}$, and the set $[0,R]\times B_{R_*}^c$. More specifically, $R_*$ will be chosen depending on $R$ and $\delta_3$, and $R$ will then later be chosen to control the solution on all of $[0, t^n_1 - T_*]$. 

Now given $R$ and $\delta_3>0$, pick $R_*$ sufficiently large such that 
\[
 \bigg( \sum_k \big\| \chi_{B_{R_{*}}^c} F_k^{(R,M)}(\epsilon^{<K_*}_{bounded}) \big\|_{L_t^1 L_x^2([0,R]\times \R^2)}^2 \bigg)^{\frac12} \ll \delta_4 e^{-R^{1000}-2^{K_*}},
\]
where $\eta^{Kn,<K_*}_{<k-10}F_k^{(R,M)}(\epsilon^{<K_*}_{bounded})$ is a schematically written term in which $\eta^{Kn,<K_*}_{<k-10}$ is a non-perturbative factor while all the perturbative positions are occupied by $\epsilon^{<K_*}_{bounded}$. Then due to the simple $L^{\infty}_{t,x}$-bound $\big\|\eta^{Kn,<K_*}\big\|_{L_{t,x}^\infty}~\lesssim~1$ provided $K,n$ are sufficiently large, we get the bound 
\begin{align*}
 \bigg( \sum_k \big\|\chi_{B_{R_{*}}^c}\eta^{Kn,<K_*}_{<k-10}F_k^{(R,M)}(\epsilon^{<K_*}_{bounded})\big\|_{L_t^1 L_x^2([0,R]\times \R^2)}^2 \bigg)^{\frac12} \ll \delta_4 e^{-R^{1000}-2^{K_*}}.
\end{align*}
Thus, consider now the region $[0,R] \times B_{R_*}$ where $\eta^{Kn,<K_*}$ and $\epsilon^{<K_*}_{bounded}$ interact strongly. Then the smallness will have to come from the factor $\eta^{Kn,<K_*}_{<k-10}$. In fact, use the splitting 
\begin{align*}
\chi_{B_{R_{*}}}\eta^{Kn,<K_*}_{<k-10}F_k^{(R,M)}(\epsilon^{<K_*}_{bounded}) = \chi_{B_{R_{*}}}\eta^{Kn,<K_*}_{[-10\log R, k-10]}F_k^{(R,M)}(\epsilon^{<K_*}_{bounded}) + \chi_{B_{R_{*}}}\eta^{Kn,<K_*}_{<-10\log R}F_k^{(R,M)}(\epsilon^{<K_*}_{bounded}).
\end{align*}
Then we control the first term on the right with the estimate 
\begin{align*}
 &\bigg( \sum_k \big\| \chi_{B_{R_{*}}} \eta^{Kn,<K_*}_{[-10\log R, k-10]} F_k^{(R,M)}(\epsilon^{<K_*}_{bounded}) \big\|_{L_t^1L_x^2([0,R]\times\R^2)}^2 \bigg)^{\frac12} \\
 &\quad \lesssim R^{41}(\log R+K_*)^{\frac12} \big\|\eta^{Kn,<K_*}\big\|_{S} C_7\Big( \big\|\epsilon^{<K_*}_{bounded}\big\|_{S} \Big) \\
 &\quad \quad + \bigg( \sum_{k>K_*} \big\| \chi_{B_{R_{*}}}\eta^{Kn,<K_*}_{[-10\log R, k-10]}F_k^{(R,M)}(\epsilon^{<K_*}_{bounded})\big\|_{L_t^1 L_x^2([0,R]\times\R^2)}^2 \bigg)^{\frac12} \\
 &\quad \lesssim  R^{41}(\log R+K_*)^{\frac12}\big\|\eta^{Kn,<K_*}\big\|_{S} C_8\Big( \big\| \epsilon^{<K_*}_{bounded} \big\|_{S} \Big)
\end{align*}
on account of the fact that the norms $\big\|F_k^{(R,M)}(\epsilon^{<K_*}_{bounded})\big\|_{L_t^1L_x^2([0,R]\times\R^2)}$ are exponentially decreasing for $k\geq K_*$ by our enforced frequency localization. 
For the second term on the right we use that for $t\in [0,R]$ 
\[
 \big|\chi_{B_{R_{*}}}\eta^{Kn,<K_*}_{<-10\log R}(t, \cdot)\big|\leq R \, \big\|\partial_t\eta^{Kn,<K_*}_{<-10\log R}\big\|_{L^\infty_{t,x}([0,R]\times \R^2)} + \big\|\chi_{B_{R_{*}}}\eta^{Kn,<K_*}_{<-10\log R}(0, \cdot)\big\|_{L^\infty_x}.
\]
Since we may choose $R_*\gg R^{10}$ we may essentially move the cutoff $\chi_{B_{R_{*}}}$ past the frequency localizer (up to exponentially decaying tails) and so picking $K$ and $R_*$ large enough we may assume that 
\[
 \big\|\chi_{B_{R_{*}}}\eta^{Kn,<K_*}_{<-10\log R}(0, \cdot)\big\|_{L^\infty_x}\ll \delta_4 e^{-R^{1000}-2^{K_*}}. 
\]
The first term on the right on the other hand may be bounded by 
\[
 R \, \big\|\partial_t\eta^{Kn,<K_*}_{<-10\log R}\big\|_{L^\infty_{t,x}([0,R]\times \R^2)} \lesssim R^{-9} \big\|\eta^{Kn,<K_*}\big\|_{S},
\]
and so we obtain 
\begin{align*}
 \bigg( \sum_k \big\| \chi_{B_{R_{*}}} \eta^{Kn,<K_*}_{<-10\log R} F_k^{(R,M)}(\epsilon^{<K_*}_{bounded}) \big\|_{L_t^1L_x^2([0,R]\times\R^2)}^2 \bigg)^{\frac12} \lesssim \Big( \delta_4 e^{-R^{1000}-2^{K_*}} + R^{-9} \big\|\eta^{Kn,<K_*}\big\|_{S} \Big) \, C_9\Big( \big\|\epsilon^{<K_*}_{bounded}\big\|_{S}\Big).
\end{align*}
In total we have obtained the following bound
\begin{align*}
&\bigg( \sum_k\big\|\eta^{Kn,<K_*}_{<-10\log R}F_k^{(R,M)}(\epsilon^{<K_*}_{bounded})\big\|_{L_t^1L_x^2([0,R]\times\R^2)}^2 \bigg)^{\frac12} \\ 
&\quad \lesssim \delta_4 e^{-R^{1000}-2^{K_*}} C_9\Big( \big\|\epsilon^{<K_*}_{bounded}\big\|_{S} \Big) + R^{41} (\log R+K_*)^{\frac12} \big\|\eta^{Kn,<K_*}\big\|_{S} C_{10}\Big( \big\|\epsilon^{<K_*}_{bounded} \big\|_{S} \Big).
\end{align*}

\medskip 

Combining the cases {\it{(i)}} -- {\it{(v)}} and the remarks preceding them, we finally obtain the bound 
\begin{equation} \label{eq:eta0Ralmost}
 \begin{aligned}
  &\bigg( \sum_k \big\| \Box \big( U_{<k}^{(\phi^{n,<K_*},R)} \overline{\phi^{n,<K_*,M,R}_k} - U_{<k}^{(u^{n},R)} \overline{u^{n,M,R}_{k}} - G^{(n)}_k \cdot U_{<k}^{(\epsilon^{<K_*},R)}\overline{\epsilon^{<K_*,M,R}_{bounded,k}}\big)\big\|_{L_t^1 L_x^2([0,R])}^2 \bigg)^{\frac12} \\ 
  &\lesssim \delta_4 e^{-R^{1000}-2^{K_*}} C_{11}\Big( \big\|\epsilon^{<K_*}_{bounded} \big\|_{S}, \|u^{n}\|_{S} \Big) +  R^{41}(\log R+K_*)^{\frac12} \big\|\eta^{Kn,<K_*}\big\|_{S} C_{10}\Big( \big\|\epsilon^{<K_*}_{bounded} \big\|_{S}, \|u^{n}\|_{S}\Big) \\
  &\quad + D\Big( \big\|\epsilon^{<K_*}_{bounded}\big\|_{S}, \|u^{n}\|_{S} \Big) \big\| \eta^{Kn,<K_*} \big\|_{S}^3.
 \end{aligned} 
\end{equation}

The bound established thus far in Step 1 of the proof of Lemma~\ref{lem:epsboundednonlin} is not quite enough yet due to the fact that the terms $U_{<k}^{(\phi^{n,<K_*},R)} \overline{\phi^{n,<K_*,M,R}_k}$ etc. are not sharply frequency localized on account of the lack of frequency localizations of the gauge transformations $U_{<k}^{(\phi^{n,<K_*},R)}$ etc. However, a straightforward Littlewood-Paley trichotomy reveals that we get the somewhat sharper estimate 
\begin{equation} \label{eq:eta0Ralmostbetter} 
 \begin{aligned}
  &\sum_a 2^{\sigma|a|} \bigg( \sum_k \Big\| P_{k+a} \Box \big( U_{<k}^{(\phi^{n,<K_*},R)} \overline{\phi^{n,<K_*,M,R}_k} - U_{<k}^{(u^{n},R)} \overline{u^{n,M,R}_{k}} - G^{(n)}_k \cdot U_{<k}^{(\epsilon^{<K_*},R)} \overline{\epsilon^{<K_*,M,R}_{bounded,k}} \big) \Big\|_{L_t^1 L_x^2([0,R])}^2 \bigg)^{\frac12} \\
  &\lesssim \delta_4 e^{-R^{1000}-2^{K_*}} C_{11}\Big( \big\|\epsilon^{<K_*}_{bounded}\big\|_{S}, \|u^{n}\|_{S} \Big) +  R^{41}(\log R+K_*)^{\frac12} \big\|\eta^{Kn,<K_*}\big\|_{S} C_{10}\Big( \big\|\epsilon^{<K_*}_{bounded}\big\|_{S}, \|u^{n}\|_{S} \Big) \\
  &\quad + D\Big( \big\|\epsilon^{<K_*}_{bounded}\big\|_{S}, \|u^{n}\|_{S}\Big) \big\|\eta^{Kn,<K_*}\big\|_{S}^3
 \end{aligned} 
\end{equation}
for any $\sigma<1$. 

\medskip 

\noindent {\bf{Step 2 of the proof of Lemma~\ref{lem:epsboundednonlin}}}: {\it{The preceding step implies via the basic energy estimate~\eqref{equ:energy_estimate} that for $M, K, n$ large enough the following bound holds}}
\begin{equation} \label{eq:Step2lemma} 
 \begin{aligned}
  &\sum_a 2^{\sigma|a|} \bigg( \sum_k \Big\| P_{k+a} \Big( U_{<k}^{(\phi^{n,<K_*},R)} \big( \overline{(\eta^{Kn,<K_*}_k+\epsilon^{<K_*}_{bounded,k})^{M,R}} -  \overline{\epsilon^{<K_*,M,R}_{bounded,k}} \big) \Big) \Big\|_{S([0,R])}^2 \bigg)^{\frac12} \\
  &\lesssim \delta_4 e^{-R^{1000}-2^{K_*}} C_{11}\Big( \big\|\epsilon^{<K_*}_{bounded}\big\|_{S}, \|u^{n}\|_{S} \Big) +  R^{41} (\log R+K_*)^{\frac12} \big\|\eta^{Kn,<K_*}\big\|_{S([0,R])} C_{10} \Big(\big\|\epsilon^{<K_*}_{bounded}\big\|_{S}, \|u^{n}\|_{S}\Big) \\
  &\quad + E\Big(\big\|\epsilon^{<K_*}_{bounded}\big\|_{S}, \|u^{n}\|_{S}\Big) \big\|\eta^{Kn,<K_*}\big\|_{S([0,R])}^3 + c_R \big\|\eta^{Kn,<K_*}\big\|_{S}, 
 \end{aligned} 
\end{equation}
where
\[
 \lim_{R\rightarrow\infty} c_R = 0. 
\]
This follows essentially from the basic energy inequality~\eqref{equ:energy_estimate}. In fact, the self-interactions of $\epsilon^{<K_*}_{bounded}$ inside $\overline{\phi^{n,<K_*,M,R}_k}$ are cancelled by $G^{(n)}_k\cdot U_{<k}^{(\epsilon^{<K_*},R)}\overline{\epsilon^{<K_*,M,R}_{bounded,k}}$ via {\it{Observation 1}}.

The self-interactions of $u^{n}$ inside $\overline{\phi^{n,<K_*,M,R}_k}$ are cancelled by $U_{<k}^{(u^{n},R)} \overline{u^{n,M,R}_{k}}$ via {\it{Observation 2}}. Next, for the interactions between $\epsilon^{<K_*}_{bounded}$ and $u^{n,M}$ inside $\overline{\phi^{n,<K_*,M,R}_k}$ we use the smallness of $u^{n} - p_*$ on the support of $\epsilon^{<K_*}_{bounded}$ (i.e. on bounded sets $\{ r = O(1) \}$) provided all perturbative factors are occupied by $\epsilon^{<K_*}_{bounded}$, while in case that all perturbative factors are occupied by $u^{n}$ we gain smallness by exploiting the frequency localization of $\epsilon^{<K_*}_{bounded}$ to the range $k \geq \mu_n$ up to exponential tails. In case that there are perturbative factors occupied by both $u^{n}$ as well $\epsilon^{<K_*}_{bounded}$, the smallness also follows from the essentially disjoint frequency supports (up to exponentially decaying tails) of these functions. 

\medskip 

Furthermore all interactions of $u^{n,M}$, $\epsilon^{<K_*}_{bounded}$ or $\eta^{Kn,<K_*}$ with $\sum_k \tilde{P}_k \Big( \big( U_{<k}^{(\phi^n,<K_*} \big)^\dagger \big( \sum_{b=1}^B \tilde{\epsilon}^{Kn,<K_*}_{b,k} + \tilde{\epsilon}^{Kn,<K_*}_{B,k} \big) \Big)$ as well as all self-interactions of the latter term inside $\overline{\phi^{n,<K_*,M,R}_k}$ are seen to lead to terms vanishing with respect to $\| \cdot \|_{S([0,R])}$ as $n\rightarrow\infty$ on account of their pointwise decay. Moreover, all interactions of $\eta^{Kn,<K_*}$ with $u^{n}$ are seen to be either vanishing provided we let $n\rightarrow\infty$ using {\it{(iv.2)}}, or else of the form $O\Big(\big\|\eta^{Kn,<K_*}\big\|_{S}^2\Big)$.   

\medskip 

\noindent {\bf{Step 3 of the proof of Lemma~\ref{lem:epsboundednonlin}}}:  {\it{The preceding bound implies the following:}}
\begin{equation} \label{eq:Step3lemma} 
 \begin{aligned}
  &\bigg( \sum_k \big\| \overline{(\eta^{Kn,<K_*}_k + \epsilon^{<K_*}_{bounded,k})^{M,R}} - \overline{\epsilon^{<K_*,M,R}_{bounded,k}} \big\|_{S_k([0,R])}^2 \bigg)^{\frac12} \\
  &\lesssim \delta_4 e^{-R^{1000}-2^{K_*}} C_{12} \Big( \big\|\epsilon^{<K_*}_{bounded}\big\|_{S}, \|u^{n}\|_{S} \Big) + R^{41} (\log R+K_*)^{\frac12} \big\|\eta^{Kn,<K_*}\big\|_{S([0,R])} C_{13} \Big(\big\|\epsilon^{<K_*}_{bounded}\big\|_{S}, \|u^{n}\|_{S}\Big) \\
  &\quad + F\Big( \big\|\epsilon^{<K_*}_{bounded} \big\|_{S}, \big\|u^{n}\big\|_{S} \Big) \big\|\eta^{Kn,<K_*}\big\|_{S([0,R])}^3 + c_R \big\|\eta^{Kn,<K_*}\big\|_{S}. 
 \end{aligned} 
\end{equation}
In fact, we can use the more general observation that if $f_{(k)}$ is a family of functions satisfying 
\[
 \sum_{a\in \Z} 2^{\sigma|a|} \Big( \sum_{k \in \Z} \big\|P_{k+a} f_{(k)}\big\|_{S_{k+a}([0,R])}^2 \Big)^{\frac12} \leq A
\]
as well as 
\[
\lim_{n\rightarrow\infty} \sum_{a\in Z} 2^{\sigma|a|} \Big( \sum_{k<\mu_n} \big\|P_{k+a} f_{(k)}\big\|_{S_{k+a}([0,R])}^2 \Big)^{\frac12} = 0, 
\]
then we have 
\[
\big(\sum_{k\in Z}\big\|\tilde{P}_k\big[\big(U_{<k}^{(\phi^{n,<K_*},R)}\big)^{\dagger}f_{(k)}\big]\big\|_{S_k([0,R]\times \R^2}^2)\big)^{\frac12}\lesssim A + c_R\big\|\eta^{Kn,<K_*}\big\|_{S} + o(1)
\]
as $n\rightarrow\infty$. To see this, we consider the case $k = 0$ and apply the usual frequency trichotomy to the expression 
\[
 \tilde{P}_0 \Big( \big(U_{<0}^{(\phi^{n,<K_*},R)}\big)^{\dagger} f_{(0)} \Big). 
\]
Consider for example the low-high case 
\[
 \tilde{P}_0 \Big( P_{<-10} \big(U_{<0}^{(\phi^{n,<K_*},R)}\big)^{\dagger} P_0 f_{(0)} \Big). 
\]
Then the estimate for those parts of $\| \cdot \|_{S_0}$ not involving any derivatives is immediate. Next, consider the part of the norm involving the operator $(\partial_t + \partial_r)$. If it falls on $P_0 f_{(0)}$, then the estimate is again immediate. Thus consider now the expression 
\[
 \tilde{P}_0 \Big( (\partial_t + \partial_r) P_{<-10} \big( U_{<0}^{(\phi^{n,<K_*},R)} \big)^{\dagger} P_0 f_{(0)} \Big). 
\]
Schematically, this reduces to estimating terms of the form 
\[
 \tilde{P}_0 \Big( (\partial_t + \partial_r) B_{<-10}^{(\phi^{n,<K_*}, R)} P_0 f_{(0)} \Big). 
\]
Call the high-frequency factors in the bilinear expression defining $B_{<-10}^{(\phi^{n,<K_*}, R)}$ the ``perturbative factors''. Expanding out $B_{<-10}^{(\phi^{n,<K_*}, R)}$ using \eqref{eq:profiles1}, we can easily deal with the cases where the perturbative factor is either $u^{n}$ or a dispersive term 
\[
 \sum_k \tilde{P}_k \bigg( \big( U_{<k}^{(\phi^n,<K_*} \big)^\dagger \Big( \sum_{b=1}^B \tilde{\epsilon}^{Kn,<K_*}_{b,k} + \tilde{\epsilon}^{Kn,<K_*}_{B,k} \Big) \bigg).
\]
In the former case, we obtain a contribution $o(1)$ due to the diverging frequency supports of $f_{(0)}$ and $u^{n}$, and the same applies in the latter case, this time due to the vanishing property of the dispersive terms. To see this latter affirmation, consider a schematic term 
\begin{equation} \label{eq:reallytediousterm}
 \tilde{P}_0 \bigg( (\partial_t + \partial_r) \sum_{k<-10} \tilde{P}_k \Big( \big( U_{<k}^{(\phi^n,<K_*} \big)^\dagger \Big( \sum_{b=1}^B \tilde{\epsilon}^{Kn,<K_*}_{b,k} + \tilde{\epsilon}^{Kn,<K_*}_{B,k} \Big) \Big) P_0 f_{(0)} \bigg).
\end{equation}
Then if there is a low-high frequency interaction inside $\tilde{P}_k \big( \ldots \big)$ and the derivative $(\partial_t + \partial_r)$ falls on the high-frequency factor, we split the term as follows
\begin{align*}
 &\tilde{P}_0 \bigg( (\partial_t + \partial_r) \sum_{k<-10} \tilde{P}_k \Big( \big( U_{<k}^{(\phi^n,<K_*} \big)^\dagger \Big( \sum_{b=1}^B \tilde{\epsilon}^{Kn,<K_*}_{b,k} + \tilde{\epsilon}^{Kn,<K_*}_{B,k} \Big) \Big) P_0 f_{(0)} \bigg) \\
 &= \chi_{\{r<\Lambda\}} \tilde{P}_0 \bigg( (\partial_t + \partial_r) \sum_{k<-10} \tilde{P}_k \Big( \big( U_{<k}^{(\phi^n,<K_*} \big)^\dagger \Big( \sum_{b=1}^B \tilde{\epsilon}^{Kn,<K_*}_{b,k} + \tilde{\epsilon}^{Kn,<K_*}_{B,k} \Big) \Big) P_0 f_{(0)} \bigg) \\
 &\quad + \chi_{\{ r \geq \Lambda \}} \tilde{P}_0 \bigg( (\partial_t + \partial_r) \sum_{k<-10} \tilde{P}_k \Big( \big( U_{<k}^{(\phi^n,<K_*} \big)^\dagger \Big( \sum_{b=1}^B \tilde{\epsilon}^{Kn,<K_*}_{b,k} + \tilde{\epsilon}^{Kn,<K_*}_{B,k} \Big) \Big) P_0 f_{(0)} \bigg).
\end{align*}
Then we place the first term on the right into the $(-)$-component, which is easily seen to be $o(1)$ as $n\rightarrow\infty$ for any $\Lambda>0$, while we place the second term on the right into the $(\pm)$-component depending on the type of $(\partial_t + \partial_r)\tilde{\epsilon}^{Kn,<K_*}_{b,k}$ and $(\partial_t + \partial_r)\tilde{\epsilon}^{Kn,<K_*}_{b,B}$. If the latter is in the $(+)$-case, then we gain smallness by using $\big\| \chi_{\{ r>\Lambda \}} P_0 f_{(0)} \big\|_{L_{t,x}^\infty} \lesssim \Lambda^{-\frac12} \big\|f_{(0)}\big\|_{L_t^\infty \dot{H}^1_x}$. Similarly one deals with the $(-)$-case. Since we may let $\Lambda\rightarrow\infty$, these contributions are $o(1)$. The remaining frequency interactions inside \eqref{eq:reallytediousterm} are handled similarly. 

\medskip 

Finally, if the perturbative factor in $B_{<-10}^{(\phi^{n,<K_*}, R)}$ happens to be $\eta^{Kn,<K_*}$, then the contribution is seen to be bounded by $c_R \big\|\eta^{Kn,<K_*}\big\|_{S}$ with $\lim_{R\rightarrow\infty} c_R = 0$ due to the $R$-dependence of $B_{<-10}^{(\phi^{n,<K_*}, R)}$. 

\medskip 

\noindent {\bf{Step 4 of the proof of Lemma~\ref{lem:epsboundednonlin}}}: {\it{The preceding step implies the following:}}
\begin{equation} \label{eq:Step4lemma}
 \begin{aligned}
  &\big\|\eta^{Kn,<K_*}\big\|_{S([0,R])} \\
  &\lesssim \delta_4 e^{-R^{1000}-2^{K_*}} C_{13}\Big( \big\|\epsilon^{<K_*}_{bounded}\big\|_{S}, \|u^{n}\|_{S} \Big) + R^{41} (\log R+K_*)^{\frac12} \big\| \eta^{Kn,<K_*} \big\|_{S([0,R])} C_{14}\Big( \big\|\epsilon^{<K_*}_{bounded} \big\|_{S}, \|u^{n}\|_{S}\Big) \\
  &\quad + E \Big( \big\|\epsilon^{<K_*}_{bounded} \big\|_{S}, \|u^{n}\|_{S}\Big) \big\| \eta^{Kn,<K_*} \big\|_{S([0,R])}^2  + c_{R,M} \big\| \eta^{Kn,<K_*} \big\|_{S([0,R])},
 \end{aligned} 
\end{equation}
where 
\[
 \lim_{R, M\rightarrow\infty}c_{R,M} = 0.
\] 
To get this conclusion, we need to pass from 
\[
 \overline{(\eta^{Kn,<K_*}_k+\epsilon^{<K_*}_{bounded,k})^{M,R}} -  \overline{\epsilon^{<K_*,M,R}_{bounded,k}}
\]
to $\eta^{Kn,<K_*}_k$. For this it suffices to control 
\[
 \overline{(\eta^{Kn,<K_*}_k+\epsilon^{<K_*}_{bounded,k})^{M,R}} -  \overline{\epsilon^{<K_*,M,R}_{bounded,k}} - \eta^{Kn,<K_*}_k,
\]
which consists of cubic terms two of which (the high frequency and differentiated low frequency term) are ``perturbative''. But then, exploiting the precise definition of this expression, we easily see that its square-sum norm may be bounded by $c_{R,M} \big\|\eta^{Kn,<K_*}\big\|_{S}$. 

\medskip 

Finally, we have reached the point where we can get the conclusion of the lemma via a bootstrap argument: 

\medskip 

\noindent {\bf{Step 5 of the proof of Lemma~\ref{lem:epsboundednonlin}}}: {\it{Conclusion of the proof via time localization}}. Observe that we can choose $R$ larger than any function of $\big\|\epsilon^{<K_*}_{bounded}\big\|_{S}$ and $\|u^{n}\|_{S}$. In particular, all terms on the right hand side of \eqref{eq:Step4lemma} except the second one are perturbative. However, this term first arose in Step~1 of the proof of Lemma~\ref{lem:epsboundednonlin} on account of the norms of certain source terms in $F^{(R,M)}$ with good divisibility properties, and propagated through the following steps. In particular, we can replace the bad factor $R^{41}(\log R + K_*)^{\frac12} C_{10}\Big( \big\|\epsilon^{<K_*}_{bounded} \big\|_{S}\Big)$ by a good one, such as $R^{-1}$, by subdividing the time interval $[0,R]$ into 
\[
 M_1 := C_{10}\Big(\big\|\epsilon^{<K_*}_{bounded}\big\|_{S}\Big) R^{100}(\log R+K_*)^{100}
\]
 many sub-intervals $I_1, I_2,\ldots, I_{M_1}$, say, and replacing $[0,R]\times \R^2$ by $I_j\times \R^2$. Doing so then allows us to infer the better estimate 
\begin{equation} \label{eq:Step5lemma}
 \begin{aligned}
  \big\|\eta^{Kn,<K_*}\big\|_{S[I_1]} &\lesssim \delta_4 e^{-R^{1000}-2^{K_*}} C_{15}\Big( \big\|\epsilon^{<K_*}_{bounded}\big\|_{S}, \|u^{n}\|_{S}\Big) + R^{-1} \big\|\eta^{Kn,<K_*}\big\|_{S[I_1]} \\
  &\quad + F\Big( \big\|\epsilon^{<K_*}_{bounded}\big\|_{S}, \|u^{n}\|_{S} \Big) \big\|\eta^{Kn,<K_*}\big\|_{S[I_1]}^2 + c_{R,M} \big\|\eta^{Kn,<K_*}\big\|_{S[I_1]},
 \end{aligned} 
\end{equation}
from which we infer 
\[
 \big\| \eta^{Kn,<K_*} \big\|_{S[I_1]} \lesssim \delta_4 e^{-R^{1000}-2^{K_*}} C_{16} \Big( \big\|\epsilon^{<K_*}_{bounded}\big\|_{S}, \|u^{n}\|_{S} \Big).
\]
Progressing inductively through the intervals $I_j$, $j = 2,3,\ldots, M_1$, and each time using the improved bound for the previous interval for the initial data, we finally infer the bound
\begin{align*}
 \big\|\eta^{Kn,<K_*}\big\|_{S([0,R])} \lesssim \delta_4 e^{C_{17} \big( \|\epsilon^{<K_*}_{bounded} \|_{S}, \|u^{n}\|_{S} \big) R^{100} (\log R+K_*)^{100}} e^{-R^{1000}-2^{K_*}} C_{18}\Big( \big\|\epsilon^{<K_*}_{bounded} \big\|_{S}, \|u^{n}\|_{S} \Big) \lesssim \delta_4 \ll \delta_3,
\end{align*}
if we choose $R$ sufficiently large and $\delta_4$ sufficiently small compared to $\delta_3$. This then proves Lemma~\ref{lem:epsboundednonlin} up to establishing {\it{Observation 1}} and {\it{Observation 2}}, which we now do. 
\end{proof}

\medskip 

\begin{proof}[Proof of Observation~1]
Write $\epsilon \equiv \epsilon^{<K_*}_{bounded}$ and $\phi \equiv \phi^{n,<K_*}$. We consider the quantity $U_{<s}^{(\phi,R)} \big( U_{<s}^{(\epsilon,R)} \big)^\dagger$. Differentiating it with respect to $s$, we find 
\begin{align*}
 \partial_s \Big( U_{<s}^{(\phi,R)} \big( U_{<s}^{(\epsilon,R)} \big)^\dagger \Big) = U_{<s}^{(\phi,R)} (B_s^{(\phi,R)} - B_s^{(\epsilon,R)}) \big( U_{<s}^{(\epsilon,R)} \big)^\dagger
\end{align*}
and so since $\lim_{s\rightarrow-\infty} U_{<s}^{(\phi,R)} \big( U_{<s}^{(\epsilon,R)} \big)^\dagger = Id$, we get 
\[
 U_{<s}^{(\phi,R)} \big( U_{<s}^{(\epsilon,R)} \big)^\dagger = \int_{-\infty}^s U_{<\tilde{s}}^{(\phi)} (B_{\tilde{s}}^{(\phi,R)} - B_{\tilde{s}}^{(\epsilon,R)}) \big( U_{<\tilde{s}}^{(\epsilon,R)} \big)^\dagger \, d\tilde{s} + Id. 
\]
We shall then simply set 
\[
 G^{(n)}_k := U_{<k}^{(\phi,R)} \big( U_{<k }^{(\epsilon,R)} \big)^\dagger(0, 0).
\]
To see that this works, note that by our definition of $U_{<k}^{(\phi, R)}$, we get 
\begin{align*}
 \Big( U_{<k}^{(\phi,R)} \big( U_{<k}^{(\epsilon,R)} \big)^\dagger - G^{(n)}_k \Big) U_{<k}^{(\epsilon,R)} \epsilon_k &= \bigg( \int_0^R \int_{-\infty}^k \partial_t \Big( U_{<\tilde{s}}^{(\phi)}(B_{\tilde{s}}^{(\phi,R)} - B_{\tilde{s}}^{(\epsilon,R)}) \big( U_{<\tilde{s}}^{(\epsilon,R)} \big)^\dagger \Big) \, d\tilde{s} \, dt \bigg) U_{<k}^{(\epsilon,R)} \epsilon_k \\
 &\quad + \Big( U_{<k}^{(\phi,R)}(0, \cdot) \big( U_{<k}^{(\epsilon,R)} \big)^\dagger(0, \cdot) - G^{(n)}_k \Big) U_{<k}^{(\epsilon,R)} \epsilon_k.
\end{align*}
Then by expanding the difference $(B_{\tilde{s}}^{(\phi,R)} - B_{\tilde{s}}^{(\epsilon,R)})$ one checks that for the first term on the right we have 
\begin{align*}
 \sum_{a\in \Z} 2^{\sigma|a|} \Big( \sum_{k\in Z} \big\| P_{k+a}(\cdot) \big\|_{S_{k+a}}^2 \Big)^{\frac12} \leq o(1) + c_R \big\|\eta^{Kn,<K_*}\big\|_{S}
\end{align*}
as $n\rightarrow\infty$, where $\lim_{R\rightarrow\infty}c_R = 0$. Furthermore, split
\[
\epsilon_k = \chi_{B_{R_*}}\epsilon_k + (1-\chi_{B_{R_*}})\epsilon_k 
\]
for some $R_*\gg R$ as in case {\it{(v)}} of Step 1 of the proof of Lemma~\ref{lem:epsboundednonlin} and write
\begin{align*} 
 &\Big( U_{<k}^{(\phi,R)}(0, r) \big( U_{<k}^{(\epsilon,R)} \big)^\dagger(0, r) - G^{(n)}_k \Big) U_{<k}^{(\epsilon,R)} \chi_{B_{R_*}} \epsilon_k \\ 
 &= \bigg( \int_0^r \int_{-\infty}^k \partial_{\tilde{r}} \Big( U_{<\tilde{s}}^{(\phi)} (B_{\tilde{s}}^{(\phi,R)} - B_{\tilde{s}}^{(\epsilon,R)}) \big( U_{<\tilde{s}}^{(\epsilon,R)} \big)^\dagger \Big)(0,\tilde{r}) \, d\tilde{s} \, d\tilde{r} \bigg) U_{<k}^{(\epsilon,R)} \chi_{B_{R_*}} \epsilon_k.
\end{align*}
Then arguing as in {\it{(v)}} of Step 1 of the proof of Lemma~\ref{lem:epsboundednonlin}, one gets that the preceding expression satisfies
\begin{align*}
 \sum_{a \in \Z} 2^{\sigma|a|} \bigg( \sum_{k\in \Z} \big\|P_{k+a}(\cdot)\big\|_{S_{k+a}}^2 \bigg)^{\frac12} \leq  o(1) + \delta_4 e^{-R^{1000}-2^{K_*}}
\end{align*}
for $K, n$ large enough, and the same conclusion applies to the contribution of $(1-\chi_{B_{R_*}})\epsilon_k $ for $R_*$ large enough relative to $R, K_*$. 
This proves the first part of {\it{Observation 1}}, and the second part is obtained similarly, using divisibility for $F^{(R,M)}(\epsilon)$. 
\end{proof}

\begin{proof}[Proof of Observation 2]
We write 
\[
 U_{<s}^{(\phi,R)} = \int_{-\infty}^s U_{<\tilde{s}}^{(\phi,R)} B_{\tilde{s}}^{(\phi,R)} \, d\tilde{s}, \quad U_{<s}^{(u^{n},R)} = \int_{-\infty}^s U_{<\tilde{s}}^{(u^{n},R)} B_{\tilde{s}}^{(u^{n},R)} \, d\tilde{s},
\]
whence we get (specializing to frequency $k = 0$)
\begin{align*}
 \big( U_{<0}^{(\phi,R)} - U_{<0}^{(u^{n},R)} \big) \overline{u^{n,M,R}_{0}} &= \int_{-\infty}^0 U_{<\tilde{s}}^{(\phi,R)} (B_{\tilde{s}}^{(\phi,R)} - B_{\tilde{s}}^{(u^{n}_{\Lambda},R)}) \overline{u^{n,M,R}_{0}} \, d\tilde{s} \\
 &\quad + \int_{-\infty}^0 \big( U_{<\tilde{s}}^{(\phi,R)} - U_{<\tilde{s}}^{(u^{n},R)} \big) B_{\tilde{s}}^{(u^{n},R)} \overline{u^{n,M,R}_{0}} \,d\tilde{s}.
\end{align*}
One then reiterates in the second expression on the right, and so it suffices to bound the first term on the right. Then recalling that $\mu_n$ denotes the frequency dividing between the lowest frequency non-atomic constituent and the first large frequency atom, then if $0 < \mu_n$ we can close by exploiting the exponential decay of all of the $\epsilon_{bounded,s_1}^K$ etc. as well as well as the bound in {\it{(iv.2)}} to control $\eta^{nK}_{s_1}$ for $s_1 < \mu_n$. On the other hand, if $0 \geq \mu_n$, we split
\begin{align*}
 \int_{-\infty}^0 U_{<\tilde{s}}^{(\phi,R)} (B_{\tilde{s}}^{(\phi,R)} - B_{\tilde{s}}^{(u^{n},R)}) \overline{u^{n,M,R}_{0}} \, d\tilde{s} &= \int_{-\infty}^{\mu_n} U_{<\tilde{s}}^{(\phi,R)}(B_{\tilde{s}}^{(\phi,R)} - B_{\tilde{s}}^{(u^{n},R)}) \overline{u^{n,M,R}_{0}} \, d\tilde{s} \\
 &\quad + \int_{\mu_n}^{0} U_{<\tilde{s}}^{(\phi,R)} (B_{\tilde{s}}^{(\phi,R)} - B_{\tilde{s}}^{(u^{n},R)}) \overline{u^{n,M,R}_{0}} \, d\tilde{s}.
\end{align*}
The first term on the right is bounded by exploiting the exponential decay of each term in 
\[
 \epsilon_{bounded,s_1}^{<K_*} + \sum_{b=1}^B \tilde{P}_{s_1} \Big( \big(U_{<s_1}^{(\phi^{n,<K_*})} \big)^\dagger \tilde{\epsilon}^{Kn,<K_*}_{b, s_1} + \big( U_{<s_1}^{(\phi^{Kn,<K_*})} \big)^\dagger \tilde{\epsilon}^{Kn,<K_*}_{B, s_1} \Big) + \eta^{Kn,<K_*}_{s_1}
\]
with respect to $s_1-\mu_n$, while in the second term one loses $\mu_n$ due to the integration over $\tilde{s}$, but gains $o(1) \, 2^{-\sigma |\mu_n|}$ due to the smallness of $\overline{u^{n}_{0}}$. 
\end{proof}

\medskip 

Continuing with the Stage 1 of the proof of Theorem~\ref{thm:profiledecomp}, we pass to 

\medskip 

\noindent {\bf{Step 2}}: {\it{We adjust the decomposition \eqref{eq:profiles1} slightly and thereby achieve control on the interval $[R, t^n_1 - T_*]$.}} Specifically, we replace \eqref{eq:profiles1} by 
\begin{equation} \label{eq:profilesmodified}
 \phi^{n,<K_*} = u^{n} +  \sum_k \tilde{P}_k \bigg( (U_{<k}^{(\phi^{n,<K_*})} \big)^\dagger \tilde{\epsilon}^{Kn,<K_*}_{k} \bigg) + \sum_k \tilde{P}_k \bigg( \big( U_{<k}^{(\phi^{n,<K_*})} \big)^\dagger \Big( \sum_{b=1}^B \tilde{\epsilon}^{Kn,<K_*}_{b,k} + \tilde{\epsilon}^{Kn,<K_*}_{B,k} \Big) \bigg) + \tilde{\eta}^{Kn,<K_*},
\end{equation}
where we let $\tilde{\epsilon}^{Kn,<K_*}_{k}$ be the evolution with respect to \eqref{eq:therightone} of the initial data 
\[
 \Big( \big( U_{<k}^{(\phi^{n,<K_*})} \big) P_{[-K,\infty)} \epsilon^{<K_*}_{bounded,k}, \partial_t \big( U_{<k}^{(\phi^{n,<K_*})} \big) P_{[-K,\infty)} \epsilon^{<K_*}_{bounded,k} + \big( U_{<k}^{(\phi^{n,<K_*})} \big) \partial_t P_{[-K,\infty)} \epsilon^{<K_*}_{bounded,k} \Big) \Big|_{t = R}.
\]
Observe that we have included an extra low frequency cutoff $P_{[-K,\infty)}$ for the data and we absorb the corresponding correction into $\tilde{\eta}^{Kn,<K_*}$. Then it is straightforward to check that 
\[
 \lim_{K\rightarrow\infty} \limsup_{n\rightarrow\infty} \big\| \nabla_{t,x} \big( \tilde{\eta}^{Kn,<K_*} - \eta^{Kn,<K_*} \big)(R, \cdot) \big\|_{L_x^2} = 0. 
\]
Now we can formulate the following important 
\begin{lem} \label{lem:epsboundedlin} 
There is $T_*$ sufficiently large (depending on the first temporally unbounded profile) such that if $K, R, n$ are sufficiently large and $B = B\big( \|u^{n} \|_{S}, \delta_2\big)$ is sufficiently large, then we have 
\[
 \big\| \tilde{\eta}^{Kn,<K_*} \big\|_{S([R, t^n_1 - T_*])} < \delta_2. 
\]
Furthermore, there exist (not necessarily frequency localized) functions $H_{(k)}^{(1)}\big( \tilde{\eta}^{Kn,<K_*}, \tilde{\epsilon}^{Kn,<K_*}_{B}, \phi^{n,<K_*} \big)$, \\
$H_{(k)}^{(2)} \big( \tilde{\epsilon}^{Kn,<K_*}_{B}, \phi^{n,<K_*} \big)$ and $g_{(k)}\big( \tilde{\eta}^{Kn,<K_*}, \tilde{\epsilon}^{Kn,<K_*}_{B}, \phi^{n,<K_*} \big)$, $k\in \Z$, which are multilinear expressions in its frequency localized inputs as well as in the gauge transformations $U_{<h}^{(\phi^{n,<K_*})}$ and $\big( U_{<h}^{(\phi^{n,<K_*})} \big)^{\dagger}$, and with 
\[
 \sum_{a\in \Z} 2^{\sigma |a|} \bigg( \sum_k \big\| P_{k+a} H_{(k)}^{(2)}\big( \tilde{\epsilon}^{Kn,<K_*}_{B}, \phi^{n,<K_*}, u^{n} \big) \big\|_{L_t^1 L_x^2}^2 \bigg)^{\frac12} \leq c\big( \eta_{A^n}(\tilde{\epsilon}^{Kn,<K_*}_{B}) \big)
\]
with $c(\gamma)\rightarrow 0 $ as $\gamma\rightarrow 0$, as well as with
\begin{align*}
 \sum_{a\in \Z}2^{\sigma |a|} \bigg( \sum_k \big\| P_{k+a} H_{(k)}^{(1)}\big( \tilde{\eta}^{Kn,<K_*}, \tilde{\epsilon}^{Kn,<K_*}_{B}, \phi^{n,<K_*} \big) \big\|_{L_t^1 L_x^2}^2 \bigg)^{\frac12} &\lesssim \Bigl( \big\|\tilde{\eta}^{Kn,<K_*}\big\|_{S} + \big\|\tilde{\eta}^{Kn,<K_*}\big\|_{S}^{100} \Bigr) F\Big( \big\| \tilde{\epsilon}^{Kn,<K_*}_{B} \big\|_{S}\Big), \\
 \sum_{a\in \Z} 2^{\sigma |a|} \bigg( \sum_k \big\| P_{k+a} g_{(k)}\big( \tilde{\eta}^{Kn,<K_*}, \tilde{\epsilon}^{Kn,<K_*}_{B}, \phi^{n,<K_*}\big) \big\|_{S_{k+a}}^2 \bigg)^{\frac12} &\lesssim \big\| \tilde{\epsilon}^{Kn,<K_*}_{B}\big\|_{S}^2 + \big\|\tilde{\eta}^{Kn,<K_*}\big\|^2,
\end{align*}
and correction terms $\kappa_k$, $k\in \Z$ with 
\[
 \sum_{a \in \Z} 2^{\sigma |a|} \Big( \sum_k \big\| P_{k+a} \kappa_k \big\|_{S}^2 \Big)^{\frac12} \leq o(1)
\]
as $R, T_*, n\rightarrow\infty$, and such that 
\begin{equation} \label{eq:tiletaonlargeint}
 \tilde{\Box}_{A^n} \big( U_{<k-10}^{(\phi^{n,<K_*})} \tilde{\eta}^{Kn,<K_*}_k + g_{(k)} + \kappa_k \big) = H^{(1)}_{(k)}\big( \tilde{\eta}^{Kn,<K_*}, \tilde{\epsilon}^{Kn,<K_*}_{B}, \phi^{n,<K_*} \big) + H_{(k)}^{(2)}\big( \tilde{\epsilon}^{Kn,<K_*}_{B}, \phi^{n,<K_*} \big) + o^{(k)}(1),
\end{equation}
where the error satisfies the relation 
\[
 \lim_{R,T_*,n\rightarrow\infty} \sum_{a \in \Z} 2^{\sigma |a|} \Big( \sum_k \big\| P_{k+a} o^{(k)}(1) \big\|_{L_t^1 L_x^2([R, t^n_1 - T_*])}^2 \Big)^{\frac12} = 0. 
\]
\end{lem}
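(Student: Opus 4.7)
The plan is to mimic the structure of Step~1 (Lemma~\ref{lem:epsboundednonlin}), but with two key modifications: (i) we replace $\epsilon^{<K_*}_{bounded}$ by its magnetic surrogate $\tilde{\epsilon}^{Kn,<K_*}_k$ (which has already scattered past the nonlinear regime by time $t=R$ if $R$ is large), and (ii) we absorb the non-perturbative $u^n$-$\tilde{\eta}$ and $u^n$-$\tilde{\epsilon}^{Kn,<K_*}_B$ interactions on the left-hand side by passing from $\Box$ to $\tilde{\Box}_{A^n}$. Concretely, I plug \eqref{eq:profilesmodified} into the frequency-localized gauged equation $\Box(U_{<k}^{(\phi^{n,<K_*})} \overline{\phi^{n,<K_*}_k}) = U_{<k}^{(\phi^{n,<K_*})} F_k(\phi^{n,<K_*}, \nabla_{t,x}\phi^{n,<K_*})$, expand using Proposition~\ref{prop:nlw_for_renormalized_phi_bar_schematic_identities} and Lemma~\ref{lem:tediousmultilinear}, and identify the bulk cancellations. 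The self-interactions of $u^n$ cancel the wave-maps equation for $u^n$ (as in Observation~2 of Step~1); the purely linear magnetic evolution of the profile pieces cancels via the equation \eqref{eq:therightone} that defines them; and the quintilinear ``trichotomy error'' arising from passing between $\overline{\phi^{n,<K_*}_k}$ and $\phi^{n,<K_*}_k$ and between $U^{(\phi^{n,<K_*})}_{<k}$ and $U^{(u^n)}_{<\mu_n}$ in $\tilde{\Box}_{A^n}$ is collected into $g_{(k)}$ (bounded by a Littlewood-Paley trichotomy as in Step~3 of Step~1) and $\kappa_k$ (all genuinely $o(1)$ by Observations~1--2).

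After these cancellations the remaining source splits into three natural buckets. First, every term containing at least one factor of $\tilde{\eta}^{Kn,<K_*}$ becomes $H^{(1)}_{(k)}$; it is estimated exactly as in case~(iv) of Step~1, using the trilinear estimate~\eqref{equ:trilinear_estimate} and the null form estimate~\eqref{equ:null_form_estimate} for terms with $\tilde{\eta}$ in a perturbative slot, and the energy bound of case~(iv.2) of Step~1 (now propagated across the long interval by the essential frequency support of $\tilde{\eta}$ above $\mu_n$) for terms with $\tilde{\eta}$ in a non-perturbative slot. Second, terms with no $\tilde{\eta}$ but at least one factor of $\tilde{\epsilon}^{Kn,<K_*}_B$ become $H^{(2)}_{(k)}$: since $\tilde{\epsilon}^{Kn,<K_*}_{B,k}$ solves \eqref{eq:therightone} with initial data whose concentration energy $\eta_{A^n}(\tilde{\epsilon}^{Kn,<K_*}_B[0])$ can be made arbitrarily small by choosing $B = B(\|u^n\|_S, \delta_2)$ via Proposition~\ref{prop:linprofiles}, Lemma~\ref{lem:soloftherightoneasymptotics} provides pointwise smallness of $P_k \nabla_{t,x} \tilde{\epsilon}^{Kn,<K_*}_{B,k}$ outside a finite time window, and the variant of Lemma~\ref{lem:bilinearnullformbelowL2use} allowing for additional factors of $u^n$, $\tilde{\epsilon}^{Kn,<K_*}_k$ and $\tilde{\epsilon}^{Kn,<K_*}_{b,k}$ then yields the square-summed $L^1_t L^2_x$ smallness $c(\eta_{A^n}(\tilde{\epsilon}^{Kn,<K_*}_B))$ required by the statement. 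Third, terms in which every factor is drawn from $\{u^n, \tilde{\epsilon}^{Kn,<K_*}_k, \tilde{\epsilon}^{Kn,<K_*}_{b,k}\}_{b=1}^B$ are asymptotically negligible on $[R, t^n_1 - T_*]$: by Lemma~\ref{lem:soloftherightoneasymptotics} and the divergence relation $\lim_n|t^n_b - t^n_{b'}| = \infty$, each $\tilde{\epsilon}^{Kn,<K_*}_{b,k}$ decays uniformly in $L^\infty_{t,x}$ on this interval as $T_*,n \to \infty$, while $\tilde{\epsilon}^{Kn,<K_*}_k$ decays similarly by scattering; applying Lemma~\ref{lem:bilinearnullformbelowL2use} places the entire contribution into $o^{(k)}(1)$.

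With \eqref{eq:tiletaonlargeint} thus established, closing the bootstrap follows exactly as in Steps~4--5 of the proof of Lemma~\ref{lem:epsboundednonlin}: the energy estimate \eqref{equ:energy_estimate} transfers $L^1_t L^2_x$ control on the right-hand side into $S_{k+a}$ control on $U_{<k-10}^{(\phi^{n,<K_*})} \tilde{\eta}^{Kn,<K_*}_k + g_{(k)} + \kappa_k$, from which a Littlewood-Paley trichotomy recovers $\|\tilde{\eta}^{Kn,<K_*}\|_S$ itself (using orthogonality of $U_{<k-10}^{(\phi^{n,<K_*})}$ and the smallness of $g_{(k)}, \kappa_k$). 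One then partitions $[R, t^n_1 - T_*]$ into $N(\|u^n\|_S, \delta_2)$-many sub-intervals dictated by the divisibility of the Strichartz and null-form norms of $u^n$ and of the $\tilde{\epsilon}^{Kn,<K_*}_{b,k}$, on each of which the bound on $H^{(1)}_{(k)}$ becomes genuinely perturbative in $\|\tilde{\eta}^{Kn,<K_*}\|_S$; iterating the bootstrap across sub-intervals with the improved bound used as data for the next interval yields $\|\tilde{\eta}^{Kn,<K_*}\|_{S([R, t^n_1 - T_*])} < \delta_2$ provided $K, n$ are sufficiently large.

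The main obstacle lies in bucket~(b) and bucket~(c) above, specifically in producing $L^1_t L^2_x$ smallness for multilinear expressions in $\tilde{\epsilon}^{Kn,<K_*}_B$, $\tilde{\epsilon}^{Kn,<K_*}_{b,k}$ on an interval whose length diverges with $n$: we cannot simply pay by multiplying by the length of the time interval as in the crude bound of Step~1, so we must genuinely exploit the ``below $L^2$'' null form estimate~\eqref{equ:null_form_estimate_proof_L32} through Lemma~\ref{lem:bilinearnullformbelowL2use} to convert pointwise smallness of a single factor into square-summed $L^1_t L^2_x$ smallness of the whole multilinear expression. The fact that we only need to control a fixed number of profiles (namely $B$, chosen at the outset depending only on $\|u^n\|_S$ and $\delta_2$) is crucial, since it allows the divergence rates implicit in Lemma~\ref{lem:soloftherightoneasymptotics} to be uniform across all profiles under consideration.
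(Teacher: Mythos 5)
Your proposal is correct and follows essentially the same route as the paper: you exploit that the profile pieces are annihilated by $\tilde{\Box}_{A^n}$ (so the equation reduces to one for $U_{<k-10}^{(\phi^{n,<K_*})}\tilde{\eta}^{Kn,<K_*}_k$ plus commutator terms $g_{(k)},\kappa_k$), cancel the $u^n$ self-interactions and the magnetic-potential terms against the corresponding pieces of the wave maps nonlinearity, sort the residual sources into $H^{(1)}$, $H^{(2)}$ and $o^{(k)}(1)$ exactly as the paper does, use the low-frequency energy bound for $\tilde{\eta}$ and the ``below $L^2$'' bilinear null form input of Lemma~\ref{lem:bilinearnullformbelowL2use} to gain smallness on the unbounded time interval, and close by the same bootstrap/divisibility and Littlewood--Paley trichotomy argument.
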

\begin{proof}
Consider 
\begin{equation} \label{eq:lem:epsboundedlin} 
 \tilde{\Box}_{A^n} \bigg( U_{<k-10}^{(\phi^{n,<K_*})} \overline{\phi^{n,<K_*}_k} - U_{<k-10}^{(u^{n})} \overline{u^{n}_{k}} - \tilde{\epsilon}^{Kn,<K_*}_{k} - \Big( \sum_{b=1}^B \tilde{\epsilon}^{Kn,<K_*}_{b,k} + \tilde{\epsilon}^{Kn,<K_*}_{B,k} \Big) \bigg).
\end{equation}
Then on the one hand this equals 
\[
 \tilde{\Box}_{A^n} \Big( U_{<k-10}^{(\phi^{n,<K_*})} \tilde{\eta}^{Kn,<K_*}_k + \zeta_k \Big),
\]
where (setting $\tilde{\epsilon}^{Kn,<K_*}_{0,k}:= \tilde{\epsilon}^{Kn,<K_*}_{k}$ for simplicity)
\begin{align*}
 \zeta_k &= \big( U_{<k-10}^{(\phi^{n,<K_*})} - U_{<k-10}^{(u^{n})} \big) \overline{u^{n}_{k}} \\ 
 &\quad + U_{<k-10}^{(\phi^{n,<K_*})} \tilde{P}_k \bigg( \big( U_{<k-10}^{(\phi^{n,<K_*})} \big)^{\dagger} \Big( \sum_{b=0}^B \tilde{\epsilon}^{Kn,<K_*}_{b,k} + \tilde{\epsilon}^{Kn,<K_*}_{B,k} \Big) \bigg) - \Big( \sum_{b=0}^B \tilde{\epsilon}^{Kn,<K_*}_{b,k} + \tilde{\epsilon}^{Kn,<K_*}_{B,k} \Big) \\
 &\quad + U_{<k-10}^{(\phi^{n,<K_*})} \bigg( \overline{\phi^{n,<K_*}_k} - \overline{u^{n}_{k}} - \tilde{P}_k \bigg( \big( U_{<k-10}^{(\phi^{n,<K_*})} \big)^{\dagger} \Big( \sum_{b=0}^B \tilde{\epsilon}^{Kn,<K_*}_{b,k} + \tilde{\epsilon}^{Kn,<K_*}_{B,k} \Big) \bigg) - \tilde{\eta}^{Kn,<K_*}_k \bigg),
\end{align*}
is of the form $\kappa_k + g_{(k)}$ as claimed. In fact, for the first difference term on the right, it is of type $\kappa_k$ due to a small variation on {\it{Observation 2}} in the preceding step, while for the second difference term on the right, i.e. 
\begin{equation} \label{eq:tediouscommutator}
 U_{<k-10}^{(\phi^{n,<K_*})} \tilde{P}_k \bigg( \big( U_{<k-10}^{(\phi^{n,<K_*})} \big)^{\dagger} \Big( \sum_{b=0}^B \tilde{\epsilon}^{Kn,<K_*}_{b,k} + \tilde{\epsilon}^{Kn,<K_*}_{B,k} \Big) \bigg) - \Big( \sum_{b=0}^B \tilde{\epsilon}^{Kn,<K_*}_{b,k} + \tilde{\epsilon}^{Kn,<K_*}_{B,k} \Big),
\end{equation}
one observes that it coincides with 
\begin{equation} \label{eq:tediouscommutator2}
 \Big( \sum_{b=0}^B \tilde{P}_k \tilde{\epsilon}^{Kn,<K_*}_{b,k} + \tilde{P}_k\tilde{\epsilon}^{Kn,<K_*}_{B,k} \Big) - \Big( \sum_{b=0}^B \tilde{\epsilon}^{Kn,<K_*}_{b,k} + \tilde{\epsilon}^{Kn,<K_*}_{B,k} \Big),
\end{equation}
up to the sum of a term satisfying the desired bound for $\kappa_k$ and a term of type $g_{(k)}\big( \tilde{\eta}^{Kn,<K_*}, \tilde{\epsilon}^{Kn,<K_*}_{B}, \phi^{n,<K_*} \big)$. To see this, note that one can write the difference of the term \eqref{eq:tediouscommutator} and the immediately preceding term schematically as
\[
 2^{-k}\nabla_x \big( U_{<k-10}^{(\phi^{n,<K_*})} \big)^{\dagger} \Big( \sum_{b=0}^B \tilde{\epsilon}^{Kn,<K_*}_{b,k} + \tilde{\epsilon}^{Kn,<K_*}_{B,k} \Big)
\]
and expanding out $\nabla_x \big( U_{<k-10}^{(\phi^{n,<K_*})} \big)^{\dagger}$ using the equation for $U_{<k-10}^{(\phi^{n,<K_*})}$, one retains only those terms involving $\tilde{\eta}^{Kn,<K_*}$ and $\tilde{\epsilon}^{Kn,<K_*}_{B}$ in combination with the factor $\tilde{\epsilon}^{Kn,<K_*}_{B,k}$ to be included into the term $g_{(k)}$. 

The preceding difference \eqref{eq:tediouscommutator2} also satisfies the desired bound for $\kappa_k$ as one easily sees after applying the operator $\tilde{P}_k$ to the equation \eqref{eq:therightone}. 
Finally, for the third term in the above formula for $\zeta_k$ we note that it arises due to the nonlinear interactions inside $\overline{\phi^{n,<K_*}_k}$, and its contribution to the norm controlling $\kappa_k$ is easily seen to be bounded by $O \Big( \big\|\tilde{\eta}^{Kn,<K_*}\big\|_{S}^2 + \big\|\tilde{\epsilon}^{Kn,<K_*}_{B} \big\|_{S}^2 \Big)$ in the limit $R, T, n\rightarrow\infty$ . 

Getting back to \eqref{eq:lem:epsboundedlin}, on the other hand we can write this as
\begin{equation} \label{eq:anothermess111} 
 \begin{aligned}
  &\tilde{\Box}_{A^n} \bigg( U_{<k-10}^{(\phi^{n,<K_*})} \overline{\phi^{n,<K_*}_k} - U_{<k-10}^{(u^{n})} \overline{u^{n}_{k}} - \Big( \sum_{b=0}^B \tilde{\epsilon}^{Kn,<K_*}_{b,k} + \tilde{\epsilon}^{Kn,<K_*}_{B,k} \Big) \bigg) \\
  &= \tilde{\Box}_{A^n} \Big( U_{<k-10}^{(\phi^{n,<K_*})} \overline{\phi^{n,<K_*}_k} - U_{<k-10}^{(u^{n})} \overline{u^{n}_{k}} \Big) \\
  &= \Box \Big( U_{<k-10}^{(\phi^{n,<K_*})} \overline{\phi^{n,<K_*}_k} \Big) - \Box \Big( U_{<k-10}^{(u^{n})} \overline{u^{n}_k} \Big) + U_{<\mu_n}^{(u^{n})} \cA_{\alpha, low}(u^{n})  \big( U_{<\mu_n}^{(u^{n})} \big)^\dagger \partial^{\alpha} \Big( U_{<k-10}^{(u^{n})} \overline{u^{n}_{k}} - U_{<k-10}^{(\phi^{n,<K_*})}\overline{\phi^{n,<K_*}_k} \Big).
 \end{aligned}
\end{equation}
As for the last term on the right, expand it out as 
\begin{align*}
 &U_{<\mu_n}^{(u^{n})} \cA_{\alpha, low}(u^{n})  \big( U_{<\mu_n}^{(u^{n})} \big)^\dagger \partial^{\alpha} \Big( U_{<k-10}^{(u^{n})} \overline{u^{n}_{k}} - U_{<k-10}^{(\phi^{n,<K_*})}\overline{\phi^{n,<K_*}_k} \Big) \\
 &= U_{<\mu_n}^{(u^{n})} \cA_{\alpha, low}(u^{n}) \big( U_{<\mu_n}^{(u^{n})} \big)^\dagger \partial^{\alpha} \Big( \big( U_{<k-10}^{(u^{n})} - U_{<k-10}^{(\phi^{n,<K_*})} \big)  \overline{u^{n}_{k}} \Big) \\
 &\quad - U_{<\mu_n}^{(u^{n})} \cA_{\alpha, low}(u^{n}) \big( U_{<\mu_n}^{(u^{n})} \big)^\dagger \partial^{\alpha} \Big( U_{<k-10}^{(\phi^{n,<K_*})} \big( \overline{\phi^{n,<K_*}_k}-\overline{u^{n}_{k}} \big) \Big).
\end{align*}
Here the first term on the right is easily seen to be of the form
\begin{equation} \label{eq:errorterm11} 
 \begin{aligned}
  &U_{<\mu_n}^{(u^{n})} \cA_{\alpha, low}(u^{n}) \big( U_{<\mu_n}^{(u^{n})} \big)^\dagger \partial^{\alpha} \Big( \big( U_{<k-10}^{(u^{n})} - U_{<k-10}^{(\phi^{n,<K_*})} \big) \overline{u^{n}_{k}} \Big) \\
  &= U_{<\mu_n}^{(u^{n})} \cA_{\alpha, low}(u^{n}) \big( U_{<\mu_n}^{(u^{n})} \big)^\dagger \partial^{\alpha} \Big( \big( U_{<k-10}^{(u^{n})} - U_{<k-10}^{(u^{n} + \tilde{\eta}^{Kn,<K_*})} \big) \overline{u^{n}_{k}} \Big) + o^{(k)}(1),
 \end{aligned}
\end{equation}
where the error $o^{(k)}(1)$ has the claimed vanishing property provided $T_*, R, n\rightarrow\infty$, and similarly we have 
 \begin{equation} \label{eq:errorterm12}
  \begin{aligned}
   &U_{<\mu_n}^{(u^{n})} \cA_{\alpha, low}(u^{n}) \big( U_{<\mu_n}^{(u^{n})} \big)^\dagger \partial^{\alpha} \Big( U_{<k-10}^{(\phi^{n,<K_*})} \big( \overline{\phi^{n,<K_*}_k} - \overline{u^{n}} \big) \Big) \\
   &= U_{<\mu_n}^{(u^{n})} \cA_{\alpha, low}(u^{n}) \big( U_{<\mu_n}^{(u^{n})} \big)^\dagger \partial^{\alpha} \Big( U_{<k-10}^{(\phi^{n,<K_*})} \tilde{\eta}^{Kn,<K_*}_k \Big) \\ 
   &\quad + U_{<\mu_n}^{(u^{n})} \cA_{\alpha, low}(u^{n}) \big( U_{<\mu_n}^{(u^{n})} \big)^\dagger \partial^{\alpha} \bigg( U_{<k-10}^{(\phi^{n,<K_*})} P_k \Big( \sum_{b=0}^B \big( U_{<k-10}^{(\phi^{n,<K_*})} \big)^\dagger \tilde{\epsilon}^{Kn,<K_*}_{b,k} + \big( U_{<k-10}^{(\phi^{n,<K_*})} \big)^\dagger \tilde{\epsilon}^{Kn,<K_*}_{B,k} \Big) \bigg) \\
   &\quad + o^{(k)}(1).
  \end{aligned}
 \end{equation}
Of the preceding two expressions \eqref{eq:errorterm11} and \eqref{eq:errorterm12}, the most delicate case appears when the derivative $\partial^{\alpha}$ in \eqref{eq:errorterm11} falls on the term $u^{n}_{k}$, since then smallness has to come from essentially a factor of the form $\tilde{\eta}^{Kn,<K_*}_{<k-10}$ arising essentially from $U_{<k-10}^{(u^{n})} - U_{<k-10}^{(u^{n} + \tilde{\eta}^{Kn,<K_*})}$. Dealing with this situation requires an energy-type bound for $\tilde{\eta}^{Kn,<K_*}_k$ for frequencies $k \leq \mu_n$, analogous to Lemma~\ref{lem:bootstrap_non_atomic_kinetic}. In fact, making a suitable bootstrap hypothesis for the very low frequency part of $\tilde{\eta}^{Kn,<K_*}_k$ and proceeding as in the proof of Lemma~\ref{lem:bootstrap_non_atomic_kinetic}, we infer the following bound 
\[
 \big\|\nabla_{t,x}\tilde{\eta}^{Kn,<K_*}_{k}\big\|_{L_t^\infty L_x^2}\leq o(1)\cdot 2^{-\sigma|k-\mu_n|},
\]
where $o(1)$ vanishes as $n\rightarrow\infty$. Using this and, as usual, splitting into the cases $k \leq \mu_n$ and $k>\mu_n$, one easily infers that as $n\rightarrow\infty$,
\begin{align*}
 \sum_{a\in\Z} 2^{\sigma |a|} \bigg( \sum_k \Big\| P_{k+a} \Big( U_{<\mu_n}^{(u^{n})} \cA_{\alpha, low}(u^{n}) \big( U_{<\mu_n}^{(u^{n})} \big)^\dagger \partial^{\alpha} \big( \big( U_{<k-10}^{(u^{n})} -U_{<k-10}^{(u^{n} + \tilde{\eta}^{Kn,<K_*})} \big) u^{n}_{k} \big) \Big) \Big\|_{L_t^1L_x^2([R, t^n_1 - T_*])}^2 \bigg)^{\frac12} = o(1)
\end{align*}
On the other hand, the first two lines on the right in \eqref{eq:errorterm12} cancel against a corresponding term obtained when expanding 
\[
 \Box \big( U_{<k-10}^{(\phi^{n,<K_*})} \overline{\phi^{n,<K_*}_k} \big) - \Box \big( U_{<k-10}^{(u^{n})} \overline{u^{n}_{k}} \big) = U_{<k-10}^{(\phi^{n,<K_*})} F_k\big( \phi^{n,<K_*} \nabla_{t,x} \phi^{n,<K_*} \big) - U_{<k-10}^{(u^{n})} F_k \big( u^{n}, \nabla_{t,x} u^{n} \big).
\]
In fact, we get 
\begin{equation} \label{eq:expansion}
 \begin{aligned}
  &U_{<k-10}^{(\phi^{n,<K_*})} F_k \big( \phi^{n,<K_*}, \nabla_{t,x} \phi^{n,<K_*}) - U_{<k-10}^{(u^{n})} F_k \big( u^{n}, \nabla_{t,x} u^{n} \big) \\
  &= U_{<\mu_n}^{(u^{n})} \cA_{\alpha, low}(u^{n}) \big( U_{<\mu_n}^{(u^{n})} \big)^\dagger \partial^{\alpha} \big( U_{<k-10}^{(\phi^{n,<K_*})} \tilde{\eta}^{Kn,<K_*}_k \big) \\
  &\quad + U_{<\mu_n}^{(u^{n})} \cA_{\alpha, low}(u^{n}) \big( U_{<\mu_n}^{(u^{n})} \big)^\dagger \partial^{\alpha} P_k \Big( \sum_{b=0}^B \big( U_{<k-10}^{(\phi^{n,<K_*})} \big)^{\dagger} \tilde{\epsilon}^{Kn,<K_*}_{b,k} + \big( U_{<k-10}^{(\phi^{n,<K_*})} \big)^{\dagger} \tilde{\epsilon}^{Kn,<K_*}_{B,k} \Big) \\
  &\quad + H^{(1)}_{(k)}\big( \tilde{\eta}^{Kn,<K_*}, \tilde{\epsilon}^{Kn,<K_*}_{B}, \phi^{Kn,<K_*} \big) + H^{(2)}_{(k)}\big( \tilde{\epsilon}^{Kn,<K_*}_{B}, \phi^{Kn,<K_*} \big) + o^{(k)}(1),
 \end{aligned} 
\end{equation}
where the error $o^{(k)}(1)$ is again as in the statement of Lemma~\ref{lem:epsboundedlin}. In fact, with the exception of the second term on the right in \eqref{eq:expansion}, all terms on the left in \eqref{eq:expansion} involving at least one perturbative factor of the form 
\[
 \sum_{b=0}^B \big( U_{<k-10}^{(\phi^{n,<K_*})} \big)^{\dagger} \tilde{\epsilon}^{Kn,<K_*}_{b,k}
\]
are seen to be of the form $o^{(k)}(1)$ by using Lemma~\ref{lem:bilinearnullformbelowL2use}. The same applies for all terms involving at least one perturbative factor $u^{n}$. For the remaining interactions, the terms of the form $H^{(1)}\big( \tilde{\eta}^{Kn,<K_*}, \tilde{\epsilon}^{Kn,<K_*}_{B}, \phi^{Kn,<K_*} \big)$ arise when at least one of the perturbative terms in the source is of the form $\tilde{\eta}^{Kn,<K_*}$ while all other perturbative factors are of the form
\[
 \sum_k \tilde{P}_k \Big( \big( U_{<k-10}^{(\phi^{n,<K_*})} \big)^{\dagger} \tilde{\epsilon}^{Kn,<K_*}_{B,k} \Big),
\] 
while the terms of the form $H^{(2)}\big( \tilde{\epsilon}^{Kn,<K_*}_{B}, \phi^{Kn,<K_*} \big)$ arise when all perturbative factors are of the form 
\[
 \sum_k \big( U_{<k-10}^{(\phi^{n,<K_*})} \big)^\dagger \tilde{\epsilon}^{Kn,<K_*}_{Bk}.
\]
Now a priori in expressions of the latter kind the norm of the expression will also depend on $\big\| \phi^{Kn,<K_*} \big\|_{S}$, provided a derivative falls on the factor $U_{<k-10}^{(\phi^{n,<K_*})}$. In this case we re-express $\phi^{Kn,<K_*}$ using \eqref{eq:profilesmodified}. On the other hand, all terms in $H^{(2)}\big( \tilde{\epsilon}^{Kn,<K_*}_{B}, \phi^{Kn,<K_*} \big)$ where no derivative lands on $U_{<k-10}^{(\phi^{n,<K_*})}$ are left unchanged. This process can be continued, absorbing terms either into $H^{(1)}\big( \tilde{\eta}^{Kn,<K_*}, \tilde{\epsilon}^{Kn,<K_*}_{B}, \phi^{Kn,<K_*} \big)$ or into $H^{(2)}\big( \tilde{\epsilon}^{Kn,<K_*}_{B}, \phi^{Kn,<K_*} \big)$ or into $o^{(k)}(1)$, until all remaining terms in $H^{(2)}\big( \tilde{\epsilon}^{Kn,<K_*}_{B}, \phi^{Kn,<K_*} \big)$ in which a derivative can still potentially fall on a factor $\big( U_{<k-10}^{(\phi^{n,<K_*})} \big)^\dagger$ can be estimated purely in terms of Strichartz norms of its inputs. Then it is easy to see that it suffices to control $\big\| \nabla_{t,x} \big( U_{<k-10}^{(\phi^{n,<K_*})} \big)^\dagger \big\|_{L_t^\infty L_x^{2+}}$, which we do by energy conservation. This proves the relation \eqref{eq:tiletaonlargeint} and a straightforward bootstrap/divisibility argument then yields the bound 
\[
 \big\| \tilde{\eta}^{Kn,<K_*} \big\|_{S([R, t^n_1 - T_*])} < \delta_2,
\]
provided we have picked $R, T_*$, and $n$ sufficiently large. In fact, to pass from the bound on $\big\{ U_{<k-10}^{(\phi^{n,<K_*})} \tilde{\eta}^{Kn,<K_*}_k \big\}_{k\in\Z}$ to the bound for $\tilde{\eta}^{Kn,<K_*}$, one argues exactly as in Step 3 of the proof of the preceding Lemma~\ref{lem:epsboundednonlin}. 
\end{proof}

At this point, we have shown that under the hypotheses of Theorem~\ref{thm:profiledecomp}, we control the solution $\phi^{n,<K_*}$ for $n$ sufficiently large on the full interval $[0, t^n_1 - T_*]$ for a fixed but sufficiently large $T_*$, and that we in fact obtain bounds on $\big\| \phi^{n,<K_*} \big\|_{S}$ which are independent of $K_*$ and $n$. 

\medskip

Our aim now is to continue the preceding process on the next large time interval $[t^n_1 - T_*, t^n_2 - T_{1*}]$ for $T_{1*}$ sufficiently large. 

\medskip 

\noindent {\bf{Stage 2}}: {\it{Control of the solution on $[t^n_1 - T_*, t^n_2 - T_{1*}]$.}} Here we want to repeat the method of Stage 1 by replacing the initial time $t = 0$ by the initial time $t = t^n_1 - T_*$. A basic technical difficulty we face here is that we first have to introduce the analogue of $\epsilon^{<K_*}_{bounded}$, say $\epsilon^{<K_*}_{1bounded}$, by suitably modifying the function $\sum_k \big( U_{<k-10}^{(\phi^{n,<K_*})} \big)^\dagger \tilde{\epsilon}^{Kn,<K_*}_{1,k}$, i.e. the profile with $b = 1$, which concentrates at time $t = t^n_1 - T_*$. For this we will also have to take into account the perturbation $\tilde{\eta}^{Kn,<K_*}$, which, while small in energy, may lead to a significant effect on the $L^\infty_x$-norm. To construct the profile, we more or less proceed as in the proof of Lemma~\ref{lem:prelim profile}, but the following lemma simplifies things.
\begin{lem} \label{lem:weakvanishingofeta} 
The function $\nabla_{t,x} \tilde{\eta}^{Kn,<K_*}(t^n_1 - T_*, \cdot)$ vanishes weakly in $L^2_x(\R^2)$ as $T_*, K, n\rightarrow\infty$. 
\end{lem}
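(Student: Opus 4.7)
The plan is to combine strong vanishing of $\tilde{\eta}^{Kn,<K_*}$ at the intermediate time $t = R$ with a duality argument that propagates this vanishing to $t = t^n_1 - T_*$ via the equation \eqref{eq:tiletaonlargeint}. First I would verify that $\nabla_{t,x}\tilde{\eta}^{Kn,<K_*}(R,\cdot) \to 0$ strongly in $L^2_x$ in the iterated limit $n \to \infty$, $K \to \infty$, $R \to \infty$. This follows by combining the relation
\[
 \lim_{K\to\infty}\limsup_{n\to\infty}\big\|\nabla_{t,x}(\tilde{\eta}^{Kn,<K_*} - \eta^{Kn,<K_*})(R,\cdot)\big\|_{L^2_x} = 0
\]
cited after \eqref{eq:profilesmodified} with the bound $\|\eta^{Kn,<K_*}\|_{S([0,R])} < \delta_3$ from Lemma~\ref{lem:epsboundednonlin}, noting that $\delta_3$ is a free parameter that can be driven to zero by choosing $R$ sufficiently large and $K, n$ accordingly.

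Next, I would fix a radial test field $\psi \in C_c^\infty(\bR^2;\bR^{m+1}\times\bR^{m+1})$ and introduce the backward solution $W^{(n)}$ of $\tilde{\Box}_{A^n}W^{(n)} = 0$ with Cauchy data $\psi$ at $t = t^n_1 - T_*$. By Lemma~\ref{lem:soloftherightoneasymptotics}, $W^{(n)}$ satisfies the uniform bound $\|W^{(n)}\|_S \leq C(\|u^{n}\|_S)\|\psi\|_{\dot{H}^1_x\times L^2_x}$ together with asymptotic energy conservation. Stripping the gauge factor $U^{(\phi^{n,<K_*})}_{<k-10}$ using its uniform $L^\infty$ bound, pairing \eqref{eq:tiletaonlargeint} against $W^{(n)}$, summing over $k$ with an $\ell^2$ weight, and integrating by parts on $[R, t^n_1 - T_*]\times \bR^2$ yields the identity
\[
 \bigl\langle \nabla_{t,x}\tilde{\eta}^{Kn,<K_*}(t^n_1 - T_*,\cdot), \psi \bigr\rangle = {\mathcal D}(K,n,R) + \iint_{[R,t^n_1 - T_*]\times \bR^2}\bigl(H^{(1)}+H^{(2)}+o^{(k)}(1)\bigr)\cdot W^{(n)} \, dx \, dt + {\mathcal E}_{Kn},
\]
where ${\mathcal D}$ is the data pairing at time $R$ and ${\mathcal E}_{Kn}$ collects the contributions of the correction terms $g_{(k)},\kappa_k$. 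The data term is bounded by $\|\nabla_{t,x}\tilde{\eta}^{Kn,<K_*}(R,\cdot)\|_{L^2_x}\|\nabla_{t,x}W^{(n)}(R,\cdot)\|_{L^2_x}$ and hence vanishes by the previous step, while the source integral is controlled by $\|H^{(1)}+H^{(2)}+o(1)\|_{L^1_tL^2_x}\|W^{(n)}\|_{L^\infty_tL^2_x}$, which by Lemma~\ref{lem:epsboundedlin} is bounded by $C(\|\psi\|_{\dot{H}^1_x\times L^2_x})\bigl(\delta_2 F(\|\tilde{\epsilon}^{Kn,<K_*}_{B}\|_S) + c(\eta_{A^n}(\tilde{\epsilon}^{Kn,<K_*}_{B}[0])) + o(1)\bigr)$. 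The $c(\cdot)$ and $o(1)$ contributions vanish in the iterated limit $T_*, K, n \to \infty$ and then $B \to \infty$, while the $\delta_2$-dependent term can be made negligible by choosing $\delta_2$ small enough depending on $\|\psi\|$.

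The chief obstacle is that $H^{(1)}$ is only controlled by $O(\delta_2)$, not by $o(1)$, as $T_*, K, n\to\infty$. This is resolved by fixing $\psi$ first and then selecting $\delta_2 \ll \|\psi\|_{\dot{H}^1_x\times L^2_x}^{-1}$ in the application of Lemma~\ref{lem:epsboundedlin}, which is legitimate because $\delta_2$ is a free parameter in the construction of the decomposition \eqref{eq:profilesmodified}. A secondary subtlety is that the concentration profile $\tilde{\epsilon}^{Kn,<K_*}_{1,k}$ lives at the right endpoint $t^n_1$ of the integration window, so its interaction with $\tilde{\eta}^{Kn,<K_*}$ through $H^{(1)}$ is a priori nontrivial; however, by the dispersion statement in Lemma~\ref{lem:soloftherightoneasymptotics}, for any $\gamma>0$ one may take $T_*$ large enough so that $\tilde{\epsilon}^{Kn,<K_*}_{1,k}$ is pointwise $O(\gamma)$ throughout $[R, t^n_1 - T_*]$, which permits absorbing its contribution into the $o(1)$ error uniformly in $n$ and thereby completes the argument.
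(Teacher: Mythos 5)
Your argument establishes only that the pairing $\bigl\langle \nabla_{t,x}\tilde{\eta}^{Kn,<K_*}(t^n_1-T_*,\cdot),\psi\bigr\rangle$ is \emph{small}, of size $O\bigl(\delta_2 + c(\eta_{A^n}(\tilde{\epsilon}^{Kn,<K_*}_{B}))\bigr)\,\|\psi\|_{\dot{H}^1_x\times L^2_x}$, not that it \emph{vanishes} as $T_*,K,n\to\infty$. The two escape routes you propose are not available: $B$ and $\delta_2$ are fixed at the outset of the construction (indeed $B=B(\|u^{n}\|_S,\delta_2)$, and $\tilde{\eta}^{Kn,<K_*}$ itself is defined through the decomposition \eqref{eq:profilesmodified} with these parameters), so one can neither send $B\to\infty$ afterwards nor choose $\delta_2$ depending on the test function $\psi$; weak convergence demands that the pairing with each fixed $\psi$ tend to zero for the \emph{fixed} object $\tilde{\eta}^{Kn,<K_*}$. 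More fundamentally, a pure norm bound $\|H^{(1)}+H^{(2)}\|_{L^1_tL^2_x}\|W^{(n)}\|_{L^\infty_tL^2_x}$ can never yield exact weak vanishing, since $H^{(2)}$ is merely small (controlled by $c(\eta_{A^n}(\tilde{\epsilon}^{Kn,<K_*}_{B}))$, which is bounded below away from zero for fixed $B$), not asymptotically vanishing.

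The missing idea is structural rather than quantitative. The paper evaluates $\tilde{\eta}^{Kn,<K_*}$ at $t^n_1-T_*$ via the Duhamel formula for \eqref{eq:tiletaonlargeint} and exploits that all perturbative factors of $H^{(2)}_{(k)}$ are occupied by $\sum_k\bigl(U^{(\phi^{n,<K_*})}_{<k-10}\bigr)^\dagger\tilde{\epsilon}^{Kn,<K_*}_{B,k}$, which decomposes, up to an arbitrarily small error, into solutions of \eqref{eq:therightone} concentrating at time scales diverging from $t^n_1-T_*$. Consequently the Duhamel integral of $H^{(2)}$ converges in the $S$-norm to a solution of \eqref{eq:therightone} concentrated away from $t^n_1-T_*$, and it is this concentration/dispersion structure --- not smallness --- that forces its gradient to vanish weakly there. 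The $H^{(1)}$ contribution is then treated by substituting the Duhamel representation of $\tilde{\eta}^{Kn,<K_*}$ into itself: each substitution produces a principal term vanishing weakly by the same mechanism plus a remainder whose $S$-norm is smaller by a factor reflecting the smallness of $\|\tilde{\eta}^{Kn,<K_*}\|_S$ and $\|\tilde{\epsilon}^{Kn,<K_*}_{B}\|_S$, so finitely many iterations reduce the remainder below any threshold. Your first step (strong $L^2_x$ vanishing of the data at $t=R$, which disposes of the homogeneous term) is correct and consistent with the paper, but the treatment of the inhomogeneous terms must be replaced by this argument. Your final worry about $\tilde{\epsilon}^{Kn,<K_*}_{1,k}$ is moot: by construction $H^{(1)}$ and $H^{(2)}$ involve only $\tilde{\eta}^{Kn,<K_*}$ and $\tilde{\epsilon}^{Kn,<K_*}_{B}$ in their perturbative slots, the interactions with the profiles $b=1,\dots,B$ having already been absorbed into the $o^{(k)}(1)$ error in Lemma~\ref{lem:epsboundedlin}.
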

\begin{proof}
This is a consequence of the identity \eqref{eq:tiletaonlargeint} in the preceding Lemma~\ref{lem:epsboundedlin}. Writing the flow of \eqref{eq:therightone} associated with the data $(0, f)$ at $t = 0$ in the form 
\[
 U_{A^n}(t)(f), 
\]
we have 
\begin{align*}
 &\Big( U_{<k-10}^{(\phi^{n,<K_*})} \tilde{\eta}^{Kn,<K_*}_k + g_{(k)}\big( \tilde{\eta}^{Kn,<K_*}, \tilde{\epsilon}^{Kn,<K_*}_{B}, \phi^{n,<K_*} \big) \Big) \Big|_{t^n_1 - T_*} \\
 &= S_{A^n} \Big( \big( U_{<k-10}^{(\phi^{n,<K_*})} \tilde{\eta}^{Kn,<K_*}_k + g_{(k)}\big( \tilde{\eta}^{Kn,<K_*}, \tilde{\epsilon}^{Kn,<K_*}_{B}, \phi^{n,<K_*} \big) \big)[0] \Big) \Big|_{t^n_1 - T_*} \\ 
 &\quad + \int_0^{t} U_{A^n}(t-s)\Big( H^{(1)}_{(k)}\big( \tilde{\eta}^{Kn,<K_*}, \tilde{\epsilon}^{Kn,<K_*}_{B}, \phi^{n,<K_*} \big) + H_{(k)}^{(2)}\big( \tilde{\epsilon}^{Kn,<K_*}_{B}, \phi^{n,<K_*}\big) \Big)(s, \cdot) \, ds \Big|_{t^n_1 - T_*}  + o(1),
\end{align*}
where the error vanishes (in the energy sense) upon letting $T_*, K, n\rightarrow\infty$. Consider the principal contribution to the last integral, which is the expression 
\[
 \int_0^{t} U_{A^n}(t-s) \Big( H_{(k)}^{(2)}\big( \tilde{\epsilon}^{Kn,<K_*}_{B}, \phi^{n,<K_*}\big) \Big)(s, \cdot) \, ds \Big|_{t^n_1 - T_*}.
\]
Recall that $H_{(k)}^{(2)}\big( \tilde{\epsilon}^{Kn,<K_*}_{B}, \phi^{n,<K_*} \big)$ is a multilinear expression with $\sum_k \big( U_{<k-10}^{(\phi^{n,<K_*})} \big)^\dagger \tilde{\epsilon}^{Kn,<K_*}_{B,k}$ in its ``perturbative factors''. In turn, $\tilde{\epsilon}^{Kn,<K_*}_{B,k}$ can be expanded as a sum of free waves associated to \eqref{eq:therightone} up to an arbitrarily small error $\tilde{\epsilon}^{Kn,<K_*}_{B_1}$ and each of these free waves concentrates at a time scale diverging from $t^n_1 - T_*$. This means that up to an arbitrarily small error and upon letting $n\rightarrow\infty$, the function 
\[
 \int_0^{t} U_{A^n}(t-s)\Big( H_{(k)}^{(2)}\big( \tilde{\epsilon}^{Kn,<K_*}_{B}, \phi^{n,<K_*} \big) \Big)(s, \cdot) \, ds \Big|_{t^n_1 - T_*}
\]
converges in the $S$-norm to a solution of \eqref{eq:therightone} concentrating away from $t = t^n_1 - T_*$, and in particular we have 
\[
 \nabla_{t,x} \int_0^{t} U_{A^n}(t-s) \Big( H_{(k)}^{(2)}\big( \tilde{\epsilon}^{Kn,<K_*}_{B}, \phi^{n,<K_*} \big) \Big)(s, \cdot) \, ds \Big|_{t^n_1 - T_*} \rightharpoonup 0.
\]
As for the remaining integral 
\[
 \int_0^{t} U_{A^n}(t-s) \Big( H^{(1)}_{(k)}\big( \tilde{\eta}^{Kn,<K_*}, \tilde{\epsilon}^{Kn,<K_*}_{B}, \phi^{n,<K_*} \big) \Big)(s, \cdot) \, ds \Big|_{t^n_1 - T_*}, 
\]
inserting the principal term for $\tilde{\eta}^{Kn,<K_*}$, which equals 
\[
 \int_0^{t} U_{A^n}(t-s) \Big( H_{(k)}^{(2)}\big( \tilde{\epsilon}^{Kn,<K_*}_{B}, \phi^{n,<K_*} \big) \Big)(s, \cdot) \, ds,
\]
results in a similar term, again vanishing weakly at $t =  t^n_1 - T_*$ by the same kind of argument, and one winds up with another error term stemming from substituting 
\[
 \int_0^{t} U_{A^n}(t-s) \Big( H^{(1)}_{(k)}\big( \tilde{\eta}^{Kn,<K_*}, \tilde{\epsilon}^{Kn,<K_*}_{B}, \phi^{n,<K_*} \big) \Big)(s, \cdot) \, ds 
\]
for $\tilde{\eta}^{Kn,<K_*}$. In light of the smallness of $\big\|\tilde{\epsilon}^{Kn,<K_*}_{B}\big\|_{S}$ and $\big\|\tilde{\eta}^{Kn,<K_*}\big\|_{S}$, this term has much smaller $S$-norm than 
\[
 \int_0^{t} U_{A^n}(t-s) \Big( H^{(1)}_{(k)}\big( \tilde{\eta}^{Kn,<K_*}, \tilde{\epsilon}^{Kn,<K_*}_{B}, \phi^{n,<K_*} \big) \Big)(s, \cdot) \, ds.
\]
Repeating this argument a finite number of times leads to a sequence of terms vanishing weakly at $t =  t^n_1 - T_*$ up to an arbitrarily small error, which yields the claim.
\end{proof}

We now construct a good bounded profile $\epsilon^{<K_*}_{1bounded}$ at time $t = t^n_1 - T_*$. Departing from \eqref{eq:profilesmodified}, we get upon restriction to a suitable subsequence with respect to $n$,
\begin{align*}
 \bigg( \sum_k \tilde{P}_k \Big( \big( U_{<k}^{(\phi^{n,<K_*})} \big)^\dagger \tilde{\epsilon}^{Kn,<K_*}_{1,k} \Big) + \tilde{\eta}^{Kn,<K_*} + p_{1*} \bigg) \bigg|_{t^n_1 - T_*} = \phi^{n,<K_*}\big|_{t^n_1 - T_*} + o_{L^\infty_{loc}}(1)
\end{align*}
with the error vanishing as $n\rightarrow\infty$ in the sense of $L_{loc}^{\infty}$. Here $p_{1*} \in \mathbb{S}^m$ may be set equal to $\lim_{n\rightarrow\infty} u^{n}(t^n_1 - T_*, 0)$, the limit existing upon passing to a suitable subsequence. In fact, the remaining terms 
\[
 \sum_k \tilde{P}_k \bigg( \big( U_{<k}^{(\phi^{n,<K_*})} \big)^\dagger \Big( \sum_{b=0,\,b\neq 1}^B \tilde{\epsilon}^{Kn,<K_*}_{b,k} + \tilde{\epsilon}^{Kn,<K_*}_{B,k} \Big) \bigg) \bigg|_{t^n_1 - T_*}
\]
all converge to zero in $L^\infty_{loc}$ as $n\rightarrow\infty$ due to the fact that we have essentially (up to exponentially decaying tails) localized their frequency supports to compact intervals, and each of them converges weakly to zero at time $t^n_1 - T_*$ as $n\rightarrow\infty$.

Now due to the essentially compact frequency support of the expression, upon letting $K, n\rightarrow\infty$ along a suitable subsequence, we can find $\tilde{\epsilon}^{<K_*}_{1bounded}$ such that 
\[
 \bigg( \sum_k \tilde{P}_k \Big( \big( U_{<k}^{(\phi^{n,<K_*})} \big)^\dagger \tilde{\epsilon}^{Kn,<K_*}_{1,k} \Big) + \tilde{\eta}^{Kn,<K_*} \bigg) \bigg|_{t^n_1 - T_*} \rightharpoonup \,  \tilde{\epsilon}^{<K_*}_{1bounded} \big|_{t^n_1 - T_*}
\]
and also strongly in $H^{1+}_{loc}$, in particular in $L^\infty_{loc}$. In particular, it follows that  $\tilde{\epsilon}^{<K_*}_{1bounded} + p_{1*}$ maps into $\mathbb{S}^m$. Thus it is natural to set 
\[
 \epsilon^{<K_*}_{1bounded} \big|_{t^n_1 - T_*} := \tilde{\epsilon}^{<K_*}_{1bounded} \big|_{t^n_1 - T_*} + p_{1*},
\]
and to pass from \eqref{eq:profilesmodified} to a new decomposition with a new error term $\tilde{\tilde{\eta}}^{Kn,<K_*}$, where now $\epsilon^{<K_*}_{1bounded}$ denotes the nonlinear evolution associated with the data $\epsilon^{<K_*}_{1bounded}\big|_{t^n_1 - T_*}$, 
\begin{equation} \label{eq:profilesmodified1}
 \phi^{n,<K_*} = u^{n} - p_{1*} + \epsilon^{<K_*}_{1bounded} + \sum_k \tilde{P}_k \bigg( \big( U_{<k}^{(\phi^{n,<K_*})} \big)^\dagger \Big( \sum_{b=0,\,b\neq 1}^B \tilde{\epsilon}^{Kn,<K_*}_{b,k} + \tilde{\epsilon}^{Kn,<K_*}_{B,k} \Big) \bigg) + \tilde{\tilde{\eta}}^{Kn,<K_*}.
\end{equation}
This is the analogue of \eqref{eq:profiles} which we expect to be valid on a large but finite time interval $[t^n_1 - T_*, t^n_1 - T_* + R_1]$, say, where $R_1$ will now depend on the scattering properties of  $\epsilon^{<K_*}_{1bounded}$, just as $R$ before was chosen depending on the scattering properties of the evolution of $\epsilon^{<K_*}_{bounded}$. Also, comparing \eqref{eq:profilesmodified1} and \eqref{eq:profilesmodified}, we have 
\[
 \tilde{\tilde{\eta}}^{Kn,<K_*} \big|_{t^n_1 - T_*} = \sum_k \big( U_{<k}^{(\phi^{n,<K_*})} \big)^\dagger \tilde{\epsilon}^{Kn,<K_*}_{1,k} \big|_{t^n_1 - T_*} + \tilde{\eta}^{Kn,<K_*} \big|_{t^n_1 - T_*} - \tilde{\epsilon}^{<K_*}_{1bounded}.
\]
Then by definition of $\tilde{\epsilon}^{<K_*}_{1bounded}$ we have $\lim_{n\rightarrow\infty}\big\|\tilde{\tilde{\eta}}^{Kn,<K_*}\big|_{t^n_1 - T_*}\big\|_{L_{loc}^\infty} = 0$, which is analogous to the property of $\eta^{Kn,<K_*}$ in Lemma~\ref{lem:prelim profile}. Also, note that for any $R_*>0$ we have 
\[
 \lim_{n\rightarrow\infty} \big\| \nabla_{t,x} \tilde{\tilde{\eta}}^{Kn,<K_*} \big|_{t^n_1 - T_*} \big\|_{L_x^2(B_{R_*})} = 0.
\]
We further observe that due to Lemma~\ref{lem:weakvanishingofeta} and its proof we have the following. Denoting by $\tilde{\eta}_{(1)k}^{Kn,<K_*}$ the solution of the approximation to \eqref{eq:tiletaonlargeint} given by 
\begin{equation} \label{eq:tiletaapprox} 
 \begin{aligned}
  &\tilde{\Box}_{A^n} \Big( U_{<k-10}^{(\phi^{n,<K_*})} \tilde{\eta}^{Kn,<K_*}_{(1)k} + g_{(k)}\big( \tilde{\eta}^{Kn,<K_*}_{(1)}, \tilde{\epsilon}^{Kn,<K_*}_{B}, \phi^{n,<K_*} \big) \Big) \\
  &= H^{(1)}_{(k)}\big( \tilde{\eta}^{Kn,<K_*}_{(1)}, \tilde{\epsilon}^{Kn,<K_*}_{B}, \phi^{n,<K_*} \big) + H_{(k)}^{(2)}\big( \tilde{\epsilon}^{Kn,<K_*}_{B}, \phi^{n,<K_*} \big)
 \end{aligned}
\end{equation}
with the same initial data as $\tilde{\eta}^{Kn,<K_*}$ at $t = R$, then $\tilde{\eta}^{Kn,<K_*}_{(1)}$ obeys the same bound as $\tilde{\eta}^{Kn,<K_*}$, and moreover we have 
\[
 \lim_{R, T_*, K,n\rightarrow\infty} \big\| \nabla_{t,x} \big( \tilde{\eta}^{Kn,<K_*}_{(1)}\big|_{t^n_1 - T_*} - \tilde{\tilde{\eta}}^{Kn,<K_*}\big|_{t^n_1 - T_*} \big) \big\|_{L_x^2} = 0.
\]
We also note that 
\[
\lim_{R, T_*, K,n\rightarrow\infty}\big\|\tilde{\eta}^{Kn,<K_*}_{(1)}- \tilde{\tilde{\eta}}^{Kn,<K_*})\big\|_{S([R, t_1^n - T_*])} = 0.
\]

At this point, we briefly pause to make an important 
\begin{rem} 
Note that at this stage of the proof of Theorem~\ref{thm:profiledecomp} we have already introduced three different perturbation terms $\eta^{Kn,<K_*}$, $\tilde{\eta}^{Kn,<K_*}$, and $\tilde{\tilde{\eta}}^{Kn,<K_*}$. As we intend to iterate the preceding procedure on each of the time intervals $[t^n_b - T_{b*}, t^n_{b+1} - T_{b+1*}]$, $b = 1,2,\ldots, B$, we have to be careful that this process does not result in accruing more and more errors, i.e. we have to ensure uniform smallness of these perturbations. The idea here is that we have the parameters $R, T_*,K, n$ and analogous ones for the later stages ($B$ in total) at our disposal, while $B$ will be chosen as already mentioned only depending on an absolute small constant $\delta_2$ as well as on $\|u^{n}\|_{S}$. Thus, by eventually picking $R, T_*,K, n$ etc. sufficienty large, we will be able to ensure that passing from one error to the next will be small even when re-iterated $B$ times. 
\end{rem}

At this stage, we have the tools to address the preceding remark and continue the solution, thus far constructed locally in time, in a global fashion. In fact, working with the presentation \eqref{eq:profilesmodified1} on an interval $[t_1^{n} - T_*, t_1^n + R_1]$ for some very large $R_1$ (whose choice will depend on the scattering properties of $\epsilon^{<K_*}_{1bounded}$) and essentially repeating the arguments in the proof of Lemma~\ref{lem:epsboundednonlin}, picking $n$ large enough, we can extend the solution in the form \eqref{eq:profilesmodified1} such that $\big\| \tilde{\tilde{\eta}}^{Kn,<K_*}\big\|_{S([t_1^{n} - T_*, t_1^n + R_1])} \ll \delta_2$. Moreover, we still have the relation 
\[
 \lim_{R, T_*, K,n\rightarrow\infty} \big\| \nabla_{t,x}\big( \tilde{\eta}^{Kn,<K_*}_{(1)} \big|_{t^n_1 +R_1} - \tilde{\tilde{\eta}}^{Kn,<K_*} \big|_{t^n_1 + R_1} \big) \big\|_{L_x^2} = 0.
\]

Then we construct the solution on the interval $[t_1^n+R_1, t_2^n - T_{1*}]$ for $R_1, T_{1*}$ sufficiently large, by repeating the procedure in Step 2 of Stage 1 before. More precisely, we modify \eqref{eq:profilesmodified1} to 
\begin{equation} \label{eq:profilesmodified2}
 \phi^{n,<K_*} = u^{n} + \sum_k \tilde{P}_k \bigg( \big( U_{<k}^{(\phi^{n,<K_*})} \big)^\dagger \Big( \sum_{b=0}^B \tilde{\epsilon}^{Kn,<K_*}_{b,k} + \tilde{\epsilon}^{Kn,<K_*}_{B,k} \Big) \bigg) + \tilde{\tilde{\tilde{\eta}}}^{Kn,<K_*}
\end{equation}
on $[t_1^n+R_1, t_2^n - T_{1*}]$, where now $\epsilon^{<K_*}_{1bounded}$ has been replaced by the expression $\sum_k \big( U_{<k}^{(\phi^{n,<K_*})} \big)^\dagger \tilde{\epsilon}^{Kn,<K_*}_{1,k}$, where $\tilde{\epsilon}^{Kn,<K_*}_{1,k}$ solves \eqref{eq:therightone} with data at time $t = t_1^n+R_1$ given by 
\[
 \tilde{\epsilon}^{Kn,<K_*}_{1,k}[t_1^n+R_1] = \Big( \big( U_{<k}^{(\phi^{n,<K_*})}\epsilon^{<K_*}_{1bounded} \big)\big|_{t_1^n+R_1}, \, \partial_t \big( U_{<k}^{(\phi^{n,<K_*})} \epsilon^{<K_*}_{1bounded} \big) \big|_{t_1^n+R_1} \Big).
\]
Then repeating the arguments in the proof of Lemma~\ref{lem:epsboundedlin}, we infer the global bound 
\[
 \big\| \tilde{\tilde{\tilde{\eta}}}^{Kn,<K_*} \big\|_{S([t_1^n+R_1, t_2^n - T_{1*}])} \ll \delta_2.
\]
Moreover, importantly, we also obtain that 
\[
 \lim_{R, T_*, R_1, T_{1*}, K, n\rightarrow\infty} \big\|\tilde{\tilde{\tilde{\eta}}}^{Kn,<K_*} - \tilde{\eta}^{Kn,<K_*}_{(1)}\big\|_{S([t_1^n+R_1, t_2^n - T_{1*}])} = 0.
\]
Combining this with our earlier considerations, we observe that the error function consisting of $\tilde{\eta}^{Kn,<K_*}$ on $[R, t_1^n - T_*]$, $\tilde{\tilde{\eta}}^{Kn,<K_*}$ on $[t_1^n - T_*, t_1^n + R_1]$ and $\tilde{\tilde{\tilde{\eta}}}^{Kn,<K_*}$ on $[ t_1^n + R_1, t_2^n - T_{1*}]$ differs from $ \tilde{\eta}^{Kn,<K_*}_{(1)}$ (which we recall solves \eqref{eq:tiletaapprox} and had its data prescribed at time $t = R$) by an error with respect to $\| \cdot \|_{S}$, which vanishes as the parameters $R, T_*, R_1, T_{1*}, K, n\rightarrow\infty$. 

\medskip 

However, on account of the fact that $\tilde{\epsilon}_B^{Kn,<K_*}$ satisfies uniform $S$-bounds (its energy being uniformly bounded), the equation \eqref{eq:tiletaapprox} can be solved for $\tilde{\eta}^{Kn,<K_*}_{(1)}$ via a simple divisibility argument {\it{on any time interval}} on which $\phi^{n,<K_*}$ is defined, and picking $B = B\big( \|u^{n}\|_{S}, \delta_2 \big)$ large enough, we can ensure that 
\[
 \big\| \tilde{\eta}^{Kn,<K_*}_{(1)} \big\|_{S} \ll \delta_2. 
\]
We can then repeat the preceding procedure and prolong the solution beyond $t_2^n - T_{1*}$, until after $B$ steps we have a global solution. Of course at each of the $B$ many steps, we have to adjust the parameters $R, T_*, R_1, T_{1*}, K, n$ etc. to preserve the required smallness of the error. This proves Theorem~\ref{thm:profiledecomp}. 
\end{proof}

\subsection{Conclusion of the induction on frequency process} \label{subsec:conclusion}

In the previous subsection we established that the data $\Pi_{\leq \mu_n^1 + \log(R_n)} \phi^n[0]$ can be globally evolved with uniform $S$-norm bounds under the assumption that all profiles have energy strictly less than $E_{crit}$. We may now continue this induction on frequency process and by proceeding as in Subsection~\ref{subsec:lowest_freq_non_atomic} obtain that the data $\Pi_{\leq \mu_n^2 - \log(R_n)} \phi^n[0]$ can be globally evolved with uniform $S$-norm bounds. Then we ``add in'' the second frequency atom $P_{[\mu_n^2 - \log(R_n), \mu_n^2 + \log(R_n)]} \phi^n[0]$ in the sense that by proceeding analogously to Subsection~\ref{subsec:adding_in_first_atom} we may establish the global evolution of the corresponding geometric data $\Pi_{\leq \mu_n^2 + \log(R_n)} \phi^n[0]$ with uniform $S$-norm bounds under the assumption that the associated profiles all have energy strictly less than $E_{crit}$.

We may continue this procedure $\Lambda_0$ many times and establish the global evolution with uniform $S$-norm bounds of the essentially singular sequence of data $\{ \phi^n[0] \}_{n \geq 1}$, which would however be a contradiction, unless the sequence $\{ \phi^n[0] \}_{n \geq 1}$ is composed of exactly one frequency atom that consists of exactly one profile of asymptotic energy $E_{crit}$. Thus, in view of Theorem~\ref{thm:profiledecomp}, we may assume after rescaling so that $\phi^n[0]$ is essentially supported at frequency $\sim 1$, that we can either write 
\begin{equation} \label{equ:min_blowup_sol_bounded}
 \Pi_{< K_*} \phi^n[0] = \phi_{bounded}^{<K_*}[0] + \eta^{n, <K_*}[0]
\end{equation}
with $\lim_{K_*, n \to \infty} \bigl\| \eta^{n, <K_*}[0] \bigr\|_{\dot{H}^1_x \times L^2_x} = 0$, or else we have
\begin{equation} \label{equ:min_blowup_sol_unbounded}
 \Pi_{< K_*} \phi^n[0] = \sum_{k \in \bZ} \big( U_{<k}^{(\phi^{n, <K_*})} \big)^\dagger  S(\cdot - t_n) \big( \phi^{<K_*}_k[0] \big)[0] + \eta^{n, <K_*}[0]
\end{equation}
with $\lim_{n\to\infty} |t_n| = + \infty$, where $S(\cdot)$ now simply denotes the free wave propagator associated with the standard d'Alembertian on $\R^{1+2}$, and we again have $\lim_{K_*,n \to \infty} \big\|\eta^{n,<K_*}[0]\big\|_{\dot{H}^1_x \times L^2_x} = 0$. 

\medskip 

In any case we infer that the limits
\[
 \phi_{bounded}[0] = \lim_{K_*\rightarrow\infty}\phi^{<K_*}_{bounded}[0], \quad  \phi[0] = \lim_{K_*\rightarrow\infty}\phi^{<K_*}[0]
\]
exist and satisfy the ``criticality condition''
\[
 E[\phi_{bounded}] = E[\phi] = E_{crit}.
\]
We also observe that we have $\eta^{n,<K_*} \rightarrow 0$ in $L^\infty_{loc}$ as $n\rightarrow\infty$ after passing to subsequences.

\medskip 

In the next two propositions we infer that we can actually extract a minimal blowup solution from either scenario.
\begin{prop} \label{prop:minblowup1} 
Assume the situation given by \eqref{equ:min_blowup_sol_bounded}. Then the wave maps evolution $\phi_{bounded}(t, x)$ with lifespan $I$ associated with the energy class data $\phi_{bounded}[0] \colon \R^2 \rightarrow T\mathbb{S}^m$ satisfies 
\[
 \sup_{[T_1,T_2]\subset I} \big\|\phi_{bounded}\big\|_{S([T_1,T_2])} = +\infty. 
\]
In particular, $\phi_{bounded}(t, x)$ is a minimal blowup solution. 
\end{prop}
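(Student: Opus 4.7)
The plan is to argue by contradiction: suppose there is a closed sub-interval $[T_1,T_2] \subset I$ with $\|\phi_{bounded}\|_{S([T_1,T_2])} = M < \infty$. First I would upgrade this to a global bound. By the maximal lifespan characterization following Definition~\ref{defn:energy_class_evolution}, if the local $S$ norm bound on $\phi_{bounded}$ does not blow up on $I$ then we must have $I = \R$ and $\|\phi_{bounded}\|_{S(\R)} \leq C_0(M)$. Unpacking the definition of the energy class evolution via regularization, the high-frequency truncations $\phi_{bounded}^{(K)}$ (the wave maps evolutions of $\Pi_{<K} \phi_{bounded}[0]$) converge to $\phi_{bounded}$ in the $S$ norm, so for all $K$ sufficiently large $\|\phi_{bounded}^{(K)}\|_{S(\R)} \leq 2C_0$. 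In particular, for $K_*$ sufficiently large, the nonlinear evolution of $\phi_{bounded}^{<K_*}[0]$ is globally defined and satisfies a uniform $S$ norm bound.

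Next I would use the decomposition \eqref{equ:min_blowup_sol_bounded} to compare $\Pi_{<K_*}\phi^n[0]$ with $\phi_{bounded}^{<K_*}[0]$. The difference is the error $\eta^{n,<K_*}[0]$, which is small in $\dot H^1_x \times L^2_x$ as $K_*, n \to \infty$. Since after the rescaling normalizing $\mu_n^1 \equiv 0$ the data $\Pi_{<K_*}\phi^n[0]$ and $\phi_{bounded}^{<K_*}[0]$ are both essentially frequency-supported up to frequency $\sim K_*$ with exponentially decaying tails, one may split $\eta^{n,<K_*}[0]$ into a high-frequency piece to which Lemma~\ref{lem:localperturbative} applies directly and a low-frequency piece whose smallness is absorbed by a divisibility bootstrap along finitely many time intervals, exactly in the spirit of Proposition~\ref{prop:bootstrap_bounds_to_next_level_in_evolving_lowest_frequency_nonatomic_part} and of Step~5 in the proof of Lemma~\ref{lem:epsboundednonlin}. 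The outcome is that the nonlinear evolution of $\Pi_{<K_*}\phi^n[0]$ exists globally and satisfies
\[
 \bigl\| \Pi_{<K_*}\phi^n \bigr\|_{S(\R)} \leq 4 C_0
\]
for all sufficiently large $K_*$ and $n = n(K_*)$.

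Finally, I would transfer this bound back to $\phi^n$ itself. The difference data $\phi^n[0] - \Pi_{<K_*}\phi^n[0]$ is admissible, essentially supported at frequencies $\geq K_*$ up to exponentially decaying tails, and its energy tends to zero as $K_* \to \infty$ by the essential frequency localization of the (rescaled) essentially singular sequence around frequency $\sim 1$. A further application of Lemma~\ref{lem:localperturbative}, now in its genuine high-frequency perturbation form, yields $\|\phi^n\|_{S(\R)} \leq 8 C_0$ for all sufficiently large $n$, in direct contradiction with $\lim_{n\to\infty} \|\phi^n\|_{S[I^n]} = +\infty$. This forces $\sup_{[T_1,T_2] \subset I} \|\phi_{bounded}\|_{S([T_1,T_2])} = +\infty$, making $\phi_{bounded}$ a minimal blowup solution.

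The main obstacle is the intermediate perturbation step where $\eta^{n,<K_*}$ is merely small in energy but not frequency-localized in the manner required by Lemma~\ref{lem:localperturbative}. The resolution is to invoke the bootstrap/divisibility machinery already developed for the lowest frequency non-atomic part, using that the background $\phi_{bounded}^{<K_*}$ has globally bounded $S$ norm, so the divisibility of its associated source terms produces arbitrarily small contributions on short enough time slabs. A secondary subtlety, which is essentially bookkeeping, is to make sure that the (three) applications of the perturbation arguments can be chained consistently: this is handled by fixing the threshold $C_0(M)$ first and then choosing the smallness parameters (and hence $K_*$ and $n$) successively.
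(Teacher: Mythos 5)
Your overall architecture agrees with the paper's: argue by contradiction, upgrade the hypothesis to a uniform global $S$ bound on the truncated evolutions $\phi^{<K_*}_{bounded}$, run a perturbation argument to control the error $\eta^{n,<K_*}$ uniformly, and then contradict essential singularity via Lemma~\ref{lem:localperturbative}. (As a side remark, the correct negation of the claim is that the supremum over \emph{all} closed subintervals is finite, not that a single subinterval has finite norm; your subsequent steps show you intend the former.) However, there is a genuine gap in the central step, namely the claim that the global-in-time control of $\eta^{n,<K_*}$ follows from ``a divisibility bootstrap along finitely many time intervals, exactly in the spirit of Proposition~\ref{prop:bootstrap_bounds_to_next_level_in_evolving_lowest_frequency_nonatomic_part} and of Step~5 in the proof of Lemma~\ref{lem:epsboundednonlin}.''

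The obstruction is the interaction term in which $\eta^{n,<K_*}$ occupies a \emph{non-perturbative} slot (one that can only be estimated in $L^\infty_{t,x}$) while all perturbative factors are $\phi^{<K_*}_{bounded}$, schematically $\eta^{n,<K_*}_{<k-10}F_k^{(R,M)}(\phi^{<K_*}_{bounded})$. Estimating $\eta$ crudely in $L^\infty_{t,x}$ produces a forcing term whose global $L^1_tL^2_x$ norm is of size $C(\|\phi^{<K_*}_{bounded}\|_S)$, which is bounded but \emph{not} small; divisibility only distributes this $O(1)$ quantity over $N(\delta)$ slabs of size $\delta$ each, and the accumulated drift $N(\delta)\,\delta$ need not vanish. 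This is exactly why case \emph{(v)} of Step~1 of Lemma~\ref{lem:epsboundednonlin} extracts smallness from $\eta$ itself via the fundamental theorem of calculus in time, $|\eta(t)|\leq R\,\|\partial_t\eta\|_{L^\infty}+|\eta(0)|$, together with the precise $L^\infty_{loc}$ information on $\eta[0]$ from Lemma~\ref{lem:prelim profile} — an argument that costs a factor of the interval length $R$ and therefore fails outright on an unbounded time interval. The missing ingredient in your proposal is the uniform scattering decomposition of the background: the paper first invokes Lemma~\ref{lem:soloftherightoneasymptotics} to write $\phi^{<K_*}_{bounded}=(\phi^{<K_*}_{bounded})_1+(\phi^{<K_*}_{bounded})_2$ with $\|(\phi^{<K_*}_{bounded})_1\|_{S[\R]}<\gamma$ and $\|(\phi^{<K_*}_{bounded})_2\|_{L^\infty_tL^\infty_x}<\gamma$ for $|t|\geq R_\gamma$, and only then splits spacetime into $[-R_\gamma,R_\gamma]\times\R^2$ (handled by the finite-time machinery of Lemma~\ref{lem:epsboundednonlin}) and its complement (handled by the asymptotic argument of Lemma~\ref{lem:epsboundedlin}, where smallness comes from the dispersed background via Lemma~\ref{lem:bilinearnullformbelowL2use} rather than from $\eta$). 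Without this decomposition your bootstrap cannot close on $\R$.
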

\begin{proof} We must show that it holds that
\[
 \lim_{K_*\rightarrow\infty} \sup_{[T_1, T_2] \subset I} \big\| \phi^{<K_*}_{bounded} \big\|_{S([T_1,T_2])} = + \infty. 
\]
Suppose not. Then all evolutions $\phi^{<K_*}_{bounded}$ exist globally in time and we have that 
\[
 \limsup_{K_* \rightarrow \infty} \, \big\| \phi^{<K_*}_{bounded} \big\|_{S[\R]} < \infty. 
\]
This uniform bound on the evolutions $\phi^{<K_*}_{bounded}$ implies by Lemma~\ref{lem:soloftherightoneasymptotics} that they have uniform scattering behavior. Given $\gamma>0$, there exists a decomposition
\[
 \phi^{<K_*}_{bounded} = \big( \phi^{<K_*}_{bounded} \big)_1 + \big( \phi^{<K_*}_{bounded} \big)_2
\]
and some $R_\gamma > 0$ such that
\[
 \big\| \big( \phi^{<K_*}_{bounded} \big)_1 \big\|_{S[\R]} < \gamma, \quad \big\| \big( \phi^{<K_*}_{bounded} \big)_2 \big\|_{L^\infty_t L^\infty_x[(-\infty, -R_\gamma] \cup [R_\gamma, +\infty)]} < \gamma. 
\]
We can then show, following Stage 1 in the proof of Theorem~\ref{thm:profiledecomp}, that we obtain uniform (in large enough $n$ and $K_*$) bounds on $\big\| \eta^{n,<K_*} \big\|_{S[\R]}$. This in turn contradicts the fact that for any $K_*$ large enough, we must have $\lim_{n\rightarrow\infty} \big\| \phi^{n,<K_*} \big\|_{S[\R]} = +\infty$ on account of Lemma~\ref{lem:localperturbative} and the fact that $\{ \phi^n\}_{n\geq 1}$ is an essentially singular sequence which is essentially supported at frequency $\sim 1$. 

In order to obtain the desired control over $\eta^{n,<K_*}$, we split $\R^{1+2}$ into the regions $[-R_\gamma, R_\gamma]^c \times \R^2$ and $[-R_\gamma, R_\gamma] \times \R^2$. To control $\eta^{n,<K_*}$ on the former region, we use a (much simplified) variant of the argument for Lemma~\ref{lem:epsboundedlin}. To handle the latter region, we argue as in (v) of Step 1 of the proof of Lemma~\ref{lem:epsboundednonlin}, replacing $\epsilon^{<K_*}_{bounded}$ there by $\phi^{<K_*}_{bounded}$.
\end{proof}

To conclude, we have 
\begin{prop} \label{prop:minblowup2} 
Assume the situation given by \eqref{equ:min_blowup_sol_unbounded}. Then there exists an energy class data pair $\tilde{\phi}[0] \colon \R^2 \to T \mathbb{S}^m$ with $E[\tilde{\phi}] = E_{crit}$ and such that denoting its wave maps evolution with lifespan $I$ by $\tilde{\phi}(t, x)$, we have 
\[
 \sup_{[T_1,T_2] \subset I} \| \tilde{\phi} \|_{S([T_1,T_2])} = +\infty. 
\]
In particular, $\tilde{\phi}(t,x)$ is a minimal blowup solution.
\end{prop}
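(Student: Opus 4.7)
After passing to a subsequence and using time reversal if needed, I will assume $t_n \to +\infty$. The plan is to construct $\tilde{\phi}$ as the nonlinear profile associated with the temporally unbounded free-wave profile $\phi^{<K_*}$, namely the energy class radial wave map scattering as $t \to -\infty$ to the free wave $S(t)(\phi^{<K_*}[0])$, and then prove that its $S$-norm must be infinite on some closed subinterval of its lifespan by contradicting the essentially singular property of $\{\phi^n\}$.

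For each fixed $K_*$, I first construct $\tilde{\phi}^{<K_*}$ as follows. For $T$ sufficiently large, the linear flow $S(\cdot)(\phi^{<K_*}[0])$ is dispersed on $(-\infty, -T]$ in the sense that all multilinear quantities arising in the renormalized wave maps nonlinearity and its gauge-theoretic variants of Section~\ref{sec:decomposition_nonlinearity} become arbitrarily small when evaluated on it. One can therefore solve the wave maps equation backwards from time $-T$ starting with $\Pi \bigl( S(-T)(\phi^{<K_*}[0]) \bigr)$ and obtain convergence to the free wave as $t \to -\infty$, using a perturbation argument in the spirit of Lemma~\ref{lem:localperturbative} and Theorem~\ref{thm:small_energy_global} but now with the dispersed free wave as the background rather than with small energy. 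Extending forward via Definition~\ref{defn:energy_class_evolution} gives $\tilde{\phi}^{<K_*}$ on a maximal lifespan $I^{<K_*}$ with $E[\tilde{\phi}^{<K_*}] = E[\phi^{<K_*}]$. Passing to the limit $K_* \to \infty$ using that $\phi^{<K_*}[0] \to \phi[0]$ in $\dot{H}^1_x \times L^2_x$ and the stability of this backwards scattering construction then produces $\tilde{\phi}[0] \colon \bR^2 \to T\bS^m$ with $E[\tilde{\phi}[0]] = E_{crit}$, whose energy class evolution $\tilde{\phi}$ with lifespan $I$ is the candidate minimal blowup solution.

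To show that $\tilde{\phi}$ is indeed a minimal blowup solution, I argue by contradiction: assume $\|\tilde{\phi}\|_{S([T_1,T_2])} < \infty$ for every closed $[T_1, T_2] \subset I$. By Definition~\ref{defn:energy_class_evolution} then $I = \bR$, and Proposition~\ref{prop:breakdown_criterion} gives that $\|\tilde{\phi}\|_{S(\bR)}$ is in fact finite and $\tilde{\phi}$ scatters to free waves as $t \to \pm\infty$; by the stability of the backwards construction the same bound holds uniformly for $\tilde{\phi}^{<K_*}$ for large $K_*$. I then transfer this bound to $\phi^{n,<K_*}$ by comparison on the three time intervals $(-\infty, t_n - T_*]$, $[t_n - T_*, t_n + T_*]$, and $[t_n + T_*, \infty)$. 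On the first, relation~\eqref{equ:min_blowup_sol_unbounded} shows that $\phi^{n,<K_*}$ is, modulo negligible errors from $\eta^{n,<K_*}$ and from the essential trivialization of the gauge transformations $U^{(\phi^{n,<K_*})}_{<k}$ in the dispersed regime, approximated by the free wave $S(\cdot - t_n)(\phi^{<K_*}[0])$; the perturbation analysis of Stage~1 of the proof of Theorem~\ref{thm:profiledecomp} then delivers uniform $S$-norm control. On the middle interval, the finiteness of $\|\tilde{\phi}^{<K_*}\|_S$ combined with a variant of Lemma~\ref{lem:epsboundedlin} yields the same control, and the third interval is handled analogously to the first using the forward scattering of $\tilde{\phi}^{<K_*}$. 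Finally, the high-frequency perturbation Lemma~\ref{lem:localperturbative} absorbs the discrepancy between $\phi^n$ and $\phi^{n,<K_*}$, producing uniform global $S$-norm bounds on $\phi^n$ and contradicting the essentially singular property.

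The principal obstacle is the construction of $\tilde{\phi}^{<K_*}$: one cannot invoke small-data theory since $E[\phi^{<K_*}]$ is close to $E_{crit}$, so the necessary smallness for the bootstrap must instead be extracted from the dispersion of the free wave in the far past, implemented via divisibility of the atomic $Z^\pm_k$ norms and the weighted null form estimates of Section~\ref{sec:multilinear_estimates}. A further subtlety is the self-consistent treatment of the gauge transformations, whose dependence on the as-yet-unknown solution forces one to iterate as in the analysis of Theorem~\ref{thm:profiledecomp}, now adapted to a backwards-in-time problem in which the unknown asymptotes to a free wave rather than to a prescribed low-frequency atom.
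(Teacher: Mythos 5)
Your proposal follows the same overall strategy as the paper (produce the nonlinear profile attached to the temporally unbounded linear profile, check it has energy $E_{crit}$, and derive a contradiction with essential singularity if its $S$ norm were finite on all closed subintervals), but the construction of the data $\tilde{\phi}[0]$ is genuinely different. The paper does not solve a final-state problem at all: it first observes, via the argument of Lemma~\ref{lem:epsboundedlin}, that the evolutions $\phi^{n,<K_*}$ exist on $(-\infty, t^n-C]$ for a fixed large $C$, then evaluates them at the pre-concentration time $t^n-C$ and extracts $\tilde{\phi}^{<K_*}[0]$ as an $n\to\infty$ limit by a compactness argument (exploiting the frequency localization up to exponential tails), writing $\phi^{n,<K_*}[t^n-C]=\tilde{\phi}^{<K_*}[0]+\tilde{\eta}^{n,<K_*}[t^n-C]$ with an error vanishing locally; the contradiction is then obtained by reducing to the argument of Proposition~\ref{prop:minblowup1} with time re-centered at $t^n-C$. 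Your route instead constructs $\tilde{\phi}^{<K_*}$ as the wave-map "wave operator" image of the free wave $S(t)(\phi^{<K_*}[0])$, i.e.\ by solving backwards from $\Pi\bigl(S(-T)(\phi^{<K_*}[0])\bigr)$ in the dispersed regime. This is conceptually clean and standard in Kenig--Merle schemes, but it carries an additional burden that the paper's compactness shortcut avoids: you must actually prove well-posedness of this final-state problem for the renormalized wave maps system at near-critical energy (existence, $T$-independence/compatibility of the resulting solutions, and stability under $K_*\to\infty$), none of which is established anywhere in the paper. You correctly identify that the smallness must come from dispersion rather than small energy and that the gauge transformations must be handled self-consistently, but these remain sketches of a nontrivial auxiliary construction rather than appeals to existing lemmas; the paper's choice to define $\tilde{\phi}^{<K_*}[0]$ as a limit of the already-controlled sequence at time $t^n-C$ is precisely what makes this extra work unnecessary. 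Your contradiction step (splitting time into the two dispersed regions and the middle nonlinear window and invoking Stage~1 of Theorem~\ref{thm:profiledecomp} together with a variant of Lemma~\ref{lem:epsboundedlin}) is essentially identical to the paper's reduction to Proposition~\ref{prop:minblowup1}. One further caveat: the linear profile in \eqref{equ:min_blowup_sol_unbounded} carries the conjugating factors $\bigl(U_{<k}^{(\phi^{n,<K_*})}\bigr)^\dagger$, so if you scatter to the bare free wave $S(t)(\phi^{<K_*}[0])$ you must justify (with uniformity in $k$ and $n$, not just pointwise) that these gauge matrices trivialize in the dispersed regime before the comparison on $(-\infty, t_n-T_*]$ goes through; this deserves an explicit argument rather than the phrase "essential trivialization".
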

\begin{proof} Assume without loss of generality that $t_n \to +\infty$ as $n \to \infty$. We reduce to the situation in the preceding Proposition~\ref{prop:minblowup1}. To begin with, using the argument of the proof of Lemma~\ref{lem:epsboundedlin}, we see that there is a large $C>0$ such that the wave maps evolution of the data $\phi^{n,<K_*}[0] \equiv \Pi_{<K_*} \phi^n[0]$ exists on $(-\infty, t^n-C]$ for any large enough $n$. Then by a simple compactness argument, exploiting the frequency localization up to exponentially decaying tails, we have that 
\[
 \phi^{n,<K_*}[t^n - C] \longrightarrow \tilde{\phi}^{<K_*}[0]
\]
as $n\rightarrow\infty$ in $\big( \dot{H}^{1}_{loc} \cap L^\infty_{loc} \big) \times L^2_{loc}$. We also have that the limit
\[
 \tilde{\phi}[0] = \lim_{K_*\rightarrow\infty} \tilde{\phi}^{<K_*}[0]
\]
exists in $\dot{H}^1_x \times L^2_x$. We now claim that this is the desired minimal blowup solution. In fact, we can write 
\[
 \phi^{n,<K_*}[t^n - C] =  \tilde{\phi}^{<K_*}[0] + \tilde{\eta}^{n,<K_*}[t^n - C], 
\]
where $ \tilde{\eta}^{n,<K_*}(t^n - C)\rightarrow 0$ both in $L^\infty_{loc}$ as well as $\dot{H}^1_{loc}$, and also $ \partial_t\tilde{\eta}^{n,<K_*}(t^n - C)\rightarrow 0$ in $L^2_{loc}$. But then assuming that the evolutions of the $ \tilde{\phi}^{<K_*}[0]$ satisfy uniform (in $K_*$) global-in-time $S$-norm bounds, we obtain a contradiction exactly as in the proof of the preceding Proposition~\ref{prop:minblowup1}.
\end{proof}

Finally, we observe that the minimal blowup solution constructed in the preceding Proposition~\ref{prop:minblowup1}, respectively in Proposition~\ref{prop:minblowup2}, has the following crucial compactness property whose proof follows exactly as in \cite{Kenig_Merle1, Kenig_Merle2}, see also Corollary~9.36 in \cite{KS}. 
\begin{cor} \label{cor:compactness property}
 There exists a non-trivial, radially symmetric, energy class, minimal blowup solution $\phi^\infty \colon I \times \bR^2 \to \bS^{m}$ to (WM) of energy $E_{crit}$ and with maximal interval of existence $I$. Moreover, there exists a continuous function $\lambda \colon I \to (0, \infty)$ so that the family of functions
 \begin{equation*}
  \Bigl\{ \bigl( \phi^\infty(t, \lambda(t)^{-1} \cdot ), \lambda(t)^{-1} \partial_t \phi^\infty(t, \lambda(t)^{-1} \cdot \bigr) \colon t \in I \Bigr\}
 \end{equation*}
 is pre-compact in $\dot{H}^1_x \times L^2_x$.
\end{cor}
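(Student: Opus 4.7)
The existence of the non-trivial, radially symmetric, energy class, minimal blowup solution $\phi^\infty \colon I \times \bR^2 \to \bS^m$ of energy $E_{crit}$ is immediate from Proposition~\ref{prop:minblowup1} and Proposition~\ref{prop:minblowup2}, depending on whether the surviving profile in the one-profile limit is bounded or temporally unbounded. So the real content of the corollary is the compactness property of the orbit modulo scaling, which I would prove by a Kenig--Merle type argument as implemented in \cite{KS}.

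My plan is to argue by contradiction. Assume no such continuous scaling function $\lambda$ exists. A standard diagonal argument then produces a sequence $\{t_n\} \subset I$ and scales $\{\lambda_n\} \subset (0,\infty)$ such that the rescaled data
\[
 \psi^n[0] := \bigl( \phi^\infty(t_n, \lambda_n^{-1} \cdot),\, \lambda_n^{-1} \partial_t\phi^\infty(t_n, \lambda_n^{-1} \cdot) \bigr)
\]
admits no subsequence converging in $\dot{H}^1_x \times L^2_x$ after any further rescaling. Treating $\{\psi^n[0]\}_{n\geq 1}$ as a new sequence of radial energy class data, each of energy exactly $E_{crit}$, I would then apply the M\'etivier--Schochet frequency atom decomposition together with the twisted Bahouri--G\'erard profile decomposition set up in Subsection~\ref{subsec:adding_in_first_atom}, after first regularizing the $\psi^n[0]$ to classical data by projecting frequency truncations onto $\bS^m$ as in Proposition~\ref{prop:data_approximate_preservation_of_frequency_localization} and invoking the energy class evolution framework of Subsection~\ref{subsec:energy_class_evolution}.

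The key step is the following dichotomy. By the asymptotic energy orthogonality \eqref{eq:asymptoticorthogonalityatcoordinates} and the fact that $E[\psi^n[0]] = E_{crit}$ for every $n$, either all profiles and the Besov remainder have asymptotic energy strictly less than $E_{crit}$, or exactly one profile captures the full energy $E_{crit}$ and all other profiles plus the remainder vanish. In the first case, Theorem~\ref{thm:profiledecomp} yields uniform in $n$ global $S$ norm bounds on the evolutions of $\psi^n[0]$. Transferring these bounds back to $\phi^\infty$ via finite speed of propagation and the rescaling covariance of the $S$ norm contradicts the essential singularity of $\phi^\infty$ near $t_n$. In the second case, the surviving profile is either bounded at concentration time $0$ or temporally unbounded. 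In the bounded case, $\psi^n[0]$ converges in $\dot{H}^1_x \times L^2_x$ to the profile's data along a subsequence, directly contradicting our standing assumption. In the unbounded case, the concentration times $\tau_n \to \pm\infty$ and the asymptotic vanishing in Lemma~\ref{lem:soloftherightoneasymptotics} force $\psi^n[0]$ to be close at time $0$ to a configuration whose forward (or backward) evolution has arbitrarily small $S$ norm on a small interval around $0$; rescaled back, this says that the evolution of $\phi^\infty$ has small $S$ norm on an interval around $t_n$, again contradicting essential singularity.

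Once pre-compactness modulo scaling is established, the continuous choice of $\lambda(t)$ follows by a standard argument: define $\lambda(t)$ via a natural frequency scale associated with $\phi^\infty[t]$ (for instance the inverse of a median frequency for which a fixed fraction of the energy lies below), and use pre-compactness together with the continuity of $t \mapsto \phi^\infty[t]$ in $\dot{H}^1_x \times L^2_x$ (valid in the energy class by Definition~\ref{defn:energy_class_evolution}) to verify continuity of $\lambda$. The main obstacle will be the unbounded profile case: because our profiles are built with respect to the magnetic-type linear flow \eqref{eq:therightone} rather than the free flow, one has to check carefully that the smallness produced by Lemma~\ref{lem:soloftherightoneasymptotics} for the linear concentration profiles can genuinely be promoted to smallness of the nonlinear evolution of $\psi^n$ on an interval around $0$, uniformly in $n$. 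This is the place where the delicate interplay between the gauge transformations $U_{<k}^{(\cdot)}$, the rescaling, and the time translations in the profile decomposition ansatz has to be tracked explicitly, in the spirit of Stage 2 in the proof of Theorem~\ref{thm:profiledecomp}.
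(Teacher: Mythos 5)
Your overall plan is the one the paper intends: the paper proves this corollary purely by reference to the Kenig--Merle scheme and to Corollary~9.36 of \cite{KS}, and your reconstruction (existence from Propositions~\ref{prop:minblowup1} and \ref{prop:minblowup2}, then a contradiction argument applying the frequency-atom/profile machinery to the rescaled data $\phi^\infty[t_n]$ and using $E = E_{crit}$ to force a single surviving profile) is exactly that scheme. The bounded-profile sub-case and the ``at least two profiles'' sub-case are handled correctly.

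There is, however, a genuine logical slip in your temporally unbounded sub-case. You conclude that the evolution of $\psi^n$ has ``arbitrarily small $S$ norm on a small interval around $0$,'' hence that $\phi^\infty$ has small $S$ norm on an interval around $t_n$, and you claim this contradicts essential singularity. It does not: the blowup property of $\phi^\infty$ is $\sup_{J \subset I} \|\phi^\infty\|_{S[J]} = \infty$ over the whole maximal interval, which is perfectly compatible with the $S$ norm being small on short intervals around each individual time $t_n$. The contradiction must be global in time, not local. The correct argument is that when the unique profile concentrates at times $\tau_n \to -\infty$ (say), the associated nonlinear profile scatters forward, so the perturbative machinery (Theorem~\ref{thm:profiledecomp} with a single temporally unbounded profile, together with Lemma~\ref{lem:soloftherightoneasymptotics}) yields a bound on the $S$ norm of the evolution of $\psi^n$ on the entire forward half of its lifespan, uniformly in $n$; rescaling and translating back gives $\|\phi^\infty\|_{S[[t_n, \sup I))} < \infty$, which contradicts divergence of the $S$ norm in the forward time direction (one first reduces to the case where the blowup occurs forward in time, and treats $\tau_n \to +\infty$ symmetrically backward). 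Your instinct that the interaction between the magnetic flow \eqref{eq:therightone}, the gauge transformations, and the rescalings is the delicate point is right, but the shape of the contradiction you extract from it must be one-sided global finiteness of the $S$ norm, not smallness near $t_n$. As a minor remark, in your first case finite speed of propagation plays no role: the $S$ norm is invariant under the rescaling and time translation, so the uniform bounds transfer to $\phi^\infty$ directly.
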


\section{Rigidity argument} \label{sec:rigidity_argument}

In this last section we rule out the existence of a minimal blowup solution $\phi^\infty$ as in Corollary~\ref{cor:compactness property}. To this end we closely mimic the rigidity argument of Kenig-Merle~\cite{Kenig_Merle1} as implemented in~\cite{KS} for energy critical wave maps with $\mathbb{H}^2$ target. We emphasize that the momentum vanishing property of the minimal blowup solution is here an immediate consequence of the hypothesis of radial symmetry.

\subsection{Preliminary properties of minimal blowup solutions}
We begin by stating some standard properties of the minimal blowup solution $\phi^\infty$.
As in Corollary~\ref{cor:compactness property}, $I$ denotes the lifespan of $\phi^\infty$ and we set $I^+:= I\cap[0,\infty)$. The finite speed of propagation for wave maps gives the following lemma.
\begin{lem}\label{lem:finite_speed_of_propagation}
For given $\varepsilon>0$, let $M>0$ be such that
\begin{equation}
 \int_{|x|\geq M} \frac{1}{2} \sum_{\alpha=0}^2 |\partial_{\alpha} \phi^\infty(0,x)|^2 \, dx <\varepsilon.
\end{equation}
Then
\begin{equation}\label{equ:finite_speed_of_propagation}
 \int_{|x|\geq M+t} \frac{1}{2} \sum_{\alpha=0}^2 |\partial_{\alpha} \phi^\infty(t,x)|^2 \, dx <\varepsilon.
\end{equation}
for all $t\in I^+$.
\end{lem}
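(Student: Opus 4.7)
The plan is to first establish the finite-speed-of-propagation inequality for smooth classical approximations to $\phi^\infty$ via the standard stress-energy tensor monotonicity argument, and then to pass to the limit using the definition of the energy class wave maps evolution from Definition~\ref{defn:energy_class_evolution}.

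For a classical wave map $\phi \colon [0,T] \times \bR^2 \to \bS^m$, the wave maps equation implies that the stress-energy tensor $T_{\alpha\beta} = \partial_\alpha \phi^\dagger \partial_\beta \phi - \frac{1}{2} \eta_{\alpha\beta} \partial^\gamma \phi^\dagger \partial_\gamma \phi$ is divergence-free, since $\partial^\alpha T_{\alpha\beta} = \Box\phi^\dagger \partial_\beta\phi$ and $\Box\phi^\dagger \partial_\beta\phi = -\bigl(\partial_\gamma\phi^\dagger\partial^\gamma\phi\bigr)\phi^\dagger\partial_\beta\phi = 0$ by the consistency relation $\phi^\dagger\partial_\beta\phi = \frac{1}{2}\partial_\beta(\phi^\dagger\phi) = 0$. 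Integrating $\partial^\alpha T_{\alpha 0} = 0$ over the truncated cone-exterior region $\Omega_t = \{(s,x) \colon 0 \leq s \leq t,\ |x| \geq M+s\}$ and applying the divergence theorem yields the identity
\[
 \int_{|x| \geq M+t} e(\phi)(t,x) \, dx + \mathcal{F}(\phi; t, M) = \int_{|x| \geq M} e(\phi)(0,x) \, dx,
\]
where $e(\phi) = \frac{1}{2}\sum_{\alpha=0}^2 |\partial_\alpha \phi|^2$ is the energy density and the flux $\mathcal{F}(\phi; t, M)$ through the outgoing light cone $\{|x| = M+s,\ 0 \leq s \leq t\}$ has the pointwise non-negative integrand proportional to $\frac{1}{2}|(\partial_t + \partial_r)\phi|^2 + \frac{1}{2}|\slashed{\nabla} \phi|^2$. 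This furnishes the desired monotonicity in the smooth case.

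To transfer this to the energy class solution $\phi^\infty$, we invoke Definition~\ref{defn:energy_class_evolution}: let $\phi^{(K)}$ denote the classical wave maps evolutions of the regularized data $\Pi_{<K} \phi^\infty[0]$. For any closed subinterval $[0,t] \subset I^+$, after passing to a subsequence, the $\phi^{(K)}$ are defined on $[0,t]$ with uniform bounds on $\|\phi^{(K)}\|_{S([0,t])}$, and $\phi^{(K)}[s] \to \phi^\infty[s]$ in $\dot{H}^1_x \times L^2_x$ for every $s \in [0,t]$. Given $\varepsilon > 0$ and $M > 0$ as in the hypothesis, fix $\delta \in \bigl(0, \varepsilon - \int_{|x|\geq M} e(\phi^\infty)(0,x)\,dx\bigr)$. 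For $K$ large enough we have $\int_{|x|\geq M} e(\phi^{(K)})(0,x)\,dx < \varepsilon$, and the classical finite-speed-of-propagation bound applied to the smooth $\phi^{(K)}$ yields $\int_{|x|\geq M+t} e(\phi^{(K)})(t,x)\,dx < \varepsilon$. Letting $K \to \infty$ and using the global $L^2_x$ convergence of $\nabla_{t,x} \phi^{(K)}(t, \cdot)$ to $\nabla_{t,x}\phi^\infty(t, \cdot)$ (which dominates convergence of the integrals truncated to $|x| \geq M+t$) yields~\eqref{equ:finite_speed_of_propagation}. The only conceptual point beyond the classical argument is the limiting step, and no significant obstacle arises since the energy-topology convergence supplied by Definition~\ref{defn:energy_class_evolution} is precisely what is needed.
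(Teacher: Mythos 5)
Your proof is correct and is exactly the standard argument the paper has in mind: the paper states the lemma with only the remark that it follows from finite speed of propagation, and your writeup supplies the usual stress-energy/flux monotonicity for smooth solutions (with the correct non-negative flux density $\tfrac12|(\partial_t+\partial_r)\phi|^2+\tfrac12|\slashed{\nabla}\phi|^2$ on the outgoing cone) together with the passage to the energy-class limit via Definition~\ref{defn:energy_class_evolution}. No gaps.
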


Next, we recall the following conservation laws for wave maps.
\begin{prop}
 Let $\phi$ be an energy class solution to (WM). Then the following conservation laws hold.
\begin{itemize} [leftmargin=*]
 \item Energy conservation
\begin{equation} \label{equ:cons_energy}
 \frac{d}{dt} \int_{\R^2} \frac{1}{2} \sum_{\alpha=0}^2 |\partial_{\alpha} \phi|^2 \, dx \, =0
\end{equation}
 \item Momentum conservation
\begin{equation} \label{equ:cons_momentum}
 \frac{d}{dt} \int_{\R^2} \partial_{t} \phi^\dagger  \partial_i \phi \, dx \, =0 \qquad i=1,2
\end{equation}
 \item Weighted energy
\begin{equation} \label{equ:cons_weighted_energy}
 \frac{d}{dt} \int_{\R^2}  \sum_{\alpha=0}^2 x_i \varphi (x/R) \frac{1}{2} \left|\partial_{\alpha} \phi(t,x)\right|^2 \, dx \, =\, - \int_{\R^2} \partial_{t} \phi^\dagger \partial_i \phi \, dx +O( r(R)) \qquad i=1,2
\end{equation}
 \item Weighted momentum monotonicity
\begin{equation} \label{equ:cons_weighted_momentum}
 \frac{d}{dt} \int_{\R^2}  \sum_{i=1}^2 x_i \varphi (x/R) \partial_{t} \phi^\dagger \partial_i \phi\, dx \, =\, - \int_{\R^2} |\partial_{t} \phi|^2 \, dx +O( r(R))
\end{equation}
\end{itemize}
where $\varphi$ is a fixed bump function which is equal to one on $\{ |x|\leq 1 \}$ and 
$$r(R):= \int_{|x|\geq R} \sum_{\alpha=0}^2 |\partial_{\alpha} \phi^\infty(0,x)|^2 \, dx. $$
\end{prop}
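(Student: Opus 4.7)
The plan is to first establish the four identities for classical (smooth) wave maps via direct calculation with the stress-energy tensor, and then extend them to the energy class via the approximation procedure of Subsection~\ref{subsec:energy_class_evolution}. For classical radial wave maps $\phi \colon I \times \R^2 \to \bS^m$, introduce the stress-energy tensor
\[
T_{\alpha \beta} := \partial_\alpha \phi^\dagger \partial_\beta \phi - \frac{1}{2} \eta_{\alpha \beta} \, \partial_\gamma \phi^\dagger \partial^\gamma \phi,
\]
where $\eta$ is the Minkowski metric. Using the constraint $\phi^\dagger \phi = 1$, which upon differentiation yields the key orthogonality relation $\phi^\dagger \partial_\alpha \phi = 0$, together with the wave maps equation $\Box \phi = - \phi \, \partial_\gamma \phi^\dagger \partial^\gamma \phi$, a direct computation gives
\[
\partial^\alpha T_{\alpha \beta} = (\Box \phi)^\dagger \partial_\beta \phi + \partial^\alpha \phi^\dagger \partial_\alpha \partial_\beta \phi - \frac{1}{2} \partial_\beta (\partial_\gamma \phi^\dagger \partial^\gamma \phi) = - (\partial_\gamma \phi^\dagger \partial^\gamma \phi) \, \phi^\dagger \partial_\beta \phi = 0.
\]

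With the divergence-free property of $T_{\alpha\beta}$ in hand, the four identities follow by integrating against suitable weights and applying the divergence theorem. The energy conservation~\eqref{equ:cons_energy} comes from integrating $\partial^\alpha T_{\alpha 0} = 0$ over $\R^2$; decay of the classical solution (which is constant outside a compact set at each time by finite speed of propagation) justifies the vanishing of boundary terms. The momentum conservation~\eqref{equ:cons_momentum} comes analogously from $\partial^\alpha T_{\alpha i} = 0$, noting that $T_{0 i} = \partial_t \phi^\dagger \partial_i \phi$. For the weighted energy identity~\eqref{equ:cons_weighted_energy}, multiply $\partial^\alpha T_{\alpha 0} = 0$ by $x_i \varphi(x/R)$ and integrate by parts; the principal term produces $-\int \partial_t \phi^\dagger \partial_i \phi \, dx$ (using $T_{00} = \frac{1}{2} \sum_\alpha |\partial_\alpha \phi|^2$ and $T_{i0} = \partial_t \phi^\dagger \partial_i \phi$), while the derivatives falling on $\varphi(x/R)$ produce an error supported in $\{|x| \gtrsim R\}$ and bounded by $r(R)$ via Cauchy--Schwarz. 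The weighted momentum monotonicity~\eqref{equ:cons_weighted_momentum} is obtained in the same way starting from $\partial^\alpha T_{\alpha i} = 0$: the leading term yields $-\int |\partial_t \phi|^2 \, dx$ from the trace part of $T_{ii}$ combined with the Lagrangian structure, and cutoff derivatives again give an $O(r(R))$ contribution.

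To pass from classical solutions to energy class solutions, approximate $\phi[0]$ by frequency-truncated smooth data $\phi^{(K)}[0] = \Pi_{<K} \phi[0]$ as in Definition~\ref{defn:energy_class_evolution}. For each $K \geq K_0$, the identities \eqref{equ:cons_energy}--\eqref{equ:cons_weighted_momentum} hold for the smooth evolution $\phi^{(K)}$. The convergence $\phi^{(K)} \to \phi$ in the energy topology on any closed subinterval $J \subset I$, combined with the uniform $S$-norm bounds, allows us to pass to the limit in each integral identity: the energy and momentum densities converge in $L^1_x$ uniformly on $J$, and the weighted expressions likewise converge since the weights $x_i \varphi(x/R)$ are bounded on the support of the integrand (with decay controlled by $r(R)$). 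The main (mild) technical point is handling the error term $O(r(R))$ uniformly in the approximation parameter~$K$, which follows from finite speed of propagation (Lemma~\ref{lem:finite_speed_of_propagation}) applied to $\phi^{(K)}$ with constants independent of~$K$, ensuring that the exterior energy flux remains controlled uniformly and in the limit by the prescribed quantity $r(R)$ defined via $\phi^\infty$.
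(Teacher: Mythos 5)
Your proposal is correct and follows essentially the same route as the paper: the identities are derived for classical solutions from the divergence-free stress-energy tensor via Stokes' theorem (using the orthogonality $\phi^\dagger \partial_\beta \phi = 0$ to kill the nonlinearity), and then transferred to energy class solutions by passing to the limit along the frequency-truncated approximations of Definition~\ref{defn:energy_class_evolution}. The paper states this in two sentences; your write-up simply fills in the standard computations.
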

\begin{proof}
 For classical solutions to (WM) these identities follow from Stokes' theorem together with the fact that the stress-energy tensor is divergence free. By Definition~\ref{defn:energy_class_evolution} one can then pass to the limit and obtain the result for energy class solutions.
\end{proof}

By virtue of Corollary~\ref{cor:compactness property}, the energy class, minimal blowup solution $\phi^\infty$ satisfies some important properties, which we present in the next lemmas.
In the case that $I^+$ is finite, we have the following lower bound for the continuous function $\lambda(t)$ of Corollary~\ref{cor:compactness property}.
\begin{lem} \label{lem:lower_bound_lambda}
Assume that $I^+$ is finite. After suitable rescaling, we may assume that $I^+=[0,1)$. Let $\lambda: I^+\rightarrow (0, \infty)$ be as in Corollary~\ref{cor:compactness property}
and let us denote the pre-compact set in $\dot{H}^1_x \times L^2_x$ therein by $K$. Then, there exists a constant $C_0(K)>0$ such that
\begin{equation}\label{equ:bound_lambda}
 0<\frac{C_0(K)}{1-t}\leq \lambda(t)
\end{equation}
for all $0\leq t<1$.
\end{lem}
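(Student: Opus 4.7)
The plan is to argue by contradiction. Suppose the conclusion fails; then there exists a sequence $t_n \to 1^-$ with $\lambda(t_n)(1-t_n) \to 0$. The first step is to translate the pre-compactness of the rescaled orbit from Corollary~\ref{cor:compactness property} into a \emph{uniform} exterior energy decay: for every $\varepsilon > 0$ there exists $R = R(\varepsilon) > 0$ such that
\[
 \int_{|x| > R/\lambda(t)} \sum_{\alpha = 0}^{2} |\partial_\alpha \phi^\infty(t,x)|^2 \, dx < \varepsilon \qquad \text{for every } t \in I^+.
\]
This is a standard consequence of the fact that pre-compact subsets of $\dot{H}^1_x \times L^2_x$ have uniform tails at infinity.

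The second step is to propagate this concentration backward to earlier times by means of finite speed of propagation. Applying the time-reversed version of Lemma~\ref{lem:finite_speed_of_propagation} between times $s$ and $t_n$ with radius $R/\lambda(t_n)$ at $t_n$, one obtains, for every $0 \leq s \leq t_n$,
\[
 \int_{|x| > R/\lambda(t_n) + (t_n - s)} |\nabla_{t,x}\phi^\infty(s, x)|^2 \, dx < \varepsilon.
\]
Letting $n \to \infty$ with $R/\lambda(t_n) \to 0$ and $t_n \to 1$, and then sending $\varepsilon \to 0$, one deduces that for every $s \in I^+$,
\[
 \nabla_{t,x}\phi^\infty(s, x) = 0 \qquad \text{for } |x| > 1-s,
\]
so that the full energy $E_{crit}$ at time $s$ is supported in the closed ball $\overline{B_{1-s}}$.

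For the final contradiction, rescale: the pullback
\[
 \tilde{\phi}^n := \bigl( \phi^\infty(t_n, \lambda(t_n)^{-1}\cdot),\ \lambda(t_n)^{-1}\partial_t\phi^\infty(t_n, \lambda(t_n)^{-1}\cdot) \bigr)
\]
has all of its energy supported in the ball $B_{(1-t_n)\lambda(t_n)}$, whose radius tends to zero by the contradiction hypothesis. By Corollary~\ref{cor:compactness property}, after extracting a subsequence, $\tilde{\phi}^n$ converges in $\dot{H}^1_x \times L^2_x$ to some $(\psi_0, \psi_1)$ of energy $E_{crit}$. But for any fixed $\rho > 0$, eventually $(1-t_n)\lambda(t_n) < \rho$, and hence $\nabla_{t,x}\tilde{\phi}^n$ vanishes on $|y| > \rho$; the limit therefore satisfies $\nabla_{t,x}(\psi_0, \psi_1) = 0$ outside every ball centered at the origin. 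Since $\dot{H}^1_x$-functions cannot have their derivative concentrated at a single point, this forces $E[(\psi_0,\psi_1)] = 0$, contradicting $E_{crit} > 0$.

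The hard part will be executing the first step rigorously: one must verify that the pre-compactness in $\dot{H}^1_x \times L^2_x$ of the $\lambda(t)$-rescaled orbit produces the correctly scaled uniform exterior-energy statement above, rather than an a priori weaker, pointwise-in-$t$ one. Everything else is a direct application of finite speed of propagation in reversed time together with a simple rescaling and compactness argument.
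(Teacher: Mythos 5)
Your argument has a genuine gap at the step ``letting $n \to \infty$ with $R/\lambda(t_n) \to 0$''. The contradiction hypothesis only gives you a sequence $t_n \to 1^-$ with $\lambda(t_n)(1-t_n) \to 0$; this does \emph{not} imply $\lambda(t_n) \to \infty$, and hence does not imply $R/\lambda(t_n) \to 0$ for fixed $R = R(\varepsilon)$. For instance, $\lambda(t_n) \equiv 1$ satisfies the negation of the lemma, and in that case your backward finite-speed estimate only yields
\[
 \int_{|x| > R + (t_n - s)} |\nabla_{t,x}\phi^\infty(s,x)|^2\,dx < \varepsilon,
\]
which in the limit localizes the energy to $B_{R+1-s}$ rather than $B_{1-s}$ and produces no contradiction. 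The regime where $\lambda(t_n)$ stays bounded (or merely fails to diverge fast enough) is precisely part of what the lemma must rule out, and your argument does not address it. There is also a structural warning sign here: the support property $\mathrm{supp}(\nabla_{t,x}\phi^\infty(s,\cdot)) \subset \overline{B_{1-s}}$ that you derive as an intermediate step is the paper's Lemma~\ref{lem:support_property}, which is proved \emph{from} the present lemma exactly because one needs $\lambda(t) \geq C_0(K)/(1-t)$ to guarantee $R(\varepsilon)/\lambda(t_n) \leq R(\varepsilon)(1-t_n)/C_0 \to 0$. Your route inverts that logical order and hits the missing ingredient at the corresponding spot.

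The paper (following Lemma~10.4 of \cite{KS}, which in turn follows Lemma~4.7 of \cite{Kenig_Merle1}) instead argues via compactness plus the local theory: the rescaled data $\bigl(\phi^\infty(t_n,\lambda(t_n)^{-1}\cdot),\lambda(t_n)^{-1}\partial_t\phi^\infty(t_n,\lambda(t_n)^{-1}\cdot)\bigr)$ lie in the compact set $K$, so after extraction they converge in $\dot H^1_x \times L^2_x$ to a fixed energy-class datum whose evolution exists on some interval $[0,T_0]$ with finite $S$ norm; by the perturbation theory the rescaled solutions then also exist on $[0,T_0]$ for $n$ large, whereas by scaling their maximal forward lifespan is exactly $\lambda(t_n)(1-t_n) \to 0$ --- a contradiction. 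That argument handles all cases of the negation uniformly (bounded or unbounded $\lambda(t_n)$), and is the one you should use; your finite-speed-of-propagation route only covers the sub-case $\lambda(t_n)\to\infty$, and patching the remaining case requires the local well-posedness argument anyway.
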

\begin{proof}
 The proof follows along the lines of Lemma 10.4 in \cite{KS}
\end{proof}

In addition, under the same assumptions of Lemma~\ref{lem:lower_bound_lambda} we have the following support property of $\phi^\infty$.

\begin{lem} \label{lem:support_property}
Let $\phi^\infty$ as in Corollary~\ref{cor:compactness property} with finite lifespan. Then there exists $x_0\in \R^2$ such that
$$\mathrm{supp}(\phi^\infty(t, \cdot) )\subset B(x_0, 1-t)$$
for all $0\leq t<1$.
\end{lem}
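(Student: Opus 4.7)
The plan is to combine the pre-compactness modulo scaling from Corollary~\ref{cor:compactness property} with the lower bound $\lambda(t) \geq C_0/(1-t)$ from Lemma~\ref{lem:lower_bound_lambda} and the finite speed of propagation (Lemma~\ref{lem:finite_speed_of_propagation}) to conclude that $\nabla_{t,x}\phi^\infty(t, x) = 0$ on $\{|x| > 1-t\}$. Since the orbit of $\phi^\infty$ consists of radially symmetric functions, the natural center is $x_0 = 0$; this is moreover compatible with the vanishing of the momentum integral~\eqref{equ:cons_momentum}, which is automatic under the radial symmetry assumption.

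First, pre-compactness of the family $\{(\phi^\infty(t, \lambda(t)^{-1}\cdot), \lambda(t)^{-1}\partial_t \phi^\infty(t, \lambda(t)^{-1}\cdot))\}_{t\in I^+}$ in $\dot{H}^1_x \times L^2_x$ together with the scaling invariance of the energy yields that for each $\varepsilon>0$ there exists $R_\varepsilon > 0$ such that
\[
\int_{|x| > R_\varepsilon/\lambda(t)} |\nabla_{t,x}\phi^\infty(t, x)|^2\, dx < \varepsilon \quad \text{for all } t \in [0,1).
\]
Inserting the lower bound from Lemma~\ref{lem:lower_bound_lambda} gives $R_\varepsilon/\lambda(t) \leq R_\varepsilon (1-t)/C_0$, so the energy concentrates in a ball whose radius shrinks linearly as $t \to 1^-$.

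Next, I would propagate this concentration backward in time using finite speed of propagation. Since the wave maps equation is second order and invariant under time reversal, applying Lemma~\ref{lem:finite_speed_of_propagation} to the time-reversed solution $\psi(s, x) := \phi^\infty(t_0 - s, x)$ shows that whenever $\int_{|x| > M}|\nabla_{t,x}\phi^\infty(t_0, x)|^2 dx < \varepsilon$, one has for all $t \in [0, t_0]$
\[
\int_{|x| > M + (t_0 - t)} |\nabla_{t,x}\phi^\infty(t, x)|^2\, dx < \varepsilon.
\]
Taking $M = R_\varepsilon/\lambda(t_0)$ and using $R_\varepsilon/\lambda(t_0) \leq R_\varepsilon(1-t_0)/C_0$, the cutoff $R_\varepsilon/\lambda(t_0) + (t_0 - t)$ converges to $1 - t$ as $t_0 \to 1^-$ (with $t$ fixed). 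Hence for each fixed $\sigma > 0$, choosing $t_0$ sufficiently close to $1$ yields
\[
\int_{|x| > 1-t+\sigma} |\nabla_{t,x}\phi^\infty(t, x)|^2\, dx < \varepsilon.
\]
Sending $\sigma \to 0^+$ by monotone convergence and then $\varepsilon \to 0^+$ gives $\int_{|x| > 1-t}|\nabla_{t,x}\phi^\infty(t, x)|^2\, dx = 0$ for every $t \in [0,1)$.

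Finally, the vanishing of $\nabla_{t,x}\phi^\infty$ almost everywhere on the connected open set $\{(t,x)\in [0,1)\times \bR^2 \,:\, |x|>1-t\}$ forces $\phi^\infty$ to equal a single constant $c\in \bS^m$ there, which is the desired support property with $x_0 = 0$. The main technical hurdle in this scheme is arranging the correct direction of finite speed of propagation: the concentration supplied by compactness lives at a \emph{future} time $t_0$ close to the blowup time, whereas one needs control at an \emph{earlier} time $t$. Invoking Lemma~\ref{lem:finite_speed_of_propagation} after time-reversing the wave map handles this cleanly, and letting $t_0 \to 1^-$ recovers precisely the backward light cone from the blowup point $(1, 0)$. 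A second subtlety, already dealt with in Subsection~\ref{subsec:energy_class_evolution}, is that $\phi^\infty$ is only of energy class; here one applies the above estimates to the regularizations $\phi^{(K)}$ in Definition~\ref{defn:energy_class_evolution} and passes to the limit.
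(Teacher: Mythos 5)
Your argument is correct and is precisely the argument the paper invokes: the paper's proof simply defers to Lemma 4.8 of Kenig--Merle, which combines the uniform small-tails property coming from pre-compactness, the lower bound $\lambda(t)\geq C_0(K)/(1-t)$ of Lemma~\ref{lem:lower_bound_lambda}, and finite speed of propagation applied backward in time from $t_0\to 1^-$, exactly as you do (with the translation center trivially $x_0=0$ here by radial symmetry). Your closing remarks on the direction of the light cone and on working through the regularizations of Definition~\ref{defn:energy_class_evolution} address the only subtleties.
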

\begin{proof}
 The proof follows as in Lemma 4.8 of \cite{Kenig_Merle1} by use of Lemmas~\ref{lem:finite_speed_of_propagation} and ~\ref{lem:lower_bound_lambda}.
\end{proof}

We finally state the vanishing momentum property which we get here for free by the very definition of radially symmetric wave maps.

\begin{prop} \label{prop:vanishing_momentum}
 Let $\phi^\infty$ as in Corollary~\ref{cor:compactness property}. Then 
\begin{equation} \label{equ:vanishing_momentum}
\int_{\R^2}\partial_{t} {\phi^\infty}^\dagger \partial_i \phi^\infty   \, dx \, =0 \qquad i=1,2
\end{equation}
for all times in $I^+$.
\end{prop}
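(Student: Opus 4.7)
The proposition asserts a property that is essentially a direct consequence of the radial symmetry hypothesis, and the only real subtlety is dealing with the energy class regularity of $\phi^\infty$. My plan is to verify the vanishing of the integrand componentwise in polar coordinates and then pass to the limit through the regularization procedure from Subsection~\ref{subsec:energy_class_evolution}.

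First, I would assume that $\phi^\infty$ is a classical radial wave map, writing $\phi^\infty(t,x) = \tilde{\phi}^\infty(t, r)$ with $r = |x|$. Then $\partial_t \phi^\infty = \partial_t \tilde{\phi}^\infty(t,r)$ and $\partial_i \phi^\infty = \frac{x_i}{r} \partial_r \tilde{\phi}^\infty(t,r)$, so that
\[
 (\partial_t \phi^\infty)^\dagger \, \partial_i \phi^\infty = \frac{x_i}{r} \, (\partial_t \tilde{\phi}^\infty)^\dagger \, \partial_r \tilde{\phi}^\infty.
\]
Changing to polar coordinates $x = (r\cos\theta, r\sin\theta)$ and computing the $\theta$-integral first, the factor $x_i/r$ contributes either $\cos\theta$ or $\sin\theta$, both of which integrate to zero over $[0, 2\pi)$. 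Hence \eqref{equ:vanishing_momentum} holds pointwise in $t$ for every classical radial wave map.

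To handle the general energy class case, I would invoke Definition~\ref{defn:energy_class_evolution}: approximate $\phi^\infty[0]$ by the regularized data $\Pi_{<K}\phi^\infty[0]$, which are of class $H^{1+}_x \times H^{0+}_x$ and yield classical wave maps $\phi^{\infty,(K)}$ on any closed subinterval $J \subset I^+$, converging to $\phi^\infty$ in the energy topology and in $\| \cdot \|_{S[J]}$ as $K \to \infty$. Since the frequency truncation $P_{<K}$ and the projection $\Pi$ preserve radial symmetry, each $\phi^{\infty,(K)}$ is a classical radial wave map and the computation above yields
\[
 \int_{\R^2} (\partial_t \phi^{\infty,(K)})^\dagger \, \partial_i \phi^{\infty,(K)} \, dx = 0, \quad t \in J, \ i = 1,2.
\]
The momentum functional $(f, g) \mapsto \int_{\R^2} g^\dagger \partial_i f \, dx$ is continuous on $\dot{H}^1_x \times L^2_x$ by Cauchy–Schwarz, so passing $K \to \infty$ gives \eqref{equ:vanishing_momentum} for $\phi^\infty$ at each $t \in I^+$.

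There is no real obstacle here beyond bookkeeping; the only point requiring any care is making sure that the regularizations used in Definition~\ref{defn:energy_class_evolution} preserve the radial symmetry, which they do because both $P_{<K}$ (Fourier multiplier with radial symbol) and the normal projection $\Pi$ commute with rotations. Consequently, the conclusion could alternatively have been recorded as an immediate consequence of the radiality assumption without invoking the conservation law \eqref{equ:cons_momentum} at all.
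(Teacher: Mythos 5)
Your proof is correct and follows exactly the route the paper intends: the text preceding the proposition states that the vanishing momentum property comes ``for free by the very definition of radially symmetric wave maps,'' and no further proof is given there. Your polar-coordinate computation together with the regularization from Definition~\ref{defn:energy_class_evolution} simply fills in the bookkeeping the paper omits.
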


\subsection{Rigidity I: Infinite time interval and reduction to the self-similar case for finite time intervals}

The final goal of the rigidity argument consists in proving the following theorem.

\begin{prop}\label{prop:rigidity}
 Let $\phi^\infty$ as in Corollary~\ref{cor:compactness property} with lifespan $I=(-T_0, T_1)$. Then, one cannot have $T_1$ or $T_0$ finite. Moreover, if $\lambda(t)\geq \lambda_0>0$ for all $t\in \R$, one necessarily has 
$\partial_\alpha \phi^\infty=0$ for $\alpha=0,1,2$.
\end{prop}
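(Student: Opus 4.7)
The plan is to handle the finite-lifespan and infinite-lifespan scenarios separately, leveraging in both cases the conservation identities \eqref{equ:cons_energy}--\eqref{equ:cons_weighted_momentum}, the compact support of the blowup profile from Lemma~\ref{lem:support_property}, and the vanishing of momentum \eqref{equ:vanishing_momentum} which is automatic under radial symmetry.

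For the finite-lifespan case, suppose for contradiction that $T_1 < \infty$; after rescaling we may take $T_1 = 1$. By Lemma~\ref{lem:support_property} there exists $x_0 \in \bR^2$ such that $\phi^\infty(t,\cdot)$ is supported (modulo an asymptotic constant) in $B(x_0, 1-t)$, and by Lemma~\ref{lem:lower_bound_lambda} we have $\lambda(t) \geq C_0/(1-t)$. I would apply the weighted momentum monotonicity \eqref{equ:cons_weighted_momentum} with cutoff $\varphi(\cdot/R)$ for $R \geq |x_0| + 1$, in which case compact support forces the error $r(R) = 0$. Integrating from $0$ to $1-\delta$, the boundary terms at both endpoints are controlled by $R \cdot E_{crit}$ via Cauchy-Schwarz and \eqref{equ:vanishing_momentum}, yielding a uniform-in-$\delta$ bound on $\int_0^{1-\delta} \int_{\bR^2} |\partial_t \phi^\infty|^2 \, dx\, dt$. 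Combined with the pre-compactness of the rescaled orbit and the lower bound on $\lambda$, this allows one to extract from $\phi^\infty(t_n, \lambda(t_n)^{-1} \cdot)$ a weak subsequential limit as $t_n \to 1^-$ which is a radial finite-energy self-similar wave map profile on the backward light cone. The classical non-existence of non-trivial such profiles (via the ODE reduction for radial self-similar wave maps into the sphere) then forces the profile to be constant, contradicting $E[\phi^\infty] = E_{crit} > 0$.

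For the infinite-lifespan case under the additional assumption $\lambda(t) \geq \lambda_0 > 0$ for all $t \in \bR$, the pre-compactness combined with the uniform lower bound on $\lambda$ yields that for every $\varepsilon > 0$ there exists $R_\varepsilon > 0$ with $r(R_\varepsilon) < \varepsilon$ uniformly in $t \in \bR$. Applying \eqref{equ:cons_weighted_momentum} over $[0, T]$ with $R = R_\varepsilon$ and invoking the vanishing momentum \eqref{equ:vanishing_momentum}, the boundary terms are bounded by $C R_\varepsilon E_{crit}$ uniformly in $T$, giving
\[
 \frac{1}{T} \int_0^T \int_{\bR^2} |\partial_t \phi^\infty(t,x)|^2 \, dx\, dt \leq \frac{C R_\varepsilon E_{crit}}{T} + C \varepsilon.
\]
Sending $T \to \infty$ and then $\varepsilon \to 0$, combined with the uniform continuity of $t \mapsto \|\partial_t \phi^\infty(t)\|_{L^2}$ afforded by pre-compactness, forces $\partial_t \phi^\infty \equiv 0$. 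Thus $\phi^\infty$ is time-independent, hence a finite-energy radial harmonic map from $\bR^2$ into $\bS^m$. The classical non-existence of non-trivial such maps, obtained via the ODE reduction under radial symmetry and a standard energy identity (the only radial finite-energy harmonic maps from $\bR^2$ to $\bS^m$ are constants), forces $\phi^\infty$ to be constant, giving $\partial_\alpha \phi^\infty = 0$ as claimed.

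The main obstacle will be the finite-lifespan case: although the weighted identities under compact support provide good integrated control of $\|\partial_t \phi^\infty(t)\|_{L^2}^2$, extracting a genuine self-similar limit rigorously requires tracking the concentration point, exploiting both Lemma~\ref{lem:support_property} and Lemma~\ref{lem:lower_bound_lambda} to pass to subsequential limits in appropriately rescaled variables, and then carrying out the ODE analysis on the truncated backward light cone to rule out non-trivial radial self-similar profiles.
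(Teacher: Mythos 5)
Your overall skeleton (weighted momentum identity plus vanishing momentum plus compactness, ending in harmonic-map/self-similar rigidity) is the right one, but both halves have a genuine gap. In the infinite-time case, the momentum identity together with \eqref{equ:vanishing_momentum} does give that the averages $\frac1T\int_0^T\|\partial_t\phi^\infty(t)\|_{L^2_x}^2\,dt$ tend to zero, but the step ``combined with the uniform continuity \ldots forces $\partial_t\phi^\infty\equiv0$'' fails. Pre-compactness holds only for the orbit rescaled by $\lambda(t)$, and under the hypothesis $\lambda(t)\geq\lambda_0$ the function $\lambda$ may be unbounded above, so $t\mapsto\|\partial_t\phi^\infty(t)\|_{L^2_x}$ has no uniform modulus of continuity; and even granting one, a vanishing Ces\`aro mean only forces smallness on a set of times of full density, not identical vanishing (think of sparse bumps). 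The correct bridge -- and the heart of the paper's proof -- is the dichotomy: either $\int_I\int_{\bR^2}|\partial_t\phi^\infty|^2\,dx\,dt\geq\alpha>0$ on every unit interval, in which case the integrated identity \eqref{equ:cons_weighted_momentum} gives the contradiction you describe, or else there is a sequence of intervals on which this integral is $o(1)$, and one must then extract, via pre-compactness (distinguishing $\lambda(s_n)$ bounded from unbounded, the latter handled by a Vitali covering argument), a non-trivial finite-energy radial harmonic map into $\bS^m$ and invoke its non-existence. You appeal to harmonic-map non-existence only \emph{after} the invalid step, so the infinite-time argument as written has a hole exactly where the substantive work lies.

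In the finite-time case two ingredients are missing. First, the self-similar reduction needs not only the lower bound $\lambda(t)\geq C_0/(1-t)$ but also the upper bound $\lambda(t)\leq C_1/(1-t)$ of Lemma~\ref{lem:upper_bound_lambda}; without it the family $(1-t)\partial_\alpha\phi^\infty(t,(1-t)\cdot)$ need not be pre-compact and no self-similar profile can be extracted. Proving that upper bound is itself non-trivial: it again runs through a dichotomy using the functional $z(t)$, the vanishing momentum \eqref{equ:vanishing_momentum}, and the same Vitali/harmonic-map extraction. Second, your uniform bound on $\int_0^{1-\delta}\int|\partial_t\phi^\infty|^2\,dx\,dt$ does not yield a \emph{stationary} profile: in self-similar variables $\partial_t\phi^\infty$ corresponds (up to Jacobian weights) to $\partial_s\tilde{\phi}+y\cdot\nabla_y\tilde{\phi}$, so controlling it does not control $\partial_s\tilde{\phi}$ alone. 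The paper instead introduces the Lyapunov functional $\widetilde{E}$ and uses its monotonicity (Proposition~\ref{prop:monotonicity_phitilde}) to select times $\bar{s}_\delta$ (Lemma~\ref{lem:s_delta}) at which $\int_{B_1}|\partial_s\tilde{\phi}|^2(1-|y|^2)^{-3/2}$ is small, then shows the limit is non-constant (Lemma~\ref{lem:limit_constant_in_time}) and rules it out by Proposition~\ref{prop:final_contradiction} (via Shatah--Struwe and Lemaire's uniqueness theorem) rather than a bare ODE reduction. You flag the finite-time case as the main obstacle, but these are the specific steps your outline does not supply.
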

In this subsection, we will provide the proof of Proposition~\ref{prop:rigidity} for the infinite time case and then reduce to the self-similar situation for the finite time case. The proof of Proposition~\ref{prop:rigidity}
in the finite time case will be concluded in the next subsection.

\begin{proof}{[Proposition ~\ref{prop:rigidity}: infinite time case]} 
We will first treat the case $T_1=\infty$. Assuming that $\partial_\alpha \phi^\infty$ do not all vanish, we extract a non-trivial finite energy radially symmetric harmonic map into the sphere,
leading to a contradiction. Following the proof of Lemma 10.9 in \cite{KS}, which crucially hinges on the geometry of the target, we show that there exists $\alpha\in \R$ such that
\begin{equation} \label{equ:no_harmonic_map_invested}
 \int_I \int_{\bR^2} | \partial_t \phi^\infty(t,x)|^2 \, dx \, dt \geq \alpha > 0
\end{equation}
for all intervals $I$ of unit length. 
Suppose not, then there exists a sequence of intervals $I_n:= [t_n, t_n+1]$ with the property that $t_n\rightarrow \infty$ and
\begin{equation} \label{equ:contradiction_hyp}
 \int_{I_n}\int_{\R^2} |\partial_t \phi^\infty(t,x)|^2 \, dx \, dt \leq \frac{1}{n}.
\end{equation}
Then there exists a sequence of times $s_n\in I_n$ such that $||\partial_t \phi^\infty(s_n, \cdot)||_2\rightarrow 0$ as $n\rightarrow \infty$.
By the compactness property of Corollary~\ref{cor:compactness property}, the family of functions
$$ \Bigl\{ \bigl( \phi^\infty(s_n, \lambda(s_n)^{-1} \cdot ), \lambda(s_n)^{-1} \partial_t \phi^\infty(s_n, \lambda(s_n)^{-1} \cdot \bigr) \Bigr\}_{n=0}^\infty$$
is pre-compact in $\dot{H}^1_x\times L^2_x$. Up to a subsequence, there exist limit functions $\Phi^\ast$ such that
$$\lambda(s_n)^{-1} \partial_\alpha \phi^\infty(s_n, \lambda(s_n)^{-1} \cdot \bigr) \rightarrow \partial_\alpha\Phi^\ast(\cdot)$$
strongly in $L^2_x$ for $\alpha=0,1,2$.
Pre-compactness and standard perturbative arguments ensure that there exists some non-empty interval $I^\ast$ around zero such that
$$\lambda(s_n)^{-1} \partial_\alpha \phi^\infty(s_n + t\lambda(s_n)^{-1}, \lambda(s_n)^{-1} \cdot \bigr) \rightarrow \partial_\alpha\Phi^\ast(t, \cdot)$$
in $L^\infty_{loc}(I^\ast; L^2(\R^2))$ with $\Phi^\ast$ being a radially symmetric weak solution of (WM) on $I^\ast \times \R^2$.
We now distinguish two cases:
\begin{itemize}
 \item there exists a sequence of times $s_n\in I_n$ such that $\{\lambda(s_n)\}$ is bounded: since $\lambda(t)\geq  \lambda_0>0$, there exists a non-empty time interval
$\tilde{I}\subset I^\ast$ such that $s_n+\lambda(s_n)^{-1} \tilde{I} \subset I_n$ 
for each $n$; then \eqref{equ:contradiction_hyp} implies that $ \int_{\tilde{I}}\int_{\R^2} |\partial_t \Phi^\ast(t,x)|^2 \, dx \, dt=0$ whence $\partial_t \Phi^\ast(t, \cdot)=0$
for all $t\in \tilde{I}$.
\item $\{\lambda(s_n)\}$ is unbounded for every sequence $\{s_n\}$ with $s_n\in I_n$: in this case we use a crucial Vitali covering argument as in \cite{KS} to arrive at the same conclusion; we write for each $n$
$$I_n= \bigcup_{s\in I_n} \left[s-\lambda(s)^{-1}, s+ \lambda^{-1}(s) \right]\cap I_n$$
and by the Vitali covering lemma, we may pick a disjoint subcollection of intervals $\{I_s\}_{s\in J_n}$ with $I_s:=\left[s-\lambda(s)^{-1}, s+ \lambda^{-1}(s) \right]\cap I_n$ for some subset
$J_n\subset I_n$ with the property that
$$\sum_{s\in J_n} 5\left| I_s\right| \geq 1;$$
it follows that, by definition of $I_n$, we may pick a sequence of times $s_n \in I_n$ with the property that 
$\int_{I_{s_n}} \|\partial_t \phi^\infty(t, \cdot)\|^2_{L^2_x} \, dt = o(\lambda^{-1}(s_n))$ and in particular 
$$\int_{-1}^1 \|\left(\chi_{I_n}\partial_t \phi^\infty\right)(s_n+t\lambda(s_n)^{-1}, \cdot)\|^2_{L^2_x} \, dt = o(1),$$
but then, passing to a subsequence, we can extract a limiting function $\Phi^\ast$ from 
$$\lambda(s_n)^{-1} \partial_t \phi^\infty(s_n +t \lambda(s_n)^{-1},  \lambda(s_n)^{-1} \cdot)$$ with the property that $\partial_t \Phi^\ast(t, \cdot)=0$ for all $t$ 
in the lifespan of $\Phi^\ast$.
\end{itemize}
Hence, in both cases we have deduced the existence of a radially symmetric weak wave map $\Phi^\ast: \R^{2+1}\rightarrow \bS^2$ with the following properties
$$\partial_t \Phi^\ast=0, \quad t\in I^\ast$$
and
$$\sum_{\alpha=1}^2 \| \partial_\alpha \Phi^\ast\|_{L^2_x}^2\neq 0.$$
This leads to a contradiction since there do not exist non-vanishing finite energy radial harmonic maps into the sphere, see for instance \cite{Struwe_radial1}.
We can then conclude that \eqref{equ:no_harmonic_map_invested} holds.
The remainder of the argument hinges upon the weighted momentum monotonicity \eqref{equ:cons_weighted_momentum} combined with the following property:
for any given $\varepsilon > 0$, by pre-compactness of the orbit of $\phi^\infty$ in $\dot{H}^1_x \times L^2_x$ there exists $R(\varepsilon)>0$ such that for all $t \in I$,
\[
 \int_{ \{ |x| \geq R(\varepsilon) \} } \bigl| \lambda(t)^{-1} \nabla_{t,x} \phi^\infty(t, \lambda(t)^{-1} x) \bigr|^2 \, dx \leq \varepsilon.
\]
Changing variables and using the key assumption $\lambda(t) \geq \lambda_0 > 0$, we can conclude that for all $t \in I$,
\[
 \int_{ \{ |x| \geq \frac{ R(\varepsilon) }{\lambda_0} \} } | \nabla_{t,x} \phi^\infty(t,x) |^2 \, dx  \leq \varepsilon.
\]
Using the notation $\tilde{R}(\varepsilon) := \frac{ R(\varepsilon) }{\lambda_0}$, then by the weighted momentum monotonicity identity \eqref{equ:cons_weighted_momentum} integrated over a sufficiently large time interval $[0,T]$, $T\gg 1$,
we get for arbitrary $\varepsilon > 0$ and corresponding $\tilde{R} \equiv \tilde{R}(\varepsilon)$ that
\begin{equation} \label{equ:integrated_in_time}
 \int_{\bR^2} x^j \varphi(x / \tilde{R})  {(\partial_t \phi^\infty)}^\dagger \partial_j \phi^\infty \, dx \bigg|_{0}^{T}  = - \int_0^T \int_{\bR^2} | \partial_t \phi^\infty (t,x) |^2 \, dx \, dt + \int_0^T O( r (\tilde{R}) ) \, dt,
\end{equation}
where 
\[
 r(\tilde{R}) = \int_{|x| \geq \tilde{R} } |\nabla_{t,x} \phi^\infty(t,x) |^2 \, dx \leq \varepsilon
\]
by our choice of $\tilde{R}$. But then using \eqref{equ:no_harmonic_map_invested}, the right-hand side of \eqref{equ:integrated_in_time} can be bounded from above by
\begin{align*}
 - \int_0^T \int_{\bR^2} | \partial_t \phi^\infty (t,x) |^2 \, dx \, dt + \int_0^T O( r (\tilde{R}) ) \, dt &\leq - (T-1) \alpha + C T \varepsilon \longrightarrow - \infty
\end{align*}
as $T \to \infty$, if we just choose $\varepsilon > 0$ sufficiently small so that $C \varepsilon \ll \alpha$. At this point $\varepsilon > 0$ and correspondingly $\tilde{R}$ are fixed. But then we see that the left-hand side of \eqref{equ:integrated_in_time} is bounded by
$ \tilde{R} E_{crit}$ 
and so we get a contradiction by taking $T \gg 1$ sufficiently large. 
\end{proof}

Having concluded the infinite time case, we turn now to the proof of Proposition~\ref{prop:rigidity} in the case $T_1<\infty$. Similarly to \cite{Kenig_Merle1}, \cite{KS} and \cite{KL}, the first 
step consists in reducing to a self-similar blow-up scenario.
As before, we can assume without loss of generality that $T_1=1$. We recall from Lemma~\ref{lem:lower_bound_lambda} that
\begin{equation}
\lambda(t) \geq \frac{C_0(K)}{1-t}, \quad 0<t<1,
\end{equation}
and from Lemma~\ref{lem:support_property} that 
\begin{equation} \label{equ:support_bis}
 \mathrm{supp}(\phi^\infty(t, \cdot) )\subset B(0, 1-t)\quad \text{and}\quad \mathrm{supp}(\partial_t\phi^\infty(t, \cdot) )\subset B(0, 1-t).
\end{equation}
Then we may prove the following upper bound for $\lambda(t)$.

\begin{lem} \label{lem:upper_bound_lambda}
Let $\phi^\infty$ as above with $T_1=1$. Then there exists $C_1(K)>0$ such that
\begin{equation} \label{equ:upper_bound_lambda}
\lambda(t) \leq \frac{C_1(K)}{1-t} 
\end{equation}
for all $0 \leq t<1$.
\end{lem}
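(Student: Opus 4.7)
The plan is a proof by contradiction, closely following the strategy used for the analogous finite-time rigidity in \cite{KS}. Suppose there is a sequence $t_n \uparrow 1$ with $\lambda(t_n)(1-t_n) \to \infty$. Setting $\mu_n := \lambda(t_n)$, which diverges by \eqref{equ:bound_lambda}, I consider the rescaled radial wave map
\[
 \phi_n(s, y) := \phi^\infty\bigl(t_n + s\mu_n^{-1},\, \mu_n^{-1} y\bigr),
\]
defined on the lifespan $s \in \bigl(-t_n\mu_n,\, (1-t_n)\mu_n\bigr)$, whose right endpoint tends to $+\infty$ by assumption. The pre-compactness of Corollary~\ref{cor:compactness property} ensures that, after passing to a subsequence, the rescaled data $\phi_n[0]$ converge strongly in $\dot{H}^1_x \times L^2_x$ to a radial energy class data pair $\Phi[0]$ of energy exactly $E_{crit}$. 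Combining the energy class evolution construction from Subsection~\ref{subsec:energy_class_evolution} with a standard perturbation argument rooted in the small energy theory of Theorem~\ref{thm:small_energy_global}, the $\phi_n$ converge on compact subintervals of time to the energy class wave maps evolution $\Phi$ of $\Phi[0]$, whose maximal lifespan therefore contains $[0, \infty)$.

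The scaling function of $\phi^\infty$ induces, after a further subsequence, a continuous $\lambda_\Phi \colon [0, \infty) \to (0, \infty)$ via $\lambda_\Phi(s) := \lim_n \lambda(t_n + s\mu_n^{-1})/\mu_n$, such that $\{(\Phi(s, \lambda_\Phi(s)^{-1}\cdot),\, \lambda_\Phi(s)^{-1}\partial_s\Phi(s, \lambda_\Phi(s)^{-1}\cdot))\}_{s \geq 0}$ is pre-compact in $\dot{H}^1_x \times L^2_x$. Since $\Phi$ is radial, its momentum vanishes automatically. Granted a uniform lower bound $\lambda_\Phi(s) \geq \lambda_0 > 0$ on $[0, \infty)$, the already established infinite-time part of Proposition~\ref{prop:rigidity} applies to $\Phi$ and forces $\partial_\alpha \Phi \equiv 0$, contradicting $E[\Phi] = E_{crit} > 0$.

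The main obstacle is upgrading the pointwise positivity of $\lambda_\Phi$, which merely follows from the existence of $\Phi$ as an honest wave map on $[0,\infty)$, to a uniform lower bound. Naively inheriting \eqref{equ:bound_lambda} only yields $\lambda_\Phi(s) \geq \lim_n C_0/[(1-t_n)\mu_n - s] = 0$, which is vacuous. My plan to overcome this is a diagonal extraction: if $\lambda_\Phi(s_k) \to 0$ along some $s_k \to \infty$, one rescales $\Phi$ at the times $s_k$ by the factor $\lambda_\Phi(s_k)^{-1}$ to extract a further limiting energy class wave map $\Phi'$, again of energy $E_{crit}$ and satisfying a compactness property whose scaling is now normalized to $1$ at $s=0$. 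The minimality of $E_{crit}$ as the critical energy threshold, combined with the rigidity of the profile extraction from Section~\ref{sec:Concentration_Compactness_Step}, obstructs indefinite iteration of this procedure, so after finitely many steps one reaches a limit with bounded scaling, to which the infinite-time case applies. An alternative route bypasses the lower-bound question entirely by applying the weighted momentum identity \eqref{equ:cons_weighted_momentum} directly to $\Phi$: by compactness the energy density of $\Phi$ is, modulo an arbitrarily small tail, concentrated in a bounded spatial region whose size depends only on the compact orbit of the rescaled profiles, and the momentum of $\Phi$ vanishes, so the same blow-down Virial argument used in the infinite-time case (together with the non-existence of non-trivial radial harmonic maps into $\bS^m$) produces the desired contradiction verbatim.
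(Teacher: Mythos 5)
Your approach --- rescale at times $t_n$ where $\lambda(t_n)(1-t_n)\to\infty$, extract a limiting wave map $\Phi$ of energy $E_{crit}$, and feed it to the already-proved infinite-time case of Proposition~\ref{prop:rigidity} --- is genuinely different from the paper's, and it has two gaps that the sketch does not close. First, the assertion that the maximal lifespan of $\Phi$ contains $[0,\infty)$ does not follow from the stated perturbation argument. The approximating solutions $\phi_n$ are rescalings of the blowup solution $\phi^\infty$ and therefore have unbounded $S$ norms on $[0,(1-t_n)\mu_n)$; the perturbation lemmas (e.g.\ Lemma~\ref{lem:localperturbative}) run in the opposite direction --- they need a reference solution with finite $S$ norm on the target interval --- so the mere existence of the $\phi_n$ on longer and longer intervals says nothing about whether $\Phi$ itself blows up in finite forward time, and the finite-time rigidity that would rule this out is exactly what is still under construction. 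Second, as you partly acknowledge, the infinite-time rigidity needs $\lambda_\Phi(s)\geq\lambda_0>0$ uniformly on $[0,\infty)$. Your first fix (finitely many re-extractions, ``obstructed by minimality'') is not an argument: each re-extraction produces another energy-$E_{crit}$ solution with a compactness property, and no quantity decreases to terminate the iteration. Your second fix does not bypass the issue either: the compactness property localizes the energy of $\Phi(s)$ only in $\{|x|\lesssim R(\varepsilon)/\lambda_\Phi(s)\}$, so without the uniform lower bound on $\lambda_\Phi$ there is no fixed $\tilde{R}$ for the weighted-momentum/virial argument, and the coercivity bound \eqref{equ:no_harmonic_map_invested} also invokes the lower bound in one of its two cases. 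Both routes presuppose exactly what is missing.

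For comparison, the paper's proof stays with $\phi^\infty$ itself and exploits what is special about finite-time blowup, namely the support property of Lemma~\ref{lem:support_property}. It introduces $z(t)=\sum_j\int_{\R^2} x_j(\partial_j\phi^\infty)^\dagger\partial_t\phi^\infty\,dx$, uses the weighted momentum identity \eqref{equ:cons_weighted_momentum} (with no error term, thanks to the compact support in $B(0,1-t)$) to obtain $z'(t)=-\|\partial_t\phi^\infty(t)\|_{L^2_x}^2$ together with $z(t)\to0$ as $t\to1$, and then runs a dichotomy: either $\int_t^1\|\partial_t\phi^\infty(s)\|_{L^2_x}^2\,ds\geq\alpha(1-t)$, which is ruled out following Kenig--Merle using the vanishing momentum of Proposition~\ref{prop:vanishing_momentum}, or the kinetic energy has vanishing averages on intervals $(t_n,1)$, in which case a Vitali covering plus compactness argument produces a non-trivial finite-energy radial harmonic map into $\bS^m$, which does not exist. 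To salvage your scheme you would essentially have to import the analogue of Lemma 10.18 of \cite{KS} to normalize $\lambda_\Phi$ and separately settle global forward existence of $\Phi$; at that point the paper's direct argument is shorter.
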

\begin{proof}
We adopt the same strategy as in the proof of Lemma 10.11 in \cite{KS} (see also Lemma 8.12 in \cite{KL}).
Suppose that \eqref{equ:upper_bound_lambda} fails.
Define, for $0\leq t<1$ the following functional
$$z(t):=  \sum_{j=1}^2 \int_{\R^2} x_j (\partial_j \phi^\infty (t,x))^\dagger \partial_t \phi^\infty(t,x) \, dx.$$
The weighted momentum monotonicity
\eqref{equ:cons_weighted_momentum} allows to compute that
$$z'(t)=-\int_{\R^2} \left|\partial_t \phi^\infty(t,x) \right|^2\, dx.$$
Next by \eqref{equ:support_bis}, we see that $z(t) \rightarrow 0$ as $t\rightarrow 1$, hence we can write
$$z(t)= \int_t^1 \int_{\R^2} \left|\partial_t \phi^\infty(s,x) \right|^2\, dx \, ds.$$
At this point, we need to distinguish two cases: either there exists $\alpha>0$ such that
$$\int_t^1 \int_{\R^2} \left|\partial_t \phi^\infty(s,x) \right|^2\, dx\, ds \geq \alpha (1-t),\quad 0\leq t<1$$
or else, there exists a sequence $\{t_n\}_n \subset [0,1)$ with $t_n\rightarrow 1$ such that, denoting $J_n=(t_n,1)$, it holds
$$\left|J_n\right|^{-1} \int_{J_n} \int_{\R^2}  \left|\partial_t \phi^\infty(s,x) \right|^2\, dx\, ds \rightarrow 0 \quad \text{as}\, n\rightarrow \infty. $$
In the first case, we get a contradiction by proceeding as in the proof of Lemma 5.6 of \cite{Kenig_Merle1} and using therein the vanishing momentum property of Proposition~\ref{prop:vanishing_momentum}.
In the second case, we argue similarly as in the proof of Proposition~\ref{prop:rigidity} for the infinite time case: by a Vitali argument and by pre-compactness one can conclude
the existence of a non-vanishing finite energy radially symmetric harmonic map into the sphere which gives a contradiction (cf. \cite{Struwe_radial1}).
\end{proof}
Combining the previous lemmas we can reduce to the self--similar scenario. Indeed, we have the following.
\begin{cor} \label{cor:self-similar}
 Let $\phi^\infty$ be as above with $T_1=1$, then the set
 \begin{equation*}
  \Bigl\{  (1-t)\partial_\alpha \phi^\infty(t, (1-t) \cdot \bigr) : t \in I, \, \alpha = 0, 1, 2 \Bigr\}
 \end{equation*}
 is pre-compact in $L^2_x(\R^2)$.
\end{cor}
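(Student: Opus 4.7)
The plan is to deduce the corollary directly from Corollary~\ref{cor:compactness property} together with the two-sided bound on $\lambda(t)$ provided by Lemmas~\ref{lem:lower_bound_lambda} and~\ref{lem:upper_bound_lambda}. More precisely, I would set
\[
 \mu(t) := \lambda(t)\,(1-t), \qquad 0 \leq t < 1,
\]
and observe that these lemmas yield $C_0(K) \leq \mu(t) \leq C_1(K)$, so that $\mu$ takes values in the compact set $J := [C_0, C_1] \subset (0, \infty)$. Writing $F_\alpha(t, y) := \lambda(t)^{-1} (\partial_\alpha \phi^\infty)(t, \lambda(t)^{-1} y)$ for $\alpha = 0, 1, 2$, Corollary~\ref{cor:compactness property} tells us that the set $\mathcal{F}_\alpha := \{F_\alpha(t, \cdot) : t \in I^+\}$ is pre-compact in $L^2_x(\R^2)$. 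A change of variables then gives the algebraic identity
\[
 (1-t)\, \partial_\alpha \phi^\infty(t, (1-t) x) = \mu(t)\, F_\alpha(t, \mu(t) x),
\]
which reduces matters to showing that $\{\mu(t) F_\alpha(t, \mu(t) \cdot) : t \in I^+\}$ is pre-compact in $L^2_x$.

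To handle this, I would invoke the continuous rescaling map
\[
 T \colon L^2_x(\R^2) \times J \longrightarrow L^2_x(\R^2), \qquad T(F, \mu)(x) := \mu\, F(\mu x).
\]
In two spatial dimensions one has the isometry $\|T(F, \mu)\|_{L^2_x} = \|F\|_{L^2_x}$ by change of variables, and by triangle inequality
\[
 \|T(F, \mu) - T(F', \mu')\|_{L^2_x} \leq \|T(F, \mu) - T(F, \mu')\|_{L^2_x} + \|F - F'\|_{L^2_x}.
\]
The second summand is $L^2$-isometric, while the first summand vanishes as $\mu' \to \mu$ for each fixed $F \in L^2_x$ by the standard strong continuity of the dilation group on $L^2_x(\R^2)$ (which follows by approximating $F$ by $C^\infty_c$ functions). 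Hence $T$ is jointly continuous.

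Finally, the set $\overline{\mathcal{F}_\alpha} \times J$ is compact in $L^2_x \times \R$, so its image under $T$ is compact in $L^2_x$. Since $\{\mu(t)\, F_\alpha(t, \mu(t) \cdot) : t \in I^+\}$ is contained in this image, it is pre-compact in $L^2_x$, which by the identity above is exactly the desired conclusion for the index $\alpha$. Taking the union over $\alpha \in \{0, 1, 2\}$ yields the corollary. I do not anticipate any genuine obstacle here: all the input is already in place from Corollary~\ref{cor:compactness property} and Lemmas~\ref{lem:lower_bound_lambda}--\ref{lem:upper_bound_lambda}, and the only thing to verify — joint continuity of $T$ on a bounded interval of scales — is routine.
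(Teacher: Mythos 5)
Your argument is correct and is exactly the standard one: the paper simply cites the proof of Proposition~5.7 in \cite{Kenig_Merle1}, which deduces compactness at the scale $(1-t)^{-1}$ from the compactness of Corollary~\ref{cor:compactness property} together with the two-sided bounds $C_0(K)/(1-t)\leq\lambda(t)\leq C_1(K)/(1-t)$ via the strong continuity of the $L^2$-isometric dilation group, just as you do. No gaps.
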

\begin{proof}
For the proof we refer to the proof of Proposition 5.7 in \cite{Kenig_Merle1}.
\end{proof}

\subsection{Rigidity II: The self-similar case}
In this section we deal with the finite time case and we carry out the last step of the Kenig-Merle rigidity argument by excluding the possibility of self-similar blow-up. In particular, we rule out the existence of a minimal blow-up
solution $\phi^\infty$ as in Corollary~\ref{cor:self-similar}.
For the sake of simplicity we drop the superscript $\infty$ and denote the minimal blowup solution from Corollary~\ref{cor:self-similar} just by $\phi$.
 As in \cite{Merle_Zaag}, \cite{Kenig_Merle1}, \cite{KS}, we introduce the self-similar variables
$$y= \frac{x}{1-t}, \quad  s=- \log(1-t), \quad 0\leq t <1$$
and 
$$\widetilde{\phi}(s,y, 0):= \phi(t,x) = \phi(1- e^{-s}, e^{-s} y), \quad 0\leq s< \infty. $$
By construction, $\nabla_{s,y} \widetilde{\phi} (s, \cdot, 0)$ is supported in $\{y\in \R^2:|y|\leq 1 \}$. Next, for small $\delta>0$, we also define
$$y= \frac{x}{1+ \delta-t}, \quad  s=- \log(1+\delta-t), \quad 0\leq t <1$$
and set
\begin{equation} \label{equ:self_similar_delta_phi}
 \widetilde{\phi}(s,y,\delta):= \phi(t,x) = \phi(1+ \delta- e^{-s}, e^{-s} y), \quad 0\leq s< \infty.
\end{equation}
Note that $\widetilde{\phi}(s,y,\delta)$ is defined for $- \log(1+\delta) \leq s< -\log \delta$. 
By standard computations, we can write the wave maps equation in self-similar variables as follows
\begin{equation} \label{equ:equation_selfsimilar}
 \partial_s^2 \widetilde{\phi}= \frac{1}{\rho} \text{div} \left( \rho \nabla_y \widetilde{\phi}- \rho ( y \cdot \nabla_y \widetilde{\phi}) y\right) - 2 y \cdot \nabla_y \partial_s \widetilde{\phi}
- \partial_s \widetilde{\phi}+ \widetilde{\phi} \left(\left|\partial_s \widetilde{\phi}+ y\cdot \nabla_y \widetilde{\phi}\right|^2 - \left|\nabla_y\widetilde{\phi}\right|^2\right)
\end{equation}
where $\rho= (1- |y|^2)^{-1/2}$.
The following lemma collects some basic properties of $\widetilde{\phi}$ inherited from $\phi$.

\begin{lem}\label{lem:properties_phitilde}
 For fixed $\delta>0$, we have for all $0\leq s<-\log \delta$ that
\begin{itemize}
 \item $\mathrm{supp} (\partial_\alpha \widetilde{\phi}(s,\cdot, \delta) ) \subset \{y \in \R^2:|y|\leq 1- \delta\} \quad \alpha= 0,1,2$
 \item $ \int_{\R^2} \left(\left|\nabla_y \widetilde{\phi}(s,y, \delta)\right|^2+ \left|\partial_s \widetilde{\phi}(s,y, \delta)\right|^2 \right) \, dy \leq C$
 \item $\sum_{\alpha=0}^2 \int_{\R^2} \left| \partial_\alpha \widetilde{\phi}(s,y, \delta) \right|^2 \log\left( \frac{1}{1- |y|^2} \right)\, dy \leq C \log \frac{1}{\delta}$
 \item $\sum_{\alpha=0}^2 \int_{\R^2} \left| \partial_\alpha \widetilde{\phi}(s,y, \delta) \right|^2 \left( 1- |y|^2 \right)^{- \frac{1}{2}}\, dy \leq C \delta^{-1/2}$.
\end{itemize}
\end{lem}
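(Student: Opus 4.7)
The plan is to derive all four assertions as fairly direct consequences of Lemma~\ref{lem:support_property}, energy conservation \eqref{equ:cons_energy}, and the definition \eqref{equ:self_similar_delta_phi}.

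First, for the support statement, I would invoke Lemma~\ref{lem:support_property}, observing that radial symmetry of $\phi^\infty$ forces the concentration point $x_0$ to be the origin, so that $\partial_\alpha \phi(t,\cdot)$ is supported in $\{|x| \leq 1-t\}$ for $\alpha = 0,1,2$. Substituting $t = 1+\delta-e^{-s}$ and $x = e^{-s}y$ converts this to the condition $e^{-s}|y| \leq e^{-s} - \delta$, equivalently $|y| \leq 1 - \delta e^s$, which for $0 \leq s < -\log\delta$ is contained in $\{|y| \leq 1-\delta\}$.

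For the energy bound, I would differentiate \eqref{equ:self_similar_delta_phi} by the chain rule to obtain
\[
 \nabla_y \widetilde{\phi}(s,y,\delta) = e^{-s}\nabla_x\phi(t,x), \qquad \partial_s \widetilde{\phi}(s,y,\delta) = e^{-s}\bigl(\partial_t\phi(t,x) - y\cdot\nabla_x\phi(t,x)\bigr),
\]
so that on the support, where $|y| \leq 1$, one has the pointwise bound $|\nabla_{s,y}\widetilde{\phi}|^2 \lesssim e^{-2s}(|\partial_t\phi|^2 + |\nabla_x\phi|^2)$. Since the Jacobian of $x = e^{-s}y$ is $dy = e^{2s}\,dx$, this $e^{-2s}$ prefactor is exactly absorbed, and a change of variables combined with energy conservation \eqref{equ:cons_energy} yields
\[
 \int_{\R^2} \bigl(|\partial_s \widetilde{\phi}|^2 + |\nabla_y\widetilde{\phi}|^2\bigr) \, dy \lesssim \int_{\R^2} \bigl(|\partial_t\phi|^2 + |\nabla_x\phi|^2\bigr)\,dx = 2 E_{crit}.
\]

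For the two weighted bounds, I would combine the sharper support statement with the energy bound just established. On $\{|y| \leq 1-\delta e^s\}$ one has
\[
 1-|y|^2 \geq 2\delta e^s - \delta^2 e^{2s} \geq \delta e^s,
\]
since $\delta e^s \in (0,1]$ throughout the range $0 \leq s < -\log\delta$. Pulling out the pointwise bounds $(1-|y|^2)^{-1/2} \leq (\delta e^s)^{-1/2} \leq \delta^{-1/2}$ for $s \geq 0$, respectively $\log\bigl(1/(1-|y|^2)\bigr) \leq s + \log(1/\delta) \leq 2\log(1/\delta)$ for $s < -\log\delta$, and applying the energy bound, yields the third and fourth assertions.

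There is no substantive obstacle here: the lemma amounts to bookkeeping in self-similar variables. The only mildly delicate point is keeping track of the finer support $\{|y| \leq 1 - \delta e^s\}$, rather than the cruder $\{|y| \leq 1 - \delta\}$, since the latter would not be sharp enough to recover the precise $\delta$-dependence in the weighted assertions.
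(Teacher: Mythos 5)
Your proof is correct and is precisely the ``direct calculation'' that the paper's one-line proof alludes to (citing Kenig--Merle): the support follows from Lemma~\ref{lem:support_property} plus radiality, the energy bound from the chain rule and the Jacobian $dy=e^{2s}dx$ cancelling, and the weighted bounds from the pointwise estimate $1-|y|^2\geq \delta e^s$ on the support. The only blemish is the intermediate bound $\log(1/(1-|y|^2))\leq s+\log(1/\delta)$, which should read $-s+\log(1/\delta)$ (in fact $\leq\log(1/\delta)$ for $s\geq 0$); as written it is still true, just not sharp, and the conclusion stands.
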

\begin{proof}
 The properties are obtained by direct calculation. See also \cite{Kenig_Merle1} and \cite{KL}.
\end{proof}
As in \cite{KS}, we introduce the following Lyapunov functional
$$\widetilde{E}\left( \widetilde{\phi} \right) (s):= \frac{1}{2} \int_{B_1} \left[\left|\partial_s \widetilde{\phi}\right|^2
+ \left|\nabla_y \widetilde{\phi}\right|^2 - \left|y \cdot \nabla_y \widetilde{\phi}\right|^2  \right] \left( 1- |y|^2 \right)^{- \frac{1}{2}} \, dy, $$
which satisfies a suitable monotonicity property stated in the next proposition.
\begin{prop} \label{prop:monotonicity_phitilde}
 For $0\leq s_1 <s_2<\log(1/\delta)$, the following identities holds
\begin{enumerate}
 \item[(1)] $\widetilde{E}\left( \widetilde{\phi} \right) (s_2)- \widetilde{E}\left( \widetilde{\phi} \right) (s_1) 
= \int_{s_1}^{s_2} \int_{B_1} \frac{|\partial_s \widetilde{\phi}| ^2}{\left( 1- |y|^2 \right)^{3/2}} \, dy \, ds,$
 \item[(2)] $\lim _{s\rightarrow \log (1/\delta)} \widetilde{E}\left( \widetilde{\phi} \right) (s)\leq E_{crit}.$
\end{enumerate}
\end{prop}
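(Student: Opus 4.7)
The two assertions of Proposition~\ref{prop:monotonicity_phitilde} are of rather different flavors: (1) is a direct virial/multiplier identity, while (2) is a flux bound. I sketch each in turn.

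For (1), the strategy is to differentiate $\widetilde{E}(\widetilde{\phi})(s)$ under the integral, integrate by parts in $y$, and use \eqref{equ:equation_selfsimilar} together with the geometric orthogonality $\widetilde{\phi}^{\dagger}\partial_{s}\widetilde{\phi}=0$. Concretely, a direct computation gives
\[
 \tfrac{d}{ds}\widetilde{E}(\widetilde{\phi}) \;=\; \int_{B_{1}} \bigl[\, \partial_{s}\widetilde{\phi}^{\dagger}\,\partial_{s}^{2}\widetilde{\phi} \,+\, \nabla_{y}\widetilde{\phi}^{\dagger}\!\cdot\!\nabla_{y}\partial_{s}\widetilde{\phi} \,-\, (y\cdot\nabla_{y}\widetilde{\phi})^{\dagger}(y\cdot\nabla_{y}\partial_{s}\widetilde{\phi}) \,\bigr] \rho \, dy.
\]
Because by Lemma~\ref{lem:properties_phitilde} the function $\widetilde{\phi}(s,\cdot,\delta)$ is supported in $\{|y|\le 1-\delta\}$, the weight $\rho=(1-|y|^{2})^{-1/2}$ is smooth on the support and no boundary contributions arise when integrating by parts. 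Doing so on the last two terms produces $\tfrac{1}{\rho}\mathrm{div}\bigl(\rho\nabla_{y}\widetilde{\phi}-\rho(y\cdot\nabla_{y}\widetilde{\phi})y\bigr)$, which by \eqref{equ:equation_selfsimilar} equals $\partial_{s}^{2}\widetilde{\phi}+2y\cdot\nabla_{y}\partial_{s}\widetilde{\phi}+\partial_{s}\widetilde{\phi}$ minus the nonlinearity $\widetilde{\phi}(|\partial_{s}\widetilde{\phi}+y\cdot\nabla_{y}\widetilde{\phi}|^{2}-|\nabla_{y}\widetilde{\phi}|^{2})$. When paired with $\partial_{s}\widetilde{\phi}^{\dagger}$ the nonlinear term drops out since $\widetilde{\phi}^{\dagger}\partial_{s}\widetilde{\phi}=0$ (from $\widetilde{\phi}^{\dagger}\widetilde{\phi}=1$), and the $\partial_{s}^{2}\widetilde{\phi}$ terms cancel. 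One is left with
\[
 \tfrac{d}{ds}\widetilde{E}(\widetilde{\phi}) \;=\; -\int_{B_{1}} \bigl[\, y\cdot\nabla_{y}|\partial_{s}\widetilde{\phi}|^{2} + |\partial_{s}\widetilde{\phi}|^{2}\,\bigr]\rho\,dy.
\]
A final integration by parts on the first summand uses $\mathrm{div}(\rho y)=2\rho+|y|^{2}(1-|y|^{2})^{-3/2}$, and combining terms yields $\rho+|y|^{2}(1-|y|^{2})^{-3/2}=(1-|y|^{2})^{-3/2}$, so that $\tfrac{d}{ds}\widetilde{E}(\widetilde{\phi})=\int_{B_{1}}|\partial_{s}\widetilde{\phi}|^{2}(1-|y|^{2})^{-3/2}\,dy$. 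Integrating from $s_{1}$ to $s_{2}$ gives (1).

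For (2), the idea is to re-express $\widetilde{E}(\widetilde{\phi})(s)$ as a Minkowski energy flux on a truncated slice of the backward cone from the blow-up vertex and to dominate it by the Minkowski energy of $\phi$. Unwinding the change of variables \eqref{equ:self_similar_delta_phi} with $R(s):=1+\delta-t=e^{-s}$ gives
\[
 \widetilde{E}(\widetilde{\phi})(s) \;=\; \int_{\{|x|<R(s)\}} \frac{R(s)}{\sqrt{R(s)^{2}-|x|^{2}}}\,\tfrac{1}{2}\Bigl[\,|\partial_{t}\phi|^{2}+|\nabla_{x}\phi|^{2}-\tfrac{2}{R(s)}\,\partial_{t}\phi^{\dagger}(x\cdot\nabla_{x}\phi)\,\Bigr](t,x)\,dx.
\]
In the radial setting $\nabla_{x}\phi=(x/|x|)\partial_{r}\phi$, so setting $\epsilon=|x|/R(s)\in[0,1)$ the bracket equals $|\partial_{t}\phi-\epsilon\,\partial_{r}\phi|^{2}+(1-\epsilon^{2})|\partial_{r}\phi|^{2}$. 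Multiplying by the weight gives
\[
 \tfrac{R(s)}{\sqrt{R(s)^{2}-|x|^{2}}}\cdot\tfrac{1}{2}|\partial_{t}\phi-\epsilon\,\partial_{r}\phi|^{2} \;+\; \tfrac{\sqrt{R(s)^{2}-|x|^{2}}}{R(s)}\cdot\tfrac{1}{2}|\partial_{r}\phi|^{2},
\]
both terms nonnegative, and the second trivially dominated by $\tfrac{1}{2}|\partial_{r}\phi|^{2}$. The first term is precisely one half of the squared projection of $\nabla_{t,x}\phi$ onto the null generator of the cone $|x|=R(s)$, weighted by the reciprocal distance to that cone, and it arises naturally when integrating the divergence identity $\partial_{\alpha}T^{\alpha\beta}K_{\beta}=0$ for the stress-energy tensor $T$ of the wave map against the conformal Killing field $K=(R(s)^{2}-|x|^{2})\partial_{t}-2(x\cdot t)\cdot x$ (or equivalently, the multiplier used in the standard conformal energy identity for \eqref{equ:equation_selfsimilar}) over the truncated backward cone with vertex at $(1+\delta,0)$. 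Applying that identity between the initial slice $s=-\log(1+\delta)$ and the slice $s$ and using positivity of the flux through the lateral boundary one obtains
\[
 \widetilde{E}(\widetilde{\phi})(s) \;\le\; \widetilde{E}(\widetilde{\phi})(-\log(1+\delta)) \,+\, E[\phi] \;\le\; C+E_{crit}.
\]
To push this to the sharp bound $E_{crit}$, we pass to the limit $s\to\log(1/\delta)$: on the support $\{|x|\le 1-t\}$ of $\phi$ (Lemma~\ref{lem:support_property}) one has $R(s)^{2}-|x|^{2}\ge\delta(2(1-t)+\delta)$, so $\epsilon=|x|/R(s)\to 0$ as $t\to 1$, the weight tends to $1$, and the integrand converges pointwise to $\tfrac{1}{2}(|\partial_{t}\phi|^{2}+|\nabla_{x}\phi|^{2})$. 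Using the compactness of the orbit from Corollary~\ref{cor:self-similar} to pass to the limit of the integral, and invoking energy conservation \eqref{equ:cons_energy}, one concludes $\lim_{s\to\log(1/\delta)}\widetilde{E}(\widetilde{\phi})(s)\le E_{crit}$.

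The main obstacle will be (2): verifying the flux identity rigorously at energy class regularity, and justifying the passage to the limit, requires a careful approximation argument using the energy class evolution framework of Subsection~\ref{subsec:energy_class_evolution}, and exploits that $\widetilde{E}(\widetilde{\phi})(s)$ is nondecreasing in $s$ by part (1), so it suffices to control the limit. Part (1) is essentially a direct computation once the integration by parts is justified by the support property.
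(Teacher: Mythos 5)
Your proposal is correct and follows essentially the same route as the paper, which simply cites Merle--Zaag (Lemma 2.1) for the multiplier computation in (1) and Kenig--Merle for (2); your integration-by-parts identity $\mathrm{div}(\rho y)-\rho=(1-|y|^{2})^{-3/2}$ and the change-of-variables formula for $\widetilde{E}$ in the original coordinates are exactly the computations carried out in those references. One small remark: in (2) the conformal-Killing-field/flux detour is unnecessary, since the rewriting as a sum of two nonnegative terms together with the support property $|x|\leq 1-t$ (which forces $\epsilon\to 0$ and the weight $\to 1$ uniformly on the support), energy conservation, and the monotonicity from (1) already yield the sharp bound $E_{crit}$.
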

\begin{proof}
 For (1) see the proof of Lemma 2.1 in \cite{Merle_Zaag}, while (2) can be proven as in \cite{Kenig_Merle1}.
\end{proof}

By direct application of Proposition~\ref{prop:monotonicity_phitilde} we obtain the following corollary.

\begin{lem} \label{lem:s_delta}
 For all $\delta>0$, there exists $\bar{s}_\delta \in \left(\frac{|\log \delta|}{2}, |\log \delta| \right)$ such that
$$\int_{\bar{s}_\delta}^{\bar{s}_\delta+ |\log \delta|^{\frac{1}{2}}}\int_{B_1} \frac{|\partial_s \widetilde{\phi}| ^2}{\left( 1- |y|^2 \right)^{3/2}}\, dy \, ds\leq \frac{E_{crit}}{|\log \delta|^{\frac{1}{2}}}.$$
\end{lem}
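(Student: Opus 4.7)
The proof will be a straightforward pigeonhole/mean-value argument on the dissipation identity in Proposition~\ref{prop:monotonicity_phitilde}. Setting
\[
 F(s) := \int_{B_1} \frac{|\partial_s \widetilde{\phi}|^2}{(1-|y|^2)^{3/2}} \, dy,
\]
part~(1) of Proposition~\ref{prop:monotonicity_phitilde} identifies $F(s)$ as the derivative of the Lyapunov functional $\widetilde{E}(\widetilde{\phi})(s)$, while part~(2) together with the nonnegativity of $\widetilde{E}(\widetilde{\phi})(0)$ (which is at most $E_{crit}$ since $\widetilde{E}$ is controlled by the energy of $\phi$) yields the global bound
\[
 \int_0^{|\log \delta|} F(s) \, ds \leq \widetilde{E}(\widetilde{\phi})\bigl( |\log \delta|^- \bigr) - \widetilde{E}(\widetilde{\phi})(0) \leq E_{crit}.
\]
In particular, restricting to the second half of this interval gives
\[
 \int_{|\log \delta|/2}^{|\log \delta|} F(s) \, ds \leq E_{crit}.
\]

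The plan is then to partition the interval $(|\log\delta|/2, |\log\delta|)$, which has length $|\log\delta|/2$, into roughly $N := \lfloor |\log\delta|^{1/2}/2 \rfloor$ consecutive, essentially disjoint subintervals of length $|\log\delta|^{1/2}$. For $\delta > 0$ sufficiently small (which we may assume without loss of generality, since the lemma is trivial for $\delta$ bounded away from $0$) the additivity of the integral forces at least one such subinterval $[\bar{s}_\delta, \bar{s}_\delta + |\log\delta|^{1/2}]$ to satisfy
\[
 \int_{\bar{s}_\delta}^{\bar{s}_\delta + |\log\delta|^{1/2}} F(s) \, ds \leq \frac{E_{crit}}{N} \leq \frac{E_{crit}}{|\log\delta|^{1/2}},
\]
where the second inequality uses $N \geq |\log\delta|^{1/2}$ after an inessential adjustment of the range (one may alternatively absorb the factor $2$ by replacing the interval $(|\log\delta|/2, |\log\delta|)$ with $(|\log\delta|/4, |\log\delta|)$ of length $3|\log\delta|/4$, which is more than enough to accommodate $|\log\delta|^{1/2}$-many subintervals of length $|\log\delta|^{1/2}$). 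By construction $\bar{s}_\delta \in (|\log\delta|/2, |\log\delta|)$ and $\bar{s}_\delta + |\log\delta|^{1/2} < |\log\delta| = \log(1/\delta)$, so $\widetilde{\phi}$ is defined on the entire interval of integration.

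This is a routine argument and no genuine obstacle arises; the only point to be mindful of is ensuring that $\bar{s}_\delta + |\log\delta|^{1/2}$ remains strictly less than $\log(1/\delta)$ so that the integrand is well-defined, which is automatic since $|\log\delta|^{1/2} \ll |\log\delta|/2$ for $\delta$ small.
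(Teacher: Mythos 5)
Your approach is the intended one: the paper offers no written proof (it declares the lemma a ``direct application'' of Proposition~\ref{prop:monotonicity_phitilde}), and the standard argument is exactly your pigeonhole on the dissipation identity, using that $\widetilde{E}\geq 0$ (which, to be precise, follows from $|y\cdot\nabla_y\widetilde{\phi}|^2\leq |y|^2|\nabla_y\widetilde{\phi}|^2\leq|\nabla_y\widetilde{\phi}|^2$ on $B_1$, not merely from $\widetilde E$ being ``controlled by the energy'') so that $\int_{|\log\delta|/2}^{|\log\delta|}F(s)\,ds\leq E_{crit}$.

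The one place where your write-up does not close is the constant. The interval $(|\log\delta|/2,|\log\delta|)$ has length $|\log\delta|/2$, and since each window $[\bar s_\delta,\bar s_\delta+|\log\delta|^{1/2}]$ must also stay inside the domain of definition $s<\log(1/\delta)$, you can fit only $N=\lfloor |\log\delta|^{1/2}/2\rfloor$ disjoint windows, so the pigeonhole yields $E_{crit}/N\approx 2E_{crit}/|\log\delta|^{1/2}$. Your assertion that $N\geq|\log\delta|^{1/2}$ is therefore false, and your alternative patch --- enlarging the range to $(|\log\delta|/4,|\log\delta|)$ --- produces a $\bar s_\delta$ that may violate the lemma's requirement $\bar s_\delta\in(|\log\delta|/2,|\log\delta|)$, so neither repair works as stated. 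The honest conclusion is that the lemma should read $2E_{crit}/|\log\delta|^{1/2}$ (or $C/|\log\delta|^{1/2}$, as in Kenig--Merle); this is immaterial for the only use of the lemma, in Lemma~\ref{lem:limit_constant_in_time}, where one only needs the bound to tend to $0$ as $\delta\to 0$. You should state this explicitly rather than paper over the factor of $2$.
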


By using once more pre-compactness together with the self-similar behavior, we aim to extract a limit $\phi^\ast$ as $\delta \rightarrow 0$ and to show that $\phi^\ast$ is a stationary solution to \eqref{equ:equation_selfsimilar} and finally obtain a contradiction as in previous sections.
In order to achieve this, let $\bar{t}_{\delta}= 1+ \delta - e^{-\bar{s}_\delta}$ for $\bar{s}_{\delta}$ as in Lemma~\ref{lem:s_delta}. By Corollary~\ref{cor:self-similar} we may select $\delta_j\rightarrow 0$ as $j\rightarrow \infty$ 
such that, for each $\alpha=0,1,2$
$$(1- \bar{t}_{\delta_j})  \partial_\alpha \phi^\infty(\bar{t}_{\delta_j}, (1-\bar{t}_{\delta_j})x) \rightarrow \partial_\alpha \Phi^\ast(x)$$
strongly in $L^2$ as $\delta_j\rightarrow 0$. In fact, we may also achieve that
\begin{equation} \label{equ:self_similar_data}
(1+ \delta_j - \bar{t}_{\delta_j})  \partial_\alpha \phi^\infty(\bar{t}_{\delta_j}, (1+\delta_j-\bar{t}_{\delta_j})x) \rightarrow \partial_\alpha \Phi^\ast(x) 
\end{equation}
strongly in $L^2$. Next, we consider the evolution, in the sense of Definition~\ref{defn:energy_class_evolution}, of the energy class data given by the left hand side of \eqref{equ:self_similar_data}. We denote these evolutions by $\phi^{j_\ast}$
and we remark that, due to pre-compactness and standard perturbative arguments, these evolutions exist on some joined fixed life-span $[0, T^\ast]$, where we may assume $0<T^\ast<1$. Furthermore, on $[0, T^\ast]$ we have
$$\partial_\alpha \phi^{j_\ast} (t,x)= (1+\delta_j - \bar{t}_{\delta_j}) \partial_\alpha \phi^\infty ( \bar{t}_{\delta_j} + (1+\delta_j-\bar{t}_{\delta_j})t, (1+\delta_j-\bar{t}_{\delta_j})x)$$
and for each $\alpha=0,1,2$
$$\partial_\alpha \phi^{j_\ast} (t,\cdot)\rightarrow \partial_\alpha \Phi^\ast(t,\cdot)$$
strongly in $L^2$ as $j\rightarrow \infty$ and uniformly for all $0\leq t \leq T^\ast$, where $\Phi^\ast$ is a weak wave map on $[0, T^\ast] \times \R^2$. We also remark that, due to the previous identities, it holds
$$\mathrm{supp} \left(\phi^{j_\ast} (t,\cdot) \right) \subset \left\{x\in \R^2: \, |x| \leq \frac{1-\bar{t}_{\delta_j}}{1+\delta_j - \bar{t}_{\delta_j}}-t<1-t \right\}$$
for $0\leq t\leq T^\ast$.
Next, we switch to self-similar variables 
$$y= \frac{x}{1-t}, \quad  s=- \log(1-t), \quad 0\leq t \leq T^\ast,$$
and define
$$\widetilde{\phi^{j_\ast}}(s,y):= \phi^{j_\ast}(1- e^{-s}, e^{-s} y)$$
and analogously for $\widetilde{\Phi^\ast}$. Then, as in \cite{KS}, we infer that
$$ \partial_\alpha \widetilde{\phi^{j_\ast}}(s,\cdot)\rightarrow \partial_\alpha \widetilde{\Phi^\ast}(s,\cdot)$$
strongly in $L^2_y(\R^2)$ as $j\rightarrow \infty$ and uniformly for all $0\leq s\leq -\log (1- T^\ast/2)=: \widetilde{T}$ and for $\alpha=0,1,2$.
Moreover, with $\widetilde{\phi}$ as in \eqref{equ:self_similar_delta_phi}, we have
$$ \widetilde{\phi^{j_\ast}}(s,y)= \widetilde{\phi} (\bar{s}_{\delta_j}+s , y, \delta_j)$$
and hence for each $\alpha=0,1,2$
\begin{equation} \label{equ:self_similar_convergence}
 \partial_\alpha \widetilde{\phi} (\bar{s}_{\delta_j}+s , y, \delta_j) \rightarrow \partial_\alpha \widetilde{\Phi^\ast} (s, \cdot)
\end{equation}
strongly in $L^2_y$ and uniformly in $0\leq s\leq \widetilde{T}$ as $j\rightarrow  \infty$. Therefore, $\Phi^\ast$ is a solution of \eqref{equ:equation_selfsimilar} and
$$\mathrm{supp} \left(\partial_\alpha \widetilde{\Phi}^\ast(\cdot, s)\right)\subset \{y\in \R^2: \,|y|\leq 1\}$$
and moreover
$$\mathrm{trace} \left(\widetilde{\Phi}^\ast(\cdot, s) \right)= const$$
where ``$\mathrm{trace}$'' is the $L^2-$ trace.

\begin{lem} \label{lem:limit_constant_in_time}
 Let $\widetilde{\Phi}^\ast$ be as above. Then we have 
 $$\widetilde{\Phi}^\ast(s,y)= \widetilde{\Phi}^\ast(y)\quad \text{and}\quad \widetilde{\Phi}^\ast\nequiv const.$$
\end{lem}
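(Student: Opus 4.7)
The plan is to establish the two conclusions of the lemma separately, exploiting the monotonicity formula from Proposition~\ref{prop:monotonicity_phitilde} for the first, and energy conservation for the original minimal blowup solution for the second.

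For the stationarity assertion $\widetilde{\Phi}^\ast(s,y) = \widetilde{\Phi}^\ast(y)$, I would combine Lemma~\ref{lem:s_delta} with the strong convergence \eqref{equ:self_similar_convergence}. For $j$ sufficiently large, $\widetilde{T} \leq |\log \delta_j|^{1/2}$, and since $(1-|y|^2)^{-3/2} \geq 1$ on $B_1$, Lemma~\ref{lem:s_delta} gives
\begin{equation*}
 \int_0^{\widetilde{T}} \int_{B_1} \bigl| \partial_s \widetilde{\phi}(\bar{s}_{\delta_j} + s, y, \delta_j) \bigr|^2 \, dy \, ds \leq \frac{E_{crit}}{|\log \delta_j|^{1/2}} \longrightarrow 0.
\end{equation*}
By the uniform-in-$s$ strong $L^2_y$ convergence of $\partial_s \widetilde{\phi}(\bar{s}_{\delta_j}+s,\cdot,\delta_j)$ to $\partial_s \widetilde{\Phi}^\ast(s,\cdot)$ over $s \in [0,\widetilde{T}]$ together with dominated convergence, the left-hand side converges to $\int_0^{\widetilde{T}} \| \partial_s \widetilde{\Phi}^\ast(s,\cdot) \|_{L^2_y}^2\,ds$. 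Hence this integral vanishes, forcing $\partial_s \widetilde{\Phi}^\ast \equiv 0$ on $[0,\widetilde{T}]\times B_1$, which yields the desired time-independence.

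For the non-triviality $\widetilde{\Phi}^\ast \not\equiv \mathrm{const}$, I would argue by contradiction and translate back to the original variables. Assuming $\widetilde{\Phi}^\ast \equiv c_\ast$, we have $\partial_\alpha \widetilde{\Phi}^\ast = 0$ for $\alpha = 0,1,2$, so \eqref{equ:self_similar_convergence} evaluated at $s=0$ gives $\| \partial_\alpha \widetilde{\phi}(\bar{s}_{\delta_j}, \cdot, \delta_j) \|_{L^2_y} \to 0$. Using the direct chain-rule identities $\nabla_y \widetilde{\phi} = e^{-s} \nabla_x \phi$ (evaluated at $(t,x) = (\bar{t}_{\delta_j}, e^{-\bar{s}_{\delta_j}} y)$) and $\partial_s \widetilde{\phi} = e^{-s}(\partial_t \phi - y \cdot \nabla_x \phi)$ together with the change of variables $x = e^{-\bar{s}_{\delta_j}} y$, the vanishing of the spatial component translates into $\| \nabla_x \phi(\bar{t}_{\delta_j}, \cdot) \|_{L^2_x} \to 0$. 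For the time component, combining the vanishing of $\partial_s \widetilde{\phi}$ with the support property $|x| \leq 1 - \bar{t}_{\delta_j}$ from Lemma~\ref{lem:support_property} (which gives $e^{\bar{s}_{\delta_j}} |x| \leq (1-\bar{t}_{\delta_j})/(1+\delta_j - \bar{t}_{\delta_j}) < 1$ on the support of $\phi(\bar{t}_{\delta_j}, \cdot)$), the crude triangle inequality $|\partial_t \phi|^2 \leq 2|\partial_t \phi - e^{\bar{s}_{\delta_j}} x \cdot \nabla_x \phi|^2 + 2 |\nabla_x \phi|^2$ yields $\| \partial_t \phi(\bar{t}_{\delta_j}, \cdot) \|_{L^2_x} \to 0$ as well. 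Together these imply $E[\phi(\bar{t}_{\delta_j})] \to 0$, flatly contradicting energy conservation $E[\phi(\bar{t}_{\delta_j})] = E_{crit} > 0$.

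I do not anticipate a real obstacle here: all the heavy lifting has already been done in Proposition~\ref{prop:monotonicity_phitilde} (monotonicity of the Lyapunov functional $\widetilde{E}$) and Lemma~\ref{lem:s_delta} (extraction of a good time $\bar{s}_\delta$ where the dissipation is small on a long time interval), and the pre-compactness producing the strong $L^2_y$ convergence \eqref{equ:self_similar_convergence} is given. The only mildly delicate point is ensuring the change of variables between self-similar and physical coordinates is executed carefully enough to extract the full $\dot{H}^1_x \times L^2_x$ smallness of $\phi(\bar{t}_{\delta_j}, \cdot)$, but this is entirely elementary thanks to the support control from Lemma~\ref{lem:support_property} that forces $e^{\bar{s}_{\delta_j}} |x| < 1$ on the support.
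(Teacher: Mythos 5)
Your proof is correct and follows exactly the route the paper takes (the paper simply defers to \cite{KS} and \cite{Kenig_Merle1}, citing the same two ingredients): the stationarity follows from Lemma~\ref{lem:s_delta} combined with the uniform strong $L^2_y$ convergence \eqref{equ:self_similar_convergence}, and the non-triviality follows from the Kenig--Merle contradiction with energy conservation at level $E_{crit}>0$. Your change of variables back to physical coordinates, using $|y|\leq 1$ on the support to absorb the $x\cdot\nabla_x\phi$ term and the scale invariance of the two-dimensional $\dot{H}^1_x\times L^2_x$ norm, is exactly the standard execution of that second step.
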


\begin{proof}
 The proof works exactly as in \cite{KS}: by \eqref{equ:self_similar_convergence} and Lemma~\ref{lem:s_delta} one shows that $\widetilde{\Phi}^\ast(s,y)= \widetilde{\Phi}^\ast(y)$ while
 the fact that $ \widetilde{\Phi}^\ast\nequiv const$ follows as in \cite{Kenig_Merle1}.
\end{proof}

By this procedure we have obtained a stationary, nonconstant, distributional solution to \eqref{equ:equation_selfsimilar} with finite energy (relative to the $y$ variable).
As in \cite{KS}, the following proposition leads to a contradiction.

\begin{prop}\label{prop:final_contradiction}
 Let $\Phi$ be a distributional solution to \eqref{equ:equation_selfsimilar} of finite energy
 $$\int_{B_1} |\nabla \Phi(y)|^2 \, dy < \infty.$$
 Then $\Phi^\ast= const$.
 \end{prop}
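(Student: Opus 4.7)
The plan is to bypass any reduction to an ODE in the radial variable and to derive a Pohozaev-type identity directly, by pairing the equation with $\Phi$ itself and exploiting the sphere constraint $|\Phi|^2=1$ (which $\Phi$ inherits from the underlying wave maps evolution into $\bS^m$). After multiplying through by $\rho$, the stationary equation reads, in divergence form,
\[
\mathrm{div}\bigl(\rho\,\nabla_y \Phi - \rho\,(y\cdot\nabla_y\Phi)\,y\bigr) \;=\; \rho\,\Phi\,\bigl(|\nabla_y \Phi|^2 - |y\cdot\nabla_y\Phi|^2\bigr).
\]
Since $\Phi \in H^1(B_1)\cap L^\infty$ (the $L^\infty$ bound coming from $|\Phi|^2=1$) and $\rho$ is locally bounded away from $\partial B_1$, both sides are locally integrable on $B_1$ and the equation is valid against any $\psi \in H^1(B_1;\R^{m+1})\cap L^\infty$ with compact support in $B_1$.

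The main step is to test against $\psi = \chi_\varepsilon\,\Phi$, where $\chi_\varepsilon \in C_c^\infty(B_1)$ is a standard radial cutoff with $\chi_\varepsilon \equiv 1$ on $B_{1-2\varepsilon}$ and $\mathrm{supp}(\chi_\varepsilon)\subset B_{1-\varepsilon}$. The sphere constraint provides the pointwise identities $\Phi^a\,\partial_i \Phi^a = 0$ and hence $\Phi^a\,(y\cdot\nabla\Phi)^a = 0$, so the ``cutoff--derivative'' contributions (those in which, after integrating by parts, $\partial_i$ lands on $\chi_\varepsilon$) vanish identically:
\[
-\int_{B_1} (\partial_i \chi_\varepsilon)\,\rho\,\bigl[\Phi^a\,\partial^i\Phi^a - y^i\,\Phi^a (y\cdot\nabla\Phi)^a\bigr]\,dy \;=\; 0.
\]
What remains on the divergence side collapses to $-\int_{B_1} \chi_\varepsilon\,\rho\,(|\nabla_y\Phi|^2 - |y\cdot\nabla_y\Phi|^2)\,dy$, while the nonlinearity paired with $\chi_\varepsilon\Phi$ becomes $+\int_{B_1} \chi_\varepsilon\,\rho\,(|\nabla_y\Phi|^2 - |y\cdot\nabla_y\Phi|^2)\,dy$ by $|\Phi|^2 = 1$. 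Equating the two sides of the tested equation yields
\[
\int_{B_1} \chi_\varepsilon \,\rho\,\bigl(|\nabla_y\Phi|^2 - |y\cdot\nabla_y\Phi|^2\bigr)\,dy \;=\; 0 \qquad \text{for every } \varepsilon > 0.
\]

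To conclude I would invoke the elementary pointwise inequality
\[
|\nabla_y\Phi|^2 - |y\cdot\nabla_y\Phi|^2 \;\geq\; (1-|y|^2)\,|\nabla_y\Phi|^2 \;\geq\; 0 \qquad \text{on } B_1,
\]
which is Cauchy--Schwarz applied componentwise in the target. Letting $\varepsilon\to 0$ by monotone convergence gives $\int_{B_1} \rho\,(|\nabla_y\Phi|^2 - |y\cdot\nabla_y\Phi|^2)\,dy = 0$, and hence $|\nabla_y\Phi| = 0$ a.e., so $\Phi$ is constant. Note that neither radial symmetry of $\Phi$ nor any boundary information (such as the constancy of the trace on $\partial B_1$) is used; the conclusion holds for any finite-energy distributional solution of \eqref{equ:equation_selfsimilar}.

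The only delicate point I anticipate is justifying that $\chi_\varepsilon\Phi$ is a legitimate test function for the distributional formulation and, in particular, that the chain-rule identity $\Phi\cdot\nabla\Phi = 0$ a.e.\ may be used rigorously for an $H^1\cap L^\infty$ map with $|\Phi|^2 = 1$ a.e. This is standard in the theory of Sobolev maps into spheres (one may truncate and approximate, or differentiate $|\Phi|^2 - 1 \equiv 0$ in the sense of distributions), but it should be recorded explicitly. Once that is in place, the entire proof is the algebraic bookkeeping of the two cancellations sketched above together with the one-line Cauchy--Schwarz estimate.
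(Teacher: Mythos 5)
Your computation is internally consistent, but the entire conclusion is extracted from the sign of the cubic term in \eqref{equ:equation_selfsimilar}, and that sign is a typo: with the correct sign your identity degenerates to $0=0$. To see this without redoing the change of variables, note that for \emph{any} map $\Phi\in H^1\cap L^\infty(B_1;\bS^m)$ with $|\Phi|^2=1$ a.e.\ the constraint gives $\Phi^a\partial_i\Phi^a=0$, hence
\begin{align*}
\Phi\cdot\mathrm{div}\bigl(\rho\nabla_y\Phi-\rho(y\cdot\nabla_y\Phi)y\bigr)
&=\mathrm{div}\bigl(\rho\,\Phi^a\nabla_y\Phi^a-\rho\,\Phi^a(y\cdot\nabla_y\Phi^a)\,y\bigr)-\rho\bigl(|\nabla_y\Phi|^2-|y\cdot\nabla_y\Phi|^2\bigr)\\
&=-\rho\bigl(|\nabla_y\Phi|^2-|y\cdot\nabla_y\Phi|^2\bigr)
\end{align*}
in the sense of distributions. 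The stationary equation is the Euler--Lagrange equation of $\tfrac12\int_{B_1}\rho\bigl(|\nabla_y\Phi|^2-|y\cdot\nabla_y\Phi|^2\bigr)\,dy$ under the constraint $|\Phi|=1$, so its right-hand side must equal the Lagrange multiplier $-\rho\,\Phi\,(|\nabla_y\Phi|^2-|y\cdot\nabla_y\Phi|^2)$, i.e.\ the opposite of what is printed; carrying out the substitution $y=x/(1-t)$, $s=-\log(1-t)$ in $\Box\phi=\phi(|\partial_t\phi|^2-|\nabla_x\phi|^2)$ confirms this. With the printed sign, dotting the equation with $\Phi$ already forces $|\nabla_y\Phi|^2=|y\cdot\nabla_y\Phi|^2$ pointwise, hence $\nabla\Phi\equiv 0$, with no use of finite energy at all --- a clear signal that the equation as printed is overdetermined. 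With the correct sign, your pairing against $\chi_\varepsilon\Phi$ produces $-\int\chi_\varepsilon\rho(|\nabla_y\Phi|^2-|y\cdot\nabla_y\Phi|^2)\,dy$ on \emph{both} sides and yields no information.

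This is not a fixable detail: any argument that tests the equation against a scalar multiple of $\Phi$ can only reproduce the Lagrange-multiplier identity above, so it cannot prove the proposition. Moreover, your closing claim that neither radial symmetry nor the boundary trace is needed cannot be right: after the radial conformal change of variables $\tau=r/(1+\sqrt{1-r^2})$ the correct stationary equation becomes exactly the weakly harmonic map equation from the flat unit disc into $\bS^m$, for which constancy is a genuine rigidity problem. The paper's route (the proof of Proposition 10.17 in \cite{KS}, following Section 7.5.1 of \cite{Shatah_Struwe_book}) uses precisely the inputs you discard: the finite weighted energy controls the normal derivative at $\partial B_1$, the $L^2$-trace on $\partial B_1$ is constant, and Lemaire's uniqueness theorem \cite{Lemaire} then forces $\Phi$ to be constant. (For the radial $\Phi^\ast$ actually produced in Section~\ref{sec:Concentration_Compactness_Step} one may alternatively reduce to the geodesic ODE $\ddot\Phi+\Phi|\dot\Phi|^2=0$ on a half-line and use that finite energy forces zero speed.) You need to take one of these routes.
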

\begin{proof}
 For the proof we refer the reader to the proof of Proposition 10.17 in \cite{KS} where the argument of Section 7.5.1 in \cite{Shatah_Struwe_book} is implemented and the conclusion is reached by Lemaire's uniqueness theorem~\cite{Lemaire}.
\end{proof}

Thanks to Proposition~\ref{prop:final_contradiction} we can conclude the proof of Proposition~\ref{prop:rigidity} in the finite time case.

\begin{proof}{[Proposition ~\ref{prop:rigidity}: finite time case]} 
In the finite time case, thanks to Corollary~\ref{cor:self-similar} we have been able to construct a nonconstant self--similar solution $\Phi^\ast$ of \eqref{equ:equation_selfsimilar} of finite energy, but this is in contradiction
with Proposition~\ref{prop:final_contradiction} and this completes the proof of Proposition~\ref{prop:rigidity}.
\end{proof}

In order to close the rigidity argument, we notice that we need to justify the additional condition $\lambda(t)\geq \lambda_0>0$ for all $t\in \R$ assumed in Proposition~\ref{prop:rigidity}, but this follows as in Lemma 10.18 of \cite{KS}.
This concludes the rigidity argument and hence finishes the proof of Theorem~\ref{thm:main_theorem}.

\bibliographystyle{amsplain}
\bibliography{references}

\end{document}